\documentclass{amsart}
\usepackage[T1]{fontenc}
\usepackage[utf8]{inputenc}
\usepackage{amsmath,amsfonts,amssymb,amsthm}
\usepackage{mathtools}
\usepackage{tensor}
\usepackage{pgf,tikz}
\usetikzlibrary{matrix,arrows,calc}
\usepackage{tikz-cd}
\usepackage{wrapfig}
\usepackage{shuffle}
\usepackage{url}
\usepackage{xcolor}
\numberwithin{equation}{section}
\numberwithin{table}{section}

\usepackage{caption} 

\usepackage{enumitem}

\usepackage[colorinlistoftodos]{todonotes}

\definecolor{darkgreen}{rgb}{0,0.5,0}
\usepackage[
colorlinks, citecolor=darkgreen,
colorlinks=true,
]{hyperref}
\usepackage{cleveref}
\usepackage{comment}

\newtheorem{thm}{Theorem}
\newtheorem{prop}[thm]{Proposition}
\newtheorem{lemma}[thm]{Lemma}
\newtheorem{cor}[thm]{Corollary}
\newtheorem{conj}[thm]{Conjecture}

\theoremstyle{remark}
\newtheorem{rem}[thm]{Remark}

\newtheorem{example}[thm]{Example}

\theoremstyle{definition}
\newtheorem{defn}[thm]{Definition}
\newtheorem{cond}[thm]{Condition}

\AddToHook{env/defn/begin}{\crefalias{thm}{defn}}
\AddToHook{env/prop/begin}{\crefalias{thm}{prop}}
\AddToHook{env/lemma/begin}{\crefalias{thm}{lemma}}
\AddToHook{env/conjecture/begin}{\crefalias{thm}{conjecture}}
\AddToHook{env/cor/begin}{\crefalias{thm}{cor}}
\AddToHook{env/rem/begin}{\crefalias{thm}{rem}}
\AddToHook{env/remarks/begin}{\crefalias{thm}{remarks}}
\AddToHook{env/example/begin}{\crefalias{thm}{example}}
\AddToHook{env/cond/begin}{\crefalias{thm}{cond}}
\AddToHook{env/conj/begin}{\crefalias{thm}{conj}}

\numberwithin{thm}{section}

\mathchardef\mhyphen="2D %

\newcommand{\bA}{\mathbb A}

\newcommand{\cB}{\mathcal B}
\newcommand{\cC}{\mathcal C}

\newcommand{\bC}{\mathbb C}
\newcommand{\rd}{\mathrm d}
\newcommand{\cE}{\mathcal E}

\newcommand{\bF}{\mathbb F}

\newcommand{\bG}{\mathbb G}

\newcommand{\rH}{\mathrm H}

\newcommand{\fl}{\mathfrak l}

\newcommand{\fn}{\mathfrak n}
\newcommand{\MT}{\mathrm{MT}}

\newcommand{\bP}{\mathbb P}

\newcommand{\fp}{\mathfrak p}
\newcommand{\fP}{\mathfrak P}
\newcommand{\fq}{\mathfrak q}
\newcommand{\bQ}{\mathbb Q}
\newcommand{\bZ}{\mathbb Z}
\newcommand{\cO}{\mathcal{O}}

\newcommand{\fu}{\mathfrak{u}}
\newcommand{\rZ}{\mathrm{Z}}

\newcommand{\llangle}{\langle\!\langle}
\newcommand{\rrangle}{\rangle\!\rangle}
\newcommand{\an}{{\mathrm{an}}}

\newcommand{\ur}{{\mathrm{ur}}}
\newcommand{\prim}{{\mathrm{prim}}}

\newcommand{\mot}{{\mathrm{mot}}}

\newcommand{\Fib}{{\mathrm{Fib}}}
\newcommand{\Gon}{{\mathrm{Gon}}}
\newcommand{\ev}{{\mathrm{ev}}}
\newcommand{\univ}{{\mathrm{univ}}}
\newcommand{\red}{{\mathrm{red}}}

\newcommand{\eps}{\varepsilon}

\newcommand{\et}{\operatorname{\acute{e}t}}
\newcommand{\Gal}{\mathrm{Gal}}

\newcommand{\Vect}{\mathrm{Vect}}

\newcommand{\lto}{\longrightarrow}
\newcommand\fcov{\mathrm{f\mhyphen cov}}

\DeclareMathOperator{\ab}{ab}

\DeclareMathOperator{\ad}{ad}

\DeclareMathOperator{\Aut}{Aut}

\DeclareMathOperator{\cris}{cris}

\DeclareMathOperator{\Ext}{Ext}

\DeclareMathOperator{\gr}{gr}
\DeclareMathOperator{\crs}{cr}
\DeclareMathOperator{\dR}{dR}
\DeclareMathOperator{\pr}{pr}

\DeclareMathOperator{\res}{res}
\DeclareMathOperator{\loc}{loc}
\DeclareMathOperator{\li}{li}
\DeclareMathOperator{\Li}{Li}
\DeclareMathOperator{\Lie}{Lie}

\DeclareMathOperator{\Res}{Res}

\DeclareMathOperator{\Hom}{Hom}
\DeclareMathOperator{\Nm}{Nm}
\DeclareMathOperator{\per}{per}
\DeclareMathOperator{\PL}{PL}
\DeclareMathOperator{\Rep}{Rep}
\DeclareMathOperator{\Sel}{Sel}
\DeclareMathOperator{\Spec}{Spec}

\mathchardef\mhyphen="2D %

\usepackage[
    backend=biber,
    style=alphabetic,
    minalphanames=3,
    maxnames=99,
    maxalphanames=4,
    giveninits=true,
    isbn=false,
    doi=true,
]{biblatex}
\title{Polylogarithmic Chabauty--Kim loci over number fields}

\author{Xiang Li}
\address{Xiang Li,
	School of Mathematics,
	The University of Edinburgh,
    James Clerk Maxwell Building,
    Peter Guthrie Tait Road,
    Edinburgh,
    EH9 3FD, United Kingdom}
\email{X.Li-198@sms.ed.ac.uk}

\author{Martin Lüdtke}
\address{Martin Lüdtke,
	Institut für Mathematik,
	Carl von Ossietzky Universität Oldenburg,
	26111 Oldenburg,
	Germany
}
\email{martin.luedtke@uol.de}

\begin{document}

    \begin{abstract}
We study polylogarithmic Chabauty--Kim loci for $S$-integral points on $\mathbb{P}^1\smallsetminus\left\{0,1,\infty\right\}$ over number fields. We compare motivic and étale Selmer schemes, describe Galois actions on Selmer schemes and period rings, and give an explicit formula for the localisation map on the polylogarithmic Selmer scheme. For imaginary and real quadratic fields, we derive equations and determine several polylogarithmic and full Chabauty--Kim loci and show that Kim's Conjecture holds in several new cases. In other cases, we show that the polylogarithmic Chabauty--Kim method is insufficient to cut out precisely the $S$-integral points, even when combined with $S_3$-symmetrisation, due to additional points arising from $p$-adic roots of unity. As a further application, we give a Chabauty--Kim theoretic proof of the $S$-Selmer Section Conjecture for imaginary quadratic fields when $S$ is empty or $S$ consists of a single prime not fixed by complex conjugation.
\end{abstract}

	\maketitle

\tableofcontents
    
	\thispagestyle{empty}

    \section{Introduction}

For a number field~$K$ and a finite set~$S$ of primes of~$K$, it is a classical theorem of Siegel and Mahler that the set $X(\cO_{K,S})$ of $S$-integral points of the thrice-punctured line
\[
    X=\bP^1\smallsetminus\{0,1,\infty\}
\]
is finite.  In \cite{kim_2005_the}, Kim gave a new proof of this result for $K=\bQ$ by developing a non-abelian generalisation of Chabauty's method.

Fix a rational prime~$p$ such that no prime in~$S$ lies above~$p$, and a quotient $\pi_1^{\mot}(X,0)\twoheadrightarrow\Pi$ of the motivic fundamental group of Deligne--Goncharov at the tangential base point $\vec{1}_0$ at~$0$ \cite{deligne-goncharov}. Central to our approach over number fields is the motivic Chabauty--Kim diagram
\begin{equation}
\label{eq:ck-diagram-intro}
\begin{tikzcd}[column sep=large,row sep=large]
X(\cO_{K,S}) \rar[hook] \dar["j_S^{\mot}"'] &
\displaystyle\prod_{\fp\mid p}X(\cO_{\fp})
    \dar["\prod_{\fp}j_{\fp}^{\dR}"] \\
\Sel_{S,\Pi}^{\mot}(X)_{\bQ_p}
    \rar["\loc_p"'] &
\displaystyle\prod_{\fp\mid p}\Res_{K_{\fp}/\bQ_p}\Pi_{K_{\fp}}^{\dR}.
\end{tikzcd}
\end{equation}
Here $\Sel_{S,\Pi}^{\mot}(X)$ is the motivic Selmer scheme.  As summarised in \Cref{sec:intro-framework}, this diagram is canonically identified with the usual étale Chabauty--Kim diagram, in which the bottom arrow is the product of the localisation maps for all primes $\fp \mid p$.  The Chabauty--Kim locus is defined as the inverse image under the right vertical map of the scheme-theoretic image of the bottom horizontal map.  Taking for $\Pi$ the depth-$n$ quotient, i.e.\ the $n$-th quotient along the descending central series, gives a subset
\[
    X(\cO_K\otimes\bZ_p)_{S,n}\subseteq
    X(\cO_K\otimes\bZ_p)=\prod_{\fp\mid p}X(\cO_{\fp})
\]
containing the diagonal image of $X(\cO_{K,S})$.  These Chabauty--Kim loci form a descending sequence as~$n$ increases.  Kim's Conjecture predicts that
\[
    X(\cO_K\otimes\bZ_p)_{S,n}=X(\cO_{K,S})
\]
for sufficiently large~$n$.  Over general number fields, Dogra proved the eventual finiteness of (refined) Chabauty--Kim loci and thereby obtained a new proof of the Siegel--Mahler theorem \cite{dogra_2023_unlikely}.  Much of the existing explicit and motivic theory has nevertheless been developed with $K=\bQ$ as its principal setting. The presence of several primes above~$p$, together with the arithmetic of the ground field and its Galois action, makes explicit Chabauty--Kim theory over number fields substantially richer. 
In this paper we restrict attention to the \emph{polylogarithmic quotient} of the fundamental group, whose associated Chabauty--Kim loci
\[
    X(\cO_K\otimes\bZ_p)_{S,\PL,n}
    \supseteq X(\cO_K\otimes\bZ_p)_{S,n}
\]
are more amenable to computation. They are cut out by functions involving the $p$-adic logarithm and polylogarithms
\[ \log(z) = \int_{\vec{1}_0}^z \frac{\rd t}{t}, \quad \Li_m(z) \coloneqq \int_{\vec{1}_0}^z \underbrace{\frac{{\rd}t}{t} \cdots \frac{{\rd}t}{t} \frac{{\rd}t}{1-t}}_m   \quad (1 \leq m \leq n), \]
rather than more general iterated Coleman integrals from the tangential basepoint $\vec{1}_0$ to~$z$. A first objective of this paper is to develop the motivic, Galois-equivariant, and computational foundations needed to derive equations defining the polylogarithmic Chabauty--Kim loci $X(\cO_K\otimes\bZ_p)_{S,\PL,n}$ over general number fields. A second is to use this framework to compute these loci and test Kim's Conjecture over quadratic number fields. 

Before discussing the details of the method, we state some of the main results obtained by applying it in the case of quadratic number fields. The first non-trivial setting is the case where $K$ is imaginary quadratic and $S = \emptyset$. 

\begin{thm}[Equations for integral points over imaginary quadratic fields, {\Cref{integral-points-equations}}]
    \label{mainthm:integral-points-equations}
    Let $K$ be an imaginary quadratic field and let $p$ be a prime which splits completely in~$K$. For $1 \leq n \leq \infty$, the depth-$n$ polylogarithmic Chabauty--Kim locus $X(\cO_K \otimes \bZ_p)_{\emptyset,\PL,n}$ is contained in the set of pairs $(z_1,z_2) \in X(\bZ_p) \times X(\bZ_p)$ satisfying the equations
    \begin{align}
        \log(z_1) = \log(z_2) &= 0, %
        \\
        \Li_1(z_1) = \Li_1(z_2) &= 0, %
        \\
        \Li_m(z_1) + (-1)^m \Li_m(z_2) &= 0 \qquad \text{for $2\leq m \leq n$}. %
    \end{align}
    The containment is an equality if the $p$-adic period conjecture (\Cref{conj:period-conjecture}) holds for the primes dividing $p$.
\end{thm}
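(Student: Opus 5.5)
The plan is to compute the polylogarithmic Selmer scheme, its Galois-equivariant structure, and the localisation map explicitly for $K$ imaginary quadratic, $S=\emptyset$, and then read off the equations cutting out the scheme-theoretic image.

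\textbf{Step 1: the Selmer scheme.} The polylogarithmic quotient is an extension of $\bQ(1)$ by $\prod_{m\ge1}\bQ(m)$. Its motivic Selmer scheme is therefore built from the groups $\Ext^1_{\MT(\cO_{K,S})}(\bQ(0),\bQ(m))$, which by Borel's computation of $K_{2m-1}(\cO_K)\otimes\bQ$ have the following dimensions:
\begin{itemize}
\item $m=1$: the group is $\cO_K^\times\otimes\bQ=0$, since $S=\emptyset$ and the unit rank of $K$ is $0$;
\item $m\ge2$: the dimension is $r_2=1$.
\end{itemize}
Consequently the $\log$ and $\Li_1$ coordinates vanish identically on $\Sel^{\mot}$. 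So $\Sel^{\mot}_{\emptyset,\PL,n}\cong\bA^{n-1}$, with one coordinate $c_m$ for each $2\le m\le n$, namely the motivic period of a generator $\zeta_K(m)$-type class.

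\textbf{Step 2: the localisation map.} I would invoke the paper's explicit formula for $\loc_p$ on the polylogarithmic Selmer scheme. Since $p$ splits as $\fp\bar\fp$, both completions are $\bQ_p$, and the target is $\Pi^{\dR}_{\bQ_p}\times\Pi^{\dR}_{\bQ_p}$. Because the $\log$ and $\Li_1$ coordinates vanish on the Selmer scheme, the image lies in the sub-scheme where $\log$ and $\Li_1$ vanish at both places. The Coleman functions pulled back along $j^{\dR}_\fp$ are $\log(z_i)$, $\Li_1(z_i)$ and $\Li_m(z_i)$. Moreover, the $\log$ and $\Li_1$ coordinates vanish, so the polylogarithmic group law degenerates and the $\Li_m$ coordinates become additive.

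The $\Li_m$ coordinates of $\loc_p(c)$ at the two places are the $p$-adic periods of the generator of $\Ext^1(\bQ(0),\bQ(m))$ under the two embeddings $K\hookrightarrow\bQ_p$. These two embeddings are swapped by complex conjugation $\sigma$. I would use the paper's description of the Galois action on Selmer schemes and period rings: $\sigma$ acts on the class in weight $2m$ (that is, on $\bQ(m)$) by $(-1)^m$ relative to the basis, and the generator of $K_{2m-1}(\cO_K)\otimes\bQ$ for imaginary quadratic $K$ lies in the $(-1)^{m-1}$ eigenspace of $\sigma$. Combining these two signs, the two $p$-adic periods satisfy $\per_{\bar\fp}=(-1)^{m+1}\per_{\fp}$ coordinatewise, i.e.\ $\per_{\bar\fp}=-(-1)^m\per_{\fp}$. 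Hence the image of $\loc_p$ lies in the linear locus $\Li_m^{(1)}+(-1)^m\Li_m^{(2)}=0$. Containment of the Chabauty--Kim locus in the stated equations follows by pulling this back along $j^{\dR}$.

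\textbf{Step 3: equality under the period conjecture.} The image of $\loc_p$ is the line spanned, in each weight, by the vector $(\per_\fp,\per_{\bar\fp})$. It equals the full linear subspace $\{x^{(1)}+(-1)^m x^{(2)}=0\}$ exactly when $\per_\fp\neq0$. Nonvanishing of this period (a $p$-adic $\zeta$-type value) is what \Cref{conj:period-conjecture} guarantees, so under that conjecture the scheme-theoretic image is exactly the subvariety given by the equations, and the containment becomes an equality.

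\textbf{Main obstacle.} The delicate point is the sign bookkeeping in Step 2: determining the $\sigma$-eigenvalue on the motivic $\Ext^1$ together with the action on the de Rham period ring across the two embeddings, so as to obtain $(-1)^m$ rather than its negative. To settle it I would first check the case $m=2$ directly against the dilogarithm and the Bloch group, where $\sigma$ acts by $-1$.
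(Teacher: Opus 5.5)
Your overall route is the paper's. With $S=\emptyset$ there are no degree $-1$ generators, so the Selmer scheme is $\bA^{n-1}$ with one coordinate $z_m$ per generator $\sigma_m$. (These coordinates are not themselves motivic periods; the periods are the $\sigma_m$-coefficients $c_m$ of $\eps_{\fp}$.) Every commutator lies in the Goncharov ideal and is killed by $\xi$ (\Cref{cocycles-vanishing-on-goncharov-ideal}); this is the precise form of your ``group law degenerates''. Hence $\ev_{\eps_{\fp}}$ is $z_m\mapsto c_m z_m$ in the $L_m$-coordinates, and these agree with $\Li_m$ where $\log=0$. Finally, the period conjecture gives $c_m\neq0$.

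The gap is in Step~2, which is the only non-formal input, and as written it does not give what you state. Since $(-1)^m\cdot(-1)^{m-1}=-1$ for every $m$, your ``combining'' would produce $\Li_m(z_1)+\Li_m(z_2)=0$ for odd $m$ as well. That is false. For $m=3$, choose $\sigma_3$ dual to $\zeta^{\fu}(3)$, which is defined over $\bQ$ and hence fixed by $\sigma_*$. Then $\per_{\sigma^*\fp}=\per_{\fp}\circ\sigma_*$ (\Cref{eta-sigma-p-from-eta-p}) gives $c'_3=\per_{\fp_1}(\sigma_*\zeta^{\fu}(3))=c_3$, not $-c_3$. Consistently, at the integral point $(\zeta_6,\zeta_6^{-1})$ of $\bQ(\zeta_3)$, Coleman--Sinnott gives $\Li_3(\zeta_6)-\Li_3(\zeta_6^{-1})=0$.

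The extra ``$(-1)^m$ on $\bQ(m)$'' is a Betti phenomenon (conjugation acting on $(2\pi i)^m$) and does not enter here. The base-change functor $\sigma_*$ fixes $\bQ(1)$ and is compatible with the canonical fibre functor. So the only ingredients are $\eps_{\fp_2}=\sigma^*\eps_{\fp_1}$ (\Cref{sigma-on-period-point}) and the eigenvalue of $\sigma^*$ on the line $\Lie(U_S^{\MT})^{\ab}_{-m}$. That eigenvalue is $(-1)^{m+1}$. The reason is that its invariants are $\Ext^1_{\MT(\bQ,\bQ)}(\bQ(0),\bQ(m))$ by Deligne--Goncharov, and this has dimension $1$ for odd $m\ge3$ and $0$ for even $m$ (\Cref{lem:sigmasigman}).

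Your eigenspace claim by itself therefore already yields $c'_m=(-1)^{m+1}c_m$, hence $L_m^{(1)}+(-1)^mL_m^{(2)}=0$. Drop the second sign and supply this justification. Your proposed $m=2$ sanity check could not have caught the error, since both sign rules agree at $m=2$; a check at $m=3$ would.
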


\Cref{mainthm:integral-points-equations} shows a typical structure for many of our results: we unconditionally obtain equations which hold on the polylogarithmic Chabauty--Kim locus, and thereby on the $S$-integral points, while we need to assume a $p$-adic period conjecture, asserting that certain $p$-adic periods are nonzero, in order to know that these equations precisely cut out the locus. We obtain both positive and negative results in this paper, i.e.\ we verify Kim's Conjecture in many cases, while in others we show that the polylogarithmic quotient is insufficient. Determining the solution set to the equations from \Cref{mainthm:integral-points-equations}, which notably do not depend on~$K$, we find: 

\begin{thm}[Loci for integral points over imaginary quadratic fields]
    \label{mainthm:imagquad-integral-locus}
    Let $K$ be an imaginary quadratic field, let $S=\emptyset$, and let $p$ be a rational prime which splits in~$K$. If $K=\bQ(\zeta_3)$, then Kim's Conjecture holds for the polylogarithmic quotient in depth $p-3$. If $K\neq\bQ(\zeta_3)$ and $p\not\equiv1\bmod3$, it holds in depth~1.  If $K\neq\bQ(\zeta_3)$ and $p\equiv1\bmod3$, then, assuming the $p$-adic period conjecture,
    \[
        X(\cO_K)=\emptyset
        \subsetneq X(\mathcal{O}_K\otimes\mathbb{Z}_p)_{\emptyset,\PL,\infty}
        =\{(\zeta_6,\zeta_6^{-1}),(\zeta_6^{-1},\zeta_6)\},
    \]
    so the polylogarithmic quotient is insufficient to verify Kim's Conjecture.
\end{thm}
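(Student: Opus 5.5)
Using \Cref{mainthm:integral-points-equations}, the plan is to determine the solution set in $X(\bZ_p)\times X(\bZ_p)$ of the equations $\log z_i=0$, $\Li_1(z_i)=\log(1-z_i)=0$ and $\Li_m(z_1)+(-1)^m\Li_m(z_2)=0$. Since these equations do not depend on $K$, only the final comparison with $X(\cO_K)$ and the period conjecture involve $K$.

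First we handle the depth-1 equations. The kernel of the Iwasawa logarithm on $\bZ_p^\times$ is $\mu_{p-1}$ (for $p$ odd; $p=2$ gives $\pm1$ and I would treat it by hand). So $z_i$ and $1-z_i$ must both be $(p-1)$-st roots of unity, i.e.\ Teichmüller lifts. The condition that $\zeta$ and $1-\zeta$ both be roots of unity in $\bZ_p$ forces $\zeta=\zeta_6^{\pm1}$. The argument reduces mod $p$ to the Teichmüller condition and then uses that $\omega(a)+\omega(1-a)=1$ holds in $\bZ_p$ only if this sum relation already holds in $\bC$, via the characterisation of roots of unity $\zeta,\eta$ with $\zeta+\eta=1$. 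This requires $\zeta_6\in\bZ_p$, i.e.\ $p\equiv1\bmod 3$. Hence, if $p\not\equiv1\bmod3$, the depth-1 locus is empty.

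Next we treat the cases. When $K\neq\bQ(\zeta_3)$, we have $X(\cO_K)=\emptyset$, since a unit $z$ with $1-z$ also a unit requires $z,1-z\in\cO_K^\times=\{\pm1\}$ (or $\{\pm1,\pm i\}$ for $\bQ(i)$, where one checks there is no solution). Thus for $p\not\equiv1\bmod3$ the depth-1 locus equals $X(\cO_K)$ and Kim's Conjecture holds in depth~1. For $p\equiv1\bmod 3$, the candidates are the four pairs $(\zeta_6^{\pm1},\zeta_6^{\pm1})$. Using the distribution relation, $\Li_m$ vanishes on roots of unity of order prime to $p$ up to the factor $1-p^{1-m}$, which is the same vanishing that holds for $\Li_m(1)$. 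Combined with the inversion relation $\Li_m(z^{-1})=(-1)^{m-1}\Li_m(z)$ for such $z$, this gives: on the antidiagonal pairs $(\zeta,\zeta^{-1})$ we have $\Li_m(z_1)+(-1)^m\Li_m(z_2)=\Li_m(\zeta)(1-1)=0$ for all $m$. On the diagonal pairs we get $(1+(-1)^m)\Li_m(\zeta_6)$, so for even $m$ these pairs survive exactly when $\Li_m(\zeta_6)=0$. I would show that the even-index values $\Li_{2k}(\zeta_6)$ are nonzero, perhaps via $\Li_2(\zeta_6)$ and its relation to $p$-adic $L$-values, or Kubota--Leopoldt values $L_p(2k,\chi\omega^{1-2k})$ for the cubic character. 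Alternatively, I would show that the diagonal pairs are excluded by the polylogarithmic relations themselves. By the equality part of \Cref{mainthm:integral-points-equations}, assuming the period conjecture, the depth-$\infty$ locus is then the two antidiagonal points, which are not in $X(\cO_K)=\emptyset$.

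For $K=\bQ(\zeta_3)$, we have $X(\cO_K)=\{(\zeta_6,\zeta_6^{-1}),(\zeta_6^{-1},\zeta_6)\}$ under the embeddings corresponding to the two primes above $p$; complex conjugation swaps them, giving the antidiagonal. The remaining task is to kill the diagonal pairs unconditionally in some depth $m\le p-3$. This requires $\Li_m(\zeta_6)\neq0$ for some even $m\le p-3$, and I would derive it from the congruence of $\Li_m(\zeta)$ modulo $p$ with finite polylogarithms or Bernoulli-type sums $\sum_{k<p}\zeta^k/k^m$. Reducing modulo $p$, these become explicit sums whose nonvanishing for some even $m\le p-3$ is elementary. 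The main obstacle is this unconditional nonvanishing of $p$-adic polylogarithms at $\zeta_6$, together with identifying exactly which diagonal/antidiagonal pairs survive. I would first check $m=2$ mod $p$ via the sum $\sum_{k=1}^{p-1}\zeta^k k^{-2}$.
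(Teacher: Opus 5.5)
Your overall route is the paper's. You reduce to the equations of \Cref{mainthm:integral-points-equations}, and use the complex absolute value to force $z_1,z_2\in\{\zeta_6^{\pm1}\}$ in depth~1 (\Cref{lem:logLi1zeta6}). You keep the antidiagonal pairs via Coleman--Sinnott, and observe that a diagonal pair $(\zeta,\zeta)$ survives the depth-$m$ equation for even $m$ only if $\Li_m(\zeta)=0$. One side remark is wrong: the distribution relation does not make $\Li_m$ vanish at roots of unity of order prime to $p$ ``up to the factor $1-p^{1-m}$''. The values $\Li_m(\zeta)$ at nontrivial roots of unity are in general nonzero, which is exactly what you need later. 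You never use that remark, though; the inversion relation alone handles the antidiagonal.

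The genuine gap is the step you yourself flag as the main obstacle: you never exhibit an even $m\le p-3$ with $\Li_m(\zeta_6)\neq0$. This unconditional nonvanishing is the entire content of the depth-$(p-3)$ claim for $\bQ(\zeta_3)$, where no period conjecture may be assumed. It is also what removes the diagonal pairs when $K\neq\bQ(\zeta_3)$; there the period conjecture for $K$ does not directly apply, since $\zeta_6\notin K$ and so $\Li_2^{\fu}(\zeta_6)$ is not a motivic period of $\cO_K$. Starting with $m=2$ cannot give a uniform proof, because $\Li_2(\zeta_6)\neq0$ is only known numerically or under the period conjecture (\Cref{depth-p-minus-3}).

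The missing idea, which the paper takes from \cite[Proposition~5.15]{BKL:chabauty-kim-sc}, is to use $m=p-3$. For a nontrivial $(p-1)$-st root of unity $z$ one has $z^p=z$, so
$\Li_m^{(p)}(z)\coloneqq\Li_m(z)-p^{-m}\Li_m(z^p)=(1-p^{-m})\Li_m(z)$.
Moreover $\Li_m^{(p)}(z)\equiv\frac{1}{1-\bar z}\li_m(\bar z)\bmod p$, where $\li_m(x)=\sum_{k=1}^{p-1}x^k/k^m$. Note the normalisation: it is essentially $p^{-m}\Li_m(z)$, not $\Li_m(z)$ itself, that reduces to a finite polylogarithm. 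Since $k^{-(p-3)}\equiv k^2 \bmod p$, one gets $\li_{p-3}(x)=\sum_{k=1}^{p-1}k^2x^k=x(1+x)/(1-x)^2$ in $\bF_p$. Its only zero in $\bF_p\smallsetminus\{0,1\}$ is $x=-1$.

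As $\bar\zeta_6\neq-1$, this gives $\Li_{p-3}(\zeta_6^{\pm1})\neq0$ for every $p\equiv1\bmod3$, and then $p\ge7$ so $p-3\ge4$ is even. This single equation kills both diagonal pairs unconditionally, for every imaginary quadratic $K$. The period conjecture is then needed only for the reverse inclusion of the antidiagonal pairs when $K\neq\bQ(\zeta_3)$, so the Kubota--Leopoldt route you sketch is unnecessary.
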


\Cref{mainthm:imagquad-integral-locus} is the combination of \Cref{integral-points-locus}, Corollaries~\ref{kims-conjecture-for-Qzeta3} and \ref{kims-conjecture-p-not-1-mod-3}, and \Cref{integral-points-kims-conjecture-failure}. 

An example over~$\bQ$ where the polylogarithmic quotient is insufficient was already found by Corwin and Dan-Cohen \cite{CDC:polylog1}. As a remedy, they proposed to consider an intermediate ``$S_3$-symmetrised'' locus
\[
    X(\cO_K\otimes\bZ_p)_{S,n}
    \subseteq X(\cO_K\otimes\bZ_p)_{S,\PL,n}^{S_3}
    \subseteq X(\cO_K\otimes\bZ_p)_{S,\PL,n},
\]
defined as the largest subset of $X(\cO_K\otimes\bZ_p)_{S,\PL,n}$ which is stable under the natural action of $S_3=\Aut(X)$. However, the set $\{(\zeta_6,\zeta_6^{-1}),(\zeta_6^{-1},\zeta_6)\}$ is already $S_3$-stable, so nothing is gained by $S_3$-symmetrisation and \Cref{mainthm:imagquad-integral-locus} provides infinitely many counterexamples to the ``$S_3$-symmetrised polylogarithmic Chabauty--Kim Conjecture'' \cite[Conjecture~8.2.1]{ishaidancohen_2020_mixed}.

We also obtain negative results for non-empty~$S$ and larger fields~$K$:

\begin{thm}[{Roots-of-unity obstruction, \Cref{cor:CMfails}}]
\label{mainthm:roots-of-unity}
Let $K$ be a number field containing an imaginary quadratic field and suppose that $S$ contains all primes of~$K$ above a rational prime~$l$. Let $p\neq l$ split completely in~$K$, let $\fp\mid p$, and let $X(\cO_{\fp})_{S,\PL,\infty}$ be the projection of $X(\cO_K \otimes \bZ_p)_{S,\PL,\infty}$ onto $X(\cO_{\fp})$.  Assume the $p$-adic period conjecture. Then
\[
    \mu(\bZ_p)\smallsetminus\{1\}
    \subseteq X(\cO_{\fp})_{S,\PL,\infty}.
\]
In particular, for all sufficiently large~$p$, the locus $X(\cO_K \otimes \bZ_p)_{S,\PL,\infty}$ contains points not coming from $X(\cO_{K,S})$, so Kim's Conjecture fails for the polylogarithmic quotient in infinite depth.
\end{thm}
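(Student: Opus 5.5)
Since $\loc_p$ has dense image in the relevant product (period conjecture), the plan is to exhibit a $K$-rational point of the Selmer scheme whose image under $\loc_p$ has $\fp$-component equal to $j_\fp^{\dR}$ of a root of unity. The first step is to recall the explicit description of the polylogarithmic Selmer scheme and the explicit formula for $\loc_p$ developed in the body of the paper. In polylogarithmic coordinates, a point of $\Sel^{\mot}_{S,\PL,\infty}(X)$ is given by a depth-zero class $a\in \Ext^1(\bQ(0),\bQ(1))\otimes$, i.e.\ an element of $\cO_{K,S}^\times\otimes\bQ_p$, a class $b$ of the same kind, and classes $c_m\in K_{2m-1}(K)\otimes\bQ_p$ for $m\geq 2$. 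The map $\loc_p$ sends this point to the tuple of $p$-adic logarithms $\log_\fp(a)$, $\log_\fp(b)$ together with expressions in the $p$-adic Borel regulators of the $c_m$ plus shuffle corrections.

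For a $p$-adic point $z\in X(\cO_\fp)$ we have $j^{\dR}_\fp(z)=(\log z,\Li_1(z),\Li_2(z),\dots)$. The second step is to show that for $\zeta\in\mu(\bZ_p)\smallsetminus\{1\}$ every one of these coordinates is realised by the Selmer scheme, after projecting to the $\fp$-factor. For $\log(\zeta)$ this is immediate, since $\log(\zeta)=0$ and we may take $a=1$. For $\Li_1(\zeta)=-\log(1-\zeta)$ we use that $S$ contains every prime above $l$. We need an $S$-unit $u\in\cO_{K,S}^\times$ with $\log_\fp(u)=\log_\fp(1-\zeta)$ while the other components $\log_{\fp'}(u)$ are unconstrained. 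Because we only project to $X(\cO_\fp)$, the image is dense whenever the rank of $\cO_{K,S}^\times$ is at least $1$, which holds because $S\neq\emptyset$. For the higher $\Li_m(\zeta)$ the key input is the distribution relation. If $\zeta$ has order $N$, then $\Li_m(\zeta)$ is tied to values at other roots of unity via $\sum_{\eta^N=1}\Li_m(\eta x)=N^{1-m}\Li_m(x^N)$, and Coleman's computation shows these $p$-adic polylogarithm values are, up to Euler factors, $p$-adic regulators of cyclotomic elements of $K_{2m-1}$. This is where the CM hypothesis enters. Since $K$ contains an imaginary quadratic field, $K$ has complex places only, so $\operatorname{rank} K_{2m-1}(K)=r_2>0$ for every $m$, including even $m$; for a totally real field the even-$m$ groups would vanish. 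The $\fp$-component of the $p$-adic regulator of $K_{2m-1}(K)$ is then nonzero under the period conjecture, so it can be prescribed arbitrarily in the single coordinate $\fp$.

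The third step assembles these pieces. I would prove by induction on $m$ that the projection to the $\fp$-factor of the image of $\loc_p$, truncated at depth $m$, is Zariski dense in $\Res_{K_\fp/\bQ_p}$ of the depth-$m$ polylogarithmic group, i.e.\ in affine space. The induction uses the triangular form of the explicit localisation formula: the new coordinate is $\mathrm{reg}_\fp(c_m)$ plus a polynomial in the lower coordinates. Density then implies that every point $z\in X(\cO_\fp)$ that extends to a point of the whole locus lies in the projection. It remains to produce such a point, namely values at the other primes $\fp'\mid p$ making the tuple lie in the closure. Here I would take $z_{\fp'}$ to be suitable conjugate roots of unity, say $\zeta$ or $\zeta^{-1}$ depending on whether $\fp'$ is related to $\fp$ by complex conjugation, following the pattern $(\zeta_6,\zeta_6^{-1})$ of \Cref{mainthm:imagquad-integral-locus}. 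I would then check that all defining equations, which are the $\bQ$-linear relations among the $\fp'$-components forced by the Galois action, hold because of the parity identity $\Li_m(\zeta^{-1})=(-1)^{m-1}\Li_m(\zeta)$ together with $\log(\zeta)=0$.

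I expect the main obstacle to be the dimension count in the third step. One must show that the equations of $X(\cO_K\otimes\bZ_p)_{S,\PL,\infty}$ restrict, on the $\fp$-slice, to conditions satisfied by the conjugate root-of-unity tuple, and this requires controlling exactly which Galois-twisted linear relations among the $\fp'$-regulators the period conjecture allows. The final claim then follows because $\mu(\bZ_p)$ has $p-1\geq 3$ elements, while for large $p$ no element of $X(\cO_{K,S})$, a finite set, is a nontrivial root of unity in every embedding.
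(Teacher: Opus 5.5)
Your proposal has a genuine gap exactly where you place the ``main obstacle'', and parts of the surrounding framework are wrong. First, $\loc_p$ does \emph{not} have dense image. For $K$ imaginary quadratic the depth-$n$ polylogarithmic Selmer scheme has dimension at most $n+3$, while the target $\prod_{\fp\mid p}\Lie(\Pi_{\PL,n}^{\dR})$ has dimension $2(n+1)$; if the image were dense, the locus would be everything. Second, Zariski density of the $\fp$-projection of the image does no work. $X(\cO_{\fp})_{S,\PL,\infty}$ is the projection of $j_p^{-1}$ of the closure of the image, so you must produce a whole tuple $(z_{\fp'})_{\fp'\mid p}$ in that preimage with $\fp$-component $\zeta$. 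The other components are \emph{not} unconstrained, and this is why you misread the hypothesis on $S$: it is not there just to give $\cO_{K,S}^\times$ positive rank. At the conjugate prime the $\Li_1$-coordinate uses the same $y$-coordinates but the conjugate coefficient. If $S=\{\fl\}$ is a single Galois-unstable prime, the depth-1 equations of \Cref{S=1-unstable-equations} force $(\log(\sigma\pi_{\fl})-\log(\pi_{\fl}))\Li_1(\zeta)=0$ for the pair $(\zeta,\zeta^{-1})$. Hence that pair lies in the locus only if $\Li_1(\zeta)=0$, i.e.\ $\zeta$ is a primitive sixth root of unity. What ``all primes above $l$'' actually provides is a Galois-invariant $S$-unit of infinite order, namely $\tau_1=v_l\circ\Nm_{K/\bQ}$ with $\sigma^*\tau_1=\tau_1$. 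A single coordinate $y_1$ then matches $\Li_1(\zeta)=\Li_1(\zeta^{-1})$ at both primes at once.

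Your guess of pairing $\zeta$ with $\zeta^{-1}$ via the parity identity is the right one. What is missing is that you never need the defining equations of the closure or any dimension count: you construct the Selmer point explicitly.
\begin{itemize}
\item Reduce via \Cref{field-extension-functoriality} to an imaginary quadratic subfield $K_0$ with $S_l(K_0)$; enlarging $S$ afterwards only enlarges the locus. This also spares you from defining ``complex conjugation'' and controlling the relations among all $\eps_{\fp'}$ for a possibly non-Galois $K$.
\item Over $K_0$, choose $\sigma_n$ with $\sigma^*\sigma_n=(-1)^{n+1}\sigma_n$ (\Cref{lem:sigmasigman}). Take $\xi$ with all $x_i(\xi)=0$ and $y_i(\xi)=0$ for $i\ge2$.
\item Then $\xi$ sends every commutator into $e_1$-degree $\ge2$, which is zero in $\Lie(\Pi_{\PL}^{\omega})$. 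Only the linear Goncharov coefficients survive: $\ev_{\eps_1}(\xi)=(0,c_{\tau_1}y_1,c_2z_2,c_3z_3,\dots)$, and since $\eps_2=\sigma^*\eps_1$, $\ev_{\eps_2}(\xi)=(0,c_{\tau_1}y_1,-c_2z_2,c_3z_3,\dots)$.
\item Because $\log\zeta=0$, \Cref{coleman-sinnott} gives $L_n(\zeta^{-1})=(-1)^{n+1}L_n(\zeta)$. So the $\fp_2$-conditions follow from the $\fp_1$-conditions, and the period conjecture ($c_n\neq0$) lets you solve $z_n=L_n(\zeta)/c_n$ and $y_1=\Li_1(\zeta)/c_{\tau_1}$.
\end{itemize}
This puts $j_p(\zeta,\zeta^{-1})$ in the actual image of $\loc_p$, not merely its closure.

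Your detour through the distribution relation and cyclotomic regulators is unnecessary and not quite right. $\zeta\in\mu(\bZ_p)$ is usually not in $K$, so $\Li_m(\zeta)$ is not a regulator of an element of $K_{2m-1}(K)$. Since the Selmer coordinates are $\bQ_p$-valued, only $c_{\sigma_m}\neq0$ matters.
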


If $K$ is itself an imaginary quadratic field, a stronger version of \Cref{mainthm:roots-of-unity} states that $X(\cO_K\otimes\bZ_p)_{S,\PL,\infty}$ contains every pair $(\zeta,\zeta^{-1})$ with $1\neq\zeta\in\mu(\bZ_p)$; see \Cref{thm:roots-of-unity}. The general statement follows from this by functoriality under field extension (\Cref{field-extension-functoriality}). 

We also study the case where $K$ is an imaginary quadratic field and $S$ contains a single prime of~$K$. The sets of $S$-integral points for all such $K$ and $S$ are given in \Cref{S=1-stable-solutions}. The behaviour of the Chabauty--Kim loci depends on whether $S$ is stable under complex conjugation. In each case, we derive equations defining the polylogarithmic Chabauty--Kim locus $X(\cO_K \otimes \bZ_p)_{S,\PL,n}$ in depth $n \leq 4$ inside $X(\cO_K \otimes \bZ_p) \cong X(\bZ_p) \times X(\bZ_p)$, see Theorems~\ref{S=1-stable-equations} and~\ref{S=1-unstable-equations}. They are given by polynomials in $\log(z_i)$ and $L_m(z_i)$ for $i=1,2$ and $1 \leq m \leq n$, where $L_m(z)$ denotes the modified polylogarithms
\[
    L_m(z)=\sum_{k=0}^{m-1}\frac{B_k}{k!}\log(z)^k\Li_{m-k}(z),
\]
and $B_k$ denotes the $k$-th Bernoulli number with $B_1=-1/2$. The coefficients of these polynomials are certain $p$-adic periods which are difficult to determine in practice. However, in several cases where we know sufficiently many $S$-integral points, we manage to make these equations completely explicit and compute their solution sets:

\begin{thm}[Loci over imaginary quadratic fields, $S$ of size one]
    \label{mainthm:imagquad-singleton}
    The following statements hold.
    \begin{enumerate}[label=(\alph*)]
        \item \label{item:Qzeta3-Qi-depth2}
        Kim's Conjecture holds in depth~$2$ in the following two cases:
        \[
            (K,S,p)=\bigl(\bQ(\sqrt{-2}),\{(\sqrt{-2})\},3\bigr)
            \quad\text{or}\quad
            (K,S,p)=\bigl(\bQ(i),\{(1-i)\},5\bigr).
        \]

        \item \label{item:Qi-Qzeta3-depth4}
        Let $(K,S)$ be either $(\bQ(i),\{(1-i)\})$ or $(\bQ(\zeta_3),\{(1-\zeta_3)\})$. For every prime $p<2{,}000$ which splits in~$K$, we have
        \begin{align*}
            X(\cO_K\otimes\bZ_p)_{S,\PL,4}
            &=X(\cO_{K,S})\cup
            \{(\zeta,\zeta^{-1}):1\neq\zeta\in\mu(\bZ_p)\}.
            \\
            X(\cO_K\otimes\bZ_p)_{S,\PL,4}^{S_3}
            &=
            \begin{cases}
                X(\cO_{K,S})\cup
                \{(\zeta,\zeta^{-1}):\zeta\in\mu_6^{\prim}(\bZ_p)\},
                    & K=\bQ(i),\\[1mm]
                X(\cO_{K,S}),
                    & K=\bQ(\zeta_3).
            \end{cases}
        \end{align*}
        For $K=\bQ(i)$, the two displayed equalities are optimal in view of the roots-of-unity obstruction (\Cref{mainthm:roots-of-unity}). For $K=\bQ(\zeta_3)$, Kim's Conjecture holds for the full quotient in depth~$4$ for all $p < 2{,}000$; for $p=7$, it already holds for the polylogarithmic quotient in depth~$4$.
        \item \label{item:imagquad-unstable}
        Let $K$ be an imaginary quadratic field and assume $S=\{\fl\}$ is not stable under complex conjugation. Then $X(\cO_K\otimes\bZ_p)_{S,\PL,1}$ is finite for every split prime~$p$ not divisible by~$\fl$; moreover, the $S$-Selmer Section Conjecture \cite[Conj.~1.6]{BKL:chabauty-kim-sc} holds for $(X,S)$.
    \end{enumerate}
\end{thm}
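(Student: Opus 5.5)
The plan is to handle the three parts separately. Each rests on the equations for $X(\cO_K\otimes\bZ_p)_{S,\PL,n}$ with $|S|=1$ in Theorems~\ref{S=1-stable-equations} and~\ref{S=1-unstable-equations}. Those equations are polynomial relations in $\log(z_i)$ and $L_m(z_i)$, $i=1,2$, $m\le n$, and their coefficients are unknown $p$-adic periods. The common strategy is to pin down these coefficients by evaluating the relations at known $S$-integral points, listed in \Cref{S=1-stable-solutions}, for instance $2,-1,1/2$ and their conjugation twists for $\bQ(i)$. Once the coefficients are expressed through values of $\log$ and $L_m$ at these points, solve the resulting explicit system in $X(\bZ_p)\times X(\bZ_p)$.

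For \ref{item:Qzeta3-Qi-depth2}, $S$ is conjugation-stable, so the depth-$2$ equations from \Cref{S=1-stable-equations} apply. I would check that the known points give a nondegenerate coefficient system. Then I would compute the common zero locus of the two resulting power-series equations on each residue disc of $X(\bZ_3)^2$, respectively $X(\bZ_5)^2$, by Newton-polygon/Strassmann bounds and verify that the count of zeros equals the number of known $S$-integral points. For $p=3,5$ there are only a handful of residue discs, so the bound should be tight.

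For \ref{item:Qi-Qzeta3-depth4}, the same procedure is run in depth~$4$, but now computationally for all split $p<2{,}000$. Three things have to be verified.
\begin{enumerate}
\item The coefficient matrix built from the known points is invertible, so all periods are determined. This is the step I expect to be the main obstacle: one needs enough independent known points, and the invertibility must be checked to sufficient $p$-adic precision for every $p$.
\item The pairs $(\zeta,\zeta^{-1})$ lie in the locus. This follows from \Cref{thm:roots-of-unity}: $\log\zeta=0$, and $L_m(\zeta)=(-1)^{m-1}L_m(\zeta^{-1})$ by the inversion relation, so the equations are satisfied.
\item There are no further solutions. This is a rigorous residue-disc-by-disc zero count.
\end{enumerate}
For the $S_3$-symmetrised statement, I would compute the $S_3$-orbits of the extra points $(\zeta,\zeta^{-1})$: $z\mapsto 1-z$ and $z\mapsto 1/z$ send $\zeta$ to $1-\zeta$, which is a root of unity only when $\zeta$ is a primitive sixth root. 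Hence only the $\mu_6^{\prim}$ pairs can survive. For $K=\bQ(\zeta_3)$ these pairs are genuine $S$-integral points, since $\zeta_6$ and $1-\zeta_6$ are units. The full-quotient statement then follows from $X_{S,n}\subseteq X^{S_3}_{S,\PL,n}$. For $p=7$, I would check directly that the extra $\zeta$ fail the depth-$4$ equations beyond those in $\mu_6$.

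For \ref{item:imagquad-unstable}, $S=\{\fl\}$ with $\fl\ne\bar\fl$. Then \Cref{S=1-unstable-equations} in depth~$1$ gives two independent linear relations. The motivic Selmer scheme in depth~$1$ has dimension $\dim K^\times_S\otimes\bQ_p$ for each of $z,1-z$, namely $1+1$. By contrast, $\Res_{K_\fp/\bQ_p}$ over both primes has dimension $2+2$. The relations therefore take the form $\log z_2 = c\log z_1$ and $\Li_1(z_2)=c'\Li_1(z_1)$, with $c,c'$ nonzero. Here the periods $c,c'$ should be ratios of $p$-adic logarithms of a generator of $\fl^{h}$ under the two embeddings, and these are unconditionally nonzero because that generator is not a root of unity. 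The common zero set of two independent nonconstant analytic equations in two variables is finite on each residue polydisc; I would verify this by a Jacobian or resultant argument. Finally, I would deduce the $S$-Selmer Section Conjecture from this finiteness as in \cite{BKL:chabauty-kim-sc}. One shows that any Selmer section whose localisations land in the finite locus for infinitely many $p$ must come from a point, using the local-to-global compatibility of the Chabauty--Kim diagram~\eqref{eq:ck-diagram-intro}.
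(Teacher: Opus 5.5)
\textbf{Main gap: the Section Conjecture in (c).} Finiteness of $X(\cO_K\otimes\bZ_p)_{S,\PL,1}$ is not what the argument of \cite{BKL:chabauty-kim-sc} needs. The criterion (\Cref{kim-implies-selmer-section-conjecture}, or its variant via §2.3, Theorem~2.9 and Theorem~F of loc.\ cit.) requires the following: for a Dirichlet-dense set of $\fp$, the \emph{refined} locus $X(\cO_\fp)^{\min}_{S,\infty}$ lies in $Z(K_\fp)$ for one fixed finite subscheme $Z\subseteq X$ defined over $K$. A finite set of $p$-adic points that varies with $p$, and need not consist of algebraic points, cannot be globalised. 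Moreover, ``infinitely many $p$'' is not the required density.

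The missing idea is the infinite-depth computation of the $\Sigma=(1)$ refined locus (\Cref{S=1-refined1-equations}). On $\{x_\fl=0\}$ all commutators die, which gives $\log z_1=\log z_2=0$, $\Li_n(z_1)+(-1)^n\Li_n(z_2)=0$ for $n\ge2$, and $\log(\sigma\pi_\fl)\Li_1(z_1)=\log(\pi_\fl)\Li_1(z_2)$. Reducing $\Li_n^{(p)}$ mod $p$ at $n=p-2,p-3$ forces $z_2=z_1^{-1}$. Then $\log\pi_\fl\ne\log\sigma\pi_\fl$ forces $\Li_1(\zeta)=0$, i.e.\ $\zeta\in\mu_6^{\prim}$. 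Passing to the full group and using that $Z=\{\zeta_6^{\pm1}\}$ is $S_3$-stable handles the other refinement conditions. Only then does the BKL argument apply.

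\textbf{Finiteness in (c).} This half is also unsupported as written. Two ``independent'' analytic equations in two variables can share a curve of zeros: for $c=1$ (the Galois-stable case) your equations vanish on the whole diagonal. The essential input, which you never use, is $c=\log(\sigma\pi_\fl)/\log(\pi_\fl)\ne1$; note also that $c=c'$. The paper excludes curves by Ax--Schanuel, together with the observation that on a curve of solutions both $a\,\rd z_1/z_1-b\,\rd z_2/z_2$ and $a\,\rd z_1/(1-z_1)-b\,\rd z_2/(1-z_2)$ pull back to zero, forcing the curve into $\{z_1=z_2\}$ (\Cref{depth-1-finiteness}).

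Your Jacobian idea would in fact give a more elementary proof if carried out:
\begin{enumerate}
\item The Jacobian is $c(z_2-z_1)/\bigl(z_1z_2(1-z_1)(1-z_2)\bigr)$, so off-diagonal zeros are isolated.
\item For $c\ne1$ the only diagonal zeros are $(\zeta_6,\zeta_6)$ and $(\zeta_6^{-1},\zeta_6^{-1})$.
\item At these points, with $u=\log z_1$, the system becomes $g(cu)=cg(u)$ for $g(u)=\log(1-\zeta_6^{\pm1}e^u)$, and $g(cu)-cg(u)=\tfrac{c^2-c}{2}u^2+\cdots$, so these zeros are isolated too.
\item Compactness of $X(\bZ_p)^2$ then gives finiteness.
\end{enumerate}
But this argument has to actually be made.

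\textbf{Smaller points.} In (a) the depth-$\le2$ equations contain no unknown periods. In (a), and likewise in (b), the depth-1 equations vanish along the diagonal, so before any residue-disc count you need \Cref{lem:logLi1eq} (diagonal plus finitely many root-of-unity pairs) and the one-variable \Cref{lem:L2-roots}. In (b) the single point $1-i$, resp.\ $1-\zeta_3$, already determines $c_3,c_4$ (\Cref{S=1-stable-coeffs-from-known-solution}).

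\textbf{The $p=7$ clause.} Your $p=7$ check cannot succeed. Here $\mu(\bZ_7)=\mu_6$, and $(-1,-1)$ satisfies every depth-4 equation, since $\log(-1)=0$ and $L_2(-1)=L_4(-1)=0$ by \Cref{coleman-sinnott-for-Ln}. Yet $-1\notin X(\cO_{K,S})$, because $2$ is inert in $\bQ(\zeta_3)$. By \Cref{S=1-stable-depth4} itself, the $p=7$ polylogarithmic locus is $X(\cO_{K,S})\cup\{(-1,-1)\}$. So that clause of the statement appears to be incorrect.
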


Part~\ref{item:Qzeta3-Qi-depth2} follows from \Cref{thm:locusforimquadSsingle} and the corollary following it. Part~\ref{item:Qi-Qzeta3-depth4} combines \Cref{S=1-stable-depth4} with \Cref{Qzeta3-kim-conjecture}, and its equalities were verified computationally for all split primes in the stated range. The finiteness assertion in part~\ref{item:imagquad-unstable} is \Cref{S=1-finiteness}, whose proof uses the Ax--Schanuel theorem; the Selmer Section Conjecture assertion, whose proof follows the strategy of \cite{BKL:chabauty-kim-sc} and involves determining refined Chabauty--Kim loci for infinitely many primes~$p$, is \Cref{selmer-section-conjecture}. These instances of the $S$-Selmer Section Conjecture are not new, see \Cref{rem:selmer-section-conjecture}.

We now turn to the case of a real quadratic field. For $S = \emptyset$, we derive equations for $X(\cO_K \otimes \bZ_p)_{\emptyset,\PL,n}$ for arbitrary~$n$, see Theorems~\ref{real-integral-equations} and~\ref{realquad:higher-depth}. In the case that $S$ contains a single prime which is Galois-stable, depth~4 equations are derived in \Cref{thm:realquadS=1eqs}; depth~2 equations in the Galois-unstable case are derived in \Cref{thm:realquad_unstable}. We manage to compute Chabauty--Kim loci by solving these equations in several cases:

\begin{thm}[Real quadratic fields]
\label{mainthm:real-quadratic}
Let $K$ be a real quadratic field and let $p<2{,}000$ split in~$K$.  Then
\[
X(\cO_K\otimes\bZ_p)_{\emptyset,2}
=
\begin{cases}
S_3.\!\left(\dfrac{-1+\sqrt5}{2},\dfrac{-1-\sqrt5}{2}\right),
    & p\equiv\pm1\bmod5,\\[2mm]
\emptyset, & p\not\equiv\pm1\bmod5.
\end{cases}
\]
Consequently, Kim's Conjecture holds in depth~2 for $K=\bQ(\sqrt5)$ and every such~$p$, and for every real quadratic $K\neq\bQ(\sqrt5)$ when $p\not\equiv\pm1\bmod5$.  If $K\neq\bQ(\sqrt5)$ and $p\equiv\pm1\bmod5$, Kim's Conjecture does not hold in depth~2.  For $K=\bQ(\sqrt2)$ and $S=\emptyset$, Kim's Conjecture holds in depth~4 for every split $p<2{,}000$; it also holds in depth~4 for $S=\{(\sqrt2)\}$ and every split $p<100$.
\end{thm}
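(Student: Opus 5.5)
The plan has three parts: derive depth-2 equations, solve them via the unit equation, and then handle $\bQ(\sqrt2)$ in depth~4.

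\emph{Depth-2 equations.} For $S=\emptyset$ and $K$ real quadratic, the unit group $\cO_K^\times$ has rank one, generated by $-1$ and a fundamental unit $\varepsilon$. Write $(z_1,z_2)\in X(\bZ_p)\times X(\bZ_p)$ using the two embeddings $K\hookrightarrow K_{\fp_i}=\bQ_p$.

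In depth~1 the Selmer scheme is $\Res\bG_a$ of dimension $1$ in each of the two coordinates $\log$ and $\Li_1$. The localisation image is therefore cut out by linear relations, namely $\log$ and $\Li_1$ being proportional to the vector $(\log\varepsilon,\log\varepsilon')$.

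In depth~2 the only genuinely new coordinate is $\Li_2$. Since $\rH^1_f(K,\bQ_p(2))=0$ for real quadratic $K$, there is no new Selmer direction in weight~$2$. Hence $\Li_2$ of the localisation is the polynomial in the depth-1 coordinates forced by the dilogarithm's coproduct. Concretely, I would show that the locus $X(\cO_K\otimes\bZ_p)_{\emptyset,2}$ is cut out by equations of the shape
\[
\log(z_1)\log(z_2')\!-\!\log(z_2)\log(z_1')\!=\!0,\quad
\log(z_1)\Li_1(z_2)\!-\!\log(z_2)\Li_1(z_1)\!=\!0,\quad
a\,L_2(z_1)+b\,L_2(z_2)=0,
\]
with periods fixed by Galois equivariance under $\Gal(K/\bQ)$. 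For the $\Li_2$ equation I expect the conjugation-odd combination $L_2(z_1)-L_2(z_2)$ to be the relevant one. This is what the paper's \Cref{real-integral-equations} provides, and I will invoke it.

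\emph{Solving the equations.} The first step is to observe that the equations are symmetric under $S_3$ and under swapping $z_1,z_2$. Next, pairs $(z_1,z_2)$ with $z_1,z_2$ both units of $\cO_K$ and $1-z_1$ a unit come from solutions of the unit equation. Over $\cO_K$, the unit equation $u+v=1$ with $u,v$ units has solutions only when $K=\bQ(\sqrt5)$, given by the golden-ratio orbit $S_3.\!\left(\tfrac{-1+\sqrt5}{2},\ldots\right)$, whose six points are the conjugate pairs. Non-global solutions must then be analysed: I would reduce the equations modulo~$p$ and use Coleman's residue-disc analysis, in which each $\log$, $\Li_1$ and $L_2$ is a $p$-adic power series on residue discs.

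The key observation is that the golden-ratio points $\phi$ with $\phi^2+\phi-1=0$ exist in $\bZ_p$ precisely when $5$ is a square mod $p$, i.e.\ $p\equiv\pm1\bmod5$. These points satisfy the depth-2 equations independently of $K$, because the equations do not involve $\varepsilon$ beyond linear proportionality. The logarithms of $\phi$ and $1-\phi=\phi^2$ are proportional with consistent ratio, and the known dilogarithm identity $L_2(\phi)\pm L_2(\bar\phi)=0$, coming from the five-term relation, gives the $\Li_2$ equation.

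For every $p<2{,}000$ I would then verify by computer, with a Newton-polygon/Hensel count in each residue disc, that no other solutions exist. This gives the displayed formula, and the Kim's Conjecture consequences follow immediately: for $K\ne\bQ(\sqrt5)$ and $p\equiv\pm1\bmod 5$ we have $X(\cO_K)=\emptyset$ but the locus is nonempty, so Kim's Conjecture fails in depth~$2$.

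\emph{Depth 4 for $\bQ(\sqrt2)$.} I would use the higher-depth equations of \Cref{realquad:higher-depth} and \Cref{thm:realquadS=1eqs}, making the period coefficients explicit by plugging in the known $S$-integral points such as $\sqrt2-1$, $2$ and $-1$ and their $S_3$-orbits. The extra spurious roots should then be ruled out in depth~4 by computation.

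The main obstacle is twofold. First, one must determine the unknown $p$-adic periods, which is only possible when there are enough known points (solvable linear systems). Second, one must certify completeness of the solution set in every residue disc, which requires careful $p$-adic precision bookkeeping.
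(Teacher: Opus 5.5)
Your outline matches the paper's. You take the equations of \Cref{real-integral-equations} and note that they do not depend on $K$, so the $S_3$-orbit of $\bigl(\frac{-1+\sqrt5}{2},\frac{-1-\sqrt5}{2}\bigr)$ lies in the locus for every real quadratic $K$; this orbit is integral over $\bQ(\sqrt5)$ and lies in $X(\bZ_p)^2$ exactly when $p\equiv\pm1\bmod5$. You then check by computer that nothing else survives, and for $\bQ(\sqrt2)$ you obtain the depth-4 periods from $\{(\sqrt2)\}$-integral points.

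However, the shape you predict for the equations is wrong, and not harmlessly so. Your fundamental unit $\varepsilon$ (the paper's $\beta$) has norm $\pm1$, so $\sigma^*\tau_\beta=-\tau_\beta$. Hence proportionality to $(\log\varepsilon,\log\varepsilon')$ means the \emph{linear} relations $\log(z_1)+\log(z_2)=0$ and $\Li_1(z_1)+\Li_1(z_2)=0$. The determinantal relations you display are strictly weaker: they contain the whole diagonal.

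In weight $2$ your own observation applies. There is no generator ($r_2=0$) and $[\tau_\beta,\tau_\beta]=0$, so $\fn(\cO_K)_{-2}=0$, and \Cref{cocycle-evaluation-map} gives $\ev_{\eps_i}^\sharp L_2=0$ for each $i$ separately. The depth-2 equations are therefore $L_2(z_1)=0$ \emph{and} $L_2(z_2)=0$, not a single combination. With only $L_2(z_1)-L_2(z_2)=0$ you would keep $(\zeta_6,\zeta_6)$ when $p\equiv1\bmod3$. With only $L_2(z_1)+L_2(z_2)=0$ you would keep $(\zeta_6,\zeta_6^{-1})$, since $L_2(z)+L_2(z^{-1})=0$. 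Both pairs are excluded only because $L_2(\zeta_6)=\Li_2(\zeta_6)\neq0$. Since you end up invoking \Cref{real-integral-equations}, the proof survives, but only with its actual content.

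The solving step is where you genuinely diverge, and the paper's route is much lighter. The linear depth-1 equations say $\log(z_1z_2)=\log((1-z_1)(1-z_2))=0$. So $\zeta=z_1z_2$ and $\eta=(1-z_1)(1-z_2)$ are roots of unity, and $z_1,z_2$ are the roots of $X^2-(1+\zeta-\eta)X+\zeta$ (\Cref{solving-realquad-depth1}). The depth-1 locus is thus already a finite, explicit set of algebraic points. The check for $p<2{,}000$ is then just evaluating $L_2$ at these candidates: non-vanishing is certified at finite precision, and vanishing on the golden-ratio orbit is known exactly.

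Your residue-polydisc Newton-polygon/Hensel analysis would only rediscover this, since in each polydisc $z_1z_2$ and $(1-z_1)(1-z_2)$ are forced to be fixed Teichm\"uller lifts. Genuine two-variable root finding is needed only for $S=\{(\sqrt2)\}$. There depth~1 imposes nothing, and the depth-2 system of \Cref{thm:realquadS=1eqs} must be solved directly.

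Likewise, for $\bQ(\sqrt2)$ with $S=\emptyset$ you only need to eliminate the golden-ratio pairs. Use the unconditional equations \eqref{eq:realquadSemptydepth4-1}--\eqref{eq:realquadSemptydepth4-2}; \Cref{realquad:higher-depth} assumes the period conjecture. Since $X(\cO_{\bQ(\sqrt2)})=\emptyset$, you must also say why periods computed from $S'=\{(\sqrt2)\}$-integral points apply. The point is that $\tilde c_{\tau_\beta\chi_3}$ and $\tilde c_{\tau_\beta\sigma_3}$ are the $c_{4,2}$ and $c_{4,4}$ of \Cref{real-stable-coefficients}. This holds because $\fn(\cO_{K,S'})\twoheadrightarrow\fn(\cO_K)$ maps $\eps_{\fp}$ to $\eps_{\fp}$ and, for compatible generators with $\tau_{\fl}\mapsto0$, preserves the Goncharov coefficients of words in $\tau_\beta,\sigma_3,\chi_3$ (as in \Cref{goncharov-coeffs-from-larger-field}).

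Of your sample points, $-1$ contributes nothing because $\log(-1)=0$, and $2$ gives a zero row of $M_2$. You need irrational points such as $\sqrt2$ and $\sqrt2\pm1$, together with their $S_3$-orbits.
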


The description of the depth-2 locus for $S = \emptyset$ is \Cref{real-integral-depth2-locus}; the depth-4 computations for $\bQ(\sqrt2)$ are Theorems~\ref{thm:Qsqrt2SemptyKimholds} and~\ref{thm:Qsqrt2S=1Kimholds}.

The following table summarises the results concerning Kim's Conjecture established in Theorems~\ref{mainthm:imagquad-integral-locus}, \ref{mainthm:imagquad-singleton}, and~\ref{mainthm:real-quadratic}, together with the roots-of-unity obstruction in Theorem~\ref{mainthm:roots-of-unity}.

\begin{table}[ht]
\centering
\footnotesize
\renewcommand{\arraystretch}{1.15}
\setlength{\tabcolsep}{2.75pt}
\begin{tabular}{@{}lllcccc@{}}
\hline
$K$ & $S$ & condition on $p$ & depth & $\Pi_{\PL}$ & $S_3$-sym. & $\Pi$ \\
\hline
$\bQ(\zeta_3)$ & $\emptyset$ & $p\equiv1\bmod3$ & $p-3$ & $\checkmark$ & $\checkmark$ & $\checkmark$ \\
imag.\ quad. $\neq\bQ(\zeta_3)$ & $\emptyset$ & $p\equiv1\bmod3$ & $p-3$ & $\times^{*}$ & $\times^{*}$ & \\
imag.\ quad. $\neq\bQ(\zeta_3)$ & $\emptyset$ & $p\not\equiv1\bmod3$ & $1$ & $\checkmark$ & $\checkmark$ & $\checkmark$ \\
$\bQ(\sqrt{-2})$ & $\{(\sqrt{-2})\}$ & $p=3$ & $2$ & $\checkmark$ & $\checkmark$ & $\checkmark$ \\
$\bQ(i)$ & $\{(1-i)\}$ & $p=5$ & $2$ & $\checkmark$ & $\checkmark$ & $\checkmark$ \\
$\bQ(i)$ & $\{(1-i)\}$ & $5<p<2{,}000,\ p\not\equiv1\bmod3$ & $4$ & $\times^{**}$ & $\checkmark$ &  $\checkmark$ \\
$\bQ(i)$ & $\{(1-i)\}$ & $5<p<2{,}000,\ p\equiv1\bmod3$ & $4$ & $\times^{**}$ & $\times^{*}$ &  \\
$\bQ(\zeta_3)$ & $\{(1-\zeta_3)\}$ & $p=7$ & $4$ & $\checkmark$ & $\checkmark$ & $\checkmark$ \\
$\bQ(\zeta_3)$ & $\{(1-\zeta_3)\}$ & $7<p<2{,}000$ & $4$ & $\times^{**}$ & $\checkmark$ & $\checkmark$ \\
\hline
$\bQ(\sqrt5)$ & $\emptyset$ & $p<2{,}000$ & $2$ & $\checkmark$ & $\checkmark$ & $\checkmark$ \\
real quad. $\neq\bQ(\sqrt5)$ & $\emptyset$ & $p<2{,}000$, $p\not\equiv\pm1\bmod5$ & $2$ & $\checkmark$ & $\checkmark$ & $\checkmark$ \\
$\bQ(\sqrt2)$ & $\emptyset$ & $p<2{,}000$ & $4$ & $\checkmark$ & $\checkmark$ & $\checkmark$ \\
$\bQ(\sqrt2)$ & $\{(\sqrt2)\}$ & $p<100$ & $4$ & $\checkmark$ & $\checkmark$ & $\checkmark$ \\
\hline
$K \supseteq $ imag.\ quad. & $S \supseteq \{\fl \mid l\}$ & $\mu_{p-1} \not \subseteq K$ & $\infty$ & $\times$ & & \\
\hline
\end{tabular}
\caption{Kim's Conjecture in the cases treated in this paper. The columns $\Pi_{\PL}$, $S_3$-sym., and $\Pi$ concern the polylogarithmic quotient, its $S_3$-symmetrisation, and the full quotient, respectively. A checkmark indicates that the conjecture holds, a cross that it fails. The symbol $\times^*$ means that it fails but the only additional points in the locus are $(\zeta_6,\zeta_6^{-1})$ and $(\zeta_6^{-1},\zeta_6)$; the symbol $\times^{**}$ means that the only additional points are $(\zeta,\zeta^{-1})$ with $\zeta \neq 1$ a root of unity in~$\bZ_p$. A blank entry means that no conclusion is asserted. Throughout, $p$ is assumed to split in~$K$. The entries in the second and last row are conditional on the $p$-adic period conjecture.}
\label{tab:intro-kim}
\end{table}

We also derive equations for \emph{refined Chabauty--Kim loci} in the sense of Betts--Dogra \cite{BD:refined} and compute the loci in several cases, see Theorems~\ref{thm:refinedkim-functionsforS=1}, \ref{S=1-refined1-equations}, \ref{S=1-refined0-equations}, \ref{S=1-refined0-depth1and2}, \ref{Qzeta3-refined0-depth3and4}, \ref{Qi-refined0-depth3and4}, \ref{real-refined1-equations}, \ref{thm:realquad_unstable}.

Code in SageMath \cite{sagemath} for computing polylogarithmic Chabauty--Kim loci over number fields is available at \url{https://github.com/martinluedtke/PolylogNF}.

\subsection{Polylogarithmic Chabauty--Kim theory over number fields}
\label{sec:intro-framework}

We now give a brief summary of the theory developed in Sections~\ref{sec:ck-loci}--\ref{sec:periods}. As before, let $K$ be a number field, let $S$ be a finite set of primes of~$K$, and let $p$ be a rational prime not divisible by a prime in~$S$.  
We work with the motivic formulation of the Chabauty--Kim method developed by Hadian~\cite{Hadian2011} and Dan-Cohen and Wewers \cite{DCW:explicitCK, ishaidancohen_2016_mixed}. Here, the pro-unipotent étale fundamental group of the thrice-punctured line is replaced by the motivic fundamental group $\pi_1^{\mot}(X,0)$ of Deligne and Goncharov \cite{deligne-goncharov}. The equivalence between the two formulations is established by a motivic--étale comparison theorem, see Theorem~\ref{motivic-etale-comparison}.

Write $\MT(\cO_{K,S},\bQ)$ for the category of $\bQ$-linear mixed Tate motives over~$\cO_{K,S}$. It carries a canonical $\bQ$-linear fibre functor~$\omega$ whose base change to~$K$ is the de Rham realisation functor. Let $G_S^{\MT}$ be the Tannaka group of $\MT(\cO_{K,S},\bQ)$ with respect to~$\omega$, and let $U_S^{\MT}$ be its unipotent part. The \emph{motivic Selmer scheme} $\Sel_{S,\Pi}^{\mot}(X)$ associated to a motivic quotient $\pi_1^{\mot}(X,0) \twoheadrightarrow \Pi$ is defined as the moduli space of $G_S^{\MT}$-equivariant $\Pi^{\omega}$-torsors.
For the depth-$n$ polylogarithmic quotient $\pi_1^{\mot}(X,0) \twoheadrightarrow \Pi_{\PL,n}$, there are canonical identifications
\[
    \Sel_{S,\PL,n}^{\mot}(X)
    \cong
    \Hom_{\gr}\bigl(\Lie(U_S^{\MT}),\Lie(\Pi_{\PL,n}^{\omega})\bigr),
    \qquad
    \log\colon \Pi_{\PL,n}^{\dR}\xrightarrow{\sim}\Lie(\Pi_{\PL,n}^{\dR}),
\]
where $\Hom_{\gr}$ means weight-graded Lie algebra homomorphisms. Under these identifications, the localisation map $\loc_p$ in the Chabauty--Kim diagram~\eqref{eq:ck-diagram-intro} is the product 
\begin{equation}
\label{eq:loc-evaluation}
    \Hom_{\gr}(\Lie(U_S^{\MT}),\Lie(\Pi_{\PL,n}^{\omega}))
    \longrightarrow
    \prod_{\fp\mid p}\Lie(\Pi_{\PL,n}^{\dR})_{K_{\fp}},
    \qquad
    c\longmapsto(c(\eps_{\fp}))_{\fp\mid p}.
\end{equation}
of the evaluation maps $\ev_{\eps_{\fp}}$ at certain $\fp$-adic period elements $\eps_{\fp}\in\Lie(U_S^{\MT})(K_{\fp})$ for $\fp \mid p$, which were constructed by Chatzistamatiou--Ünver \cite{chatzistamatiou-unver:p-adic_periods}. 

The Selmer scheme $\Sel_{S,\PL,n}^{\mot}(X)$ is an affine space with an explicit set of coordinates relative to a choice of free generators of $\Lie(U_S^{\MT})$.  Using these coordinates on the Selmer scheme and a natural set of coordinates on $\Lie(\Pi_{\PL,n}^{\omega})$, we derive explicit polynomials describing the evaluation map
\[ \ev_{\eps}\colon \Sel_{S,\PL,n}^{\mot}(X)_R \to \Lie(\Pi_{\PL,n}^{\omega})_R \]
at an arbitrary element $\eps \in \Lie(U_S^{\MT})(R)$ for a $\bQ$-algebra~$R$, see \Cref{cocycle-evaluation-map}. The coefficients of these polynomials depend on $\eps$ in the following way. All graded Lie algebra homomorphisms factor through the so-called \emph{Goncharov quotient} $\Lie(U_S^{\MT}) \twoheadrightarrow \Lie(U_S^{\MT})_{\Gon}$ (\Cref{def:goncharov-quotient}). Using the free Lie algebra generators for $U_S^{\MT}$ again, we describe a basis of $\Lie(U_S^{\MT})_{\Gon}$ in \Cref{lie-algebra-element-expansion}. The coordinates of $\eps$ with respect to this basis are called its ``Goncharov coefficients'' (\Cref{def:goncharov-representation}). Now $\ev_{\eps}$ is given by polynomials whose coefficients are explicit $\bZ$-linear combinations of the Goncharov coefficients of $\eps$.

In order to make the localisation map~\eqref{eq:loc-evaluation} explicit, we need to study the Goncharov coefficients of the $\fp$-adic period elements $\eps_{\fp}$ for $\fp\mid p$. These are elements of~$K_{\fp}$ in the image of the \emph{$\fp$-adic period homomorphism} (\Cref{def:motivic-periods})
\[
    \per_{\fp}\colon \cO(U_S^{\MT}) \to K_{\fp}.
\]
The ring $\cO(U_S^{\MT})$ is called the ring of (unipotent mixed Tate) motivic periods, and the $\fp$-adic period conjecture is the statement that $\per_{\fp}$ is injective. By lifting $\fp$-adic periods to the motivic level, we are able to exploit their functorial behaviour with respect to field homomorphisms. More precisely, if $\sigma\colon K\to K'$ is a homomorphism of number fields, and $\sigma_* S$ denotes the set of primes $\fp'$ of~$K'$ such that $\sigma^*\fp' \in S$, we have surjective resp.\ injective homomorphisms
\begin{align*}
    \sigma^*\colon U_{\sigma_* S}^{\MT} &\twoheadrightarrow U_S^{\MT},\\
    \sigma^*\colon \Lie(U_{\sigma_* S}^{\MT}) &\twoheadrightarrow \Lie(U_S^{\MT}),\\
    \sigma_*\colon \cO(U_{\sigma_* S}^{\MT}) &\hookrightarrow \cO(U_S^{\MT}),
\end{align*}
see Sections~\ref{sec:galois-action-on-selmer-scheme} and~\ref{sec:galois-action-on-periods}.
In particular, if $K/\bQ$ is Galois, the group $\Gal(K/\bQ)$ acts on $U_S^{\MT}$, its Lie algebra, and on the motivic period ring $\cO(U_S^{\MT})$, and the $\fp$-adic period elements $\eps_{\fp}$ for the various $\fp \mid p$ are permuted by the Galois action. As a consequence, the Goncharov coefficients of a single $\eps_{\fp}$ determine all others.  

Determining the $\fp$-adic periods appearing as Goncharov coefficients of the $\eps_{\fp}$ is a difficult problem even after exploiting the Galois action. In half-weight $1$ they are always given by $\fp$-adic logarithms (\Cref{tau-coeffs}) but in order to compute the higher-weight coefficients one usually needs a sufficiently large supply of known $S$-integral points. The general procedure is to derive equations for the Chabauty--Kim loci with $\fp$-adic periods appearing as unknown coefficients, which one can then try to solve for by plugging in known solutions.

\subsection{Organisation of the paper}

This paper is organised in two parts.  Sections~\ref{sec:ck-loci}--\ref{sec:periods} develop the theory, while Sections~\ref{sec:integral-points-imag}--\ref{sec:real-quadratic} contain applications. 
In Section~\ref{sec:ck-loci} we recall the definition of Chabauty--Kim loci in the number field setting. In Section \ref{sec:motivic-ck}, we discuss the motivic Chabauty--Kim method, the motivic--étale comparison and functoriality. In Section~\ref{sec:ck-theory-for-polylog-quotient} we specialise to the polylogarithmic quotient and derive a formula for the localisation map. Section~\ref{sec:periods} contains a discussion of motivic and $p$-adic periods and their Galois action. 
In Sections~\ref{sec:integral-points-imag}--\ref{sec:S-of-size-one}, we apply the theory to imaginary quadratic fields, including the roots-of-unity obstruction and refined loci. Finally, \Cref{sec:real-quadratic} treats real quadratic fields.

\subsection*{AI disclosure}
The authors used GPT and other large language models to assist in writing some of the SageMath code for the computations in this paper. GPT-5.5 was also used to discover the equations and coefficient formulas in \Cref{realquad:higher-depth} and to assist in developing the proof. All AI-assisted output was checked and verified by the authors.

\subsection*{Acknowledgements}
We are particularly grateful to Minhyong Kim for proposing this project. We thank Netan Dogra for clarifying the definition of the Chabauty--Kim locus in \cite{dogra_2023_unlikely}, which helped us distinguish between two possible definitions of the single-prime locus (see \Cref{rem:alternative-single-place-locus}), and for suggesting the use of Ax--Schanuel in the proof of \Cref{depth-1-finiteness}. We also thank Ishai Dan-Cohen for helpful discussions about the construction of bases of the extension spaces $E_n(Z)$; see \Cref{sec:generators}.

X.\ L. is supported by a Carnegie PhD Scholarship from the Carnegie Trust for the Universities of Scotland. M.\ L. is supported by a Minerva Fellowship of the Minerva Stiftung Gesellschaft für die Forschung mbH and also acknowledges support through a guest postdoc fellowship at the Max Planck Institute for Mathematics in Bonn and through an NWO Grant, project number VI.Vidi.192.106.

\section{Chabauty--Kim loci and Kim's Conjecture over number fields}
\label{sec:ck-loci}

We define Chabauty--Kim loci and state two versions of Kim's Conjecture in the number field setting. Our definition of the Selmer scheme uses Bloch--Kato local conditions as in \cite{kim:albanese}; we later also consider refined Selmer schemes as defined in \cite{BD:refined} (over~$\bQ$) and \cite{dogra_2023_unlikely} (over general number fields). We restrict the discussion to the case of the thrice-punctured line for simplicity but remark that everything in this section admits straightforward generalisations to general hyperbolic curves.

Let $K$ be a number field, let $S$ be a finite set of primes of~$K$, and let $X = \bP^1 \smallsetminus \{0,1,\infty\}$ be the thrice-punctured line over $\cO_{K,S}$. Fix an auxiliary prime~$p$ not divisible by a prime in~$S$. Let $\overline{K}/K$ be an algebraic closure and $G_K = \Gal(\overline{K}/K)$. Fix a $G_K$-equivariant quotient $\pi_1^{\et,\bQ_p}(X_{\overline{K}}, 0) \twoheadrightarrow \Pi^{\et}$ of the $\bQ_p$-prounipotent étale fundamental group of $X_{\overline{K}}$. Here, $0$ in $\pi_1^{\et,\bQ_p}(X_{\overline{K}}, 0)$ is shorthand for the tangential base point $\vec{1}_0$ of~$X$ given by the tangent vector $\partial/\partial t$ at~$0$, where $t$ is the standard coordinate on $\bA^1$. We have the following \emph{Chabauty--Kim diagram:}
\begin{equation}
\label{eq:CK-diagram}
\begin{tikzcd}[column sep=huge]
    X(\cO_{K,S}) \rar[hook] \dar["j_S"] & X(\mathcal{O}_{K}\otimes_{\mathbb{Z}}\mathbb{Z}_p)=\prod_{\fp\mid p} X(\mathcal{O}_\fp) \dar["j_p = \prod_\fp j_\fp"] \\
    \rH^1_{f,S}(G_K, \Pi^{\et}) \rar["{\loc_p} = \prod_{\fp} \loc_{\fp}"]  & \prod_{\fp\mid p} \rH^1_f(G_\fp, \Pi^{\et})
\end{tikzcd}
\end{equation}
Here, the objects in the bottom row are affine $\bQ_p$-schemes and the localisation map $\loc_p$ is algebraic. The object $\rH^1_{f,S}(G_K, \Pi^{\et})$ is called the (global) \emph{Selmer scheme}. It parametrises $G_K$-equivariant $\Pi^{\et}$-torsors which are crystalline at places dividing~$p$ and unramified at all places outside~$S$ not dividing~$p$. (This is the special case of \cite[Definition~3.2.2]{betts:weight-filtrations} when applied to the Selmer structure described in Example~3.2.3 of loc.\ cit.)
The objects $\rH^1_f(G_\fp, \Pi^{\et})$ are the \emph{local Selmer schemes} which parametrise crystalline $G_\fp$-equivariant $\Pi^{\et}$-torsors, where $G_\fp \subseteq G_K$ is the local Galois group of a prime dividing~$p$. The vertical maps $j_S$ and $j_\fp$ are the non-abelian Kummer maps which take a global resp.\ local point $x$ of~$X$ to the $\bQ_p$-point representing the path torsor $\pi_1^{\et,\bQ_p}(X_{\overline{K}}; 0, x)$, pushed out to~$\Pi^{\et}$, equipped with the natural $G_K$- resp.\ $G_\fp$-action. When $\Pi^{\et}$ is finite-dimensional, the global and local Selmer scheme are of finite type over~$\bQ_p$. 

\begin{defn}
\label{def:ck-locus}
	The \emph{Chabauty--Kim locus} for the quotient $\Pi^{\et}$ is defined as
	\[ X(\cO_K \otimes_{\bZ} \bZ_p)_{S,\Pi^{\et}}\coloneq j_p^{-1}(\loc_p(\rH^1_{f,S}(G_K, \Pi^{\et})))\subseteq X(\cO_K\otimes_{\mathbb{Z}}\mathbb{Z}_p), \]
    in other words, as the set of all tuples $(x_\fp)_{\fp \mid p}$ for which $(j_\fp(x_\fp))_{\fp \mid p}$ lies in the scheme-theoretic image of the Selmer scheme under the localisation map.
    The \emph{Chabauty--Kim locus at a place} $\fp \mid p$ for the quotient $\Pi^{\et}$ is 
    \[ X(\cO_{\fp})_{S,\Pi^{\et}}\coloneq \mathrm{pr}_\fp(X(\cO_K \otimes_{\bZ} \bZ_p)_{S,\Pi^{\et}})\subseteq X(\mathcal{O}_\fp), \]
    where $\mathrm{pr}_\fp\colon  X(\mathcal{O}_{K}\otimes_{\mathbb{Z}}\mathbb{Z}_p)=\prod_{\fp'\mid p} X(\mathcal{O}_{\fp'})\to X(\mathcal{O}_{\fp})$ is the projection to the $\fp$-component.
\end{defn}

When $\Pi^{\et} = \Pi^{\et}_n$ is the maximal $n$-step nilpotent quotient of $\pi_1^{\et,\bQ_p}(X_{\overline{K}}, 0)$, where $1 \leq n \leq \infty$, the Chabauty--Kim loci are simply denoted by $X(\cO_K \otimes_{\bZ} \bZ_p)_{S,n}$ resp.\ $X(\cO_{\fp})_{S,n}$, and~$n$ is called the \emph{depth}. In this paper we mostly work with the \emph{polylogarithmic} depth-$n$ quotient $\Pi^{\et}_{\PL,n}$ (see Definition \ref{def:PL}), whose associated Chabauty--Kim loci we denote by $X(\cO_K \otimes_{\bZ} \bZ_p)_{S,\PL,n}$ resp.\ $X(\cO_{\fp})_{S,\PL,n}$. 

By the commutativity of the Chabauty--Kim diagram~\eqref{eq:CK-diagram}, we have inclusions
\begin{align}
    \label{eq:kim-inclusion-full}
    X(\cO_{K,S}) &\subseteq X(\cO_K \otimes \bZ_p)_{S,\Pi^{\et}}, \\
    \label{eq:kim-inclusion-single}
    X(\cO_{K,S}) &\subseteq X(\cO_\fp)_{S,\Pi^{\et}}. %
\end{align}

Kim \cite{kim_2005_the} showed (in the case $K = \bQ$) that the Chabauty--Kim loci are finite for sufficiently large fundamental group quotients~$\Pi^{\et}$, thereby giving a new proof of the Siegel--Mahler theorem, i.e.\ finiteness of $X(\cO_{K,S})$. Kim's Conjecture \cite[§3.1 \& §8.1]{BDCKW}, generalised to the number field setting, states that the inclusions~\eqref{eq:kim-inclusion-full}, \eqref{eq:kim-inclusion-single} become even equalities for sufficiently large $\Pi^{\et}$.

\begin{conj}[Kim's Conjecture]
\label{conj:kim-full}
    For sufficiently large quotients $\pi_1^{\et,\bQ_p}(X_{\overline{K}},0)\twoheadrightarrow \Pi^{\et}$, we have
    \[ X(\cO_K \otimes_{\bZ} \bZ_p)_{S,\Pi^{\et}} = X(\mathcal{O}_{K,S}). \]
\end{conj}

\begin{conj}[Kim's Conjecture, single prime version]
\label{conj:kim-single}
    Let $\fp$ be a prime of~$K$ dividing~$p$. For sufficiently large fundamental group quotients $\pi_1^{\et,\bQ_p}(X_{\overline{K}},0) \twoheadrightarrow \Pi^{\et}$, we have
    \[ X(\cO_{\fp})_{S,\Pi^{\et}} = X(\mathcal{O}_{K,S}). \]
\end{conj}

Note that \Cref{conj:kim-full} implies \Cref{conj:kim-single} for all $\fp \mid p$. The converse is not true since an element of $X(\cO_K \otimes \bZ_p)_{S,\Pi^{\et}} \subseteq \prod_{\fp \mid p} X(\cO_\fp)$ could have all components lying in $X(\cO_{K,S})$ without the whole tuple being in the image of the diagonal embedding $X(\cO_{K,S}) \hookrightarrow \prod_{\fp \mid p} X(\cO_\fp)$. An example of this occurs for the depth-1 loci for $K = \bQ(\zeta_3)$, $S = \emptyset$, $p \equiv 1 \bmod 3$, see \Cref{Kim-conjecture-full-vs-single-place} below.

\begin{rem}
    \label{rem:alternative-single-place-locus}
    Another possible definition of the Chabauty--Kim locus at a single prime $\fp \mid p$ that has been considered in the literature is
    $$X(\cO_\fp)_{S,\Pi^{\et}}'\coloneq j_\fp^{-1}(\loc_{\fp}(\rH^1_{f,S}(G_K, \Pi^{\et})))\subseteq X(\mathcal{O}_\fp).$$
    It is clear that $X(\cO_{\fp})_{S,\Pi^{\et}}\subseteq X(\cO_\fp)_{S,\Pi^{\et}}'$, so this latter variant is weaker compared to the one we are using. The advantage is that finiteness of $X(\cO_\fp)_{S,\Pi^{\et}}'$ is guaranteed as soon as $\loc_{\fp}$ is non-dominant for a single prime~$\fp \mid p$. Kim used this to prove that $X(\cO_\fp)_{S,\PL,\infty}'$ is finite for any set of primes~$S$ when $K$ is totally real \cite[§3]{kim:tangential}. Moreover, in the case that $K$ is a real quadratic field and $S = \emptyset$, Dan-Cohen and Wewers have computed an equation defining the depth-2 locus $X(\cO_\fp)_{\emptyset,2}'$ \cite[§12.1]{DCW:explicitCK}. On the other hand, assuming a $p$-adic period conjecture, it is shown in \cite[Proposition~8.1]{ishaidancohen_2020_mixed} that when $K$ is \emph{not} totally real then for any set of primes $S$, the polylogarithmic locus in infinite depth $X(\cO_\fp)_{S,\PL,\infty}'$ is all of $X(\mathcal{O}_\fp)$ (except when $K$ is imaginary quadratic and $S =\emptyset$).\footnote{\cite[Proposition~8.1]{ishaidancohen_2020_mixed} is false when $K$ is imaginary quadratic and $S = \emptyset$. The proof in loc.\ cit. claims that the motivic Ext group $\Ext^1_{\MT(\cO_{K,S},\bQ)}(\bQ(0), \bQ(n))$ is nonzero for all $n \geq 1$ but for $n = 1$ this group is $\cO_{K,S}^\times \otimes \bQ$, which is trivial in the aforementioned case.} In contrast we will see that $X(\cO_\fp)_{S,\PL,2}$ is finite when $K$ is imaginary quadratic and $S$ is a Galois-stable singleton set (Theorem \ref{thm:locusforimquadSsingle}). This shows that the inclusion $X(\cO_{\fp})_{S,\Pi^{\et}}\subseteq X(\cO_\fp)_{S,\Pi^{\et}}'$ can be strict.
\end{rem}

We note the following functorial behaviour for Chabauty--Kim loci under field extensions for later use.

\begin{lemma}
    \label{field-extension-functoriality}
    Let $K \subseteq K'$ be an extension of number fields, let $S$ be a finite set of primes of~$K$ and let $S'$ be the set of all primes of~$K'$ lying over~$S$. Let $p$ be a rational prime not divisible by any prime in~$S$. Then for any $G_K$-equivariant quotient $\pi_1^{\et,\mathbb{Q}_p}(X_{\overline{K}},0) \twoheadrightarrow \Pi^{\et}$ we have the inclusions
    \begin{align*}
        X(\mathcal{O}_K \otimes \mathbb{Z}_p)_{S,\Pi^{\et}} &\subseteq X(\mathcal{O}_{K'} \otimes \mathbb{Z}_p)_{S',\Pi^{\et}},\\
        X(\cO_{\fp})_{S,\Pi^{\et}} &\subseteq X(\cO_{\fp'})_{S',\Pi^{\et}} \quad \text{for $\fp'\mid\fp\mid p$}.
    \end{align*}
\end{lemma}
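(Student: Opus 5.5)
Both inclusions will follow from a restriction map from the $K$-Chabauty--Kim diagram to the $K'$-diagram. First we make sense of the statement. Since $G_{K'}\subseteq G_K$ and $X_{\overline{K}}=X_{\overline{K'}}$ (taking $\overline{K'}=\overline{K}$), the quotient $\Pi^{\et}$ is also $G_{K'}$-equivariant. The inclusion of $X(\cO_K\otimes\bZ_p)=\prod_{\fp\mid p}X(\cO_\fp)$ into $X(\cO_{K'}\otimes\bZ_p)=\prod_{\fp'\mid p}X(\cO_{\fp'})$ is the map $(x_\fp)_\fp\mapsto(x_{\fp'\cap\cO_K})_{\fp'}$, induced by $\cO_K\otimes\bZ_p\to\cO_{K'}\otimes\bZ_p$. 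We then build a morphism of Chabauty--Kim diagrams~\eqref{eq:CK-diagram} from the one for $(K,S)$ to the one for $(K',S')$.

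\textbf{Construction of the maps.} On the bottom row we use restriction of the Galois action. A $G_K$-equivariant $\Pi^{\et}$-torsor restricts to a $G_{K'}$-equivariant one, which gives a morphism of $\bQ_p$-schemes $\res\colon\rH^1_{f,S}(G_K,\Pi^{\et})\to\rH^1_{f,S'}(G_{K'},\Pi^{\et})$. It respects the local conditions. If a torsor is unramified at a prime $\fq\notin S$ with $\fq\nmid p$, then it is unramified at every $\fq'\mid\fq$, because the inertia group $I_{\fq'}$ is contained in $I_\fq$ and every $\fq'$ outside $S'$ lies over some $\fq$ outside $S$. If it is crystalline at $\fp\mid p$, it stays crystalline at $\fp'\mid\fp$, because $K'_{\fp'}/K_\fp$ is a finite extension and crystallinity is preserved under such restriction (via $B_{\cris}$ being unchanged). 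Likewise we get local restriction maps $\rH^1_f(G_\fp,\Pi^{\et})\to\rH^1_f(G_{\fp'},\Pi^{\et})$. Assembling these with the diagonal $\fp\mapsto(\fp')_{\fp'\mid\fp}$ gives a morphism $r\colon\prod_{\fp}\rH^1_f(G_\fp,\Pi^{\et})\to\prod_{\fp'}\rH^1_f(G_{\fp'},\Pi^{\et})$.

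\textbf{Compatibilities.} We need two commutativities.
\begin{enumerate}[label=(\roman*)]
\item $r\circ\loc_p=\loc'_p\circ\res$. This holds because both sides are restriction of the action to the decomposition group $G_{\fp'}\subseteq G_\fp\subseteq G_K$ (after fixing compatible embeddings).
\item $r\circ j_p=j'_p\circ\iota$. This holds because the path torsor $\pi_1^{\et,\bQ_p}(X_{\overline{K}};0,x)$ is the same object for $K$ and $K'$; only the acting Galois group changes, and the tangential base point $\vec{1}_0$ is defined over~$\bQ$.
\end{enumerate}

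\textbf{Conclusion.} Let $x\in X(\cO_K\otimes\bZ_p)_{S,\Pi^{\et}}$, so that $j_p(x)$ lies in the scheme-theoretic image $Z$ of $\loc_p$. The scheme-theoretic image of $\loc_p$ is the closure of the set-theoretic image and $r$ is continuous, so $r(Z)\subseteq\overline{r(\loc_p(\rH^1_{f,S}))}$. By~(i) this is contained in $\overline{\loc'_p(\res(\rH^1_{f,S}))}$, which in turn is contained in the scheme-theoretic image $Z'$ of $\loc'_p$. By~(ii), $j'_p(\iota(x))=r(j_p(x))\in Z'$, so $\iota(x)$ lies in the $K'$-locus. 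This proves the first inclusion. For the second, take the $\fp'$-component. We have $\pr_{\fp'}\circ\iota=\pr_\fp$ under the identification of $X(\cO_\fp)$ as a subset of $X(\cO_{\fp'})$, so projecting the first inclusion gives the second.

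The main obstacle is the bookkeeping in (i): we have to fix compatible embeddings $\overline{K}\hookrightarrow\overline{K}_{\fp'}$ so that the decomposition groups nest, and check that restriction preserves the Bloch--Kato local conditions scheme-theoretically, that is, for $R$-valued points. For the latter we would cite the definitions in \cite{betts:weight-filtrations}, where the conditions are functorial in the Galois group.
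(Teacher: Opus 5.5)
Your proposal is correct and follows the paper's proof. Both restrict the Galois action from $G_K$ to $G_{K'}$ on the global and local Selmer schemes and check that the unramified and crystalline conditions are preserved at primes $\fq'\mid\fq$ with $\fq\notin S$. Both then use the resulting commutative diagram to send the scheme-theoretic image of $\loc_p$ into that of $\loc'_p$. Your closure argument for scheme-theoretic images and the projection step for the single-prime inclusion only spell out what the paper leaves implicit.
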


\begin{proof}
    We have the following commutative diagram:
    \begin{center}
    \begin{tikzcd}[row sep=small]
         & & & X(\mathcal{O}_K \otimes \mathbb{Z}_p) \arrow[dd,hook] \arrow[dl] \\
         \rH^1_{f,S}(G_K, \Pi^{\et}) \rar[hook] \arrow[dd,dashed] & \rH^1(G_K, \Pi^{\et}) \rar \arrow[dd,"\res"] & \prod_{\mathfrak{p}\mid p} \rH^1(G_{\fp}, \Pi^{\et}) \arrow[dd,"\res"] & \\
         & & & X(\mathcal{O}_{K'} \otimes \mathbb{Z}_p) \arrow[dl] \\
         \rH^1_{f,S'}(G_{K'}, \Pi^{\et}) \rar[hook] & \rH^1(G_{K'}, \Pi^{\et}) \rar & \prod_{\mathfrak{p}'\mid p} \rH^1(G_{\fp'}, \Pi^{\et}) &
    \end{tikzcd}
    \end{center}
    The vertical restriction maps are those for the inclusions $G_{K'} \subseteq G_K$ and $G_{\fp'} \subseteq G_{\fp}$ where $\mathfrak{p}'\mid \mathfrak{p}\mid p$. The restriction map on the global side is a map of functors on $\bQ$-algebras. It suffices to show that it maps the Selmer subfunctor $\rH^1_{f,S}(G_K, \Pi^{\et})$ into $\rH^1_{f,S'}(G_{K'}, \Pi^{\et})$, for then we have the dashed morphism of $\bQ_p$-schemes in the diagram, implying the claimed inclusion of Chabauty--Kim loci. But for $\mathfrak{p}'\mid \mathfrak{p}$ with $\mathfrak{p}' \not \in S'$, we also have $\mathfrak{p} \not\in S$, and the restriction morphism of local Selmer schemes $\rH^1(G_{\fp}, \Pi^{\et}) \to \rH^1(G_{\fp'}, \Pi^{\et})$ maps unramified classes to unramified classes (if $\mathfrak{p}\nmid p$) and crystalline classes to crystalline classes (if $\mathfrak{p}\mid p$). Thus, the local Selmer conditions are preserved under the global restriction map.
\end{proof}

In this paper we also consider the \emph{refined} Chabauty--Kim method as introduced by Betts and Dogra \cite{BD:refined}. It is based on the definition of a refined Selmer scheme which is a closed subscheme of the original Selmer scheme but imposes stricter local conditions. More precisely, the \emph{refined Selmer scheme} $\Sel_{S,\Pi^{\et}}^{\min}(X)$ is the $\bQ_p$-scheme parametrising cohomology classes $\xi \in \rH^1(G_K,\Pi^{\et})$ such that
\begin{equation}
\label{eq:refined-local-condition}
    \loc_{\mathfrak{q}}(\xi)\in \begin{cases}
        j_{\mathfrak{q}}(X(\mathcal{O}_{\mathfrak{q}}))^{\mathrm{Zar}}, & \text{ if }\mathfrak{q}\notin S;\\
        j_{\mathfrak{q}}(X(K_{\mathfrak{q}}))^{\mathrm{Zar}}, & \text{ if }\mathfrak{q}\in S,
    \end{cases}
\end{equation}
for all primes~$\fq$ of~$K$, where $(-)^{\mathrm{Zar}}$ denotes Zariski closure inside $\rH^1(G_{\fq},\Pi^{\et})$.
This implies that $\loc_{\fq}(\xi)$ is unramified for $\fq \not\in S \cup \{\fp \mid p\}$ and $\loc_{\fp}(\xi)$ crystalline for $\fp \mid p$, so the refined Selmer scheme is a closed subscheme of the original one:
\begin{equation}
\label{eq:refined-selmer-scheme-is-subscheme}
    \Sel_{S,\Pi^{\et}}^{\min}(X) \subseteq \rH^1_{f,S}(G_K, \Pi^{\et}).
\end{equation}
The Kummer map $j_S\colon X(\cO_{K,S}) \to \rH^1_{f,S}(G_K,\Pi^{\et})$ factors through the refined Selmer scheme, so we still have a commutative diagram when $\rH^1_{f,S}(G_K,\Pi^{\et})$ is replaced with $\Sel_{S,\Pi^{\et}}^{\min}(X)$ in the Chabauty--Kim diagram \eqref{eq:CK-diagram}.

\begin{defn}
	The \emph{refined Chabauty--Kim locus} for the quotient $\Pi^{\et}$ is defined as
	\[ X(\cO_K \otimes_{\bZ} \bZ_p)_{S,\Pi^{\et}}^{\min}\coloneq j_p^{-1}(\loc_p(\Sel_{S,\Pi^{\et}}^{\min}(X))) \subseteq X(\cO_K\otimes_{\mathbb{Z}}\mathbb{Z}_p). \]
    The \emph{refined Chabauty--Kim locus at a place} $\fp \mid p$ for the quotient $\Pi^{\et}$ is 
    \[ X(\cO_{\fp})_{S,\Pi^{\et}}^{\min}\coloneq \mathrm{pr}_\fp(X(\cO_K \otimes_{\bZ} \bZ_p)_{S,\Pi^{\et}}^{\min})\subseteq X(\mathcal{O}_\fp). \]
\end{defn}

We have inclusions
\begin{align*}
    X(\cO_{K,S}) &\subseteq X(\cO_K \otimes_{\bZ} \bZ_p)_{S,\Pi^{\et}}^{\min} \subseteq X(\cO_K \otimes_{\bZ} \bZ_p)_{S,\Pi^{\et}},\\
    X(\cO_{K,S}) &\subseteq X(\cO_{\fp})_{S,\Pi^{\et}}^{\min} \subseteq X(\cO_{\fp})_{S,\Pi^{\et}},
\end{align*}
so the refined Chabauty--Kim method is at least as strong as the original one. Dogra \cite[Theorem~1.1\,(1)]{dogra_2023_unlikely} proved that the depth-$n$ refined Chabauty--Kim locus $X(\cO_K \otimes_{\bZ} \bZ_p)_{S,n}^{\min}$ is finite for $n \gg 0$, which provides a new proof of the Siegel--Mahler theorem over number fields.

We also have refined variants of Kim's Conjecture:

\begin{conj}[Refined Kim's Conjecture]
\label{conj:refined-kim-full}
    For sufficiently large fundamental group quotients $\pi_1^{\et,\bQ_p}(X_{\overline{K}},0) \twoheadrightarrow \Pi^{\et}$, we have
    \[ X(\cO_K \otimes_{\bZ} \bZ_p)_{S,\Pi^{\et}}^{\min} = X(\mathcal{O}_{K,S}). \]
\end{conj}

\begin{conj}[Refined Kim's Conjecture, single prime version]
\label{conj:refined-kim-single}
    Let $\fp$ be a prime of~$K$ dividing~$p$. For sufficiently large fundamental group quotients $\pi_1^{\et,\bQ_p}(X_{\overline{K}},0) \twoheadrightarrow \Pi^{\et}$, we have
    \[ X(\cO_{\fp})_{S,\Pi^{\et}}^{\min} = X(\mathcal{O}_{K,S}). \]
\end{conj}

It is clear that the refined variant of Kim's Conjecture is implied by the unrefined one. One reason for investigating \Cref{conj:refined-kim-single} is its relation with Grothendieck's Section Conjecture:

\begin{thm}[{\cite[Theorem~2.21]{BKL:chabauty-kim-sc}}]
\label{kim-implies-selmer-section-conjecture}
    Assume that $X(\cO_{\fp})_{S,\infty}^{\min} = X(\mathcal{O}_{K,S})$ holds for a Dirichlet-dense set of primes~$\fp$ of~$K$. Then $X$ satisfies the Section Conjecture for locally geometric $S$-integral Galois sections. 
\end{thm}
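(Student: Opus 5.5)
The plan follows the strategy of \cite{BKL:chabauty-kim-sc}. Argue by contraposition: suppose $s\colon G_K \to \pi_1^{\et}(X)$ is a locally geometric $S$-integral Galois section. Such a section is a local point class at every prime, and we aim to show it comes from a global point of $X(\cO_{K,S})$.

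\textbf{Step 1: build a refined Selmer class.} For each auxiliary prime $p$ and each $\fp \mid p$, push the section out along $\pi_1^{\et}(X_{\overline K},0) \to \pi_1^{\et,\bQ_p}(X_{\overline K},0) \twoheadrightarrow \Pi^{\et}$. This gives a class $\xi_s \in \rH^1(G_K,\Pi^{\et})$, compatibly in $\Pi^{\et}$. Local geometricity says that for each prime $\fq$ there is $x_\fq \in X(\cO_\fq)$ (or $X(K_\fq)$ if $\fq\in S$) with $s|_{G_\fq}$ conjugate to the section of $x_\fq$. Hence $\loc_\fq(\xi_s) = j_\fq(x_\fq)$, which satisfies the refined local conditions~\eqref{eq:refined-local-condition}, so $\xi_s \in \Sel^{\min}_{S,\infty}(X)(\bQ_p)$. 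Moreover $\loc_p(\xi_s) = j_p((x_\fp)_{\fp\mid p})$, so $(x_\fp)_\fp \in X(\cO_K\otimes\bZ_p)^{\min}_{S,\infty}$. By hypothesis, for $\fp$ in the Dirichlet-dense set, $x_\fp \in X(\cO_{K,S})$.

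\textbf{Step 2: pass from local agreement to a global point.} $X(\cO_{K,S})$ is finite (Siegel--Mahler), so there is some $y\in X(\cO_{K,S})$ with $x_\fp = y$ for a Dirichlet-dense set of $\fp$. We then need to show that $s$ is conjugate to the section $s_y$. To compare them, use that both restrict at those $\fp$ to the same local section, up to conjugacy. Then pass to finite quotients: for each characteristic open subgroup, the two sections induce homomorphisms $G_K \to \pi_1(X)/N$ lifting the identity on $G_K$. Their difference is a nonabelian cocycle which is locally trivial at a Dirichlet-dense set of primes. By Chebotarev, every element of the relevant finite Galois image is a Frobenius at such a prime, so the cocycle is locally trivial at all cyclic subgroups. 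The finite-level lifts therefore agree up to conjugacy, and a compactness/limit argument gives conjugacy of $s$ and $s_y$.

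The main obstacle is Step 2. A torsor that is trivial on every cyclic subgroup need not be trivial; this is a Sha-type phenomenon. The fix I would try first is to use that the injectivity into Selmer schemes holds for unipotent completions, together with the Kummer injectivity: $x_\fp=y$ pins down $\xi_s = j_S(y)$ in every finite-depth Selmer scheme. I would then invoke the known injectivity of $\rH^1(G_K,\pi_1)$ into the limit of unipotent Selmer data for points, i.e.\ the ``Selmer section'' formulation of the conjecture in \cite{BKL:chabauty-kim-sc}, and deduce the genuine section statement from there.
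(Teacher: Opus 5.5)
Your Step~1 is correct. It is the first half of the argument in \cite{BKL:chabauty-kim-sc}, whose structure is recalled in the proof of \Cref{selmer-section-conjecture}: the local points $x_v$ attached to a locally geometric section give a refined Selmer class with $\loc_p(\xi_s)=j_p((x_{\fp})_{\fp\mid p})$, so $x_{\fp}\in X(\cO_{\fp})_{S,\infty}^{\min}$.

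The gap is Step~2, where none of your moves works.
\begin{enumerate}
\item Pigeonhole does not give a single $y$ with $x_{\fp}=y$ on a set of density one. It only gives positive upper density, and such a set can miss whole Frobenius classes of a finite quotient. So Chebotarev does not reach every cyclic subgroup.
\item Even if it did, you note yourself that a non-abelian class on a finite quotient which is trivial on all cyclic subgroups need not be trivial.
\item Your repair is unfounded. $\loc_{\fp}(\xi_s)=\loc_{\fp}(j_S(y))$ does not force $\xi_s=j_S(y)$, since no injectivity of $\loc_{\fp}$ or $\loc_p$ on the Selmer scheme is available. Even equality in every $\bQ_p$-unipotent Selmer scheme for every $p$ would not give $s\sim s_y$, because unipotent completions forget profinite information. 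Already the Kummer class of a root of unity vanishes with $\bQ_p$-coefficients.
\end{enumerate}
There is no known injectivity of profinite sections into unipotent Selmer data; such a statement would be a large part of the Section Conjecture itself.

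The missing idea is to work with the finite descent locus $X(\bA_{K,S})^{\fcov}_{\bullet}$ instead of comparing $s$ directly with a candidate $s_y$. As explained in \cite[\S2.3]{BKL:chabauty-kim-sc}, your Step~1 applies to every element of this locus. So every such element has its $\fp$-components in the finite set $X(\cO_{K,S})$ for all $\fp$ in the Dirichlet-dense set.

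The key input is then \cite[Theorem~2.9]{BKL:chabauty-kim-sc}: an element of $X(\bA_{K,S})^{\fcov}_{\bullet}$ whose components lie in a fixed finite subscheme $Z\subseteq X$ at a Dirichlet-dense set of primes lies in $Z(K)$. Its hypothesis is compatibility of the adelic point with twists of \emph{all} finite étale covers. That is far more information than one cocycle modulo one characteristic subgroup, and this theorem replaces both your pigeonhole and your Chebotarev step.

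This gives sufficiency of the finite descent obstruction, $X(\bA_{K,S})^{\fcov}_{\bullet}=X(\cO_{K,S})$. Then \cite[Theorem~F]{BKL:chabauty-kim-sc} turns this into the Section Conjecture for locally geometric $S$-integral sections. The passage from the adelic point back to the section itself happens there, by descent-theoretic arguments rather than through unipotent completions.
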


\Cref{kim-implies-selmer-section-conjecture} (which is valid for any hyperbolic curve) was used in \cite{BKL:chabauty-kim-sc} to verify the ``$S$-Selmer Section Conjecture'' in the case $K = \bQ$, $X = \bP^1 \smallsetminus \{0,1,\infty\}$, and $S = \{2\}$. We obtain new examples of this strategy over imaginary quadratic fields in this paper; see \Cref{selmer-section-conjecture}.

It can happen that there is a local obstruction to the existence of $S$-integral points: when there is a prime $\fq \not\in S$ with $X(\cO_{\fq}) = \emptyset$ then automatically $X(\cO_{K,S}) = \emptyset$. The refined Chabauty--Kim method captures this obstruction since the local condition~\eqref{eq:refined-local-condition} at~$\fq$ in the definition of the refined Selmer scheme is unsatisfiable, so that the refined Selmer scheme and all refined Chabauty--Kim loci are empty and \Cref{conj:refined-kim-full} holds vacuously. We have $X(\cO_{\fq}) = \emptyset$ if and only if $X(\bF_{\fq}) = \emptyset$, which happens if and only if the residue $\bF_{\fq}$ of~$\fq$ is $\bF_2$. Thus, in order to avoid this trivial situation in the refined Chabauty--Kim method we will often impose the following condition.

\begin{cond}
\label{cond:F2}
    $S$ contains all primes of~$K$ with residue field~$\bF_2$.
\end{cond}

The refined Chabauty--Kim method is only interesting when $S \neq \emptyset$. When $S = \emptyset$ and the non-degeneracy condition \ref{cond:F2} is satisfied (i.e.\ the residue fields of the even primes of~$K$ are not~$\bF_2$), then the refined Selmer scheme agrees with the non-refined one, so nothing is gained.

\section{Mixed Tate motivic Chabauty--Kim}
\label{sec:motivic-ck}

As before, let $K$ be a number field, let $S$ be a finite set of primes of~$K$, let $X = \bP^1 \smallsetminus \{0,1,\infty\}$ be the thrice-punctured line over $\cO_{K,S}$, and let $\vec{1}_0 = \partial t/t$ be the standard tangential base point at~$0$. In this section we describe the motivic formulation of the Chabauty--Kim method, which has been studied by Hadian, Dan-Cohen, Wewers, Corwin, Brown, and others \cite{Hadian2011,DCW:explicitCK,ishaidancohen_2016_mixed,brown:integral_points,ishaidancohen_2020_mixed,CDC:polylog1,CDC:polylog2,BKL:chabauty-kim-sc,motivic-selmer-scheme}. We also discuss the motivic--étale comparison theorem in the number field setting, the Lie algebra variant for semisimple fundamental group quotients, and the Galois action on Selmer schemes.

\subsection{The motivic Chabauty--Kim diagram}
\label{sec:motivic-ck-diagram}

Denote by $\MT(\cO_{K,S}, \bQ)$ the category of mixed Tate motives over $\cO_{K,S}$, which is defined to be the full subcategory of the category $\MT(K, \bQ)$ of mixed Tate motives over $K$ consisting of those which are unramified outside $S$ \cite[§1.6]{deligne-goncharov}. It carries various fibre functors, notably the \emph{canonical fibre functor} $\omega\colon \MT(\cO_{K,S}, \bQ)\to \Vect(\mathbb{Q})$ defined by
$$\omega(M) \coloneqq M^{\omega} \coloneqq \bigoplus_{n\in\mathbb{Z}}\omega_n(M),\quad  \omega_n(M) \coloneqq \Hom(\mathbb{Q}(-n),\mathrm{gr}_{2n}^W(M)),$$
the \emph{de Rham fibre functor} $\omega^{\dR}\colon \MT(\cO_{K,S}, \bQ)\to \Vect(K)$, $M \mapsto M^{\dR}$ given by de Rham realisation, and for a prime~$p$ the \emph{$p$-adic étale fibre functor} $\omega^{\et} \colon \MT(\cO_{K,S}, \bQ) \to \Vect(\bQ_p)$, $M \mapsto M^{\et}$ given by étale realisation. There is a natural isomorphism $\omega\otimes_{\mathbb{Q}} K\cong \omega^{\dR}$ \cite[Proposition~2.10]{deligne-goncharov}.
The canonical fibre functor $\omega$ makes the category $\MT(\cO_{K,S}, \bQ)$ a neutral $\bQ$-linear Tannakian category. The corresponding Tannakian fundamental group is denoted by $G_S^{\MT}$. It has a semi-direct product decomposition 
\[ G_S^{\MT}=U_S^{\MT}\rtimes \mathbb{G}_m \]
with $U_S^{\MT}$ pro-unipotent \cite[§2.1]{deligne-goncharov}. The \emph{motivic fundamental group} $\pi_1^{\mot}(X,0)$ of $X$ at the tangential base point $\vec{1}_0$ constructed by Deligne--Goncharov is a pro-unipotent group in $\MT(\cO_{K,S}, \bQ)$, meaning that its ring of functions is an algebra object of the ind-category of $\MT(\cO_{K,S}, \bQ)$. Its de Rham realisation is canonically isomorphic to the unipotent de Rham fundamental group $\pi_1^{\dR}(X,0)$ of~$X$, and its $p$-adic étale realisation is canonically isomorphic to the $p$-adic étale fundamental group $\pi_1^{\et,\bQ_p}(X_{\overline{K}}, 0)$. For any quotient $\pi_1^{\mot}(X,0) \twoheadrightarrow \Pi$ in $\MT(\cO_{K,S},\bQ)$, the motivic Selmer scheme is defined as follows.

\begin{defn}[{\cite[Definition~2.4]{motivic-selmer-scheme}}]
    \label{defn:motivic-selmer-scheme}
    The \emph{motivic Selmer scheme} is the affine $\bQ$-scheme representing the functor on $\bQ$-algebras
    \[ \Sel_{S,\Pi}^{\mot}(X)\colon R \mapsto \{\text{$\Pi$-torsors over~$R$ in $\MT(\cO_{K,S},\bQ)$}\}/\text{iso}. \]
\end{defn}

Elements of the motivic Selmer scheme can more concretely be identified with $\bG_m$-equivariant algebraic cocycles \cite[Theorem~4.7]{motivic-selmer-scheme}:
\begin{equation}
    \label{eq:selmer-scheme-via-cocycles}
    \Sel_{S,\Pi}^{\mot}(X)(R) \cong \rZ^1_{\bG_m}(U_{S,R}^{\MT}, \Pi^{\omega}_R).
\end{equation}
The isomorphism is given by mapping a $\Pi$-torsor $P$ over a $\bQ$-algebra~$R$ to the cocycle $u \mapsto p_0^{-1} u(p_0)$, where $p_0 \in P^{\omega}(R)$ is the unique $\bG_m$-fixed point. For each $S$-integral point $x \in X(\cO_{K,S})$, Deligne and Goncharov construct the motivic path torsor $\pi_1^{\mot}(X;0,x)$, a $\pi_1^{\mot}(X,0)$-torsor over~$\bQ$ in $\MT(\cO_{K,S}, \bQ)$ \cite{deligne-goncharov}. Pushing out along the quotient map $\pi_1^{\mot}(X,0) \twoheadrightarrow \Pi$ yields a $\Pi$-torsor, i.e.\ a $\bQ$-point of $\Sel_{\Pi,S}^{\mot}(X)$. We thus have a \emph{motivic Kummer map}
\begin{equation}
\label{eq:motivic-kummer-map}
    j_S\colon X(\cO_{K,S}) \to \Sel_{\Pi,S}^{\mot}(X)(\bQ), \quad x \mapsto [\pi_1^{\mot}(X;0,x) \overset{\pi_1^{\mot}(X,0)}{\times} \Pi].
\end{equation}

Let $p$ be a rational prime not divisible by a prime in~$S$. We do not assume that $p$ splits completely in~$K$. For any $\fp \mid p$, Chatzistamatiou--Ünver \cite{chatzistamatiou-unver:p-adic_periods} construct a distinguished \emph{$\fp$-adic period point} $\eta_{\fp} \in U_S^{\MT}(K_{\fp})$ whose definition we recall in \Cref{def:period-element} below. Evaluation of $\bG_m$-equivariant cocycles at this point defines a map of $\bQ_p$-schemes
\begin{equation}
    \label{eq:cocycle-evaluation-at-eta-p}
    \ev_{\eta_{\fp}}\colon \Sel_{S,\Pi}^{\mot}(X)_{\bQ_p} = \rZ^1_{\bG_m}(U_S^{\MT}, \Pi^{\omega})_{\bQ_p} \to \Res_{K_{\fp}/\bQ_p} \Pi^{\dR}_{K_{\fp}}, \quad c \mapsto c(\eta_{\fp}),
\end{equation}
where we have used the identification $\Pi^{\omega} \otimes_{\bQ} K \cong \Pi^{\dR}$. We also have the local \emph{de Rham Kummer map}
\begin{equation}
    \label{eq:de-Rham-Kummer-map}
    j_{\fp}^{\dR}\colon X(\cO_{\fp}) \to \Pi^{\dR}(K_{\fp}).
\end{equation}
It is defined by mapping $x \in X(\cO_{\fp})$ to $(\gamma_x^H)^{-1} \gamma_x^{\cris} \in \pi_1^{\dR}(X,0)(K_{\fp})$ and composing with the quotient map $\pi_1^{\dR}(X,0) \to \Pi^{\dR}$, where $\gamma_x^H,\gamma_x^{\cris} \in \pi_1^{\dR}(X;0,x)(K_{\fp})$ are the unique Hodge-filtered path and the unique Frobenius-invariant path, respectively. The various maps are combined in the \emph{motivic Chabauty--Kim diagram}
\begin{equation}
    \label{eq:motivic-ck-diagram}
    \begin{tikzcd}[column sep = huge]
        X(\cO_{K,S}) \rar[hook] \dar["j_S^{\mot}"] & \prod_{\fp\mid p} X(\cO_{\fp}) \dar["j_p^{\dR} = \prod_{\fp} j_{\fp}^{\dR}"] \\
        \Sel_{S,\Pi}^{\mot}(X)_{\bQ_p} \rar["\prod_{\fp} \ev_{\eta_{\fp}}"] & \prod_{\fp \mid p} \Res_{K_{\fp}/\bQ_p} \Pi^{\dR}_{K_{\fp}}.
    \end{tikzcd}
\end{equation}
The commutativity of \eqref{eq:motivic-ck-diagram} is shown in \Cref{evaluation-localisation} below.

\subsection{Motivic-étale comparison}
\label{sec:motivic-etale-comparison}

In this subsection we prove that the motivic Chabauty--Kim diagram~\eqref{eq:motivic-ck-diagram} is canonically isomorphic to the classical (étale) Chabauty--Kim diagram~\eqref{eq:CK-diagram}. In the case $K = \bQ$ this was previously shown in \cite[§3]{BKL:chabauty-kim-sc}. We follow the same strategy, with the necessary modifications for general number fields.

If $R$ is a $\bQ_p$-algebra and $P$ is a $\Pi$-torsor over~$R$ in $\MT(\cO_{K,S}, \bQ)$, then the $p$-adic étale realisation $P^{\et}$ is a $G_K$-equivariant $\Pi^{\et}$-torsor over~$R \otimes_{\bQ} \bQ_p$. Base-changing along the multiplication map $R \otimes_{\bQ} \bQ_p \to R$ yields a $\Pi^{\et}$-torsor over~$R$. This construction defines a morphism of $\bQ_p$-schemes from the motivic to the étale Selmer scheme:
\begin{equation}
    \label{eq:motivic-to-etale}
    \Sel_{S,\Pi}^{\mot}(X)_{\bQ_p} \to \rH^1_{f,S}(G_K, \Pi^{\et}).
\end{equation}
The following is the generalisation of \cite[Theorem~3.2]{BKL:chabauty-kim-sc} to number fields.

\begin{thm}
    \label{motivic-etale-comparison}
    Let $p$ be a rational prime not divisible by a prime in~$S$. Let $\pi_1(X,0) \twoheadrightarrow \Pi$ be a quotient in $\MT(\cO_{K,S},\bQ)$ and $\Pi^{\et}$ its $p$-adic étale realisation. Then the map~\eqref{eq:motivic-to-etale} is an isomorphism, fitting into a commutative diagram for each $\fp \mid p$:
    \begin{equation}
        \label{eq:comparison-diagram}
        \begin{tikzcd}[column sep = small]
			{} & X(\cO_{K,S}) \arrow[rr,hook]\arrow[dl,"j_S^{\mot}"] \arrow[dd,"j_S"', pos=0.3] && X(\cO_{\fp}) \arrow[dl,"j_{\fp}^{\dR}"]\arrow[dd,"j_{\fp}"] \\
			\Sel_{S,\Pi}^{\mot}(X)(\bQ_p) \arrow[rr,"\ev_{\eta_{\fp}}",pos=0.7, crossing over]\arrow[dr,equal,"\eqref{eq:motivic-to-etale}"',"\sim"] && \Pi^{\dR}(K_{\fp}) \arrow[dr,equals,"\sim"] & {} \\
			{} & \rH^1_{f,S}(G_K,\Pi^{\et}(\bQ_p)) \arrow[rr,"\loc_{\fp}"]  && \rH^1_f(G_{\fp},\Pi^{\et}(\bQ_p)),
		\end{tikzcd}
    \end{equation}
    where the identification $\Pi^{\dR}(K_{\fp}) \cong \rH^1_f(G_{\fp}, \Pi^{\et}(\bQ_p))$ is the one coming from the non-abelian Bloch--Kato exponential \cite[Proposition~1.4]{kim:tangential}.
    In particular the étale Chabauty--Kim diagram~\eqref{eq:CK-diagram} and the motivic Chabauty--Kim diagram~\eqref{eq:motivic-ck-diagram} are isomorphic.
\end{thm}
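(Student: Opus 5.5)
We follow the strategy of \cite[§3]{BKL:chabauty-kim-sc} for $K=\bQ$, adapting each step to the number field setting.

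\textbf{Step 1: the map \eqref{eq:motivic-to-etale} is an isomorphism.} Both sides are affine schemes of finite type when $\Pi$ is finite-dimensional, and we reduce to that case by passing to the limit over finite-dimensional motivic quotients. We then induct along the descending central series (or any motivic filtration with abelian graded pieces $V$, which are direct sums of $\bQ(n)$, $n\geq 1$). Via \eqref{eq:selmer-scheme-via-cocycles}, the motivic Selmer scheme becomes a tower of torsors under the vector groups $\Ext^1_{\MT(\cO_{K,S},\bQ)}(\bQ(0),V)$. Here $\Hom$-groups vanish because all weights are nonzero, and $\Ext^2$ vanishes in $\MT$ over a number field. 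On the étale side, $\rH^1_{f,S}(G_K,-)$ is likewise a tower of torsors under $\rH^1_{f,S}(G_K,V^{\et})$. The map is compatible with the twisted exact sequences, so it suffices to show that
\[
\Ext^1_{\MT(\cO_{K,S},\bQ)}(\bQ(0),\bQ(n))\otimes\bQ_p \to \rH^1_{f,S}(G_K,\bQ_p(n))
\]
is an isomorphism for every $n\geq1$, together with injectivity of the corresponding $\rH^2$ map, or vanishing of the relevant obstructions.

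For $n=1$, the left side is $\cO_{K,S}^\times\otimes\bQ_p$ and the right side is the Kummer Selmer group. For $n\geq2$, the left side is $K_{2n-1}(K)\otimes\bQ$, and the comparison is the $p$-adic regulator/Chern class map into $\rH^1_f(G_K,\bQ_p(n))$. That this map is an isomorphism follows from Soulé's theorem and Borel's rank computation combined with dimension counts for $\rH^1_f$ via Poitou--Tate and the Euler characteristic formula. We expect this to be the main obstacle. In particular, we must check that the unramified condition at primes outside $S\cup\{\fp\mid p\}$ and the crystalline condition at all $\fp\mid p$ match exactly, and that $\rH^1_{f}=\rH^1_{f,S}$ for $n\geq2$. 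We will quote the number field versions of these results as used in \cite{ishaidancohen_2016_mixed,dogra_2023_unlikely}.

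To pass from the graded statement to the full isomorphism of schemes, we also need the twisted versions, i.e.\ torsor actions over a base point. These follow by functoriality in $R$, since both towers are torsors under the same vector groups after base change.

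\textbf{Step 2: commutativity of \eqref{eq:comparison-diagram}.} The left triangle commutes because the étale realisation of the motivic path torsor $\pi_1^{\mot}(X;0,x)$ is the étale path torsor (Deligne--Goncharov). The right triangle is the definition of $j_{\fp}$ via Olsson's/Kim's comparison: the crystalline path torsor corresponds to $(\gamma^H_x)^{-1}\gamma^{\cris}_x$ under the non-abelian Bloch--Kato exponential.

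For the back square, we use the defining property of the Chatzistamatiou--Ünver point $\eta_{\fp}$. It is the element of $U_S^{\MT}(K_{\fp})$ relating the canonical ($\bG_m$-split) de Rham trivialisation to the Frobenius-invariant one under the crystalline realisation at $\fp$. Given a motivic torsor $P$ with $\bG_m$-fixed point $p_0$, the element $c(\eta_{\fp})=p_0^{-1}\eta_{\fp}(p_0)\in\Pi^{\dR}(K_{\fp})$ is exactly the discrepancy between the Hodge-filtered point and the Frobenius-invariant point of $P^{\dR}_{K_{\fp}}$. This is the invariant classifying the crystalline $G_{\fp}$-torsor $P^{\et}|_{G_{\fp}}$ via the Bloch--Kato exponential, using Fontaine's comparison $P^{\et}\otimes B_{\cris}\cong P^{\dR}\otimes B_{\cris}$ for the crystalline realisation at $\fp$, as in \Cref{evaluation-localisation}. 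The final assertion of the theorem then follows by taking the product over $\fp\mid p$.
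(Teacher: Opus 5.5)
Your proposal is correct. The commutativity half is the paper's argument: the left triangle via étale realisation of the motivic path torsor, the right triangle via Kim's parametrisation $(t^H)^{-1}t^{\crs}$ together with $D_{\dR}(\pi_1^{\et,\bQ_p}(X_{\overline{K}};0,x))\cong\pi_1^{\dR}(X;0,x)_{K_\fp}$, and the evaluation--localisation face by computing $p_0^{-1}\eta_\fp(p_0)$. Two points there should be stated explicitly.

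\begin{itemize}
\item Your claim that $p_0^{-1}\eta_\fp(p_0)$ is the discrepancy between the Hodge point and the Frobenius-invariant point rests on two facts. The first is $p_0=p^H$. This holds because the identification $\omega\otimes_{\bQ}K\cong\omega^{\dR}$ is built from the Hodge splitting of the weight filtration (Deligne--Goncharov, Prop.~2.10). The second is $\eta_\fp(p^H)=p^{\crs}$, which is exactly the splitting description of $\eta_\fp$.
\item The remaining top face is not on your list. It follows formally from the other faces, because $\exp_{\mathrm{BK}}$ is bijective.
\end{itemize}

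Where you genuinely go beyond the paper is the isomorphism of Selmer schemes. The paper gives no argument of its own there; it defers to the $K=\bQ$ proof in \cite[§3.3]{BKL:chabauty-kim-sc} and the number-field sketch in \cite[§2.6]{luedtke_2025_refined}. Your dévissage along the central series of $\Pi$ is a legitimate way to supply that argument. Its merit is that it isolates exactly the arithmetic input:
\begin{itemize}
\item Kummer theory for $\bQ(1)$;
\item for $n\geq2$, Soulé's theorem together with Borel's ranks. Here all local conditions are vacuous, since $\rH^1_f(G_\fp,\bQ_p(n))=\rH^1(G_\fp,\bQ_p(n))$ for $\fp\mid p$ and $\rH^1(G_\fq,\bQ_p(n))=0$ for $\fq\nmid p$.
\end{itemize}

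You can streamline the step you flagged as unclear. Surjectivity of the étale transition maps needs no étale $\rH^2$ vanishing. By induction every étale class at level $n$ is the realisation of a motivic one, and that motivic class lifts because $\Ext^2_{\MT(\cO_{K,S},\bQ)}=0$.

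What you do have to check by hand is that the étale fibres are torsors under $\rH^1_{f,S}(G_K,V^{\et})$.
\begin{itemize}
\item Freeness holds because twisted forms of $\Pi_n^{\et}$ have no $G_K$-invariants.
\item Differences of two Selmer lifts satisfy the local conditions. The reason is that $\gr^n$ of the central series is $\bQ(n)$-isotypic, so for $n\geq2$ the conditions are vacuous as above, and at the first step the base is trivial.
\end{itemize}

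Finally, identifying the map induced by étale realisation on $\Ext^1$ with Soulé's $p$-adic Chern class map is itself a known compatibility. You are quoting it, not proving it, and should cite it as such.
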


The proof of \Cref{motivic-etale-comparison} occupies the remainder of this subsection. The comparison isomorphism~\eqref{eq:motivic-to-etale} between the motivic and étale Selmer scheme is a straightforward generalisation of the proof given over~$\bQ$ in \cite[§3.3]{BKL:chabauty-kim-sc}; for a sketch of the argument in the number field case see \cite[§2.6]{luedtke_2025_refined}.
It remains to check the commutativity of the diagram~\eqref{eq:comparison-diagram}. The left triangle commutes because the $p$-adic étale realisation of the motivic path torsor $\pi_1^{\mot}(X;0,x)$ agrees with the étale path torsor $\pi_1^{\et,\bQ_p}(X_{\overline{K}}; 0, x)$.

\begin{lemma}
    \label{right-triangle-commutes}
    The right triangle in Diagram~\eqref{eq:comparison-diagram} commutes.
\end{lemma}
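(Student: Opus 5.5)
The right triangle asserts that for $x \in X(\cO_{\fp})$ the de Rham Kummer class $j_{\fp}^{\dR}(x) = (\gamma_x^H)^{-1}\gamma_x^{\cris}$ corresponds under the non-abelian Bloch--Kato exponential to the étale Kummer class $j_{\fp}(x)$. The argument is purely local at $\fp$, so the global field $K$ plays no role. I will therefore reduce to Kim's local theory over the $p$-adic field $K_{\fp}$, following the case $K=\bQ$ in \cite[§3]{BKL:chabauty-kim-sc}, with $\bQ_p$ replaced by $K_{\fp}$ throughout.

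\textbf{Step 1: the identification.} I first recall how the identification $\Pi^{\dR}(K_{\fp}) \cong \rH^1_f(G_{\fp},\Pi^{\et}(\bQ_p))$ of \cite[Proposition~1.4]{kim:tangential} is built. A crystalline $G_{\fp}$-equivariant $\Pi^{\et}$-torsor $P$ is sent to $D_{\cris}(P)$. This is an admissible filtered $\varphi$-module torsor under $\Pi^{\dR}_{K_{\fp}} = D_{\cris}(\Pi^{\et})$. It has a unique Frobenius-invariant point $p^{\cris}$ and a unique Hodge-filtered point $p^H$ (the latter in $F^0$), and the class of $P$ is sent to $(p^H)^{-1}p^{\cris} \in \Pi^{\dR}(K_{\fp})$. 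This relies on $F^0\Pi^{\dR}$ being trivial and Frobenius acting without fixed points on $\Lie\Pi^{\dR}$. Both hold for any motivic quotient of $\pi_1^{\mot}(X,0)$ because it has negative weights.

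\textbf{Step 2: comparison for path torsors.} I then apply this to $P = \pi_1^{\et,\bQ_p}(X_{\overline K};0,x)$ pushed out to $\Pi^{\et}$. By Olsson's non-abelian $p$-adic comparison theorem (or Vologodsky's), applicable because $\fp\nmid S$ and $X$ has good reduction at $\fp$ with the tangential base point extending, we have $D_{\cris}(\pi_1^{\et}(X;0,x)) \cong \pi_1^{\dR}(X;0,x)_{K_{\fp}}$. This isomorphism is compatible with torsor structures, with the Hodge filtration, and with the crystalline Frobenius. Under it, $p^{\cris}$ becomes $\gamma_x^{\cris}$ and $p^H$ becomes $\gamma_x^H$, so $j_{\fp}(x) \mapsto (\gamma_x^H)^{-1}\gamma_x^{\cris} = j_{\fp}^{\dR}(x)$. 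Compatibility with pushout along $\pi_1 \twoheadrightarrow \Pi$ follows from functoriality of $D_{\cris}$.

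\textbf{Main obstacle.} The delicate step is making the comparison isomorphism available at the tangential base point $\vec 1_0$ over the ramified field $K_{\fp}$, together with matching sign and ordering conventions: $(\gamma^H)^{-1}\gamma^{\cris}$ versus its inverse, and left versus right torsors. I would handle this by citing the version of the comparison theorem used in \cite[§3]{BKL:chabauty-kim-sc} and \cite[§1]{kim:tangential}, both of which allow $p$-adic fields with tangential base points. I would then check that Kim's normalisation of the Bloch--Kato map is exactly the one defining $j_{\fp}^{\dR}$ in \eqref{eq:de-Rham-Kummer-map}, which fixes the convention.
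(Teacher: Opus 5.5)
Your proposal is correct and follows the paper's proof: you identify $\log_{\mathrm{BK}}$ with $T^{\et}\mapsto D_{\dR}(T^{\et})$ followed by Kim's parametrisation $T\mapsto (t^H)^{-1}t^{\crs}$ of admissible torsors, and then conclude from the comparison isomorphism $D_{\dR}(\pi_1^{\et,\bQ_p}(X_{\overline{K}};0,x))\cong\pi_1^{\dR}(X;0,x)_{K_{\fp}}$ together with the definition of $j_{\fp}^{\dR}$. The only difference is cosmetic. The paper works with $D_{\dR}$, hence directly over $K_{\fp}$, which is cleaner when $K_{\fp}$ is ramified than your $D_{\cris}$ over $K_{\fp,0}$, and it cites \cite[Remark~3.1.3]{betts:weight-filtrations} for this description of the Bloch--Kato logarithm.
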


\begin{proof}
    The space $\rH^1_f(G_{\fp}, \Pi^{\et}(\bQ_p))$ parametrises crystalline $G_{\fp}$-equivariant $\Pi^{\et}$-torsors over~$\bQ_p$. On the other hand, by \cite[Proposition~1]{kim:albanese}, $\Pi^{\dR}(K_{\fp})$ parametrises admissible $\Pi^{\dR}_{K_{\fp}}$-torsors over~$K_{\fp}$ in the sense of loc.\ cit., i.e.\ $\Pi^{\dR}$-torsors equipped with a Hodge filtration and a Frobenius automorphism compatible with the corresponding structures on $\Pi^{\dR}_{K_{\fp}}$. The parametrisation is given by sending an admissible $\Pi^{\dR}_{K_{\fp}}$-torsor~$T^{\dR}$ to $(t^H)^{-1} t^{\crs} \in \Pi^{\dR}(K_{\fp})$ where $t^H$ is the unique element in the Hodge subscheme $F^0 T^{\dR}$ and $t^{\crs}$ is the unique Frobenius-invariant element. (Kim actually uses the inverse element $(t^{\crs})^{-1} t^{H}$ due to differing conventions.) As explained in \cite[Remark~3.1.3]{betts:weight-filtrations}, the non-abelian Bloch--Kato logarithm (inverse of the non-abelian Bloch--Kato exponential) 
    \[ \log_{\mathrm{BK}}\colon \rH^1_f(G_{\fp}, \Pi^{\et}(\bQ_p)) \to \Pi^{\dR}(K_{\fp}) \] 
    sends the class of a crystalline $G_{\fp}$-equivariant $\Pi^{\et}$-torsor~$T^{\et}$ to the admissible $\Pi^{\dR}_{K_{\fp}} = D_{\dR}(\Pi^{\et})$-torsor $D_{\dR}(T^{\et})$, where $D_{\dR}$ denotes the Dieudonné functor from $p$-adic Hodge theory. The claim then follows from the comparison theorem of path torsors
    \[ D_{\dR}(\pi_1^{\et,\bQ_p}(X_{\overline{K}}; 0,x)) \cong \pi_1^{\dR}(X; 0,x)_{K_{\fp}} \]
    and the definition of the de Rham Kummer map~\eqref{eq:de-Rham-Kummer-map}.
\end{proof}

\begin{defn}
    \label{def:period-element}
    For a prime $\fp \not\in S$ of~$K$, the \emph{$\fp$-adic period point} $\eta_{\fp} \in U_S^{\MT}(K_{\fp})$ is the inverse of the point $\eta_{\fp}^{\ur}$ defined in \cite[Lemma~2.2.5]{chatzistamatiou-unver:p-adic_periods}.
\end{defn}

We recall the construction of~$\eta_{\fp}$ as a $\otimes$-automorphism of $\omega \otimes_{\bQ} K_{\fp} \cong \omega^{\dR} \otimes_K K_{\fp}$ converting between two natural splittings of the weight filtration. Given a mixed Tate motive~$M$ in $\MT(\cO_{K,S},\bQ)$, two splittings
\[  s_{\dR}, s_{\cris} \colon \bigoplus_{i \in \bZ} \gr^{2i}_W M^{\dR}_{K_{\fp}} \xrightarrow{\sim} M^{\dR}_{K_{\fp}} \]
of the weight filtration on $M^{\dR}_{K_{\fp}} \coloneqq M^{\dR} \otimes_K K_{\fp}$ are given as follows. The first one comes from the fact that the Hodge and weight filtrations are opposed, so
\[ M^{\dR} = \bigoplus_{i \in \bZ} (W_{2i} \cap F^i) M^{\dR}. \]
The second one comes from the decomposition of~$M_{K_{\fp}}^{\dR}$ into Frobenius eigenspaces:
\[ M^{\dR}_{K_{\fp}} = \bigoplus_{i \in \bZ} (M^{\dR}_{K_{\fp}})^{\varphi^f = q^{-i}}. \]
Here, $q = p^f$ is the cardinality of the residue field of~$\fp$ and $\varphi^f$ refers to the $K_{\fp}$-linear Frobenius on $M^{\dR}_{K_{\fp}}$ induced via $M^{\cris} \otimes_{K_{\fp,0}} K_{\fp} \cong M^{\dR}_{K_{\fp}}$ by the $f$th power of the semilinear Frobenius on $M^{\cris}$, the vector space over the maximal unramified subfield $K_{\fp,0} \subseteq K_{\fp}$ underlying the filtered $\varphi$-module structure on $M^{\dR}_{K_{\fp}}$, cf.\ \cite[§4.2.1]{betts:weight-filtrations}. Now $\eta_{\fp}\colon M^{\dR}_{K_{\fp}} \to M^{\dR}_{K_{\fp}}$ is given by the composite
\begin{equation}
\label{eq:eta-p-splittings}
    \eta_{\fp}\colon M^{\dR}_{K_{\fp}} \xrightarrow[\sim]{s_{\dR}^{-1}} \bigoplus_i \gr^{2i}_{W} M^{\dR}_{K_{\fp}} \xrightarrow[\sim]{s_{\cris}} M^{\dR}_{K_{\fp}}. 
\end{equation}
It is an element of $G_S^{\MT}(K_{\fp})$ since this is $\otimes$-natural in~$M$, and it belongs to the subgroup $U_S^{\MT}(K_{\fp})$ since it acts trivially on~$\bQ(1)$.

The following is the generalisation of \cite[Lemma~3.9]{BKL:chabauty-kim-sc} to number fields.

\begin{lemma}
    \label{eta-of-hodge-equals-crystalline}
    Let $P$ be a $\Pi$-torsor over a $\bQ$-algebra~$R$ in $\MT(\cO_{K,S},\bQ)$. Let $\gamma^H \in F^0 P^{\dR}(R \otimes_{\bQ} K_{\fp})$ and $\gamma^{\crs} \in P^{\dR}(R \otimes_{\bQ} K_{\fp})^{\varphi^f = 1}$ be the Hodge element and Frobenius-invariant element of $P^{\dR}_{K_{\fp}}$, respectively. Then we have
    $\eta_{\fp}(\gamma^H) = \gamma^{\crs}$.
\end{lemma}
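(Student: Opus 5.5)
The plan is to reduce to a statement about the grading fixed points, following \cite[Lemma~3.9]{BKL:chabauty-kim-sc}. The torsor $P$ lives in the ind-category of $\MT(\cO_{K,S},\bQ)$, twisted by $R$. Its ring of functions $\cO(P)$ is therefore an ind-mixed Tate motive carrying an $R$-module structure. The element $\eta_{\fp}$, acting through the Tannakian formalism, is a $\otimes$-automorphism of $\omega \otimes K_{\fp}$. It therefore acts on $\cO(P)^{\dR}_{K_{\fp}}$ by an $R\otimes K_{\fp}$-algebra automorphism compatible with the $\Pi$-action. Dually it acts on the points $P^{\dR}(R\otimes_{\bQ} K_{\fp})$, and this is the action meant in the statement.

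\textbf{Step 1: the Hodge point.} I would first identify the Hodge point as a character. A point of $P^{\dR}$ is an algebra homomorphism $\cO(P)^{\dR} \to R\otimes K_{\fp}$. By definition, $\gamma^H$ is the unique point lying in $F^0$, i.e.\ the unique such homomorphism that kills $F^1\cO(P)^{\dR}$. Since the Hodge and weight filtrations are opposed, $\cO(P)^{\dR} = \bigoplus_i (W_{2i}\cap F^i)$. So $\gamma^H$ is the unique algebra homomorphism that vanishes on the summands with $i\neq 0$ (hence on $F^1$) and restricts to a prescribed map on the $i=0$ part. In other words, $\gamma^H$ corresponds, under the de Rham splitting $s_{\dR}$, to the augmentation that kills $\bigoplus_{i\ne0}\gr^{2i}_W$.

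\textbf{Step 2: the Frobenius-invariant point.} In the same way, $\gamma^{\crs}$ is the unique $\varphi^f$-invariant homomorphism. Since $R\otimes K_{\fp}$ has trivial Frobenius action in this $K_{\fp}$-linear setup, such a homomorphism must kill every eigenspace $(\cO(P)^{\dR}_{K_{\fp}})^{\varphi^f=q^{-i}}$ with $i\neq0$. So it corresponds, under $s_{\cris}$, to the same augmentation of $\bigoplus_i \gr^{2i}_W$.

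\textbf{Step 3: conclusion.} By \eqref{eq:eta-p-splittings}, $\eta_{\fp}=s_{\cris}\circ s_{\dR}^{-1}$, so it transports the first augmentation to the second. Hence $\eta_{\fp}(\gamma^H)=\gamma^{\crs}$. One has to match conventions: the action on points is dual to the action on functions, and the inverse in \Cref{def:period-element} is there to make this come out as stated. I would check this against the case $P=\Pi$, where both elements are the identity, and against the Kummer torsor $\bQ(1)$-extensions.

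The main obstacle is the bookkeeping with $R$-coefficients. One needs that $F^0P^{\dR}$ and the Frobenius-invariant locus each consist of a single $R\otimes K_{\fp}$-point, and that both splittings are $R$-linear and multiplicative on $\cO(P)$. This amounts to both splittings being $\otimes$-functorial, which holds because they are natural in the motive and compatible with tensor products. The $R$-torsor case then reduces to the case of motives with an $R$-module structure by naturality.
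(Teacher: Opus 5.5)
Your proposal is correct and follows the paper's own argument: $(\gamma^H)^\sharp$ and $(\gamma^{\crs})^\sharp$ are the projections onto $\gr^0_W \cO(P^{\dR}_{K_{\fp}}) = R\otimes_{\bQ} K_{\fp}$ for the Hodge and Frobenius splittings of the weight filtration respectively, and since $\eta_{\fp} = s_{\cris}\circ s_{\dR}^{-1}$, precomposing the first with $\eta_{\fp}^{-1}$ yields the second. The paper settles the convention issue you flag by taking $\eta_{\fp}(\gamma)$ to correspond to $\gamma^\sharp\circ\eta_{\fp}^{-1}$, exactly as your direction check requires.
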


\begin{proof}
    The Hodge element $\gamma^H$ corresponds to the algebra homomorphism
    \[ (\gamma^H)^{\sharp}\colon \cO(P^{\dR}_{K_{\fp}}) \xrightarrow{s_{\dR}^{-1}} \bigoplus_{i \in \bZ} \gr_W^{2i} \cO(P^{\dR}_{K_{\fp}}) \to \gr_W^{0} \cO(P^{\dR}_{K_{\fp}}) = R \otimes_{\bQ} K_{\fp} \]
    given by projecting to the weight-zero part for the Hodge splitting of the weight filtration. The element $\eta_{\fp}(\gamma^H)$ by definition corresponds to $(\gamma^H)^{\sharp} \circ \eta_{\fp}^{-1}$, which, since $\eta_{\fp}$ converts between the splittings $s_{\dR}$ and $s_{\cris}$, equals
    \[ \cO(P^{\dR}_{K_{\fp}}) \xrightarrow{s_{\cris}^{-1}} \bigoplus_{i \in \bZ} \gr_W^{2i} \cO(P^{\dR}_{K_{\fp}}) \to \gr_W^{0} \cO(P^{\dR}_{K_{\fp}}) = R \otimes_{\bQ} K_{\fp}, \]
    the projection onto the Frobenius-invariant subspace. But this is precisely the algebra homomorphism $(\gamma^{\crs})^{\sharp}$ corresponding to the Frobenius-invariant element~$\gamma^{\crs}$.
\end{proof}

\begin{lemma}
    \label{evaluation-localisation}
    The square of $\bQ_p$-schemes
    \begin{equation}
        \label{eq:evaluation-localisation}
        \begin{tikzcd}
            \Sel_{S,\Pi}^{\mot}(X)_{\bQ_p} \rar["\ev_{\eta_{\fp}}"] \dar[equals, "\eqref{eq:motivic-to-etale}"] & \Res_{K_{\fp}/\bQ_p} \Pi^{\dR}_{K_{\fp}} \dar["\exp_{\mathrm{BK}}"] \\
            \rH^1_{f,S}(G_K, \Pi^{\et}) \rar["\loc_{\fp}"] & \rH^1_f(G_{\fp}, \Pi^{\et})
        \end{tikzcd}
    \end{equation}
    commutes. In particular, the bottom square in Diagram~\eqref{eq:comparison-diagram} commutes. 
\end{lemma}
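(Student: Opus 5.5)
We check commutativity pointwise on $R$-valued points for an arbitrary $\bQ_p$-algebra~$R$. Fix a $\Pi$-torsor~$P$ over~$R$ in $\MT(\cO_{K,S},\bQ)$, corresponding under~\eqref{eq:selmer-scheme-via-cocycles} to the cocycle $c(u) = p_0^{-1}u(p_0)$, where $p_0 \in P^{\omega}(R)$ is the unique $\bG_m$-fixed point. We then follow~$P$ around both sides of the square.

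\emph{Bottom-left route.} The map~\eqref{eq:motivic-to-etale} sends $P$ to the étale realisation~$P^{\et}$, base-changed to~$R$. Restricting to~$G_{\fp}$ gives a crystalline $G_{\fp}$-equivariant $\Pi^{\et}$-torsor; crystallinity comes from the realisation of a motive with good reduction at~$\fp$. As in the proof of \Cref{right-triangle-commutes}, its Bloch--Kato logarithm is the admissible $\Pi^{\dR}_{K_{\fp}}$-torsor $D_{\dR}(P^{\et}|_{G_{\fp}})$. By the $p$-adic comparison isomorphism for mixed Tate motives, compatible with Hodge filtrations and Frobenius, this is $P^{\dR}_{K_{\fp}}$ with its Hodge filtration and crystalline Frobenius. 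Under Kim's parametrisation it therefore corresponds to $(\gamma^H)^{-1}\gamma^{\crs} \in \Pi^{\dR}(R\otimes K_{\fp})$. Since $\exp_{\mathrm{BK}}$ is the inverse of $\log_{\mathrm{BK}}$, it suffices to show that the top route also lands on $(\gamma^H)^{-1}\gamma^{\crs}$.

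\emph{Top route.} We have $\ev_{\eta_{\fp}}(c) = p_0^{-1}\eta_{\fp}(p_0)$, computed in $P^{\dR}_{K_{\fp}} = P^{\omega}\otimes K_{\fp}$. The key observation is that $p_0$, the $\bG_m$-fixed point of the canonical fibre functor, is exactly the Hodge element $\gamma^H$. Indeed, the identification $\omega\otimes K\cong\omega^{\dR}$ sends the grading of~$\omega$ to the Hodge splitting $s_{\dR}$, since $\omega_n(M)\otimes K \cong (W_{2n}\cap F^n)M^{\dR}$. Hence the weight-zero projection defining $p_0$ agrees with the one defining $\gamma^H$ in the proof of \Cref{eta-of-hodge-equals-crystalline}. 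That lemma then gives $\eta_{\fp}(p_0)=\gamma^{\crs}$, so $\ev_{\eta_{\fp}}(c)=(\gamma^H)^{-1}\gamma^{\crs}$, which matches the bottom-left route. Since this holds for all~$R$, the square commutes as a diagram of schemes. The statement about the bottom square of~\eqref{eq:comparison-diagram} is the special case $R=\bQ_p$.

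\emph{Main obstacle.} The delicate part is the bookkeeping of identifications. One must check that the isomorphism $\omega\otimes K_{\fp}\cong\omega^{\dR}_{K_{\fp}}$ used to define~$\eta_{\fp}$ matches $D_{\dR}$ of the étale realisation, including the filtered $\varphi$-module structure; this is the relevant $p$-adic comparison theorem for $\MT(\cO_{K,S},\bQ)$, as in Chatzistamatiou--Ünver. One must also confirm that the order convention $(t^H)^{-1}t^{\crs}$, as opposed to Kim's inverse convention, is used consistently. For $R$-points one further needs that the Bloch--Kato logarithm is defined functorially in~$R$, following \cite{betts:weight-filtrations} exactly as over~$\bQ$ in \cite[Lemma~3.10]{BKL:chabauty-kim-sc}. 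The only change for a general number field is replacing $\varphi$ by $\varphi^f$ and $\bQ_p$ by~$K_{\fp}$.
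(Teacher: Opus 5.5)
Your proposal is correct and follows essentially the same route as the paper. Both arguments compute both sides on an $R$-point $P$, identify the Bloch--Kato side with $(p^H)^{-1}p^{\crs}$ via $D_{\dR}(P^{\et}_R)\cong P^{\dR}_{R\otimes_{\bQ_p}K_{\fp}}$, and reduce to $p_0=p^H$ (the Hodge splitting in Deligne--Goncharov's identification $\omega\otimes K\cong\omega^{\dR}$) and $\eta_{\fp}(p_0)=p^{\crs}$ (\Cref{eta-of-hodge-equals-crystalline}).
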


\begin{proof}
    Let $R$ be a $\bQ_p$-algebra. We have to show that the composition
    \begin{equation}
        \label{eq:localisation-composition}
        \Sel_{S,\Pi}^{\mot}(X)(R) \xrightarrow{\eqref{eq:motivic-to-etale}} \rH^1_{f,S}(G_K, \Pi^{\et}(R)) \xrightarrow{\loc_{\fp}} \rH^1_f(G_{\fp}, \Pi^{\et}(R)) \xrightarrow{\log_{\mathrm{BK}}} \Pi^{\dR}(R \otimes_{\bQ_p} K_{\fp})
    \end{equation}
    agrees with the cocycle evaluation map~$\ev_{\eta_{\fp}}$. Let $P$ be an element of $\Sel_{S,\Pi}^{\mot}(X)(R)$, i.e.\ a $\Pi$-torsor over~$R$ in $\MT(\cO_{K,S}, \bQ)$. Its étale realisation $P^{\et}$ is a $\Pi^{\et}$-torsor over~$R \otimes_{\bQ} \bQ_p$ and, by definition, its base change $P^{\et}_R$ along the multiplication map $R \otimes_{\bQ} \bQ_p \to R$ represents the image of $[P]$ under the map~\eqref{eq:motivic-to-etale}. Let $D_{\dR}$ be the de Rham Dieudonné functor for $K_{\fp}$, then $D_{\dR}(P^{\et}_R)$ is canonically isomorphic to $P^{\dR}_{R \otimes_{\bQ_p} K_{\fp}}$, the base change of the de Rham realisation $P^{\dR}$ along $R \otimes_{\bQ} K \to R \otimes_{\bQ_p} K_{\fp}$. Let $p^H$ be the unique element in $F^0 P^{\dR}(R \otimes_{\bQ_p} K_{\fp})$ and $p^{\crs}$ the unique Frobenius-invariant element in $P^{\dR}(R \otimes_{\bQ_p} K_{\fp})$. Then the image of~$[P]$ under the composition~\eqref{eq:localisation-composition} equals $(p^H)^{-1} p^{\crs}$ by definition of the non-abelian Bloch--Kato logarithm. On the other hand, the image of $[P]$ under the cocycle evaluation map $\ev_{\eta_{\fp}}$ is $p_0^{-1} \eta_{\fp}(p_0)$ where $p_0 \in P^{\omega}(R)$ is the unique $\bG_m$-invariant point. It thus suffices to show that $p_0 = p^H$ in $P^{\omega}(R \otimes_{\bQ_p} K_{\fp})$ and $\eta_{\fp}(p_0) = p^{\crs}$. The identity $p_0 = p^H$ follows from the fact that the identification of fibre functors $\omega \otimes_{\bQ} K \cong \omega^{\dR}$ uses the Hodge splitting of the weight filtration \cite[Proposition~2.10]{deligne-goncharov}, and then $\eta_{\fp}(p_0) = p^{\crs}$ follows from \Cref{eta-of-hodge-equals-crystalline}.
\end{proof}

The commutativity of the top square in~\eqref{eq:comparison-diagram} now follows from the commutativity of all other faces. This finishes the proof of \Cref{motivic-etale-comparison}.

\subsection{Lie algebra variant}
\label{sec:lie-algebra-variant}

Assume that $\pi_1(X,0)^{\mot} \twoheadrightarrow \Pi$ is a motivic fundamental quotient such that $U_S^{\MT}$ acts trivially on~$\Pi^{\omega}$. This will be the case for the polylogarithmic quotient considered later on (see \Cref{trivial-action-on-polylogarithmic-quotient}). In this case $\bG_m$-equivariant cocycles $U_S^{\MT} \to \Pi^{\omega}$ are the same as $\bG_m$-equivariant homomorphisms, and it becomes particularly convenient to work with a Lie algebra version of the motivic Chabauty--Kim diagram. Since $U_S^{\MT}$ and $\Pi^{\omega}$ are pro-unipotent, the logarithm provides isomorphisms of pro-affine $\bQ$-schemes
\begin{align*}
    \log\colon U_S^{\MT} &\xrightarrow{\sim} \Lie(U_S^{\MT}),\\
    \log\colon \Pi^{\omega} &\xrightarrow{\sim} \Lie(\Pi^{\omega}).
\end{align*}
Moreover, we have natural isomorphisms for $\bQ$-algebras~$R$:
\begin{align*}
    \Sel_{S,\Pi}^{\mot}(X)(R) &\overset{\mathclap{\eqref{eq:selmer-scheme-via-cocycles}}}{\cong} \rZ^1(U_{S,R}^{\MT}, \Pi^{\omega}_R) \\
    &\cong \Hom_{\bG_m}(U_{S,R}^{\MT},\Pi^{\omega}_R) \\
    &\cong \Hom_{\gr}(\Lie(U_{S,R}^{\MT}),\Lie(\Pi_R^{\omega})),
\end{align*}
so the motivic Selmer scheme can be identified with the space of graded Lie algebra homomorphisms $\Lie(U_S^{\MT}) \to \Lie(\Pi^{\omega})$.

\begin{defn}
    \label{def:p-adic-period-Lie-algebra-element}
    Let $\fp \not\in S$ be a prime of~$K$. The $\fp$-adic period Lie algebra element $\eps_{\fp} \in \Lie(U_S^{\MT})(K_{\fp})$ is defined as 
    \[ \eps_{\fp} \coloneqq \log(\eta_{\fp}), \]
    where $\eta_{\fp} \in U_S^{\MT}(K_{\fp})$ is the $\fp$-adic period element defined in \Cref{def:period-element}.
\end{defn}

Evaluation of $\bG_m$-equivariant cocycles at $\eps_{\fp}$ defines a map of $\bQ_p$-schemes
\begin{equation}
    \label{eq:lie-homomorphism-evaluation-map}
    \ev_{\eps_{\fp}}\colon \Sel_{S,\Pi}^{\mot}(X)_{\bQ_p} = \Hom_{\gr}(\Lie(U_S^{\MT}),\Lie(\Pi^{\omega}))_{\bQ_p} \to \Res_{K_{\fp}/\bQ_p} \Lie(\Pi^{\dR}_{K_{\fp}}).
\end{equation}

Now let $p$ be a rational prime not divisible by a prime in~$S$. The Lie algebra variant of the motivic Chabauty--Kim diagram~\eqref{eq:motivic-ck-diagram} looks as follows:
\begin{equation}
    \label{eq:lie-algebra-ck-diagram}
    \begin{tikzcd}[column sep = huge]
        X(\cO_{K,S}) \rar[hook] \dar["j_S^{\mot}"] & \prod_{\fp\mid p} X(\cO_{\fp}) \dar["\prod_{\fp} (\log \circ j_{\fp}^{\dR})"] \\
        \Sel_{S,\Pi}^{\mot}(X)_{\bQ_p} \rar["\prod_{\fp} \ev_{\eps_{\fp}}"] & \prod_{\fp \mid p} \Res_{K_{\fp}/\bQ_p} \Lie(\Pi^{\dR}_{K_{\fp}}).
    \end{tikzcd}
\end{equation}

\begin{prop}
    \label{lie-algebra-ck-diagram}
    The diagrams \eqref{eq:lie-algebra-ck-diagram} and \eqref{eq:motivic-ck-diagram} are canonically isomorphic.
\end{prop}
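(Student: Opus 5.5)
We identify the two diagrams vertex by vertex. The top rows are literally the same. On the bottom left, both diagrams have $\Sel_{S,\Pi}^{\mot}(X)_{\bQ_p}$. In \eqref{eq:motivic-ck-diagram} it is described via \eqref{eq:selmer-scheme-via-cocycles} as $\rZ^1_{\bG_m}(U_S^{\MT},\Pi^{\omega})$. In \eqref{eq:lie-algebra-ck-diagram} it is described as $\Hom_{\gr}(\Lie(U_S^{\MT}),\Lie(\Pi^{\omega}))$, via the chain of isomorphisms displayed just before \Cref{def:p-adic-period-Lie-algebra-element}. We take the connecting isomorphism to be this chain. On the bottom right, we use the logarithm $\log\colon \Res_{K_{\fp}/\bQ_p}\Pi^{\dR}_{K_{\fp}} \xrightarrow{\sim} \Res_{K_{\fp}/\bQ_p}\Lie(\Pi^{\dR}_{K_{\fp}})$ for each $\fp\mid p$. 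This is an isomorphism of $\bQ_p$-schemes because $\Pi^{\dR}$ is pro-unipotent, and Weil restriction preserves isomorphisms. With these choices the right-hand square commutes by definition, since the right vertical arrow of \eqref{eq:lie-algebra-ck-diagram} is $\prod_{\fp}\log\circ j_{\fp}^{\dR}$. The left square commutes as well, because the left vertical map $j_S^{\mot}$ is the same map in both diagrams.

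The real content is the bottom square. We must show that for a $\bQ_p$-algebra $R$ and $c\in \Hom_{\bG_m}(U^{\MT}_{S,R},\Pi^{\omega}_R)$ with associated Lie homomorphism $\rd c = \Lie(c)$,
\[ \log\bigl(c(\eta_{\fp})\bigr) = (\rd c)(\eps_{\fp}) = (\rd c)(\log \eta_{\fp}) \]
holds in $\Lie(\Pi^{\dR})(R\otimes_{\bQ_p}K_{\fp})$. Two points are needed for this.

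First, because $U_S^{\MT}$ acts trivially on $\Pi^{\omega}$, the cocycle condition $c(uv)=c(u)\,u(c(v))$ reduces to $c(uv)=c(u)c(v)$. Hence a $\bG_m$-equivariant cocycle is a homomorphism of pro-unipotent groups. The hypothesis that the action is trivial is the assumption of \Cref{sec:lie-algebra-variant}, and it is used exactly here.

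Second, for any homomorphism $f\colon U\to V$ of pro-unipotent groups over a $\bQ$-algebra we have $\log\circ f = \Lie(f)\circ\log$. This is the standard compatibility of the exponential and logarithm with homomorphisms. It follows for instance from the fact that a homomorphism of unipotent groups is determined by, and equal to, $\exp\circ\Lie(f)\circ\log$, by the Baker--Campbell--Hausdorff formula. For pro-unipotent groups one passes to the limit over finite-dimensional quotients.

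Applying this with $U=U^{\MT}_{S}$, $V=\Pi^\omega$, base-changed to $R\otimes_{\bQ}K_{\fp}$, evaluated at $\eta_{\fp}$, and combined with the identification $\Pi^\omega\otimes_{\bQ}K\cong\Pi^{\dR}$, gives the required identity. This identification is compatible with logarithms since it is an isomorphism of group schemes.

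The only point needing care, and the step we expect to take the most attention, is bookkeeping of the base changes. Elements of $\Sel(R)$ are defined over $R$. The period point $\eta_{\fp}$ lives over $K_{\fp}$. The evaluation happens over $R\otimes_{\bQ_p}K_{\fp}$, reached via $R\otimes_{\bQ}K_{\fp}\to R\otimes_{\bQ_p}K_{\fp}$. We will check that the logarithms on $U_S^{\MT}$ and $\Pi^{\omega}$ commute with these base changes; this holds because $\log$ is defined by a universal formula on the coordinate rings. We will also check that the identification $\rZ^1_{\bG_m}\cong\Hom_{\gr}$ is natural in $R$, so that the comparison is an isomorphism of $\bQ_p$-schemes and not merely a bijection on points. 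Combined with \Cref{evaluation-localisation}, this shows that the diagram \eqref{eq:lie-algebra-ck-diagram} commutes and is canonically isomorphic to \eqref{eq:motivic-ck-diagram}, and hence also to the étale diagram by \Cref{motivic-etale-comparison}.
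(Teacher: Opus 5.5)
Your proposal is correct and takes essentially the same route as the paper. The only nontrivial square is the bottom one, and it commutes because $\log\circ c = \Lie(c)\circ\log$ on $K_{\fp}$-points for every $\bG_m$-equivariant homomorphism $c$; the trivial-action hypothesis is what turns cocycles into homomorphisms, and your extra bookkeeping of base changes and naturality in $R$ only makes explicit what the paper leaves implicit.
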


\begin{proof}
    It suffices to verify the commutativity of the diagram
    \begin{equation}
        \begin{tikzcd}[column sep = huge]
            \Hom_{\bG_m}(U_{S,\bQ_p}^{\MT}, \Pi^{\omega}_{\bQ_p})\rar["\ev_{\eta_{\fp}}"] \dar["\sim"] & \Pi^{\dR}(K_{\fp}) \dar["\log", "\sim"'] \\
            \Hom_{\gr}(\Lie(U_{S,\bQ_p}^{\MT}), \Lie(\Pi^{\omega}_{\bQ_p}))\rar["\ev_{\eps_{\fp}}"] & \Lie(\Pi^{\dR})(K_{\fp}).
        \end{tikzcd}
    \end{equation}
    This follows from the commutativity of
    \begin{equation*}
        \begin{tikzcd}[column sep = huge]
            U_S^{\MT}(K_{\fp}) \dar["\log"] \rar["c"] & \Pi^{\dR}(K_{\fp}) \dar["\log"] \\
            \Lie(U_S^{\MT})(K_{\fp}) \rar["\Lie(c)"] & \Lie(\Pi^{\dR})(K_{\fp})
        \end{tikzcd}
    \end{equation*}
    for $\bG_m$-equivariant homomorphisms $c\colon U_{S,K_{\fp}}^{\MT} \to \Pi^{\dR}_{K_{\fp}}$.
\end{proof}

\subsection{The Galois action on the Selmer scheme}
\label{sec:galois-action-on-selmer-scheme}

We show that the motivic Selmer scheme is functorial with respect to field homomorphisms. This implies in particular that when $K$ is Galois over~$\bQ$ and $S$ is Galois-stable, the Galois group $\Gal(K/\bQ)$ acts on $\Sel_{\Pi,S}^{\mot}(X)$.

Let $\sigma\colon K \to K'$ be a homomorphism of number fields and let $S$ be a set of primes of~$K$. Define the set $\sigma_*S$ of primes of~$K'$ by
\[ \sigma_*S \coloneqq (\sigma^*)^{-1}(S) \coloneqq \{ \fp' \,\vert\, \sigma^*\fp' \in S \}. \]
By \cite[§2.16]{deligne-goncharov} and $\cO_{K,S}^{\times} \subseteq \cO_{K',\sigma_*S}^{\times}$, extension of scalars along~$\sigma$ induces a fully faithful, exact $\otimes$-functor 
\begin{equation}
\label{eq:sigma-on-MT}
    \sigma_*\colon \MT(\cO_{K,S},\bQ) \to \MT(\cO_{K',\sigma_*S},\bQ) 
\end{equation}
satisfying $\sigma_* \bQ(1) = \bQ(1)$. It is thus a \emph{Tate functor} in the sense of \cite[§3.2.1]{BKL:chabauty-kim-sc}. In particular, it is automatically compatible with the canonical fibre functors, i.e.\ there exists a natural isomorphism $\omega \cong \omega \circ \sigma_*$ of functors $\MT(\cO_{K,S},\bQ) \to \bQ\mhyphen\Vect$. 

Let $\pi_1^{\mot}(X,0) \twoheadrightarrow \Pi$ be a quotient of the motivic fundamental group of~$X$ in $\MT(\cO_{K,S},\bQ)$. We identify $\Pi$ with its image $\sigma_* \Pi$ under the fully faithful functor~\eqref{eq:sigma-on-MT}. If $P$ is a $\Pi$-torsor over some $\bQ$-algebra~$R$ in $\MT(\cO_{K,S},\bQ)$, then $\sigma_* P$ is a $\Pi$-torsor over~$R$ in $\MT(\cO_{K',\sigma_*S},\bQ)$, thus we get a morphism of motivic Selmer schemes
\begin{equation}
\label{eq:sigma-on-selmer-scheme}
    \sigma_*\colon \Sel^{\mot}_{S,\Pi}(X) \to \Sel^{\mot}_{\sigma_*S,\Pi}(X).
\end{equation}
This is compatible with the Kummer map~\eqref{eq:motivic-kummer-map}:
\begin{lemma}
    \label{sigma-on-kummer}
    The following diagram commutes:
    \[
    \begin{tikzcd}
        X(\cO_{K,S}) \dar["j_S"] \rar["\sigma"] & X(\cO_{K',\sigma_*S}) \dar["j_{\sigma_*S}"] \\
        \Sel^{\mot}_{S,\Pi}(X) \rar["\sigma_*"] & \Sel^{\mot}_{\sigma_*S,\Pi}(X).
    \end{tikzcd}
    \]
\end{lemma}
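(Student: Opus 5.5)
Unwinding the definitions, the claim is that for every $x\in X(\cO_{K,S})$ there is an isomorphism of $\Pi$-torsors in $\MT(\cO_{K',\sigma_*S},\bQ)$
\[
\sigma_*\bigl(\pi_1^{\mot}(X;0,x)\overset{\pi_1^{\mot}(X,0)}{\times}\Pi\bigr)\;\cong\;\pi_1^{\mot}(X;0,\sigma(x))\overset{\pi_1^{\mot}(X,0)}{\times}\Pi .
\]
Since $\sigma_*$ is an exact $\otimes$-functor, it commutes with pushout of torsors along $\pi_1^{\mot}(X,0)\twoheadrightarrow\Pi$. It therefore suffices to prove the statement before pushing out, namely
\[
\sigma_*\pi_1^{\mot}(X_K;0,x)\cong\pi_1^{\mot}(X_{K'};0,\sigma(x))
\]
as torsors under $\sigma_*\pi_1^{\mot}(X_K,0)\cong\pi_1^{\mot}(X_{K'},0)$. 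We then check that this identification of groups is the one used implicitly when we identify $\Pi$ with $\sigma_*\Pi$.

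The key step is base change for the Deligne--Goncharov construction. The motivic path torsor $\pi_1^{\mot}(X;0,x)$ is built as an ind-object of $\MT(K,\bQ)$ from relative cohomology motives of the schemes $X^n$ relative to the faces defined by the tangential point $\vec 1_0$ and by~$x$ \cite{deligne-goncharov}. The functor $\sigma_*$ of~\eqref{eq:sigma-on-MT} is the restriction of extension of scalars $\MT(K,\bQ)\to\MT(K',\bQ)$ in Voevodsky's triangulated category, which is compatible with pullback of smooth varieties and relative motives. Applying it to these relative cohomology objects therefore gives the corresponding objects for $X_{K'}$, the base point $\vec1_0$ (which is defined over $\bZ$), and the point $\sigma(x)$. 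This base-change isomorphism respects the coalgebra/torsor structure maps (composition of paths), because those come from diagonal-type morphisms of schemes that are also compatible with base change.

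The unramifiedness conditions need no separate argument. Both sides are objects of $\MT(\cO_{K',\sigma_*S},\bQ)$ by \cite[§2.16]{deligne-goncharov}, and $\MT(\cO_{K',\sigma_*S},\bQ)$ is a full subcategory of $\MT(K',\bQ)$, so an isomorphism in $\MT(K',\bQ)$ is already an isomorphism there. Finally, we verify that the group identification $\sigma_*\pi_1^{\mot}(X_K,0)\cong\pi_1^{\mot}(X_{K'},0)$ is the case $x=\vec1_0$ of the same base change, so the torsor structures match.

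The main obstacle is the compatibility of the Deligne--Goncharov construction with extension of scalars at the level of torsor structures, not merely of underlying motives. If a direct motivic argument proves too cumbersome, I would check it after realisations instead. By \Cref{motivic-etale-comparison}, it suffices to show that the two $\bQ_p$-points agree in the étale Selmer scheme $\rH^1_{f,\sigma_*S}(G_{K'},\Pi^{\et})$. For étale realisations the claim is immediate: $\sigma_*$ on étale realisations is restriction from $G_K$ to $G_{K'}$ (via~$\sigma$), and the étale path torsor $\pi_1^{\et,\bQ_p}(X_{\overline K};0,x)$ with its $G_K$-action restricts to the one for $\sigma(x)$ with its $G_{K'}$-action. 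This follows because geometric étale fundamental groupoids are invariant under extension of algebraically closed fields, compatibly with Galois actions.
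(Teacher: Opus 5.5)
Your proposal is correct, and its main line is the paper's argument. The paper's whole proof is the one-line assertion that the Deligne--Goncharov construction commutes with extension of scalars, $\sigma_*\pi_1^{\mot}(X;0,x)\cong\pi_1^{\mot}(X;0,\sigma x)$; your reduction via pushout along $\pi_1^{\mot}(X,0)\twoheadrightarrow\Pi$ and your check of the torsor structures just make that step explicit.

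Your étale fallback is also sound. The $\bQ$-points of the affine Selmer scheme inject into its $\bQ_p$-points, and \Cref{motivic-etale-comparison} (with $p$ prime to $\sigma_*S$) makes the comparison map an isomorphism. It rests on the same kind of input, though, namely that the étale realisation is compatible with $\sigma_*$.
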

\begin{proof}
    The construction of the motivic path torsor in \cite{deligne-goncharov} is compatible with extension of scalars: $\sigma_* \pi_1^{\mot}(X;0,x) \cong \pi_1^{\mot}(X; 0, \sigma x)$.
\end{proof}

The functor~\eqref{eq:sigma-on-MT} and its compatibility with the canonical fibre functors gives rise to a morphism of Tannaka groups $\sigma^*\colon G^{\MT}_{\sigma_*S} \to G_S^{\MT}$, hence to a $\bG_m$-equivariant homomorphism
\begin{equation}
    \label{eq:sigma-on-U}
    \sigma^*\colon U_{\sigma_*S}^{\MT} \to U_S^{\MT}.
\end{equation}
We have the following description of the map~\eqref{eq:sigma-on-selmer-scheme} when the motivic Selmer schemes are viewed as spaces of $\bG_m$-equivariant cocycles via~\eqref{eq:selmer-scheme-via-cocycles}.

\begin{lemma}
\label{sigma-on-cocycle-space}
    The diagram
    \[
    \begin{tikzcd}
        \Sel^{\mot}_{S,\Pi}(X) \dar["\sim"] \rar["\sigma_*"] & \Sel^{\mot}_{\sigma_*S,\Pi}(X) \dar["\sim"] \\
        \rZ^1_{\bG_m}(U_S^{\MT}, \Pi^{\omega}) \rar["\sigma_*"] & \rZ^1_{\bG_m}(U_{\sigma_*S}^{\MT}, \Pi^{\omega})
    \end{tikzcd}
    \]
    in which the bottom map sends a cocycle $c$ to its precomposition $c \circ \sigma^*$ with~\eqref{eq:sigma-on-U}, is commutative.
\end{lemma}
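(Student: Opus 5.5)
We unwind both vertical identifications and compare the cocycles on a torsor. Let $R$ be a $\bQ$-algebra and let $P$ be a $\Pi$-torsor over~$R$ in $\MT(\cO_{K,S},\bQ)$. By \eqref{eq:selmer-scheme-via-cocycles}, the left vertical map sends $[P]$ to the cocycle $c_P(u) = p_0^{-1}u(p_0)$. Here $p_0\in P^{\omega}(R)$ is the unique $\bG_m$-fixed point, and $u\in U_S^{\MT}(R')$ acts on $P^{\omega}$ through the Tannakian action of $G_S^{\MT}$ on the fibre functor. The right vertical map sends $[\sigma_*P]$ to $c_{\sigma_*P}(u') = p_0'^{-1}u'(p_0')$ for $u'\in U^{\MT}_{\sigma_*S}(R')$, where $p_0'$ is the $\bG_m$-fixed point of $(\sigma_*P)^{\omega}$. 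The goal is therefore to show that $c_{\sigma_*P} = c_P\circ\sigma^*$.

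\textbf{Step 1: transport along the fibre-functor isomorphism.} Fix the natural isomorphism $\alpha\colon\omega\xrightarrow{\sim}\omega\circ\sigma_*$ of $\otimes$-functors. By construction, $\sigma^*\colon G^{\MT}_{\sigma_*S}\to G^{\MT}_S$ is defined by transporting a $\otimes$-automorphism $g'$ of $\omega$ on $\MT(\cO_{K',\sigma_*S},\bQ)$, restricted along $\sigma_*$, back via~$\alpha$. Concretely, for every object (or ind-object) $M$ of $\MT(\cO_{K,S},\bQ)$,
\[
\alpha_M\circ(\sigma^*g')_M = g'_{\sigma_*M}\circ\alpha_M .
\]
Applying this to $\cO(P)$ and $\cO(\Pi)$, and using that $\alpha$ is a $\otimes$-isomorphism, $\alpha$ induces an isomorphism of schemes $P^{\omega}\cong(\sigma_*P)^{\omega}$. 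This isomorphism is equivariant for $\Pi^{\omega}\cong(\sigma_*\Pi)^{\omega}$, which is how we identified $\Pi$ with $\sigma_*\Pi$. It also intertwines the action of $\sigma^*u'$ on the source with the action of $u'$ on the target.

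\textbf{Step 2: matching the $\bG_m$-fixed points.} Show that $\alpha$ carries $p_0$ to $p_0'$. Since $\sigma_*\bQ(1)=\bQ(1)$, the functor $\sigma_*$ is a Tate functor, so $\sigma^*$ is compatible with the $\bG_m$-factors (this is why \eqref{eq:sigma-on-U} is $\bG_m$-equivariant). Hence $\alpha$ respects the weight gradings $\omega=\bigoplus_n\omega_n$. The isomorphism $P^{\omega}\cong(\sigma_*P)^{\omega}$ is therefore $\bG_m$-equivariant, and by uniqueness it sends the unique fixed point to the unique fixed point.

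\textbf{Step 3: conclusion.} Combining the two steps gives
\[
c_{\sigma_*P}(u') = p_0'^{-1}u'(p_0') = \alpha\bigl(p_0^{-1}(\sigma^*u')(p_0)\bigr) = c_P(\sigma^*u').
\]
After the identification of $\Pi^{\omega}$ with $(\sigma_*\Pi)^{\omega}$ via $\alpha$, this is exactly the claimed equality.

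The main obstacle is Step~2: checking carefully that the isomorphism $\omega\cong\omega\circ\sigma_*$ is compatible with the gradings, rather than merely with the underlying vector spaces. I would first deduce this from the fact that a Tate functor preserves the weight filtration and sends $\bQ(-n)$ to $\bQ(-n)$. As a result, $\omega_n(M)=\Hom(\bQ(-n),\gr^W_{2n}M)$ maps isomorphically onto $\Hom(\bQ(-n),\gr^W_{2n}\sigma_*M)$ by full faithfulness. This gives a graded choice of~$\alpha$, which I would use throughout.
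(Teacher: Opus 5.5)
Your proposal is correct and follows the same route as the paper: identify $(\sigma_*P)^{\omega}$ with $P^{\omega}$, note that $U^{\MT}_{\sigma_*S}$ acts on it through $\sigma^*$, and evaluate both cocycles at the same $\bG_m$-fixed point $p_0$ to get $c_{\sigma_*P}=c_P\circ\sigma^*$. The one difference is that you explain why the $\bG_m$-fixed points correspond, namely by choosing $\alpha$ graded via $\omega_n(M)=\Hom(\bQ(-n),\gr^W_{2n}M)$, whereas the paper takes this for granted by writing $(\sigma_*P)^{\omega}=P^{\omega}$; that final-paragraph construction is the valid justification, since the appeal in Step~2 to the $\bG_m$-equivariance of $\sigma^*$ runs the logic backwards.
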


\begin{proof}
    Let $P$ be a $\Pi$-torsor over a $\bQ$-algebra~$R$ in $\MT(\cO_{K,S},\bQ)$. The Tannaka group $G^{\MT}_S$ acts on the canonical realisation $P^{\omega}$. The image $\sigma_*P$ of $P$ in $\MT(\cO_{K',\sigma_*S},\bQ)$ has the same canonical realisation $(\sigma_*P)^{\omega} = P^{\omega}$, and the $G^{\MT}_{\sigma_*S}$-action on it is the one obtained by pullback along $\sigma^*\colon G_{\sigma_*S}^{\MT} \to G_S^{\MT}$. In particular, if $p_0 \in P^{\omega}(R)$ denotes the $\bG_m$-fixed point, then the cocycle $c_P\colon U_{S,R}^{\MT} \to \Pi_R^{\omega}$ associated to $[P]$ under~\eqref{eq:selmer-scheme-via-cocycles} is given by $c_P(u') = p_0^{-1} u'(p_0)$, and the cocycle associated to $[\sigma_*P]$ is $c\circ \sigma^*$.
\end{proof}

\section{Chabauty--Kim theory for the polylogarithmic quotient}
\label{sec:ck-theory-for-polylog-quotient}

As before, let $K$ be a number field, let $S$ be a finite set of primes of~$K$, let $X = \bP^1 \smallsetminus \{0,1,\infty\}$ be the thrice-punctured line over $\cO_{K,S}$, and let $\vec{1}_0$ be the standard tangential base point at~$0$.

\subsection{The polylogarithmic quotient}
\label{sec:polylogarithmic-quotient}

In this paper we focus on the following quotient of the motivic fundamental group of~$X$.

\begin{defn}
\label{def:PL}
    The \emph{polylogarithmic quotient} $\Pi_{\PL}$ is defined as $\pi_1^{\mot}(X,0)/[N_1,N_1]$, where $N_1$ is the kernel of the map $\pi_1^{\mot}(X,0)\to \pi_1^{\mot}(\mathbb{G}_m)$ induced by $X\hookrightarrow \mathbb{G}_m$.
\end{defn}

For $1 \leq N \leq \infty$, the \emph{polylogarithmic depth-$N$ quotient} $\Pi_{\PL,N}$ is the maximal $N$-step nilpotent quotient of $\Pi_{\PL}$. It agrees with the depth-$N$ quotient $\Pi_N$ of the full fundamental group for $N \leq 2$.

A more concrete description of the polylogarithmic quotient can be given in terms of its Lie algebra. The Lie algebra $\Lie(\pi_1^{\omega}(X,0))$ of the full fundamental is free pro-nilpotent on two generators $e_0,e_1$; the polylogarithmic Lie algebra $\Lie(\Pi_{\PL}^{\omega})$ is the quotient by all Lie monomials of $e_1$-degree~$> 1$. A basis is given by $e_0$ and $e_1$ (in degree~$-1$) and by $\ad(e_0)^{n-1} e_1$ (in degree~$-n$) for $n \geq 2$. The depth-$N$ quotient $\Lie(\Pi_{\PL,N}^{\omega})$ is obtained by modding out all $\ad(e_0)^{n-1} e_1$ with $n > N$.

Every sequence $\omega_{i_1}\cdots \omega_{i_n}$ in the differential forms $\omega_0 = \frac{\rd t}{t}$ and $\omega_1 = \frac{\rd t}{1-t}$ defines a function on $\pi_1^{\omega}(X,0)$ sending a loop $\gamma$ to the iterated integral
\[ \int_{\gamma} \omega_{i_1}\cdots \omega_{i_n}. \]
A priori, this definition makes sense on $\pi_1^{\omega}(X,0) \otimes_{\bQ} K = \pi_1^{\dR}(X,0)$, but it descends to $\pi_1^{\omega}(X,0)$ by Galois-invariance. When the differential form $\omega_1$ occurs at most once in $\omega_{i_1}\cdots\omega_{i_n}$, the iterated integral is well-defined on the polylogarithmic quotient. We therefore have functions $\log, \Li_1, \Li_2,\ldots \in \cO(\Pi_{\PL}^{\omega})$, defined by
\[ \log(\gamma) \coloneqq \int_{\gamma} \frac{\rd t}{t}, \quad \Li_n(\gamma) \coloneqq \int_{\gamma} \underbrace{\frac{{\rd}t}{t} \cdots \frac{{\rd}t}{t} \frac{{\rd}t}{1-t}}_n. \]
If $n \leq N$ then $\Li_n$ descends to the depth-$N$ quotient $\Pi_{\PL,N}$. For a prime~$\fp$ of~$K$, the pullback of the functions $\log$ and $\Li_n$ along the $\fp$-adic Kummer map $j_{\fp}^{\dR}\colon X(\cO_{\fp}) \to \Pi^{\dR}(K_{\fp})$ yields the $\fp$-adic logarithm and polylogarithms given by the iterated Coleman integrals
\[ \log^{\fp}(z) = \int_{\vec{1}_0}^z \frac{\rd t}{t}, \quad \Li_n^{\fp}(z) = \int_{\vec{1}_0}^z \underbrace{\frac{{\rd}t}{t} \cdots \frac{{\rd}t}{t} \frac{{\rd}t}{1-t}}_n. \]
We usually omit the superscript~$(-)^{\fp}$. For later use we note the following functional equation \cite[Proposition~6.4(i)]{coleman:dilogarithms}.

\begin{lemma}[Coleman--Sinnott]
    \label{coleman-sinnott}
    For every $z \in X(\cO_{\fp})$ and $n \geq 1$ we have
    \[ \Li_n(z) + (-1)^n \Li_n(z^{-1}) = -\frac1{n!} \log(z)^n. \]
\end{lemma}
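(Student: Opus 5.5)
The plan is to prove the identity first as an identity of Coleman functions, by differentiating and comparing constant terms. The functions involved are all built from the tangential base point $\vec{1}_0$, and they are compatible with the automorphism $\iota\colon z\mapsto z^{-1}$ of~$X$, which swaps $0$ and~$\infty$. Set
\[ F_n(z) \coloneqq \Li_n(z)+(-1)^n\Li_n(z^{-1})+\tfrac1{n!}\log(z)^n, \]
a Coleman function on $X$ over $K_{\fp}$. I will show $F_n\equiv 0$ by induction on~$n$, using the differential equations
\[ \rd\Li_n(z)=\Li_{n-1}(z)\,\frac{\rd z}{z}\quad (n\ge 2), \qquad \rd\Li_1(z)=\frac{\rd z}{1-z}, \qquad \rd\log(z)=\frac{\rd z}{z}. \]
These follow from the iterated-integral definitions by the defining property of Coleman integration: the last form in the word gets differentiated off.

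\textbf{Base case $n=1$.} We have $\rd\bigl(\Li_1(z^{-1})\bigr)=\frac{-z^{-2}\,\rd z}{1-z^{-1}}=\frac{-\rd z}{z(z-1)}$. Hence
\[ \rd F_1=\frac{\rd z}{1-z}+\frac{\rd z}{z(z-1)}+\frac{\rd z}{z}, \]
and a partial-fractions check shows this vanishes. So $F_1$ is a locally constant Coleman function, and therefore a global constant, by uniqueness of Coleman integration on the connected rigid space (a Coleman function with zero differential is constant).

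\textbf{Inductive step, $n\ge 2$.} Using $\rd(z^{-1})/z^{-1}=-\rd z/z$, we get
\[ \rd F_n=\Bigl(\Li_{n-1}(z)+(-1)^{n-1}\Li_{n-1}(z^{-1})+\tfrac{1}{(n-1)!}\log(z)^{n-1}\Bigr)\frac{\rd z}{z}=F_{n-1}(z)\,\frac{\rd z}{z}. \]
By induction $F_{n-1}=0$, so $F_n$ is again constant.

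\textbf{Determining the constant.} This is the main obstacle, because $z=1$ is a puncture and the tangential normalisation at $0$ versus $\infty$ has to be handled carefully. I would evaluate the constant via asymptotics at $z\to 1$ (tangential base point $\vec{1}_1$) or at $z=-1$, where $\iota$ has a fixed point: $F_n(-1)=(1+(-1)^n)\Li_n(-1)+\tfrac1{n!}\log(-1)^n$.

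For $p$-adic Coleman integration with the branch $\log(p)=0$ (the one relevant here), the $p$-adic logarithm of a root of unity is~$0$, so $\log(-1)=0$. It remains to show $\Li_n(-1)=0$ for even~$n$. For this I would use the distribution relation $\Li_n(z)+\Li_n(-z)=2^{1-n}\Li_n(z^2)$ at $z=1$. Then $\Li_n(1)+\Li_n(-1)=2^{1-n}\Li_n(1)$, which reduces the claim to $\Li_n(1)=0$ for even~$n$, i.e.\ the vanishing of the even $p$-adic zeta values $\zeta_p(n)$. That vanishing is a known result (Coleman, via $\zeta_p(2k)=0$ and the Frobenius relation).

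Alternatively, I can avoid evaluating at a special point and compare constant terms of the expansions at~$0$. Near $0$, $\Li_n(z)$ is the power series $\sum z^k/k^n$, which is $O(z)$, whereas $\Li_n(z^{-1})$ near $0$ is handled by applying $\iota$ to the expansion at $\vec{1}_\infty$. The cleanest approach is probably to invoke \cite[Proposition~6.4(i)]{coleman:dilogarithms} directly, noting that the paper's conventions ($\vec{1}_0$ base point, $\log$ branch) match Coleman's. The honest core of the statement is this constant check, which I expect to require citing Coleman's computation of $\Li_n(1)$ in any case.
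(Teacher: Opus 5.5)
The paper gives no argument of its own here; it imports the identity from \cite[Proposition~6.4(i)]{coleman:dilogarithms}. Your route is an actual derivation, and it is correct. Your computations $\rd F_1=0$ and $\rd F_n=F_{n-1}\,\rd z/z$ are right, so each step leaves one constant of integration. At the fixed point $z=-1$ of $z\mapsto z^{-1}$, where $\log(-1)=0$ for any branch, that constant is $0$ for odd $n$, and for even $n$ it is $2\Li_n(-1)=2(2^{1-n}-1)\zeta_p(n)$.

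What your route buys over the bare citation is that it isolates the single arithmetic input: the vanishing of the even $p$-adic zeta values. This is the $p$-adic counterpart of the Bernoulli constant in the complex inversion formula.

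The price is that this vanishing must come from a source that does not itself go through the inversion formula. Letting $z\to1$ in the identity gives $\zeta_p(2k)=0$ at once, so citing ``Coleman'' loosely risks circularity. Two independent inputs are available:
\begin{itemize}
\item Coleman's formula $\zeta_p(k)=\frac{p^k}{p^k-1}L_p(k,\omega^{1-k})$, combined with $L_p(s,\chi)\equiv0$ for odd $\chi$.
\item Within the paper's own framework, \Cref{zeta-and-cyclotomic-polylogs-in-En} for $K=\bQ$ places $\Li_{2k}^{\fu}(-1)$ in $E_{2k}$. That space is zero by \eqref{eq:borel-dimensions}, since $r_2(\bQ)=0$. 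Hence $\Li_{2k}(-1)=\per_p(\Li_{2k}^{\fu}(-1))=0$ for odd $p$ by \Cref{LinuisLinp}, and the distribution relation is not even needed.
\end{itemize}

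Two minor points. For $p=2$, the point $-1$ lies in the residue disc of $1$, so you need constancy of $F_n$ on the whole wide open, not just on $X(\cO_{\fp})$. And your fallback of comparing expansions at $0$ does not sidestep anything, because it needs the constant term of $\Li_n$ at $\infty$, which is the same unknown.
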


We will also need the following modified polylogarithm functions:
\begin{defn}
    \label{def:Ln}
    For $n \geq 1$, define $L_n^{\fp}\colon X(\cO_{\fp}) \to K_{\fp}$ by
    \begin{equation*}
    \label{eq:Ln}
        L_n^{\fp}(z) \coloneqq \sum_{k=0}^{n-1} \frac{B_k}{k!} \log(z)^k \Li_{n-k}(z),
    \end{equation*}
    where $B_k$ is the $k$-th Bernoulli number (with $B_1 = -1/2$).
\end{defn}
For example:
\begin{align*}
    L_1^{\fp}(z) &= \Li_1(z) = -\log(1-z),\\
    L_2^{\fp}(z) &= \Li_2(z) - \tfrac12 \log(z)\Li_1(z),\\
    L_3^{\fp}(z) &= \Li_3(z) - \tfrac12 \log(z)\Li_2(z)+ \tfrac1{12} \log(z)^2 \Li_1(z),\\
    L_4^{\fp}(z) &= \Li_4(z)- \tfrac12 \log(z)\Li_3(z)+ \tfrac1{12} \log(z)^2 \Li_2(z).
\end{align*}

The functions $L_n^{\fp}$ arise naturally as pullbacks of linear functions on $\Lie(\Pi_{\PL})$ via the logarithm:

\begin{lemma}
    \label{Ln-as-pullback}
    Let $L_0,L_1,L_2,\ldots \in \Lie(\Pi_{\PL}^{\dR})^{\vee}$ denote the dual basis of $e_0, e_1, [e_0,e_1],\ldots$ Then the pullback of $L_n$ along the composition
    \[ X(\cO_{\fp}) \xrightarrow{j_{\fp}^{\dR}} \Pi_{\PL}^{\dR}(K_{\fp}) \xrightarrow[\sim]{\log_{\Pi_{\PL}^{\dR}}} \Lie(\Pi_{\PL}^{\dR})(K_{\fp}) \]
    equals $L_n^{\fp}$.
\end{lemma}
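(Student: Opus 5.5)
We compute the logarithm of a group-like element of the completed polylogarithmic Hopf algebra explicitly. The de Rham Kummer image of $z$ in $\Pi_{\PL}^{\dR}(K_{\fp})$ is the non-commutative generating series
\[ g_z = \sum_w \Bigl(\int_{\vec{1}_0}^z w\Bigr) e_w, \]
taken in the quotient of the completed universal enveloping algebra $\widehat{U}(\Lie(\Pi_{\PL}^{\dR}))$ by words with at least two letters $e_1$. The sign and order conventions for $e_w$ are fixed so that the coefficient of $e_0^{n-1}e_1$ (or $e_1e_0^{n-1}$) is $\Li_n(z)$ and that of $e_0$ is $\log(z)$. Modulo words with two or more $e_1$'s, this series is
\[ g_z = \exp(\log(z)\,e_0)\cdot\Bigl(1+\sum_{n\ge1} c_n\, e_0^{n-1}e_1\Bigr) \]
for suitable $c_n$. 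Alternatively, one uses the standard shuffle/regularisation identities, in which words with a single $e_1$ placed in the middle are expressed through $\log^k\Li_m$. The resulting coefficients on words $e_0^a e_1 e_0^b$ are $\frac{(-1)^b}{?}\dots$; rather than fix them by hand, I would derive them from the differential equation $dg = g\,(\omega_0 e_0+\omega_1 e_1)$.

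The cleaner route is to write $g_z=\exp(\ell e_0)\exp(v)$ with $\ell=\log z$ and $v=\sum_{n\ge1} v_n \ad(e_0)^{n-1}e_1$ lying in the abelian ideal $N=\mathrm{span}\{\ad(e_0)^{n-1}e_1\}$. Since $N$ is abelian and $e_0$ acts on it by $\ad(e_0)$, Baker--Campbell--Hausdorff in the semidirect product $\bQ e_0\ltimes N$ gives the closed formula
\[ \log\bigl(\exp(\ell e_0)\exp(v)\bigr)=\ell e_0+\frac{\ell\,\ad(e_0)}{1-e^{-\ell\,\ad(e_0)}}\,v. \]
The generating function $\frac{x}{1-e^{-x}}=\sum_k \frac{(-1)^kB_k}{k!}x^k$ is Bernoulli with $B_1=+\tfrac12$, or equivalently with the paper's convention $B_1=-\tfrac12$ after replacing $x$ by $-x$. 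The coefficient of $\ad(e_0)^{n-1}e_1$ is therefore
\[ \sum_{k} \frac{\pm B_k}{k!}\,\ell^k\, v_{n-k}. \]

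It remains to show $v_m=\Li_m(z)$. The factor $\exp(v)$ equals $\exp(-\ell e_0)g_z$, and its coefficient on the linear functional dual to $\ad(e_0)^{m-1}e_1$ must be computed. For this I would use the differential equation: $v$ satisfies $dv=$ (projection of $\exp(-\ell\,\ad e_0)$ applied to $\omega_1 e_1$) plus $\omega_0$-terms. This yields the recursion $dv_m=\omega_0 v_{m-1}$ and $dv_1=\omega_1$, with tangential normalisation at $\vec{1}_0$. These are exactly the Coleman differential equations and boundary conditions of $\Li_m$ in the right ordering convention, so uniqueness of Coleman integration gives $v_m=\Li_m$.

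The main obstacle is bookkeeping of conventions. One must check the order of composition of paths (and hence whether $g_z$ is $\exp(\ell e_0)\exp(v)$ or $\exp(v)\exp(\ell e_0)$), the sign of $\ad(e_0)$ versus $[e_0,\cdot]$, and the matching of $e_0^{n-1}e_1$ with the ordering $\omega_0\cdots\omega_0\omega_1$ in the definition of $\Li_n$. Together these decide whether the Bernoulli signs come out as $B_k$ with $B_1=-\tfrac12$ as claimed. I would fix them by checking $n=2$ directly: the BCH term $\tfrac12[\ell e_0, v_1e_1]$ must produce the coefficient $-\tfrac12\log\Li_1$, matching $L_2$.
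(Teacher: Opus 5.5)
Your strategy is sound, but the factorization is in the wrong order for the paper's conventions, and as a result the step ``it remains to show $v_m=\Li_m(z)$'' is false. In the paper, $\Li_n$ is the coefficient of $e_0^{n-1}e_1$, with the first letter integrated last. So the Kummer image satisfies $dg_z=(\omega_0e_0+\omega_1e_1)\,g_z$, not the right-multiplied equation you wrote.

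With $g_z=\exp(\ell e_0)\exp(v)$, read off the coefficient of $e_0^{m-1}e_1$. This gives $\Li_m=\sum_k \frac{\ell^k}{k!}v_{m-k}$, so for instance $v_2=\Li_2(z)-\log(z)\Li_1(z)$. Equivalently, the differential equation gives $dv=\omega_1\,e^{-\ell\ad(e_0)}e_1$, which has no $\omega_0$-terms and no recursion $dv_m=\omega_0v_{m-1}$. If you instead use $dg_z=g_z(\omega_0e_0+\omega_1e_1)$, you get $dv_m=-\omega_0v_{m-1}$ and end up with $(-1)^{n-1}L_n$. Your need to replace $x$ by $-x$ in the Bernoulli series is a symptom of this. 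The ansatz in your first paragraph is also wrong: $g_z$ has nonzero coefficients on words $e_0^ae_1e_0^b$ with $b>0$, e.g.\ $\log(z)\Li_1(z)-\Li_2(z)$ on $e_1e_0$.

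The repair is to factor as $g_z=\exp(v)\exp(\ell e_0)$ with $v=\sum_m v_m\ad(e_0)^{m-1}e_1\in N$. Modulo words with two $e_1$'s this is $(1+v)\exp(\ell e_0)$, and the only words ending in $e_1$ come from $v$ itself. Moreover, $\ad(e_0)^{m-1}e_1$ is the only basis element whose expansion contains $e_0^{m-1}e_1$, with coefficient $1$. Hence $v_m=\Li_m(z)$ directly from the definition, with no differential equation or Coleman uniqueness needed. In this order the semidirect-product BCH formula reads $\log(\exp(v)\exp(\ell e_0))=\ell e_0+\frac{\ell\ad(e_0)}{e^{\ell\ad(e_0)}-1}v$. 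Since $\frac{x}{e^x-1}=\sum_k B_kx^k/k!$ with exactly $B_1=-\tfrac12$, this yields $L_n$ with no sign adjustment.

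So corrected, your argument is a genuinely different route from the paper's. The paper identifies $L_n$ with the coefficient-extraction functional $f_{e_0^{n-1}e_1}$ of \cite{motivic-selmer-scheme}, using the same observation about the word $e_0^{n-1}e_1$ that you would use. It then simply quotes Proposition~10.8 there for the pullback along $\log$. Your version is a self-contained proof of that cited formula in the polylogarithmic case. It also makes transparent why Bernoulli numbers appear: the ideal $N$ is abelian, so BCH collapses to the generating function $\frac{x}{e^x-1}$.
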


\begin{proof}
    The function $L_n \in \Lie(\Pi_{\PL}^{\dR})^{\vee}$ agrees with the \emph{coefficient extraction functional} $f_{e_0^{n-1}e_1}$ from \cite[§10]{motivic-selmer-scheme}, as can be checked on the basis $e_0$, $e_1$, $[e_0,e_1],\ldots$ Hence, by Proposition~10.8 of loc.\ cit., the pullback of $L_n$ along the logarithm isomorphism equals
    \[ \log_{\Pi_{\PL}^{\dR}}^{\sharp} L_n = \sum_{k=0}^{n-1} \frac{B_k}{k!} \log^k \cdot \Li_{n-k}, \]
    and pulling back along $j_{\fp}^{\dR}$ yields the claim.
\end{proof}

As in the case of $\Li_n$, we usually omit the superscript $(-)^{\fp}$ from $L_n^{\fp}(z)$ and simply write $L_n(z)$. The functional equations satisfied by $L_n(z)$ are sometimes cleaner than for $\Li_n$:

\begin{lemma}
\label{coleman-sinnott-for-Ln}
    For every $z \in X(\cO_{\fp})$ and $n \geq 2$ we have
    \[ L_n(z) + (-1)^n L_n(z^{-1}) = 0. \]
\end{lemma}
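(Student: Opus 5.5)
We prove this by direct computation from \Cref{def:Ln} and the Coleman--Sinnott relation (\Cref{coleman-sinnott}). Write $\ell = \log(z)$, so that $\log(z^{-1}) = -\ell$. Then
\[
 L_n(z^{-1}) = \sum_{k=0}^{n-1} \frac{B_k}{k!}(-\ell)^k \Li_{n-k}(z^{-1}),
\]
and \Cref{coleman-sinnott} applied with index $n-k$ gives
\[
 \Li_{n-k}(z^{-1}) = (-1)^{n-k+1}\Li_{n-k}(z) - (-1)^{n-k}\tfrac{1}{(n-k)!}\ell^{n-k}.
\]
Substituting, we get $(-1)^n L_n(z^{-1})$ as a sum of two parts. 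The first part is $-\sum_k \frac{B_k}{k!}\ell^k\Li_{n-k}(z) = -L_n(z)$. The second part is
\[
 -\sum_{k=0}^{n-1}\frac{B_k}{k!(n-k)!}\ell^n .
\]
Hence
\[
 L_n(z)+(-1)^nL_n(z^{-1}) = -\frac{\ell^n}{n!}\sum_{k=0}^{n-1}\binom{n}{k}B_k .
\]

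The remaining step is the Bernoulli identity $\sum_{k=0}^{n-1}\binom nk B_k = 0$ for $n\ge 2$. This is exactly the defining recursion of the Bernoulli numbers with the convention $B_1=-1/2$. Equivalently, it follows from $\frac{t}{e^t-1}(e^t-1)=t$ by comparing coefficients of $t^n$ for $n\ge2$. For $n=1$ the sum is $B_0=1$, which is why the statement requires $n\ge2$.

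The only delicate point is sign bookkeeping. The Bernoulli sign convention ($B_1=-1/2$) must match the recursion used, and the factor $(-1)^k$ coming from $\log(z^{-1})^k$ must cancel against the $(-1)^{n-k}$ from Coleman--Sinnott. Since $(-1)^k(-1)^{n-k}=(-1)^n$, this cancellation is uniform in $k$.
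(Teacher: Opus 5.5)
Your proof is correct and follows the paper's argument: you expand via the definition of $L_n$, apply Coleman--Sinnott termwise so that the $\Li$-terms cancel, and reduce to $-\frac{1}{n!}\log(z)^n\sum_{k=0}^{n-1}\binom{n}{k}B_k$, which vanishes for $n\geq 2$. The only cosmetic difference is that the paper obtains the Bernoulli identity from Faulhaber's formula at $m=1$, whereas you read it off from the generating function $t/(e^t-1)$.
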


\begin{proof}
    We compute
    \begin{align*}
        L_n(z) &+ (-1)^n L_n(z^{-1}) \\
        &= \sum_{k=0}^{n-1} \frac{B_k}{k!} \left(\log(z)^k \Li_{n-k}(z) + (-1)^n \log(z^{-1})^k \Li_{n-k}(z^{-1})\right) & \text{(\Cref{def:Ln})} \\
        &= \sum_{k=0}^{n-1} \frac{B_k}{k!} \log(z)^k \left( \Li_{n-k}(z) + (-1)^{n-k} \Li_{n-k}(z^{-1})\right) \\
        &= \sum_{k=0}^{n-1} \frac{B_k}{k!} \log(z)^k \left(-\frac1{(n-k)!} \log(z)^{n-k}\right) & \text{(Coleman--Sinnott)} \\
        &= -\frac1{n!} \log(z)^n \sum_{k=0}^{n-1} \binom{n}{k} B_k \\
        &= -\frac1{n!} \log(z)^n 0^{n-1} \\[1mm]
        &= 0
    \end{align*}
    where the second-to-last equality follows from Faulhaber's formula
    \[ \sum_{k=0}^n \binom{n}{k} B_k m^{n-k} = \sum_{i=0}^{m-1} i^{n-1} \]
    applied with $m = 1$, and the assumption $n \geq 2$ is used in the last equality.
\end{proof}

\begin{lemma}[{\cite[Proposition~6.4(ii)]{coleman:dilogarithms}}]
\label{L2-reflection-formula}
    For every $z \in X(\cO_{\fp})$ we have
    \[ L_2(1-z) = -L_2(z). \]
\end{lemma}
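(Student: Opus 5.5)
The plan is to reduce the reflection formula for $L_2$ to the classical Euler-type functional equation for the $p$-adic dilogarithm. First write out both sides using \Cref{def:Ln}:
\[ L_2(z) = \Li_2(z) - \tfrac12\log(z)\Li_1(z), \qquad L_2(1-z) = \Li_2(1-z) - \tfrac12\log(1-z)\Li_1(1-z). \]
Since $\Li_1(z) = -\log(1-z)$ and $\Li_1(1-z) = -\log(z)$, the claimed identity $L_2(z)+L_2(1-z)=0$ becomes
\[ \Li_2(z) + \Li_2(1-z) + \tfrac12\log(z)\log(1-z) + \tfrac12\log(1-z)\log(z) = 0, \]
that is, the $p$-adic analogue of Euler's reflection formula
\[ \Li_2(z) + \Li_2(1-z) = -\log(z)\log(1-z). \]
Note that the complex constant $\zeta(2)$ is absent here, as is appropriate for Coleman's dilogarithm with the tangential normalisation at $\vec{1}_0$ and the branch $\log(p)=0$ implicit in \Cref{coleman-sinnott}.

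To prove this reflection formula, I would use the uniqueness principle for Coleman integration. Set $F(z) = \Li_2(z)+\Li_2(1-z)+\log(z)\log(1-z)$. This is a Coleman function on $X$, and we differentiate it using $\rd\Li_2(z) = \Li_1(z)\,\frac{\rd z}{z} = -\log(1-z)\frac{\rd z}{z}$ and $\rd\Li_2(1-z) = \log(z)\frac{\rd z}{1-z}$. This gives
\[ \rd F = -\log(1-z)\frac{\rd z}{z} + \log(z)\frac{\rd z}{1-z} + \log(1-z)\frac{\rd z}{z} - \log(z)\frac{\rd z}{1-z} = 0. \]
Since a Coleman function with vanishing differential on the connected rigid space $X$ is constant, it follows that $F$ is constant on $X(\cO_{\fp})$ and in fact on the whole rigid analytic space.

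The remaining step, and the only delicate one, is to show that this constant is $0$. The approach is to evaluate $F$ via the tangential limit $z\to 0$. Near $\vec{1}_0$, the regularised values satisfy $\Li_2(z)\to 0$ and $\log(z)\log(1-z)\to 0$, because $\log(1-z)=O(z)$ kills the logarithmic singularity. This leaves $\Li_2(1)$, the $p$-adic value $\zeta_p(2)$, which vanishes since the $p$-adic zeta values at even integers are zero. Alternatively, one can evaluate at a convenient point such as $z=\zeta_6$ (when available in $\cO_{\fp}$) or $z=-1$. This constant computation is the main obstacle, since it requires care with the regularisation at the boundary. Since this is precisely \cite[Proposition~6.4(ii)]{coleman:dilogarithms}, I would ultimately cite Coleman for the vanishing of the constant, and note that the reduction above shows the two formulations agree.
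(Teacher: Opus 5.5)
Your proposal is correct, and it is more detailed than the paper, which gives no argument and simply cites Coleman's Proposition~6.4(ii). Your derivation is sound. Using $\Li_1(z)=-\log(1-z)$ and $\Li_1(1-z)=-\log(z)$, the claim is literally Euler's reflection formula $\Li_2(z)+\Li_2(1-z)=-\log(z)\log(1-z)$. Your computation $\rd F=0$ is right. Coleman's uniqueness principle on the connected space $\bP^1_{\bC_p}\smallsetminus\{0,1,\infty\}$ then reduces everything to a single constant, and your tangential limit at $0$ correctly identifies that constant as $\zeta_p(2)=0$.

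Two side remarks need adjusting. First, the absence of a $\zeta(2)$-term has nothing to do with a choice of branch of $\log$. The functions on $X(\cO_{\fp})$ are defined via Frobenius-invariant paths and involve no branch, and in your limit $\log(z)\log(1-z)\to 0$ for any branch. The absence is exactly the vanishing $\zeta_p(2)=0$. Second, $z=-1$ on its own does not fix the constant: $F(-1)=\Li_2(-1)+\Li_2(2)$ is not visibly zero, and you would also need $z=\tfrac12$ together with the inversion formula.

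Your other suggestion, $z=\zeta_6$, is the cleanest option and avoids boundary regularisation entirely. The point $\zeta_6$ lies on $X$ over $\bC_p$ for every $p$, with $1-\zeta_6=\zeta_6^{-1}$ and $\log(\zeta_6)=0$. Hence $F(\zeta_6)=\Li_2(\zeta_6)+\Li_2(\zeta_6^{-1})=0$ by the inversion formula of \Cref{coleman-sinnott}, in Coleman's form over $\bC_p$. So part~(ii) follows from part~(i) together with your computation $\rd F=0$.
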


What makes the polylogarithmic quotient particularly convenient to work with in Chabauty--Kim theory is the fact that it is semisimple as a motive.

\begin{lemma}
    \label{trivial-action-on-polylogarithmic-quotient}
    The action of $U_S^{\MT}$ on $\Pi_{\PL}^{\omega}$ is trivial.
\end{lemma}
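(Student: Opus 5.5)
The plan is to show that $\Pi_{\PL}$, as a pro-object of $\MT(\cO_{K,S},\bQ)$, has a semisimple Lie algebra motive, i.e.\ $\Lie(\Pi_{\PL})$ is a product of pure Tate motives. Then $U_S^{\MT}$ acts trivially on its canonical realisation: it acts trivially on the associated graded of the weight filtration of any object, hence on any direct sum of Tate objects. Since $\log\colon \Pi_{\PL}^{\omega}\xrightarrow{\sim}\Lie(\Pi_{\PL}^{\omega})$ is $G_S^{\MT}$-equivariant, triviality on the Lie algebra gives triviality on the group.

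I would first use the description recalled above. $\Lie(\Pi_{\PL}^{\omega})$ has basis $e_0$, $e_1$ and $\ad(e_0)^{n-1}e_1$ for $n\ge2$. The graded piece of weight $-2n$ is one-dimensional for $n\ge2$ and two-dimensional for $n=1$. The key structural input is the exact sequence of motivic Lie algebras
\[ 0\to \Lie(N_1/[N_1,N_1]) \to \Lie(\Pi_{\PL}) \to \Lie(\pi_1^{\mot}(\bG_m)) = \bQ(1)\to 0. \]
Here the abelian kernel is spanned by the $\ad(e_0)^{n-1}e_1$, $n\ge1$. It is the abelianised fundamental group of the $\bZ$-covering, which motivically is $\prod_{n\ge1}\bQ(n)$ with $\ad(e_0)^{n-1}e_1$ spanning $\bQ(n)$. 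Concretely, I would check that the sub-object generated by $e_1$ is the image of $\pi_1^{\mot}$ of a punctured neighbourhood of $1$, hence $\bQ(1)$. Bracketing with $e_0$ is a morphism of motives $\bQ(1)\otimes(-)\to(-)$, since the Lie bracket is motivic and $e_0$ spans the quotient $\bQ(1)$ of $\pi_1^{\mot}(\bG_m)$. The point is to see that $e_0$ itself is a motivic element, i.e.\ that the sequence splits. This follows because the local monodromy at the tangential base point $\vec1_0$ gives a morphism $\bQ(1)\to\Lie(\pi_1^{\mot}(X,0))$ hitting $e_0$ (Deligne--Goncharov, the tangential base point's local monodromy). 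Similarly, local monodromy at $1$ gives a morphism $\bQ(1)\to\Lie$ hitting a conjugate of $e_1$. Modulo $[N_1,N_1]$ and depth this conjugate is $e_1$ plus terms $\ad(e_0)^k e_1$, and those can be absorbed.

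An alternative, cleaner route that I would actually try first is the canonical fibre functor with $\bQ$-structure $\omega$. By \cite{deligne-goncharov}, $e_0,e_1\in\Lie(\pi_1^{\omega}(X,0))$ are images of canonical morphisms $\bQ(1)\to\Lie\pi_1^{\mot}(X,0)$ given by the local monodromies at $0$ and $1$, up to conjugation for the latter. Hence $G_S^{\MT}$ fixes $e_0$ and sends $e_1$ to $\exp(\ad f)(e_1)$ for some $f$ in $\Lie(\pi_1^{\omega})$; this transport is standard for the Ihara action. Passing to $\Lie(\Pi_{\PL}^{\omega})$, where all elements of $e_1$-degree at least $2$ vanish, write $f = a e_0 + (\text{terms in }N_1)$. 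Then $\exp(\ad f)e_1 = \exp(a\,\ad e_0)e_1$. But $u\in U_S^{\MT}$ is $\bG_m$-equivariantly unipotent, so it preserves weight and acts as the identity on graded pieces. This forces $a$ to have weight $0$, and since $f$ has strictly negative weight, $a=0$. Hence $u$ fixes $e_1$ and $e_0$, and therefore fixes all brackets.

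The main obstacle is justifying precisely the Ihara-type formula: that $u\in U_S^{\MT}$ fixes $e_0$ and sends $e_1$ to a conjugate $\mathrm{Ad}(g_u)e_1$ with $g_u\in\pi_1^{\omega}$ having no weight-zero component. I would cite Deligne--Goncharov's description of the motivic Galois action on $\pi_1^{\mot}(X,\vec1_0)$ via the path torsor ${}_1\Pi_0$ and the local monodromies. With that in hand, the rest is the short weight computation above.
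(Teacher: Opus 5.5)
Your overall plan (show that $U_S^{\MT}$ fixes the two topological Lie generators $e_0,e_1$ of $\Lie(\Pi_{\PL}^{\omega})$ using local monodromies) is sound. The reduction in your second route is also correct: $u(e_0)=e_0$, $u(e_1)=\exp(\ad f_u)e_1$, and in $\Lie(\Pi_{\PL}^{\omega})$ this equals $\exp(a_u\ad e_0)e_1$, where $a_u$ is the $e_0$-coefficient of $f_u$.

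\textbf{The gap: $a_u=0$.} This step is the whole content of the lemma, and your weight argument does not prove it. For every value of $a_u$, the element $e_1+a_u[e_0,e_1]+\tfrac12 a_u^2\ad(e_0)^2e_1+\cdots$ agrees with $e_1$ modulo strictly lower weight. So ``$u$ acts as the identity on $\gr^W$'' imposes no condition. As a function of $u$, $a_u$ is homogeneous of half-weight $1$, not $0$; it is an element of $A(Z)_1\cong\cO_{K,S}^{\times}\otimes\bQ$, which is nonzero as soon as $\cO_{K,S}^{\times}$ has positive rank. In fact your bookkeeping goes through verbatim for the tangential base point $\lambda\vec1_0$ with $\lambda\in\cO_{K,S}^{\times}$ of infinite order. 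There $a_u=\pm\log^{\fu}(\lambda)(u)\neq0$, and $U_S^{\MT}$ genuinely acts nontrivially on the polylogarithmic quotient. So some input specific to $\vec1_0$ and $-\vec1_1$ is indispensable.

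Your first route has the same hole. Transporting $\mu_1'$ to $\vec1_0$ needs a motivic path, and $\pi_1^{\mot}(X;\vec1_0,-\vec1_1)$ has none, since its periods include $\zeta^{\fu}(3)\neq0$. Moreover, a morphism $\bQ(1)\to\Lie(\Pi_{\PL})$ lands in degree $-1$ of $\omega$. So there are no terms $\ad(e_0)^k e_1$ to ``absorb''; their presence only signals that the transport was not motivic. The splitting of $N_1^{\ab}$ as $\prod_n\bQ(n)$ is precisely what must be proved, not asserted.

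\textbf{The missing input.} It is the one the paper uses: $\pi_1^{\mot}(\bG_m;\vec1_0,1)$ is motivically trivialised by the path $\gamma_{01}$, and $N_1$ acts trivially by conjugation on $N_1^{\ab}$. Concretely, let $p$ be the canonical $\omega$-path from $\vec1_0$ to $-\vec1_1$ and set $h_u\coloneqq p^{-1}u(p)$. Then $u(e_1)=\mathrm{Ad}(h_u^{-1})e_1$, and the image of $h_u$ in $\pi_1^{\omega}(\bG_m,\vec1_0)$ is $\gamma_{01}^{-1}u(\gamma_{01})=1$. Hence $h_u\in N_1^{\omega}$, so $a_u=0$ and $\mathrm{Ad}(h_u^{-1})$ is the identity on $N_1^{\ab}$, giving $u(e_1)=e_1$.

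The paper packages the same two facts motivically. It conjugates $\mu_1'$ by the $N_1$-torsor $Q$ of lifts of $\gamma_{01}$; on $N_1^{\ab}$ the result is independent of the lift, which yields $\mu_1\colon\bQ(1)\to N_1^{\ab}$. It then invokes Deligne's Proposition~16.13 to get $\bQ(1)\ltimes\prod_n\bQ(n)\xrightarrow{\sim}\Lie(\Pi_{\PL})$. Once repaired this way, your second route is a legitimate and slightly more elementary alternative. It only needs $U_S^{\MT}$ to fix $e_0$ and $e_1$, rather than the full decomposition of $\Lie(\Pi_{\PL})$ into Tate motives.
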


\begin{proof}
Recall that $\pi_1^{\mot}(X,0)$ denotes the motivic fundamental group at the tangential base point $\vec{1}_0$ at~$0$; similarly, let $\pi_1^{\mot}(X,1)$ denote the motivic fundamental group at the tangential base point $-\vec{1}_1$.
We have local monodromy maps \cite[§5.4]{deligne-goncharov}
\begin{align*}
    \mu_0\colon \bQ(1) &\to \pi_1^{\mot}(X,0),\\
    \mu_1'\colon \bQ(1) &\to \pi_1^{\mot}(X,1).
\end{align*}
The target of $\mu_1'$ is the motivic fundamental group based at~$1$ but following \cite[§16.11]{deligne:droite-projective} we can construct another map $\mu_1\colon \bQ(1) \to N_1^{\ab} \subseteq \Pi_{\PL}$ from it. Namely, the path torsor $\pi_1^{\mot}(\bG_m; 0,1)$ is motivically trivialised by a path $\gamma_{01}$ \cite[§15.51]{deligne:droite-projective}. Let $Q$ be the $N_1$-torsor of paths in $\pi_1^{\mot}(X;0,1)$ lifting~$\gamma_{01}$. Then the map
\[ \bQ(1) \times Q \to N_1^{\ab}, \quad (a,q) \mapsto q^{-1} \mu_1'(a) q \]
does not depend on~$q$, hence factors through $\mu_1\colon \bQ(1) \to N_1^{\ab}$, giving the claimed map.

Denote the map $\bQ(1) \to \Lie(\Pi_{\PL})$ induced by $\mu_i$ also by $\mu_i$. Then $\mu_0$ and $\ad(\mu_1)^{n-1}(\mu_0) \colon \bQ(n) \to \Lie(\Pi_{\PL})$ induce a motivic Lie algebra homomorphism
\[ \bQ(1) \ltimes \prod_{n=1}^\infty \bQ(n) \to \Lie(\Pi_{\PL}). \]
By \cite[Proposition~16.13]{deligne:droite-projective}, this is an isomorphism, as one can check on de Rham realisations. Since $U_S^{\MT}$ acts trivially on the Tate motives~$\bQ(n)$, it acts trivially on $\Lie(\Pi_{\PL}^{\omega})$, thus on~$\Pi_{\PL}^{\omega}$ itself.
\end{proof}

Since $U_S^{\MT}$ acts trivially on $\Pi_{\PL,N}^{\omega}$, we are in the situation of §\ref{sec:lie-algebra-variant} and can identify the motivic polylogarithmic Selmer scheme with a space of graded Lie algebra homomorphisms: 
\[ \Sel_{S,\PL,N}^{\mot}(X) \cong \Hom_{\gr}(\Lie(U_S^{\MT}), \Lie(\Pi_{\PL,N}^{\omega})). \]

\subsection{Coordinates on the polylogarithmic Selmer scheme}
\label{sec:coordinates}

Recall that $U_S^{\MT}$ denotes the unipotent part of the Tannaka group of the category $\MT(\cO_{K,S},\bQ)$ of mixed Tate motives over~$\cO_{K,S}$. It has the structure of a free pro-unipotent group, so its Lie algebra $\Lie(U_S^{\MT})$ is free pro-nilpotent \cite[Proposition~2.3]{deligne-goncharov}. In order to define coordinates on the polylogarithmic Selmer scheme, we need a choice of free generators of $\Lie(U_S^{\MT})$. This can be achieved by lifting a basis from the abelianisation, for which there are canonical isomorphisms of pro-finite dimensional vector spaces \cite[(2.3.11)]{deligne-goncharov}
\[ \Lie(U_S^{\MT})^{\ab} \cong \prod_{n = 1}^{\infty} \Ext^1_{\MT(\cO_{K,S},\bQ)}(\bQ(0), \bQ(n))^{\vee} \cong \prod_{n = 1}^{\infty} K_{2n-1}(\cO_{K,S})_{\bQ}^\vee. \]
We denote by $\tau_1,\ldots,\tau_{d_1}$ a choice of generators in half-weight (also called ``degree'') $-1$ and by $\sigma_{n,1},\ldots,\sigma_{n,d_n}$ generators in half-weight $-n$ for $n \geq 2$, where
\begin{equation} \label{eq:borel-dimensions}
    d_n = \dim_{\bQ} K_{2n-1}(\cO_{K,S})_{\bQ} = \begin{cases}
        r_1 + r_2 + \#S - 1, & \text{if $n = 1$},\\
        r_2, & \text{if $n \geq 2$ even},\\
        r_1 + r_2, & \text{if $n \geq 3$ odd}.
    \end{cases}
\end{equation}
Here, $r_1$ and $r_2$ are the number of real and complex places of $K$, respectively. We set
\begin{align*}
    \Sigma_1 &\coloneqq \{\tau_1,\ldots,\tau_{d_1}\},\\
    \Sigma_n &\coloneqq \{\sigma_{n,1},\ldots,\sigma_{n,d_n}\} \quad \text{for $n \geq 2$},\\
    \Sigma &\coloneqq \bigcup_{n \geq 1} \Sigma_n.
\end{align*}

Now we can define coordinates $x_i$, $y_i$ ($1 \leq i \leq d_1$) and $z_{n,i}$ ($2 \leq n \leq N$, $1 \leq i \leq d_n$) on the depth-$N$ polylogarithmic Selmer scheme via
    \begin{align}
    \label{eq:tauimapto}
        \xi(\tau_i) &= x_i(\xi) e_0 + y_i(\xi) e_1, \\
        \label{eq:sigmanmapto}
        \xi(\sigma_{n,i}) &= z_{n,i}(\xi) \ad(e_0)^{n-1} e_1
    \end{align}
for $\xi\in \Hom_{\gr}(\Lie(U_S^{\MT}), \Lie(\Pi_{\PL,N}^{\omega}))$.
This gives a parametrisation of the Selmer scheme:

\begin{thm}[{\cite[Proposition~7.6 + Theorem~11.1\,(a)]{motivic-selmer-scheme}}]
    The depth-$N$ polylogarithmic Selmer scheme $\Sel_{S,\PL,N}^{\mot}(X)$ is an affine space of dimension
    \[ \Sel_{S,\PL,N}^{\mot}(X) = r_1 \left\lfloor \frac{N+3}{2} \right\rfloor + r_2(N+1) + 2\#S - 2,\]
    and the functions $x_i, y_i, z_{n,i}$ provide an isomorphism
    \begin{equation*}
        \Sel_{S,\PL,N}^{\mot}(X) \cong \Spec \bQ[\left\{x_i,y_i\right\}_{1\leq i\leq d_1}\cup\left\{z_{n,i}\right\}_{2 \leq n\leq N,1\leq i\leq d_n}].
    \end{equation*} 
\end{thm}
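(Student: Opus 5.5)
Since $U_S^{\MT}$ acts trivially on $\Pi_{\PL,N}^{\omega}$ (\Cref{trivial-action-on-polylogarithmic-quotient}), the Lie algebra variant of §\ref{sec:lie-algebra-variant} identifies $\Sel_{S,\PL,N}^{\mot}(X)$ with $\Hom_{\gr}(\Lie(U_S^{\MT}),\Lie(\Pi_{\PL,N}^{\omega}))$. The plan is to use freeness of $\Lie(U_S^{\MT})$ on the homogeneous generators $\Sigma$: a graded Lie homomorphism out of a free pro-nilpotent graded Lie algebra is the same thing as an arbitrary assignment of each generator to an element of the same degree in the target. So we first check that the lifted basis $\Sigma$ of the abelianisation really generates $\Lie(U_S^{\MT})$ freely as a graded pro-nilpotent Lie algebra, which is the standard fact for graded pro-nilpotent free Lie algebras \cite[Proposition~2.3]{deligne-goncharov}: homogeneous lifts of a graded basis of $H_1$ give free generators. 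Then the functor $R\mapsto\Hom_{\gr}(\Lie(U_{S,R}^{\MT}),\Lie(\Pi^{\omega}_{\PL,N,R}))$ becomes $R\mapsto\prod_{s\in\Sigma}\Lie(\Pi_{\PL,N}^{\omega})_{\deg s}\otimes R$.

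Next we read off the graded pieces of $\Lie(\Pi_{\PL,N}^{\omega})$ from the basis described after \Cref{def:PL}. In degree $-1$ we get $\bQ e_0\oplus\bQ e_1$, in degree $-n$ with $2\le n\le N$ we get $\bQ\,\ad(e_0)^{n-1}e_1$, and everything else is zero. Hence each $\tau_i$ maps to $x_ie_0+y_ie_1$ with $x_i,y_i$ free, each $\sigma_{n,i}$ with $n\le N$ maps to $z_{n,i}\ad(e_0)^{n-1}e_1$, and each $\sigma_{n,i}$ with $n>N$ maps to $0$. This shows that the coordinates \eqref{eq:tauimapto}, \eqref{eq:sigmanmapto} give a natural bijection with $R^{2d_1+\sum_{n=2}^N d_n}$, i.e.\ the stated isomorphism with a polynomial ring.

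It remains to count dimensions using \eqref{eq:borel-dimensions}. We have $2d_1=2r_1+2r_2+2\#S-2$. For $2\le n\le N$, the $r_2$ term contributes $r_2(N-1)$ in total. The $r_1$ term contributes once for each odd $n$ with $3\le n\le N$, i.e.\ $\lfloor (N-1)/2\rfloor$ times. The total is therefore $r_1(2+\lfloor (N-1)/2\rfloor)+r_2(N+1)+2\#S-2$, and $2+\lfloor (N-1)/2\rfloor=\lfloor (N+3)/2\rfloor$.

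The main point requiring care is the first step: justifying that arbitrary homogeneous lifts give \emph{free} generators, and that graded homomorphisms from a pro-nilpotent free Lie algebra into the finite-dimensional nilpotent target are determined freely by generator images. The convergence issue disappears because the target is nilpotent and the grading is compatible.
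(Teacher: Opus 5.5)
Your argument is correct and is essentially what lies behind the cited result. The paper gives no proof of its own beyond citing \cite{motivic-selmer-scheme}, but it sets up exactly your ingredients: the identification $\Sel_{S,\PL,N}^{\mot}(X)\cong\Hom_{\gr}(\Lie(U_S^{\MT}),\Lie(\Pi_{\PL,N}^{\omega}))$ via \Cref{trivial-action-on-polylogarithmic-quotient} and \S\ref{sec:lie-algebra-variant}, free generators obtained by lifting a basis of the abelianisation, and the coordinates \eqref{eq:tauimapto}, \eqref{eq:sigmanmapto}; your count $2d_1+\sum_{n=2}^{N}d_n=r_1\lfloor (N+3)/2\rfloor+r_2(N+1)+2\#S-2$ from \eqref{eq:borel-dimensions} is also right.
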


When we specialise to imaginary quadratic fields, we have $d_1=\#S$ and $d_n=1$ for $n\geq 2$. We simplify the notation by writing $\sigma_n\coloneqq \sigma_{n,1}$ in that case.

\subsection{The Goncharov quotient}
\label{sec:goncharov-quotient}

In order to describe the cocycle evaluation map for the polylogarithmic quotient, it is useful to observe that all graded homomorphisms $\Lie(U_S^{\MT}) \to \Lie(\Pi_{\PL}^{\omega})$ factor through a certain quotient, which was previously defined in \cite[§2.3.2]{CDC:polylog2}.

\begin{defn}
    \label{def:goncharov-quotient}
    The \emph{Goncharov quotient} $\Lie(U_S^{\MT}) \twoheadrightarrow \Lie(U_S^{\MT})_{\Gon}$ is the quotient by the \emph{Goncharov ideal} 
    \[ I_{\Gon} \coloneqq [\Lie(U_S^{\MT})_{\leq -2}, \Lie(U_S^{\MT})_{\leq -2}]. \]
    Here, $\Lie(U_S^{\MT})_{\leq -2}$ denotes the Lie ideal of elements of degree $\leq -2$ for the grading by half-weight.
\end{defn}

\begin{lemma}
    \label{cocycles-vanishing-on-goncharov-ideal}
    Any graded homomorphism $\Lie(U_S^{\MT}) \to \Lie(\Pi_{\PL}^{\omega})$ vanishes on $I_{\Gon}$.
\end{lemma}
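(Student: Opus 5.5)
The plan is to show that the image of $\Lie(U_S^{\MT})_{\leq -2}$ under any graded homomorphism lands in an abelian ideal of $\Lie(\Pi_{\PL}^{\omega})$. The claim then follows from the fact that brackets of such elements vanish.

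Let $\xi\colon \Lie(U_S^{\MT}) \to \Lie(\Pi_{\PL}^{\omega})$ be a graded Lie algebra homomorphism.

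First, describe the target concretely. By the description in \S\ref{sec:polylogarithmic-quotient}, $\Lie(\Pi_{\PL}^{\omega})$ has basis $e_0, e_1$ in degree $-1$ and $\ad(e_0)^{n-1}e_1$ in degree $-n$ for $n \geq 2$. Let $\mathfrak n$ be the closed span of $e_1$ and all $\ad(e_0)^{n-1}e_1$. This is the image of $\Lie(N_1)$; equivalently, it is the kernel of the map to $\Lie(\pi_1^{\omega}(\bG_m)) = \bQ e_0$. By definition of $\Pi_{\PL}$, in which all Lie monomials of $e_1$-degree $>1$ are killed, $\mathfrak n$ is an abelian ideal: any bracket of two elements of $\mathfrak n$ has $e_1$-degree $\geq 2$ and hence vanishes. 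Moreover, the degree $\leq -2$ part of $\Lie(\Pi_{\PL}^{\omega})$ is exactly the closed span of the $\ad(e_0)^{n-1}e_1$ with $n\ge2$, since $e_0$ and $e_1$ only live in degree $-1$. So $\Lie(\Pi_{\PL}^{\omega})_{\leq -2} \subseteq \mathfrak n$.

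Second, because $\xi$ is graded (it respects the half-weight grading, being $\bG_m$-equivariant), it maps each homogeneous element of degree $\leq -2$ into $\Lie(\Pi_{\PL}^{\omega})_{\leq -2}$. The ideal $\Lie(U_S^{\MT})_{\leq -2}$ consists of (convergent sums of) homogeneous elements of degree $\leq -2$, since the grading is by nonpositive half-weights and the Lie algebra is the completed product of its graded pieces. Hence $\xi(\Lie(U_S^{\MT})_{\leq -2}) \subseteq \mathfrak n$. Continuity of $\xi$ for the pro-nilpotent topology lets us pass from homogeneous pieces to the completed span.

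Finally, for $a,b \in \Lie(U_S^{\MT})_{\leq -2}$ we have $\xi([a,b]) = [\xi(a),\xi(b)] \in [\mathfrak n,\mathfrak n] = 0$. Since $\xi$ is continuous and linear, it therefore vanishes on the closed ideal $I_{\Gon}$ generated by such brackets. Indeed, the closed span of $[\Lie_{\leq-2},\Lie_{\leq-2}]$ is already an ideal: bracketing with an arbitrary element $c$ gives $[c,[a,b]] = [[c,a],b] + [a,[c,b]]$, and $[c,a], [c,b]$ are again of degree $\leq -2$. The only point needing care is this completion/topology bookkeeping; the algebraic content is just that $\mathfrak n$ is abelian.
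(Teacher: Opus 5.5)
Your proof is correct and follows essentially the same route as the paper: a graded homomorphism sends $\Lie(U_S^{\MT})_{\leq -2}$ into the span of the $\ad(e_0)^{n-1}e_1$ with $n \geq 2$, and these have pairwise vanishing brackets in $\Lie(\Pi_{\PL}^{\omega})$. Your additional remarks, on the abelian ideal containing $e_1$, on continuity, and on the span of brackets already being an ideal, are valid but only make explicit what the paper's two-line argument leaves implicit.
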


\begin{proof}
    Any graded homomorphism maps $\Lie(U_S^{\MT})_{\leq -2}$ into $\Lie(\Pi_{\PL}^{\omega})_{\leq -2}$, which is spanned by elements of the form $\ad(e_0)^{n-1} e_1$ with $n \geq 2$. By definition of the polylogarithmic quotient, the commutator of any two such elements vanishes in $\Lie(\Pi_{\PL}^{\omega})$.
\end{proof}

Using our choice free generators $\tau_i$, $\sigma_{n,i}$ of $\Lie(U_S^{\MT})$ we construct a simple basis of a complementary subspace of the Goncharov ideal.

\begin{lemma}
\label{goncharov-basis}
    Let $\cB$ be the set of elements of $\Lie(U_S^{\MT})$ consisting of
    \begin{itemize}
        \item the Lie algebra generators $\tau_i$ and $\sigma_{n,i}$;
        \item all elements of the form
        \begin{equation}
            \label{eq:basis-element-taus}
            \ad(\tau_{i_1}) \cdots \ad(\tau_{i_{k-1}}) \tau_{i_k}, \quad i_1 \leq \ldots \leq i_{k-1} > i_k, \quad k \geq 2,
        \end{equation}
        where $1 \leq i_j \leq d_1$;
        \item all elements of the form
        \begin{equation}
            \label{eq:basis-elements-sigma}
            \ad(\tau_{i_1})\cdots \ad(\tau_{i_k}) \sigma_{n,i}, \quad i_1\leq \ldots \leq i_k, \quad k \geq 1,
        \end{equation}
        where $1 \leq i_j \leq d_1$, $n \geq 2$, $1 \leq i \leq d_n$.
    \end{itemize}
    Then $\cB$ maps to a basis of $\Lie(U_S^{\MT})_{\Gon}$.
\end{lemma}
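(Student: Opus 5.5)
We split $L \coloneqq \Lie(U_S^{\MT})$ into the subalgebra generated by the $\tau_i$ and the ideal $L_{\leq -2}$, and describe the quotient of each piece by $I_{\Gon}$. Write $\fn \coloneqq L_{\leq -2}$; it is the Lie ideal generated by all $\sigma_{n,i}$, since every element of degree $\leq -2$ involves at least one $\sigma$ (the $\tau_i$ sit in degree $-1$ and brackets of $\tau$'s alone can have arbitrary negative degree, so more precisely: let $\fn$ be the ideal generated by the $\sigma_{n,i}$). Note that $I_{\Gon}=[L_{\le -2},L_{\le -2}]$ contains $[\fn,\fn]$ but also brackets of degree-$\leq-2$ pure-$\tau$ elements. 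So first step: identify $L_{\le-2}$ exactly. Elements of degree $\le -2$ are spanned by Lie monomials of length $\ge 2$ in the generators plus the $\sigma$'s; in particular $[\tau_i,\tau_j]\in L_{\le -2}$. Hence $I_{\Gon}\supseteq [[\tau_i,\tau_j],[\tau_k,\tau_l]]$ etc.

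Plan. Since $L$ is free on $\Sigma$, $L \cong \fl \ltimes \fn$ where $\fl$ is the free Lie algebra on the $\tau_i$ and $\fn$ is the ideal generated by the $\sigma$'s, which (Lazard elimination) is free on the elements $\ad(\tau_{j_1})\cdots\ad(\tau_{j_k})\sigma_{n,i}$ for arbitrary words $j_1\cdots j_k$. Then $L_{\le-2} = \fl_{\le -2}\oplus \fn$ with $\fl_{\le-2}=[\fl,\fl]$, and
\[ I_{\Gon} = [[\fl,\fl],[\fl,\fl]] + [[\fl,\fl],\fn] + [\fn,\fn], \]
up to taking ideals (it is automatically an ideal since $L_{\le-2}$ is). I then compute the quotient in two pieces. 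On $\fl$: $\fl/[[\fl,\fl],[\fl,\fl]]$ is the free metabelian Lie algebra on $\tau_1,\dots,\tau_{d_1}$, and it is classical that a basis is given by the generators together with the left-normed elements $\ad(\tau_{i_1})\cdots\ad(\tau_{i_{k-1}})\tau_{i_k}$ with $i_1\le\dots\le i_{k-1}>i_k$ (proof: in the metabelian quotient the operators $\ad(\tau_i)$ commute on $[\fl,\fl]^{\ab}$ by Jacobi, so we can sort the first $k-1$ indices; linear independence by a dimension count against the known Hilbert series, or by the standard realisation of the free metabelian Lie algebra inside a polynomial module). On $\fn$: modulo $[\fn,\fn]$, $\fn^{\ab}$ is a free $U(\fl)$-module on the $\sigma_{n,i}$; further modding by $[[\fl,\fl],\fn]$ turns it into a free $U(\fl^{\ab})=\mathbb Q[\tau_1,\dots,\tau_{d_1}]$-module on the $\sigma_{n,i}$, whose monomial basis is exactly $\ad(\tau_{i_1})\cdots\ad(\tau_{i_k})\sigma_{n,i}$ with $i_1\le\dots\le i_k$.

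Finally check the quotient is the direct sum of these two pieces: $L/I_{\Gon} \cong (\fl/[[\fl,\fl],[\fl,\fl]]) \ltimes (\fn/([\fn,\fn]+[[\fl,\fl],\fn]))$, which follows from the semidirect decomposition once one verifies that $[[\fl,\fl],\fn]+[\fn,\fn]$ is an ideal of $L$ contained in $\fn$ and $I_{\Gon}\cap\fl=[[\fl,\fl],[\fl,\fl]]$ (the latter because the projection $L\to\fl$ killing $\fn$ maps $I_{\Gon}$ onto $[[\fl,\fl],[\fl,\fl]]$). Everything respects the grading, so completion causes no issue degree by degree. The main obstacle is the linear independence of the metabelian basis for the $\tau$-part; I would prove it via the embedding of the free metabelian Lie algebra into $\fl^{\ab}\ltimes(\text{free }\mathbb Q[\tau]\text{-module on } e_1,\dots,e_{d_1})$ sending $\tau_i\mapsto(\tau_i,e_i)$, where $[\tau_{i_1},\dots,\tau_{i_k}]\mapsto \tau_{i_1}\cdots\tau_{i_{k-2}}(\tau_{i_{k-1}}e_{i_k}-\tau_{i_k}e_{i_{k-1}})$, and read off independence from leading terms.
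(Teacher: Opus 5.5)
Your proof is correct, and it takes a genuinely different route from the paper's.

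\textbf{The paper's route.} It fixes a single Hall basis $H=H_0\cup H_1\cup\cdots$ of the whole free Lie algebra that is compatible with the derived series (from Reutenauer's book), ordering the generators so that every $\sigma_{n,i}$ precedes every $\tau_i$. A case analysis, which reorders nested commutators modulo the second derived ideal via Jacobi, then shows that $I_{\Gon}$ is spanned by $H_{\ge 2}$ together with the elements of $H_1$ whose last letter is a $\sigma$. The remaining Hall elements are $\cB$ up to sign.

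\textbf{Your route.} You split $L=\fl\ltimes\fn$ by Lazard elimination and observe $L_{\le -2}=[\fl,\fl]\oplus\fn$. Hence $I_{\Gon}=[[\fl,\fl],[\fl,\fl]]\oplus\bigl([[\fl,\fl],\fn]+[\fn,\fn]\bigr)$, with the two summands lying in $\fl$ and $\fn$ respectively. You then compute the two quotients separately:
\begin{itemize}
\item the free metabelian Lie algebra on the $\tau_i$;
\item $\fn^{\ab}/[\fl,\fl]\cdot\fn^{\ab}\cong\bigl(U(\fl)/[\fl,\fl]U(\fl)\bigr)\otimes\mathrm{span}(\Sigma_{\ge 2})\cong\bQ[\tau_1,\ldots,\tau_{d_1}]\otimes\mathrm{span}(\Sigma_{\ge 2})$. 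This uses that $[\fl,\fl]U(\fl)$ is a two-sided ideal, because $[\fl,\fl]$ is an ideal of $\fl$.
\end{itemize}

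\textbf{Comparison.} Your route gives a structural description of $\Lie(U_S^{\MT})_{\Gon}$: a free metabelian Lie algebra acting through its abelianisation on a free polynomial module. This explains the shape of $\cB$, in particular why the $\tau$-prefixes of the $\sigma$'s are commutative monomials. The price is that you must supply the basis theorem for free metabelian Lie algebras separately. In the paper that part comes for free from the Hall basis.

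\textbf{A small fix in your metabelian sketch.} Commutation of the operators $\ad(\tau_i)$ on $[\fl,\fl]$ modulo $[[\fl,\fl],[\fl,\fl]]$ only lets you sort $i_1,\ldots,i_{k-2}$. To make $i_{k-1}$ the largest index you also need the Jacobi rewriting
\[
\ad(x)[y,z]=\ad(y)[x,z]-\ad(z)[x,y],
\]
applied with $y>z$ the innermost pair and $x>y$ the largest index in the prefix. After this step both terms, once their prefixes are sorted, are of the required form.

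Independence via your embedding $\tau_i\mapsto(\tau_i,e_i)$ works as you say. In the $e_c$-component, basis elements with $i_k=c$ contribute pairwise distinct monomials whose largest variable is $>c$. Those with $i_{k-1}=c$ contribute monomials whose largest variable is $\le c$. So no cancellation can occur.
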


\begin{proof}
    For a free Lie algebra there exists a construction of a certain Hall basis which (unlike the Lyndon basis) is compatible with the derived series \cite[§5.3]{reutenauer-book}. We apply the construction to $\Lie(U_S^{\MT})$. The basis has the form $H = H_0 \cup H_1 \cup \ldots $ with the property that the $n$-th derived subspace $D^n$ is spanned by $H_{\geq n} \coloneqq \bigcup_{m \geq n} H_m$ \cite[Theorem~5.7]{reutenauer-book}. The set $H_0$ is defined as set~$\Sigma$ of all generators $\tau_i$, $\sigma_{n,i}$. We fix a total ordering on~$\Sigma$ satisfying $\tau_1 < \ldots < \tau_{d_1}$ and $\sigma_{n,j} < \tau_i$ for all $n, j, i$. For example, order the generators by degree first (where $\deg(\tau_i) = -1$ and $\deg(\sigma_{n,i}) = -n$) and order them within each fixed degree by $\sigma_{n,1} < \sigma_{n,2} < \ldots$ and $\tau_1 < \tau_2 < \ldots$. Once an ordering on~$H_n$ is chosen, the set $H_{n+1}$ is inductively defined as the set of nested commutators of the form
    \begin{equation}
        \label{eq:H1-elements}
        \bigl[\cdots[[h_1, h_2], h_3],\ldots,h_k\bigr]
    \end{equation}
    where $h_i \in H_n$, $k \geq 2$, and
    \begin{equation}
        \label{eq:order-condition}
        h_1 < h_2 \geq \ldots \geq h_k.
    \end{equation}
    
    Let $\cC$ be the union of $H_{\geq 2}$ and the set of all elements in~$H_1$ of the form~\eqref{eq:H1-elements} whose last symbol~$h_k$ is a $\sigma$, i.e.\ an element of~$\Sigma_{\geq 2} \coloneqq \Sigma_2 \cup \Sigma_3 \cup \ldots$. We claim that the Goncharov ideal~$I_{\Gon}$ is spanned by~$\cC$. 
    Let us first check that the elements of $\cC$ are indeed contained in~$I_{\Gon}$. The set $H_{\geq 2}$ is a basis of second derived subspace $D^2$ of $\Lie(U_S^{\MT})$. By definition, $D^2$ is spanned by commutators of commutators. Since commutators have degree~$\leq -2$, we have $D^2 \subseteq I_{\Gon}$ and thus $H_{\geq 2} \subseteq I_{\Gon}$. Now consider an element~$h$ of~$H_1$ of the form \eqref{eq:H1-elements} whose last symbol is $\sigma \in \Sigma_{\geq 2}$. Write $h = [h',\sigma]$ with $h' = [\cdots[[a_1,a_2],a_3]\ldots,a_{k-1}]$. If $k > 2$, the degree of $h'$ is $\leq -2$, hence $h \in I_{\Gon}$. If $k = 2$, we must have $a_1 < \sigma$ by the condition~\eqref{eq:order-condition}. Since the ordering on the generators is chosen in such a way that each $\sigma$ is smaller than each~$\tau$, the element $a_1$ must also be a~$\sigma$. In particular, $a_1$ must also have degree~$\leq -2$, so again we find $h \in I_{\Gon}$. 

    We now show that each element of $I_{\Gon}$ can be written as a linear combination of elements in~$\cC$. Note that $\Lie(U_S^{\MT})_{\leq -2}$ is spanned by $D^1$ together with $\Sigma_{\geq 2}$, hence $I_{\Gon}$ is spanned by $D^2 = [D^1,D^1]$ together with $[\Sigma_{\geq 2}, \Sigma_{\geq 2}]$ and $[D^1,\Sigma_{\geq 2}]$. Elements of $D^2$ are linear combinations of $H_{\geq 2} \subseteq \cC$. Consider an element $[\sigma,\sigma']$ of $[\Sigma_{\geq 2}, \Sigma_{\geq 2}]$, where we can assume $\sigma \neq \sigma'$. If $\sigma < \sigma'$ then $[\sigma,\sigma']$ is contained in~$H_1$; otherwise $[\sigma',\sigma] = -[\sigma,\sigma']$ is an element of $H_1$ and in either case the last symbol is in~$\Sigma_{\geq 2}$, so the element belongs to~$\cC$. Finally, consider an element of $[D^1,\Sigma_{\geq 2}]$.
    Since $D^1$ is spanned by $H_1 \cup H_{\geq 2}$, such an element can be written as a linear combination of elements of the form $[h,\sigma]$ with $h \in H_1$ and $\sigma \in \Sigma_{\geq 2}$, modulo the subspace $D^2$ which we already showed to be contained in the span of~$\cC$. Write $h = [\cdots[[a_1,a_2],a_3],\ldots,a_k]$ with $a_i \in \Sigma$, $k \geq 2$. We distinguish two cases. If $a_2 < \sigma$ then, since $a_2$ is maximal among the $a_i$ by condition~\eqref{eq:order-condition}, we have in fact $a_i < \sigma$ for all~$i$, so each $a_i$ is an element of $\Sigma_{\geq 2}$. In this case, $[h,\sigma]$ lies in the free sub-Lie algebra generated by~$\Sigma_{\geq 2}$, so it can be written as a linear combination of elements of~$H_{\geq 1}$ using only generators in~$\Sigma_{\geq 2}$. In particular, the elements of $H_1$ used in the linear combination all have an element of $\Sigma_{\geq 2}$ as their last symbol, hence $[h,\sigma]$ lies in the span of~$\cC$. Consider the second case $a_2 \geq \sigma$. If $Y$ is any nested commutator of length at least two, and $a,b$ are two elements, we have $[[Y,a],b] \equiv [[Y,b],a] \bmod D^2$ by the Jacobi identity. Thus, in the nested commutator
    \[ [h, \sigma] = [\cdots[[[a_1,a_2],a_3],\ldots,a_k],\sigma], \]
    changing the order of $a_3,\ldots,a_k,\sigma$ does not change the element modulo~$D^2$. Denote by $a_3' \geq \ldots \geq a_{k+1}'$ the sequence $a_3,\ldots,a_k$ with $\sigma$ inserted in the correct position. Then we have
    \begin{equation}
        \label{eq:reordered-commutator}
        [h,\sigma] \equiv [\cdots[[a_1,a_2],a_3'],\ldots,a_{k+1}'] \bmod D^2
    \end{equation}
    and the right hand side is an element of~$H_1$ since \eqref{eq:order-condition} is satisfied. Note that $\sigma \geq a_{k+1}'$ since $\sigma$ is among the $a_i'$, which implies that $a_{k+1}' \in \Sigma_{\geq 2}$ by the choice of ordering on~$\Sigma$. In particular, the nested commutator on the right hand side of~\eqref{eq:reordered-commutator} is contained in~$\cC$. This completes the proof that the Goncharov ideal is spanned by~$\cC$.

    As a consequence, the elements of the basis~$H$ not contained in~$\cC$ span a complementary subspace to~$I_{\Gon}$. We show that the elements of $H \smallsetminus \cC$ form the claimed basis~$\cB$ up to sign. Indeed, $H \smallsetminus \cC$ consists of $H_0 = \Sigma$ as well as all elements of $H_1$ whose last symbol is in~$\Sigma_1$. Consider such an element $h = [\cdots[[a_1,a_2],a_3],\ldots,a_k] \in H_1$ with $a_k \in \Sigma_1$. Note that $a_2,\ldots,a_k$ must all be in $\Sigma_1$ by condition~\eqref{eq:order-condition}, so $a_1$ is the only symbol which is possibly in $\Sigma_{\geq 2}$. Assume first that $a_1 \in \Sigma_1$ as well, so that $h$ is of the form
    \[ h = [\cdots[[\tau_{i_1},\tau_{i_2}],\tau_{i_3}],\ldots,\tau_{i_k}], \quad i_1 < i_2 \geq \ldots \geq i_k, \quad k\geq 2. \]
    Changing from left-centered to right-centered brackets reverses the order of the entries and introduces a harmless sign, so $h$ is of the claimed form~\eqref{eq:basis-element-taus} up to sign. Assume now that $a_1 \in \Sigma_{\geq 2}$. In this case, $h$ is of the form
    \[ h = [\cdots[[[\sigma,\tau_{i_1}],\tau_{i_2}],\tau_{i_3}],\ldots,\tau_{i_k}], \quad i_1 \geq i_2 \geq \ldots \geq i_k, \quad k\geq 1. \]
    Changing from left-centered to right-centered brackets, we obtain an element of the form~\eqref{eq:basis-elements-sigma} up to sign, finishing the proof.
\end{proof}

\begin{prop}
    \label{lie-algebra-element-expansion}
    Each element $\eps$ of $\Lie(U_S^{\MT})(R)$ for a $\bQ$-algebra~$R$ can uniquely be written in the form 
    \begin{equation}
        \label{eq:eps-form}
        \eps = \sum_{n=1}^\infty \eps_n + \eps_{\Gon}
    \end{equation}
    where $\eps_{\Gon}$ is contained in the Goncharov ideal (\Cref{def:goncharov-quotient}), 
    \begin{equation}
        \label{eq:eps-1}
        \eps_1 = \sum_{i=1}^{d_1} c_{\tau_i} \tau_i,
    \end{equation}
    and
    \begin{align}
        \label{eq:eps-n}
        \begin{split}
            \eps_n &= \sum_{ i_1 \leq \ldots \leq i_{n-1} > i_n} c_{\tau_{i_1}\cdots \tau_{i_n}} \ad(\tau_{i_1})\cdots\ad(\tau_{i_{n-1}}) \tau_{i_n} \\
            & \qquad + \sum_{m=2}^n\sum_{i=1}^{d_m} \sum_{i_1\leq\ldots\leq i_{n-m}} c_{\tau_{i_1}\cdots\tau_{i_{n-m}} \sigma_{m,i}} \ad(\tau_{i_1})\cdots\ad(\tau_{i_{n-m}}) \sigma_{m,i}
        \end{split}
    \end{align}
    for $n \geq 2$, with coefficients $c_w \in R$. Here, all the indices $i_j$ run over $\{1,\ldots,d_1\}$.
\end{prop}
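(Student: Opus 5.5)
The proposition is essentially a repackaging of \Cref{goncharov-basis}, made degree-by-degree and extended to coefficients in an arbitrary $\bQ$-algebra~$R$. I would argue as follows.

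The first step works in a single degree. Since $\Lie(U_S^{\MT})$ is graded by half-weight and each graded piece $\Lie(U_S^{\MT})_{-n}$ is finite-dimensional over~$\bQ$ (there are finitely many generators in each degree), and since the Goncharov ideal $I_{\Gon}$ is a graded (homogeneous) ideal, the decomposition $\Lie(U_S^{\MT}) = \operatorname{span}(\cB) \oplus I_{\Gon}$ from \Cref{goncharov-basis} holds degree-wise: $\Lie(U_S^{\MT})_{-n} = \operatorname{span}(\cB_{-n}) \oplus (I_{\Gon})_{-n}$. Here $\cB_{-n}$ is the finite set of elements of~$\cB$ of degree~$-n$.

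The second step lists $\cB_{-n}$. The generators $\tau_i$ have degree $-1$ and the $\sigma_{m,i}$ have degree $-m$. So an element $\ad(\tau_{i_1})\cdots\ad(\tau_{i_{k-1}})\tau_{i_k}$ has degree $-k$, and $\ad(\tau_{i_1})\cdots\ad(\tau_{i_k})\sigma_{m,i}$ has degree $-(k+m)$. It follows that $\cB_{-1}=\{\tau_i\}$, which gives~\eqref{eq:eps-1}. For $n\ge2$, $\cB_{-n}$ consists of:
\begin{itemize}
\item the $\tau$-words of length~$n$ with $i_1\le\dots\le i_{n-1}>i_n$;
\item the elements $\ad(\tau_{i_1})\cdots\ad(\tau_{i_{n-m}})\sigma_{m,i}$ with $i_1\le\dots\le i_{n-m}$ and $2\le m\le n$, where the case $m=n$, $k=0$ is the generator $\sigma_{n,i}$ itself.
\end{itemize}
This is exactly the index set appearing in~\eqref{eq:eps-n}.

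The third step is the base change to~$R$ and the passage to infinite sums. I interpret $\Lie(U_S^{\MT})(R)$ as the completed product $\prod_{n\ge1}\Lie(U_S^{\MT})_{-n}\otimes_{\bQ} R$, using pro-nilpotence and the identification with $R$-points of the pro-affine scheme. Tensoring the degree-wise direct sum decomposition with~$R$ preserves it, since $-\otimes_{\bQ}R$ is additive and a $\bQ$-basis of $\operatorname{span}(\cB_{-n})$ gives an $R$-basis after base change. Each homogeneous component $\eps^{(-n)}$ therefore decomposes uniquely as $\eps_n + \eps_{\Gon}^{(-n)}$, with $\eps_n$ an $R$-linear combination of $\cB_{-n}$. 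Setting $\eps_{\Gon} = \sum_n \eps_{\Gon}^{(-n)}$ gives an element of the completed Goncharov ideal $I_{\Gon}(R)$; here I read $I_{\Gon}$ as its closure, i.e.\ the product of its graded pieces, and note this convention. Uniqueness follows because any two decompositions agree in every degree.

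The only delicate point is this bookkeeping with completions and with the meaning of ``contained in the Goncharov ideal'' for infinite sums. The content of the statement itself is already in \Cref{goncharov-basis}, once one observes that the sign ambiguity there (from converting left-normed to right-normed brackets) does not affect the spanning or uniqueness.
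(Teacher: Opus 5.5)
Your proposal is correct and follows the same route as the paper, which deduces the statement directly from \Cref{goncharov-basis} by grouping the basis elements by weight and treating the generators $\sigma_{m,i}$ as the $k=0$ case of the second family. Your degree-wise base change to $R$ and your explicit convention that $\eps_{\Gon}$ lies in the closure $\prod_n (I_{\Gon})_{-n}\otimes R$ of the Goncharov ideal make precise some bookkeeping that the paper leaves implicit.
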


\begin{proof}
    This follows from \Cref{goncharov-basis}, grouping basis elements by weight and subsuming the generators $\sigma_{m,i}$ under~\eqref{eq:basis-elements-sigma} by allowing $k = 0$.
\end{proof}

\begin{defn}
\label{def:goncharov-representation}    
    We call the representation of a Lie algebra element $\eps \in \Lie(U_S^{\MT})(R)$ in the form~\eqref{eq:eps-form}--\eqref{eq:eps-n} its \emph{Goncharov representation}, and say that the $c_w \in R$ are its \emph{Goncharov coefficients}.
\end{defn}

Given an element $\eps$ of $\Lie(U_S^{\MT})(R)$ for a $\bQ$-algebra~$R$, we can also consider its expansion
\begin{equation}
    \label{eq:lie-like-series-expansion}
    \eps = \sum_w b_w w \in R\llangle \Sigma \rrangle
\end{equation}
as a primitive element of the non-commutative power series ring $R\llangle \Sigma \rrangle$, i.e.\ a formal sum of words~$w$ in the generators~$\Sigma$ with coefficients $b_w \in R$, satisfying a shuffle identity (see \cite[Lemma~9.3\,(b)]{motivic-selmer-scheme}). For example:
\[ [\tau_1,[\tau_2,\sigma_3]] = \tau_1 \tau_2 \sigma_3 - \tau_1 \sigma_3 \tau_2 - \tau_2\sigma_3 \tau_1 + \sigma_3 \tau_2 \tau_1. \]
The Goncharov coefficient $c_w$ might disagree with the coefficient $b_w$ of the corresponding word in the expansion~\eqref{eq:lie-like-series-expansion}. However, they do agree in some cases:

\begin{lemma}
\label{lie-coeffs}
    We have $b_w = c_w$ in the following cases:
    \begin{enumerate}[label=(\alph*)]
        \item $w = \tau_i$;
        \item $w = \sigma_{n,j}$;
        \item $w = \tau_{i_1}\tau_{i_2}$ for $i_1 > i_2$;
        \item $w = \tau_{i_1}\tau_{i_2} \tau_{i_3}$ for $i_1 \leq i_2 > i_3$;
        \item $w = \tau_i \sigma_{n,j}$.
    \end{enumerate}
    Here, the $i_k$ are in the range $1 \leq i_k \leq d_1$, and $n \geq 2$ and $1 \leq j \leq d_n$.
\end{lemma}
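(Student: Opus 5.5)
Write $\eps = \sum_w b_w w$ as in~\eqref{eq:lie-like-series-expansion} and substitute the Goncharov representation~\eqref{eq:eps-form}--\eqref{eq:eps-n}. Expanding each Goncharov basis element of $\cB$, and each element of the Goncharov ideal, as a Lie polynomial in $R\llangle\Sigma\rrangle$ gives
\[ b_w = \sum_{\beta} c_\beta \cdot (\text{coefficient of } w \text{ in } \beta) + (\text{coefficient of } w \text{ in } \eps_{\Gon}). \]
For each listed word~$w$ we will check two things. First, $w$ does not occur in any element of $I_{\Gon}$. Second, among the basis elements of~$\cB$ of the same weight, $w$ occurs only in the basis element labelled by~$w$ itself, and there with coefficient~$1$. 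The weights automatically separate the words, since everything is graded by half-weight and by multidegree in the generators.

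We first dispose of the ideal $I_{\Gon}$. It is spanned by brackets $[a,b]$ with $a,b$ of degree $\leq -2$, hence by brackets of at least two Lie monomials each containing either a $\sigma$ or at least two letters. Such a bracket therefore has total length $\geq 3$ in the letters, unless it equals $[\sigma,\sigma']$. Words of length $1$ or $2$ containing at most one $\sigma$ — cases (a), (b), (c), (e) — thus have zero coefficient in $\eps_{\Gon}$.

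For case (d), a word $\tau\tau\tau$ in $I_{\Gon}$ could only come from $[[\tau,\tau],\tau]$-type elements lying in $[D^1,D^1]$ or $[D^1,\Sigma_{\geq2}]$. Neither is possible with three $\tau$'s: $[D^1,D^1]$ needs length $\geq 4$, and $\Sigma_{\geq 2}$ contains no $\tau$. Since $I_{\Gon}$ is a graded ideal, the multidegree-$\tau\tau\tau$ component of $\eps_{\Gon}$ is zero.

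Now we compare with the basis elements. Cases (a), (b) are immediate, since only the generator itself has that multidegree. For (c), the only basis element with multidegree $\{\tau_{i_1},\tau_{i_2}\}$, $i_1>i_2$, is $[\tau_{i_1},\tau_{i_2}] = \tau_{i_1}\tau_{i_2} - \tau_{i_2}\tau_{i_1}$, giving $b_{\tau_{i_1}\tau_{i_2}} = c_{\tau_{i_1}\tau_{i_2}}$. For (e), the only basis element of multidegree $\{\tau_i,\sigma_{n,j}\}$ is $[\tau_i,\sigma_{n,j}] = \tau_i\sigma_{n,j} - \sigma_{n,j}\tau_i$, with coefficient $1$ on $\tau_i\sigma_{n,j}$.

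Case (d) is the main step. Fix the multiset $\{i_1,i_2,i_3\}$ and list all basis elements $[\tau_{a},[\tau_{b},\tau_{c}]]$ with $a\le b>c$ of that multidegree. Expanding,
\[ [\tau_a,[\tau_b,\tau_c]] = \tau_a\tau_b\tau_c - \tau_a\tau_c\tau_b - \tau_b\tau_c\tau_a + \tau_c\tau_b\tau_a, \]
and we must show that the word $\tau_{i_1}\tau_{i_2}\tau_{i_3}$ with $i_1\le i_2>i_3$ appears only in the element with $(a,b,c)=(i_1,i_2,i_3)$. We do this by cases on the equality pattern of the indices.

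\begin{itemize}
\item If all three indices are distinct, say $x<y<z$, the basis elements are $(x,z,y)$ and $(y,z,x)$. Matching the words shows that $\tau_x\tau_z\tau_y$ appears only in the first, and $\tau_y\tau_z\tau_x$ only in the second: the other expansion has the terms $\tau_y\tau_z\tau_x, \tau_y\tau_x\tau_z, \tau_z\tau_x\tau_y, \tau_x\tau_z\tau_y$ with appropriate signs, so we check carefully. In the first element the term $-\tau_b\tau_c\tau_a = -\tau_z\tau_y\tau_x$, and in the second the term $\tau_c\tau_b\tau_a = \tau_x\tau_z\tau_y$. The latter \emph{does} contribute to the word $\tau_x\tau_z\tau_y$.
\end{itemize}

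So a naive claim fails here and we must recheck. The genuine statement must rely on the precise Lie-series expansion convention, namely the right-to-left reading in~\eqref{eq:lie-like-series-expansion} as in the displayed example $[\tau_1,[\tau_2,\sigma_3]]$. The main obstacle is therefore this bookkeeping for (d): we will enumerate the at most two basis elements per multidegree and verify, using the exact expansion convention of the example, that the coefficient matrix relating $(b_w)$ to $(c_w)$ on the words with $i_1\le i_2>i_3$ is the identity. If a cross term such as the one above survives, it would have to cancel against the contribution of $\eps_{\Gon}$ or be excluded by the ordering conditions.
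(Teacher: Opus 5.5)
The gap in your proposal is case (d), and it cannot be closed, because (d) is false unless $i_1=i_2$.

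Your treatment of $\eps_{\Gon}$ and of cases (a), (b), (c), (e) is correct. It can be shortened: $I_{\Gon}=[\Lie(U_S^{\MT})_{\leq -2},\Lie(U_S^{\MT})_{\leq -2}]$ lives in half-weight $\leq -4$. For (e) you only need that a bracket in $I_{\Gon}$ containing a single $\sigma$ has at least two $\tau$'s.

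The cross term you found in (d) is real, and none of your proposed rescues exists:
\begin{itemize}
\item the expansion convention is the one fixed by the displayed example for $[\tau_1,[\tau_2,\sigma_3]]$, which is exactly what you used;
\item $\eps_{\Gon}$ has no component in half-weight $-3$, so nothing can cancel;
\item the condition $i_1\le i_2>i_3$ admits both $(x,z,y)$ and $(y,z,x)$.
\end{itemize}
Structurally, a basis element $\ad(\tau_a)\ad(\tau_b)\tau_c$ with $a\le b>c$ contains the reversed word $\tau_c\tau_b\tau_a$ with coefficient $+1$. That word satisfies the ordering condition exactly when $a<b$. The other two words $\tau_a\tau_c\tau_b$ and $\tau_b\tau_c\tau_a$ never satisfy it. Hence
\[
b_{\tau_{i_1}\tau_{i_2}\tau_{i_3}} = \begin{cases} c_{\tau_{i_1}\tau_{i_2}\tau_{i_3}}, & i_1 = i_2 > i_3,\\ c_{\tau_{i_1}\tau_{i_2}\tau_{i_3}} + c_{\tau_{i_3}\tau_{i_2}\tau_{i_1}}, & i_1 < i_2 > i_3. \end{cases}
\]
The failure already occurs for $d_1=2$. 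Take $\eps=[\tau_1,[\tau_2,\tau_1]]=2\tau_1\tau_2\tau_1-\tau_1\tau_1\tau_2-\tau_2\tau_1\tau_1$. Then $c_{\tau_1\tau_2\tau_1}=1$ but $b_{\tau_1\tau_2\tau_1}=2$. So you should not try to finish the bookkeeping; you should record that (d) holds only for $i_1=i_2>i_3$, or replace it by the formula above.

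The paper's own proof only shows that $\eps_{\Gon}$ does not contribute to the listed words. It tacitly assumes that each listed word occurs only in its own element of $\cB$, with coefficient $1$, and that is precisely the step you correctly saw fail in (d). Nothing downstream is affected. The lemma is invoked only for (a), in \Cref{tau-coeffs}, and for (b), in the identity $c^{\fu}_{\sigma_n}=b^{\fu}_{\sigma_n}$ in the Galois-unstable case.
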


\begin{proof}
    The Lie ideal $\Lie(U_S^{\MT})_{\leq -2}$ is generated by elements of the form $[\tau_{i_1},\tau_{i_2}]$ and $\sigma_{n,j}$, so the Goncharov ideal is generated by pairwise Lie brackets of such elements. Hence, the expansion of $\eps_{\Gon}$ only involves words with at least two $\sigma$'s, words with one $\sigma$ and least two $\tau$'s, and words with at least four $\tau$'s.
\end{proof}

\subsection{The cocycle evaluation map}
\label{sec:cocycle-evaluation-map-formula}

We can now derive a formula for the cocycle evaluation map 
\[ \ev_{\eps}\colon \Hom_{\gr}(\Lie(U_S^{\MT}), \Lie(\Pi_{\PL,N}^{\omega}))_R \to \Lie(\Pi_{\PL,N}^{\omega})_R \]
defined by an element $\eps \in \Lie(U_S^{\MT})(R)$ for a $\bQ$-algebra~$R$, in terms of the coordinates on the polylogarithmic Selmer scheme constructed in §\ref{sec:coordinates}.

\begin{thm}
    \label{cocycle-evaluation-map}
    Let $R$ be a $\bQ$-algebra and let $\eps \in \Lie(U_S^{\MT})(R)$ be written as in \Cref{lie-algebra-element-expansion} above. Let $\xi \in \Hom_{\gr}(\Lie(U_S^{\MT}),\Lie(\Pi_{\PL,N}^{\omega}))_R$ be a graded homomorphism defined over~$R$ with coordinates $x_i = x_i(\xi)$, $y_i = y_i(\xi)$ and $z_{n,i} = z_{n,i}(\xi)$ defined by~\eqref{eq:tauimapto}, \eqref{eq:sigmanmapto}. Then the cocycle evaluation map $\ev_{\eps}$ is given by
    \[ \ev_{\eps}(\xi) = \sum_{i=1}^{d_1} c_{\tau_i} x_i e_0 + \sum_{i=1}^{d_1} c_{\tau_i} y_i e_1 + \sum_{n=2}^N k_n \ad(e_0)^{n-1} e_1\]
    with
    \begin{equation}
        \begin{split}
            k_n &= \sum_{i_1 \leq \ldots \leq i_{n-1}} \sum_{i_n} c_{\tau_{i_1}\cdots\tau_{i_n}}' x_{i_1}\cdots x_{i_{n-1}} y_{i_n} \\
            & \qquad+ \sum_{m=2}^n \sum_{i=1}^{d_m} \sum_{i_1 \leq \ldots \leq i_{n-m}} c_{\tau_{i_1}\cdots \tau_{i_{n-m}}\sigma_{m,i}} x_{i_1} \cdots x_{i_{n-m}} z_{m,i}
        \end{split}
    \end{equation}
    and
    \[ 
        c_{\tau_{i_1}\cdots\tau_{i_n}}' = \begin{cases}
            c_{\tau_{i_1}\cdots\tau_{i_n}} & \text{if $i_{n-1} > i_n$},\\[2mm]
            -\displaystyle\sum_{\substack{j \in \{i_1,\ldots,i_{n-1}\},\\ j < i_n}} c_{\tau_{i_1}\cdots\hat\tau_{j}\cdots \tau_{i_n} \tau_{j}} & \text{if $i_{n-1} \leq i_n$.}
        \end{cases}
    \]
\end{thm}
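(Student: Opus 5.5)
By \Cref{cocycles-vanishing-on-goncharov-ideal}, $\xi$ kills $\eps_{\Gon}$. By linearity, $\ev_\eps(\xi)=\sum_n \xi(\eps_n)$, and it suffices to compute $\xi$ on each basis element of $\cB$ from \Cref{goncharov-basis}, using the Goncharov representation of \Cref{lie-algebra-element-expansion}. The degree $-1$ part is immediate from~\eqref{eq:tauimapto}: $\xi(\eps_1)=\sum_i c_{\tau_i}(x_ie_0+y_ie_1)$. For $n\ge 2$ everything lands in the line spanned by $\ad(e_0)^{n-1}e_1$, and we only need the coefficient.

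The basic computation is the bracket structure in $\Lie(\Pi_{\PL,N}^{\omega})$. Since all brackets among the $\ad(e_0)^{k}e_1$ with $k\ge1$ and $e_1$ vanish except those involving $e_0$, we have, for $k\geq 1$,
\[ [x e_0 + y e_1,\ \ad(e_0)^{k-1}e_1] = x\,\ad(e_0)^{k}e_1, \]
and
\[ [x_ae_0+y_ae_1,\ x_be_0+y_be_1] = (x_ay_b - x_by_a)\,[e_0,e_1]. \]
By induction on the number of $\ad$'s we then obtain:
\[ \xi\bigl(\ad(\tau_{i_1})\cdots\ad(\tau_{i_{n-m}})\sigma_{m,i}\bigr) = x_{i_1}\cdots x_{i_{n-m}} z_{m,i}\,\ad(e_0)^{n-1}e_1, \]
\[ \xi\bigl(\ad(\tau_{i_1})\cdots\ad(\tau_{i_{n-1}})\tau_{i_n}\bigr) = x_{i_1}\cdots x_{i_{n-2}}\,(x_{i_{n-1}}y_{i_n} - x_{i_n}y_{i_{n-1}})\,\ad(e_0)^{n-1}e_1. \]
The $\sigma$-terms immediately give the second sum in $k_n$.

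The main step is reorganising the $\tau$-terms into a sum over monomials $x_{i_1}\cdots x_{i_{n-1}}y_{i_n}$ with $i_1\le\dots\le i_{n-1}$ sorted. A basis word with $i_1\le\dots\le i_{n-1}>i_n$ contributes $+c$ to the monomial $x_{i_1}\cdots x_{i_{n-1}}y_{i_n}$; since $i_{n-1}>i_n$, this is the case $c'=c$. It also contributes $-c$ to the monomial $x_{i_1}\cdots x_{i_{n-2}}x_{i_n}\,y_{i_{n-1}}$.

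Conversely, fix a target monomial with sorted multiset $\{i_1,\dots,i_{n-1}\}$ and $y$-index $i_n$, and collect the negative contributions. They come from basis words $\tau_{j_1}\cdots\tau_{j_{n-1}}\tau_{j_n}$ with $j_{n-1}=i_n$ and $\{j_1,\dots,j_{n-2},j_n\}=\{i_1,\dots,i_{n-1}\}$, subject to $j_1\le\dots\le j_{n-2}\le i_n$ and $j_n<i_n$. Write $j=j_n$, an element of $\{i_1,\dots,i_{n-1}\}$ with $j<i_n$. For the remaining indices $\{i_1,\dots,i_{n-1}\}\smallsetminus\{j\}$ to satisfy $j_{n-2}\le i_n$, we need every $i_k\le i_n$. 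This holds automatically when $i_{n-1}\le i_n$, giving exactly the word $\tau_{i_1}\cdots\hat\tau_j\cdots\tau_{i_n}\tau_j$ and hence the stated formula for $c'$ in that case.

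If instead $i_{n-1}>i_n$, no negative contribution exists, because the largest index cannot be placed before $i_n$ in a sorted prefix ending with $\tau_{i_n}$. This gives exactly $c'=c$. The same argument also shows that there is no positive contribution when $i_{n-1}\le i_n$, since the basis requires $i_{n-1}>i_n$.

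I expect the main obstacle to be this bookkeeping. In particular, repeated indices need care: the sum over $j$ should be read over distinct values, and the duplicate-$j$ case needs checking. It is also worth verifying the $n=2$ case, where $c'_{\tau_a\tau_b}=-c_{\tau_b\tau_a}$ for $a<b$ and $c'_{\tau_a\tau_a}=0$, against the definitions as a sanity check.
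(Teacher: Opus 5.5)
Your proposal is correct and follows the paper's route. You kill $\eps_{\Gon}$, evaluate $\xi$ on the nested commutators to get $x_{i_1}\cdots x_{i_{n-2}}(x_{i_{n-1}}y_{i_n}-x_{i_n}y_{i_{n-1}})\ad(e_0)^{n-1}e_1$, and collect the positive and negative contributions for each sorted monomial. Reading the sum over $j$ as running over distinct values is correct: each distinct value $j<i_n$ yields exactly one sequence in the basis set. This matches the paper's formulation of moving one occurrence of an entry $<i_n$ to the end.
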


\begin{proof}
    Since $\xi$ vanishes on $\eps_{\Gon}$ (\Cref{cocycles-vanishing-on-goncharov-ideal}), we have $\ev_{\eps}(\xi) = \xi(\eps) = \sum_{n=1}^N \xi(\eps_n)$. For $n = 1$ we compute
    \[ \xi(\eps_1) \overset{\eqref{eq:eps-1}}{=} \sum_{i=1}^{d_1} c_{\tau_i} \xi(\tau_i) \overset{\eqref{eq:tauimapto}}{=} \sum_{i=1}^{d_1} c_{\tau_i} x_i e_0 + \sum_{i=1}^{d_1} c_{\tau_i} y_i e_1. \]
    For $n \geq 2$, we first compute the evaluation of~$\xi$ on a nested commutator of the kind appearing in~\eqref{eq:eps-n}. For any sequence of indices $i_1,\ldots,i_n$ in $\{1,\ldots,d_1\}$ we have
    \begin{align*}
        \xi(\ad(\tau_{i_1}) \cdots \ad(\tau_{i_{n-1}}) \tau_{i_n}) &= \ad(\xi(\tau_{i_1}))\cdots \ad(\xi(\tau_{i_{n-1}})) \xi(\tau_{i_n}) \\
        &\overset{\mathclap{\eqref{eq:tauimapto}}}{=} \ad(x_{i_1} e_0 + y_{i_1} e_1) \cdots \ad(x_{i_{n-1}} e_0 + y_{i_{n-1}} e_1) (x_{i_n} e_0 + y_{i_n} e_1)\\
        &= \ad(x_{i_1} e_0 + y_{i_1} e_1) \cdots (x_{i_{n-1}} y_{i_n} - x_{i_n} y_{i_{n-1}}) [e_0,e_1] \\
        &= x_{i_1}\cdots x_{i_{n-2}} (x_{i_{n-1}} y_{i_n} - x_{i_n} y_{i_{n-1}}) \ad(e_0)^{n-1} e_1,
    \end{align*}
    where in the last equality the first $n-2$ occurrences of $e_1$ could be ignored since they appear in a Lie bracket whose second argument already contains $e_1$. A similar computation using~\eqref{eq:sigmanmapto} shows
    \[ \xi(\ad(\tau_{i_1}) \cdots \ad(\tau_{i_{n-m}}) \sigma_{m,i}) = x_{i_1}\cdots x_{i_{n-m}} z_{m,i} \ad(e_0)^{n-1} e_1, \]
    so evaluating $\xi$ on~\eqref{eq:eps-n} results in $\xi(\eps_n) = k_n \ad(e_0)^{n-1} e_1$ with
    \begin{align*}
        \begin{split}
            k_n &= \sum_{ i_1 \leq \ldots \leq i_{n-1} > i_n} c_{\tau_{i_1}\cdots \tau_{i_n}} x_{i_1}\cdots x_{i_{n-2}} (x_{i_{n-1}} y_{i_n} - x_{i_n} y_{i_{n-1}}) \\
            & \qquad + \sum_{m=2}^n\sum_{i=1}^{d_m} \sum_{i_1\leq\ldots\leq i_{n-m}} c_{\tau_{i_1}\cdots\tau_{i_{n-m}} \sigma_{m,i}} x_{i_1}\cdots x_{i_{n-m}} z_{m,i}.
        \end{split}
    \end{align*}
    It remains to determine the coefficient $c_{\tau_{j_1}\cdots\tau_{j_n}}'$ of each monomial $x_{j_1}\cdots x_{j_{n-1}} y_{j_n}$, where we can assume $j_1 \leq \ldots \leq j_{n-1}$. Let $A = \{ (i_1,\ldots,i_n) : i_1 \leq \ldots \leq i_{n-1} > i_n \}$. From a sequence $(i_1,\ldots,i_n) \in A$ we get a positive contribution to $x_{i_1}\cdots x_{i_{n-1}} y_{i_n}$ (in which the largest index $i_{n-1}$ appears among the~$x_i$) and a negative contribution to $x_{i_1}\cdots x_{i_{n-2}} x_{i_n} y_{i_{n-1}}$ (in which the largest index appears in $y_{i_{n-1}}$). If $j_{n-1} > j_n$, the only contribution comes from $(j_1,\ldots,j_{n-1},j_n) \in A$, giving the coefficient $c_{\tau_{j_1}\cdots \tau_{j_n}}$. If $j_{n-1} \leq j_n$, we get a contribution from each sequence $(i_1,\ldots,i_n) \in A$ in which $i_{n-1}$ is $j_n$ and the elements $i_1,\ldots,i_{n-2},i_n$ are $j_1,\ldots,j_{n-1}$ after ordering. But these sequences are precisely those which can be obtained from $(j_1,\ldots,j_n)$ by moving an entry $< j_n$ to the end, i.e.\ $(j_1,\ldots,\hat{k},\ldots,j_n,k)$ with $k \in \{j_1,\ldots,j_{n-1}\}$ and $k < j_n$, where the hat means that one occurrence of~$k$ is omitted. This results in the claimed formula for $c_{\tau_{j_1}\cdots\tau_{j_n}}'$.
\end{proof}

\subsection{The motivic refined Selmer scheme}
\label{sec:motivic-refined-selmer-scheme}

Now we consider the \emph{refined} Chabauty--Kim method as recalled in \S\ref{sec:ck-loci}. This is based on replacing the global étale Selmer scheme $\rH^1_{f,S}(G_K, \Pi^{\et})$ with the refined subscheme $\Sel^{\min}_{S,\Pi^{\et}}(X)$. For the depth-$N$ polylogarithmic fundamental group $\Pi_{\PL,N}$, we define a motivic refined Selmer scheme $\Sel_{S,\PL,N}^{\mot,\min}(X)$ of $\Sel_{S,\PL,N}^{\mot}(X)$, whose base change to~$\bQ_p$ agrees with the étale refined Selmer scheme $\Sel^{\min}_{S,\PL,N}(X)$. It is not necessary to restrict to the polylogarithmic quotient but we limit the discussion to this case as we introduced the coordinates on the Selmer scheme only in this setting.

\begin{defn}
\label{def:refined-selmer-scheme}
    Assume \Cref{cond:F2}. Let $1 \leq N \leq \infty$. Suppose the set $\Sigma_1$ of chosen Lie algebra generators in degree~$-1$ includes elements $(\tau_{\fl})_{\fl \in S}$ corresponding to the valuations $v_{\fl} \in (\cO_{K,S}^\times \otimes \bQ)^{\vee}$ under the isomorphism $\Lie(U_S^{\MT})_{-1} \cong (\cO_{K,S}^\times \otimes \bQ)^{\vee}$. Define the \emph{motivic refined Selmer scheme} $\Sel_{S,\PL,N}^{\mot,\min}(X)$ as the closed subscheme of~$\Sel^{\mot}_{S,\PL,N}(X)$ defined by the equations
    \[ x_{\fl} y_{\fl} (x_{\fl} + y_{\fl}) = 0 \quad \text{for $\fl \in S$} \]
    in terms of the coordinates $x_{\fl}$ and $y_{\fl}$ defined in §\ref{sec:coordinates}. It can be written as a union
    \begin{equation}
    \label{eq:decompositionSelmin}
        \Sel_{S,\PL,N}^{\mot,\min}(X) = \bigcup_{\Sigma} \Sel_{S,\PL,N}^{\mot,\Sigma}(X)
    \end{equation}
    over the $3^{\#S}$ \emph{refinement conditions} $\Sigma = (\Sigma_{\fl})_{\fl \in S}\in  \{0,1,\infty\}^S$, where $\Sel_{S,\PL,N}^{\mot,\Sigma}(X)$ is defined by the equations
    \begin{align*}
        y_{\fl} &= 0 \quad \text{if $\Sigma_{\fl} = 0$},\\
        x_{\fl} &= 0 \quad \text{if $\Sigma_{\fl} = 1$},\\
        x_{\fl} + y_{\fl} &= 0 \quad \text{if $\Sigma_{\fl} = \infty$}
    \end{align*}
    for $\fl \in S$.
\end{defn}

\begin{prop}
\label{refined-selmer-scheme-comparison}
    Assume \Cref{cond:F2}. The comparison isomorphism of unrefined Selmer schemes $\Sel_{S,\PL,N}^{\mot}(X)_{\bQ_p} \cong \rH^1_{f,S}(G_K, \Pi^{\et}_{\PL,N})$ from~\eqref{eq:motivic-to-etale} induces an isomorphism
    \[ \Sel_{S,\PL,N}^{\mot,\min}(X)_{\bQ_p} \cong \Sel_{S,\PL,N}^{\min}(X). \]
\end{prop}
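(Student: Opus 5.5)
We want to show that under the comparison isomorphism \eqref{eq:motivic-to-etale}, the closed subscheme cut out by the equations $x_{\fl}y_{\fl}(x_{\fl}+y_{\fl})=0$ for $\fl\in S$ corresponds exactly to the subscheme cut out by the refined local conditions \eqref{eq:refined-local-condition}. The plan is to analyse these local conditions prime by prime, using the fact (\Cref{trivial-action-on-polylogarithmic-quotient}) that everything for $\Pi_{\PL,N}$ is governed by graded Lie algebra homomorphisms.

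\textbf{Primes $\fq\notin S$.} The conditions at $\fq\nmid p$ and at $\fq\mid p$ impose nothing beyond $\rH^1_{f,S}$.
\begin{itemize}
\item For $\fq\notin S$ with $\fq\nmid p$, the local unramified cohomology $\rH^1_{\ur}(G_\fq,\Pi^{\et})$ is a single point. This is because $\Pi^{\et}$ is an iterated extension of $\bQ_p(n)$ with $n\ge1$, whose unramified $\rH^1$ vanishes (Frobenius weights are nonzero). Moreover $X(\cO_\fq)\neq\emptyset$ by \Cref{cond:F2}. Hence the Zariski closure of $j_\fq(X(\cO_\fq))$ is exactly that point, and the refined condition agrees with the unramified condition.
\item For $\fq\mid p$, the locus $j_\fq(X(\cO_\fq))$ is Zariski dense in $\rH^1_f(G_\fq,\Pi^{\et})\cong\Res\Pi^{\dR}$ for the polylogarithmic quotient. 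This follows from the algebraic independence of $\log,\Li_1,\dots,\Li_N$ as Coleman functions on a residue disc, or equivalently the known density of the unipotent Kummer map. So there the refined condition also agrees with the crystalline condition.
\end{itemize}

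\textbf{Primes $\fl\in S$.} Here $\fl\nmid p$, and the crux is to identify the localisation $\loc_\fl$ of a Selmer class in terms of the coordinates $x_\fl,y_\fl$.
\begin{itemize}
\item The local Galois cohomology $\rH^1(G_\fl,\Pi^{\et}_{\PL,N})$ factors through the tame quotient. For $\Pi^{\et}$ an extension of Tate twists, the depth-$\ge2$ part contributes nothing locally. Concretely, $\rH^1(G_\fl,\bQ_p(n))=0$ for $n\ge2$, while $\rH^1(G_\fl,\bQ_p(1))\cong K_\fl^\times\otimes\bQ_p$ is detected by the valuation. So a class is determined by its image in $\rH^1(G_\fl,\Pi_1^{\et})=\rH^1(G_\fl,\bQ_p(1)^2)$, i.e.\ by its valuation pair, but one must check this injectivity nonabelianly via the central series.
\item Globally, the abelian part of $\xi$ is the pair of elements of $(\cO_{K,S}^\times\otimes\bQ_p)$ given by its images along $e_0,e_1$. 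Under the identification $\Lie(U_S^{\MT})_{-1}\cong(\cO_{K,S}^\times\otimes\bQ)^\vee$, the valuation at $\fl$ of these two units is $\xi(\tau_\fl)=x_\fl e_0+y_\fl e_1$, i.e.\ the pair $(x_\fl,y_\fl)$. We rely on the compatibility of $\loc_\fl$ with the Kummer-theory identification $\Ext^1(\bQ(0),\bQ(1))=\cO_{K,S}^\times\otimes\bQ$.
\item On the other hand, $j_\fl(X(K_\fl))$ consists of the images of $z\in K_\fl\smallsetminus\{0,1\}$. Its valuation pair $(v(z),v(1-z))$ takes the values $(n,0)$, $(0,n)$ or $(-n,-n)$ with $n\ge0$ integers. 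The Zariski closure in the $\bQ_p$-affine space of pairs is the union of the three lines $y=0$, $x=0$, $x+y=0$. Orientation and sign conventions for $e_1$ versus $\rd t/(1-t)$ must be matched to get $x+y$ rather than $x-y$.
\item Having also shown that the local class is determined by this pair, with the higher parts then forced, the condition becomes $x_\fl y_\fl(x_\fl+y_\fl)=0$.
\end{itemize}

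The main obstacle is this last bullet chain: proving that local cohomology at $\fl$ for the nonabelian $\Pi_{\PL,N}^{\et}$ injects into the abelianisation, and that the closure of $j_\fl(X(K_\fl))$ is precisely the preimage of the three lines. I would first attempt the tame computation, writing $G_\fl^{t}$ as generated by Frobenius and inertia and showing that cocycles on inertia valued in depth $\ge2$ are coboundaries.
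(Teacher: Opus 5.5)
Your strategy is the right one and is essentially the argument of \cite[Proposition~5.11]{BKL:chabauty-kim-sc}, which the paper cites rather than repeats. At $\fq\notin S$ with $\fq\nmid p$, the refined condition is the single trivial unramified class, nonempty by \Cref{cond:F2}. At $\fq\mid p$, it is the crystalline condition by Zariski density of the Kummer image. At $\fl\in S$, everything is governed by the valuation pair $(x_\fl,y_\fl)$. The proof goes through once two points are tightened.

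First, at $\fl\in S$, injectivity of $\rH^1(G_\fl,\Pi^{\et}_{\PL,N})\to\rH^1(G_\fl,\Pi^{\et}_1)\cong\bA^2$ only shows that $j_\fl(X(K_\fl))^{\mathrm{Zar}}$ is contained in the preimage of the three lines. Without more, the image of the closure could miss points of the lines. To get equality you need this map to be an isomorphism of $\bQ_p$-schemes. The missing surjectivity comes from the vanishing of the lifting obstructions $\rH^2(G_\fl,\bQ_p(n))\cong\rH^0(G_\fl,\bQ_p(1-n))^\vee=0$ for $n\ge 2$. Apply this step by step along the lower central series, whose graded pieces are central copies of $\bQ_p(n)$. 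Together with the $\rH^1$-vanishing you already use, this gives a bijection on $R$-points for every $\bQ_p$-algebra $R$.

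Second, if $\fq\mid p$ is not split, the density has to hold in $\Res_{K_\fq/\bQ_p}\Pi^{\dR}_{K_\fq}$. That is, you need algebraic independence of all conjugates $\tau(\log z)$ and $\tau(\Li_n(z))$ over the embeddings $\tau\colon K_\fq\to\overline{\bQ}_p$, which is more than independence on a residue disc. It does follow from it: on a residue disc the $\tau$-component is a power series in $\tau(z)$. Moreover, no nonzero convergent power series in the variables $(\tau(z))_\tau$ vanishes on a $p$-adic open subset of $K_\fq$.
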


\begin{proof}
    This follows the description of the étale refined Selmer scheme given in \cite[Proposition~5.11]{BKL:chabauty-kim-sc}. It is stated over $K = \bQ$ in loc.\ cit.\ but the generalisation to general number fields is straightforward. 
\end{proof}

We also define the $\Sigma$-refined Chabauty--Kim loci
\begin{align*}
    X(\cO_K \otimes \bZ_p)_{S,\PL,N}^{\Sigma} &\coloneqq j_p^{-1}(\loc_p(\Sel_{S,\PL,N}^{\mot,\Sigma}(X))), \\ 
    X(\cO_{\fp})_{S,\PL,N}^{\Sigma} &\coloneqq \pr_{\fp}(X(\cO_K \otimes \bZ_p)_{S,\PL,N}^{\Sigma}).
\end{align*}
From \eqref{eq:decompositionSelmin} we get
$$X(\cO_K \otimes \bZ_p)_{S,\PL,N}^{\min}=\bigcup_{\Sigma} X(\cO_K \otimes \bZ_p)_{S,\PL,N}^{\Sigma},$$
corresponding to the ``decomposition'' $X(\cO_{K,S}) = \bigcup_{\Sigma} X(\cO_{K,S})_{\Sigma}$ where
\[ X(\cO_{K,S})_{\Sigma} = \left\{x \in X(\cO_{K,S}) : \red_{\fl}(x) \in (X \cup \{\Sigma_{\fl}\})(\bF_{\fl}) \text{ for $\ell \in S$} \right\}. \]
Here, the mod-$\fl$ reduction map is
\[ \red_{\fl}\colon X(K_{\fl}) \subseteq \bP^1(K_{\fl}) = \bP^1(\cO_{\fl}) \twoheadrightarrow \bP^1(\bF_{\fl}). \]

As noted in \cite[Lemma~4.12]{BKL:chabauty-kim-sc}, the refined Chabauty--Kim locus for the full depth-$N$ quotient of the fundamental group is stable under the action of $\Aut(X) \cong S_3$. This is no longer true for the  \emph{polylogarithmic} quotient, cf.\ Remark 4.13 of loc.\ cit. However, the polylogarithmic locus is still stable under the action of the inversion map $\iota \colon X \to X$, $z \mapsto 1/z$:

\begin{lemma}
    \label{inversion-action-on-refined-loci}
    The automorphism $\iota\colon z \mapsto 1/z$ of~$X$ induces an involution of $X(\cO_K \otimes \bZ_p)_{S,\PL,N}^{\min}$ permuting the subsets $X(\cO_K \otimes \bZ_p)_{S,\PL,N}^{\Sigma}$ according to the action of $\iota$ on $\{0,1,\infty\}$, in the sense that
    \[ \iota(X(\cO_K \otimes \bZ_p)_{S,\PL,N}^{\Sigma}) = X(\cO_K \otimes \bZ_p)_{S,\PL,N}^{\iota(\Sigma)} \]
    for all refinement conditions $\Sigma \in \{0,1,\infty\}^S$.
\end{lemma}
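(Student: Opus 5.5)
The plan is to exhibit an involution $\iota_*$ of the Selmer scheme $\Sel^{\mot}_{S,\PL,N}(X)$ and an involution $\iota_{\dR}$ of $\Lie(\Pi_{\PL,N}^{\dR})$, both induced by functoriality of the motivic fundamental group under $\iota$. We would then check three things: $\iota_*$ preserves the refined subscheme and permutes the pieces $\Sel^{\mot,\Sigma}$ via $\Sigma\mapsto\iota(\Sigma)$; $\iota_{\dR}$ commutes with the evaluation maps $\ev_{\eps_{\fp}}$; and $\iota_{\dR}$ is compatible with $\log\circ j_{\fp}^{\dR}$ and $\iota$ on $X(\cO_{\fp})$. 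Given these, the definition of $X(\cO_K\otimes\bZ_p)^{\Sigma}_{S,\PL,N}$ as the preimage of the scheme-theoretic image shows that $\iota$ maps the $\Sigma$-locus into the $\iota(\Sigma)$-locus. Applying this twice, since $\iota$ is an involution, gives equality, and taking the union over $\Sigma$ gives the involution of the full refined locus.

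First, the tangential base point. The map $\iota$ sends $\vec 1_0$ to the tangent vector $\vec 1_\infty$ at $\infty$, not to $\vec 1_0$. So I would instead use the composite of $\iota$ with a motivic path from $\vec 1_0$ to $\vec 1_\infty$. The cleanest alternative is to work directly on Lie algebras. The map $\iota$ induces a motivic automorphism of the polylogarithmic Lie algebra, at least after conjugation, with $e_0\mapsto -e_0$ and $e_1\mapsto$ something equal to $e_1$ modulo terms killed in $\Pi_{\PL}$. Concretely, on de Rham/canonical realisations $\iota^*\omega_0=-\omega_0$ and $\iota^*\omega_1=\omega_1-\omega_0$. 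Modulo the correction from changing the base point, this induces the graded Lie automorphism $e_0\mapsto -e_0$, $e_1\mapsto -e_1$ (up to sign conventions) on $\Lie(\Pi_{\PL}^{\omega})$, sending $\ad(e_0)^{n-1}e_1\mapsto (-1)^n\ad(e_0)^{n-1}e_1$.

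The choice of sign is pinned down by requiring consistency with Lemma~\ref{coleman-sinnott} and Lemma~\ref{coleman-sinnott-for-Ln}, together with $\log(1/z)=-\log z$ and $L_1(1/z)=L_1(z)-\log z$. Since $U_S^{\MT}$ acts trivially (Lemma~\ref{trivial-action-on-polylogarithmic-quotient}), any graded automorphism $\phi$ of $\Lie(\Pi_{\PL,N}^{\omega})$ defined over $\bQ$ acts on $\Hom_{\gr}$ by postcomposition. Postcomposition obviously commutes with evaluation at any $\eps_{\fp}$. I would therefore define the automorphism directly as
\[ \phi(e_0)=-e_0,\qquad \phi(e_1)=e_1-e_0\ \text{or the sign variant},\qquad \phi(\ad(e_0)^{n-1}e_1)=(-1)^{n}\ad(e_0)^{n-1}e_1 \ (n\ge2). \]
I would then verify via Lemma~\ref{Ln-as-pullback} that $\log\circ j^{\dR}_{\fp}(1/z)=\phi(\log\circ j^{\dR}_{\fp}(z))$. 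This amounts to the identities $\log(1/z)=-\log z$, $L_1(1/z)=L_1(z)-\log z$ (since $-\log(1-1/z)=-\log(1-z)+\log z$ with the sign handled carefully) and $L_n(1/z)=(-1)^{n+1}\cdots$ from Lemma~\ref{coleman-sinnott-for-Ln}. The exact signs will be fixed by these computations. This sidesteps the base-point issue entirely, because the compatibility with Kummer maps is checked pointwise via the functional equations.

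Next, the Selmer side. In coordinates, $\phi_*$ sends $(x_{\fl},y_{\fl})$ to a linear transform, e.g.\ $(-x_{\fl}-y_{\fl},\,y_{\fl})$ or similar, and rescales $z_{n,i}$ by signs. I would check that it permutes the three lines $y=0$, $x=0$ and $x+y=0$ as $\iota$ permutes $0,\infty,1$: fixing $1$ and swapping $0,\infty$. This is a direct computation and is where the sign choice of $\phi(e_1)$ must be correct.

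The main obstacle is getting the transformation of $e_1$ exactly right. It must simultaneously match the $L_1$ functional equation and send the refinement lines to one another. I would first test on $z\in\cO_{K,S}^\times$-type points, where $(x,y)$ are essentially the valuations of $z$ and $1-z$. Under $z\mapsto 1/z$ these become $(-v(z),\,v(1-z)-v(z))$, which pins down the signs.
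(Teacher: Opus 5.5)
Your argument is correct, but it takes a different route from the paper's.

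\textbf{The paper's route.} It argues motivically. Because $\iota$ extends to $\bG_m$, $\iota_*$ carries $N_1$ at $\vec 1_0$ to $N_1$ at the tangential base point at $\infty$, giving an isomorphism of polylogarithmic quotients. Pushing torsors forward and composing with the change-of-basepoint isomorphism of \cite{motivic-selmer-scheme} gives an automorphism of the Selmer scheme compatible with the global and local Kummer maps. The refined statement is then deduced from the fact that, for $\fq\in S$, $\iota$ swaps the components of $j_{\fq}(X(K_{\fq}))^{\mathrm{Zar}}$ attached to the cusps $0$ and $\infty$.

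\textbf{Your route.} You use \Cref{trivial-action-on-polylogarithmic-quotient}: since the Selmer scheme is $\Hom_{\gr}(\Lie(U_S^{\MT}),\Lie(\Pi^{\omega}_{\PL,N}))$, postcomposition with any graded Lie automorphism $\phi$ over $\bQ$ acts on it and commutes with every $\ev_{\eps_{\fp}}$. Kummer compatibility reduces via \Cref{Ln-as-pullback} to the Coleman--Sinnott identities. The permutation of the pieces is a linear check against \Cref{def:refined-selmer-scheme}, which is exactly how $X(\cO_K\otimes\bZ_p)^{\Sigma}_{S,\PL,N}$ is defined.

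\textbf{What each buys.} Your route is fully explicit and avoids the base-point bookkeeping and the identification of local components, which the paper treats only briefly. The paper's route explains where the symmetry comes from, needs no functional equations as input, and is not tied to quotients on which $U_S^{\MT}$ acts trivially. Once you have $\loc_p\circ\phi_*=\Phi\circ\loc_p$ and $j_p\circ\iota=\Phi\circ j_p$ with $\Phi=\prod_{\fp}\phi$ an automorphism, the equality $\iota(X^{\Sigma})=X^{\iota(\Sigma)}$ follows at once. You do not need to apply an inclusion twice.

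\textbf{Your provisional formulas are wrong, and not merely up to sign.} The correct inputs are:
\begin{itemize}
\item $\log(1/z)=-\log z$;
\item $\Li_1(1/z)=\Li_1(z)+\log z$ (your displayed $L_1(z)-\log z$ has the wrong sign, as does $\iota^*\omega_1=\omega_1-\omega_0$; in fact $\iota^*\omega_1=\omega_0+\omega_1$);
\item $L_n(1/z)=(-1)^{n+1}L_n(z)$ for $n\ge 2$.
\end{itemize}
Dually these force
\[ \phi(e_0)=e_1-e_0,\qquad \phi(e_1)=e_1,\qquad \phi\bigl(\ad(e_0)^{n-1}e_1\bigr)=(-1)^{n-1}\ad(e_0)^{n-1}e_1 , \]
so $e_1$ is fixed and $e_0$ moves.

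You must also check that this is a Lie homomorphism, since matching coordinates alone does not give that. It is:
\begin{itemize}
\item $\phi$ fixes $e_1$, so it preserves the ideal generated by $e_1$ and that ideal's commutator subalgebra, hence descends to $\Lie(\Pi^{\omega}_{\PL,N})$;
\item $\ad(e_1-e_0)^{n-1}e_1\equiv(-1)^{n-1}\ad(e_0)^{n-1}e_1$ modulo $e_1$-degree $\ge 2$.
\end{itemize}

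On coordinates $\phi$ gives $(x_{\fl},y_{\fl})\mapsto(-x_{\fl},\,x_{\fl}+y_{\fl})$. This sends $\{y_{\fl}=0\}$ to $\{x_{\fl}+y_{\fl}=0\}$, fixes $\{x_{\fl}=0\}$, and sends $\{x_{\fl}+y_{\fl}=0\}$ to $\{y_{\fl}=0\}$, as required.

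Your candidate $e_0\mapsto -e_0$, $e_1\mapsto e_1-e_0$, i.e.\ $(x,y)\mapsto(-x-y,y)$, would instead fix the $0$-line and swap the $1$- and $\infty$-lines. Together with your sign $(-1)^n$ on $\ad(e_0)^{n-1}e_1$, it is not even compatible with the bracket. Your final test on $S$-unit points does recover the correct map, so this is a correction of details, not a gap in the strategy.
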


\begin{proof}
    Let $b = \vec{1}_0$ and let $N_1(b)$ be the kernel of the map $\pi_1^{\mot}(X,b)_N \twoheadrightarrow \pi_1^{\mot}(\bG_m)$ induced by $X \hookrightarrow \bG_m$, and set $\Pi_{\PL,N}(b) \coloneqq \Pi_{\PL,N} = \pi_1^{\mot}(X,b)_N/[N_1(b),N_1(b)]$. Let $\iota b$ be the tangential base point at $\infty$ obtained by applying $\iota$ to~$b = \vec{1}_0$ and define $N_1(\iota b)$ and $\Pi_{\PL,N}(\iota b)$ similarly. The involution $\iota$ of~$X$ extends to an involution of~$\bG_m$, so we have a commutative diagram
    \[ 
    \begin{tikzcd}
        \pi_1^{\mot}(X,b) \rar["\iota_*"] \dar[->>] & \pi_1(X,\iota  b) \dar[->>] \\
        \pi_1^{\mot}(\bG_m) \rar["\iota_*"] & \pi_1^{\mot}(\bG_m),
    \end{tikzcd}
    \]
    which implies that $\iota_*(N_1(b)) = N_1(\iota b)$. The isomorphism $\iota_*\colon \pi_1^{\mot}(X,b) \to \pi_1^{\mot}(X,\iota b)$ thus induces an isomorphism of the depth-$N$ polylogarithmic quotients:
    \[ \iota_* \colon \Pi_{\PL,N}(b) \to \Pi_{\PL,N}(\iota b). \]
    The operation of pushing $\Pi_{\PL,N}(b)$-torsors forward along this isomorphism defines an isomorphism of Selmer schemes
    \begin{equation}
        \label{eq:iota_star}
        \iota_*\colon \Sel_{S,\Pi_{\PL}(b)}^{\mot}(X) \to \Sel_{S,\Pi_{\PL}(\iota b)}^{\mot}(X). 
    \end{equation}
    Note that conjugation by any path from~$b$ to~$\iota b$ maps $N_1(b)$ into $N_1(\iota b)$, thus $\pi_1^{\mot}(X,\iota b) \twoheadrightarrow \Pi_{\PL,N}(\iota b)$ is the quotient \emph{corresponding to} $\pi_1^{\mot}(X,b) \twoheadrightarrow \Pi_{\PL,N}(b)$ in the sense of \cite[Definition~2.6]{motivic-selmer-scheme}. By Corollary~2.8 of loc.\ cit., we get a change-of-basepoint isomorphism
    \[ \Sel_{S,\Pi_{\PL}(\iota b)}^{\mot}(X) \cong \Sel_{S,\Pi_{\PL}(b)}^{\mot}(X). \]
    The composition of this isomorphism with \eqref{eq:iota_star} yields an automorphism of the polylogarithmic Selmer scheme. One has a similar automorphism on the local side, and a compatibility with the global and local Kummer maps, which implies that $\iota$ preserves the refined Chabauty--Kim locus $X(\cO_K \otimes \bZ_p)_{S,\PL,N}^{\min}$. The more precise statement about permuting the $\Sigma$-refined subsets follows from the fact that for each $\fq \in S$, the action of $\iota$ on the local Selmer scheme $\rH^1(G_{\fq}, \Pi_{\PL})$ interchanges the two components of $j_{\fq}(X(K_{\fq}))^{\mathrm{Zar}}$ corresponding to the cusps $0$ and~$\infty$. (See \cite[Lemma~2.9]{refined-selmer-equations} for a description of the three components in the case $K = \bQ$.)
\end{proof}

\section{Motivic and $\fp$-adic periods}
\label{sec:periods}

Let $K$ be a number field and let $S$ be a finite set of primes of~$K$. Recall that $U_S^{\MT}$ denotes the unipotent part of the Tannaka group of $\MT(\cO_{K,S}, \bQ)$. To simplify the notation, let $Z = \Spec(\cO_{K,S})$ and write
\begin{align*}
    \fn(Z) &\coloneqq \Lie(U_S^{\MT}),\\
    A(Z) &\coloneqq \cO(U_S^{\MT})
\end{align*}
for the Lie algebra of, and the ring of functions on~$U_S^{\MT}$, respectively. We also use the notation $A(\cO_{K,S})$ and $\fn(\cO_{K,S})$ when we wish to indicate the dependence on the set~$S$. Both $\fn(Z)$ and $A(Z)$ carry a grading by half-weight: 
\begin{equation*}
    \fn(Z) = \prod_{n=1}^{\infty} \fn(Z)_{-n}, \qquad A(Z) = \bigoplus_{n=0}^{\infty} A(Z)_n.
\end{equation*}

\begin{defn}
    \label{def:motivic-periods}
    A (unipotent mixed Tate) \emph{motivic period} of $\cO_{K,S}$ is an element of $A(Z)$, i.e.\ a function on $U_S^{\MT}$. If $\fp \not\in S$ is a prime of~$K$ and $\eta_{\fp} \in U_S^{\MT}(K_{\fp})$ is the $\fp$-adic period point from \Cref{def:period-element}, the value $f(\eta_{\fp}) \in K_{\fp}$ of a motivic period $f \in A(Z)$ at~$\eta_{\fp}$ is called a (unipotent mixed Tate) \emph{$\fp$-adic period}. The map
    \begin{equation}
        \label{eq:period-homomorphism}
        \eta_{\fp}^{\sharp} = \per_{\fp} \colon A(Z) \to K_{\fp}, \quad f \mapsto f(\eta_{\fp})
    \end{equation}
    sending a motivic period to its $\fp$-adic value is called the \emph{$\fp$-adic period homomorphism}.
\end{defn}

\subsection{Abstract motivic periods}
\label{sec:abstract-periods}

Suppose we have a choice of free generators $\Sigma = \coprod_{n=1}^{\infty} \Sigma_n$ of $\fn(Z)$ as in §\ref{sec:coordinates}. This allows us to construct abstract motivic periods as follows. For each $\bQ$-algebra~$R$, the group of $R$-valued points $U_S^{\MT}(R)$ equals the group of grouplike elements of the non-commutative power series algebra $R\llangle \Sigma \rrangle$ with respect to the ``deconcatenation coproduct'', which is characterised by the property $\Delta \sigma = 1 \otimes \sigma + \sigma \otimes 1$ for $\sigma \in \Sigma$. Concretely, such an element is given by a formal series
\[ \sum_{w} a_w w \in R\llangle\Sigma\rrangle \]
with $w$ running over non-commutative monomials (``words'') in the generators~$\Sigma$, and with coefficients $a_w \in R$ satisfying a shuffle product formula (see \cite[Lemma~9.3\,(a)]{motivic-selmer-scheme}).

\begin{defn}
    \label{def:abtract-motivic-period}
    Given a choice of free generators $\Sigma$ of $\Lie(U_S^{\MT})$ and a word~$w$ in those generators, define the motivic period $f_w \in A(Z)$ as the function $\sum_w a_w w \mapsto a_w$.
\end{defn}

In other words, $f_w$ is given by extracting the $w$-coefficient in the expansion of an element of $U_S^{\MT}$ as a grouplike series in the chosen generators. The ring of motivic periods $A(Z)$ is a shuffle algebra with basis given by the $f_w$. It is the linear dual of the pro-finite dimensional vector space $\bQ\llangle \Sigma \rrangle$. If $w = \sigma_1\cdots\sigma_r$ and we set $\deg(w) \coloneqq \sum_i \deg(\sigma_i)$, then $f_w$ is homogeneous of degree~$-\deg(w)$ for the grading of $A(Z)$ by half-weight.

Another way of viewing the abstract motivic periods $f_w$ is as coefficients of the \emph{universal point} of $U_S^{\MT}$. The latter is defined as the $A(Z)$-valued point $\eta_{\univ} \in U_S(A(Z))$ corresponding to the identity morphism $\Spec(A(Z)) \to U_S^{\MT}$. The expansion of $\eta_{\univ}$ in the chosen generators is
\begin{equation}
    \label{eq:eta-univ}
    \eta_{\univ} = \sum_w f_w w \in A(Z)\llangle \Sigma \rrangle
\end{equation}
by definition of the~$f_w$. When $\fp \not\in S$ is a prime of~$K$ then, tautologically, the $\fp$-adic period point $\eta_{\fp}$ is the image of the universal point $\eta_{\univ}$ under the map $U_S^{\MT}(A(Z)) \to U_S^{\MT}(K_{\fp})$ induced by the period homomorphism $\per_{\fp}\colon A(Z) \to K_{\fp}$. This means that the expansion
\begin{equation}
\label{eq:eta-p-expansion}
    \eta_{\fp} = \sum_w a_w^{\fp} w \in K_{\fp}\llangle \Sigma\rrangle.
\end{equation}
of $\eta_{\fp}$ is obtained from \eqref{eq:eta-univ} by applying $\per_{\fp}$ to the coefficients: 
\begin{equation}
\label{eq:p-adic-from-motivic-coeff}
    a_w^{\fp} = \per_{\fp}(f_w) \quad \text{for all $w$}.
\end{equation}
We also need the expansion
\begin{equation}
    \label{eq:eps-p-expansion}
    \eps_{\fp} = \sum_w b_w^{\fp} w \in K_{\fp}\llangle \Sigma\rrangle
\end{equation}
of the Lie algebra element $\eps_{\fp} = \log(\eta_{\fp})$ in the chosen generators. The coefficients~$b_w^{\fp}$ can be expressed in terms of the~$a_w^{\fp}$ via the following lemma.

\begin{lemma}
    \label{lie-coefficients-from-group-coefficients}
    Let $\eta \in U_S^{\MT}(R)$ for some $\bQ$-algebra~$R$ and let $\eps \coloneqq \log(\eta) \in \Lie(U_S^{\MT})(R)$ be the corresponding Lie algebra element. Write
    \begin{equation*}
        \eta = \sum_w a_w w, \qquad
        \eps = \sum_w b_w w
    \end{equation*}
    for the expansions of $\eta$ and $\eps$ as a grouplike resp.\ primitive element of $R\llangle \Sigma\rrangle$. Then we have
    \[ b_w = \sum_{n=1}^{|w|} \frac{(-1)^{n+1}}{n} \sum_{\substack{w = w_1\cdots w_n\\ |w_i| \geq 1}} a_{w_1}\cdots a_{w_n}, \]
    for all words~$w$. Here, the inner sum runs over all decompositions of~$w$ into~$n$ nonempty subwords. 
\end{lemma}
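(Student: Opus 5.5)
The plan is to apply the power series expansion of the logarithm in the completed algebra $R\llangle\Sigma\rrangle$ and read off coefficients word by word. The grouplike element $\eta$ has constant term $a_\emptyset = 1$, so we can write $\eta = 1 + \nu$, where $\nu = \sum_{|w|\geq 1} a_w w$ has no constant term. The logarithm on $U_S^{\MT}$ is, on $R$-points inside $R\llangle\Sigma\rrangle$, the usual series
\[ \log(\eta) = \sum_{n\geq 1} \frac{(-1)^{n+1}}{n} \nu^n. \]
This sum converges in the topology given by word length, because $\nu^n$ only involves words of length $\geq n$. The first point to justify is therefore that the abstract logarithm $U_S^{\MT} \to \Lie(U_S^{\MT})$ is compatible with this formal series. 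This holds because $U_S^{\MT}(R)$ is identified with the grouplike elements of $R\llangle\Sigma\rrangle$ and $\Lie(U_S^{\MT})(R)$ with the primitive ones. Under these identifications the exponential of the pro-unipotent group is the formal exponential series, and the formal logarithm series is its inverse. This is the standard description of a free pro-unipotent group via its completed universal enveloping algebra, i.e.\ the identification already used in \S\ref{sec:abstract-periods}.

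Next I compute the $w$-coefficient of $\nu^n$. Since multiplication in $R\llangle\Sigma\rrangle$ is concatenation, we have
\[ \nu^n = \sum_{w_1,\ldots,w_n,\ |w_i|\geq 1} a_{w_1}\cdots a_{w_n}\, w_1\cdots w_n. \]
So the coefficient of a fixed word $w$ is $\sum a_{w_1}\cdots a_{w_n}$, summed over all ordered decompositions $w = w_1\cdots w_n$ into nonempty subwords. In particular this coefficient vanishes when $n > |w|$, which is why the outer sum may be truncated at $n = |w|$. Summing these coefficients against the weights $\frac{(-1)^{n+1}}{n}$ gives exactly the claimed expression for $b_w$.

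The only genuinely non-formal step is the first one: the identification of the group-theoretic $\log$ with the formal series. Everything after it is a finite coefficient extraction. I expect no real obstacle beyond pointing to the identification of $\log$ as the inverse of the formal exponential on grouplike elements, which in turn follows from the Tannakian description of $U_S^{\MT}$ and the structure of a free pro-unipotent group.
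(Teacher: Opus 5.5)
Your proposal is correct and matches the paper's proof. The paper likewise identifies $\log$ on grouplike elements with the formal series $\sum_{n\geq 1}\frac{(-1)^{n+1}}{n}\bigl(\sum_w a_w w - 1\bigr)^n$ and then reads off the coefficient of each word $w$ from the concatenation products, exactly as you do.
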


\begin{proof}
    This follows from the fact that the logarithm isomorphism $U_S^{\MT}(R) \cong \Lie(U_S^{\MT})(R)$ is given by sending the grouplike power series $\eta = \sum_w a_w w$ to the primitive power series
    \[ \log(\eta) = \sum_{n=1}^{\infty} \frac{(-1)^{n+1}}{n} \left( \sum_w a_w w -1 \right)^n, \]
    cf.\ \cite[Lemma~10.5]{motivic-selmer-scheme} for the analogous computation in the case $\Sigma = \{e_0,e_1\}$.
\end{proof}

In practice, it may be difficult to compute the $\fp$-adic values $\per_{\fp}(f_w) \in K_{\fp}$. This is related to the non-canonicity in choosing the generators~$\Sigma$. We discuss this issue in §\ref{sec:generators} below.

\subsection{Motivic logarithms and polylogarithms}
\label{sec:motivic-polylogarithms}

One source of motivic periods whose $\fp$-adic values can be computed are motivic iterated integrals between $S$-integral points on $\bP^1 \smallsetminus \{0,1,\infty\}$ or $\bG_m$. In this paper we only need motivic logarithms and polylogarithms. For $\alpha \in \bG_m(\cO_{K,S}) = \cO_{K,S}^\times$, the motivic logarithm $\log^{\fu}(\alpha) \in A(Z)$ is defined as follows. There is a motivic path space $\pi_1^{\mot}(\bG_m; 1, \alpha)$ in $\MT(\cO_{K,S},\bQ)$ whose canonical realisation contains a canonical path $\gamma_{1,\alpha} \in \pi_1^{\omega}(\bG_m; 1, \alpha)(\bQ)$. It can be characterised as the unique path which is $\bG_m$-invariant, or equivalently as the path corresponding to the algebra homomorphism
\[ \cO(\pi_1^{\omega}(\bG_m; 1, \alpha)) = \bigoplus_{n \geq 0} \omega_n(\cO(\pi_1^{\mot}(\bG_m; 1, \alpha)) \to \bQ \]
given by projection onto the 0th factor.

\begin{defn}
    \label{def:motivic-logarithm}
    The \emph{motivic logarithm} $\log^{\fu}(\alpha) \in A(Z)$ of $\alpha \in \cO_{K,S}^\times$ is defined as the function
    \[ u \mapsto \int_{u(\gamma_{1,\alpha})} \frac{\rd t}{t}. \]
\end{defn}

Here, $u$ is an $R$-valued point of $U_S^{\MT}$ for some $\bQ$-algebra~$R$, $u(\gamma_{1,\alpha}) \in \pi_1^{\omega}(\bG_m; 1, \alpha)(R)$ is the image of the canonical path under the action of $u$, and the integral $\int_{u(\gamma_{1,\alpha})} \rd t/t \in R$ is the coefficient of $e_0$ in the expansion of $u(\gamma_{1,\alpha})$ as a group-like element of $R\llangle e_0 \rrangle$. 
With the identification $\pi_1^{\omega}(\bG_m; 1, \alpha) \otimes_{\bQ} K \cong \pi_1^{\dR}(\bG_m; 1,\alpha)$, the element $u(\gamma_{1,\alpha})$ can also be viewed as an $R' \coloneqq R \otimes_{\bQ} K$-valued de Rham path and the integral can be described as a matrix coefficient arising from the unipotent vector bundle with connection $(\cE, \nabla)$ on~$\bG_m$ defined by
\begin{equation}
    \label{eq:kummer-bundle}
    \cE = \cO_{\bG_m} e_1 \oplus \cO_{\bG_m} e_0, \quad \nabla = \rd - \begin{pmatrix}
    0 & \rd t/t \\
    0 & 0
    \end{pmatrix}.
\end{equation}
In this formulation, the integral $\int_{u(\gamma_{1,\alpha})} \rd t/t$ is the value of 1 under the composition
\[ R' \overset{e_0}{\lto} R'^2 = \Fib_1(\cE) \otimes_K R' \overset{u(\gamma_{1,\alpha})}{\lto} \Fib_{\alpha}(\cE) \otimes_K R' = R'^2 \overset{e_1^{\vee}}{\lto} R'. \]

Since the motivic path torsor $\pi_1^{\mot}(\bG_m;\vec{1}_0,1)$ is trivial \cite[§5.4]{deligne-goncharov}, one can equivalently use the tangential base point $\vec{1}_0$ instead of $1 \in \bG_m$ in the definition of the motivic logarithm.
We have $\log^{\fu}(\alpha \beta) = \log^{\fu}(\alpha) + \log^{\fu}(\beta)$ by $\gamma_{1,\alpha\beta}=\gamma_{1,\alpha}\cdot \gamma_{\alpha,\alpha\beta}$ and the change of variables formula for multiplication by~$\alpha$. 
Therefore, the motivic logarithm map extends to a well-defined $\bQ$-linear map
\[ \log^{\fu}\colon \cO_{K,S}^\times \otimes_{\bZ} \mathbb{Q} \to A(Z).\]

\begin{lemma}
\label{lem:loguislogp}
    The image of $\log^{\fu}(\alpha)$ under the $\fp$-adic period homomorphism is the $\fp$-adic logarithm $\log^{\fp}(\alpha) \in K_{\fp}$.
\end{lemma}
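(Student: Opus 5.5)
The plan is to compute $\per_{\fp}(\log^{\fu}(\alpha)) = \log^{\fu}(\alpha)(\eta_{\fp})$ directly from the description~\eqref{eq:eta-p-splittings} of $\eta_{\fp}$ as the transition between the Hodge and Frobenius splittings. We apply this to the rank-two realisation of the Kummer motive (the path space $\pi_1^{\mot}(\bG_m;1,\alpha)$, or equivalently the bundle~\eqref{eq:kummer-bundle} with its fibre at~$\alpha$). By definition, $\log^{\fu}(\alpha)(\eta_{\fp})$ is the $e_0$-coefficient of $\eta_{\fp}(\gamma_{1,\alpha})$ in $\pi_1^{\dR}(\bG_m;1,\alpha)(K_{\fp})$. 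Here $\gamma_{1,\alpha}$ is viewed as a de Rham path via $\omega\otimes K\cong\omega^{\dR}$.

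The key steps are as follows.

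First, identify $\gamma_{1,\alpha}$, the unique $\bG_m$-invariant path, with the Hodge path $\gamma^H\in F^0\pi_1^{\dR}(\bG_m;1,\alpha)$. This is the same point as in the proof of \Cref{evaluation-localisation}: the identification $\omega\otimes_{\bQ}K\cong\omega^{\dR}$ uses the Hodge splitting of the weight filtration \cite[Proposition~2.10]{deligne-goncharov}, so the $\bG_m$-fixed point corresponds to the Hodge-filtered element.

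Second, apply \Cref{eta-of-hodge-equals-crystalline}, with $P=\pi_1^{\mot}(\bG_m;1,\alpha)$ viewed as a torsor under $\pi_1^{\mot}(\bG_m,1)$. This gives $\eta_{\fp}(\gamma^H)=\gamma^{\crs}$, the unique Frobenius-invariant path from $1$ to~$\alpha$ in $\pi_1^{\dR}(\bG_m;1,\alpha)(K_{\fp})$.

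Third, conclude that $\per_{\fp}(\log^{\fu}(\alpha))=\int_{\gamma^{\crs}}\rd t/t$, the $e_0$-coefficient of the Frobenius-invariant path. By Besser's description of Coleman integration via Frobenius-invariant paths, this iterated integral is the Coleman integral $\int_1^{\alpha}\rd t/t=\log^{\fp}(\alpha)$, normalised by $\log^{\fp}(p)=0$ as usual. This matches the convention for $j_{\fp}^{\dR}$ and the functions $\log^{\fp}$ in \S\ref{sec:polylogarithmic-quotient}.

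One subtlety is that $\alpha\in\cO_{K,S}^\times$ need not lie in $\cO_{\fp}^\times$ only if $\fp\in S$, which is excluded, so $\alpha$ is a unit at~$\fp$. Its Coleman logarithm is therefore the usual $\fp$-adic logarithm, with no branch ambiguity. Also, $\bG_m$ has good reduction at~$\fp$ with $\alpha$ and $1$ integral, so the crystalline path exists.

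I expect the main obstacle to be the third step, matching conventions. We must check that the Frobenius-invariant path of the crystalline realisation, using $\varphi^f$ as in the definition of $s_{\cris}$, computes exactly the Coleman integral. A possible alternative check is to compute directly in rank two. Frobenius-invariance forces the $e_0$-coefficient $c$ to satisfy $c=\frac{1}{q}\,c+(\text{Frobenius-lift correction})$. With Frobenius lift $t\mapsto t^q$, the correction is $\frac1q\log(\alpha^{q}/\phi(\alpha))$ with $\phi(\alpha)\equiv\alpha^q$. Solving then gives $c=\frac{1}{q-1}\cdot$ (a convergent series) $=\log^{\fp}(\alpha)$, via $\log(\alpha^{q-1})=(q-1)\log(\alpha)$, where $\alpha^{q-1}\equiv1$ modulo~$\fp$.
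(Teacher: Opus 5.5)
Your argument is correct and is essentially the paper's proof: the canonical $\bG_m$-invariant path $\gamma_{1,\alpha}$ is the Hodge path, \Cref{eta-of-hodge-equals-crystalline} sends it to the Frobenius-invariant path $\gamma_{1,\alpha}^{\crs}$, and the integral of $\rd t/t$ along that path is the Coleman integral $\int_1^{\alpha}\rd t/t=\log^{\fp}(\alpha)$. Your optional rank-two check is not needed, and its correction term is misstated: with the lift $t\mapsto t^q$ one has $\phi(\alpha)=\alpha^q$ exactly, so $\log(\alpha^q/\phi(\alpha))=0$, and the relation should read $qc=c+\log(\alpha^{q-1})$, where $\log(\alpha^{q-1})$ is the local integral from $\alpha$ to $\phi(\alpha)$; your final formula $c=\frac{1}{q-1}\log(\alpha^{q-1})$ is nonetheless right.
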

\begin{proof}
    By \Cref{eta-of-hodge-equals-crystalline}, $\eta_{\fp}$ maps the canonical path $\gamma_{1,\alpha}$ to the Frobenius-invariant path $\gamma_{1,\alpha}^{\crs}$. The integral along $\gamma_{1,\alpha}^{\crs}$ is the $\fp$-adic Coleman integral, i.e.\ in this case the $\fp$-adic logarithm:
    \[ \per_{\fp}(\log^{\fu}(\alpha)) = \log^{\fu}(\alpha)(\eta_{\fp}) = \int_{\eta_{\fp}(\gamma_{1,\alpha})} \frac{\rd t}{t} = \int_{\gamma_{1,\alpha}^{\crs}} \frac{\rd t}{t} = \int_1^{\alpha} \frac{\rd t}{t} = \log^{\fp}(\alpha). \qedhere \]
\end{proof}

Motivic polylogarithms of elements of $X(\cO_{K,S})$ are defined as motivic iterated integrals on the thrice-punctured line as follows. 

\begin{defn}
    \label{def:motivic-polylog}
    Let $n \geq 1$ and $\alpha \in X(\cO_{K,S})$. The \emph{motivic polylogarithm} $\Li^{\fu}_n(\alpha) \in A(Z)$ is the function
    \begin{equation}
    \label{eq:motivic-polylog}
        u \mapsto \int_{u(\gamma_{0,\alpha})} \underbrace{\frac{{\rd}t}{t} \cdots \frac{{\rd}t}{t} \frac{{\rd}t}{1-t}}_n,
    \end{equation}
    where $\gamma_{0,\alpha} \in \pi_1^{\omega}(X;0,\alpha)(\bQ)$ is the canonical (i.e.\ $\bG_m$-invariant) path from $\vec{1}_0$ to~$\alpha$.
\end{defn}
Here, $u$ is an $R$-valued point of $U_S^{\MT}$ for some $\bQ$-algebra~$R$ and the iterated integral~\eqref{eq:motivic-polylog} is defined as the coefficient of $e_0^{n-1}e_1$ in the expansion of $u(\gamma_{0,\alpha})$ as a group-like element of $R\llangle e_0,e_1\rrangle$. As in the case of the motivic logarithm, this can also be viewed as a matrix coefficient arising from a unipotent vector bundle with connection on~$X$. Since $\gamma_{1,\alpha}$ is $\bG_m$-invariant and the word $e_0^{n-1}e_1$ lives in half-weight~$-n$, the motivic polylogarithm $\Li_n^{\fu}(\alpha)$ belongs to the subspace $A(Z)_n$ of half-weight~$n$. For $n=1$, the change of variables formula with respect to $z \mapsto 1-z$ shows $\Li_1^{\fu}(\alpha) = -\log^{\fu}(1-\alpha)$. 
Similar to \Cref{lem:loguislogp} we have:

\begin{lemma}
    \label{LinuisLinp}
    The image of $\Li_n^{\fu}(\alpha)$ under the $\fp$-adic period homomorphism is the $\fp$-adic logarithm~$\Li_n^{\fp}(\alpha) \in K_{\fp}$. \qed
\end{lemma}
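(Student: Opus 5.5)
The plan is to copy the proof of \Cref{lem:loguislogp}, replacing the Kummer bundle on $\bG_m$ by the polylogarithmic path torsor on~$X$. By \Cref{def:period-element} and \Cref{def:motivic-periods}, $\per_{\fp}(\Li_n^{\fu}(\alpha))$ is the value of the function~\eqref{eq:motivic-polylog} at $u = \eta_{\fp}$. Unwinding definitions, this is the coefficient of $e_0^{n-1}e_1$ in the grouplike expansion of $\eta_{\fp}(\gamma_{0,\alpha}) \in \pi_1^{\dR}(X;0,\alpha)(K_{\fp})$. Here we use the identification $\pi_1^{\omega}(X;0,\alpha)\otimes_{\bQ} K \cong \pi_1^{\dR}(X;0,\alpha)$.

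The key step is to identify $\eta_{\fp}(\gamma_{0,\alpha})$ with the Frobenius-invariant path $\gamma^{\crs}_{0,\alpha}$. For this, apply \Cref{eta-of-hodge-equals-crystalline} to the $\pi_1^{\mot}(X,0)$-torsor $P = \pi_1^{\mot}(X;0,\alpha)$ over $R = \bQ$, which is an object of $\MT(\cO_{K,S},\bQ)$ since $\alpha$ is $S$-integral. Under the comparison $\omega\otimes_{\bQ} K\cong\omega^{\dR}$, the canonical $\bG_m$-invariant path $\gamma_{0,\alpha}$ corresponds to the Hodge path $\gamma^H$. This is the same fact used in the proof of \Cref{evaluation-localisation}: the identification uses the Hodge splitting of the weight filtration \cite[Proposition~2.10]{deligne-goncharov}. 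The lemma then gives $\eta_{\fp}(\gamma_{0,\alpha}) = \gamma^{\crs}_{0,\alpha}$. If the lemma is stated only for finite-dimensional quotients, we apply it to each depth-$N$ polylogarithmic quotient with $N\ge n$. This is harmless, because the coefficient of $e_0^{n-1}e_1$ factors through $\Pi_{\PL,n}$.

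Finally, the coefficient of $e_0^{n-1}e_1$ in $\gamma^{\crs}_{0,\alpha}$, the Frobenius-invariant path from $\vec{1}_0$ to $\alpha$, is by definition the Coleman iterated integral $\int_{\vec{1}_0}^{\alpha} \frac{\rd t}{t}\cdots\frac{\rd t}{t}\frac{\rd t}{1-t}$, that is, $\Li_n^{\fp}(\alpha)$. Coleman integration is characterised by Frobenius invariance of the path. Equivalently, this is the pullback of $\Li_n$ along $j_{\fp}^{\dR}$, as recalled before \Cref{coleman-sinnott}. That description used $(\gamma^H)^{-1}\gamma^{\crs}$; here the iterated-integral convention is applied to the path $\gamma^{\crs}$ itself.

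The main obstacle is the step identifying the canonical path with the Hodge path at a \emph{tangential} base point. One must check that the comparison isomorphism is compatible with the tangential-basepoint normalisation. One must also check that the Coleman integral from $\vec{1}_0$ is the one normalised by the Frobenius-invariant path, and not some other regularisation. I expect both to follow from Deligne--Goncharov's construction, since $\pi_1^{\mot}(\bG_m;\vec{1}_0,1)$ is trivial, together with Besser's description of Coleman integration via Frobenius-invariant paths.
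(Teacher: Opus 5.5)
Your proposal is correct and matches the paper's intended argument: the paper gives no separate proof, only the remark that the lemma holds ``similar to'' \Cref{lem:loguislogp}, whose proof is what you describe. That is, \Cref{eta-of-hodge-equals-crystalline} sends the canonical ($\bG_m$-invariant, equivalently Hodge) path $\gamma_{0,\alpha}$ to the Frobenius-invariant path, and the $e_0^{n-1}e_1$-coefficient of that path is the Coleman integral $\Li_n^{\fp}(\alpha)$. The paper likewise does not address the tangential-basepoint normalisation you flag, and simply takes as standard that Coleman integration is integration along the Frobenius-invariant path.
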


The definition of $\Li_n^{\fu}(\alpha) \in A(Z)$ also makes sense when $\alpha$ is an $S$-integral tangent vector. Taking $\alpha$ to be the tangent vector $-\vec{1}_1$ at~$1$, one defines the \emph{motivic zeta values}
\begin{equation}
\label{eq:motivic-zeta-value}
    \zeta^{\fu}(n) \coloneqq \Li_n^{\fu}(-\vec{1}_1) \in A(Z)_n.
\end{equation}

\subsection{Choosing generators of $\fn(Z)$}
\label{sec:generators}

Recall that a choice of a free generating set $\Sigma = \coprod_{n=1}^{\infty} \Sigma_n$ of $\fn(Z)$ determines a basis of $A(Z)$ consisting of elements $f_w$ indexed by words $w$ in those generators. The $\fp$-adic periods $\per_{\fp}(f_w) \in K_{\fp}$ appear as coefficients of the cocycle evaluation map for the $\fp$-adic period point~$\eta_{\fp}$ and its corresponding Lie algebra element $\eps_{\fp}$. In order to be able to compute these, we want to choose the generators in such a way that the $f_w$ can be expressed in terms of motivic (poly)logarithms. This is a difficult problem in general but can sometimes be achieved by constructing elements in $A(Z)_n$ forming bases of certain subspaces. This is explained for general mixed Tate groups in \cite[§3.2]{ishaidancohen_2020_mixed} and carried out in an example in \cite[§4.3]{CDC:polylog1}. We recall briefly what we need in this paper.

For every $n \geq 1$ there is an exact sequence
\begin{equation}
    \label{eq:extension-sequence}
    0 \lto \Ext^1_{\MT(\cO_{K,S},\bQ)}(\bQ(0), \bQ(n)) \lto A(Z)_n \xrightarrow{\Delta'} \bigoplus_{\substack{i + j = n\\ i,j \geq 1}} A(Z)_i \otimes A(Z)_j.
\end{equation}
The image of the extension group inside $A(Z)_n$ is denoted by $E_n(Z)$. It is the kernel of the \emph{reduced coproduct} 
\[ \Delta'(f) \coloneqq \Delta(f) - f \otimes 1 - 1 \otimes f. \]

\begin{lemma}
\label{single-letter-periods-from-En-basis}
    Let $f_1,\ldots,f_{d_n}$ be a basis of $E_n(Z)$. If the Lie algebra generators $\sigma_{n,1},\ldots,\sigma_{n,d_n} \in \Sigma_n$ lift the dual basis $f_1^{\vee},\ldots,f_{d_n}^{\vee}$ under the surjection
    \[ \fn(Z)_{-n} \twoheadrightarrow \fn(Z)^{\ab}_{-n} \cong \Ext^1_{\MT(\cO_{K,S},\bQ)}(\bQ(0),\bQ(n))^{\vee} \cong E_n(Z)^{\vee} \]
    then $f_{\sigma_{n,i}} = f_i$ for $i=1,\ldots,d_n$.
\end{lemma}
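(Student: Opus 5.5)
We need to show that $f_{\sigma_{n,i}} = f_i$ as elements of $A(Z)_n$. The plan is to compare both sides as linear functionals on $\fn(Z)$, or rather on the completed universal enveloping algebra $\bQ\llangle\Sigma\rrangle$. Recall that $A(Z)$ is the graded dual of $\bQ\llangle\Sigma\rrangle$. Under this duality, $f_w$ is the functional extracting the coefficient of the word $w$. The subspace $E_n(Z) = \ker(\Delta')\cap A(Z)_n$ consists exactly of the primitive elements of the Hopf algebra $A(Z)$ in degree $n$. Dually, these are the functionals that vanish on $1$ and on all products of elements of positive degree, i.e.\ on $I^2$ where $I$ is the augmentation ideal. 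Such functionals therefore factor through $I/I^2 \cong \fn(Z)^{\ab}$. This identification of $E_n(Z)$ with $(\fn(Z)^{\ab}_{-n})^\vee$ is the isomorphism used in the statement.

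First I check that $f_{\sigma_{n,i}}$ lies in $E_n(Z)$. Since the coproduct on $A(Z)$ is dual to concatenation, we have $\Delta(f_w) = \sum_{w = uv} f_u \otimes f_v$. A single-letter word $w = \sigma_{n,i}$ has only the trivial splittings, so $\Delta'(f_{\sigma_{n,i}}) = 0$. It is homogeneous of degree $n$, so $f_{\sigma_{n,i}} \in E_n(Z)$.

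Next I identify $f_{\sigma_{n,i}}$ under $E_n(Z) \cong (\fn(Z)^{\ab}_{-n})^\vee$. As a functional on $\fn(Z)_{-n}$, the element $f_{\sigma_{n,i}}$ sends a primitive series to its coefficient of the letter $\sigma_{n,i}$. This functional kills all Lie brackets, since brackets only involve words of length at least $2$. On the images of the generators it takes the values $f_{\sigma_{n,i}}(\sigma_{n,j}) = \delta_{ij}$. Now $\Sigma_n$ maps to a basis of $\fn(Z)^{\ab}_{-n}$, and by hypothesis this basis is $f_1^\vee,\dots,f_{d_n}^\vee$. Hence $f_{\sigma_{n,i}}$ is the dual basis element to $f_i^\vee$. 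By double duality in finite dimensions, this dual element is $f_i$.

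The main obstacle is bookkeeping: confirming that the pairing $E_n(Z)\times \fn(Z)^{\ab}_{-n}\to\bQ$ implicit in the displayed isomorphism is the naive evaluation pairing between $A(Z)$ and $\fn(Z)\subset\bQ\llangle\Sigma\rrangle$, rather than one twisted by a sign or by the logarithm. I would check this from the definitions. A primitive element $\eps\in\fn(Z)$ pairs with $f$ via $f(\exp\eps)$ differentiated at $0$. For primitive $f$ this is linear in $\eps$ and equals the naive pairing, because the higher terms $\eps^k/k!$ with $k\ge2$ lie in $I^2$, where primitive functionals vanish.
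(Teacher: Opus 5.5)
Your proposal is correct and follows essentially the same route as the paper: you show that the single-letter periods $f_{\sigma_{n,i}}$ are primitive for the deconcatenation coproduct, hence lie in $E_n(Z)$. You then observe that they are dual to the images of the $\sigma_{n,i}$ and conclude that they coincide with the $f_i$; your extra check that the implicit pairing is the naive evaluation pairing is a harmless elaboration of what the paper takes as ``by definition''.
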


\begin{proof}
    The single-letter periods $f_{\sigma_{n,i}}$ are primitive for the deconcatenation coproduct, so they are contained in $\ker(\Delta') = E_n(Z)$ and form a basis thereof. By definition, they are dual to the images $\overline{\sigma}_{n,i}$ of the $\sigma_{n,i}$ in $E_n(Z)^{\vee}$. But the $f_i$ are also dual to the $\overline{\sigma}_{n,i}$ by assumption, so we have  $f_{\sigma_{n,i}} = f_i$ for all~$i$.
\end{proof}

We apply the above for $n = 1$. In this case we have $E_1(Z) = A(Z)_1$ and we can easily construct a basis consisting of motivic logarithms. 

\begin{lemma}
    \label{sec:logu-iso}
    $\log^{\fu}$ defines a $\mathbb{Q}$-linear isomorphism $\mathcal{O}_{K,S}^\times \otimes \mathbb{Q}\to A(Z)_1$, $\alpha\mapsto \log^{\fu}(\alpha)$.
\end{lemma}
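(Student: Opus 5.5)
We have to show that $\log^{\fu}\colon \cO_{K,S}^\times\otimes\bQ\to A(Z)_1$ is well defined, injective and surjective.

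\emph{Well-definedness and linearity.} These were already observed before the statement: the identity $\log^{\fu}(\alpha\beta)=\log^{\fu}(\alpha)+\log^{\fu}(\beta)$ makes $\log^{\fu}$ a group homomorphism $\cO_{K,S}^\times\to A(Z)_1$. Since $A(Z)_1$ is a $\bQ$-vector space, this extends uniquely to a $\bQ$-linear map on $\cO_{K,S}^\times\otimes\bQ$. Torsion units go to zero automatically.

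\emph{Matching dimensions.} By \eqref{eq:extension-sequence} with $n=1$, the reduced coproduct $\Delta'$ has target zero, so $A(Z)_1=E_1(Z)\cong\Ext^1_{\MT(\cO_{K,S},\bQ)}(\bQ(0),\bQ(1))$. By \cite{deligne-goncharov}, this group is $\cO_{K,S}^\times\otimes\bQ$, of dimension $d_1$ as in \eqref{eq:borel-dimensions}. Both sides are therefore finite-dimensional of the same dimension $d_1$, and it suffices to prove that $\log^{\fu}$ is injective.

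\emph{Injectivity, first approach (Kummer motives).} The cleanest route is to show that $\log^{\fu}(\alpha)$ is, under the identification $E_1(Z)\cong\Ext^1(\bQ(0),\bQ(1))$, exactly the class of the Kummer motive $K_\alpha$. This is the motive whose de Rham realisation is the bundle \eqref{eq:kummer-bundle} with fibres at $1$ and $\alpha$, i.e.\ the motivic path torsor $\pi_1^{\mot}(\bG_m;1,\alpha)$ truncated in weight $\leq 2$. One would take the matrix coefficient description of $\log^{\fu}(\alpha)$ given after \Cref{def:motivic-logarithm} and compare it with the standard description of $E_n(Z)$ as matrix coefficients $u\mapsto e_1^\vee(u\cdot e_0)$ of extensions. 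Then $\alpha\mapsto[K_\alpha]$ is precisely the Deligne--Goncharov isomorphism $\cO_{K,S}^\times\otimes\bQ\cong\Ext^1(\bQ(0),\bQ(1))$ (Kummer theory), so $\log^{\fu}$ is that isomorphism composed with $\Ext^1\cong E_1(Z)$. This identification is the main obstacle: one must check that the canonical $\bG_m$-invariant path and the matrix coefficient correspond to the extension class up to a sign or normalisation, which does not affect bijectivity.

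\emph{Injectivity, fallback ($p$-adic periods).} If that comparison is awkward, injectivity can instead be obtained through periods. Suppose $\sum_i c_i\log^{\fu}(\alpha_i)=0$ for a basis $\alpha_1,\dots,\alpha_r$ of $\cO_{K,S}^\times\otimes\bQ$. Applying $\per_{\fp}$ for all $\fp\notin S$ and using \Cref{lem:loguislogp} gives
\[
\log^{\fp}\Bigl(\prod_i\alpha_i^{c_i}\Bigr)=0\quad\text{for all }\fp\notin S,
\]
after clearing denominators in the $c_i$. This says that the unit $\beta=\prod_i\alpha_i^{c_i}$ is a root of unity in every completion $K_{\fp}$. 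One then needs to conclude that $\beta$ is torsion, i.e.\ that a unit lying in $\mu(K_{\fp})$ for all (or a density-one set of) primes $\fp$ is a root of unity. This is an elementary Chebotarev-type argument: $\beta^{q-1}=1$ in $K_\fp$, and letting $\fp$ vary forces $\beta$ to be torsion. Hence all $c_i=0$, which gives injectivity, and bijectivity follows from the dimension count.
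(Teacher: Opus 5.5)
Your fallback argument is essentially the paper's proof. The paper also reduces to injectivity via $A(Z)_1=E_1(Z)\cong\Ext^1(\bQ(0),\bQ(1))\cong\cO_{K,S}^\times\otimes\bQ$ and then uses $\per_{\fp}\circ\log^{\fu}=\log^{\fp}$ from \Cref{lem:loguislogp}; it avoids your Kummer-motive comparison, and it justifies that $\log^{\fu}(\alpha)$ lies in degree~$1$ via $\bG_m$-invariance of $\gamma_{1,\alpha}$, which you take for granted. Your Chebotarev-type step is unnecessary, and $\beta^{q-1}=1$ ignores possible $p$-power roots of unity in $K_{\fp}$: one prime $\fp\notin S$ suffices, because $\log^{\fp}(\beta)=0$ makes $\beta$ a root of unity in $K_{\fp}$, hence in $K$ since $K\hookrightarrow K_{\fp}$ is injective.
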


\begin{proof}
    By $\bG_m$-invariance of the canonical path $\gamma_{1,\alpha}$ and $\deg(e_0)=-1$, it is clear from the definition that $\log^{\fu}(\alpha)$ lives in degree~$1$. By our knowledge of Ext groups in $\MT(\cO_{K,S},\bQ)$, there are isomorphisms 
    \[ A(Z)_1 = E_1(Z) = \Ext^1_{\MT(\cO_{K,S},\bQ)}(\bQ(0),\bQ(1))\cong K_1(\cO_{K,S})_{\bQ} \cong \mathcal{O}_{K,S}^\times \otimes \mathbb{Q}, \]
    so the dimensions agree and it suffices to show that $\log^{\fu}$ is injective. (The composite isomorphism above is presumably the inverse of $\log^{\fu}$ but we will not need this fact.) Choose any prime $\fp \not\in S$ of~$K$. By Lemma \ref{lem:loguislogp}, the composition $\mathrm{per}_{\fp} \circ \log^{\fu}:\mathcal{O}_{K,S}^\times\otimes\mathbb{Q}\to K_{\fp}$ is just the $\fp$-adic logarithm $\log^{\fp}\colon \alpha\mapsto \log^{\fp}(\alpha)$, which is injective. This implies that $\log^{\fu}$ is injective.
\end{proof}

Now let $\alpha_1,\ldots,\alpha_{d_1}$ be a basis of the $\bQ$-vector space $\cO_{K,S}^\times \otimes \bQ$. Then, by \Cref{sec:logu-iso}, the elements $\log^{\fu}(\alpha_1),\ldots,\log^{\fu}(\alpha_{d_1})$ form a basis of $E_1(Z)$, and we can choose $\tau_1,\ldots,\tau_{d_1} \in \fn(Z)_{-1} = \fn(Z)_{-1}^{\ab}$ as the dual basis. Then \Cref{single-letter-periods-from-En-basis} immediately yields:

\begin{lemma}
    \label{f-tau-equals-log}
    Suppose that the $\tau_i$ are dual to the $\alpha_i$. Then the abstract motivic period $f_{\tau_i}$ equals the motivic logarithm $\log^{\fu}(\alpha_i)$:
    \[  f_{\tau_i} = \log^{\fu}(\alpha_i). \]
\end{lemma}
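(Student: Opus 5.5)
The plan is to apply \Cref{single-letter-periods-from-En-basis} with $n = 1$ and the basis $f_i \coloneqq \log^{\fu}(\alpha_i)$ of $E_1(Z)$. This requires two checks: that these elements really form a basis of $E_1(Z)$, and that the $\tau_i$ lift the dual basis under the surjection $\fn(Z)_{-1} \twoheadrightarrow E_1(Z)^{\vee}$.

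For the first check, \Cref{sec:logu-iso} says that $\log^{\fu}$ is a $\bQ$-linear isomorphism $\cO_{K,S}^\times \otimes \bQ \to A(Z)_1$. Since $(\alpha_i)$ is a basis of the source, the $\log^{\fu}(\alpha_i)$ form a basis of $A(Z)_1$. We also need $A(Z)_1 = E_1(Z)$. By \eqref{eq:extension-sequence}, $E_1(Z)$ is the kernel of $\Delta'$ on $A(Z)_1$, and the target of $\Delta'$ in half-weight~$1$ is an empty direct sum, since there are no $i,j \geq 1$ with $i+j=1$. Hence $\Delta'$ vanishes on $A(Z)_1$ and the equality holds.

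For the second check, note that in degree $-1$ there are no commutators: every bracket has degree at most $-2$. So $\fn(Z)_{-1} = \fn(Z)^{\ab}_{-1}$ and the surjection in \Cref{single-letter-periods-from-En-basis} is the identification $\fn(Z)_{-1} \cong E_1(Z)^{\vee}$. The hypothesis ``the $\tau_i$ are dual to the $\alpha_i$'' means exactly that, under this identification composed with the isomorphism $\log^{\fu}$, the $\tau_i$ are the dual basis of $\log^{\fu}(\alpha_i)$. Equivalently, the pairing $\langle \tau_i, \log^{\fu}(\alpha_j)\rangle$ is the Kronecker delta $\delta_{ij}$.

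With both hypotheses verified, \Cref{single-letter-periods-from-En-basis} gives $f_{\tau_i} = \log^{\fu}(\alpha_i)$.

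The main potential obstacle is the normalisation. The identification $E_1(Z)^{\vee} \cong \fn(Z)^{\ab}_{-1}$ used to define ``dual to the $\alpha_i$'' must be the same pairing as the coefficient-extraction pairing between $\fn(Z)$ and $A(Z)$, namely $f_w$ evaluated on the series. I would settle this by reading the duality directly through the pairing of $A(Z)_1$ with $\fn(Z)_{-1}$. The single-letter functional $f_{\tau_i}$ restricted to $\fn(Z)_{-1}$ extracts the $\tau_i$-coefficient, so it is the dual of $\tau_i$. This matches $\log^{\fu}(\alpha_i)$ by the choice of the $\tau_i$.
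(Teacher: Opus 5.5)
Your proposal is correct and is exactly the paper's argument: the paper notes $E_1(Z) = A(Z)_1$, uses \Cref{sec:logu-iso} to see that the $\log^{\fu}(\alpha_i)$ form a basis of $E_1(Z)$, chooses the $\tau_i \in \fn(Z)_{-1} = \fn(Z)_{-1}^{\ab}$ as the dual basis, and then concludes immediately from \Cref{single-letter-periods-from-En-basis}. Your extra checks, namely that $\Delta'$ has empty target in half-weight~$1$ and that there are no commutators in degree~$-1$, make explicit what the paper leaves implicit.
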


As a consequence, we obtain a formula for the coefficients of the $\tau_i$ in the expansions of the $\fp$-adic period elements $\eta_{\fp}$ and their logarithms $\eps_{\fp}$.

\begin{prop}
    \label{tau-coeffs}
    Suppose the $\tau_i$ are dual to $\alpha_i$. Let $\fp\not\in S$ be a prime of~$K$ and let $a_w^{\fp}$ and $b_w^{\fp}$ be the coefficients of $\eta_{\fp}$ and $\eps_{\fp} = \log(\eta_{\fp})$ in the expansions \eqref{eq:eta-p-expansion} and \eqref{eq:eps-p-expansion} as a grouplike resp.\ primitive element of~$K_{\fp}\llangle \Sigma \rrangle$. Let $c_w^{\fp}$ be the coefficients in the Goncharov expansion of~$\eps_{\fp}$ (\Cref{def:goncharov-representation}). Then we have 
    \begin{equation*}
        a_{\tau_i}^{\fp} = b_{\tau_i}^{\fp} = c_{\tau_i}^{\fp} = \log^{\fp}(\alpha_i)
    \end{equation*}
    for all $i$.
\end{prop}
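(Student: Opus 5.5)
The plan is to prove the three equalities $a_{\tau_i}^{\fp} = b_{\tau_i}^{\fp}$, $b_{\tau_i}^{\fp} = c_{\tau_i}^{\fp}$, and $a_{\tau_i}^{\fp} = \log^{\fp}(\alpha_i)$ separately, each from an earlier result.

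First I compute $a_{\tau_i}^{\fp}$. By \eqref{eq:p-adic-from-motivic-coeff} we have $a_{\tau_i}^{\fp} = \per_{\fp}(f_{\tau_i})$. By \Cref{f-tau-equals-log}, which uses the hypothesis that the $\tau_i$ are dual to the $\alpha_i$, this equals $\per_{\fp}(\log^{\fu}(\alpha_i))$. By \Cref{lem:loguislogp} that is $\log^{\fp}(\alpha_i)$.

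Next, $b_{\tau_i}^{\fp} = a_{\tau_i}^{\fp}$ follows from \Cref{lie-coefficients-from-group-coefficients} applied to the one-letter word $w = \tau_i$. Since $|w| = 1$, only the term $n=1$ occurs, and there the only decomposition is $w$ itself. So $b_{\tau_i} = a_{\tau_i}$. Equivalently, $\log(\eta) = (\eta - 1) - \tfrac12(\eta-1)^2 + \cdots$, and the higher powers of $\eta - 1$ contain only words of length at least $2$.

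Finally, $c_{\tau_i}^{\fp} = b_{\tau_i}^{\fp}$ is case (a) of \Cref{lie-coeffs}. The underlying reason is as follows. In the Goncharov representation of $\eps_{\fp}$, the only basis element whose expansion contains the single-letter word $\tau_i$ is $\tau_i$ itself. Every other basis element in \eqref{eq:eps-n}, and every element of the Goncharov ideal, is a combination of brackets and hence involves only words of length at least $2$.

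No step is a real obstacle, since the proposition just assembles \Cref{lem:loguislogp}, \Cref{f-tau-equals-log}, \Cref{lie-coefficients-from-group-coefficients} and \Cref{lie-coeffs}. The only point to check is that the identification of $\fn(Z)_{-1}$ with $E_1(Z)^\vee$ in \Cref{single-letter-periods-from-En-basis} involves no Lie brackets in degree $-1$. It does not, because $\fn(Z)_{-1} = \fn(Z)_{-1}^{\ab}$.
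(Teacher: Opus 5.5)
Your proposal is correct and follows the paper's proof exactly. The paper obtains $a_{\tau_i}^{\fp} = \log^{\fp}(\alpha_i)$ from \eqref{eq:p-adic-from-motivic-coeff}, \Cref{f-tau-equals-log} and \Cref{lem:loguislogp}, and $a_{\tau_i}^{\fp} = b_{\tau_i}^{\fp} = c_{\tau_i}^{\fp}$ from \Cref{lie-coefficients-from-group-coefficients} and \Cref{lie-coeffs}, just as you do.
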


\begin{proof}
    We have
    \[ a_{\tau_i}^{\fp} = \per_{\fp}(f_{\tau_i}) = \per_{\fp}(\log^{\fu}(\alpha_i)) = \log^{\fp}(\alpha_i) \]
    by \eqref{eq:p-adic-from-motivic-coeff}, \Cref{f-tau-equals-log}, and \Cref{lem:loguislogp}. We have $a_{\tau_i}^{\fp} = b_{\tau_i}^{\fp} = c_{\tau_i}^{\fp}$ by \Cref{lie-coefficients-from-group-coefficients} and \Cref{lie-coeffs}.
\end{proof}

\begin{rem}
    \label{rem:roots-of-unity-extensions}
    For $n > 1$ it is difficult to construct explicit motivic iterated integrals forming bases of the extension spaces $E_n(Z)$. See \cite[Conjectures~2.2.5 \& 2.2.7]{ishaidancohen_2020_mixed} for conjectures by Zagier and Goncharov in this direction.
\end{rem}

The only examples of elements of $E_n(Z)$ for $n > 1$ which are easy to construct are the motivic zeta values~\eqref{eq:motivic-zeta-value} and motivic polylogarithms at roots of unity:

\begin{lemma}
\label{zeta-and-cyclotomic-polylogs-in-En}
    For $n \geq 2$, the motivic zeta values $\zeta^{\fu}(n)$ and the values $\Li_n^{\fu}(\zeta)$ at roots of unity $\zeta \in \cO_K^{\times}$ are contained in~$E_n(Z)$.
\end{lemma}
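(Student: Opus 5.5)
We have to show that $\Delta'(f)=0$ for $f=\zeta^{\fu}(n)$ and for $f=\Li_n^{\fu}(\zeta)$, because $E_n(Z)=\ker(\Delta')\cap A(Z)_n$ by \eqref{eq:extension-sequence}. We already know these elements lie in $A(Z)_n$, as noted after \Cref{def:motivic-polylog}. The plan is to compute the coproduct with Goncharov's formula for the coaction on motivic iterated integrals, and then to show that every term of $\Delta'$ is a product of lower-weight motivic logarithms which vanish.

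First we express $\Delta$ on matrix coefficients. By \Cref{def:motivic-polylog}, $\Li_n^{\fu}(\zeta)$ is the function $u\mapsto\langle u(\gamma_{0,\zeta}),e_0^{n-1}e_1\rangle$. Multiplicativity of the action, $(uv)(\gamma)=u(v(\gamma))$, together with the fact that $U_S^{\MT}$ acts on $\pi_1^{\omega}(X;0,\zeta)$ through the motivic fundamental groupoid, gives Goncharov's coproduct formula. In it, $\Delta\Li_n^{\fu}(\zeta)$ is a sum over subsequences of the word $e_0^{n-1}e_1$. Each term is a product of motivic iterated integrals between the points $0$, $\zeta$ and the intermediate letters' singularities $\{0,1\}$, tensored with a quotient integral. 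The reduced terms with $i,j\geq1$ are of two kinds. The first kind is $\Li_j^{\fu}(\zeta)\otimes\frac{\log^{\fu}(\zeta)^{i}}{i!}$, up to the standard shape. The second kind involves $I^{\fu}(0;0,\ldots,0;\zeta)$, i.e.\ powers of $\log^{\fu}(\zeta)$, and $I^{\fu}(0;0\cdots0\,1;\zeta)$. Concretely, I expect the standard identity
\[
\Delta\Li_n^{\fu}(\zeta)=1\otimes\Li_n^{\fu}(\zeta)+\sum_{k=0}^{n-1}\Li_{n-k}^{\fu}(\zeta)\otimes\frac{\log^{\fu}(\zeta)^k}{k!}.
\]
Equivalently, we can derive it by viewing $\Li_n$ as a matrix coefficient of the polylogarithmic unipotent bundle, analogous to \eqref{eq:kummer-bundle}. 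The coproduct of a matrix coefficient is $\sum_j\langle\cdot,e_j\rangle\otimes\langle e_j^\vee,\cdot\rangle$. Since $U_S^{\MT}$ acts trivially on $\Pi_{\PL}^\omega$ (\Cref{trivial-action-on-polylogarithmic-quotient}), only the path-dependent entries survive, and these are the logarithm and the lower polylogarithms.

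Next we use that $\zeta$ is a root of unity. Then $\zeta\otimes1$ is zero in $\cO_{K,S}^\times\otimes\bQ$, so $\log^{\fu}(\zeta)=0$ by $\bQ$-linearity of $\log^{\fu}$. Hence every term with $k\geq1$ vanishes, and the formula collapses to $\Delta\Li_n^{\fu}(\zeta)=1\otimes\Li_n^{\fu}(\zeta)+\Li_n^{\fu}(\zeta)\otimes1$. This gives $\Delta'=0$.

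For $\zeta^{\fu}(n)=\Li_n^{\fu}(-\vec1_1)$ the same formula applies, with the role of $\log^{\fu}(\zeta)$ played by the motivic logarithm of the tangential endpoint. That logarithm vanishes because the path torsor between $\vec1_0$ and the tangential base point at $1$ on $\bG_m$ is motivically trivial. Alternatively, the torsor $\pi_1^{\mot}(\bG_m;\vec1_0,1)$ is trivial as recalled before \Cref{lem:loguislogp}, and $\zeta^{\fu}(n)$ is a limit of the case $\zeta=1$, where $\log^{\fu}(1)=0$.

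The main obstacle is establishing the coproduct formula rigorously in the present conventions. This needs the correct identification of the coaction of $A(Z)$ on the path torsor with the deconcatenation coproduct of $A(Z)$, and a careful treatment of the tangential endpoint for $\zeta^{\fu}(n)$. I would handle this via the matrix-coefficient description, citing Goncharov and Deligne--Goncharov for the tangential case.
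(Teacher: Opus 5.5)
Your core argument is the paper's. The paper cites the reduced-coproduct formula $\Delta'\Li_n^{\fu}(\alpha)=\sum_{i=1}^{n-1}\frac{1}{i!}\log^{\fu}(\alpha)^i\otimes\Li_{n-i}^{\fu}(\alpha)$ from Corwin--Dan-Cohen. It then uses $\log^{\fu}(\zeta)=0$, by additivity of $\log^{\fu}$, and $\log^{\fu}(-\vec{1}_1)=\log^{\fu}(1)=0$. The order of your tensor factors does not matter. Your matrix-coefficient route is a valid self-contained replacement for the citation. Because $U_S^{\MT}$ acts trivially on $\Pi_{\PL}^{\omega}$, the map $u\mapsto[\gamma_{0,\alpha}^{-1}u(\gamma_{0,\alpha})]\in\Pi_{\PL}^{\omega}$ is a homomorphism. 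Hence $\Delta\Li_n^{\fu}(\alpha)$ is the pullback of the deconcatenation coproduct of the $e_0^{n-1}e_1$-coefficient, and the $e_0^k$-coefficients pull back to $\log^{\fu}(\alpha)^k/k!$. Drop the remark that $\zeta^{\fu}(n)$ is a ``limit'' of the case $\zeta=1$: it has no meaning for motivic periods and is not needed.

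There is, however, a missing step. You say $\Li_n^{\fu}(\zeta)\in A(Z)_n$ ``as noted after \Cref{def:motivic-polylog}'', but that definition requires $\zeta\in X(\cO_{K,S})$, i.e.\ $1-\zeta\in\cO_{K,S}^{\times}$. If $\zeta$ has prime-power order $q^a$, then $1-\zeta$ has norm a power of $q$ and is not a unit. So for general $S$, for instance $S=\emptyset$ as in \Cref{cyclotomic-En-basis}, $\Li_n^{\fu}(\zeta)$ is not a priori an element of $A(Z)$, and your argument does not reach it. The paper's proof starts with the fix:
\begin{itemize}
\item enlarge $S$ to $S'$ containing the primes dividing $1-\zeta$;
\item prove $\Delta'\Li_n^{\fu}(\zeta)=0$ in $A(\cO_{K,S'})$;
\item for $n\geq2$, the coproduct-compatible inclusion $A(\cO_{K,S})\hookrightarrow A(\cO_{K,S'})$ restricts to an isomorphism $E_n(\cO_{K,S})\cong E_n(\cO_{K,S'})$, because $\Ext^1(\bQ(0),\bQ(n))\cong K_{2n-1}(\cO_{K,S})_{\bQ}$ is independent of $S$ for $n\geq2$.
\end{itemize}
No such reduction is needed for $\zeta^{\fu}(n)$, since $-\vec{1}_1$ is integral over $\bZ$.
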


\begin{proof}
    The extension spaces $E_n(Z)$ for $n \geq 2$ are independent of the set $S$, so we may assume that $S$ contains all primes dividing $1-\zeta$ for $\zeta$ a root of unity in~$K$. Then each such~$\zeta$ is an $S$-integral point of~$X$ and $\Li_n^{\fu}(\zeta)$ is a well-defined element of $A(Z)_n$. By definition, an element of $A(Z)_n$ belongs to $E_n(Z)$ if and only if its reduced coproduct is zero. By \cite[Corollary~3.2]{CDC:polylog1}, the reduced coproduct of a motivic polylogarithm $\Li_n^{\fu}(\alpha)$ is given by
    \[ \Delta' \Li_n^{\fu}(\alpha) = \sum_{i=1}^{n-1} \frac1{i!}\log^{\fu}(\alpha)^i \otimes \Li_{n-i}^{\fu}(\alpha). \]
    (The factors in the tensor product are switched compared to \cite{CDC:polylog1} as we use the functional convention for path composition, cf.\ the discussion in §2.1.4 of loc.\ cit.) When $\alpha$ is the tangent vector $-\vec{1}_1$ at~$1$ we have $\log^{\fu}(-\vec{1}_1) = \log^{\fu}(1) = 0$, and when $\alpha = \zeta$ is a root of unity we have $\log^{\fu}(\zeta) = 0$ since $\log^{\fu}\colon \cO_K^{\times} \to A(Z)$ is a homomorphism. In both cases the reduced coproduct $\Delta' \Li_n^{\fu}(\alpha)$ vanishes.
\end{proof}

For a cyclotomic field, it follows \cite[Théorème 6.8]{deligne-goncharov}
that the extension spaces $E_n(Z)$ are spanned by motivic polylogarithms at roots of unity, see also \cite[Theorem~19]{hirose}:
\begin{thm}
    \label{cyclotomic-En-basis}
    Let $K=\mathbb{Q}(\zeta_m)$ be a cyclotomic field and let $Z = \Spec(\cO_K)$. For $n \geq 2$, the extension space $E_n(Z)$ is spanned by $\Li_n^{\fu}(\zeta)$ with $\zeta$ ranging over $m$-th roots of unity in~$K$.
\end{thm}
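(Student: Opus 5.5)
The plan is to reduce the claim to a statement about Ext groups and then invoke Deligne--Goncharov's description of the motivic fundamental group of $\mathbb{G}_m \smallsetminus \mu_m$. First, by \Cref{zeta-and-cyclotomic-polylogs-in-En} each $\Li_n^{\fu}(\zeta)$ with $\zeta\in\mu_m$, $\zeta\neq1$, lies in $E_n(Z)$. Since $E_n(Z)$ for $n\ge2$ does not depend on $S$, we may enlarge $S$ to the primes above $m$ so that these are defined. The value at $\zeta=1$ is read as $\zeta^{\fu}(n)=\Li_n^{\fu}(-\vec1_1)$, which also lies in $E_n(Z)$. Hence the span $V_n$ of these elements is contained in $E_n(Z)$. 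By \eqref{eq:extension-sequence}, $E_n(Z)\cong \Ext^1_{\MT(\cO_{K},\bQ)}(\bQ(0),\bQ(n))$, which has dimension $d_n$ given by \eqref{eq:borel-dimensions}. It therefore suffices to show that $V_n$ has dimension at least $d_n$, or equivalently that $V_n$ is not contained in any proper subspace.

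For the spanning statement I would use duality. Suppose a linear functional $\lambda$ on $E_n(Z)$, that is, an element of $\fn(Z)^{\ab}_{-n}$, kills every $\Li_n^{\fu}(\zeta)$. I want to show $\lambda=0$. The key input is \cite[Théorème~6.8]{deligne-goncharov}: for $N=m$ (with the usual modifications for $m$ small or $m$ with a prime-power exception), the action of $U^{\MT}_{S}$ on the motivic fundamental group of $\mathbb{G}_m\smallsetminus\mu_m$ is faithful on the abelianised degree~$-n$ part. More concretely, it says that $\Ext^1(\bQ(0),\bQ(n))$ is generated by the classes of the depth-one quotients of $\pi_1^{\mot}(\mathbb{G}_m\smallsetminus\mu_m,\vec1_0)$. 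On the Lie algebra side, the coefficient of $e_0^{n-1}e_\zeta$ in the canonical path is exactly the motivic polylogarithm $\Li_n^{\fu}(\zeta^{-1})$, up to sign and the normalisation of the letter. Matching these conventions identifies the generating extensions of loc.\ cit.\ with the elements $\Li_n^{\fu}(\zeta)$ in $A(Z)_n$. Since these extension classes span $\Ext^1$, their images span $E_n(Z)$.

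The main obstacle is this identification step. I must check that the primitive element of $A(Z)_n$ attached to the depth-one extension in Deligne--Goncharov coincides, modulo decomposables and a nonzero scalar, with $\Li_n^{\fu}(\zeta)$ as defined in \Cref{def:motivic-polylog}. Since $\Delta'\Li_n^{\fu}(\zeta)=0$, there are no decomposables to worry about, so the remaining check is the matrix-coefficient computation with the bundle built from $\rd t/t$ and $\rd t/(t-\zeta)$. The pullback $t\mapsto\zeta t$ relates this bundle to the polylogarithm bundle on $X$. Alternatively, following \cite[Theorem~19]{hirose}, I would quote the spanning result directly in the language of motivic iterated integrals, so that only the change of variables $t\mapsto \zeta^{-1}t$ and the triviality of $\log^{\fu}(\zeta)$ need to be checked.
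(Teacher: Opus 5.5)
Your proposal is correct and follows essentially the paper's route. The paper also rests entirely on the external spanning result: it cites Deligne--Goncharov Th\'eor\`eme~6.8 and then argues via Hirose's Theorem~19 with $l=n-1$, identifying $E_n(Z)=\gr^C_1 A(Z)$ with Hirose's $\gr^C_1(\mathcal{A})_n$. It then converts $I^{\mathfrak{C}}(0;\zeta,\{0\}^{n-1};1)$ into $-\Li_n^{\fu}(\zeta^{-1})$ by the substitution $t=\zeta s$, exactly as in your alternative. Your preliminary containment step and the dimension-count/duality framing are harmless but unnecessary, since the cited theorem already gives spanning; your remark that rotating the tangential base point is controlled by $\log^{\fu}(\zeta)=0$ is a detail the paper leaves implicit.
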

\begin{proof}
Denote by $ \mu_m $ the set of $m$-th roots of unity. For $ n\geq 2 $, there is a natural identification $E_n(Z) = \gr^C_1(A(Z)) \cong \gr_1^C(\mathcal{A})_n$, where $ \mathcal{A}$ is the unipotent motivic period ring for the category $\MT(\bQ(\zeta_m), \Gamma_m)$ of \cite{hirose} and $C_{\bullet}$ denotes the coradical filtration. Note that the choice of $\Gamma_m \subseteq \bQ(\zeta_m)^{\times} \otimes \bQ$ there does not affect the extension spaces $E_n(Z)$ for $n \geq 2$. Taking
$ l=n-1 $ in \cite[Theorem~19]{hirose} (and taking into account differing conventions for the order of integration in iterated integrals), we obtain that $E_n(Z)$ is spanned by the motivic iterated integrals
\[ I^{\mathfrak{C}}(0;\zeta,\{0\}^{n-1};1)\colon u \mapsto \int_{u(\gamma_{0,1})} \underbrace{\frac{\rd t}{t}\cdots\frac{\rd t}{t} \frac{\rd t}{t-{\zeta}}}_n \]
with $\zeta \in \mu_m$.
The change of variables $ t=\zeta s $ gives
$I^{\mathfrak{C}}(0;\zeta,\{0\}^{n-1};1)
=
-\Li_n^{\fu}(\zeta^{-1}).$
Since inversion is an involution of $\mu_m $, the result follows.
\end{proof}

Using the distribution relation $\Li_n^{\fu}(z^d) = d^{n-1} \sum_{\zeta^d = 1} \Li_n^{\fu}(\zeta z)$, one can show that for $m\geq 3$, $$\left\{
\operatorname{Li}_n^{\mathfrak u}(\zeta_m^i)
\;\middle|\;
0\leq i<\frac m2,\ \gcd(i,m)=1
\right\}$$ forms a basis of $E_n(Z)$.

\subsection{The $p$-adic period conjecture}
\label{sec:period-conjecture}

Let $\fp \not\in S$ be a prime of~$K$. Consider the following $\fp$-adic analogue of Grothendieck's period conjecture, see \cite[Conjecture 2.2.11]{ishaidancohen_2020_mixed}.

\begin{conj}[$\fp$-adic period conjecture]
\label{conj:period-conjecture}
    The $\fp$-adic period homomorphism $\per_{\fp} \colon A(Z) \to K_{\fp}$ is injective.
\end{conj}

For a recent generalisation to the non-mixed Tate setting see also \cite[Conjecture~7.3.2]{CDC:periods}.

\begin{lemma}
\label{lem:periodconj-aij-nonvanishing}
   Assume the $\fp$-adic period conjecture. Then, in the expansion~\eqref{eq:eta-p-expansion}, all coefficients $a_w^{\fp}$ are nonzero.
\end{lemma}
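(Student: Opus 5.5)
By \eqref{eq:p-adic-from-motivic-coeff}, the coefficient in question is $a_w^{\fp} = \per_{\fp}(f_w)$. The $\fp$-adic period conjecture says that $\per_{\fp}$ is injective. So it is enough to show that every abstract motivic period $f_w$ is a nonzero element of $A(Z)$; injectivity then gives $\per_{\fp}(f_w) \neq 0$.

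To see that $f_w \neq 0$, I would use the description of $A(Z)$ from \S\ref{sec:abstract-periods}. Since $\fn(Z)$ is free pro-nilpotent on $\Sigma$ (\cite[Proposition~2.3]{deligne-goncharov}), the points $U_S^{\MT}(R)$ are exactly the grouplike elements of $R\llangle\Sigma\rrangle$. It follows that $A(Z)$ is the graded dual of $\bQ\llangle\Sigma\rrangle$, namely the shuffle algebra on the words in $\Sigma$. Under this identification, $f_w$ is the basis element dual to the word $w$, so in particular $f_w \neq 0$.

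If one prefers not to rely on the duality statement, there is a direct check: exhibit a single $R$-point of $U_S^{\MT}$ on which $f_w$ does not vanish. For $w = s_1\cdots s_r$, take $R = \bQ[t_1,\ldots,t_r]$ and the point
\[ \exp(t_1 s_1)\cdots\exp(t_r s_r), \]
which is grouplike because each factor $\exp(t_i s_i)$ is. Expanding the product, the coefficient of the word $w$ contains the monomial $t_1\cdots t_r$ with coefficient $1$, so it is not the zero polynomial. Hence $f_w$ does not vanish identically.

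The only step that needs care is justifying that \emph{every} grouplike series arises as a point of $U_S^{\MT}$, so that the explicit point above is legitimate. This is precisely the freeness of $\fn(Z)$ on $\Sigma$ combined with the logarithm isomorphism $U_S^{\MT} \cong \Lie(U_S^{\MT})$. Once that is in place, the argument is formal.
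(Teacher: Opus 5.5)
Your proposal is correct and follows the paper's proof: $a_w^{\fp} = \per_{\fp}(f_w)$, each $f_w$ is a basis element of the shuffle algebra $A(Z)$ and hence nonzero, and injectivity of $\per_{\fp}$ then gives $a_w^{\fp} \neq 0$. Your extra check via the point $\exp(t_1 s_1)\cdots\exp(t_r s_r)$ is also valid, but the paper does not need it, since it takes the basis description of $A(Z)$ as given.
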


\begin{proof}
    The ring of motivic periods $A(Z)$ is a shuffle algebra with $\bQ$-basis $f_w$, where $w$ runs over all words in the generators~$\Sigma$. In particular, each $f_w$ is nonzero, hence the injectivity of $\per_{\fp}$ implies that each $a_w^{\fp} = \per_{\fp}(f_w)$ is nonzero.
\end{proof}

\begin{lemma}
\label{period-conjecture-implies-coeff-nonzero}
    Assume the $\fp$-adic period conjecture. Then $f(\eps_{\fp}) \neq 0$ for any nonzero $\bQ$-linear map $f\colon \fn(Z) \to \bQ$. 
    In particular, in the Goncharov representation of $\eps_{\fp}$, the coefficients $c_w^{\fp} \in K_{\fp}$ are all nonzero.
\end{lemma}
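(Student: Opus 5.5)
The idea is to interpret a linear functional on $\fn(Z)$ evaluated at $\eps_{\fp}$ as the $\fp$-adic value of a motivic period. Then injectivity of $\per_{\fp}$ does the rest.

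Let $f\colon \fn(Z) \to \bQ$ be a nonzero $\bQ$-linear map. Since $\fn(Z)=\prod_n \fn(Z)_{-n}$ is pro-finite dimensional and only continuous functionals make sense on it (these are the ones vanishing on $\prod_{n>N}\fn(Z)_{-n}$ for some $N$), $f$ is a finite linear combination of the coefficient functionals $\eps\mapsto b_w$ in the expansion~\eqref{eq:lie-like-series-expansion}. Equivalently, $f$ is a $\bQ$-linear form on the primitive part of $\bQ\llangle\Sigma\rrangle$.

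\textbf{Step 1: realise $f\circ\log$ as a motivic period.} Compose $f$ with the logarithm $\log\colon U_S^{\MT}\xrightarrow{\sim}\fn(Z)$. This is an isomorphism of pro-affine $\bQ$-schemes, so $F\coloneqq f\circ\log$ is a regular function on $U_S^{\MT}$, i.e.\ an element of $A(Z)$. Concretely, \Cref{lie-coefficients-from-group-coefficients} gives
\[ F=\sum_w \lambda_w \sum_{n\ge1}\frac{(-1)^{n+1}}{n}\sum_{w=w_1\cdots w_n} f_{w_1}\cdots f_{w_n}, \]
where the products are taken in $A(Z)$ and $f=\sum_w\lambda_w b_w$. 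Because the evaluation is functorial in $R$ (apply it to the universal point $\eta_{\univ}$), we get
\[ f(\eps_{\fp})=F(\eta_{\fp})=\per_{\fp}(F). \]

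\textbf{Step 2: show $F\neq 0$.} This is the main obstacle, though a mild one. Since $\log$ is an isomorphism of schemes, its comorphism $\log^\sharp\colon \cO(\fn(Z))\to A(Z)$ is an isomorphism of rings. The functional $f$ is a nonzero element of $\cO(\fn(Z))$: it is a nonzero linear function, and $\fn(Z)(\bQ)$ is Zariski dense, so $f$ is nonzero as a function. Hence $F=\log^\sharp(f)\neq0$. Alternatively, one can argue via leading terms: take the shortest words on which $f$ is nonzero. There the formula above is $f_w$ plus shuffle products of shorter periods, and the basis property of the $f_w$ in the shuffle algebra shows $F\ne0$. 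The conjecture then gives $\per_{\fp}(F)\neq0$.

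\textbf{Step 3: the Goncharov coefficients.} By \Cref{goncharov-basis}, the map $\eps\mapsto c_w$ is the composition of the quotient $\fn(Z)\to\fn(Z)_{\Gon}$ with the coordinate functional dual to the basis $\cB$. This is a nonzero continuous $\bQ$-linear map $\fn(Z)\to\bQ$, as it takes the value $1$ on the corresponding basis element. Applying the first part with this functional gives $c_w^{\fp}\neq0$.
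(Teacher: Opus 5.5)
Your argument is correct and is essentially the paper's proof: $f\circ\log$ is a nonzero element of $A(Z)$ because $\log\colon U_S^{\MT}\to\fn(Z)$ is an isomorphism of schemes, its value at $\eta_{\fp}$ is $f(\eps_{\fp})$, injectivity of $\per_{\fp}$ then gives $f(\eps_{\fp})\neq0$, and the Goncharov coefficient functionals are the special case obtained from the basis $\cB$. Drop the ``leading-term'' alternative in Step~2: it fails as phrased because the products $f_{w_1}\cdots f_{w_n}$ in $A(Z)$ are shuffle products that expand into $f_u$ with $|u|=|w|$, not shorter words, and the $\lambda_w$ are not determined by $f$ (for distinct $s,t\in\Sigma$, $b_{st}+b_{ts}$ vanishes on primitive series), so the minimal-length part of $F$ can cancel to zero, and a correct version would have to argue modulo the square of the augmentation ideal instead.
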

\begin{proof}
    If $f$ is a nonzero $\bQ$-linear map $\fn(Z) \to \bQ$ then the precomposition with $\log\colon U_S^{\MT} \cong \Lie(U_S^{\MT}) = \fn(Z)$ is a nonzero regular function on~$U_S^{\MT}$, so the $\fp$-adic period conjecture implies that $(f \circ \log)(\eta_{\fp}) = f(\eps_{\fp})$ is nonzero. Taking $f$ to be the function that sends an element of the form \eqref{eq:eps-form}--\eqref{eq:eps-n} to its Goncharov coefficient $c_w$ for some~$w$ shows the claim.
\end{proof}

\subsection{The Galois action on motivic periods and period points}
\label{sec:galois-action-on-periods}

We show some general functoriality properties of motivic period rings and $p$-adic period points. They imply in particular that when $K/\bQ$ is Galois and~$S$ is Galois-stable, the Galois group $\Gal(K/\bQ)$ acts on the ring of motivic periods $A(\cO_{K,S})$, and the $\fp$-adic period points~$\eta_{\fp}$ for the various primes $\fp \mid p$ are permuted by the Galois action. 

Suppose that $\sigma\colon K \to K'$ is a homomorphism of number fields and $S$ is any set of primes of~$K$. Let $\sigma_*S$ be the set of primes~$\fp'$ of~$K'$ for which $\sigma^*\fp' \in S$. Recall from §\ref{sec:galois-action-on-selmer-scheme} that extension of scalars along~$\sigma$ induces a fully faithful exact $\otimes$-functor 
\begin{equation*}
    \sigma_*\colon \MT(\cO_{K,S},\bQ) \to \MT(\cO_{K',\sigma_*S}, \bQ) 
\end{equation*}
which is naturally compatible with the canonical fibre functors, and hence induces $\bG_m$-equivariant surjective homomorphisms
\begin{align}
    \label{eq:sigma-on-U-again}
    \sigma^*\colon U_{\sigma_*S}^{\MT} &\twoheadrightarrow U_S^{\MT},\\
    \label{eq:sigma-on-lie-algebras}
    \sigma^*\colon \Lie(U_{\sigma_*S}^{\MT}) &\twoheadrightarrow \Lie(U_S^{\MT}).
\end{align}
Taking the corresponding map between the rings of functions in~\eqref{eq:sigma-on-U-again} we obtain a graded homomorphism of Hopf algebras
\begin{equation}
    \label{eq:sigma-on-motivic-periods}
    \sigma_*\colon A(\cO_{K,S}) \hookrightarrow A(\cO_{K',\sigma_*S}).
\end{equation}

The effect of~\eqref{eq:sigma-on-motivic-periods} on motivic iterated integrals is easily described:
\begin{prop}
    \label{sigma-on-motivic-polylogarithms}
    For $\alpha \in \cO_{K,S}^{\times} \otimes \bQ$ we have
    \begin{equation}
    \label{eq:sigma-on-log}
        \sigma_*(\log^{\fu}(\alpha)) = \log^{\fu}(\sigma\alpha).
    \end{equation}
    For $\alpha \in X(\cO_{K,S})$ and $n \geq 1$ we have
    \begin{equation}
    \label{eq:sigma-on-Li}
        \sigma_*(\Li_n^{\fu}(\alpha)) = \Li_n^{\fu}(\sigma\alpha).
    \end{equation}
\end{prop}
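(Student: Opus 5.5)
We unwind the definitions: $\sigma_*$ on $A(\cO_{K,S})$ is pullback of functions along $\sigma^*\colon U_{\sigma_*S}^{\MT}\to U_S^{\MT}$. So for $u'\in U_{\sigma_*S}^{\MT}(R)$ we must show
\[ \log^{\fu}(\alpha)(\sigma^*u') = \log^{\fu}(\sigma\alpha)(u'), \qquad \Li_n^{\fu}(\alpha)(\sigma^*u') = \Li_n^{\fu}(\sigma\alpha)(u'). \]
By construction of $\sigma^*$, the element $\sigma^*u'$ acts on the canonical realisation $M^{\omega}$ of a motive $M$ in $\MT(\cO_{K,S},\bQ)$ exactly as $u'$ acts on $(\sigma_*M)^{\omega}$, once $M^{\omega}$ and $(\sigma_*M)^{\omega}$ are identified via the natural isomorphism $\omega\cong\omega\circ\sigma_*$. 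This is the same observation used in the proof of \Cref{sigma-on-cocycle-space}.

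We first treat the logarithm. The key input is the compatibility of Deligne--Goncharov path torsors with extension of scalars, as already used in \Cref{sigma-on-kummer}:
\[ \sigma_*\pi_1^{\mot}(\bG_m;1,\alpha)\cong\pi_1^{\mot}(\bG_m;1,\sigma\alpha), \qquad \sigma_*\cO(\pi_1^{\mot}(\bG_m))\cong\cO(\pi_1^{\mot}(\bG_m)). \]
We need two facts about these isomorphisms on $\omega$-realisations. First, they carry the canonical path $\gamma_{1,\alpha}$ to $\gamma_{1,\sigma\alpha}$, because both paths are characterised as the unique $\bG_m$-invariant points and the identification of fibre functors is $\bG_m$-equivariant (it respects weight gradings, $\sigma_*$ being a Tate functor). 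Second, they respect the function $\int\rd t/t$, i.e.\ the coefficient of $e_0$, because the generator $e_0$ of $\Lie\pi_1^{\omega}(\bG_m)$ is the image of the local monodromy $\bQ(1)\to\pi_1^{\mot}$, and this is preserved by $\sigma_*$ since $\sigma_*\bQ(1)=\bQ(1)$. Given these two facts,
\[ \int_{(\sigma^*u')(\gamma_{1,\alpha})}\frac{\rd t}{t}=\int_{u'(\gamma_{1,\sigma\alpha})}\frac{\rd t}{t}, \]
which is \eqref{eq:sigma-on-log} for $\alpha\in\cO_{K,S}^\times$. Both sides of \eqref{eq:sigma-on-log} are $\bQ$-linear in $\alpha$, so the identity extends to all of $\cO_{K,S}^\times\otimes\bQ$.

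The polylogarithm case runs the same way with $\pi_1^{\mot}(X;\vec1_0,\alpha)$. Here we use that $\sigma_*$ sends the tangential base point $\vec1_0$ to $\vec1_0$, and that the iterated integral of $\omega_0^{n-1}\omega_1$ is the coefficient of $e_0^{n-1}e_1$. This coefficient is preserved because $e_0$ and $e_1$ come from the local monodromies at $0$ and $1$, which are defined over $\bQ$ and hence compatible with base change.

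The main obstacle is making these identifications rigorous rather than merely plausible: one must check that the isomorphism $\omega\cong\omega\circ\sigma_*$ matches the canonical realisations of the path torsors together with their $e_0,e_1$ coordinates. I would handle this by passing to de Rham realisations, using $\omega\otimes K\cong\omega^{\dR}$ and the fact that de Rham realisation commutes with base change along $\sigma$. On the de Rham side, the relevant matrix coefficients are those of the unipotent connection \eqref{eq:kummer-bundle} (or its polylogarithmic analogue), which is defined over $\bQ$, so base change sends it to the same connection. The remaining check is that the Hodge-splitting identification commutes with $\sigma\otimes K'$, and this holds because the Hodge filtration is compatible with flat base change.
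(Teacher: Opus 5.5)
Your proposal is correct and is essentially the paper's proof. Both arguments reduce to $\alpha\in\cO_{K,S}^\times$ and use $\sigma_*\pi_1^{\mot}(\bG_m;1,\alpha)\cong\pi_1^{\mot}(\bG_m;1,\sigma\alpha)$, the $\sigma^*$-equivariance of the induced isomorphism on $\omega$-realisations, and $\bG_m$-equivariance to match $\gamma_{1,\alpha}$ with $\gamma_{1,\sigma\alpha}$; the remaining ingredient, that $\int\rd t/t$ is preserved, the paper justifies via compatibility of $\omega\otimes_{\bQ}K\cong\omega^{\dR}$ with extension of scalars, exactly as in your last paragraph, and it treats the polylogarithm case as analogous.
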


\begin{proof}
    We show \eqref{eq:sigma-on-log}; the case of \eqref{eq:sigma-on-Li} is similar. We can assume $\alpha \in \bG_m(\cO_{K,S})$. Consider the motivic path space $\pi_1^{\mot}(\bG_m;1,\alpha)$, an affine group scheme in $\MT(\cO_{K,S},\bQ)$. Applying the extension of scalars functor~\eqref{eq:sigma-on-MT} we get the corresponding path space in $\MT(\cO_{K',\sigma_*S},\bQ)$:
    \[ \sigma_* \pi_1^{\mot}(\bG_m;1,\alpha) = \pi_1^{\mot}(\bG_m;1,\sigma \alpha). \]
    Applying the canonical fibre functor and using $\sigma_* \circ \omega \cong \omega$ we obtain a natural isomorphism
    \begin{equation}
    \label{eq:sigma-on-path-space}
        \sigma_*\colon \pi_1^{\omega}(\bG_m; 1,\alpha) \xrightarrow{\simeq} \pi_1^{\omega}(\bG_m;1,\sigma\alpha). 
    \end{equation}
    It is equivariant relative to the homomorphism $\sigma^*\colon U_{\sigma_*S}^{\MT} \to U_S^{\MT}$, i.e.\ we have $u'(\sigma_*\gamma) = \sigma_*((\sigma^*u')(\gamma))$ for all $\gamma \in \pi_1^{\omega}(\bG_m;1,\alpha)$ and $u' \in U_{\sigma_*S}^{\MT}$. It is also $\bG_m$-equivariant; in particular, it maps the $\bG_m$-invariant path to the $\bG_m$-invariant path: $\sigma_* \gamma_{1,\alpha} = \gamma_{1,\sigma\alpha}$. Moreover, \eqref{eq:sigma-on-path-space} preserves integrals: $\int_{\sigma_*\gamma} \rd t/t = \int_{\gamma} \rd t/t$. This follows for example from the compatibility of the isomorphism $\omega \otimes_{\bQ} K \cong \omega^{\dR}$ with extension of scalars. Putting everything together we get
    \begin{align*}
        \sigma_*(\log^{\fu}(\alpha))(u') &= \log^{\fu}(\alpha)(\sigma^*u') = \int_{(\sigma^*u')(\gamma_{1,\alpha})} \frac{\rd t}{t} = \int_{\sigma_*((\sigma^*u')(\gamma_{1,\alpha}))} \frac{\rd t}{t} \\[1mm]
        &= \int_{u'(\sigma_* \gamma_{1,\alpha})} \frac{\rd t}{t} = \int_{u'(\gamma_{1,\sigma\alpha})} \frac{\rd t}{t} = \log^{\fu}(\sigma\alpha)(u')
    \end{align*}
    for all $u' \in U_{\sigma_*S}^{\MT}$, hence $\sigma_*(\log^{\fu}(\alpha)) = \log^{\fu}(\sigma\alpha)$.
\end{proof}

We have the following functoriality property for the $\fp$-adic period points.
\begin{thm}
\label{sigma-on-period-point}
    Let $\fp' \not\in \sigma_*S$ be a prime of~$K'$ and let $\sigma^*\fp'$ be the corresponding prime of~$K$. Then the map~\eqref{eq:sigma-on-U-again} sends the $\fp'$-adic period point $\eta_{\fp'}$ to $\eta_{\sigma^* \fp'}$, i.e.\ the following diagram commutes:
    \[
    \begin{tikzcd}
        \Spec(K'_{\fp'}) \rar["\eta_{\fp'}"] \dar["\sigma^*"] & U_{\sigma_*S}^{\MT} \dar["\sigma^*"] \\
        \Spec(K_{\sigma^*\fp'}) \rar["\eta_{\sigma^*\fp'}"] & U_S^{\MT}.
    \end{tikzcd}
    \]
\end{thm}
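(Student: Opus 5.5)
The strategy is to use the description of $\eta_{\fp}$ in \eqref{eq:eta-p-splittings}: it is the $\otimes$-automorphism of $\omega\otimes_{\bQ}K_{\fp}$ that converts the Hodge splitting $s_{\dR}$ of the weight filtration into the Frobenius splitting $s_{\cris}$. Write $\fp=\sigma^*\fp'$ and let $\sigma_{\fp'}\colon K_{\fp}\to K'_{\fp'}$ be the completion of~$\sigma$. The claim says: for every $M\in\MT(\cO_{K,S},\bQ)$, the automorphism $\eta_{\fp'}$ of $(\sigma_*M)^{\omega}\otimes K'_{\fp'}=M^{\omega}\otimes_{\bQ}K'_{\fp'}$ equals the base change along $\sigma_{\fp'}$ of the automorphism $\eta_{\fp}$ of $M^{\omega}\otimes_{\bQ}K_{\fp}$. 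Since $\sigma^*$ on $U^{\MT}$ is restriction of $\otimes$-automorphisms along $\sigma_*$, with $(\sigma_*M)^\omega=M^\omega$, this is exactly the commutativity of the diagram.

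To prove this, I would check that $\sigma_*$ is compatible with the three relevant structures. First, for de Rham realisations there is a natural isomorphism $(\sigma_*M)^{\dR}\cong M^{\dR}\otimes_{K,\sigma}K'$. It respects the Hodge and weight filtrations, and it is compatible with the identifications $\omega\otimes K\cong\omega^{\dR}$ and $\omega\otimes K'\cong\omega^{\dR}\circ\sigma_*$. This is the same compatibility already used in the proof of \Cref{sigma-on-motivic-polylogarithms}. In particular the Hodge splitting of $\sigma_*M$ is the base change of the Hodge splitting of $M$, since $(W_{2i}\cap F^i)$ commutes with flat base change.

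Second, the crystalline structure must be compatible. By $p$-adic Hodge theory, $D_{\cris,K'_{\fp'}}(V|_{G_{K'_{\fp'}}})$ is related to $D_{\cris,K_{\fp}}(V)$ by base change along the map of maximal unramified subfields, with $\varphi$ extended semilinearly. Here $V=M^{\et}$, and $(\sigma_*M)^{\et}$ is the restriction of $M^{\et}$ to $G_{K'}$ via~$\sigma$. This identification is compatible with the de Rham comparison $D_{\cris}\otimes K_{\fp}\cong D_{\dR}\cong M^{\dR}_{K_{\fp}}$. Let $f$ and $f'$ be the residue degrees, so $f'=fe_0$ for the residue degree $e_0$ of $\fp'/\fp$. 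Then $\varphi^{f'}=(\varphi^f)^{e_0}$. Its eigenspace for $q'^{-i}=(q^{-i})^{e_0}$ coincides with the base change of the $\varphi^f=q^{-i}$ eigenspace. This needs a short argument: the eigenvalues of $\varphi^f$ on $\gr^W_{2i}$ are exactly $q^{-i}$ for Tate motives, and distinct weights give distinct powers, so no eigenspaces merge. Hence the Frobenius splitting $s_{\cris}$ also base changes.

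Since both splittings base change, so does their composite $s_{\cris}\circ s_{\dR}^{-1}$, which gives the identity of points. The main obstacle is the second step: carefully matching the crystalline comparison for the restricted representation with the motivic de Rham realisation of $\sigma_*M$, including the compatibility of the semilinear Frobenius under the unramified extension $K_{\fp,0}\to K'_{\fp',0}$. I would first try to verify it on path torsors, or reduce it to the functoriality of $D_{\cris}$ and of Deligne--Goncharov's realisation under extension of scalars.
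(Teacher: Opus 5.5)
Your proposal is correct and is essentially the paper's argument. Like you, the paper shows that both the Hodge splitting $s_{\dR}$ and the Frobenius splitting $s_{\cris}$ are compatible with $\sigma_*$; it packages this by noting that $\eta_{\fp}$ factors through the $p$-adic étale realisation into crystalline mixed Tate $G_{\fp}$-representations (Chatzistamatiou--Ünver, Lemma~2.2.5), and that étale realisation turns $\sigma_*$ into restriction along $G_{\fp'}\to G_{\sigma^*\fp'}$. Your explicit check that the $\varphi^{f'}=(\varphi^f)^{e_0}$-eigenspace decomposition is the base change of the $\varphi^f$ one (distinct weights give distinct eigenvalues $(q')^{-i}$) fills in a detail the paper leaves implicit.
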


\begin{proof}
    Recall from~\eqref{eq:eta-p-splittings} that $\eta_{\fp'}$ is defined as the $\otimes$-automorphism of $\omega \otimes_{\bQ} K'_{\fp'}$ converting between two natural splittings of the weight filtration, coming from the Hodge filtration and the Frobenius action, respectively. The construction factors through the $p$-adic étale realisation functor~$\rho_{\et,\fp'}$ to the category $\Rep_{\bQ_p}^{\MT,\cris}(G_{\fp'})$ of crystalline mixed Tate $G_{\fp'}$-representations \cite[Lemma~2.2.5]{chatzistamatiou-unver:p-adic_periods}, where $p$ is the rational prime lying under~$\fp'$. The $p$-adic étale realisation functor is compatible with field extensions, i.e.\ the following square is naturally 2-commutative:
    \[
    \begin{tikzcd}
        \MT(\cO_{K,S},\bQ) \rar["\sigma_*"] \dar["\rho_{\et,\sigma^*\fp'}"] & \MT(\cO_{K',\sigma_*S},\bQ)  \dar["\rho_{\et,\fp'}"] \\
        \Rep_{\bQ_p}^{\MT,\cris}(G_{\sigma^*\fp'}) \rar["\sigma_*"] & \Rep_{\bQ_p}^{\MT,\cris}(G_{\fp'}).
    \end{tikzcd}
    \]
    Here, the arrow at the bottom is the restriction functor along the homomorphism $G_{\fp'} \to G_{\sigma^*\fp'}$, which in turn is induced by $\sigma_*\colon K_{\sigma^*\fp'} \to K'_{\fp'}$ and a compatible choice of algebraic closures. It follows that for $M \in \MT(\cO_{K,S},\bQ)$, the natural isomorphism $\omega(M) \otimes K'_{\fp'} \cong \omega(\sigma_* M) \otimes K'_{\fp'}$ is compatible with the splittings $s_{\dR}$ and $s_{\cris}$ of the weight filtrations, which implies the claim.
\end{proof}

A notable special case of \Cref{sigma-on-period-point} occurs when $K = K'$ and $\sigma\colon K \to K$ is an automorphism. 

\begin{prop}
\label{eta-sigma-p-from-eta-p}
    Let $\sigma \in \Aut(K)$ be an automorphism. Let $\fp$ be a prime of~$K$ with $\sigma^*\fp \not \in S$ and $K_{\fp} = \bQ_p$. Then the composition
    \[ A(\cO_{K,S}) \xrightarrow{\sigma_*} A(\cO_{K,\sigma_* S}) \xrightarrow{\per_{\fp}} \bQ_p \]
    equals the $\sigma^*\fp$-adic period map:
    \[ \per_{\sigma^* \fp} = \per_{\fp} \circ \sigma_*. \]
\end{prop}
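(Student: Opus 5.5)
This is a direct application of \Cref{sigma-on-period-point} with $K' = K$ and $\fp' = \fp$. One caveat on naming: in that theorem the target group is $U_{\sigma_*S}^{\MT}$, and the hypothesis is $\fp' \notin \sigma_*S$. By the definition $\sigma_*S = (\sigma^*)^{-1}(S)$, this is equivalent to $\sigma^*\fp \notin S$, which is exactly what we assume here. So the theorem gives a commutative square
\[
\begin{tikzcd}
    \Spec(K_{\fp}) \rar["\eta_{\fp}"] \dar["\sigma^*"] & U_{\sigma_*S}^{\MT} \dar["\sigma^*"] \\
    \Spec(K_{\sigma^*\fp}) \rar["\eta_{\sigma^*\fp}"] & U_S^{\MT},
\end{tikzcd}
\]
where the left vertical map comes from the field isomorphism $\sigma\colon K_{\sigma^*\fp} \to K_{\fp}$ obtained by completing $\sigma$.

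Next I pass to rings of functions. The right vertical map corresponds to $\sigma_*\colon A(\cO_{K,S}) \to A(\cO_{K,\sigma_*S})$ from \eqref{eq:sigma-on-motivic-periods}. The bottom arrow corresponds to $\per_{\sigma^*\fp}$ and the top arrow to $\per_{\fp}$, by \Cref{def:motivic-periods}. Commutativity of the square then reads
\[ \sigma \circ \per_{\sigma^*\fp} = \per_{\fp} \circ \sigma_* \colon A(\cO_{K,S}) \to K_{\fp}. \]

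Finally I remove the factor $\sigma$ on the left. Since $K_{\fp} = \bQ_p$ and $\sigma^*\fp$ has the same local degree, we also have $K_{\sigma^*\fp} = \bQ_p$. A continuous field isomorphism $\bQ_p \to \bQ_p$ is the identity, since it is $\bQ$-linear and continuous. Identifying both completions with $\bQ_p$ therefore makes the left vertical map the identity, and the claimed equality $\per_{\sigma^*\fp} = \per_{\fp} \circ \sigma_*$ follows.

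The only point that needs care is bookkeeping: making sure the identification of $K_{\sigma^*\fp}$ with $\bQ_p$ is compatible with the one implicit in $\per_{\sigma^*\fp}$. Both come from the unique embedding of each completion into $\bQ_p$, so this is harmless. I do not expect any real obstacle.
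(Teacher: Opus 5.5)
Your proposal is correct and takes the same route as the paper, whose proof is the one-line citation of \Cref{sigma-on-period-point}. You have written out the steps the paper leaves implicit: translating the hypothesis via $\sigma_*S = (\sigma^*)^{-1}(S)$, dualising the square to rings of functions, and observing that the completed map $K_{\sigma^*\fp} \to K_{\fp}$ is a continuous automorphism of $\bQ_p$ fixing $\bQ$, hence the identity.
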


\begin{proof}
    This follows from \Cref{sigma-on-period-point}.
\end{proof}

One of the main problems in explicit Chabauty--Kim theory is the determination of the coefficients $b_w^{\fp}$ in the expansion of $\eps_{\fp} = \sum_w b_w^{\fp} w$ as a primitive element of $K_{\fp}\llangle \Sigma \rrangle$, or equivalently of the coefficients $c_w^{\fp}$ in the Goncharov representation of~$\eps_{\fp}$ (\Cref{def:goncharov-representation}). One usually needs a sufficiently large supply of $S$-integral points to achieve this. If one does not have enough points, one can try to enlarge~$S$; this is the strategy proposed in \cite[§4.3.2]{CDC:polylog1} and \cite{ishaidancohen_2020_mixed}. A useful consequence of \Cref{sigma-on-period-point} is that it opens up the possibility of using an alternative strategy, namely passing to a larger field where the $S$-unit equation acquires new solutions. Here is a precise statement of how the Goncharov coefficients of $\eps_{\fp'}$ over a larger field are related to those of~$\eps_{\fp}$ over the original field. 

\begin{prop}
\label{goncharov-coeffs-from-larger-field}
    Let $K \subseteq K'$ be an extension of number fields, let $S$ be a set of primes of~$K$ and let $S'$ be the set of primes of~$K'$ lying over a prime in~$S$. Let $\fp' \mid \fp$ be primes with $\fp \not \in S$. Let $\Sigma = \{\tau_i\}_i \cup \{\sigma_{n,i}\}_{n,i}$ and $\Sigma' = \{\tau_i'\}_i \cup \{\sigma_{n,i}'\}_{n,i}$ be free homogeneous Lie algebra generators of $\Lie(U_S^{\MT})$ and $\Lie(U_{S'}^{\MT})$ respectively (see §\ref{sec:coordinates}), chosen compatibly in the sense that the surjection
    \[ \Lie(U_{S'}^{\MT}) \twoheadrightarrow \Lie(U_S^{\MT}) \]
    maps $\tau_i' \mapsto \tau_i$ for $1 \leq i \leq d_1$ and $\tau_i' \mapsto 0$ for $d_1 < i \leq d_1'$, and similarly for the $\sigma_{n,i}$. Then the coefficients $c_w \in K_{\fp}$ and $c_w' \in K_{\fp'}$ in the Goncharov representations of $\eps_{\fp}$ and $\eps_{\fp'}$ satisfy
    \begin{align}
        c_{\tau_i} &= c'_{\tau_i'} \quad \text{for $1 \leq i \leq d_1$}, \\
        c_{\tau_{i_1}\cdots\tau_{i_n}} &= c_{\tau_{i_1}'\cdots\tau_{i_n}'}' \quad \text{for $i_1 \leq \ldots \leq i_{n-1} > i_n$ with $1 \leq i_j \leq d_1$},\\
        c_{\tau_{i_1}\cdots\tau_{i_k}\sigma_{n,i}} &= c_{\tau_{i_1}'\cdots\tau_{i_k}'\sigma_{n,i}'}' \quad \text{for $1 \leq i_1 \leq \ldots \leq i_k \leq d_1$, $n \geq 2$, $1 \leq i \leq d_n$}.
    \end{align}
\end{prop}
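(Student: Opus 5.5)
The plan is to use \Cref{sigma-on-period-point} to transport $\eps_{\fp'}$ to $\eps_{\fp}$, and then check that the Goncharov representation is compatible with the surjection $\phi\coloneqq\sigma^*\colon \Lie(U_{S'}^{\MT}) \twoheadrightarrow \Lie(U_S^{\MT})$ induced by the inclusion $\sigma\colon K\hookrightarrow K'$. We have $\sigma_*S = S'$ by definition of $S'$, and $\sigma^*\fp' = \fp$.

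First, \Cref{sigma-on-period-point} gives $\phi(\eta_{\fp'}) = \eta_{\fp}$, viewing $\eta_{\fp}$ as a $K'_{\fp'}$-point via $K_{\fp} \hookrightarrow K'_{\fp'}$. Since $\phi$ is a homomorphism of pro-unipotent groups, it commutes with the logarithm. Hence the induced Lie algebra map satisfies $\phi(\eps_{\fp'}) = \eps_{\fp}$ in $\Lie(U_S^{\MT})(K'_{\fp'})$.

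Second, write $\eps_{\fp'} = \sum_n \eps'_n + \eps'_{\Gon}$ in its Goncharov representation (\Cref{lie-algebra-element-expansion}) and apply $\phi$.
\begin{itemize}
\item Because $\phi$ is graded, it maps $\Lie(U_{S'}^{\MT})_{\leq -2}$ into $\Lie(U_S^{\MT})_{\leq -2}$. It therefore maps the Goncharov ideal $I'_{\Gon}$ into $I_{\Gon}$, so $\phi(\eps'_{\Gon}) \in I_{\Gon}$.
\item On basis elements from \Cref{goncharov-basis}, $\phi$ is given by substituting $\tau'_i \mapsto \tau_i$ or $0$, and $\sigma'_{n,i}\mapsto\sigma_{n,i}$ or $0$.
\end{itemize}
So a basis element of $\cB'$ either maps to $0$ (if it involves some index beyond $d_1$ resp.\ $d_n$) or maps to the corresponding element of $\cB$ with the same index pattern. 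The ordering conditions $i_1\le\dots\le i_{n-1}>i_n$ and $i_1\le\dots\le i_k$ are preserved, since the compatible generators carry the same indices. Consequently
\[ \phi(\eps_{\fp'}) = \sum_{w} c'_{w'}\, \bar w + (\text{element of } I_{\Gon}), \]
where $w$ ranges over the index patterns with all indices in the range for $K$, and $\bar w$ is the corresponding basis element of $\cB$.

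Finally, uniqueness of the Goncharov representation of $\eps_{\fp}$ (\Cref{goncharov-basis}, applied over $R=K'_{\fp'}$) lets us compare coefficients, giving $c_w = c'_{w'}$ as claimed. One subtlety is that the elements of $\cB$ might be basis elements only up to sign, but the same sign convention occurs on both sides.

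The main point to be careful about is the first step: \Cref{sigma-on-period-point} is a statement about points, and we need it base-changed along $K_{\fp}\to K'_{\fp'}$. We also need it at the level of logarithms, which follows from the naturality of $\log$ for unipotent group homomorphisms. Everything else is bookkeeping with the explicit basis.
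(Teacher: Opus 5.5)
Your proposal is correct and is the same argument as the paper's. You push $\eps_{\fp'}$ to $\eps_{\fp}$ via \Cref{sigma-on-period-point} and compare Goncharov representations using the compatibility of $\Sigma'$ and $\Sigma$, making explicit what the paper leaves implicit: naturality of $\log$, $\phi(I'_{\Gon})\subseteq I_{\Gon}$ by gradedness, and uniqueness of the representation over $K'_{\fp'}$; your sign caveat is moot, since $\cB$ itself (not the Hall basis) maps to a basis of the Goncharov quotient.
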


\begin{proof}
    By \Cref{sigma-on-period-point}, $\eps_{\fp'} \mapsto \eps_{\fp}$ under the surjection $\Lie(U_{S'}^{\MT})(K_{\fp'}) \twoheadrightarrow \Lie(U_S^{\MT})(K_{\fp'})$. The compatibility of $\Sigma$ and $\Sigma'$ ensures that the Goncharov representation of $\eps_{\fp}$ in the generators $\Sigma$ is obtained from that of $\eps_{\fp'}$ by replacing each $\tau_i'$ with $\tau_i$ for $i \leq d_1$, and each $\sigma_{n,i}'$ with $\sigma_{n,i}$ for $i \leq d_n$ in~\eqref{eq:eps-1}--\eqref{eq:eps-n}, and dropping all terms containing other generators.
\end{proof}

In \Cref{rem:enlarging-field} below, we discuss an example where this strategy can be applied: passing from $K = \bQ$, $S = \{3\}$, where the $S$-unit equation has no solutions, to $K' = \bQ(\zeta_3)$, $S' = \{(1-\zeta_3)\}$, where there are~$8$ solutions.

\section{Integral points over imaginary quadratic fields}
\label{sec:integral-points-imag}

We determine the functions defining the polylogarithmic Chabauty--Kim locus in the case that $K$ is an imaginary quadratic field and $S = \emptyset$. It turns out that they do not depend on~$K$. We can determine their exact set of solutions, resulting in a proof of Kim's Conjecture for $K = \bQ(\zeta_3)$, and a proof that the polylogarithmic quotient is insufficient to prove the conjecture for other fields, even when combined with $S_3$-symmetrisation.

\subsection{Lie algebra generators adapted to the Galois-action}
\label{sec:lie-algebra-generators-galois-action}

Let $K$ be an imaginary quadratic field. Recall that for any set of primes~$S$ of~$K$, the Lie algebra $\Lie(U_S^{\MT})$ is free pro-nilpotent, and a set of generators $\Sigma = \Sigma_1 \cup \Sigma_2 \cup \ldots $ is obtained by choosing $\Sigma_n \subseteq \Lie(U_S^{\MT})_{-n}$ as a lift of a basis along the abelianisation map
\[ \Lie(U_S^{\MT})_{-n} \twoheadrightarrow \Lie(U_S^{\MT})_{-n}^{\ab} \cong \Ext^1_{\MT(\cO_{K,S},\bQ)}(\bQ(0),\bQ(n))^{\vee}. \]
Since $K$ is imaginary quadratic, the spaces $\Ext^1_{\MT(\cO_{K,S},\bQ)}(\bQ(0),\bQ(n))$ are 1-dimensional for all $n \geq 2$ by~\eqref{eq:borel-dimensions}. Thus, there is just a single generator~$\sigma_n \in \Sigma_n$ for each $n \geq 2$. By §\ref{sec:galois-action-on-periods}, the Galois group $\Gal(K/\bQ)$ acts contravariantly on $\Lie(U_S^{\MT})$. The following lemma ensures that the generators $\sigma_n$ can be chosen in such a way that we understand how the Galois group acts on them.

\begin{lemma}
\label{lem:sigmasigman}
    Let $\sigma\in\Gal(K/\mathbb{Q})$ be complex conjugation and let~$S$ be a Galois-stable set of primes of $K$. For every $n\geq 2$, we can choose the Lie algebra generator $\sigma_n$ in $\Lie(U_S^{\MT})_{-n}$ such that the $\Gal(K/\mathbb{Q})$-action on it is given by $\sigma^*\sigma_n=(-1)^{n+1}\sigma_n$. 
\end{lemma}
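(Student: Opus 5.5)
Since $\sigma$ is an involution and $\sigma_*S = S$, the map $\sigma^*$ from \eqref{eq:sigma-on-lie-algebras} is a graded Lie algebra automorphism of $\Lie(U_S^{\MT})$ with $(\sigma^*)^2 = \mathrm{id}$. It therefore acts on the graded piece $\Lie(U_S^{\MT})_{-n}$, and compatibly on the one-dimensional quotient $\Lie(U_S^{\MT})^{\ab}_{-n} \cong E_n(Z)^{\vee}$. The plan has three steps: identify the sign by which $\sigma$ acts on this line, lift an eigenvector, and check that the lift is still a valid generator.

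\textbf{Step 1: the sign on the abelianisation.} Dually, $\sigma_*$ acts on $E_n(Z) \subseteq A(Z)_n$. I would compute this action on an explicit nonzero element. By \Cref{zeta-and-cyclotomic-polylogs-in-En}, $\zeta^{\fu}(n) \in E_n(Z)$. The tangent vector $-\vec{1}_1$ is defined over $\bQ$, so the argument of \Cref{sigma-on-motivic-polylogarithms} gives $\sigma_*\zeta^{\fu}(n) = \zeta^{\fu}(n)$. This suffices when $n$ is odd, provided $\zeta^{\fu}(n) \neq 0$. Nonvanishing follows from the $\fp$-adic period, which is a nonzero multiple of the Kubota--Leopoldt value $L_p(n,\omega^{1-n})$; alternatively it follows from the fact that $E_n(\bZ) \to E_n(Z)$ is injective.

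For $n$ even, $E_n(\bZ) = 0$, and I would argue via Borel regulators. The group $\Ext^1_{\MT(\cO_{K,S},\bQ)}(\bQ(0),\bQ(n)) \cong K_{2n-1}(\cO_K)_{\bQ}$ carries an action of complex conjugation. Its $\Gal(K/\bQ)$-invariants are $K_{2n-1}(\bZ)_{\bQ}$, which has dimension $1$ for $n$ odd and $0$ for $n$ even. Hence $\sigma_*$ acts on $E_n(Z)$ by $(-1)^{n+1}$. Using $\Lie(U_S^{\MT})^{\ab}_{-n} \cong E_n(Z)^{\vee}$, the induced action of $\sigma^*$ on the abelianisation is also $(-1)^{n+1}$.

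The main obstacle is justifying that this Galois action on $K$-theory is compatible with the motivic functoriality $\sigma_*$. I would first try to avoid $K$-theory entirely. For $n \geq 2$ the space $E_n(Z)$ is independent of $S$ and injects into $E_n(Z')$ for larger fields, by \Cref{goncharov-coeffs-from-larger-field}. One can then embed $K$ into a cyclotomic field $\bQ(\zeta_m)$ and use \Cref{cyclotomic-En-basis} together with $\sigma_*\Li_n^{\fu}(\zeta) = \Li_n^{\fu}(\bar\zeta)$. Combining this with the distribution relations and the dimension count $\dim E_n = 1$ should show that the anti-invariant combinations $\Li_n^{\fu}(\zeta) - \Li_n^{\fu}(\bar\zeta)$ span the relevant line when $n$ is even. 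The invariant combinations $\Li_n^{\fu}(\zeta) + \Li_n^{\fu}(\bar\zeta)$ then span it when $n$ is odd.

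\textbf{Step 2: lifting an eigenvector.} Take any lift $s \in \Lie(U_S^{\MT})_{-n}$ of a nonzero vector in the line $\Lie(U_S^{\MT})^{\ab}_{-n}$. Set
\[ \sigma_n \coloneqq \tfrac12\bigl(s + (-1)^{n+1}\sigma^* s\bigr). \]
Because $(\sigma^*)^2 = \mathrm{id}$, this satisfies $\sigma^*\sigma_n = (-1)^{n+1}\sigma_n$. Its image in the abelianisation is $\tfrac12(\bar s + (-1)^{n+1}\cdot(-1)^{n+1}\bar s) = \bar s \neq 0$, using the sign from Step 1.

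\textbf{Step 3: generators.} By the discussion in §\ref{sec:coordinates}, any lift of a basis of the abelianisation gives a free generating set. So $\sigma_n$ is a legitimate choice of generator, and this choice can be made independently for each $n \geq 2$ without affecting the $\tau_i$.
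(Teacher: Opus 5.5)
Your argument is essentially the paper's proof. The sign on the one-dimensional abelianisation comes from identifying the $\Gal(K/\bQ)$-invariants of the extension group with the extension group over $\bQ$, which has dimension $1$ or $0$ by Borel. The equivariant lift is Maschke's theorem, and your averaging formula is just that theorem made explicit.

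The compatibility you flag as the main obstacle is exactly what the paper takes from Deligne--Goncharov~(2.16.2), which states the Galois descent directly for motivic $\Ext^1$ groups. So neither the $\zeta^{\fu}(n)$ computation nor the cyclotomic detour is needed. The detour would in any case not work as sketched: distribution relations and a dimension count do not determine the eigenvalue, and what it actually needs is the motivic inversion relation $\Li_n^{\fu}(\zeta^{-1}) = (-1)^{n+1}\Li_n^{\fu}(\zeta)$.
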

\begin{proof}
    Let $n \geq 2$. The action of $\sigma$ on the 1-dimensional space~$\Lie(U_S^{\MT})_{-n}^{\ab}$ is an involution, hence $\sigma$ acts as either $+1$ or $-1$. By \cite[(2.16.2)]{deligne-goncharov}, the Galois-fixed subspace of an extension space in $\MT(K,\bQ)$ is the corresponding extension space in $\MT(\bQ,\bQ)$:
    \[ \Ext^1_{\MT(\bQ,\bQ)}(\bQ(0),\bQ(n)) \cong \Ext^1_{\MT(K,\bQ)}(\bQ(0),\bQ(n))^{\Gal(K/\bQ)}. \]
    By~\eqref{eq:borel-dimensions}, the extension space over~$\bQ$ is 1-dimensional when $n$ is odd, and 0-dimensional when~$n$ is even. Thus, $\sigma$ acts as $(-1)^{n+1}$ on $\Lie(U_S^{\MT})_{-n}^{\ab}$. By Maschke's Theorem, there exists a $\Gal(K/\bQ)$-equivariant splitting of the surjection
    \[ \Lie(U_S^{\MT})_{-n}\twoheadrightarrow \Lie(U_S^{\MT})_{-n}^{\ab}. \]
    Choose $\sigma_n$ as the image of any basis vector under such a splitting, then we have $\sigma^*\sigma_n = (-1)^{n+1} \sigma_n$ as claimed. 
\end{proof}

\subsection{Deriving equations for the Chabauty--Kim locus}
\label{sec:imag-quad-integral-equations}

Let $K$ be an imaginary quadratic field. For a rational prime $p$ which splits in $K$, let $\mathfrak{p}_1,\mathfrak{p}_2\mid p$ denote the two primes of $K$ lying over $p$ and let $\iota_1, \iota_2\colon K \hookrightarrow \bQ_p$ denote the two $p$-adic embeddings of~$K$ defined by $\mathfrak{p}_1,\mathfrak{p}_2$. We thus identify $X(\mathcal{O}_{K}\otimes\mathbb{Z}_p)=X(\mathcal{O}_{\mathfrak{p}_1})\times X(\mathcal{O}_{\mathfrak{p}_2})$ with $X(\mathbb{Z}_p)\times X(\mathbb{Z}_p)$. 

\begin{thm}
    \label{integral-points-equations}
    Let $K$ be an imaginary quadratic field and let $p$ be a prime which splits completely in~$K$. For $1 \leq n \leq \infty$, the depth-$n$ polylogarithmic Chabauty--Kim locus $X(\cO_K \otimes \bZ_p)_{\emptyset,\PL,n}$ is contained in the set of pairs $(z_1,z_2)$ satisfying the equations
    \begin{align}
        \log(z_1) = \log(z_2) &= 0, \label{eq:log-equations}\\
        \Li_1(z_1) = \Li_1(z_2) &= 0, \label{eq:Li1-equations}\\
        \Li_m(z_1) + (-1)^m \Li_m(z_2) &= 0 \qquad \text{for $2\leq m \leq n$}. \label{eq:Lin-equations}
    \end{align}
    The containment is an equality if the $p$-adic period conjecture holds for the primes dividing $p$.
\end{thm}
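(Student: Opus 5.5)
We work with the Lie algebra form of the motivic Chabauty--Kim diagram (\Cref{lie-algebra-ck-diagram}) together with the explicit evaluation formula of \Cref{cocycle-evaluation-map}. For $K$ imaginary quadratic and $S=\emptyset$ we have $d_1 = r_1+r_2+\#S-1 = 0$, so there are no generators $\tau_i$. For each $n\geq2$ there is exactly one generator $\sigma_n$, and by \Cref{lem:sigmasigman} we choose it with $\sigma^*\sigma_n=(-1)^{n+1}\sigma_n$. The Selmer scheme $\Sel^{\mot}_{\emptyset,\PL,N}(X)$ is then affine space with coordinates $z_2,\ldots,z_N$. \Cref{cocycle-evaluation-map} collapses to
\[ \ev_{\eps}(\xi) = \sum_{m=2}^N c_{\sigma_m}\, z_m \ad(e_0)^{m-1}e_1 . \]
All terms involving $x_i,y_i$ vanish, and in the sum over $m$ only the term with no $\tau$'s survives.

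Next we compare the two primes. We have $\eps_{\fp_1}$ and $\eps_{\fp_2}$, and complex conjugation $\sigma$ swaps $\fp_1$ and $\fp_2$ (as $\sigma^*\fp_2=\fp_1$) while $S=\emptyset$ is stable. By \Cref{sigma-on-period-point}, $\sigma^*$ sends $\eps_{\fp_2}$ to $\eps_{\fp_1}$, where both are regarded in $\bQ_p$ via the embeddings $\iota_1,\iota_2$. Since $\sigma^*$ acts on $\sigma_m$ by $(-1)^{m+1}$, it acts the same way on the coefficient. Comparing coefficients of $\sigma_m$ gives $c^{\fp_1}_{\sigma_m} = (-1)^{m+1}c^{\fp_2}_{\sigma_m}$. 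Equivalently, \Cref{eta-sigma-p-from-eta-p} gives $\per_{\fp_1}=\per_{\fp_2}\circ\sigma_*$ applied to $f_{\sigma_m}$, and we have $\sigma_* f_{\sigma_m}=(-1)^{m+1}f_{\sigma_m}$. Set $\lambda_m \coloneqq c^{\fp_1}_{\sigma_m}\in\bQ_p$. The image of $\loc_p$ is then the set of pairs
\[ \Bigl(\sum_m \lambda_m z_m \ad(e_0)^{m-1}e_1,\ \sum_m (-1)^{m+1}\lambda_m z_m \ad(e_0)^{m-1}e_1\Bigr). \]
Its scheme-theoretic image is cut out by four conditions: the $e_0$-coordinates vanish in both factors, the $e_1$-coordinates vanish in both factors, and $L_m^{(1)} + (-1)^m L_m^{(2)} = 0$ for each $m\geq2$. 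This last condition follows from the formulas unconditionally, even when $\lambda_m=0$. When all $\lambda_m\neq0$, which the $p$-adic period conjecture guarantees via \Cref{period-conjecture-implies-coeff-nonzero}, these equations define the image exactly. If some $\lambda_m=0$, the image is smaller, so we only get containment.

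Finally we pull the equations back along $\log\circ j^{\dR}_{\fp_i}$ using \Cref{Ln-as-pullback}. The coordinate $L_0$ pulls back to $\log(z_i)$, $L_1$ pulls back to $\Li_1(z_i)$, and $L_m$ pulls back to $L_m(z_i)$. This gives \eqref{eq:log-equations} and \eqref{eq:Li1-equations} directly, together with the equations $L_m(z_1)+(-1)^mL_m(z_2)=0$. Once $\log(z_1)=\log(z_2)=0$, \Cref{def:Ln} shows that $L_m(z_i)=\Li_m(z_i)$ for every $m$, since every term with $k\geq1$ carries a factor $\log(z_i)^k$. So these equations become \eqref{eq:Lin-equations}, and the equivalence goes both ways.

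The main obstacle is the Galois-sign step: pinning down the exact relation between $\eps_{\fp_1}$ and $\eps_{\fp_2}$. This requires tracking how $\sigma^*$ on $\Lie(U^{\MT})$, the identification of each $K_{\fp_i}$ with $\bQ_p$, and the sign $(-1)^{m+1}$ combine. It also requires checking that the sign conventions give $(-1)^m$ in the final equation rather than $(-1)^{m+1}$. I would settle this using \Cref{eta-sigma-p-from-eta-p}. As a sanity check, when $m$ is odd there is the rational period $\zeta^{\fu}(m)$, which is $\sigma$-invariant; this forces $\lambda_m$ to be equal at both primes, consistent with the sign $(-1)^{m+1}=+1$.
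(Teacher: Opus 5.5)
Your proposal is correct and is essentially the paper's proof. With $d_1=0$, evaluation at $\eps_{\fp_1}$ is $(z_m)_m\mapsto(0,0,c_{\sigma_m}z_m)_m$; the relation $\eps_{\fp_2}=\sigma^*\eps_{\fp_1}$ together with $\sigma^*\sigma_m=(-1)^{m+1}\sigma_m$ yields the equations $L_0^{(i)}=L_1^{(i)}=0$ and $L_m^{(1)}+(-1)^mL_m^{(2)}=0$, which pull back to the stated ones because $L_m=\Li_m$ wherever $\log=0$, with equality of loci once all $c_{\sigma_m}\neq0$.
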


\begin{proof}

     We have to determine the image of the product of evaluation maps
\begin{equation*}
    \ev_{\eps_1} \times \ev_{\eps_2}\colon \Hom_{\gr}(\Lie(U_S^{\MT}),\Lie(\Pi_{\PL,n}^{\omega})) \to \Lie(\Pi^{\dR}_{\PL,n})_{\bQ_p} \times \Lie(\Pi_{\PL,n}^{\dR})_{\bQ_p}
\end{equation*}
The Lie algebra $\Lie(\Pi^{\dR}_{\PL,n})_{\bQ_p}$ has basis $e_0, e_1,\ldots,\ad(e_0)^{n-1} e_1$; let $L_0,\ldots,L_n$ be the dual basis. The global Selmer scheme is an affine space with coordinates $z_2,\cdots,z_n$ such that
$$\xi(\sigma_m) = z_m(\xi) \ad(e_0)^{m-1} e_1, \qquad 2\leq m \leq n$$
for $\xi\in \Hom_{\gr}(\Lie(U_S^{\MT}),\Lie(\Pi_{\PL,n}^{\omega}))$.

     Write the $\fp_1$-adic period element
     \[ \varepsilon_1 = c_2 \sigma_2 + c_3 \sigma_3 + c_4 \sigma_4 + \ldots \quad \bmod \text{commutators} \]
     with $c_m \in \bQ_p$. The $p$-adic period conjecture implies that $c_m \neq 0$ for all $m \geq 2$ (\Cref{period-conjecture-implies-coeff-nonzero}). We have
     \[ \ev_{\varepsilon_1}(\xi) = \xi(\varepsilon_1) = c_2 z_2(\xi) [e_0,e_1] + c_3 z_3(\xi) [e_0,[e_0,e_1]] + c_4 z_4(\xi) [e_0,[e_0,[e_0,e_1]]] + \ldots, \]
     so in the coordinates $(z_2,z_3,\ldots)$ and $(L_0,L_1,L_2,\ldots)$ the map~$\ev_{\varepsilon_1}$ is given by
     \[ (z_2,z_3,z_4,\ldots) \mapsto (0,0,c_2 z_2, c_3z_3, c_4 z_4,\ldots). \]
     
     The second period element $\varepsilon_2$ is related to the first by the Galois action: $\varepsilon_2 = \sigma^* \varepsilon_1$, where $\sigma \in \Gal(K/\bQ)$ is complex conjugation. By the proof of Lemma \ref{lem:sigmasigman}, $\sigma^*$ acts as $(-1)^{m+1}$ on $\Lie(U^{\MT})_{-m}^{\ab}$, and thus
     \[ \varepsilon_2 = \sigma^* \varepsilon_1 = - c_2 \sigma_2 + c_3 \sigma_3 - c_4 \sigma_4 \pm \ldots \quad \bmod \text{commutators}. \]
     Evaluation at $\varepsilon_2$ is therefore given by
     \[ (z_2,z_3,z_4,\ldots) \mapsto (0,0,-c_2 z_2, c_3 z_3, -c_4 z_4, \ldots) \]
     in coordinates. Since $c_m \neq 0$ for all $m$, the image of the pair of maps $(\ev_{\varepsilon_1},\ev_{\varepsilon_2})$ in the bottom row of the Chabauty--Kim diagram is precisely cut out by the equations
     \begin{align*}
        L_0^{(1)} = L_0^{(2)} &= 0,\\
        L_1^{(1)} = L_1^{(2)} &= 0,\\
        L_m^{(1)} + (-1)^n L_m^{(2)} &= 0 \qquad \text{for $2\leq m\leq n$},
     \end{align*}
     where $L_m^{(i)}$ is the composition of $L_m$ with the projection of $\Lie(\Pi_{\PL}) \times \Lie(\Pi_{\PL})$ onto the $i$-th factor for $i=1,2$. 
     Write $L_m$ also for the function $X(\bZ_p) \to \bQ_p$ obtained by pullback via
     \[ X(\bQ_p) \overset{j_p}{\lto} \Pi_{\PL}(\bQ_p) \overset{\log_{\Pi_{\PL}}}{\lto} \Lie(\Pi_{\PL})_{\bQ_p},\]
     then the depth-$n$ Chabauty--Kim locus $X(\cO_K \otimes \bZ_p)_{\emptyset,\PL,n}$ is defined inside $X(\bZ_p) \times X(\bZ_p)$ by the equations
     \begin{align*}
        \log(z_1) = \log(z_2) &= 0,\\
        L_1(z_1) = L_1(z_2) &= 0,\\
        L_m(z_1) + (-1)^m L_m(z_2) &= 0 \qquad \text{for $2\leq m\leq n$}.
     \end{align*}
     It follows from the definition of the functions $L_m$ (\Cref{def:Ln}) that $L_m(z) = \Li_m(z)$ when $\log(z) = 0$, so we can replace $L_m(z_i)$ with $\Li_m(z_i)$ in our Kim functions, resulting in the claimed equations. 
\end{proof}

\subsection{Verifying Kim's Conjecture for integral points}
\label{sec:kims-conjecture-for-integral-points}

We are able to determine the exact set of solutions to the equations obtained in \Cref{integral-points-equations}, for any prime~$p$, thus obtaining examples where Kim's Conjecture holds. 
	
As observed in \cite[§6.1]{BDCKW}, we have the following lemma.
\begin{lemma}
\label{lem:logLi1zeta6}
    Let $z\in\mathbb{Z}_p$ with $z \not\equiv 0,1 \bmod p$. If $\log(z)=0$ and $\mathrm{Li}_1(z)=0$, then $z$ is a primitive $6$-th root of unity.
\end{lemma}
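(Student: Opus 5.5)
The plan is to use the two hypotheses to show first that $z$ and then $1-z$ are roots of unity in $\bZ_p$, and to finish with an elementary argument about roots of unity. Throughout, $\log$ denotes the Iwasawa branch, since it is the Coleman integral from $\vec{1}_0$ with $\log(p)=0$ in the convention implicit in the paper. On the unit group $\bZ_p^\times$ its kernel is exactly the torsion subgroup $\mu(\bZ_p)$. To see this, decompose $\bZ_p^\times = \mu_{p-1}\times(1+p\bZ_p)$ for odd $p$, respectively $\{\pm1\}\times(1+4\bZ_2)$ for $p=2$. The logarithm is injective on $1+p\bZ_p$ (resp.\ $1+4\bZ_2$), because it is a bijection onto $p\bZ_p$ (resp.\ $4\bZ_2$) with inverse the exponential.

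The first step uses the hypothesis $z\not\equiv 0 \bmod p$, which gives $z\in\bZ_p^\times$. The condition $\log(z)=0$ then forces $z\in\mu(\bZ_p)$.

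The second step uses $\Li_1(z)=-\log(1-z)$, as recorded after \Cref{def:Ln}. The hypothesis $z\not\equiv 1\bmod p$ gives $1-z\in\bZ_p^\times$. The condition $\Li_1(z)=0$ then forces $1-z\in\mu(\bZ_p)$ as well.

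It remains to show that if $z$ and $1-z$ are both roots of unity, then $z$ is a primitive $6$th root of unity. Here I would argue over $\bC$ after an embedding of the field $\bQ(z)\subseteq\bQ_p$. Alternatively, and more cleanly, I can stay algebraic: $\zeta:=z$ and $\omega:=1-z$ are roots of unity with $\zeta+\omega=1$. Applying any complex embedding of the cyclotomic field $\bQ(\zeta,\omega)$, both become unit complex numbers summing to $1$. Two unit vectors summing to $1$ must be $e^{\pm i\pi/3}$ and its conjugate, since $|1-\zeta|=1$ together with $|\zeta|=1$ forces $\mathrm{Re}\,\zeta=1/2$. Hence $\zeta$ satisfies $\zeta^2-\zeta+1=0$ in that embedding. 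Since this is a polynomial identity, it holds in $\bQ(\zeta)\subseteq\bQ_p$, so $z$ is a primitive $6$th root of unity.

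The only delicate point is the logarithm convention. The statement presupposes the Iwasawa branch, in which case $\ker(\log)\cap\bZ_p^\times=\mu(\bZ_p)$ as above. For $p=2$ we have $\mu(\bZ_2)=\{\pm1\}$ and no $z$ satisfies both conditions, so the claim holds vacuously. For $p=3$ the primitive $6$th roots of unity $\zeta_6$ lie in $\bZ_3$ only if $3\equiv1\bmod 6$, which fails, so again no $z$ exists. This does not affect the argument.
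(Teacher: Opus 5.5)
Your proof is correct and follows the same route as the paper: both hypotheses force $z$ and $1-z$ to be roots of unity, and a complex embedding of $\bQ(z)$ places $z$ on the intersection of the unit circles centred at $0$ and $1$, giving $\zeta_6^{\pm1}$. You additionally justify two points the paper leaves implicit: that $\ker(\log)\cap\bZ_p^\times=\mu(\bZ_p)$, and that the identity $z^2-z+1=0$ obtained in $\bC$ transfers back to $\bQ_p$.
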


\noindent 
\begin{minipage}[c]{0.7\textwidth}
\begin{proof}
     If $\log(z)=0$ and $\mathrm{Li}_1(z)= -\log(1-z) = 0$ then both $z$ and $1-z$ are roots of unity. Fix an embedding of $\bQ(z)$ into $\mathbb{C}$. Then $|z|=1$ and $|1-z|=1$ for the complex norm~$|\cdot|$. So $z$ must lie in the intersection of the unit circle centered at the origin and the unit circle centered at $1$ in the complex plane. The points of intersection are the two primitive sixth roots of unity $\zeta_6 = e^{2\pi i/6}$ and $\zeta_6^{-1} = e^{-2\pi i/6}$.
     \end{proof}
\end{minipage}
\hfill %
\begin{minipage}[c]{0.3\textwidth} %
    \centering
    \begin{tikzpicture}[scale=1.0] %
        \draw[->] (-1.3,0) -- (2.3,0);
        \draw[->] (0,-1.3) -- (0,1.3);
        
        \draw (0,0) circle (1);
        \draw (1,0) circle (1);
        
        \fill (60:1) circle (1.5pt) node[above] {$\zeta_6$};
        \fill (-60:1) circle (1.5pt) node[below] {$\zeta_6^{-1}$};
        
        \node[below left] at (0,0) {$0$};
        \node[below right] at (1,0) {$1$};

        \draw[thick] (-1,-0.075) -- (-1,0.075);
        \draw[thick] (0,-0.075) -- (0,0.075);
        \draw[thick] (1,-0.075) -- (1,0.075);
        \draw[thick] (2,-0.075) -- (2,0.075);
        \draw[thick] (-0.075,-1) -- (0.075,-1);
        \draw[thick] (-0.075,0) -- (0.075,0);
        \draw[thick] (-0.075,1) -- (0.075,1);
    \end{tikzpicture}
\end{minipage}

\begin{thm}
\label{integral-points-locus-depth1}
Let $K$ be an imaginary quadratic field and let~$p$ be a prime which splits completely in~$K$. The Chabauty--Kim locus $X(\cO_K \otimes \bZ_p)_{\emptyset,1}$ is
    \[ \begin{cases}
        \emptyset & \text{if $p \not\equiv 1 \bmod 3$},\\
        \{(\zeta_6, \zeta_6),\; (\zeta_6^{-1}, \zeta_6^{-1}),\; (\zeta_6, \zeta_6^{-1}), \; (\zeta_6^{-1}, \zeta_6)\} & \text{if $p \equiv 1 \bmod 3$}.
    \end{cases} \]
As a result, when $K = \bQ(\zeta_3)$, $S = \emptyset$, $p$ is a prime such that $p \equiv 1 \bmod 3$, and $\fp$ is a prime of~$K$ dividing~$p$, then Kim's Conjecture for the single-prime locus (\Cref{conj:kim-single}) holds in depth $1$.
\end{thm}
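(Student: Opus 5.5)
Since the depth-$1$ quotient agrees with the polylogarithmic depth-$1$ quotient, we have $X(\cO_K\otimes\bZ_p)_{\emptyset,1}=X(\cO_K\otimes\bZ_p)_{\emptyset,\PL,1}$. We then apply \Cref{integral-points-equations} with $n=1$. In this case the period conjecture is not needed. For $n=1$ the Selmer scheme $\Hom_{\gr}(\Lie(U_\emptyset^{\MT}),\Lie(\Pi_{\PL,1}^\omega))$ is a single point, because $d_1=\#S=0$: the unit rank of an imaginary quadratic field is zero, so there are no $\tau_i$, and the $\sigma_m$ with $m\ge2$ map to zero in depth~$1$. Its image under $\ev_{\eps_1}\times\ev_{\eps_2}$ is therefore the origin. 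Hence the locus is exactly the set of pairs $(z_1,z_2)\in X(\bZ_p)^2$ with $\log(z_i)=0$ and $\Li_1(z_i)=0$ for $i=1,2$, i.e.\ the equations \eqref{eq:log-equations} and \eqref{eq:Li1-equations}, with equality unconditionally.

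Next we solve these equations one coordinate at a time. By \Cref{lem:logLi1zeta6}, any $z\in X(\bZ_p)$ (so $z\not\equiv0,1\bmod p$) with $\log z=\Li_1(z)=0$ is a primitive sixth root of unity. Such a point lies in $\bZ_p$ if and only if $\mu_6\subseteq\bZ_p^\times$, i.e.\ $6\mid p-1$, i.e.\ $p\equiv1\bmod3$ (for $p$ odd; for $p=2$ or $3$ there is no primitive sixth root of unity in $\bZ_p$). Conversely, if $p\equiv 1\bmod 3$, then $\zeta_6^{\pm1}\in\bZ_p$. Both $\zeta_6^{\pm1}$ and $1-\zeta_6^{\pm1}=\zeta_6^{\mp1}$ are roots of unity, so $\log$ and $\Li_1=-\log(1-\cdot)$ vanish on them. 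Their reductions are also not $0$ or $1$ mod $p$, since $\zeta_6-1$ is a unit. Each coordinate can therefore independently be $\zeta_6$ or $\zeta_6^{-1}$, which gives the four pairs listed in the statement, and the locus is empty when $p\not\equiv1\bmod3$. The main point requiring care is that the identification of $\bZ_p$-roots of unity with a complex picture in \Cref{lem:logLi1zeta6} is legitimate: $z$ is algebraic, and we use an embedding of $\bQ(z)$.

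For the consequence, take $K=\bQ(\zeta_3)$ and $p\equiv1\bmod3$. Then $p$ splits in $K$. The points $\zeta_6,\zeta_6^{-1}\in\cO_K$ have $1-\zeta_6^{\pm1}$ a unit, so they lie in $X(\cO_K)$; by the inclusion \eqref{eq:kim-inclusion-full} and the computation above, $X(\cO_K)=\{\zeta_6,\zeta_6^{-1}\}$. Projecting the four pairs to the $\fp$-component gives $X(\cO_\fp)_{\emptyset,1}=\{\zeta_6,\zeta_6^{-1}\}$. Here we must make sure that the embedding $\iota_\fp$ identifies this set with the image of $X(\cO_K)$, which holds because $\iota_\fp$ maps $\{\zeta_6,\zeta_6^{-1}\}$ bijectively onto the primitive sixth roots of unity in $\bZ_p$. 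This proves \Cref{conj:kim-single} in depth~$1$.
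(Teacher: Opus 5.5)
Your proposal is correct and follows the paper's proof: you reduce to the equations \eqref{eq:log-equations}--\eqref{eq:Li1-equations} via \Cref{integral-points-equations} with $n=1$, then solve them coordinatewise using \Cref{lem:logLi1zeta6} together with the fact that $\zeta_6\in\bZ_p$ if and only if $p\equiv 1\bmod 3$. Your observation that the depth-$1$ Selmer scheme is a single point (since $d_1=0$), so that equality holds without the $p$-adic period conjecture, makes explicit a step the paper uses only implicitly.
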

\begin{proof}
    By \Cref{lem:logLi1zeta6}, the solutions of the equations \eqref{eq:log-equations}, \eqref{eq:Li1-equations} are $$(\zeta_6, \zeta_6),\; (\zeta_6^{-1}, \zeta_6^{-1}),\; (\zeta_6, \zeta_6^{-1}), \; (\zeta_6^{-1}, \zeta_6)$$ if $\zeta_6$ exists in $\mathbb{Z}_p$ (which is equivalent to $p\equiv 1\bmod{3}$. Otherwise there are no solutions). As a result, $X(\mathcal{O}_{\fp})_{\emptyset,1} = \left\{\zeta_6,\zeta_6^{-1}\right\}$ for $\fp\mid p$ if $p\equiv 1\bmod{3}$, and hence Kim's Conjecture (\Cref{conj:kim-single}) holds in depth $1$ for $K=\mathbb{Q}(\zeta_3)$, $S=\emptyset$ since $X(\mathbb{Z}[\zeta_3])=\left\{\zeta_6,\zeta_6^{-1}\right\}$.
\end{proof}

For the full place version of Kim's Conjecture (\Cref{conj:kim-full}), we need to go to higher depth. The following theorem determines the Chabauty--Kim loci $X(\cO_K \otimes \bZ_p)_{\emptyset,\PL,\infty}$ by solving the equations in \Cref{integral-points-equations}.

\begin{thm}
    \label{integral-points-locus}
    Let $K$ be an imaginary quadratic field and let~$p$ be a prime which splits completely in~$K$. The polylogarithmic Chabauty--Kim locus $X(\cO_K \otimes \bZ_p)_{\emptyset,\PL,\infty}$ satisfies
    \[ X(\cO_K \otimes \bZ_p)_{\emptyset,\PL,\infty} \subseteq \begin{cases}
        \emptyset & \text{if $p \not\equiv 1 \bmod 3$},\\
        \{(\zeta_6, \zeta_6^{-1}), \; (\zeta_6^{-1}, \zeta_6)\} & \text{if $p \equiv 1 \bmod 3$}.
    \end{cases} \]
    The inclusion is an equality if the $p$-adic period conjecture holds for the primes dividing~$p$.
\end{thm}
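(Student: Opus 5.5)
By \Cref{integral-points-equations}, the locus $X(\cO_K \otimes \bZ_p)_{\emptyset,\PL,\infty}$ is contained in the common solution set of \eqref{eq:log-equations}, \eqref{eq:Li1-equations} and \eqref{eq:Lin-equations} for all $m \geq 2$. If the $p$-adic period conjecture holds, the locus is equal to this set. So it suffices to determine that solution set exactly.

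First apply \Cref{lem:logLi1zeta6} to each coordinate. It shows that \eqref{eq:log-equations} and \eqref{eq:Li1-equations} force $z_1,z_2 \in \{\zeta_6,\zeta_6^{-1}\}$. For $p \not\equiv 1 \bmod 3$ these elements do not exist in $\bZ_p$, so the set is empty. For $p \equiv 1 \bmod 3$ it remains to test the four candidate pairs of \Cref{integral-points-locus-depth1} against the higher equations $\Li_m(z_1) + (-1)^m \Li_m(z_2) = 0$.

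\textbf{Mixed pairs.} Take $(z_1,z_2) = (\zeta,\zeta^{-1})$ with $\zeta = \zeta_6^{\pm1}$. Since $\log(\zeta)=0$, \Cref{coleman-sinnott} gives $\Li_m(\zeta) + (-1)^m\Li_m(\zeta^{-1}) = -\tfrac1{m!}\log(\zeta)^m = 0$ for every $m \geq 1$. So both mixed pairs satisfy all equations and lie in the solution set.

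\textbf{Diagonal pairs.} Take $(\zeta,\zeta)$. Here the equation reads $(1+(-1)^m)\Li_m(\zeta) = 0$, which is automatic for odd $m$ and says $\Li_m(\zeta)=0$ for even $m$. I must therefore exhibit one even $m$ with $\Li_m(\zeta_6) \neq 0$; by \Cref{coleman-sinnott} this is equivalent to $\Li_m(\zeta_6^{-1})\neq 0$. This is the main obstacle.

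I would attack it using the distribution relation together with the evaluation of $p$-adic polylogarithms at roots of unity via $p$-adic $L$-values. For a root of unity $\zeta \neq 1$ of order prime to $p$, Coleman's formula gives $(1 - p^{-m}\phi)\Li_m(\zeta)$ in terms of a finite sum over $\zeta^k$ weighted by Bernoulli-type values; this is an $L_p(m, \chi\omega^{1-m})$-style expression. The distribution relation then reduces the combination $\Li_m(\zeta_6)+\Li_m(\zeta_6^{-1})$ to $\zeta_p(m)$-type values times explicit rational factors, for instance via $\Li_m(z)+\Li_m(-z) = 2^{1-m}\Li_m(z^2)$ and $\zeta_6^2=\zeta_3$. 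For even $m$, however, $p$-adic zeta vanishes, so this symmetric combination gives no information. Instead, for even $m$ the value $\Li_m(\zeta_6)$ is expressed through $L_p(m,\chi_{-3}\omega^{1-m})$, which equals a Kubota--Leopoldt value $L_p(1-k,\ldots)$ and is congruent to a generalised Bernoulli number $B_{k,\chi}/k$.

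Concretely, I would pick $m$ with $m \equiv$ a suitable value mod $p-1$, so that $L_p(m,\chi_{-3}\omega^{1-m})$ is congruent mod $p$ to $-B_{1,\chi_{-3}\omega^{j}}$ for some small $j$. The choice $m = p-3$, giving $j=1$ or so, should make this Bernoulli number a unit, because $B_{1,\chi_{-3}} = -1/3$. This would also explain the depth $p-3$ in \Cref{mainthm:imagquad-integral-locus}. The hard step is getting the twist and the Euler factor right so that the resulting value is visibly a $p$-adic unit, hence nonzero. That kills both diagonal pairs and proves the stated inclusions, with equality under the period conjecture.
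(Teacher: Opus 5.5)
Your reduction is the same as the paper's. The depth-one equations force $z_1,z_2\in\{\zeta_6^{\pm1}\}$. The mixed pairs satisfy every equation by Coleman--Sinnott. The diagonal pairs are excluded as soon as $\Li_m(\zeta_6)\neq0$ for a single even $m\geq2$, and the paper also takes $m=p-3$ (note $p\equiv1 \bmod 3$ forces $p\geq7$).

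\textbf{The gap.} This non-vanishing is the only non-formal input, and you never prove it. Your last two paragraphs are a plan (``should make this Bernoulli number a unit'', ``the hard step is getting the twist and the Euler factor right''), and the concrete input you name is the wrong one.
\begin{itemize}
\item For $m=p-3$ one has $\omega^{1-m}=\omega^{3}$. The Kubota--Leopoldt congruence then gives $L_p(p-3,\chi_{-3}\omega^3)\equiv L_p(-2,\chi_{-3}\omega^3)=-(1-\chi_{-3}(p)p^2)B_{3,\chi_{-3}}/3\equiv -2/9 \pmod p$. So the relevant unit is $B_{3,\chi_{-3}}/3=2/9$, not $B_{1,\chi_{-3}}=-1/3$.
\item $B_{1,\chi_{-3}}$ arises for $m\equiv0\pmod{p-1}$. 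There it is multiplied by the Euler factor $1-\chi_{-3}(p)$, which vanishes precisely because $p\equiv1\pmod 3$ (a trivial zero). Taken literally, your heuristic only yields $L_p\equiv0\pmod p$, which proves nothing.
\item You would also have to fix the normalisation of Coleman's formula: the Gauss sum, and the factor $1-\chi_{-3}(p)p^{-m}$, which is nonzero.
\item You would have to pass from level $6$ to the primitive character of conductor $3$. The distribution relation does this: $\Li_m(\zeta_6)-\Li_m(\zeta_6^{-1})=(1+2^{1-m})\bigl(\Li_m(\zeta_6^2)-\Li_m(\zeta_6^{-2})\bigr)$, and for $m=p-3$ one checks $1+2^{4-p}\equiv 9\not\equiv0\pmod p$.
\end{itemize}
With these corrections the $L$-function route can be completed, but as written the key step is missing.

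\textbf{How the paper closes it.} The paper uses an elementary argument, \cite[Proposition~5.15]{BKL:chabauty-kim-sc}, the same one carried out in the proof of \Cref{S=1-refined1-equations}.
\begin{itemize}
\item For a root of unity $1\neq\zeta\in\bZ_p$ one has $\zeta^p=\zeta$. Besser's finite-polylogarithm congruence therefore gives $(1-p^{-n})\Li_n(\zeta)=\Li_n^{(p)}(\zeta)\equiv\frac{1}{1-\bar\zeta}\li_n(\bar\zeta)\pmod p$.
\item For $n=p-3$, on $\bF_p\smallsetminus\{0,1\}$ one computes $\li_{p-3}(x)=\sum_{k=1}^{p-1}k^2x^k=x(1+x)/(1-x)^2$, which vanishes only at $x=-1$.
\item Since $\bar\zeta_6$ has order $6$ in $\bF_p^\times$, it is not $-1$, so $\Li_{p-3}(\zeta_6)\neq0$.
\end{itemize}
This is exactly the missing step. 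Either supply this argument or carry out the $L$-function computation with the corrected twist.
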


\begin{proof}
     By \Cref{integral-points-locus-depth1} we have
    \[ X(\cO_K \otimes \bZ_p)_{\emptyset,\PL,\infty} \subseteq \{(\zeta_6, \zeta_6),\; (\zeta_6^{-1}, \zeta_6^{-1}),\; (\zeta_6, \zeta_6^{-1}), \; (\zeta_6^{-1}, \zeta_6)\}. \]
    If $(z,z)$ is one of the diagonal solutions $(\zeta_6,\zeta_6)$, $(\zeta_6^{-1},\zeta_6^{-1})$ then Eq.~\eqref{eq:Lin-equations} yields $\Li_n(z) = 0$ for all $n \geq 2$ even. Taking $n = p-3$ it was shown in \cite[Proposition~5.15]{BKL:chabauty-kim-sc} that the only root of unity in~$\bZ_p \setminus \{1\}$ satisfying $\Li_{p-3}(z) = 0$ is $z = -1$. Thus $(\zeta_6,\zeta_6)$ and $(\zeta_6^{-1},\zeta_6^{-1})$ are not contained in $X(\cO_K \otimes \bZ_p)_{\emptyset,\PL,\infty}$. The non-diagonal solutions $(\zeta_6, \zeta_6^{-1})$ and $(\zeta_6^{-1}, \zeta_6)$ on the other hand satisfy $\Li_n(z_1) + (-1)^n \Li_n(z_2) = 0$ for every $n \geq 2$ by the Coleman--Sinnott functional equation (\Cref{coleman-sinnott}), so they are contained in $X(\cO_K \otimes \bZ_p)_{\emptyset,\PL,\infty}$.
\end{proof}

\begin{cor}
    \label{kims-conjecture-for-Qzeta3}
    When $K = \bQ(\zeta_3)$, $S = \emptyset$, and $p$ is a prime such that $p \equiv 1 \bmod 3$ then we have $X(\bZ[\zeta_3] \otimes \bZ_p)_{\emptyset,\PL,\infty} = X(\bZ[\zeta_3])$, so Kim's Conjecture (\Cref{conj:kim-full}) holds for the polylogarithmic quotient.
\end{cor}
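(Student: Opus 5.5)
The corollary should follow by combining \Cref{integral-points-locus} with the known set of integral points $X(\bZ[\zeta_3])$. The key point is that, for the inclusion $X(\bZ[\zeta_3]\otimes\bZ_p)_{\emptyset,\PL,\infty}\subseteq X(\bZ[\zeta_3])$, the conditional part of \Cref{integral-points-locus} is not needed. So the statement holds unconditionally, without the $p$-adic period conjecture.

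First I would determine $X(\bZ[\zeta_3])$. A point $z$ is integral exactly when $z$ and $1-z$ are both units of $\bZ[\zeta_3]$. These units are the sixth roots of unity, so the argument of \Cref{lem:logLi1zeta6} (intersecting two unit circles in $\bC$) gives $X(\bZ[\zeta_3])=\{\zeta_6,\zeta_6^{-1}\}$. Since $p\equiv1\bmod 3$, the prime $p$ splits in $K$, so the hypotheses of \Cref{integral-points-locus} are satisfied. I would fix the identification $X(\cO_K\otimes\bZ_p)\cong X(\bZ_p)\times X(\bZ_p)$ through the embeddings $\iota_1,\iota_2$ of \S\ref{sec:imag-quad-integral-equations}.

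Next I would compute the diagonal image of $X(\bZ[\zeta_3])$ under this identification. The two embeddings differ by complex conjugation, and conjugation inverts roots of unity. Hence $\zeta_6$ maps to $(\iota_1\zeta_6,\iota_1\zeta_6^{-1})$, a pair of the form $(\zeta_6,\zeta_6^{-1})$ in $\bZ_p$. The diagonal image is therefore exactly $\{(\zeta_6,\zeta_6^{-1}),(\zeta_6^{-1},\zeta_6)\}$, where $\zeta_6$ now denotes a fixed primitive sixth root of unity in $\bZ_p$.

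Finally, I would compare this with the loci. \Cref{integral-points-locus} gives the inclusion of the locus into exactly this two-element set. The general inclusion \eqref{eq:kim-inclusion-full} gives the reverse inclusion. The two together yield equality unconditionally. The only point requiring care is matching the labelling of $p$-adic roots of unity with the Galois action on the embeddings; this is straightforward once the embeddings are fixed.
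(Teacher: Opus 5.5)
Your proposal is correct and matches the paper's proof. You sandwich the image of $X(\bZ[\zeta_3])$ between the unconditional upper bound of \Cref{integral-points-locus} and the trivial inclusion \eqref{eq:kim-inclusion-full}, as the paper does; the only difference is that you determine $X(\bZ[\zeta_3])=\{\zeta_6,\zeta_6^{-1}\}$ completely and spell out the conjugation bookkeeping, whereas the paper only needs that $\zeta_6^{\pm1}$ are integral points giving two distinct pairs in the two-element set.
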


\begin{proof}
    Since $p \equiv 1 \bmod 3$, the prime~$p$ splits in $\bQ(\zeta_3)$. Let $\iota_1,\iota_2\colon \bQ(\zeta_3) \hookrightarrow \bQ_p$ be the two $p$-adic embeddings. By \Cref{integral-points-locus}, under the embedding $(\iota_1,\iota_2)\colon X(\bZ[\zeta_3]) \hookrightarrow X(\bZ_p) \times X(\bZ_p)$ we have 
    \begin{equation}
        \label{eq:Zzeta6-inclusions}
        X(\bZ[\zeta_3]) \subseteq X(\bZ[\zeta_3] \otimes \bZ_p)_{\emptyset,\PL,\infty} \subseteq \{(\zeta_6, \zeta_6^{-1}), \; (\zeta_6^{-1}, \zeta_6)\}
    \end{equation}
    with $\zeta_6 \in \bZ_p$ a primitive sixth root of unity. But $\bZ[\zeta_3]$ contains $\zeta_6 \coloneqq -\zeta_3$, and both $\zeta_6$ and $\zeta_6^{-1}$ are solutions to the unit equation since $\zeta_6 + \zeta_6^{-1} = 1$, so the inclusions in~\eqref{eq:Zzeta6-inclusions} are equalities.
\end{proof}

\begin{rem}
    \label{depth-p-minus-3}
    The proof of \Cref{integral-points-locus} shows that the functions $\log(z_i) = \Li_1(z_i) = 0$ for $i = 1,2$, and $\Li_{p-3}(z_1) + \Li_{p-3}(z_2) = 0$ suffice to cut out the locus $\{(\zeta_6, \zeta_6^{-1}), \; (\zeta_6^{-1}, \zeta_6)\}$, so \Cref{conj:kim-full} holds in depth $p-3$. If $\Li_2(\zeta_6) \neq 0$ in~$\bQ_p$ then depth~2 is in fact sufficient since it implies that the equation $\Li_2(z_1) + \Li_2(z_2) = 0$ is not satisfied for $(\zeta_6,\zeta_6)$ and $(\zeta_6^{-1},\zeta_6^{-1})$. The nonvanishing $\Li_2(\zeta_6) \neq 0$ is implied by the $p$-adic period conjecture. Indeed, the motivic dilogarithm $\Li_2^{\fu}(\zeta_6)$ is nonzero. This can be verified by computing its $p$-adic value for a single prime; with $p=7$ we have $\Li_2(\zeta_6) = \pm 7^2 + O(7^3) \neq 0$. Then the $p$-adic period conjecture implies that $\Li_2(\zeta_6) \neq 0$ in $\bQ_p$ for \emph{any} split prime~$p$. This has been verified for $p < 10^5$, see \cite[Remark~3.6]{refined-selmer-equations}.
\end{rem}

\begin{rem}
    \label{Kim-conjecture-full-vs-single-place}
    The proof of \Cref{integral-points-locus} also shows that for $K = \bQ(\zeta_3)$, $S = \emptyset$, and $p \equiv 1 \bmod 3$, the \emph{depth-1} locus equals
    \[ X(\cO_K \otimes \bZ_p)_{\emptyset,1} = \bigl\{ (\zeta_6,\zeta_6),\, (\zeta_6^{-1},\zeta_6^{-1}),\,(\zeta_6,\zeta_6^{-1})\,(\zeta_6^{-1},\zeta_6) \bigr\}. \]
    The projection to either component is $\{ \zeta_6, \zeta_6^{-1}\} = X(\bZ[\zeta_3])$, hence \Cref{conj:kim-single} holds in depth~1 for both primes $\fp_i$ dividing~$p$. However, the pairs $(\zeta_6,\zeta_6)$ and $(\zeta_6^{-1},\zeta_6^{-1})$ are not in the image of $(\iota_1,\iota_2)\colon X(\bZ[\zeta_3]) \to X(\cO_{\fp_1}) \times X(\cO_{\fp_2})$, so \Cref{conj:kim-full} fails in depth~1 and we have to go to higher depth in order to confirm the conjecture. 
\end{rem}

\begin{cor}
    \label{kims-conjecture-p-not-1-mod-3}
    When $K$ is an imaginary quadratic field different from $\bQ(\zeta_3)$, $S = \emptyset$, and $p$ is a prime which splits in~$K$ and satisfies $p \not\equiv 1 \bmod 3$ then we have $X(\cO_K \otimes \bZ_p)_{\emptyset,1} = X(\cO_K) = \emptyset$, so Kim's Conjecture~\ref{conj:kim-full} holds in depth~1.
\end{cor}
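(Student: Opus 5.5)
The plan is to combine the depth-1 computation of \Cref{integral-points-locus-depth1} with the sandwich of inclusions $X(\cO_K) \subseteq X(\cO_K\otimes\bZ_p)_{\emptyset,1}$ coming from the commutativity of the Chabauty--Kim diagram~\eqref{eq:CK-diagram}, i.e.\ from \eqref{eq:kim-inclusion-full}. Note that in depth~1 the polylogarithmic and full quotients agree, so \Cref{integral-points-locus-depth1} (which was derived from the equations $\log(z_i)=\Li_1(z_i)=0$ of \Cref{integral-points-equations}) applies directly to $X(\cO_K\otimes\bZ_p)_{\emptyset,1}$. Under the hypothesis $p\not\equiv 1 \bmod 3$, that theorem gives $X(\cO_K\otimes\bZ_p)_{\emptyset,1}=\emptyset$. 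Here no period conjecture is needed, because in depth~$1$ the Selmer scheme for $S=\emptyset$ over an imaginary quadratic field is a point ($d_1 = r_1+r_2+\#S-1 = 0$), so its image is $(0,0)$ and the locus is exactly cut out by $\log(z_i)=\Li_1(z_i)=0$. \Cref{lem:logLi1zeta6} then forces each $z_i$ to be a primitive sixth root of unity in $\bZ_p$, which does not exist since $p\not\equiv1\bmod 3$.

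It then remains to observe that $X(\cO_K)=\emptyset$, which is immediate from the containment $X(\cO_K)\subseteq X(\cO_K\otimes\bZ_p)_{\emptyset,1}=\emptyset$. Alternatively, one can argue directly: an $\cO_K$-integral point $z$ satisfies that $z$ and $1-z$ are units of $\cO_K$, hence roots of unity. By the same unit-circle argument as in \Cref{lem:logLi1zeta6}, $z$ must then be a primitive sixth root of unity, which does not lie in $K$ when $K\neq\bQ(\zeta_3)$. The direct argument also explains why the hypothesis $K\neq\bQ(\zeta_3)$ appears, even though it is automatic here: $p$ split in $\bQ(\zeta_3)$ forces $p\equiv1\bmod3$.

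Combining the two steps gives $X(\cO_K\otimes\bZ_p)_{\emptyset,1}=X(\cO_K)=\emptyset$, which is \Cref{conj:kim-full} in depth~$1$. The only point requiring care, rather than a genuine obstacle, is justifying that the depth-1 locus equals, and is not merely contained in, the solution set. For the corollary, however, only the containment is needed, since the solution set is already empty.
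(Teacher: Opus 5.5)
Your proposal is correct and follows essentially the paper's argument. The depth-$1$ locus is cut out by $\log(z_i)=\Li_1(z_i)=0$; \Cref{lem:logLi1zeta6} forces primitive sixth roots of unity, which do not lie in $\bZ_p$ when $p\not\equiv 1 \bmod 3$; and $X(\cO_K)=\emptyset$ then follows from the inclusion of $X(\cO_K)$ into the locus. Your side remarks are also correct: in depth~$1$ the Selmer scheme is a point, so no period conjecture is needed, and the hypothesis $K\neq\bQ(\zeta_3)$ is automatic here.
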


\begin{proof}
    The depth-1 locus $X(\cO_K \otimes \bZ_p)_{\emptyset,1}$ (which agrees with the \emph{polylogarithmic} depth-1 locus) is defined by the equations~\eqref{eq:log-equations}--\eqref{eq:Li1-equations}. For any pair $(z_1,z_2)$ in the locus, the $z_i$ must be primitive sixth roots of unity by \Cref{lem:logLi1zeta6}. But when $p \not \equiv 1 \bmod 3$, these are not contained in $\bZ_p$, so the locus is empty.
\end{proof}

\subsection{Insufficiency of the polylogarithmic quotient and $S_3$-symmetrisation}
\label{sec:insufficiency}

The equations obtained in \Cref{integral-points-equations} do not depend on the imaginary quadratic field~$K$, thus the Chabauty--Kim loci will contain the $\cO_{\bQ(\zeta_3)}$-integral points~$\zeta_6$ and $\zeta_6^{-1}$ even when $K \neq \bQ(\zeta_3)$.

\begin{thm}
    \label{integral-points-kims-conjecture-failure}
    Let $K$ be an imaginary quadratic field different from $\bQ(\zeta_3)$, $S = \emptyset$, and let $p$ be a prime which splits in~$K$ and satisfies $p \equiv 1 \bmod 3$. Assume the $p$-adic period conjecture. Then we have
    \[ X(\cO_K) = \emptyset \subsetneq \{\zeta_6, \zeta_6^{-1}\} = X(\cO_{\fp})_{\emptyset,\PL,\infty}, \]
    for $\fp \mid p$, so \Cref{conj:kim-single} fails for the polylogarithmic quotient in arbitrary depth.
\end{thm}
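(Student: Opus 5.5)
The proof combines two earlier inputs: the determination of the full locus in \Cref{integral-points-locus}, and the elementary fact that an imaginary quadratic field $K\neq\bQ(\zeta_3)$ has no $\cO_K$-integral points on $X$.

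First I show $X(\cO_K)=\emptyset$. A point $z\in X(\cO_K)$ means that both $z$ and $1-z$ lie in $\cO_K^\times$. Since $K$ is imaginary quadratic, $\cO_K^\times=\mu(K)$ is finite and consists of roots of unity. Then $z$ and $1-z$ are both roots of unity, and the complex-absolute-value argument of \Cref{lem:logLi1zeta6} forces $z\in\{\zeta_6,\zeta_6^{-1}\}$. But $\zeta_6\notin K$ unless $K=\bQ(\zeta_3)$, so the set is empty.

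Next I compute the full locus. The hypotheses are that $p$ splits in $K$, $p\equiv 1\bmod 3$, and the $p$-adic period conjecture holds. Under these, \Cref{integral-points-locus} gives equality:
\[ X(\cO_K\otimes\bZ_p)_{\emptyset,\PL,\infty}=\{(\zeta_6,\zeta_6^{-1}),(\zeta_6^{-1},\zeta_6)\}. \]
The two points are genuinely in $X(\bZ_p)$ because $p\equiv 1\bmod 3$ implies $\zeta_6\in\bZ_p$. If one wants to avoid citing the equality clause as a black box, membership of these pairs can be checked directly against the equations of \Cref{integral-points-equations}:
\begin{itemize}
\item $\log(\zeta_6^{\pm1})=0$, since $\zeta_6^{\pm1}$ is a root of unity;
\item $\Li_1(\zeta_6^{\pm1})=-\log(1-\zeta_6^{\pm1})=-\log(\zeta_6^{\mp1})=0$;
\item the relations $\Li_m(z_1)+(-1)^m\Li_m(z_2)=0$ follow from Coleman--Sinnott (\Cref{coleman-sinnott}) with $z_2=z_1^{-1}$ and $\log z_1=0$.
\end{itemize}
The equality clause of \Cref{integral-points-equations} is the one place the period conjecture is used.

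Finally, $X(\cO_{\fp})_{\emptyset,\PL,\infty}$ is by \Cref{def:ck-locus} the projection $\pr_{\fp}$ of the full locus. Whichever of $\fp_1,\fp_2$ we take, the image is $\{\zeta_6,\zeta_6^{-1}\}$, which is nonempty and hence strictly contains $X(\cO_K)=\emptyset$. For every finite depth $n$ the locus $X(\cO_{\fp})_{\emptyset,\PL,n}$ contains the infinite-depth one, since the loci decrease as $n$ grows. So the single-prime conjecture fails in every depth for the polylogarithmic quotient.

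No step is a real obstacle. The only point needing care is that the equality (rather than mere containment) in \Cref{integral-points-locus} genuinely rests on the period conjecture, through the nonvanishing of the $c_m$ from \Cref{period-conjecture-implies-coeff-nonzero}.
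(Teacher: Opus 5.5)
Your proof is correct and follows the paper's argument: you invoke the equality case of \Cref{integral-points-locus} (which is where the period conjecture enters) and project to the $\fp$-component. The only difference is cosmetic. You prove $X(\cO_K)=\emptyset$ directly from $\cO_K^\times=\mu(K)$, whereas the paper reads it off from the containment $X(\cO_K)\subseteq X(\cO_{\fp})_{\emptyset,\PL,\infty}=\{\zeta_6,\zeta_6^{-1}\}$ together with $\zeta_6\notin K$.
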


\begin{proof}
    By \Cref{integral-points-locus}, we have $X(\cO_K \otimes \bZ_p)_{\emptyset,\PL,\infty} = \{ (\zeta_6, \zeta_6^{-1}), \; (\zeta_6^{-1}, \zeta_6)\}$, hence, projecting to the $\fp$-component for one of the two primes $\fp \mid p$, we get
    \[ X(\cO_K) \subseteq X(\cO_{\fp})_{\emptyset,\PL,\infty} = \{\zeta_6, \zeta_6^{-1}\}. \]
    But $K \neq \bQ(\zeta_3)$ by assumption, so the sixth roots of unity are not contained in~$K$. Thus, $X(\cO_K) = \emptyset$ and the inclusion is strict.
\end{proof}

The insufficiency of the polylogarithmic quotient to verify Kim's Conjecture was also observed in an example over~$\bQ$ by Corwin and Dan-Cohen \cite[§5]{CDC:polylog1}. They showed that, assuming the $p$-adic period conjecture, the element $-1$ is contained in the polylogarithmic Chabauty--Kim locus $X(\bZ_p)_{\{l\},\PL,\infty}$ even though this is not an $\{l\}$-integral point when the prime~$l$ is odd. As a remedy, they proposed to consider the following strengthening. It is based on the observation that there is a natural $S_3$-action on $X = \bP^1 \smallsetminus \{0,1,\infty\}$ generated by the involutions $z \mapsto 1/z$ and $z \mapsto 1-z$. 

\begin{defn}
\label{def:S3-symmetrisation}
    In the setting of \Cref{def:ck-locus}, the \emph{$S_3$-symmetrisation} of the Chabauty--Kim locus $X(\cO_K \otimes \bZ_p)_{S,\Pi^{\et}}$ resp.\ $X(\cO_{\fp})_{S,\Pi^{\et}}$ is defined by
    \begin{align*}
        X(\cO_K \otimes \bZ_p)_{S,\Pi^{\et}}^{S_3} &\coloneqq \bigcap_{\sigma \in S_3} \sigma(X(\cO_K \otimes \bZ_p)_{S,\Pi^{\et}}),\\
        X(\cO_{\fp})_{S,\Pi^{\et}}^{S_3} &\coloneqq \bigcap_{\sigma \in S_3} \sigma(X(\cO_{\fp})_{S,\Pi^{\et}}),
    \end{align*}
    in other words, as the maximal $S_3$-stable subset. It contains all elements whose $S_3$-orbit is completely contained in the original Chabauty--Kim locus.
\end{defn}

Note that the $S_3$-symmetrised locus still contains $X(\cO_{K,S})$. The Chabauty--Kim locus for the full depth-$n$ quotient is automatically stable under the $S_3$-action \cite[Lemma~4.12]{BKL:chabauty-kim-sc} but this is not necessarily true for the polylogarithmic quotient. Thus there are inclusions
\begin{align*}
    X(\cO_{K,S}) &\subseteq X(\cO_K \otimes \bZ_p)_{S,n} \subseteq X(\cO_K \otimes \bZ_p)_{S,\PL,n}^{S_3} \subseteq X(\cO_K \otimes \bZ_p)_{S,\PL,n},\\
    X(\cO_{K,S}) &\subseteq X(\cO_{\fp})_{S,n} \subseteq X(\cO_{\fp})_{S,\PL,n}^{S_3} \subseteq X(\cO_{\fp})_{S,\PL,n}
\end{align*}
for all $1 \leq n \leq \infty$. Corwin and Dan-Cohen conjectured that $X(\cO_{K,S}) = X(\cO_K \otimes \bZ_p)_{S,\PL,n}^{S_3}$ for $n \gg 0$ (\cite[Conjecture~5.4]{CDC:polylog1} in the case $K= \bQ$, \cite[Conjecture~8.2.1]{ishaidancohen_2020_mixed} in general). This would also imply the single-prime variant $X(\cO_{K,S}) = X(\cO_{\fp})_{S,\PL,n}^{S_3}$ for $n \gg 0$. However, this turns out to be false, with infinitely many counterexamples provided by \Cref{integral-points-kims-conjecture-failure} (assuming the $p$-adic period conjecture).

\begin{thm}
    \label{S3-symmetrisation-failure}
    Let $K$ and $p$ be as in \Cref{integral-points-kims-conjecture-failure}. Then we have
    \[ X(\cO_K) = \emptyset \subsetneq \{\zeta_6, \zeta_6^{-1}\} = X(\cO_{\fp})_{\emptyset,\PL,\infty}^{S_3} \]
    for $\fp \mid p$, hence even the $S_3$-symmetrised version of \Cref{conj:kim-single} fails for the polylogarithmic quotient.
\end{thm}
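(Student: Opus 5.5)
We deduce the statement from \Cref{integral-points-kims-conjecture-failure}, which (under the $p$-adic period conjecture) gives $X(\cO_{\fp})_{\emptyset,\PL,\infty} = \{\zeta_6,\zeta_6^{-1}\}$ and $X(\cO_K)=\emptyset$. By \Cref{def:S3-symmetrisation}, the symmetrised single-prime locus is the largest $S_3$-stable subset of $X(\cO_{\fp})_{\emptyset,\PL,\infty}$. It is therefore contained in $\{\zeta_6,\zeta_6^{-1}\}$. It equals this whole set as soon as $\{\zeta_6,\zeta_6^{-1}\}$ is itself stable under the $S_3$-action on $X(\cO_{\fp}) = X(\bZ_p)$.

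The key step is to check this stability on the two generators $z\mapsto 1/z$ and $z\mapsto 1-z$ of $S_3$. For the inversion, $\zeta_6^{-1}$ is the inverse of $\zeta_6$, so the set is preserved. For the reflection, $\zeta_6$ is a root of $z^2-z+1$, so $\zeta_6+\zeta_6^{-1}=1$ and hence $1-\zeta_6=\zeta_6^{-1}$. This is exactly the identity already used in \Cref{kims-conjecture-for-Qzeta3}. Since both generators preserve the set, every $\sigma\in S_3$ does, so $\sigma(\{\zeta_6,\zeta_6^{-1}\})=\{\zeta_6,\zeta_6^{-1}\}\subseteq X(\cO_{\fp})_{\emptyset,\PL,\infty}$ for all $\sigma$.

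It follows that $\{\zeta_6,\zeta_6^{-1}\}\subseteq\bigcap_{\sigma}\sigma(X(\cO_{\fp})_{\emptyset,\PL,\infty})$, and together with the reverse inclusion from the first paragraph we get equality. The strict inclusion $\emptyset\subsetneq\{\zeta_6,\zeta_6^{-1}\}$ is inherited from \Cref{integral-points-kims-conjecture-failure}, which uses $K\neq\bQ(\zeta_3)$. One could equally argue at the level of pairs: $\{(\zeta_6,\zeta_6^{-1}),(\zeta_6^{-1},\zeta_6)\}$ is $S_3$-stable under the diagonal action, and projecting gives the same conclusion. There is no real obstacle here; the only substantive input is the determination of the unsymmetrised locus, which in turn depends on the period conjecture through \Cref{integral-points-locus}.
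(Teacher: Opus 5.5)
Your proposal is correct and follows the paper's proof: take the unsymmetrised locus $\{\zeta_6,\zeta_6^{-1}\}$ from \Cref{integral-points-kims-conjecture-failure}, then observe via $1-\zeta_6=\zeta_6^{-1}$ that it is already $S_3$-stable, so symmetrisation removes nothing. You spell out the check on both generators $z\mapsto 1/z$ and $z\mapsto 1-z$ in a bit more detail than the paper does.
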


\begin{proof}
    Since $1-\zeta_6 = \zeta_6^{-1}$, the locus $X(\cO_{\fp})_{\emptyset,\PL,\infty} = \{\zeta_6, \zeta_6^{-1}\}$ is already stable under the $S_3$-action, hence nothing is gained by $S_3$-symmetrisation.
\end{proof}

This shows the necessity to also study non-polylogarithmic quotients of the fundamental group in order to verify Kim's Conjecture.

\section{Roots of unity in Chabauty--Kim loci}
\label{sec:roots-of-unity}

As discussed in §\ref{sec:insufficiency}, Corwin and Dan-Cohen demonstrated the insufficiency of the polylogarithmic quotient in the case $K = \bQ$, $S = \{l\}$ by observing that the polylogarithmic Chabauty--Kim locus always contains~$-1$, even though this is not an $\{l\}$-integral point when $l \neq 2$. Still inverting one rational prime but taking $K$ to be imaginary quadratic, we find a more drastic example of this phenomenon: all nontrivial roots of unity in~$\bZ_p$ are contained in the polylogarithmic Chabauty--Kim locus. This leads to further such examples for all fields containing an imaginary quadratic field (\Cref{cor:CMfails}). The precise statement is the following.

\begin{thm}
\label{thm:roots-of-unity}
    Let $K$ be an imaginary quadratic field and let $S$ be the set of primes of~$K$ lying over a fixed rational prime $l$. Let $p\ne l$ be a prime which splits completely in $K$. Assume the $p$-adic period conjecture. Then the polylogarithmic Chabauty--Kim locus $X(\mathcal{O}_K \otimes \mathbb{Z}_p)_{S,\PL,\infty}$ contains all pairs $(\zeta,\zeta^{-1})$ where $\zeta \neq 1$ is a root of unity in $\mathbb{Z}_p$.
\end{thm}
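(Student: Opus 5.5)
The idea is to exhibit, for each root of unity $\zeta\neq 1$ in $\bZ_p$, an explicit $\bQ_p$-point $\xi$ of the polylogarithmic Selmer scheme with $\ev_{\eps_1}(\xi)=\log j_{\fp_1}^{\dR}(\zeta)$ and $\ev_{\eps_2}(\xi)=\log j_{\fp_2}^{\dR}(\zeta^{-1})$. This puts $(\zeta,\zeta^{-1})$ in the locus in every finite depth $n$, and hence in depth $\infty$. Throughout we work with the Lie algebra diagram \eqref{eq:lie-algebra-ck-diagram}, which is legitimate by \Cref{lie-algebra-ck-diagram} and \Cref{motivic-etale-comparison}. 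Note that $p$ is odd: for $p=2$ the only nontrivial root of unity in $\bZ_2$ is $-1\equiv 1 \bmod 2$, which does not lie in $X(\bZ_2)$, so the statement is vacuous. For odd $p$, a root of unity $\zeta\neq1$ in $\bZ_p$ is a Teichmüller lift with $\zeta\not\equiv 0,1\bmod p$, so $\zeta,\zeta^{-1}\in X(\bZ_p)$.

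\textbf{Target values.} First I compute the right-hand side. Since $\log(\zeta)=\log(\zeta^{-1})=0$, \Cref{Ln-as-pullback} and \Cref{def:Ln} give $L_0=0$ and $L_m=\Li_m$ at both points. By \Cref{coleman-sinnott}, $\Li_m(\zeta^{-1})=(-1)^{m+1}\Li_m(\zeta)$ for $m\ge 2$. For $m=1$ we have $\Li_1(\zeta^{-1})=-\log(1-\zeta^{-1})=-\log(-\zeta^{-1}(1-\zeta))=\Li_1(\zeta)$, because the $p$-adic logarithm kills roots of unity; this matches the sign $(-1)^{1+1}$ as well.

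\textbf{Choice of generators.} The set $S$ is Galois-stable and $d_1=\#S\in\{1,2\}$. I choose the basis $\alpha_1,\dots,\alpha_{d_1}$ of $\cO_{K,S}^\times\otimes\bQ$ with $\alpha_1=l$ (completed by a second element if $l$ splits), take $\tau_i$ dual to the $\alpha_i$, and choose the $\sigma_n$ Galois-adapted as in \Cref{lem:sigmasigman}.

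\textbf{Evaluation at $x=0$.} I look for $\xi$ with all $x_i(\xi)=0$. In the formula of \Cref{cocycle-evaluation-map}, every term of $k_n$ other than $c_{\sigma_n}z_n$ carries at least one factor $x_i$. Hence
\[ \ev_{\eps_j}(\xi)=\Bigl(\sum_i c^{(j)}_{\tau_i}y_i\Bigr)e_1+\sum_{n\ge2}c^{(j)}_{\sigma_n}z_n\,\ad(e_0)^{n-1}e_1 . \]

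\textbf{Relating the two period elements.} By \Cref{eta-sigma-p-from-eta-p} (equivalently \Cref{sigma-on-period-point}) we have $\eps_2=\sigma^*\eps_1$. Since $\sigma^*\sigma_n=(-1)^{n+1}\sigma_n$ and $c_{\sigma_n}$ is the coefficient of $\sigma_n$ modulo commutators, this gives $c^{(2)}_{\sigma_n}=(-1)^{n+1}c^{(1)}_{\sigma_n}$, exactly as in the proof of \Cref{integral-points-equations}. The $p$-adic period conjecture gives $c^{(1)}_{\sigma_n}\neq0$ by \Cref{period-conjecture-implies-coeff-nonzero}.

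\textbf{Solving for $\xi$.} Set $z_n=\Li_n(\zeta)/c^{(1)}_{\sigma_n}$. Then the $\ad(e_0)^{n-1}e_1$-coordinates are $\Li_n(\zeta)$ at $\fp_1$ and $(-1)^{n+1}\Li_n(\zeta)=\Li_n(\zeta^{-1})$ at $\fp_2$, as required. In degree $1$, \Cref{tau-coeffs} gives $c^{(j)}_{\tau_i}=\log(\iota_j\alpha_i)$. We need
\[ \sum_i \log(\iota_1\alpha_i)\,y_i=\sum_i\log(\iota_2\alpha_i)\,y_i=\Li_1(\zeta). \]
Take $y_1=\Li_1(\zeta)/\log_p(l)$ and $y_i=0$ for $i>1$. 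This works because $\iota_1 l=\iota_2 l=l$ and $\log_p(l)\neq0$, as $l\neq p$ is not a root of unity. So $\xi$ is a $\bQ_p$-point of the depth-$n$ Selmer scheme mapping to the image of $(\zeta,\zeta^{-1})$ under $\prod_{\fp}\log\circ j^{\dR}_{\fp}$, and $(\zeta,\zeta^{-1})$ lies in the locus for every $n$. Since the choices of $z_m$ are compatible as $n$ grows, this also holds in depth $\infty$.

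\textbf{Main obstacle.} The step needing most care is the Galois relation $c^{(2)}_{\sigma_n}=(-1)^{n+1}c^{(1)}_{\sigma_n}$. It requires checking that $\sigma^*$ preserves the Goncharov ideal and that the Goncharov coefficient of $\sigma_n$ depends only on the image in $\Lie(U_S^{\MT})^{\ab}_{-n}$; this is the same argument as in \Cref{integral-points-equations}, now with $S\ne\emptyset$. Everything else is bookkeeping. The only other point to check is that the image in \Cref{def:ck-locus} is the scheme-theoretic image, so exhibiting a $\bQ_p$-point in the preimage is sufficient.
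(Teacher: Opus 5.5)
Your proposal is correct and follows essentially the same route as the paper's proof. You set $x_i(\xi)=0$ so that $\xi$ kills everything except the $\tau_i$ and $\sigma_n$ terms of $\eps_{\fp_j}$, use $\eps_2=\sigma^*\eps_1$ with Galois-adapted $\sigma_n$ to get $c^{(2)}_{\sigma_n}=(-1)^{n+1}c^{(1)}_{\sigma_n}$, and solve for $y_1$ and $z_n=\Li_n(\zeta)/c_{\sigma_n}$ via Coleman--Sinnott. The only minor difference is in degree $-1$: you take $\tau_1$ dual to $\alpha_1=l$ and read off $c^{(1)}_{\tau_1}=c^{(2)}_{\tau_1}=\log_p(l)\neq 0$ directly from \Cref{tau-coeffs} at each prime, rather than choosing $\tau_1$ Galois-fixed and transporting its coefficient via $\sigma^*$, which is slightly cleaner and avoids invoking the period conjecture for this coefficient.
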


\begin{proof}
Note that $\cO_{K,S}^{\times}$ contains the rank~1 subgroup $\cO_{\bQ,\{l\}}^{\times} = \bZ[1/l]^{\times}$ which is fixed by the Galois action, thus we can choose our basis $\Sigma_1 = \{\tau_1,\ldots,\tau_d\}$ of $\fn(\cO_{K,S})_{-1} = (\cO_{K,S}^{\times} \otimes_\bZ \bQ)^{\vee}$ (where $d \in \{1,2\})$ in such a way that $\tau_1$ is fixed by $\Gal(K/\bQ)$. (Concretely, we can take $\tau_1\coloneqq v_l \circ \mathrm{Nm}_{K/\bQ}$, the composition of the norm homomorphism $\cO_{K,S}^\times \otimes \bQ \to \bZ[1/l]^\times \otimes \bQ$ and the $l$-adic valuation.) For $n > 1$ we choose our Lie algebra generator $\Sigma_n = \{\sigma_n\}$ in degree~$-n$ in such a way that complex conjugation $\sigma \in \Gal(K/\bQ)$ acts by $\sigma^*\sigma_n = (-1)^{n+1}\sigma_n$, which is possible by \Cref{lem:sigmasigman}.
    In order to show that $(\zeta,\zeta^{-1})\in X(\mathcal{O}_K\otimes \mathbb{Z}_p)\simeq X(\mathbb{Z}_p)\times X(\mathbb{Z}_p)$ lies in $X(\mathcal{O}_K\otimes \mathbb{Z}_p)_{S,\PL,\infty}=j_p^{-1}(\loc_p(\mathrm{Sel}))$ for a root of unity $\zeta \ne 1$, it is enough to find some $\xi\in \mathrm{Sel} \coloneqq \Sel_{S,\PL,\infty}^{\mot}(X)$ such that $$\loc_p(\xi)=j_p((\zeta,\zeta^{-1})).$$

    We show that there exists such $\xi$ in the Selmer scheme with $x_i(\xi)=0$ for all $1\leq i\leq d$ and $y_i(\xi)$, $z_n(\xi)$ determined as follows. 
    Write the first period element
		 \[ \varepsilon_1 = \left(\sum_{i=1}^{d}c_{1,i}\tau_i\right) + c_2 \sigma_2 + c_3 \sigma_3 + c_4 \sigma_4 + \ldots \quad \bmod \text{commutators} \]
    Note that $\xi$ maps commutators to zero if $x_i(\xi)=0$ for all $1\leq i\leq d$. Indeed, both $\xi(\tau_{1,i}) = y_i(\xi) e_1$ and $\xi(\sigma_n) = z_n(\xi) \ad(e_0)^{n-1}e_1$ have $e_1$-degree $\geq 1$, so any commutator is mapped to an element of $e_1$-degree $\geq 2$, which is zero in the polylogarithmic quotient. So ~$\ev_{\varepsilon_1}(\xi)$ is given by
     \[ \ev_{\varepsilon_1}(\xi) = \left(0, \sum_{i=1}^{d}c_{1,i}y_i(\xi), c_2 z_2(\xi), c_3z_3(\xi), c_4 z_4(\xi),\ldots\right) \]
    in the coordinates $L_0,L_1,L_2,\ldots$ dual to $e_0,e_1,[e_0,e_1],\ldots$
     By \Cref{sigma-on-period-point}, the second period element $\varepsilon_2$ is related to the first by the Galois action: $\varepsilon_2 = \sigma^* \varepsilon_1$, where $\sigma \in \Gal(K/\bQ)$ is complex conjugation. Since $\sigma^* \sigma_n = (-1)^{n+1} \sigma_n$ for $n\geq 2$, we get
    \[ \varepsilon_2 = \sigma^* \varepsilon_1 = \left(\sum_{i=1}^{d}c_{1,i}\sigma^*\tau_i\right) - c_2 \sigma_2 + c_3 \sigma_3 - c_4 \sigma_4 \pm \ldots \quad \bmod \text{commutators}. \]
    Note that the coefficient of $e_1$ in $\xi(\sigma^*\tau_i)$ is $(-1)^{i+1}y_i(\xi)$. Therefore $\ev_{\varepsilon_2}(\xi)$ is given by
    \[ \ev_{\varepsilon_2}(\xi) = \left(0, \sum_{i=1}^{d}c_{1,i}(-1)^{i+1}y_i(\xi), -c_2 z_2(\xi), c_3z_3(\xi), -c_4 z_4(\xi),\ldots\right). \]
    Recall that we want to construct an element $\xi$ such that $\ev_{\eps_1}(\xi) = (L_n(\zeta))_{n \geq 0}$ and $\ev_{\eps_2}(\xi) = (L_n(\zeta^{-1})_{n \geq 0}$. Thus, given that $x_i(\xi) = 0$ for $1 \leq i \leq d$, the remaining coordinates $y_i(\xi)$, $z_n(\xi) \in \bQ_p$ must be chosen so as to satisfy the following equations:
    \begin{equation}
    \label{eq:c}
        \begin{cases}
            0=\log(\zeta), & \\
            0=\log(\zeta^{-1}), &\\
            \sum_{i=1}^{d}c_{1,i}y_i(\xi) = \Li_1(\zeta), & \\
            \sum_{i=1}^{d}c_{1,i}(-1)^{i+1}y_i(\xi) = \Li_1(\zeta^{-1}), &\\
            c_n z_n(\xi)=L_n(\zeta), & n\geq 2,\\
        	 	(-1)^{n+1}c_n z_n(\xi)=L_n(\zeta^{-1}), & n\geq 2.
        \end{cases}
    \end{equation}
First note that the first two lines in \eqref{eq:c} hold trivially since $\zeta$ is a root of unity. Also by Coleman--Sinnott (Lemma \ref{coleman-sinnott}), one has $L_n(\zeta)+(-1)^{n}L_n(\zeta^{-1})=0$ since $\zeta$ is a root of unity. Thus, the last line in \eqref{eq:c} is implied by the second-to-last line. So the only equations are 
    \begin{equation*}
\begin{cases}
\sum_{i=1}^{d}c_{1,i}y_i(\xi) = \Li_1(\zeta), & \\
\sum_{i=1}^{d}c_{1,i}(-1)^{i+1}y_i(\xi) = \Li_1(\zeta^{-1}), &\\
c_n z_n(\xi)=L_n(\zeta), & n\geq 2
\end{cases}
\end{equation*}
By the $p$-adic period conjecture, $c_{1,1}\ne 0 $ and $c_n\ne 0$ for $n\geq 2$ (\Cref{period-conjecture-implies-coeff-nonzero}). Also note $\Li_1(\zeta^{-1})=\Li_1(\zeta)$ since $\zeta$ is a root of unity. So we solve 
\begin{align*}
    y_i(\xi)&\coloneqq\begin{cases}
        \Li_1(\zeta)/c_{1,1}, & i=1,\\
        0, & \text{otherwise,}
    \end{cases}\\
    z_n(\xi)&\coloneqq L_n(\zeta)/c_n \quad \text{ for }n\geq 2. \qedhere
\end{align*}
\end{proof}

\begin{cor}
\label{cor:CMfails}
    Let $K$ be a number field that contains an imaginary quadratic field and suppose that $S$ contains all primes of $K$ lying over some rational prime $l$. Assume the $p$-adic period conjecture. For any totally split prime~$p \neq l$ and any $\fp \mid p$, the Chabauty--Kim locus $X(\cO_{\fp})_{S,\PL,\infty}$ contains all roots of unity $\neq 1$ in~$\bZ_p$. In particular, Kim's Conjecture \ref{conj:kim-single} fails for the polylogarithmic quotient when $p$ is large. 
\end{cor}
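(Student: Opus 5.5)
The plan is to reduce to \Cref{thm:roots-of-unity} via the functoriality of \Cref{field-extension-functoriality}. Let $F \subseteq K$ be an imaginary quadratic subfield. Let $S_F$ be the set of primes of $F$ lying over $l$, and let $S_F'$ be the set of primes of $K$ lying over $S_F$. Then $S_F'$ is exactly the set of primes of $K$ above $l$, so $S_F' \subseteq S$. Since $p$ splits completely in $K$, it splits completely in $F$. Moreover $p \neq l$, so no prime of $S_F$ (or of $S$) lies over $p$.

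\Cref{thm:roots-of-unity}, applied to $(F,S_F,p)$ and assuming the $p$-adic period conjecture, gives $(\zeta,\zeta^{-1}) \in X(\cO_F\otimes\bZ_p)_{S_F,\PL,\infty}$ for every root of unity $1 \neq \zeta \in \bZ_p$. Projecting to a prime $\fq \mid p$ of $F$ gives $\zeta \in X(\cO_{\fq})_{S_F,\PL,\infty}$. By the second inclusion of \Cref{field-extension-functoriality}, applied to the $G_F$-equivariant quotient $\Pi^{\et}_{\PL,\infty}$ (which is $G_F$-equivariant as the realisation of a motivic quotient), we get $\zeta \in X(\cO_{\fp})_{S_F',\PL,\infty}$ for every $\fp \mid \fq$. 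Here $K_{\fp} = F_{\fq} = \bQ_p$, so we may regard $\zeta$ as an element of $X(\cO_{\fp})$. Every prime $\fp \mid p$ of $K$ lies over some $\fq$, so this covers all $\fp \mid p$.

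Next we enlarge $S_F'$ to $S$. The Selmer scheme $\rH^1_{f,S_F'}$ is a closed subscheme of $\rH^1_{f,S}$, because the local conditions for the larger set are weaker. Hence the loci are monotone in the set of primes: $X(\cO_K\otimes\bZ_p)_{S_F',\PL,\infty} \subseteq X(\cO_K\otimes\bZ_p)_{S,\PL,\infty}$, and the same holds after projecting to $\fp$. This proves the containment claimed for $X(\cO_{\fp})_{S,\PL,\infty}$.

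Two points need care. First, the period conjecture is used for $F$ and its primes above $p$; I take "assume the $p$-adic period conjecture" to include this. Second, the compatibility of embeddings: the pair $(\zeta,\zeta^{-1})$ is taken with respect to the two embeddings of $F$, and each $\fp \mid p$ of $K$ restricts to one of them. Since the statement is symmetric in $\zeta$ and $\zeta^{-1}$, the pair is harmless.

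For the final clause, $\mu(\bZ_p) = \mu_{p-1}$ has $p-2$ nontrivial elements. By contrast, $X(\cO_{K,S})$ is finite (Siegel--Mahler) and independent of $p$, and $K$ contains only finitely many roots of unity. So for $p$ large, some $\zeta \in \mu_{p-1}\smallsetminus\{1\}$ is not the image of any point of $X(\cO_{K,S})$ under $K \hookrightarrow K_{\fp}$; indeed $p-2 > \#X(\cO_{K,S})$ suffices. Therefore $X(\cO_{\fp})_{S,\PL,\infty} \neq X(\cO_{K,S})$, and \Cref{conj:kim-single} fails for the polylogarithmic quotient.

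The main obstacle I expect is the monotonicity in $S$ together with the equivariance hypothesis of \Cref{field-extension-functoriality}. Both are routine, but they must be stated precisely.
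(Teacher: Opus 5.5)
Your proposal is correct and follows the paper's proof: you pass to an imaginary quadratic subfield, apply \Cref{thm:roots-of-unity} there, transfer the roots of unity into $X(\cO_{\fp})$ via \Cref{field-extension-functoriality}, and then enlarge from the primes above $l$ to $S$ by monotonicity in $S$. One small slip: the fact that no prime of $S$ lies above $p$ does not follow from $p \neq l$; it is the paper's standing hypothesis on $p$, which is needed for the locus to be defined at all.
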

\begin{proof}
    Let $S_l(K)$ be the set of primes of~$K$ dividing~$l$. Let $K_0$ be an imaginary quadratic subfield of~$K$, which exists by assumption, and define $S_l(K_0)$ similarly. Let $\fp_0$ be the prime of~$K_0$ under~$\fp$. Then we have inclusions
    \[ X(\cO_{\fp_0})_{S_l(K_0),\PL,\infty} \subseteq X(\cO_{\fp})_{S_l(K),\PL,\infty} \subseteq X(\cO_{\fp})_{S,\PL,\infty} \]
    by \Cref{field-extension-functoriality} and by the assumption $S_l(K) \subseteq S$. But the first locus contains all non-trivial roots of unity in~$\bZ_p$ by \Cref{thm:roots-of-unity}. Since $K$ contains only finitely many roots of unity, these do not belong to $X(\cO_{K,S})$ when $p$ is large, so Kim's Conjecture fails.
\end{proof}

\section{Sets $S$ of size~1 over imaginary quadratic fields}
\label{sec:S-of-size-one}

In this section we consider an imaginary quadratic field~$K$ and a set~$S = \{\fl\}$ containing a single prime ideal of~$K$. The following proposition completely describes the solutions to the $S$-unit equation in this case.

\begin{prop}
\label{S=1-stable-solutions}
    For $K$ and $S = \{\fl\}$ as above, the solutions of the $S$-unit equation are given as follows:
    \begin{enumerate}[label=(\arabic*)]
        \item \label{item:Qzeta3-1-zeta3}
        if $K = \bQ(\zeta_3)$ and $\fl = (1-\zeta_3)$:
        \[ X(\cO_{K,S}) = \left\{ \zeta_6,\, \zeta_6^{-1}, \, \zeta_3,\, \zeta_3^{-1},\, 1-\zeta_3,\, 1-\zeta_3^{-1},\,\frac1{1-\zeta_3},\, \frac1{1-\zeta_3^{-1}} \right\}; \]
        \item \label{item:Qzeta3-2}
        if $K = \bQ(\zeta_3)$ and $\fl = (2)$:
        \[ X(\cO_{K,S}) = \bigl\{2,\, -1,\, \tfrac12,\, \zeta_6,\, \zeta_6^{-1} \bigr\};\]
        \item \label{item:Qzeta3-general}
        if $K = \bQ(\zeta_3)$ and $\fl \neq (1-\zeta_3), (2)$:
        \[ X(\cO_{K,S}) = X(\cO_K) = \bigl\{\zeta_6,\, \zeta_6^{-1} \bigr\};\]
        \item \label{item:Qi}
        if $K = \bQ(i)$ and $\fl = (1-i)$:
        \[ X(\cO_{K,S}) = \left\{ 2,\, -1,\, \tfrac12,\, \pm i, 1\pm i, \frac{1 \pm i}{2}\right\}; \]
        \item \label{item:l-divides-2}
        if $K \neq \bQ(i), \bQ(\zeta_3)$, $\fl \mid 2$, and $S$ is Galois-stable, then
        \[ X(\cO_{K,S}) = X(\bZ[1/2]) = \{2,-1,\tfrac12\}; \]
        \item in all other cases, $X(\cO_{K,S}) = \emptyset$.
    \end{enumerate}
\end{prop}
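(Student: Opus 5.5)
Write a solution as $z + w = 1$ with $z,w \in \cO_{K,S}^\times$. Since $K$ is imaginary quadratic, $\cO_K^\times = \mu(K)$ is finite, and $\cO_{K,S}^\times$ has rank $\#S = 1$. The argument splits according to whether the ideal $\fl$ is principal and whether it is Galois-stable.

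\textbf{Reduction to $S$-units generated by a single element.} Let $h$ be the order of $[\fl]$ in the class group and write $\fl^h = (\pi)$. Then every $S$-unit has the form $u\,\pi^{k/h}$ up to roots of unity; more precisely, its ideal is $\fl^{a}$ for some $a \in \bZ$. Write $(z) = \fl^a$ and $(w) = \fl^b$. Considering valuations at $\fl$ in $z + w = 1$, either $a = b \le 0$, or $\min(a,b) = 0$ with the other exponent $\ge 0$. Using the $S_3$-action on solutions ($z \mapsto 1-z$, $z \mapsto 1/z$), reduce to the case $a,b \ge 0$ with $b = 0$, i.e.\ $w \in \mu(K)$ and $z = 1 - w$ has ideal $\fl^a$.

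\textbf{Key step.} If $a = 0$, then $z$ and $1-z$ are both roots of unity. By the argument of \Cref{lem:logLi1zeta6} (complex absolute values), $z = \zeta_6^{\pm1}$, which lies in $K$ only for $K = \bQ(\zeta_3)$.

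If $a > 0$, then $z = 1 - \zeta$ with $\zeta \in \mu(K)\smallsetminus\{1\}$, and $(1-\zeta) = \fl^a$. So we only need to factor $1-\zeta$ for the finitely many $\zeta \in \mu(K)$:
\begin{itemize}
\item $1-(-1) = 2$, whose factorisation in $\cO_K$ is $\fl^a$ only if $2$ is ramified, or $2$ is inert with $\fl = (2)$. If $2$ splits, then $(2) = \fl\bar\fl$ is not a power of a single prime.
\item For $\bQ(i)$: $1 \mp i$ generates $(1-i)$.
\item For $\bQ(\zeta_3)$: $1-\zeta_3^{\pm1}$ and $1-\zeta_6^{\pm1} = \zeta_6^{\mp1}$. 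The first generates $(1-\zeta_3)$; the second is a unit, the $a=0$ case.
\end{itemize}
Matching each case produces the lists:
\begin{itemize}
\item $\bQ(\zeta_3)$, $\fl=(1-\zeta_3)$: the $S_3$-orbit of $\zeta_3$ together with $\zeta_6^{\pm1}$. The element $2$ is excluded because $2$ is inert in $\bQ(\zeta_3)$, so $(2)$ is prime and different from $\fl$.
\item $\bQ(\zeta_3)$, $\fl=(2)$: $\{2,-1,1/2\}\cup\{\zeta_6^{\pm1}\}$.
\item $\bQ(i)$, $\fl=(1-i)$: the orbit of $2$ and of $i$.
\item $\fl \mid 2$ Galois-stable: $2$ is ramified or inert, giving $\{2,-1,1/2\}$.
\end{itemize}

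\textbf{Completeness check.} The $S_3$-orbits of the listed elements are exactly as displayed, using $1-\zeta_3 = -\zeta_3^{2}(1-\zeta_3^{-1})$ and so on. In the case where $\fl \mid 2$ is not Galois-stable, $2$ splits, so no solution with $a > 0$ arises. For $K \neq \bQ(\zeta_3)$ there are no solutions with $a = 0$, hence the set is empty.

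\textbf{Expected obstacle.} The only delicate part is the valuation case analysis together with the $S_3$ reduction. One must make sure that the reduction to $w$ a root of unity is legitimate, i.e.\ that the valuation condition at the single prime $\fl$ really forces one of $z$, $1-z$, $1/z$, etc.\ to be a unit. This holds because there is only one prime in $S$: if $v_\fl(z) > 0$ then $v_\fl(1-z) = 0$; if $v_\fl(z) < 0$ then $v_\fl(1/z) > 0$; and if $v_\fl(z) = 0$ then $z$ is itself a unit.
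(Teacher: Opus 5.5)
Your proposal is correct and is essentially the paper's argument: use the $S_3$-action and the single valuation $v_{\fl}$ to reduce to a solution in which $z$ or $1-z$ is a root of unity, then check for each of the finitely many $\zeta \in \mu(K)$ whether $1-\zeta$ is a unit or generates a power of $\fl$. There is one harmless slip: the identity should read $1-\zeta_3 = -\zeta_3(1-\zeta_3^{-1})$, equivalently $1-\zeta_3^{-1} = -\zeta_3^2(1-\zeta_3)$. You should also state explicitly that for $K=\bQ(\zeta_3)$ and $\fl \neq (1-\zeta_3),(2)$ only the $a=0$ solutions $\zeta_6^{\pm1}$ survive, which gives case (3).
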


\begin{proof}
    One checks easily that the listed points are $S$-integral points in each case. For the other direction, let $z \in X(\cO_{K,S})$. Note that $X(\cO_{K,S})$ is a union of $S_3$-orbits for the $S_3$-action generated by $z \mapsto 1/z$ and $z \mapsto 1-z$. Replacing $z$ with $1/z$ if necessary we may assume $v_{\fl}(z) \geq 0$. Then, replacing $z$ with $1-z$ if necessary we may assume $v_{\fl}(z) = 0$, i.e.\ $z$ is a unit in $\cO_K$. Since $K$ is imaginary quadratic, $z$ must be a root of unity of order $\in \{2,3,4,6\}$. If $z = -1$, we have $1-z = 2$, and $\fl$ must divide~2 which cannot split completely in $K$ for this to be an $S$-unit, so we are in case~\ref{item:Qzeta3-2}, \ref{item:Qi}, or~\ref{item:l-divides-2}. If $z = \zeta_3$ has order~$3$, we must have $K = \bQ(\zeta_3)$ and $\fl$ must be $(1-\zeta_3)$ for $1-z$ to be an $S$-unit, so we are in case~\ref{item:Qzeta3-1-zeta3}.
    If $z = i$ has order~$4$, we must have $K = \bQ(i)$ and $\fl$ must be $(1-i)$ for $1-z$ to be an $S$-unit, so we are in case~\ref{item:Qi}. If $z = \zeta_6$ we have $K = \bQ(\zeta_3)$ and we are in case~\ref{item:Qzeta3-1-zeta3}, \ref{item:Qzeta3-2}, or \ref{item:Qzeta3-general}.
\end{proof}

Let $p$ be a prime not divisible by~$\fl$ which splits in~$K$. Let $\fp_1$ and $\fp_2$ be the two prime ideals of~$K$ dividing~$p$. Let $\pi_{\fl}$ be a generator of the 1-dimensional space $\cO_{K,S}^{\times} \otimes \bQ$, which determines a dual Lie algebra element $\tau_{\fl} \in \Lie(U_S^{\MT})_{-1}$. Let $\eps_i \coloneqq \eps_{\fp_i}$ be the $\fp_i$-adic period element ($i = 1,2$). We distinguish two cases, depending on whether $S$ is Galois-stable or not.
 
\subsection{Galois-stable case}
\label{sec:Galois-stable}
We first treat the case where $S = \{\fl\}$ is Galois-stable. Hence, $\fl$ is the unique prime lying over a rational prime which is either ramified or inert in~$K$. 

We derive the equations defining the polylogarithmic Chabauty--Kim locus up to depth~4. By \Cref{lem:sigmasigman}, we can choose the Lie algebra generators $\sigma_n \in \Lie(U_S^{\MT})_{-n}$ such that $\sigma^*\sigma_n = (-1)^{n+1} \sigma_n$ for $n \geq 2$, where $\sigma \in \Gal(K/\bQ)$ is the complex conjugation. We adopt this choice in the following. 
According to \Cref{lie-algebra-element-expansion}, we can write
\begin{equation}
    \label{eq:eps-expansion}
    \eps_1 = c_{\tau_{\fl}} \tau_{\fl} + c_{\sigma_2} \sigma_2 + c_{\sigma_3} \sigma_3 + c_{\tau_{\fl} \sigma_2} [\tau_{\fl}, \sigma_2] + c_{\sigma_4} \sigma_4 + c_{\tau_{\fl}\sigma_3} [\tau_{\fl},\sigma_3] + c_{\tau_{\fl} \tau_{\fl} \sigma_2} [\tau_{\fl}, [\tau_{\fl}, \sigma_2]] + \ldots 
\end{equation}
where the omitted elements belong to the Goncharov ideal or have degree $< -4$.

\begin{thm}
\label{S=1-stable-equations}
    Let $K$ be an imaginary quadratic field and assume that $S = \{\fl\}$ is Galois-stable. Let $p$ be a rational prime which splits in~$K$. Then, for $n=1,2,3,4$, the depth-$n$ polylogarithmic Chabauty--Kim locus $X(\cO_K \otimes \bZ_p)_{S,\PL,n}$ is contained in the set of pairs $(z_1,z_2) \in X(\bZ_p) \times X(\bZ_p)$ satisfying the following incremental list of equations:
    \begin{itemize}
        \item in depth~1:
        \begin{equation}
            \label{eq:S=1-stable-log}
            \log(z_1) = \log(z_2) \quad \text{and} \quad \log(1-z_1) = \log(1-z_2);
        \end{equation}
        \item in depth~2:
        \begin{equation}
            \label{eq:S=1-depth-2}
            L_2(z_1) + L_2(z_2) = 0;
        \end{equation}
        \item in depth 3:
         \begin{equation}
            \label{eq:S=1-depth-3}
            c_{\tau_{\fl}} c_{\sigma_2}(L_3(z_1) - L_3(z_2)) = 2c_{\tau_{\fl}\sigma_2} \log(z_1) L_2(z_1);
        \end{equation}
        \item in depth 4:
         \begin{equation}
            \label{eq:S=1-depth-4}
            c_{\tau_{\fl}} c_{\sigma_3}(L_4(z_1) + L_4(z_2)) = c_{\tau_{\fl}\sigma_3}\log(z_1)(L_3(z_1) + L_3(z_2)).
        \end{equation}
    \end{itemize}
    The containment is an equality if the $p$-adic period conjecture holds for the primes dividing~$p$.
\end{thm}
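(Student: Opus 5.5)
We follow the template of the proof of \Cref{integral-points-equations}, now with one degree $-1$ generator $\tau_{\fl}$ and coordinates $x=x_{\fl}$, $y=y_{\fl}$, $z_2,z_3,z_4$ on the Selmer scheme. Since $d_1=1$, the Goncharov representation~\eqref{eq:eps-expansion} contains no pure-$\tau$ commutators beyond degree $-1$ (the index condition $i_{n-1}>i_n$ cannot hold). In \Cref{cocycle-evaluation-map} the correction coefficients $c'_{\tau\cdots\tau}$ are empty sums, hence zero. The theorem then gives
\[ \ev_{\eps_1}(\xi)=\bigl(c_{\tau_{\fl}}x,\ c_{\tau_{\fl}}y,\ c_{\sigma_2}z_2,\ c_{\sigma_3}z_3+c_{\tau_{\fl}\sigma_2}xz_2,\ c_{\sigma_4}z_4+c_{\tau_{\fl}\sigma_3}xz_3+c_{\tau_{\fl}\tau_{\fl}\sigma_2}x^2z_2\bigr) \]
in the coordinates $L_0,\dots,L_4$.

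Next we obtain $\eps_2=\sigma^*\eps_1$ from \Cref{sigma-on-period-point}. Because $S$ is Galois-stable, $\pi_{\fl}$ can be taken Galois-fixed up to roots of unity; for instance $\pi_{\fl}=\Nm(\fl)$ generates $\cO_{K,S}^\times\otimes\bQ$. So $\sigma^*\tau_{\fl}=\tau_{\fl}$, while $\sigma^*\sigma_n=(-1)^{n+1}\sigma_n$ by \Cref{lem:sigmasigman}. Applying $\sigma^*$ to \eqref{eq:eps-expansion}, which preserves the Goncharov ideal, multiplies each coefficient $c_{\tau_{\fl}^k\sigma_n}$ by $(-1)^{n+1}$. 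Consequently $\ev_{\eps_2}(\xi)$ has $L_0,L_1$ components equal to those of $\ev_{\eps_1}(\xi)$, and its $L_2,L_3,L_4$ components are
\[ -c_{\sigma_2}z_2,\qquad c_{\sigma_3}z_3-c_{\tau_{\fl}\sigma_2}xz_2,\qquad -c_{\sigma_4}z_4+c_{\tau_{\fl}\sigma_3}xz_3-c_{\tau_{\fl}\tau_{\fl}\sigma_2}x^2z_2. \]

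We then eliminate the variables. Write $L_m^{(i)}$ for the coordinates of $\ev_{\eps_i}$. Depth~1 gives $L_0^{(1)}=L_0^{(2)}$ and $L_1^{(1)}=L_1^{(2)}$, and depth~2 gives $L_2^{(1)}+L_2^{(2)}=0$. In depth~3, $L_3^{(1)}-L_3^{(2)}=2c_{\tau_{\fl}\sigma_2}xz_2$. Substituting $x=L_0^{(1)}/c_{\tau_{\fl}}$ and $z_2=L_2^{(1)}/c_{\sigma_2}$ yields $c_{\tau_{\fl}}c_{\sigma_2}(L_3^{(1)}-L_3^{(2)})=2c_{\tau_{\fl}\sigma_2}L_0^{(1)}L_2^{(1)}$. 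In depth~4, the $z_4$ and $x^2z_2$ terms cancel in the sum, giving $L_4^{(1)}+L_4^{(2)}=2c_{\tau_{\fl}\sigma_3}xz_3$. Here $2c_{\sigma_3}z_3=L_3^{(1)}+L_3^{(2)}$, which produces \eqref{eq:S=1-depth-4}. Pulling back along $\log\circ j^{\dR}_{\fp_i}$ via \Cref{Ln-as-pullback}, with $L_0\mapsto\log$ and $L_1\mapsto\Li_1=-\log(1-z)$, gives the stated equations in $(z_1,z_2)$.

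For the reverse containment, we assume the $p$-adic period conjecture, so $c_{\tau_{\fl}},c_{\sigma_n}\neq0$ by \Cref{period-conjecture-implies-coeff-nonzero}. Given a point satisfying the equations, we solve successively for $x,y,z_2,z_3,z_4$. The one remaining constraint at each level is exactly the listed equation, and $z_4$ is free because $c_{\sigma_4}\ne0$. This is what makes the scheme-theoretic image exactly the displayed locus.

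The main obstacle is the bookkeeping of signs for $\sigma^*$ on the commutator terms. We must check that $\sigma^*$ maps the Goncharov basis elements $\ad(\tau_{\fl})^k\sigma_n$ to $(-1)^{n+1}$ times themselves, and that elements of the Goncharov ideal stay in it; both hold since $\sigma^*$ is a graded Lie homomorphism fixing $\tau_{\fl}$. We must also confirm that no pure-$\tau$ terms arise.
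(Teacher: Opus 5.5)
Your proposal is correct and follows essentially the same route as the paper: compute $\ev_{\eps_1}$ via \Cref{cocycle-evaluation-map}, obtain $\ev_{\eps_2}$ from $\eps_2=\sigma^*\eps_1$ using $\sigma^*\tau_{\fl}=\tau_{\fl}$ and $\sigma^*\sigma_n=(-1)^{n+1}\sigma_n$, eliminate the Selmer coordinates, and invoke the nonvanishing of $c_{\sigma_2},c_{\sigma_3},c_{\sigma_4}$ for the reverse containment. One cosmetic point concerns the unconditional direction: state the depth-3 and depth-4 eliminations as polynomial identities, e.g.\ $c_{\tau_{\fl}}c_{\sigma_2}\cdot 2c_{\tau_{\fl}\sigma_2}xz_2 = 2c_{\tau_{\fl}\sigma_2}(c_{\tau_{\fl}}x)(c_{\sigma_2}z_2)$, rather than dividing by $c_{\sigma_2}$ or $c_{\sigma_3}$, since these need not be nonzero without the period conjecture.
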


\begin{proof}
We have to determine the image of the product of evaluation maps
\begin{equation*}
    \ev_{\eps_1} \times \ev_{\eps_2}\colon \Hom_{\gr}(\Lie(U_S^{\MT}),\Lie(\Pi_{\PL,4}^{\omega})) \to \Lie(\Pi^{\dR}_{\PL,4})_{\bQ_p} \times \Lie(\Pi_{\PL,4}^{\dR})_{\bQ_p}
\end{equation*}
The Lie algebra $\Lie(\Pi^{\dR}_{\PL,4})_{\bQ_p}$ is 5-dimensional with basis $e_0, e_1,\ldots,\ad(e_0)^3 e_1$; let $L_0,\ldots,L_4$ be the dual basis. 
The global Selmer scheme is an affine space with coordinates $x_{\fl},y_{\fl},z_2,z_3,z_4$. By \Cref{cocycle-evaluation-map}, the cocycle evaluation map $\ev_{\eps_1}$ is given in these coordinates by
\begin{equation}
\label{eq:S=1-stable-ev-eps1}
\begin{split}
    &\ev_{\eps_1}\colon (x_{\fl},y_{\fl},z_2,z_3,z_4) \mapsto \\
    &\qquad (c_{\tau_{\fl}}x_{\fl}, \;\; c_{\tau_{\fl}}y_{\fl}, \;\; c_{\sigma_2}z_2, \;\; c_{\sigma_3}z_3+c_{\tau_{\fl} \sigma_2}x_{\fl}z_2, \;\; c_{\sigma_4}z_4+c_{\tau_{\fl}\sigma_3}x_{\fl}z_3+c_{\tau_{\fl} \tau_{\fl} \sigma_2} x_{\fl}^2 z_2).
\end{split}
\end{equation}
We exploit the Galois action to relate the period elements $\eps_1$ and $\eps_2$ to each other. By \Cref{sigma-on-period-point}, we have $\eps_2 = \sigma^* \eps_1$. Since $\fl$ is stable under the Galois action, $\sigma^*\tau_{\fl}=\tau_{\fl}$. Applying $\sigma^*$ to \eqref{eq:eps-expansion} and using $\sigma^*\sigma_n=(-1)^{n+1}\sigma_n$, the second period element $\eps_2$ is given by
\begin{equation*}
    \varepsilon_2 = c_{\tau_{\fl}} \tau_{\fl} - c_{\sigma_2} \sigma_2 + c_{\sigma_3} \sigma_3 - c_{\tau_{\fl} \sigma_2} [\tau_{\fl}, \sigma_2] - c_{\sigma_4} \sigma_4 + c_{\tau_{\fl}\sigma_3} [\tau_{\fl},\sigma_3] - c_{\tau_{\fl} \tau_{\fl} \sigma_2} [\tau_{\fl}, [\tau_{\fl}, \sigma_2]] + \ldots
\end{equation*}
Hence, the cocycle evaluation map $\ev_{\eps_2}$ for this element is given by
\begin{equation}
\label{eq:S=1-stable-ev-eps2}
\begin{split}
    &\ev_{\eps_2}\colon (x_{\fl},y_{\fl},z_2,z_3,z_4) \mapsto\\
    &\qquad (c_{\tau_{\fl}}x_{\fl}, \; c_{\tau_{\fl}}y_{\fl}, \; -c_{\sigma_2}z_2, \; c_{\sigma_3}z_3-c_{\tau_{\fl} \sigma_2}x_{\fl}z_2, \; -c_{\sigma_4}z_4+c_{\tau_{\fl}\sigma_3}x_{\fl}z_3-c_{\tau_{\fl} \tau_{\fl} \sigma_2} x_{\fl}^2 z_2).
\end{split}
\end{equation}
Comparing \eqref{eq:S=1-stable-ev-eps1} and~\eqref{eq:S=1-stable-ev-eps2}, the following equations hold on the image of $\ev_{\eps_1} \times \ev_{\eps_2}$:
\begin{gather*}
    L_0^{(1)} = L_0^{(2)}, \qquad L_1^{(1)} = L_1^{(2)}, \\ L_2^{(1)} + L_2^{(2)} = 0,\\
    c_{\tau_{\fl}} c_{\sigma_2}(L_3^{(1)} - L_3^{(2)}) = 2c_{\tau_{\fl}\sigma_2} L_0^{(1)} L_2^{(1)},\\
    c_{\tau_{\fl}} c_{\sigma_3}(L_4^{(1)} + L_4^{(2)}) = c_{\tau_{\fl}\sigma_3}(L_3^{(1)} + L_3^{(2)}) L_0^{(1)}.
\end{gather*}
Here, $L_n^{(i)}$ denotes $L_n$ composed with the $i$-th projection of $\Lie(\Pi_{\PL,4}^{\dR}) \times \Lie(\Pi_{\PL,4}^{\dR})$ for $i=1,2$. Pulling back these equations to $X(\bZ_p) \times X(\bZ_p)$ results in the claimed equations. If the $p$-adic period conjecture holds then the periods $c_{\sigma_2}$, $c_{\sigma_3}$, $c_{\sigma_4}$ are $\neq 0$ in $\bQ_p$ (\Cref{period-conjecture-implies-coeff-nonzero}), and in this case the image of $\ev_{\eps_1} \times \ev_{\eps_2}$ is precisely cut out by the given equations.
\end{proof}

\begin{rem}
\label{checking-period-conjecture}
    In \Cref{S=1-stable-equations}, only a small fragment of the $p$-adic period conjecture, namely the non-vanishing of the three $p$-adic periods $c_{\sigma_2}, c_{\sigma_3}, c_{\sigma_4}$, is needed to conclude that the loci $X(\cO_K \otimes \bZ_p)_{S,\PL,n}$ are precisely cut out by the given equations. In the case of $c_{\sigma_3}$, we can choose $\zeta^{\fu}(3)$ as a basis of the 1-dimensional extension space $E_3(\bQ) = E_3(K)$, so that $c_{\sigma_3} = \zeta(3)$ when $\sigma_3$ is chosen to be dual to $\zeta^{\fu}(3)$ (see \Cref{single-letter-periods-from-En-basis}). The non-vanishing of $\zeta(3)$ in $\bQ_p$ is known when $p$ is a regular prime, see \cite[Remark~2.20(i)]{furusho:p-adic}, and in any case it can quickly be checked on a computer for any given~$p$. In SageMath \cite{sagemath}, the $p$-adic zeta value $\zeta(n)$ can be computed as \texttt{Qp(p)(1).polylog(n)}. It is more difficult to check the non-vanishing of $c_{\sigma_2}$ and $c_{\sigma_4}$ since we do not have a general construction of a (conjectured) basis of $E_2(K)$ and $E_4(K)$ for arbitrary~$K$. In the case $K = \bQ(i)$ or $K = \bQ(\zeta_3)$, however, we have $\Li_n^{\fu}(i) \in E_n(\bQ(i))$ and $\Li_n^{\fu}(\zeta_3) \in E_n(\bQ(\zeta_3))$ by \Cref{zeta-and-cyclotomic-polylogs-in-En}, and we can check for any given~$p$ with $i \in \bQ_p$ resp.\ $\zeta_3 \in \bQ_p$ that the $p$-adic polylogarithms $\Li_n(i)$ and $\Li_n(\zeta_3)$ are non-zero. For $\zeta \in \{i,\zeta_3\}$, checking the non-vanishing of $\Li_n(\zeta) \in \bQ_p$ for a single prime~$p$ is enough to conclude that the motivic polylogarithm $\Li_n^{\fu}(\zeta)$ is nonzero. Alternatively, one can deduce this from the fact that $E_n(Z)$ is spanned by $\Li_n^{\fu}$ at roots of unity (see \Cref{cyclotomic-En-basis}). Hence we can take $\Li_n^{\fu}(\zeta)$ as a basis of the one-dimensional space $E_n(\bQ(\zeta))$, choose $\sigma_n \in \Lie(U_S^{\MT})_{-n}$ to be dual to it, and have $c_{\sigma_n} = \Li_n(\zeta)$ in~$\bQ_p$ for all~$p$. We checked that $\Li_n(\zeta) \neq 0$ in~$\bQ_p$ for $\zeta \in \{i,\zeta_3\}$, $n = 2,4$, and all primes $p < 10{,}000$ with $\zeta \in \bQ_p$. 
\end{rem}

Note that the equations in depth~1 and~2 in \Cref{S=1-stable-equations} do not depend on the field~$K$ and set~$S$. Since $\Pi_{\PL,2} = \Pi_2$, the polylogarithmic Chabauty--Kim locus in depth $n \leq 2$ agrees with the full depth-$n$ locus:
\[ X(\cO_K \otimes \bZ_p)_{S,\PL,n} = X(\cO_K \otimes \bZ_p)_{S,n} \quad \text{for $n = 1,2$.} \]
We analyse the loci $X(\cO_K \otimes \bZ_p)_{S,\PL,n}$ in small depth, starting with the equations~\eqref{eq:S=1-stable-log} for $X(\cO_K \otimes \bZ_p)_{S,1}$.

\begin{lemma}
\label{lem:logLi1eq}
    The solutions of the equations $\log(z_1)=\log(z_2)$, $\mathrm{Li}_1(z_1)=\mathrm{Li}_1(z_2)$ for $z_1,z_2\in\mathbb{Z}_p$ with $z_1,z_2 \not\equiv 0,1 \bmod p$ are those $(z_1,z_2)$ with $z_1=z_2$ or 
    \begin{equation}
    \label{eq:root-of-unity-parametrisation}
        (z_1,z_2)=\left(\frac{1-\eta}{\zeta-\eta},\zeta\frac{1-\eta}{\zeta-\eta}\right)
    \end{equation}
    for roots of unity $\zeta,\eta \in \bZ_p$ with $\zeta,\eta\ne 1$ and $\zeta\ne \eta$. 
\end{lemma}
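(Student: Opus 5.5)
The plan is to use that the $p$-adic logarithm on $\bZ_p^\times$ has kernel exactly the roots of unity $\mu(\bZ_p)$ (torsion of $\bZ_p^\times$). Since $\log$ is a homomorphism on $\bZ_p^\times$, the equation $\log(z_1)=\log(z_2)$ with $z_1,z_2\in\bZ_p^\times$ (guaranteed by $z_i\not\equiv 0\bmod p$) is equivalent to $\log(z_2/z_1)=0$, i.e.\ $z_2=\zeta z_1$ for some $\zeta\in\mu(\bZ_p)$. Similarly, since $\Li_1(z)=-\log(1-z)$ and $1-z_i\in\bZ_p^\times$ (from $z_i\not\equiv 1\bmod p$), the second equation is equivalent to $1-z_2=\eta(1-z_1)$ for some $\eta\in\mu(\bZ_p)$.

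Next I solve this linear system for $z_1$. Substituting $z_2=\zeta z_1$ gives $1-\zeta z_1=\eta-\eta z_1$, i.e.\ $(\eta-\zeta)z_1=\eta-1$. If $\zeta=\eta$, then necessarily $\eta=1$, so $\zeta=\eta=1$ and $z_1=z_2$ (with $z_1$ arbitrary); conversely $z_1=z_2$ clearly solves the equations. If $\zeta\neq\eta$, then $z_1=\frac{1-\eta}{\zeta-\eta}$ and $z_2=\zeta z_1$, which is the parametrisation \eqref{eq:root-of-unity-parametrisation}. In this branch, if $\eta=1$ then $z_1=0$, excluded; if $\zeta=1$ then $z_1=1$, excluded. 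So we may assume $\zeta,\eta\neq1$, recovering exactly the conditions in the statement (and these cases then give $z_1\neq z_2$).

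For the converse, given roots of unity $\zeta,\eta\neq1$, $\zeta\ne\eta$, defining $z_1,z_2$ by \eqref{eq:root-of-unity-parametrisation} gives $z_2/z_1=\zeta$ and $1-z_2=\frac{\zeta-\eta-\zeta+\zeta\eta}{\zeta-\eta}=\eta(1-z_1)$, so both equations hold whenever $z_1,z_2\in\bZ_p$ avoid $0,1\bmod p$. The only subtle point I expect is this last integrality/reduction condition: the lemma as stated describes solutions among pairs already assumed to satisfy $z_i\not\equiv 0,1\bmod p$, so I would read it as ``every such solution has this form'', and note that the formula's values lie in $\bZ_p$ with the right reductions precisely when $\zeta\not\equiv\eta\bmod p$ (i.e.\ $\zeta\ne\eta$, since distinct prime-to-$p$ roots of unity in $\bZ_p$ remain distinct mod $p$, and for $p=2$ only $\pm1$ occur, making the case trivial). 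So there is no real obstacle beyond checking the kernel of $\log$ on $\bZ_p^\times$.
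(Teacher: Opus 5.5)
Your proposal is correct and follows essentially the same route as the paper: use that the $p$-adic logarithm on $\bZ_p^\times$ vanishes exactly on the roots of unity to write $z_2=\zeta z_1$ and $1-z_2=\eta(1-z_1)$, then solve this linear system. Your case split on $\zeta=\eta$ and your check that the parametrised points automatically satisfy $z_i\not\equiv 0,1 \bmod p$ are a little more explicit than the paper, which simply notes that $z_1\neq z_2$ forces $\zeta,\eta\neq 1$ and $\zeta\neq\eta$.
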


\begin{proof}
    It is clear that the diagonal pairs $(z,z)$ satisfy the equations. Suppose that $(z_1,z_2) \in X(\bZ_p) \times X(\bZ_p)$ is a solution with $z_1 \neq z_2$.
    The equations imply 
    $$\log\left(\frac{z_1}{z_2}\right)=0,\quad \log\left(\frac{1-z_1}{1-z_2}\right)=0,$$
    hence both $z_2/z_1$ and $(1-z_2)/(1-z_1)$ are roots of unity. Denote them by $\zeta$ and $\eta$, respectively. The assumption $z_1 \neq z_2$ implies $\zeta, \eta \neq 1$ and $\zeta\ne \eta$. The equations 
    \[ z_2/z_1 = \zeta, \quad (1-z_2)/(1-z_1) = \eta \]
    can be solved for $z_1,z_2$ with linear algebra, resulting in the claimed solutions in terms of $\zeta,\eta$. Conversely, each pair $(z_1,z_2)$ of this form does satisfy the original equations.
\end{proof}

We refer to the two types of solutions to the depth-1 equations as the \emph{diagonal solutions} and \emph{off-diagonal solutions} respectively. 
Due to the diagonal solutions, the depth-1 locus in \Cref{S=1-stable-equations} is infinite. However, it becomes finite in depth~2.

\begin{thm}
    \label{S=1-stable-finiteness}
    Let $K$ be an imaginary quadratic field and assume that $S = \{\fl\}$ is Galois-stable. Let $p$ be a rational prime which splits in~$K$.
    Then $\#X(\cO_K \otimes \bZ_p)_{S,2} < \infty$.
\end{thm}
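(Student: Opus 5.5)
By \Cref{S=1-stable-equations}, and since $\Pi_{\PL,2}=\Pi_2$, the locus $X(\cO_K\otimes\bZ_p)_{S,2}$ is contained in the set of pairs $(z_1,z_2)\in X(\bZ_p)\times X(\bZ_p)$ satisfying \eqref{eq:S=1-stable-log} and \eqref{eq:S=1-depth-2}. It therefore suffices to show that this solution set is finite. By \Cref{lem:logLi1eq}, the solutions of \eqref{eq:S=1-stable-log} split into two families. The \emph{off-diagonal} solutions have the form \eqref{eq:root-of-unity-parametrisation} with $\zeta,\eta\in\mu(\bZ_p)$. Since $\mu(\bZ_p)$ is finite (it has order $p-1$, or $2$ when $p=2$), there are only finitely many of these. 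It remains to treat the \emph{diagonal} solutions $(z,z)$.

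For $z_1=z_2=z$, equation \eqref{eq:S=1-depth-2} becomes $2L_2(z)=0$, that is,
\[ \Li_2(z)-\tfrac12\log(z)\Li_1(z)=0 . \]
We must show that this has only finitely many solutions $z\in X(\bZ_p)=\bZ_p\smallsetminus\{z: z\equiv 0,1 \bmod p\}$. The plan is to use the fact that $X(\bZ_p)$ is a finite union of residue discs $]a[$ with $a\in\bF_p\smallsetminus\{0,1\}$. On each disc $]a[$, the function $L_2$ is given by a convergent power series in a local parameter $t=z-\tilde a$ with $\bQ_p$-coefficients, namely a Coleman function expanded on the disc. By Strassmann's theorem (or Weierstrass preparation), such a function has only finitely many zeros on the closed disc unless it vanishes identically there.

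The main obstacle is therefore to rule out that $L_2$ vanishes identically on a residue disc. I would first try a direct derivative argument. On any disc, Coleman integration gives
\[ \rd\Li_2=\Li_1\,\frac{\rd z}{z}, \qquad \rd\log=\frac{\rd z}{z}, \qquad \rd\Li_1=\frac{\rd z}{1-z}, \]
so
\[ \rd L_2=\tfrac12\Li_1(z)\frac{\rd z}{z}-\tfrac12\log(z)\frac{\rd z}{1-z}. \]
If $L_2\equiv0$ on a disc, then $(1-z)\Li_1(z)=z\log(z)$ identically there. Differentiating once more gives
\[ -\Li_1(z)+1=\log(z)+1, \]
so $\log(z)=-\Li_1(z)=\log(1-z)$, and hence $\log(z/(1-z))$ is constant on the disc. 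Its derivative $\frac{1}{z}+\frac{1}{1-z}=\frac{1}{z(1-z)}$ is nonzero, which is a contradiction.

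Alternatively, one could argue by analytic independence of $\log$, $\Li_1$ and $\Li_2$ on a disc (for instance via Ax--Schanuel, as the paper does elsewhere), but the elementary derivative computation above should suffice. Since there are finitely many residue discs, each contributing finitely many zeros, the diagonal solutions are finite. Combined with the finiteness of the off-diagonal solutions, this proves $\#X(\cO_K\otimes\bZ_p)_{S,2}<\infty$.
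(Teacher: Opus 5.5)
Your proof is correct and follows the same route as the paper: \Cref{lem:logLi1eq} leaves only finitely many off-diagonal pairs (parametrised by roots of unity), plus diagonal pairs $(z,z)$ with $L_2(z)=0$. Where the paper simply asserts that $L_2$ has finitely many zeros, your Strassmann argument on the residue discs, together with the derivative computation showing that $L_2\equiv 0$ on a disc would force $\log(z)=\log(1-z)$ there (impossible since $\frac1z+\frac1{1-z}\neq 0$), is a valid way to fill in that step.
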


\begin{proof}
    By \Cref{lem:logLi1eq}, the depth-1 locus $X(\cO_K \otimes \bZ_p)_{S,1}$ contains diagonal solutions $(z,z)$ and finitely many solutions of the form~\eqref{eq:root-of-unity-parametrisation} parametrised by pairs of roots of unity. Among the diagonal solutions, only those $(z,z)$ with $L_2(z) = 0$ satisfy the depth-2 equation~\eqref{eq:S=1-depth-2}. But $L_2(z)$ has only finitely many roots.
\end{proof}

There are a number of solutions coming from roots of unity which can be proved to satisfy the equations~\eqref{eq:S=1-stable-log}--\eqref{eq:S=1-depth-2} for the depth-2 locus.

\begin{thm}
    \label{S=1-stable-depth2}
    For any prime~$p$, the set
    \begin{equation}
    \label{eq:depth2-off-diag-subset}
        \bigcup_{1 \neq \zeta \in \mu(\bZ_p)} \left\{ (\zeta,\zeta^{-1}), (1-\zeta,1-\zeta^{-1}), \,  \Bigl(\frac1{1-\zeta},\frac1{1-\zeta^{-1}}\Bigr)\right\}
    \end{equation}
    with $\zeta$ running over the non-trivial roots of unity in~$\bZ_p$, is contained in the locus cut out in $X(\bZ_p) \times X(\bZ_p)$ by the equations
    \[ \log(z_1) = \log(z_2),\quad \log(1-z_1) = \log(1-z_2), \quad L_2(z_1) + L_2(z_2) = 0. \]
\end{thm}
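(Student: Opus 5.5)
We check the three types of pairs in \eqref{eq:depth2-off-diag-subset} one at a time. The depth-1 equations are multiplicative relations killed by $\log$ of roots of unity, and the depth-2 equation will follow from \Cref{coleman-sinnott-for-Ln} and \Cref{L2-reflection-formula}. Throughout, $\zeta \neq 1$ is a root of unity in $\bZ_p$, so $\log(\zeta)=0$. We also use that $1-\zeta$ is a unit in $\bZ_p$ whenever the order of $\zeta$ is not a power of $p$, which is automatic for $p$ odd since $\mu(\bZ_p)=\mu_{p-1}$. For $p=2$ the only case is $\zeta=-1$, and $1-\zeta=2$ is not in $X(\bZ_2)$ in the integral sense. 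In that case we simply note that the identities below are identities of Coleman functions wherever they are defined, or we restrict to those pairs that lie in $X(\bZ_p)\times X(\bZ_p)$.

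\emph{Pair $(\zeta,\zeta^{-1})$.} We have $\log(\zeta)=\log(\zeta^{-1})=0$. Next, $1-\zeta^{-1}=-\zeta^{-1}(1-\zeta)$, and $\log(-\zeta^{-1})=0$, so $\log(1-\zeta^{-1})=\log(1-\zeta)$. By \Cref{coleman-sinnott-for-Ln} with $n=2$, $L_2(\zeta)+L_2(\zeta^{-1})=0$.

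\emph{Pair $(1-\zeta,1-\zeta^{-1})$.} The first equation $\log(1-\zeta)=\log(1-\zeta^{-1})$ was shown in the previous step. The second equation asks for $\log(\zeta)=\log(\zeta^{-1})$, which holds since both sides are $0$. For the third, \Cref{L2-reflection-formula} gives $L_2(1-\zeta)+L_2(1-\zeta^{-1})=-(L_2(\zeta)+L_2(\zeta^{-1}))$, and this vanishes by the first case.

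\emph{Pair $\bigl(\tfrac1{1-\zeta},\tfrac1{1-\zeta^{-1}}\bigr)$.} Set $w_1=1-\zeta$ and $w_2=1-\zeta^{-1}$. Then $\log(1/w_i)=-\log(w_i)$, and these agree for $i=1,2$ by the previous step. Also $1-1/w_i=(w_i-1)/w_i$, which equals $-\zeta/(1-\zeta)$ for $i=1$ and $-\zeta^{-1}/(1-\zeta^{-1})$ for $i=2$. Both have $\log$ equal to $-\log(1-\zeta)$, using $\log$ of roots of unity $=0$ and $\log(1-\zeta)=\log(1-\zeta^{-1})$. Finally, \Cref{coleman-sinnott-for-Ln} gives $L_2(1/w_i)=-L_2(w_i)$, so $L_2(1/w_1)+L_2(1/w_2)=-(L_2(w_1)+L_2(w_2))=0$ by the second case.

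These are all routine identities. The only real care needed is in the hypotheses of the functional equations: \Cref{L2-reflection-formula} and \Cref{coleman-sinnott-for-Ln} must be applied at points of $X(\cO_{\fp})$, so we need to ensure that $1-\zeta$ and $1/(1-\zeta)$ lie in $X(\bZ_p)$. This holds for $p$ odd, where $\zeta\not\equiv1 \bmod p$. We expect this edge case ($p=2$, $\zeta=-1$) to be the only point needing a remark.
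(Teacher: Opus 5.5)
Your proof is correct and is essentially the paper's argument. The depth-1 equations hold because $\log$ vanishes on roots of unity, via $1-\zeta^{-1}=-\zeta^{-1}(1-\zeta)$. The depth-2 equation follows by chaining the inversion relation $L_2(z^{-1})=-L_2(z)$ from \Cref{coleman-sinnott-for-Ln} with the reflection $L_2(1-z)=-L_2(z)$ from \Cref{L2-reflection-formula}. Your remark about $p=2$ goes slightly beyond the paper and is right: $X(\bZ_2)=\emptyset$, so the statement only has content for odd $p$, where $\zeta\not\equiv 1 \bmod p$ ensures all listed points lie in $X(\bZ_p)$.
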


\begin{proof}
    The listed pairs clearly satisfy the depth-$1$ equations in terms of $\log$. From the functional equations $L_2(1/z) = -L_2(z)$ (\Cref{coleman-sinnott-for-Ln}) and $L_2(1-z) = -L_2(z)$ (\Cref{L2-reflection-formula}) we get
    \[ L_2(z) + L_2(z^{-1}) = L_2(1-z) + L_2(1-z^{-1}) = L_2\left(\frac1{1-z}\right) + L_2\left(\frac1{1-z^{-1}}\right) = 0 \]
    for all $z \in X(\bZ_p)$, hence the third equation also holds for the listed pairs.
\end{proof}

\begin{rem}
    \label{S=1-stable-depth2-explanation}
    Modulo the $p$-adic period conjecture, \Cref{S=1-stable-depth2} can be interpreted as saying that the depth-2 locus $X(\cO_K \otimes \bZ_p)_{S,2}$ in \Cref{S=1-stable-equations} contains the $S_3$-orbits of the elements $(\zeta,\zeta^{-1})$ with $\zeta \neq 1$ a root of unity in~$\bZ_p$. This is consistent with the fact that the depth-2 locus is stable under the $S_3$-action (see e.\,g.\ \cite[§4.6]{BKL:chabauty-kim-sc}) and contains the pairs $(\zeta,\zeta^{-1})$ by \Cref{thm:roots-of-unity}.
\end{rem}

\Cref{S=1-stable-depth2} gives a lower bound on the number of off-diagonal pairs in the depth-2 locus $X(\cO_K \otimes \bZ_p)_{S,2}$, which we found to be sharp in all cases we computed.

\begin{thm}
    \label{S=1-stable-depth2-bound}
    For any prime~$p > 2$, the number of pairs $(z_1,z_2) \in X(\bZ_p) \times X(\bZ_p)$ with $z_1 \neq z_2$ satisfying the equations \eqref{eq:S=1-stable-log}--\eqref{eq:S=1-depth-2} is
    \[  \geq \begin{cases}
        3p-9, & \text{if $p \not\equiv 1 \bmod 3$},\\
        3p-13, & \text{if $p \equiv 1 \bmod 3$.}
    \end{cases}\]
    The bound is sharp for all $p < 2{,}000$.
\end{thm}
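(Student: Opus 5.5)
The lower bound follows by counting the distinct off-diagonal pairs in the set~\eqref{eq:depth2-off-diag-subset}. \Cref{S=1-stable-depth2} shows that every element of this set satisfies \eqref{eq:S=1-stable-log}--\eqref{eq:S=1-depth-2}. So it suffices to count its off-diagonal elements and show there are at least $3p-9$ of them, resp.\ $3p-13$ when $p\equiv 1\bmod 3$. The roots of unity in $\bZ_p$ for $p>2$ are $\mu_{p-1}$, so $\zeta$ runs over $p-2$ values. Moreover $1-\zeta$ and $1/(1-\zeta)$ are $p$-adic units with $1-\zeta\not\equiv 1$, since $\zeta\not\equiv 1\bmod p$ for $\zeta\neq 1$ in $\mu_{p-1}$. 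Hence all listed pairs lie in $X(\bZ_p)\times X(\bZ_p)$.

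The first step is to remove the diagonal pairs. The pair $(\zeta,\zeta^{-1})$ is diagonal iff $\zeta=-1$. The pair $(1-\zeta,1-\zeta^{-1})$ is diagonal iff $\zeta=\zeta^{-1}$, i.e.\ again only $\zeta=-1$. The same holds for the third family. This leaves $p-3$ off-diagonal pairs in each family, so $3(p-3)=3p-9$ pairs counted with multiplicity.

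The second step is to check that these $3(p-3)$ pairs are pairwise distinct, and to identify the coincidences when $p\equiv 1\bmod 3$. Within a family distinctness is clear, since the first coordinate determines~$\zeta$: $\zeta\mapsto 1-\zeta$ and $\zeta\mapsto 1/(1-\zeta)$ are injective. Across families we need the first coordinates to agree as well. A coincidence between families one and two requires $\zeta=1-\zeta'$ and $\zeta^{-1}=1-\zeta'^{-1}$. Then both $\zeta'$ and $1-\zeta'$ are roots of unity, so by the argument of \Cref{lem:logLi1zeta6} (embedding $\bQ(\zeta')$ into~$\bC$) we get $\zeta'=\zeta_6^{\pm1}$. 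Similar arguments handle the other pairs of families, using that $1/(1-\zeta)$ being a root of unity again forces $1-\zeta$ to be one.

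Such coincidences therefore exist only when $\zeta_6\in\bZ_p$, i.e.\ $p\equiv 1\bmod 3$. In that case the relevant pairs are $(\zeta_6,\zeta_6^{-1})$ and $(\zeta_6^{-1},\zeta_6)$. Using $1-\zeta_6=\zeta_6^{-1}$ and $1/(1-\zeta_6)=\zeta_6$, each of these two pairs appears in all three families, which removes $2\cdot 2=4$ from the count. One should also check whether $\zeta=\zeta_3^{\pm1}$ creates any further coincidences; this is handled by the same computation, since $1-\zeta_3$ is not a root of unity. This gives the bound $3p-13$.

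I expect this case analysis to be the main obstacle: one must carefully verify every cross-family coincidence, including pairs whose first coordinates agree for different reasons. The sharpness claim for $p<2{,}000$ is computational. Using \Cref{lem:logLi1eq}, the off-diagonal depth-1 solutions are parametrised by pairs $(\zeta,\eta)$ of distinct nontrivial roots of unity, giving a finite explicit list. For each such pair one evaluates $L_2(z_1)+L_2(z_2)$ $p$-adically to sufficient precision (e.g.\ in SageMath) and confirms that it is nonzero except at the pairs in~\eqref{eq:depth2-off-diag-subset}. The delicate point here is certifying nonvanishing at finite precision, which is done by checking that the valuation of the computed value is below the precision bound.
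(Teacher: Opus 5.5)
Your proposal is correct and is essentially the paper's argument: the paper counts the off-diagonal elements of \eqref{eq:depth2-off-diag-subset} by grouping them into the $(p-1)/2$ $S_3$-orbits of the pairs $(\zeta,\zeta^{-1})$, each of size $6$ except for $\zeta=-1$ (whose orbit lies on the diagonal) and $\zeta=\zeta_6^{\pm1}$ (an orbit of size $2$). This is the same bookkeeping as your three-family count with coincidences removed, and sharpness is likewise checked by filtering the $(p-2)(p-3)$ off-diagonal depth-1 solutions from \Cref{lem:logLi1eq}. (One small point for the family-2-versus-family-3 coincidence: $1-\zeta = 1/(1-\zeta')$ gives $1-\zeta=-\zeta/\zeta'$, which is a root of unity, so your reduction to \Cref{lem:logLi1zeta6} applies there as well.)
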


\begin{proof}
    By \Cref{S=1-stable-depth2}, the solutions to \eqref{eq:S=1-stable-log}--\eqref{eq:S=1-depth-2} include the $S_3$-orbit of $(\zeta,\zeta^{-1})$ for every non-trivial root of unity~$\zeta \in \bZ_p$. In the union~\eqref{eq:depth2-off-diag-subset}, the contributions from $\zeta$ and $\zeta^{-1}$ together form the $S_3$-orbit of~$(\zeta,\zeta^{-1})$. Hence, \eqref{eq:depth2-off-diag-subset} is a union of $(p-1)/2$ disjoint orbits. Each has size~6, except when $\zeta$ has a non-trivial $S_3$-stabiliser. This happens when $\zeta = -1$ or when $\zeta$ is a primitive sixth root of unity. For $\zeta = -1$, the $S_3$-orbit is $\{(-1,-1),(2,2),(\tfrac12,\tfrac12)\}$ but these lie on the diagonal. When $p \not\equiv 1 \bmod 3$ then the sixth roots of unity are not contained in~$\bZ_p$, hence we get a lower bound of $((p-1)/2 - 1)\cdot 6 = 3p-9$ for the number of off-diagonal solutions. If $p \equiv 1 \bmod 3$, the exceptional orbit $\{(\zeta_6,\zeta_6^{-1}), (\zeta_6^{-1},\zeta_6)\}$ only has length~2 instead of~6, resulting in a lower bound of $(3p-9)-4 = 3p-13$.

    For any given~$p$ the solutions to \eqref{eq:S=1-stable-log}--\eqref{eq:S=1-depth-2} can easily be computed: generate all off-diagonal solutions to the depth-1 equations from pairs of roots of unity via \Cref{lem:logLi1eq} (which gives $(p-2)(p-3)$ pairs), then check which of them also satisfy $L_2(z_1) + L_2(z_2) = 0$. We checked this for all $p < 2{,}000$ and found that in each case, only the off-diagonal solutions coming from \Cref{S=1-stable-depth2} survive in depth~2. The Sage code for these computations is available at \url{https://github.com/martinluedtke/PolylogNF}. 
\end{proof}

We now turn attention to the diagonal solutions in the depth-2 locus $X(\cO_K \otimes \bZ_p)_{S,2}$ of \Cref{S=1-stable-equations}. These are the pairs $(z,z)$ with $z \in X(\bZ_p)$ satisfying $L_2(z) = 0$. Recall from \Cref{def:Ln} that 
\[ L_2(z) = \Li_2(z) - \tfrac12 \log(z) \Li_1(z). \]
The zeros of functions of the form $\Li(z) - a \log(z)\Li_1(z)$ were previously analysed in \cite[§5]{luedtke_2025_refined}.

\begin{lemma}[{\cite[§5.4]{luedtke_2025_refined}}]
    \label{L2-roots-general}
    Let $\zeta \neq 1$ be a nontrivial root of unity in~$\bZ_p$. If $v_p(\Li_2(\zeta)) = 2$ and $v_p(\Li_1(\zeta)) = 1$ then the function $L_2(z)$ has exactly one root in the residue disc of~$\zeta$.
\end{lemma}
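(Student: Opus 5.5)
We want to show that $L_2$ has exactly one zero in the residue disc $\zeta + p\bZ_p$. The plan is to expand $L_2$ as a power series in a local parameter on that disc and apply Strassmann's theorem, or equivalently the Newton polygon, to count zeros.

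Write $z = \zeta(1+pt)$ with $t \in \bZ_p$. Since $\zeta \neq 1$ lies in the disc, $1-z$ is a unit throughout, so $\Li_1(z) = -\log(1-z)$ is given by a convergent power series in $t$. We also have $\log(z) = \log(1+pt) = pt - p^2t^2/2 + \cdots$. For $\Li_2$ we integrate the identity $\rd\Li_2 = \Li_1\,\rd z/z$ along the disc:
\[ \Li_2(z) = \Li_2(\zeta) + \int_0^t \Li_1(\zeta(1+ps))\,\frac{p\,\rd s}{1+ps}. \]
This follows from Coleman integration, since $\Li_2$ is a Coleman function whose restriction to the residue disc is the unique antiderivative normalised by its value at $\zeta$. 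Putting the pieces together,
\[ L_2(z) = \Li_2(\zeta) + p\,\Li_1(\zeta)\,t - \tfrac12\,p\,\Li_1(\zeta)\,t + (\text{higher terms}) = a_0 + a_1 t + a_2t^2 + \cdots. \]
In detail, the integral contributes $p\Li_1(\zeta)t + O(p^2t^2)$, and $-\tfrac12\log(z)\Li_1(z)$ contributes $-\tfrac12 pt\,\Li_1(\zeta) + O(p^2t^2)$. We should check that the $t$-derivative of $\Li_1(\zeta(1+pt))$ is $p\zeta/(1-\zeta)\cdot(\ldots)$, which has valuation at least $1$, so every correction term carries at least $p^2$.

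The valuations of the low coefficients then follow from the hypotheses. We get $a_0 = \Li_2(\zeta)$ with $v_p(a_0) = 2$, and $a_1 = \tfrac12 p\Li_1(\zeta)$ with $v_p(a_1) = 2$ when $p$ is odd. The main obstacle is bounding the higher coefficients: we need $v_p(a_k) > 2$ for all $k \geq 2$, so that the Strassmann index, or the last vertex of the Newton polygon, is at $k = 1$. Then there is exactly one zero $t\in\bZ_p$, and it lies in the disc.

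To get this bound, note that every coefficient of $t^k$ arises either from $\log(1+pt)$, which contributes $p^k/k$, or from the expansion of $\Li_1$, whose $t^j$ coefficient has valuation at least $j$ up to a $1/j$ factor. In the integral term, the $t^k$ coefficient is $p/k$ times the $t^{k-1}$ coefficient of $\Li_1(\zeta(1+pt))/(1+pt)$, which has valuation at least $k-1 - v_p(\ldots)$. The case to watch is small $p$ or $k$ divisible by $p$, where $v_p(p^k/k)$ could drop to $2$. For $p \geq 5$ we have $k - v_p(k) \geq 3$ for $k \geq 2$ except when $k=2$, and that case must be handled directly: the coefficient of $t^2$ is $p^2(\ldots)$ times a combination involving $\zeta/(1-\zeta)$ and $\Li_1(\zeta)$. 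We would check its valuation is at least $3$, using $v_p(\Li_1(\zeta)) = 1$ together with the exact $p^2$ terms. If this fails, we fall back on citing the computation in \cite[§5.4]{luedtke_2025_refined}, which treats $\Li_2(z) - a\log(z)\Li_1(z)$ with $a = 1/2$.
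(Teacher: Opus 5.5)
Your strategy of expanding $L_2$ in $t$ on the disc $\zeta(1+p\bZ_p)$ and reading off the Newton polygon is the right one, and it is the kind of analysis the paper defers to \cite[\S5]{luedtke_2025_refined}. Your values $a_0=\Li_2(\zeta)$ and $a_1=\tfrac12p\Li_1(\zeta)$ are correct.

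The gap is that you never establish $v_p(a_2)\ge 3$, and this is the entire content of the lemma. The naive estimate only gives $v_p(a_2)\ge 2$. Write $u=\zeta/(1-\zeta)$, a unit. The $t^2$-coefficients of $\Li_2(z)$ and of $\tfrac12\log(z)\Li_1(z)$ each contain the term $\tfrac12p^2u$. These cancel only because the coefficient in $L_2$ is exactly $B_1=-\tfrac12$. With $a$ in place of $\tfrac12$, the $t^2$-coefficient is $p^2u(\tfrac12-a)+\dots$, which typically has valuation $2$.

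The cleanest way to see the cancellation is to differentiate: $\rd L_2=\tfrac12\Li_1(z)\,\rd z/z-\tfrac12\log(z)\,\rd z/(1-z)$. For $z=\zeta(1+pt)$ this gives
\[
\frac{\rd L_2}{\rd t}=\frac p2\Bigl(\frac{\Li_1(z)}{1+pt}-\frac{u\log(1+pt)}{1-put}\Bigr)=\frac p2\bigl(\Li_1(\zeta)-p\Li_1(\zeta)\,t+O(p^2t^2)\bigr).
\]
Hence $a_2=-\tfrac14p^2\Li_1(\zeta)$, which has valuation exactly $3$ by hypothesis. The same formula also gives $v_p(a_k)\ge k-v_p(k)-\lfloor\log_p(k-1)\rfloor\ge 3$ for all $k\ge 3$ when $p\ge 5$. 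This should replace your looser sketch, which tracks only $p^k/k$ and not the products of two denominators that actually occur.

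There are two smaller points. First, Strassmann's theorem only gives \emph{at most} one zero. For existence, note that $p^{-2}\sum_k a_kt^k$ reduces modulo $p$ to a linear polynomial with unit coefficients, so Hensel's lemma (or Weierstrass preparation) gives exactly one zero in $\bZ_p$.

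Second, you silently assume $p\ge 5$. For $p=3$ your estimate genuinely fails: $a_3=p^3\bigl(\tfrac16\Li_1(\zeta)-\tfrac1{12}\zeta/(1-\zeta)^2\bigr)$ has $v_3(a_3)=2$. For $p=2$ the residue disc of $-1$ is that of $1$, so $1-z$ is not a unit there. Both cases are vacuous and should be dismissed explicitly. The only nontrivial root of unity in $\bZ_2$ or $\bZ_3$ is $-1$, and $\Li_2(-1)=0$ by \Cref{coleman-sinnott} (take $z=-1$, $n=2$). So the hypothesis $v_p(\Li_2(\zeta))=2$ never holds for these primes.
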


Since $v_p(\Li_2(\zeta)) = 2$ and $v_p(\Li_1(\zeta)) = 1$ holds for most $\zeta$'s, the function $L_2(z)$ typically has $\approx p-2$ roots in $X(\bZ_p)$, which we confirmed numerically for many choices of~$p$. 

For small primes~$p$ we can calculate the depth-2 locus by hand and confirm Kim's Conjecture in a few cases. As noted in \cite[§8.2]{BDCKW}, the roots of $L_2(z)$ for $p \in \{3,5,7\}$ are given as follows.

\begin{lemma}
\label{lem:L2-roots}
    Let $z\in\mathbb{Z}_p$ with $z \not\equiv 0,1 \bmod p$. For $p \in \{3,5,7\}$, $L_2(z)=0$ if and only if $z=2$ or $-1$ or $1/2$.
\end{lemma}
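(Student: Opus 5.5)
The plan is to show first that $2,-1,\tfrac12$ are zeros of $L_2$, and then to bound the number of zeros of $L_2$ in each residue disc of $X(\bZ_p)$ for $p\in\{3,5,7\}$.

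\textbf{The three points are zeros.} For $z=-1$ we have $\log(-1)=0$. Hence $L_2(-1)=\Li_2(-1)$. By the distribution relation, $\Li_2(1)+\Li_2(-1)=\tfrac12\Li_2(1)$, so $\Li_2(-1)=-\tfrac12\zeta_p(2)$. This vanishes because $p$-adic zeta values at even arguments vanish. Alternatively, \Cref{coleman-sinnott-for-Ln} gives $L_2(-1)+L_2(-1)=0$ directly. Then \Cref{L2-reflection-formula} gives $L_2(2)=-L_2(-1)=0$, and \Cref{coleman-sinnott-for-Ln} gives $L_2(\tfrac12)=-L_2(2)=0$.

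\textbf{Upper bound on the number of zeros.}
\begin{enumerate}
\item Each residue disc $\{z\equiv a \bmod p\}$ with $a\in\bF_p\smallsetminus\{0,1\}$ contains a unique root of unity $\zeta=\omega(a)$. On that disc I will expand $L_2$ as a power series in $t=(z-\zeta)/p$ with coefficients in $\bQ_p$.
\item To get the expansion, write $\log(z)=\log(1+pt/\zeta)$ and $\Li_1(z)=-\log(1-z)=-\log(1-\zeta)-\log\bigl(1-pt/(1-\zeta)\bigr)$. The latter is a power series in $t$ because $\log(1-\zeta)=0$. Then expand $\Li_2$ around $\zeta$ using $\rd\Li_2=\Li_1\,\rd z/z$.
\item Compute the Newton polygon, or use Strassmann's theorem, to bound the number of zeros in the disc. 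Leading coefficients of order $v_p(\Li_2(\zeta))$ and $v_p(\Li_1(\zeta))$ appear, as in \Cref{L2-roots-general}.
\end{enumerate}

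\textbf{Case check.}
\begin{itemize}
\item For $p=3$ there is one disc, $a=2$, containing $-1$. I expect a single zero there, namely $z=-1$; the points $2$ and $\tfrac12$ lie in the same disc and are zeros too.
\item Hence for $p=3$ I must re-examine the count: $2\equiv -1\equiv \tfrac12 \pmod 3$, so three zeros lie in one disc. Strassmann must therefore give at most $3$, and I expect exactly three.
\item For $p=5$ the discs are $a=2,3,4$, containing $2$, $3\equiv\tfrac12$ and $-1$ respectively. I need exactly one zero per disc.
\item For $p=7$ the discs $2$, $4\equiv\tfrac12$ and $6\equiv -1$ each need exactly one zero. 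The discs $3$ and $5$, which contain $\zeta_6^{\pm1}$, need no zeros. There $L_2(\zeta_6)=\Li_2(\zeta_6)$ has valuation $2$ (\Cref{depth-p-minus-3} gives $\pm7^2$), and one must show the series has no root. The symmetries $z\mapsto 1/z$ and $z\mapsto 1-z$ preserve the zero set up to sign and permute the discs, which reduces the work.
\end{itemize}

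The main obstacle is the precise valuation bookkeeping of the series coefficients, especially for $p=3$ where three zeros collide in one disc, and for the $\zeta_6$ discs at $p=7$. I would handle both by explicit computation of the first few coefficients of each series to adequate $p$-adic precision, then apply Strassmann's theorem. Since the paper attributes the statement to \cite[§8.2]{BDCKW}, this explicit check is the intended route.
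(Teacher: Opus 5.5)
Your plan is the paper's own proof. The functional equations (\Cref{coleman-sinnott-for-Ln}, \Cref{L2-reflection-formula}) give the zeros $-1,2,\tfrac12$. The converse is a residue-disc expansion plus a Strassmann/Newton polygon count, which the paper delegates to a computer check or to \cite[Lemma~5.1]{luedtke_2025_refined}. There is, however, one false step you must fix before doing the computation.

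\textbf{The false step.} The assertion $\log(1-\zeta)=0$ is wrong except for $\zeta=\zeta_6^{\pm1}$; what holds is $\log\zeta=0$. Your expression for $\Li_1(z)$ is a power series in $t$ regardless. But its constant term $\Li_1(\zeta)=-\log(1-\zeta)$ is typically nonzero of valuation $1$ (e.g.\ $\Li_1(-1)=-\log 2$), and it is exactly what makes the count work.

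Write $L_2(\zeta+pt)=\sum_n a_nt^n$ and use $L_2'(z)=-\tfrac12\bigl(\log(1-z)/z+\log(z)/(1-z)\bigr)$. This gives $a_1=p\Li_1(\zeta)/(2\zeta)$ and $a_2=p^2\log(1-\zeta)/(4\zeta^2)$.
\begin{itemize}
\item Disc of $-1$, $p=5,7$: you get $v_p(a_1)=2$ and $v_p(a_n)\ge 3$ for $n\ge 2$, hence exactly one zero. If you dropped $\log(1-\zeta)$, you would get $a_1=a_2=0$ and Strassmann would only bound the number of zeros by $3$.
\item $p=3$: one finds $v_3(a_1)=v_3(a_3)=2$, the latter from the term $-3^3/48$ in $a_3$. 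Moreover $L_2(-1+3t)/9\equiv t-t^3 \bmod 3$, whose three simple roots are the reductions of $t=0,1,\tfrac12$. This confirms your expectation of exactly three zeros.
\item $\zeta_6$-discs at $p=7$: here your vanishing claim is correct, and in fact $a_1=a_2=0$ and $v_7(a_3)=3$. So $a_0=\Li_2(\zeta_6)$, of valuation $2$, dominates and there is no zero.
\end{itemize}

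\textbf{Tail estimate.} Strassmann's theorem needs control of every coefficient, not just the first few. Add a tail estimate such as $v_p(a_n)\ge n-v_p(n)-\lfloor\log_p(n-1)\rfloor$ for $n\ge 2$. It follows directly from the expansions of $\log z$, $\log(1-z)$, $1/z$ and $1/(1-z)$ around~$\zeta$.
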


\begin{proof}
The fact that $L_2$ vanishes on $2$, $-1$, $\tfrac12$ follows from the functional equations $L_2(z^{-1}) = - L_2(z)$ (\Cref{coleman-sinnott-for-Ln}) and $L_2(1-z) = -L_2(z)$ (\Cref{L2-reflection-formula}), or by taking $\zeta = -1$ in \Cref{S=1-stable-depth2}. For $p = 3,5,7$ one can check on a computer that $L_2$ has no other roots. Alternatively, one can analyse the Newton polygon using \cite[Lemma~5.1]{luedtke_2025_refined}.

\end{proof}

\begin{thm}	
\label{thm:locusforimquadSsingle}
	Let $K$ be an imaginary quadratic field and assume that $S=\left\{\mathfrak{l}\right\}$ is Galois-stable. 
    \begin{enumerate}[label=(\alph*)]
        \item \label{item:p=3}
        If $p=3$ splits in~$K$ then $X(\cO_K \otimes \bZ_3)_{S,2}\subseteq \left\{(2,2),(-1,-1),\left(\frac{1}{2},\frac{1}{2}\right)\right\}$.
        \item \label{item:p=5}
        If $p=5$ splits in~$K$ then 
        $$X(\cO_K \otimes \bZ_5)_{S,2}\subseteq \left\{(2,2),(-1,-1),\left(\tfrac{1}{2},\tfrac{1}{2}\right),(\pm i,\mp i),(1\pm i,1\mp i),\left(\frac{1\pm i}{2},\frac{1\mp i}{2}\right)\right\}$$
        where $i$ is a square root of $-1$ in $\mathbb{Z}_5$.
        \item \label{item:p=7}
        If $p=7$ splits in~$K$ then
        \[ X(\cO_K \otimes \bZ_7)_{S,2} \subseteq \left\{
        \begin{array}{l}
        (2,2),(-1,-1), \left(\tfrac12,\tfrac12\right), (\zeta_3, \zeta_3^{-1}), (\zeta_3^{-1},\zeta_3), (\zeta_6,\zeta_6^{-1}), (\zeta_6^{-1},\zeta_6), \\ (1-\zeta_3,1-\zeta_3^{-1}), (1-\zeta_3^{-1},1-\zeta_3), \Bigl(\frac1{1-\zeta_3},\frac1{1-\zeta_3^{-1}}\Bigr), \Bigl(\frac1{1-\zeta_3^{-1}},\frac1{1-\zeta_3}\Bigr)
        \end{array}
        \right\},\]
        where $\zeta_3$ is a primitive third root of unity in $\bZ_7$ and $\zeta_6 = -\zeta_3$.
    \end{enumerate}
\end{thm}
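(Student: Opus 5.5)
By \Cref{S=1-stable-equations}, the depth-2 locus $X(\cO_K \otimes \bZ_p)_{S,2}$ is contained in the set of pairs $(z_1,z_2)\in X(\bZ_p)\times X(\bZ_p)$ satisfying \eqref{eq:S=1-stable-log} and \eqref{eq:S=1-depth-2}. My plan is to compute this solution set explicitly for $p=3,5,7$, splitting it into diagonal and off-diagonal solutions as in \Cref{lem:logLi1eq}.

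\emph{Diagonal solutions.} For $(z,z)$, the depth-2 equation becomes $2L_2(z)=0$. By \Cref{lem:L2-roots}, for $p\in\{3,5,7\}$ the roots of $L_2$ are exactly $2$, $-1$, $\tfrac12$. This gives the three diagonal pairs in each of \ref{item:p=3}--\ref{item:p=7}.

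\emph{Off-diagonal solutions.} By \Cref{lem:logLi1eq}, these are the pairs
\[ (z_1,z_2) = \Bigl(\tfrac{1-\eta}{\zeta-\eta},\, \zeta\tfrac{1-\eta}{\zeta-\eta}\Bigr) \]
with $\zeta,\eta\in\mu(\bZ_p)=\mu_{p-1}$, $\zeta,\eta\neq 1$, $\zeta\neq\eta$. We must additionally have $z_i\not\equiv 0,1$, which is automatic since $z_i\in\bZ_p$ is assumed.

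For $p=3$ we have $\mu_2=\{\pm1\}$, so there is no admissible pair $(\zeta,\eta)$ and hence no off-diagonal solutions. This gives \ref{item:p=3}.

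For $p=5$, the candidates come from the $(p-2)(p-3)=6$ ordered pairs $(\zeta,\eta)$ in $\{-1,\pm i\}$. For $p=7$, there are $20$ ordered pairs in $\mu_6\smallsetminus\{1\}$. In each case \Cref{S=1-stable-depth2} shows that the $S_3$-orbits of $(\zeta,\zeta^{-1})$ are solutions. Counting as in \Cref{S=1-stable-depth2-bound}, these number $3p-9=6$ for $p=5$ and $3p-13=8$ for $p=7$; they are exactly the off-diagonal pairs listed in the statement. For $p=5$, the $6$ candidates are therefore exactly these solutions, so no further check is needed.

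For $p=7$, I would enumerate the $20$ candidates, identify the $8$ already known, and discard the remaining $12$ by showing $L_2(z_1)+L_2(z_2)\neq 0$ in $\bQ_7$. This can be done either by a direct computation to low $7$-adic precision, which is the sharpness check recorded in \Cref{S=1-stable-depth2-bound}, or by a mod-$7^k$ valuation argument. I expect this elimination for $p=7$ to be the main obstacle, since it is not structural and relies on a finite $p$-adic computation.

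Where convenient, I would use the inversion symmetry of \Cref{coleman-sinnott-for-Ln} and the swap $(z_1,z_2)\mapsto(z_2,z_1)$ to cut down the number of cases before computing.
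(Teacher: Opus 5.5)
Your proposal is correct and follows the paper's proof essentially step for step. Diagonal pairs are handled by \Cref{lem:L2-roots} and off-diagonal pairs by the root-of-unity parametrisation of \Cref{lem:logLi1eq}: there are no candidates for $p=3$, the six candidates for $p=5$ are exactly the listed pairs, and for $p=7$ a finite $7$-adic computation of $L_2(z_1)+L_2(z_2)$ eliminates the twelve extra candidates. The only slip is minor: $z_i\not\equiv 0,1 \bmod p$ holds because you are working in $X(\bZ_p)$, and for the parametrised pairs because distinct roots of unity in $\bZ_p$ stay distinct mod $p$, not merely because $z_i\in\bZ_p$.
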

\begin{proof}
    If $(z,z)$ is a diagonal pair contained in $X(\cO_K \otimes \bZ_p)_{S,2}$ then $L_2(z) = 0$ by~\eqref{eq:S=1-depth-2}, but for $p \in \{3,5,7\}$ the only solutions are $2, \tfrac12, -1$ by \Cref{lem:L2-roots}. So it is enough to consider off-diagonal solutions, which according to \Cref{lem:logLi1eq} must be of the form~\eqref{eq:root-of-unity-parametrisation}, parametrised by pairs of roots of unity $\zeta, \eta \in \bZ_p$ with $\zeta,\eta \neq 1$ and $\zeta \neq \eta$. For $p = 3$ there are no two distinct and nontrivial roots of unity in~$\bZ_3$. For $p=5$ there are $3\cdot 2 = 6$ such pairs, resulting in precisely the solutions $(z_2,z_2) = (\pm i,\mp i),(1\pm i,1\mp i),\left(\frac{1\pm i}{2},\frac{1\mp i}{2}\right)$.
    (One can check that these also satisfy the depth-2 equation $L_2(z_1) + L_2(z_2) = 0$, either using functional equations of the dilogarithm, or using the fact that these constitute actual $S$-integral points for $K = \bQ(i)$ and $S = \{(1-i)\}$ and the depth-2 equation does not depend on $K$ and $S$.)
    For $p = 7$ there are $5\cdot 4 = 20$ such pairs but one checks computationally that the equation $L_2(z_1) + L_2(z_2) = 0$ holds at most for the 8 off-diagonal pairs in the set listed above. (The equation does indeed hold for these pairs since they are $S$-integral points for $K = \bQ(\zeta_3)$ and $S = \{(1-\zeta_3)\}$.)
\end{proof}

\begin{cor}
   Kim's Conjecture~\ref{conj:kim-full} holds in depth $2$ in the following cases:
    \begin{itemize}
        \item $K=\mathbb{Q}(\sqrt{-2})$, $S=\left\{(\sqrt{-2})\right\}$, $p=3$;
        \item $K=\mathbb{Q}(i)$, $S=\left\{(1-i)\right\}$, $p=5$.
    \end{itemize}
\end{cor}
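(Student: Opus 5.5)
The plan is to sandwich the depth-2 locus between the set of $S$-integral points and the upper bounds from \Cref{thm:locusforimquadSsingle}. In each case we show that these two sets coincide, using the diagonal embedding $(\iota_1,\iota_2)$.

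\textbf{First case:} $K=\bQ(\sqrt{-2})$, $S=\{(\sqrt{-2})\}$, $p=3$.

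We check the hypotheses. The ideal $(\sqrt{-2})$ is the unique prime above the ramified prime~$2$, so $S$ is Galois-stable. Since $-2\equiv 1$ is a square mod~$3$, the prime $3$ splits in~$K$, and $3$ is not divisible by $(\sqrt{-2})$.

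By \Cref{S=1-stable-solutions}\ref{item:l-divides-2}, we have $X(\cO_{K,S})=\{2,-1,\tfrac12\}$. Since these points are rational, $\iota_1$ and $\iota_2$ agree on them, so their images are exactly $(2,2)$, $(-1,-1)$, $(\tfrac12,\tfrac12)$.

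By \Cref{thm:locusforimquadSsingle}\ref{item:p=3}, the depth-2 locus is contained in this set of three pairs. Combined with the inclusion~\eqref{eq:kim-inclusion-full}, this gives $X(\cO_{K,S})=X(\cO_K\otimes\bZ_3)_{S,2}$.

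\textbf{Second case:} $K=\bQ(i)$, $S=\{(1-i)\}$, $p=5$.

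Again $(1-i)$ is the unique prime over the ramified prime~$2$, so $S$ is Galois-stable. The prime $5=(2+i)(2-i)$ splits in~$K$. By \Cref{S=1-stable-solutions}\ref{item:Qi}, we have
\[ X(\cO_{K,S})=\{2,-1,\tfrac12,\pm i,1\pm i,\tfrac{1\pm i}{2}\}. \]

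The one thing to verify is how these points embed into $X(\bZ_5)\times X(\bZ_5)$. Under the two embeddings $\iota_1,\iota_2$ of $K$ into $\bQ_5$, the element $i$ goes to the two distinct square roots of $-1$ in~$\bZ_5$, and these are swapped by complex conjugation. Hence $\iota_2(z)=\iota_1(\bar z)$ for every $z\in K$. Consequently the point $i$ maps to $(i,-i)$, the point $1+i$ maps to $(1+i,1-i)$, the point $\tfrac{1+i}{2}$ maps to $(\tfrac{1+i}{2},\tfrac{1-i}{2})$, and similarly for their conjugates. The rational points map diagonally.

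The image of the 9 integral points is therefore exactly the 9-element set in \Cref{thm:locusforimquadSsingle}\ref{item:p=5}. The reverse inclusion again comes from~\eqref{eq:kim-inclusion-full}, which gives the claimed equality.

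There is no real obstacle here. The only point needing care is the matching of conjugate pairs under $(\iota_1,\iota_2)$ in the second case. We note that the argument is unconditional, since it only uses the upper bounds on the locus and never the reverse inclusions that would require the period conjecture.
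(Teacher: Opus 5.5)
Your proof is correct and follows the same argument as the paper. You combine the unconditional upper bounds of \Cref{thm:locusforimquadSsingle}\ref{item:p=3},\ref{item:p=5} with the lists of $S$-integral points in \Cref{S=1-stable-solutions}, and the reverse inclusion is automatic. Your checks of Galois-stability, of the splitting of $p$, and of how conjugate pairs match under $(\iota_1,\iota_2)$ simply make explicit what the paper's one-line proof leaves implicit.
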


\begin{proof}
    All the points in $X(\cO_K \otimes \bZ_p)_{S,2}$ given by \Cref{thm:locusforimquadSsingle} belong to $X(\mathcal{O}_{K,S})$, see \Cref{S=1-stable-solutions}.
\end{proof}

We now consider the polylogarithmic depth~3 and~4 loci in \Cref{S=1-stable-equations}, which unlike the depth-2 locus depend on~$K$ and~$S$. The difficulty arising here is that the equations involve certain constants $c_{\sigma_2}$, $c_{\sigma_3}$, $c_{\tau_{\fl} \sigma_2}$, $c_{\tau_{\fl} \sigma_3}$ which are hard to determine. However, we can compute them in some cases where we have enough $S$-integral points available. 

We know from \Cref{tau-coeffs} that $c_{\tau_{\fl}} \neq 0$, and assuming the $p$-adic period conjecture, we also have $c_{\sigma_2}, c_{\sigma_3} \neq 0$. In this case we set 
\begin{equation}
\label{eq:c3-c4}
    c_3 \coloneqq \frac{2c_{\tau_{\fl}\sigma_2}}{c_{\tau_{\fl}} c_{\sigma_2}}, \qquad c_4 \coloneqq \frac{c_{\tau_{\fl}\sigma_3}}{c_{\tau_{\fl}} c_{\sigma_3}},
\end{equation}
so that the equations~\eqref{eq:S=1-depth-3} and \eqref{eq:S=1-depth-4} can be written as
\begin{align}
    \label{eq:S=1-depth-3-simple}
    L_3(z_1) - L_3(z_2) &= c_3 \log(z_1) L_2(z_1), \\
    \label{eq:S=1-depth-4-simple}
    L_4(z_1) + L_4(z_2) &= c_4 \log(z_1) (L_3(z_1) + L_3(z_2)).
\end{align}

\begin{lemma}
\label{S=1-stable-coeffs-from-known-solution}
    In the situation of \Cref{S=1-stable-equations}, assume that we have an $S$-integral point $z \in X(\cO_{K,S})$ such that $\log(z) \neq 0$ and $L_2(z) \neq 0$ and $L_3(z) + L_3(\sigma z) \neq 0$. Then $c_{\sigma_2}, c_{\sigma_3} \neq 0$ and the constants $c_3$, $c_4$ in~\eqref{eq:S=1-depth-3-simple} and \eqref{eq:S=1-depth-4-simple} are given by
    \begin{equation}
    \label{eq:S=1-stable-coeffs-from-known-solutions}
    c_3 = \frac{L_3(z) - L_3(\sigma z)}{\log(z) L_2(z)}, \qquad 
    c_4 = \frac{L_4(z) + L_4(\sigma z)}{\log(z)(L_3(z) + L_3(\sigma z))}.
    \end{equation}
\end{lemma}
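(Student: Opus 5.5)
The plan is to plug the known $S$-integral point into the equations of \Cref{S=1-stable-equations} and use the fact that they hold on $X(\cO_{K,S})$ unconditionally. Under $(\iota_1,\iota_2)$, the point $z$ corresponds to the pair $(z_1,z_2) = (\iota_1 z, \iota_2 z)$. Identifying $K_{\fp_1}=\bQ_p$ via $\iota_1$, the second embedding is $\iota_2 = \iota_1\circ\sigma$ for complex conjugation $\sigma$. So $z_2$ is $\sigma z$ viewed in $\bQ_p$ through $\iota_1$, which is the sense in which $L_n(\sigma z)$ appears in the statement. Since $X(\cO_{K,S}) \subseteq X(\cO_K\otimes\bZ_p)_{S,\PL,4}$, the depth-3 and depth-4 equations \eqref{eq:S=1-depth-3} and \eqref{eq:S=1-depth-4} hold at $(z,\sigma z)$. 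This containment needs no period conjecture, because the equations were derived as relations valid on the whole image of $\ev_{\eps_1}\times\ev_{\eps_2}$.

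The first step is nonvanishing. Suppose $c_{\sigma_2}=0$. Then the left side of \eqref{eq:S=1-depth-3} vanishes, which forces $2c_{\tau_{\fl}\sigma_2}\log(z)L_2(z)=0$, hence $c_{\tau_{\fl}\sigma_2}=0$. This alone is not a contradiction, so I would instead argue directly from the Kummer image. The point $j(z)$ lies in the image of $\ev_{\eps_1}\times\ev_{\eps_2}$, so there is some $\xi$ with $\ev_{\eps_1}(\xi)$ having $L_2$-coordinate $c_{\sigma_2}z_2(\xi)$. This coordinate must equal $L_2(z)\neq 0$, so $c_{\sigma_2}\neq0$.

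The same argument works for $c_{\sigma_3}$, using the $L_3$-coordinates and \eqref{eq:S=1-stable-ev-eps1}--\eqref{eq:S=1-stable-ev-eps2}. Adding the two $L_3$-coordinates gives $L_3(z)+L_3(\sigma z)=2c_{\sigma_3}z_3(\xi)$, because the $c_{\tau_{\fl}\sigma_2}x_{\fl}z_2$ terms cancel. The hypothesis $L_3(z)+L_3(\sigma z)\neq0$ then forces $c_{\sigma_3}\neq 0$. We already know $c_{\tau_{\fl}}\neq 0$ from \Cref{tau-coeffs}, since it is the $\fp_1$-adic logarithm of a nontorsion $S$-unit. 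So the constants $c_3,c_4$ in \eqref{eq:c3-c4} are well defined.

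Finally, dividing \eqref{eq:S=1-depth-3} and \eqref{eq:S=1-depth-4} at $(z,\sigma z)$ by $c_{\tau_{\fl}}c_{\sigma_2}$ and $c_{\tau_{\fl}}c_{\sigma_3}$ gives the simplified equations \eqref{eq:S=1-depth-3-simple} and \eqref{eq:S=1-depth-4-simple} at this point. Since $\log(z)L_2(z)\neq 0$ and $\log(z)(L_3(z)+L_3(\sigma z))\neq0$, we can solve each for $c_3$ and $c_4$, which yields \eqref{eq:S=1-stable-coeffs-from-known-solutions}.

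The main subtlety is the bookkeeping of which $p$-adic embedding realises $\sigma z$, and making sure the equations are applied to $z$ itself rather than to $\sigma z$ in the first slot. Nothing else should be hard.
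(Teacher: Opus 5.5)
Your proposal is correct and follows the paper's own argument. You get $c_{\sigma_2}\neq 0$ from the $L_2$-coordinate, and $c_{\sigma_3}\neq 0$ from the summed $L_3$-coordinates, of the evaluation maps \eqref{eq:S=1-stable-ev-eps1}--\eqref{eq:S=1-stable-ev-eps2} at the Kummer image of $(z,\sigma z)$; the paper phrases this step contrapositively, as ``otherwise $L_2(z_1)=L_2(z_2)=0$, resp.\ $L_3(z_1)+L_3(z_2)=0$, would hold on the locus''. You then solve \eqref{eq:S=1-depth-3-simple} and \eqref{eq:S=1-depth-4-simple} at $(z,\sigma z)$, exactly as the paper does.
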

Here and in the following, $z \in X(\cO_{K,S})$ is viewed as an element of $X(\bZ_p)$ via the embedding $K \hookrightarrow K_{\fp_1} = \bQ_p$ defined by $\fp_1$. The image of $z$ in $X(\cO_K \otimes \bZ_p) \cong X(\bZ_p) \times X(\bZ_p)$ is thus the pair $(z_1,z_2) = (z,\sigma z)$.

\begin{proof}
    If $c_{\sigma_2}$ were zero then, by the formulas~\eqref{eq:S=1-stable-ev-eps1} and \eqref{eq:S=1-stable-ev-eps2} for the cocycle evaluation maps $\ev_{\eps_i}$ ($i=1,2$), the equations $L_2(z_1) = L_2(z_2) = 0$ would hold on $X(\cO_K \otimes \bZ_p)_{S,\PL,2}$. Similarly, if $c_{\sigma_3}$ were zero then $L_3(z_1) + L_3(z_2) = 0$ would hold on $X(\cO_K \otimes \bZ_p)_{S,\PL,3}$. But these equations do not hold for $(z_1,z_2) = (z,\sigma z)$ by assumption, hence we must have $c_{\sigma_2}, c_{\sigma_3} \neq 0$. Now the formulas for $c_3$ and $c_4$ follow from the equations~\eqref{eq:S=1-depth-3-simple} and~\eqref{eq:S=1-depth-4-simple} for~$(z_1,z_2) = (z,\sigma z)$.
\end{proof}

The two cases where the assumptions of \Cref{S=1-stable-coeffs-from-known-solution} can be verified are $K = \bQ(i)$, $S =\{(1-i)\}$ and $K = \bQ(\zeta_3)$, $S = \{(1-\zeta_3)\}$. In the first case we can take $z = 1-i$, in the second case $z = 1-\zeta_3$. In either case, we can verify that $L_2(z) \neq 0$ and $L_3(z) + L_3(\sigma z) \neq 0$ in~$\bQ_p$ for a single prime~$p$, and then the period conjecture predicts that this must be true for \emph{any}~$p$. For any given prime~$p$ this can be verified numerically. Then the constants $c_3$ and $c_4$ can be computed via~\eqref{eq:S=1-stable-coeffs-from-known-solutions} and one obtains the loci $X(\cO_K \otimes \bZ_p)_{S,\PL,3}$ and $X(\cO_K \otimes \bZ_p)_{S,\PL,4}$ as follows, distinguishing between diagonal and off-diagonal solutions:
\begin{itemize}
    \item The diagonal solutions in depth~2 are $(z,z)$ with $z$ a root of $L_2(z)$. These automatically satisfy the depth-3 equation~\eqref{eq:S=1-depth-3-simple}, so these are also the diagonal solutions in $X(\cO_K \otimes \bZ_p)_{S,\PL,3}$. By~\eqref{eq:S=1-depth-4-simple}, those which in addition satisfy
    \begin{equation*}
    \label{eq:S=1-stable-depth4-diagonal}
        L_4(z) = c_4 \log(z)L_3(z)
    \end{equation*}
    are the diagonal solutions in $X(\cO_K \otimes \bZ_p)_{S,\PL,4}$.
    \item The off-diagonal solutions in depth~1 are parametrised by pairs of roots of unity via \Cref{lem:logLi1eq}, and we simply check for each one whether the equations~\eqref{eq:S=1-depth-2}, \eqref{eq:S=1-depth-3-simple}, \eqref{eq:S=1-depth-4-simple} in depth~2, 3, 4 are satisfied to decide whether they belong to $X(\cO_K \otimes \bZ_p)_{S,\PL,n}$ for $n=2,3,4$.
\end{itemize}

We showed in \Cref{thm:roots-of-unity} that the pairs $(\zeta,\zeta^{-1})$ for $\zeta \neq 1$ a root of unity in~$\bZ_p$ will be contained in $X(\cO_K \otimes \bZ_p)_{S,\PL,n}$ for any depth~$n$, assuming the $p$-adic period conjecture. Thus, Kim's Conjecture~\ref{conj:kim-full} will generally not be satisfied for $X(\cO_K \otimes \bZ_p)_{S,\PL,4}$. The best we can hope for is that the inclusion
\begin{equation}
\label{eq:inclusion-S-integral-and-roots-of-unity}
    X(\mathcal{O}_{K,S})\cup \left\{(\zeta,\zeta^{-1}):1\ne \zeta\in\mu(\mathbb{Z}_p)\right\} \subseteq X(\cO_K \otimes \bZ_p)_{S,\PL,4}
\end{equation}
is an equality. Indeed, we have verified:

\begin{thm}
\label{S=1-stable-depth4}
    The inclusion~\eqref{eq:inclusion-S-integral-and-roots-of-unity} is an equality for all primes $p < 2{,}000$ which split in~$K$, in each of the following cases:
    \begin{enumerate}[label=(\alph*)]
        \item $K = \bQ(i)$ and $S = \{(1-i)\}$;
        \item $K = \bQ(\zeta_3)$ and $ S = \{(1-\zeta_3)\}$.
    \end{enumerate}
\end{thm}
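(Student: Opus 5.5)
Since this is a statement verified by computer for each split prime $p<2{,}000$, the plan is to reduce the locus to a finite, explicitly checkable set and then carry out the check prime by prime. The containment $\supseteq$ in~\eqref{eq:inclusion-S-integral-and-roots-of-unity} holds independently of any computation: $X(\cO_{K,S})$ lies in every Chabauty--Kim locus, and the pairs $(\zeta,\zeta^{-1})$ lie in the depth-$4$ locus by \Cref{thm:roots-of-unity}. For the reverse containment $\subseteq$ we only need the unconditional direction of \Cref{S=1-stable-equations}. The depth-$4$ locus is contained in the solution set of \eqref{eq:S=1-stable-log}--\eqref{eq:S=1-depth-4}, so it suffices to show that every solution of these equations lies on the left-hand side of~\eqref{eq:inclusion-S-integral-and-roots-of-unity}.

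\textbf{Step 1 (constants).} We first pin down the constants. In case (a) we take $z=1-i$, and in case (b) we take $z=1-\zeta_3$. For each $p$ we verify numerically in $\bQ_p$ that $\log(z)\neq0$, $L_2(z)\neq0$ and $L_3(z)+L_3(\sigma z)\neq0$. Then \Cref{S=1-stable-coeffs-from-known-solution} gives $c_{\sigma_2},c_{\sigma_3}\neq0$ unconditionally, so the equations take the simplified forms \eqref{eq:S=1-depth-3-simple}--\eqref{eq:S=1-depth-4-simple}, with $c_3,c_4$ computed from~\eqref{eq:S=1-stable-coeffs-from-known-solutions}.

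\textbf{Step 2 (off-diagonal solutions).} By \Cref{lem:logLi1eq}, every off-diagonal solution has the form~\eqref{eq:root-of-unity-parametrisation} for some pair $(\zeta,\eta)$ of distinct nontrivial roots of unity in $\bZ_p$. This gives at most $(p-2)(p-3)$ candidates. For each candidate we test \eqref{eq:S=1-depth-2}, \eqref{eq:S=1-depth-3-simple} and \eqref{eq:S=1-depth-4-simple} to sufficient $p$-adic precision, and we check that the survivors are exactly the off-diagonal $S$-integral points together with the pairs $(\zeta,\zeta^{-1})$. \Cref{S=1-stable-depth2-bound} suggests that the survivors of depth $2$ are already just the $S_3$-orbits of the pairs $(\zeta,\zeta^{-1})$. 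Depth $3$ and depth $4$ should then cut these orbits down to the non-$S$-integral pairs $(\zeta,\zeta^{-1})$ plus the $S$-integral points.

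\textbf{Step 3 (diagonal solutions).} The diagonal solutions are the pairs $(z,z)$ with $L_2(z)=0$ and $L_4(z)=c_4\log(z)L_3(z)$; the depth-$3$ equation is automatic on the diagonal. The expected survivors are $z\in\{2,-1,\tfrac12\}$ in case (a) and no diagonal point in case (b). The step I expect to be the main obstacle is proving that there are no other diagonal zeros, since it requires locating all roots of $L_2$ rigorously rather than only approximately. I would proceed residue disc by residue disc. In each disc $\{z\equiv\zeta\}$, expand $L_2$ as a power series in $t=z-\zeta$ and use Newton polygons, as in \Cref{L2-roots-general} and \cite[Lemma~5.1]{luedtke_2025_refined}, to certify the number of roots (typically one, when $v_p(\Li_2(\zeta))=2$ and $v_p(\Li_1(\zeta))=1$). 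Then refine each root by Hensel's lemma to enough precision that $L_4(z)-c_4\log(z)L_3(z)$ is provably nonzero at it, except at the known $S$-integral roots. Wherever the valuation hypotheses fail, we enlarge the expansion and read off the root count from the full Newton polygon.

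Finally, the computations are run over all split primes $p<2{,}000$ with precision tracking, and the results are assembled to give equality in~\eqref{eq:inclusion-S-integral-and-roots-of-unity}.
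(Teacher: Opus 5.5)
Your plan is the same as the paper's. You compute $c_3,c_4$ from $z=1-i$ resp.\ $z=1-\zeta_3$ via \Cref{S=1-stable-coeffs-from-known-solution}, filter the $(p-2)(p-3)$ root-of-unity candidates of \Cref{lem:logLi1eq} off the diagonal, and filter the roots of $L_2$ on the diagonal. The $\subseteq$ direction is fine, but two points need correcting.

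\emph{The containment $\supseteq$ is not independent of any computation.} \Cref{thm:roots-of-unity} assumes the $p$-adic period conjecture. In depth~$4$ you genuinely need $c_{\sigma_4}\neq0$, and your Step~1 only gives $c_{\sigma_2},c_{\sigma_3}\neq0$. Indeed, if $c_{\sigma_4}=0$, then on the image of the Selmer scheme $L_0^{(1)}=0$ forces $L_4^{(1)}=0$. So $(\zeta,\zeta^{-1})$ would lie in the depth-$4$ locus only if $\Li_4(\zeta)=0$.

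To make the theorem unconditional, check $c_{\sigma_4}\neq0$ for each $p$. One way is as in \Cref{checking-period-conjecture}: take $\sigma_4$ dual to $\Li_4^{\fu}(i)$ resp.\ $\Li_4^{\fu}(\zeta_3)$, so that $c_{\sigma_4}=\Li_4(i)$ resp.\ $\Li_4(\zeta_3)$. Another is to note that $(i,-i)$ resp.\ $(\zeta_3,\zeta_3^{-1})$ is the image of an $S$-integral point, so $c_{\sigma_4}=0$ would force $\Li_4(i)=0$ resp.\ $\Li_4(\zeta_3)=0$ in $\bQ_p$.

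Once $c_{\sigma_2},c_{\sigma_3},c_{\sigma_4}\neq0$, \Cref{S=1-stable-equations} describes the locus exactly, and every $(\zeta,\zeta^{-1})$ satisfies its equations. The depth-$1$ equations hold because $\log(\zeta)=0$ and $\log(1-\zeta^{-1})=\log(1-\zeta)$. The right-hand sides in depths $3,4$ vanish because $\log(\zeta)=0$. The remaining terms vanish by \Cref{coleman-sinnott-for-Ln}.

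\emph{Your predicted diagonal survivors in case (b) are wrong.} The pair $(-1,-1)$ is $(\zeta,\zeta^{-1})$ with $\zeta=-1$, so it belongs to the target set, and it does survive. We have $L_2(-1)=L_4(-1)=0$ by \Cref{coleman-sinnott-for-Ln} and $\log(-1)=0$, so $L_4(z)-c_4\log(z)L_3(z)$ is exactly zero at $z=-1$. Your Step~3 tries to certify this expression as nonzero at every root of $L_2$ except the $S$-integral ones, and there are none on the diagonal in case (b). As written, it would therefore get stuck at $z=-1$. The exceptional set should be $\{2,-1,\tfrac12\}$ in case (a) and $\{-1\}$ in case (b).

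With these two corrections, your argument matches the paper's computation.
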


\begin{rem}
\label{S=1-stable-off-diag-depth3}
    In the cases covered by \Cref{S=1-stable-depth4}, the inclusion~\eqref{eq:inclusion-S-integral-and-roots-of-unity} is already an equality in depth~3 away from the diagonal. That is, for $p < 2{,}000$, the intersection of the locus $X(\cO_K \otimes \bZ_p)_{S,\PL,3}$ with the complement of the diagonal $\Delta = \{z_1 = z_2\}$ contains precisely the pairs $(\zeta,\zeta^{-1})$ for $\zeta \in \bZ_p$ a root of unity with $\zeta \neq \pm 1$, and four additional pairs coming from $S$-integral points:
    \begin{enumerate}[label=(\alph*)]
        \item $(1+i,1-i)$, $(1-i,1+i)$, $\bigl(\frac{1+i}2, \frac{1-i}2\bigr)$, $\bigl(\frac{1-i}2, \frac{1+i}2\bigr)$; resp.
        \item $(1-\zeta_3,1-\zeta_3^{-1})$, $(1-\zeta_3^{-1}, 1-\zeta_3)$, $\bigl(\frac1{1-\zeta_3}, \frac1{1-\zeta_3^{-1}}\bigr)$, $\bigl(\frac1{1-\zeta_3^{-1}}, \frac1{1-\zeta_3}\bigr)$.
    \end{enumerate}
    Thus, in light of \Cref{lem:logLi1eq} and \Cref{S=1-stable-depth2-bound}, the size of $X(\cO_K \otimes \bZ_p)_{S,\PL,n} \smallsetminus \Delta$ is equal to $(p-2)(p-3)$ for $n = 1$, drops to $3p-9$ or $3p-13$ for $n = 2$, and finally to $p+1$ for $n \geq 3$.
\end{rem}

\begin{cor}
\label{Qzeta3-kim-conjecture}
    Let $K = \bQ(\zeta_3)$ and $S = \{(1-\zeta_3)\}$. Then Kim's Conjecture~\ref{conj:kim-full} holds for the full depth-4 quotient of the fundamental group for all primes $p < 2{,}000$ which split in~$K$.
\end{cor}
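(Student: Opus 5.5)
The idea is to sandwich the full depth-4 locus between the $S$-integral points and the $S_3$-symmetrised polylogarithmic locus, and then to compute the latter. Recall the chain of inclusions
\[ X(\cO_{K,S}) \subseteq X(\cO_K \otimes \bZ_p)_{S,4} \subseteq X(\cO_K \otimes \bZ_p)_{S,\PL,4}^{S_3} \subseteq X(\cO_K \otimes \bZ_p)_{S,\PL,4}. \]
The middle inclusion holds because the full-quotient locus is $S_3$-stable and contained in the polylogarithmic locus. It therefore suffices to show that $X(\cO_K \otimes \bZ_p)_{S,\PL,4}^{S_3} = X(\cO_{K,S})$ for every split prime $p < 2{,}000$. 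This argument needs no period conjecture: we use only the unconditional containment in \Cref{S=1-stable-equations}, with the constants $c_3, c_4$ computed from the known point $z = 1-\zeta_3$ via \Cref{S=1-stable-coeffs-from-known-solution}. For this we verify $L_2(z) \neq 0$ and $L_3(z) + L_3(\sigma z) \neq 0$ numerically for each $p$; this also yields $c_{\sigma_2}, c_{\sigma_3} \neq 0$.

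By \Cref{S=1-stable-depth4}(b) we then have
\[ X(\cO_K \otimes \bZ_p)_{S,\PL,4} = X(\cO_{K,S}) \cup \{(\zeta,\zeta^{-1}) : 1 \neq \zeta \in \mu(\bZ_p)\}. \]
If the containment of \Cref{S=1-stable-equations} is only an inclusion, we use the right-hand side as an upper bound instead, since the computation in \Cref{S=1-stable-depth4} solves the equations themselves. The remaining step is to determine which pairs $(\zeta,\zeta^{-1})$ with $\zeta \notin \{\zeta_6^{\pm1}, \zeta_3^{\pm1}\}$ can lie in an $S_3$-stable subset of this set.

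Such a pair survives $S_3$-symmetrisation only if its whole orbit does, and in particular $(1-\zeta, 1-\zeta^{-1})$ must lie in the set. This forces one of two things. Either $(1-\zeta, 1-\zeta^{-1})$ is itself of the form $(\eta,\eta^{-1})$ with $\eta$ a root of unity; then $1-\zeta$ is a root of unity, so $\zeta$ is a primitive sixth root of unity by \Cref{lem:logLi1zeta6}. Or $(1-\zeta, 1-\zeta^{-1})$ is the diagonal image of an $S$-integral point from the explicit list in \Cref{S=1-stable-solutions}(1); comparing with that list again forces $\zeta \in \{\zeta_3^{\pm1}, \zeta_6^{\pm1}\}$. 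The case $\zeta = -1$ deserves a separate check. Here $1-\zeta = 2$, so the orbit contains $(2,2)$, which is not $S$-integral for $\fl = (1-\zeta_3)$ and is not of the form $(\eta,\eta^{-1})$; hence $(-1,-1)$ is removed. All the surviving pairs come from $\zeta_3^{\pm 1}$ and $\zeta_6^{\pm 1}$, and these are images of points of $X(\cO_{K,S})$ under $z \mapsto (z,\sigma z)$, because $\sigma\zeta = \zeta^{-1}$. We conclude that $X(\cO_K\otimes\bZ_p)^{S_3}_{S,\PL,4} = X(\cO_{K,S})$, and the sandwich gives the corollary.

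The main obstacle is really the computational input, namely \Cref{S=1-stable-depth4} for all $p < 2{,}000$. That result needs the $p$-adic constants $c_3, c_4$ and an exhaustive check of both diagonal and off-diagonal solutions; by comparison, the symmetrisation argument above is elementary.
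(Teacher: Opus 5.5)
Your proposal is correct and takes essentially the same route as the paper. The paper also sandwiches the full depth-4 locus inside the $S_3$-symmetrised polylogarithmic locus, applies \Cref{S=1-stable-depth4}, and uses that symmetrisation removes every pair $(\zeta,\zeta^{-1})$ except those with $\zeta$ of order $3$ or $6$, which come from $S$-integral points. Your explicit check via $z\mapsto 1-z$ and \Cref{lem:logLi1zeta6}, together with your remark that only the upper-bound direction is needed (so no period conjecture), spells out what the paper asserts in one line.
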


\begin{proof}
    When the inclusion~\eqref{eq:inclusion-S-integral-and-roots-of-unity} is an equality then $X(\cO_K \otimes \bZ_p)^{S_3}_{S,\PL,4} = X(\cO_{K,S})$; in other words, $S_3$-symmetrisation (\Cref{def:S3-symmetrisation}) eliminates all the roots of unity solutions $(\zeta,\zeta^{-1})$, except those where $\zeta$ has order $3$ or $6$, in which case $(\zeta,\zeta^{-1})$ comes from an $S$-integral point. Now the claim follows from $X(\cO_K \otimes \bZ_p)_{S,4} \subseteq X(\cO_K \otimes \bZ_p)_{S,\PL,4}^{S_3}$ and \Cref{S=1-stable-depth4}.
\end{proof}

\begin{rem}
\label{rem:enlarging-field}
    When $K$ is imaginary quadratic, $S = \{\fl\}$ is Galois-stable, and $l$ is the rational prime below~$\fl$, we have an inclusion $X(\bZ[1/l]) \subseteq X(\cO_{K,\{\fl\}})$, which upgrades to an inclusion of Chabauty--Kim loci $X(\bZ_p)_{\{l\},\PL,n} \subseteq X(\cO_K \otimes \bZ_p)_{\{\fl\},\PL,n}$ for all $n$ by \Cref{field-extension-functoriality}. Therefore, whenever~$p$ splits in~$K$, every point $z$ in $X(\bZ_p)_{\{l\},\PL,n}$ gives rise to a diagonal solution $(z,z)$ to the equations from \Cref{S=1-stable-equations}. Thus, writing $\tau_l \coloneqq \tau_{\fl}$, the equations
    \[  L_2(z) = 0, \qquad c_{\tau_l} c_{\sigma_3} L_4(z) = c_{\tau_l \sigma_3} \log(z) L_3(z) \]
    hold on $X(\bZ_p)_{\{l\},\PL,4}$. These are equivalent to the equations derived in \cite[Proposition~5.4]{CDC:polylog1}. In the case $l = 3$ the authors of loc.\ cit.\ managed to determine the coefficient $c_{\tau_3 \sigma_3}$ (which is $f_{\sigma\tau} -\frac12 f_{\sigma} f_{\tau}$ in their notation, for the choices of $\tau_3$ and $\sigma_3$ satisfying $c_{\tau_3} = \log(3)$ and $c_{\sigma_3} = \zeta(3)$) by enlarging $\{3\}$ to $\{2,3\}$ and exploiting the known $\{2,3\}$-integral points $3$ and~$9$. We now have a new way of determining $c_{\tau_3\sigma_3}$ by enlarging the field instead of the set of primes: from the fact that $1-\zeta_3$ is a $\{(1-\zeta_3)\}$-integral point of~$X$, we get
    \[ c_{\tau_3\sigma_3} = 2\zeta(3) \frac{L_4(1-\zeta_3) + L_4(1-\zeta_3^{-1})}{L_3(1-\zeta_3) + L_3(1-\zeta_3^{-1})},\]
    using~\eqref{eq:S=1-depth-4} and $\log(1-\zeta_3) = \frac12 \log(3) = \frac12 c_{\tau_3}$ and $c_{\sigma_3} = \zeta(3)$. This approach generalises to arbitrarily high depth: every element $z \in X(\bZ_p)_{\{3\},\PL,\infty}$ satisfies
    \[ L_2(z) = 0, \qquad c_{\tau_3} c_{\sigma_{n-1}} L_n(z) = c_{\tau_3\sigma_{n-1}} \log(z) L_{n-1}(z) \; \text{ for $n \geq 4$ even} \]
    with $c_{\tau_3} = \log(3)$, $c_{\sigma_{n-1}} = \zeta(n-1)$, and
    \[
        c_{\tau_3\sigma_{n-1}} = 2\zeta(n-1)\frac{L_n(1-\zeta_3) + L_n(1-\zeta_3^{-1})}{L_{n-1}(1-\zeta_3) + L_{n-1}(1-\zeta_3^{-1})}.
    \]
    This assumes that $p$ splits in~$\bQ(\zeta_3)$, so that $\zeta_3 \in \bQ_p$; however, seeing that the expression for $c_{\tau_3\sigma_{n-1}}$ is contained in~$\bQ_p$ by Galois invariance even when $\zeta_3 \not\in \bQ_p$, the same formula presumably works for any $p \neq 3$.
\end{rem}

\begin{rem}
\label{Qzeta-neg2-coeffs}
    In joint work with Kim \cite{KLL:cyclotomic}, we determine completely explicit equations for the depth~4 polylogarithmic Chabauty--Kim locus in the case $K' = \bQ(\zeta_8)$, $S' = \{(1-\zeta_8)\}$. Using the inclusion $\bQ(\sqrt{-2}) \subseteq \bQ(\zeta_8)$, similar to \Cref{rem:enlarging-field}, this makes it possible to determine the coefficients $c_3$ and $c_4$ in the case $K = \bQ(\sqrt{-2})$, $S = \{(\sqrt{-2})\}$, and compute the corresponding loci $X(\cO_K \otimes \bZ_p)_{S,\PL,n}$ for $n \leq 4$ whenever $p$ splits in~$\bQ(\zeta_8)$. However, we have not carried this out for this paper.
\end{rem}

\subsection{Galois-unstable case}
\label{sec:Galois-unstable}

We now treat the case where $S = \{\fl\}$ is not Galois-stable. In this case, $\fl$ is one of the two primes lying over a rational prime~$l$ which splits in~$K$.
Let $\pi_{\fl}$ be a generator of the 1-dimensional space $\cO_{K,\{\fl\}}^{\times} \otimes \bQ$, which dually defines a generator $\tau_{\fl}$ of $\Lie(U_S^{\MT})$ in degree~$-1$. Choose the generators $\sigma_n$ in degree~$-n$ arbitrarily and write the element $\eps_{\univ} \coloneqq \log(\eta_{\univ}) \in \Lie(U_S^{\MT})(A(\cO_{K,S}))$ as
\[ \eps_{\univ} = c_{\tau_{\fl}}^{\fu} \tau_{\fl} + c_{\sigma_2}^{\fu} \sigma_2 + c_{\sigma_3}^{\fu} \sigma_3 + c_{\tau_{\fl}\sigma_2}^{\fu} [\tau_{\fl},\sigma_2] + c_{\sigma_4}^{\fu} \sigma_4 + c_{\tau_{\fl}\sigma_3}^{\fu} [\tau_{\fl},\sigma_3] + c_{\tau_{\fl} \tau_{\fl} \sigma_2}^{\fu} [\tau_{\fl}, [\tau_{\fl}, \sigma_2]] + \ldots \]
where the coefficients $c_w^{\fu}$ are motivic periods in $A(\cO_{K,S})$. Specialising at $\fp_1$ with $c_w = \per_{\fp_1}(c_w^{\fu})$ gives the expansion of $\eps_1\coloneq \eps_{\fp_1}$ as in \eqref{eq:eps-expansion}, which we write again below:
\begin{equation*}
    \eps_1=\eps_{\fp_1} = c_{\tau_{\fl}} \tau_{\fl} + c_{\sigma_2}\sigma_2 + c_{\sigma_3} \sigma_3 + c_{\tau_{\fl}\sigma_2} [\tau_{\fl},\sigma_2] + c_{\sigma_4} \sigma_4 + c_{\tau_{\fl}\sigma_3} [\tau_{\fl},\sigma_3] + c_{\tau_{\fl} \tau_{\fl} \sigma_2} [\tau_{\fl}, [\tau_{\fl}, \sigma_2]] + \ldots
\end{equation*}

Similarly, we write the expansion of $\eps_2\coloneq \eps_{\fp_2}=\eps_{\sigma^*\fp_1}$ as the following
\begin{equation}
    \label{eq:Galois-unstable-eps2}
    \eps_2=\eps_{\sigma^*\fp_1} = c_{\tau_{\fl}}' \tau_{\fl} + c_{\sigma_2}'\sigma_2 + c_{\sigma_3}' \sigma_3 + c_{\tau_{\fl}\sigma_2}' [\tau_{\fl},\sigma_2] + c_{\sigma_4}' \sigma_4 + c_{\tau_{\fl}\sigma_3}' [\tau_{\fl},\sigma_3] + c_{\tau_{\fl} \tau_{\fl} \sigma_2}' [\tau_{\fl}, [\tau_{\fl}, \sigma_2]] +\ldots
\end{equation}
with $c_w' = \per_{\sigma^*\fp_1}(c_w^{\fu}) = \per_{\fp_1}(\sigma_* c_w^{\fu})$ by \Cref{eta-sigma-p-from-eta-p}. In particular,
\[ c_{\tau_{\fl}}' = \per_{\fp_1}(\sigma_* \log^{\fu}(\pi_{\fl})) = \per_{\fp_1}(\log^{\fu}(\sigma \pi_{\fl})) = \log(\sigma \pi_{\fl}) \]
and
\[ c_{\sigma_n}' = \per_{\fp_1}(\sigma_*c_{\sigma_n}^{\fu}) = \per_{\fp_1}((-1)^{n+1} c_{\sigma_n}^{\fu}) = (-1)^{n+1} c_{\sigma_n},\]
where we used that $c_{\sigma_n}^{\fu}$ belongs to the 1-dimensional extension space $E_n \cong \Ext^1_{\MT(K,\bQ)}(\bQ(0),\bQ(n))$ where $\sigma_*$ acts as $(-1)^{n+1}$. (Note that $c_{\sigma_n}^{\fu} = b_{\sigma_n}^{\fu} = a_{\sigma_n}^{\fu} = f_{\sigma_n} \in E_n$ by \Cref{lie-coeffs} and \Cref{lie-coefficients-from-group-coefficients}.)

\begin{thm}
\label{S=1-unstable-equations}
    Assume that $S = \{\fl\}$ is not Galois-stable. Then, for $n=1,2,3,4$, the depth-$n$ polylogarithmic Chabauty--Kim locus $X(\cO_{K,S} \otimes \bZ_p)_{S,\PL,n}$ is contained in the set of pairs $(z_1,z_2) \in X(\bZ_p) \times X(\bZ_p)$ satisfying the incremental list of equations:
    \begin{itemize}
        \item in depth~1:
        \begin{align}
            \label{eq:S=1-unstable-log}
            \log(\sigma\pi_{\fl}) \log(z_1) &= \log(\pi_{\fl}) \log(z_2),\\
            \log(\sigma\pi_{\fl}) \log(1-z_1) &= \log(\pi_{\fl}) \log(1-z_2);
        \end{align}
        \item in depth~2:
        \begin{equation}
            \label{eq:S=1-unstable-depth-2}
            L_2(z_1) + L_2(z_2) = 0;
        \end{equation}
        \item in depth~3:
        \begin{equation}
            c_{\tau_{\fl}} c_{\sigma_2}(L_3(z_1) - L_3(z_2)) = (c_{\tau_{\fl}\sigma_2}+c_{\tau_{\fl}\sigma_2}')\log(z_1) L_2(z_1);
        \end{equation}
        \item in depth~4:
        \begin{equation}
        \begin{aligned}
            & c_{\tau_{\fl}}c_{\sigma_3}(c_{\tau_{\fl}\sigma_2}+c_{\tau_{\fl}\sigma_2}')(L_4(z_1)+L_4(z_2))\\
            =&(c_{\sigma_3}(c_{\tau_{\fl}\tau_{\fl}\sigma_2}-c_{\tau_{\fl}\tau_{\fl}\sigma_2}')(L_3(z_1)-L_3(z_2))+(c_{\tau_{\fl}\sigma_3}+c_{\tau_{\fl}\sigma_3}')(c_{\tau_{\fl}\sigma_2}'L_3(z_1)+c_{\tau_{\fl}\sigma_2}L_3(z_2)))\log(z_1)
        \end{aligned}
        \end{equation}
    \end{itemize}
where $\log(\pi_{\fl}), \log(\sigma \pi_{\fl}) \in \bQ_p$ are defined via the $\fp_1$-adic embedding $K^\times \hookrightarrow K_{\fp_1}^{\times} \cong \bQ_p^{\times}$, and $\sigma\in\Gal(K/\mathbb{Q})$ is complex conjugation.
The containment is an equality if the $p$-adic period conjecture holds for the primes dividing~$p$.
\end{thm}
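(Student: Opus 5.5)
The plan is to follow the proof of \Cref{S=1-stable-equations}: compute both evaluation maps $\ev_{\eps_1},\ev_{\eps_2}$ on the depth-$4$ Selmer scheme, then eliminate the Selmer coordinates. The only new feature is that $\sigma^*$ no longer acts on $\Lie(U_S^{\MT})$, since $S$ is not Galois-stable. So I will not relate $\eps_2$ to $\eps_1$ via $\sigma^*$. Instead I will use the expansion~\eqref{eq:Galois-unstable-eps2} of $\eps_2$ in the same generators $\tau_{\fl},\sigma_n$, with primed coefficients $c_w'=\per_{\fp_1}(\sigma_*c_w^{\fu})$, together with the two identities already derived before the statement:
\[ c_{\tau_{\fl}}'=\log(\sigma\pi_{\fl}), \qquad c_{\sigma_n}'=(-1)^{n+1}c_{\sigma_n}. \]

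The Selmer scheme has coordinates $x=x_{\fl}$, $y=y_{\fl}$, $z_2,z_3,z_4$. Applying \Cref{cocycle-evaluation-map} with $d_1=1$, where every $c'_{\tau\cdots\tau}$ term of length $\geq 2$ vanishes, gives
\[ \ev_{\eps_1}\colon (x,y,z_2,z_3,z_4)\mapsto \bigl(c_{\tau_{\fl}}x,\; c_{\tau_{\fl}}y,\; c_{\sigma_2}z_2,\; c_{\sigma_3}z_3+c_{\tau_{\fl}\sigma_2}xz_2,\; c_{\sigma_4}z_4+c_{\tau_{\fl}\sigma_3}xz_3+c_{\tau_{\fl}\tau_{\fl}\sigma_2}x^2z_2\bigr). \]
The map $\ev_{\eps_2}$ is given by the same formula with primed coefficients, and hence with $c_{\sigma_2}$ and $c_{\sigma_4}$ replaced by their negatives. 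The elimination then proceeds degree by degree.

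\begin{itemize}
\item \emph{Degrees $0$ and $1$.} Eliminating $x$ and $y$ gives $c'_{\tau_{\fl}}L_0^{(1)}=c_{\tau_{\fl}}L_0^{(2)}$, and likewise for $L_1$. Since $c_{\tau_{\fl}}=\log(\pi_{\fl})$ by \Cref{tau-coeffs}, and $L_0=\log$, $L_1=\Li_1=-\log(1-z)$, these are the depth-$1$ equations.
\item \emph{Degree $2$.} Because $c'_{\sigma_2}=-c_{\sigma_2}$, we get $L_2^{(1)}+L_2^{(2)}=0$.
\item \emph{Degree $3$.} The terms $c_{\sigma_3}z_3$ cancel in the difference $L_3^{(1)}-L_3^{(2)}$. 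This leaves a multiple of $xz_2$, into which I substitute $x=L_0^{(1)}/c_{\tau_{\fl}}$ and $z_2=L_2^{(1)}/c_{\sigma_2}$.
\item \emph{Degree $4$.} The terms $c_{\sigma_4}z_4$ cancel in the sum $L_4^{(1)}+L_4^{(2)}$. This leaves a combination of $xz_3$ and $x^2z_2$. I eliminate $z_3$ using the two degree-$3$ relations $c_{\sigma_3}z_3=L_3^{(i)}-(\text{coefficient})\,xz_2$. I eliminate $x^2z_2$ using the depth-$3$ relation, which expresses $x\cdot xz_2$ through $x(L_3^{(1)}-L_3^{(2)})$. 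This is why the factor $c_{\tau_{\fl}\sigma_2}+c'_{\tau_{\fl}\sigma_2}$ appears on the left of the depth-$4$ equation after clearing denominators.
\end{itemize}

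Pulling everything back along $\log\circ j_{\fp_i}^{\dR}$, using \Cref{Ln-as-pullback}, and writing $\log(z_1)$ for $L_0^{(1)}$ gives the displayed equations. The main bookkeeping risk is sign conventions. Whether the combination in depth $3$ appears as $c_{\tau_{\fl}\sigma_2}\pm c'_{\tau_{\fl}\sigma_2}$ depends on how $\sigma_*$ interacts with the relation between $\tau_{\fl}$ and the dual of $\sigma\pi_{\fl}$. I would check this by expanding $[\tau_{\fl},\sigma_2]$ under the compatible choice of generators for $\fp_2$ and tracking signs carefully.

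For the converse inclusion under the period conjecture, I will show that the image of $\ev_{\eps_1}\times\ev_{\eps_2}$ is exactly cut out by these equations. Given a point satisfying them, I solve successively for $x$ and $y$ (using $c_{\tau_{\fl}}\neq 0$), then $z_2$ (using $c_{\sigma_2}\neq0$), then $z_3$ (using $c_{\sigma_3}\neq0$), and finally $z_4$ (using $c_{\sigma_4}\neq 0$). The non-vanishing of $c_{\sigma_n}$ comes from \Cref{period-conjecture-implies-coeff-nonzero}. The main obstacle is that the elimination in depth $4$ divided by $c_{\tau_{\fl}\sigma_2}+c'_{\tau_{\fl}\sigma_2}$, so its non-vanishing is needed to guarantee that the system is consistent and solvable. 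I would deduce this from the period conjecture by showing that the motivic period $c^{\fu}_{\tau_{\fl}\sigma_2}+\sigma_*c^{\fu}_{\tau_{\fl}\sigma_2}$ is nonzero in $A(\cO_{K,S})$. The first approach I would try is computing its reduced coproduct, which should involve $\log^{\fu}(\pi_{\fl})\otimes c_{\sigma_2}^{\fu}$ and $\log^{\fu}(\sigma\pi_{\fl})\otimes(-c_{\sigma_2}^{\fu})$. These are linearly independent because $\pi_{\fl}$ and $\sigma\pi_{\fl}$ are independent in $K^\times\otimes\bQ$.
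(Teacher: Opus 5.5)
Your route is the paper's: compute both evaluation maps via \Cref{cocycle-evaluation-map}, using $c'_{\tau_\fl}=\log(\sigma\pi_\fl)$ and $c'_{\sigma_n}=(-1)^{n+1}c_{\sigma_n}$, then eliminate $x_\fl,y_\fl,z_2,z_3,z_4$ degree by degree. The real problem is the sign you leave open, and how you diagnose it.

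If $\ev_{\eps_2}$ is ``the same formula with primed coefficients'' (which is exactly what \eqref{eq:Galois-unstable-eps2} gives with $c'_w=\per_{\fp_2}(c^{\fu}_w)$), then $L_3^{(1)}-L_3^{(2)}=(c_{\tau_\fl\sigma_2}-c'_{\tau_\fl\sigma_2})x_\fl z_2$. Carrying your elimination through then gives the displayed depth-$3$ and depth-$4$ equations with $c'_{\tau_\fl\sigma_2}$ and $c'_{\tau_\fl\tau_\fl\sigma_2}$ replaced by their negatives; $c'_{\tau_\fl\sigma_3}$ keeps its sign. So the factor $c_{\tau_\fl\sigma_2}+c'_{\tau_\fl\sigma_2}$ does not come out of your own formula.

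Checking how $\sigma_*$ relates $\tau_\fl$ to the dual of $\sigma\pi_\fl$ cannot fix this. Both period elements are expanded in the same generators of $\Lie(U_S^{\MT})$, and $\pi_\fl$ versus $\sigma\pi_\fl$ only enters through $c'_{\tau_\fl}$ in depth~$1$. The ``$+$'' in the statement comes from the paper's proof. There the last two components of $\ev_{\eps_2}$ are written as $c_{\sigma_3}z_3-c'_{\tau_\fl\sigma_2}x_\fl z_2$ and $-c_{\sigma_4}z_4+c'_{\tau_\fl\sigma_3}x_\fl z_3-c'_{\tau_\fl\tau_\fl\sigma_2}x_\fl^2z_2$. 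These signs are carried over from the Galois-stable case rather than read off \eqref{eq:Galois-unstable-eps2}.

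Your sign is the one consistent with $c'_w=\per_{\fp_2}(c^{\fu}_w)$. For Galois-stable $S$ the same computation applies and $\per_{\fp_2}(c^{\fu}_{\tau_\fl\sigma_2})=-c_{\tau_\fl\sigma_2}$, so only $c_{\tau_\fl\sigma_2}-c'_{\tau_\fl\sigma_2}$ recovers the factor $2c_{\tau_\fl\sigma_2}$ of \Cref{S=1-stable-equations}. To land on the displayed equations you must adopt the convention that $c'_{\tau_\fl\sigma_2}$ and $c'_{\tau_\fl\tau_\fl\sigma_2}$ denote minus the corresponding Goncharov coefficients of $\eps_2$. The two versions differ only by this renaming.

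On the converse you are more careful than the paper, whose proof does not discuss it. Modulo the depth-$\leq 3$ equations, the displayed depth-$4$ equation is the honest one multiplied by the depth-$3$ factor. The honest one comes from substituting $x_\fl=\log(z_1)/c_{\tau_\fl}$, $z_2=L_2(z_1)/c_{\sigma_2}$, and $z_3$ obtained from $L_3(z_1)$. So equality needs that factor to be nonzero, in addition to $c_{\sigma_2},c_{\sigma_3},c_{\sigma_4}\neq 0$.

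In either labelling the factor is the same number, $\per_{\fp_1}(c^{\fu}_{\tau_\fl\sigma_2}-\sigma_*c^{\fu}_{\tau_\fl\sigma_2})$, not the period of the sum you considered. Your linear-independence argument still covers it. We have $\Delta'(c^{\fu}_{\tau_\fl\sigma_2})=\tfrac12\bigl(\log^{\fu}(\pi_\fl)\otimes f_{\sigma_2}-f_{\sigma_2}\otimes\log^{\fu}(\pi_\fl)\bigr)$ and $\sigma_*f_{\sigma_2}=-f_{\sigma_2}$. Hence the reduced coproduct of $c^{\fu}_{\tau_\fl\sigma_2}-\sigma_*c^{\fu}_{\tau_\fl\sigma_2}$ is the same expression with $\log^{\fu}(\pi_\fl)$ replaced by $\log^{\fu}(\pi_\fl\,\sigma\pi_\fl)$. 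This is nonzero because $\pi_\fl\,\sigma\pi_\fl=\Nm_{K/\bQ}(\pi_\fl)$ is nonzero in $\cO_K[1/l]^{\times}\otimes\bQ$.

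One caveat remains: $\sigma_*c^{\fu}_{\tau_\fl\sigma_2}$ lives in $A(\cO_{K,\sigma_*S})$, so the difference only makes sense in $A(\cO_K[1/l])$. You therefore need the $\fp_1$-adic period conjecture for $\cO_K[1/l]$, together with compatibility of $\per_{\fp_1}$ under enlarging $S$, and not merely for $\cO_{K,S}$.
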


\begin{proof}
We need to determine the image of $\ev_{\eps_1}\times \ev_{\eps_2}$ and use the same coordinates $x_{\fl},y_{\fl},z_2,z_3,z_4$ of the depth $4$ global Selmer scheme as in the proof of \Cref{S=1-stable-equations}. Using the expression of $\eps_1,\eps_2$ above and by \Cref{cocycle-evaluation-map}, the cocycle evaluation map $\ev_{\eps_1}$ is given by
\begin{equation}
\begin{split}
    &\ev_{\eps_1}\colon (x_{\fl},y_{\fl},z_2,z_3,z_4) \mapsto \\
    &\qquad (c_{\tau_{\fl}}x_{\fl}, \;\; c_{\tau_{\fl}}y_{\fl}, \;\; c_{\sigma_2}z_2, \;\; c_{\sigma_3}z_3+c_{\tau_{\fl} \sigma_2}x_{\fl}z_2, \;\; c_{\sigma_4}z_4+c_{\tau_{\fl}\sigma_3}x_{\fl}z_3+c_{\tau_{\fl} \tau_{\fl} \sigma_2} x_{\fl}^2 z_2),
\end{split}
\end{equation}
and the cocycle evaluation map $\ev_{\eps_2}$ for $\eps_2$ is given by
\begin{equation}
\begin{split}
    &\ev_{\eps_2}\colon (x_{\fl},y_{\fl},z_2,z_3,z_4) \mapsto\\
    &\qquad (c_{\tau_{\fl}}'x_{\fl}, \; c_{\tau_{\fl}}'y_{\fl}, \; -c_{\sigma_2}z_2, \; c_{\sigma_3}z_3-c_{\tau_{\fl} \sigma_2}'x_{\fl}z_2, \; -c_{\sigma_4}z_4+c_{\tau_{\fl}\sigma_3}'x_{\fl}z_3-c_{\tau_{\fl} \tau_{\fl} \sigma_2}' x_{\fl}^2 z_2).
\end{split}
\end{equation}

Comparing the expressions of $\ev_{\eps_1},\ev_{\eps_2}$ and pulling back these equations to $X(\bZ_p) \times X(\bZ_p)$ results in the claimed equations.

\end{proof}

\begin{thm}
\label{S=1-finiteness}
    In the setting of \Cref{S=1-unstable-equations}, the depth-$1$ polylogarithmic Chabauty--Kim locus $X(\cO_K \otimes \bZ_p)_{S,\PL,1}$ is finite. \qed
\end{thm}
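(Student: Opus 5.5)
By \Cref{S=1-unstable-equations}, the depth-1 locus is contained in the common zero set in $X(\bZ_p)\times X(\bZ_p)$ of
\[ F(z_1,z_2)\coloneqq\log(\sigma\pi_{\fl})\log(z_1)-\log(\pi_{\fl})\log(z_2),\qquad G(z_1,z_2)\coloneqq\log(\sigma\pi_{\fl})\log(1-z_1)-\log(\pi_{\fl})\log(1-z_2). \]
It therefore suffices to show that this zero set is finite. Set $a\coloneqq\log(\pi_{\fl})$ and $b\coloneqq\log(\sigma\pi_{\fl})$, both taken in $\bQ_p$ via $\fp_1$. Both are nonzero: $\pi_{\fl}$ and $\sigma\pi_{\fl}$ are not roots of unity, since they have nonzero valuation at $\fl$ resp.\ $\sigma\fl$. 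Moreover, $b/a\notin\bQ$. Indeed, if $b=ra$ with $r\in\bQ$, then $\log(\sigma\pi_{\fl}^{m}\pi_{\fl}^{-n})=0$ for suitable integers, so $(\sigma\pi_{\fl})^m=\zeta\pi_{\fl}^n$. Comparing $\fl$-adic valuations, using $v_{\fl}(\sigma\pi_{\fl})=0$ because $S$ is not Galois-stable, forces $n=0$, and then $m=0$. The remaining sub-case needs the $p$-adic Schanuel-type input (Baker--Brumer): if $a,b$ were $\bQ$-dependent, the multiplicatively independent algebraic numbers $\pi_{\fl},\sigma\pi_{\fl}$ would have $\bQ$-dependent $p$-adic logarithms, which is excluded by the multiplicative-independence argument above. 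So $\lambda\coloneqq b/a\in\bQ_p\setminus\bQ$.

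Next I would reduce to a problem on residue discs. $X(\bZ_p)\times X(\bZ_p)$ is a finite union of polydiscs, so it is enough to show finiteness on each product $D_1\times D_2$ of residue discs. On such a product, write $z_1=\zeta_1 u_1$ and $1-z_1=\eta_1 v_1$ with $u_1,v_1\in1+p\bZ_p$, and similarly for $z_2$. The equations then say $u_2=u_1^{\lambda}$ and $v_2=v_1^{\lambda}$, where $w^\lambda=\exp(\lambda\log w)$. The constraints $\zeta_1u_1+\eta_1v_1=1$ and $\zeta_2u_2+\eta_2v_2=1$ define a rigid analytic curve $C\subset\mathbb{G}_m^4$, or rather its intersection with the disc. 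On $C$, the locus is cut out by the analytic subgroup $\{u_2=u_1^\lambda,\ v_2=v_1^\lambda\}$ of $\mathbb{G}_m^4$. I would apply the $p$-adic (formal) Ax--Schanuel theorem, or Ax's theorem on the exponential, to the algebraic variety $V=\{\zeta_1u_1+\eta_1v_1=1,\ \zeta_2u_2+\eta_2v_2=1\}$ of dimension $2$ and the formal subgroup $A$ of codimension $2$. An intersection component of positive dimension would have to be contained in a weakly special subvariety, namely a coset of an algebraic subtorus of $\mathbb{G}_m^4$ containing a positive-dimensional part of $V\cap A$.

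The key step, and the main obstacle, is to rule out such atypical intersections. The smallest algebraic subgroup containing a curve in $A$ is defined by the $\bZ$-linear relations among $\log u_1,\log u_2,\log v_1,\log v_2$ that hold on it. The only relations available from $A$ are $\log u_2=\lambda\log u_1$ and $\log v_2=\lambda\log v_1$, and since $\lambda\notin\bQ$ these give no rational relation between $u_2$ and $u_1$. So a positive-dimensional component would lie in a torus coset on which either $\log u_1$ and $\log u_2$ are both constant, or $\log v_1$ and $\log v_2$ are both constant (or some rational relation $u_1^m v_1^n=\mathrm{const}$ holds, combined with the $\lambda$-twisted version). Each case has to be checked against $V$. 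For example, $u_1$ constant together with $\zeta_1u_1+\eta_1v_1=1$ forces $v_1$ constant, so the component is a point. A relation $u_1^mv_1^n=c$ on the line $\zeta_1u_1+\eta_1v_1=1$ holds only at finitely many points unless $m=n=0$, since a line in $\mathbb{G}_m^2$ is not a torus coset. Given this, every component is $0$-dimensional. Compactness of $\bZ_p$ then gives finiteness on each disc, and hence finiteness of $X(\cO_K\otimes\bZ_p)_{S,\PL,1}$.
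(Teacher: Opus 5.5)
Your first step, writing the locus on a polydisc as $V\cap A$ with $A=\{\log u_2=\lambda\log u_1,\ \log v_2=\lambda\log v_1\}$ and using Ax--Schanuel to push any positive-dimensional component into a coset of a proper subtorus, is the same as the paper's. The gap is in the step you yourself flag as the key one. Ax--Schanuel only gives you \emph{some} relation $u_1^{m_1}u_2^{m_2}v_1^{n_1}v_2^{n_2}=c$, with $(m_1,m_2,n_1,n_2)\neq 0$, along the component $\gamma$. On $A$ this reads $(m_1+\lambda m_2)\log u_1+(n_1+\lambda n_2)\log v_1=\mathrm{const}$. Suppose both coefficients are nonzero with irrational ratio. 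For example, $u_1v_2=c$ gives $\log u_1+\lambda\log v_1=\mathrm{const}$. Such a relation is not of the form $u_1^mv_1^n=c$, so the fact that the line $\zeta_1u_1+\eta_1v_1=1$ is not a torus coset does not exclude it. Your case list does not cover this. The remark ``the only relations available from $A$ are \dots'' does not fill the gap: $\gamma$ lies in $A\cap V$, and it is exactly a relation forced by the interaction with $V$ that you must rule out. Separately, your valuation argument already proves $\lambda\notin\bQ$ completely, because $\log$ has torsion kernel on $\bZ_p^\times$. The Baker--Brumer sentence is unnecessary and circular.

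What is missing is a differential argument, which is how the paper concludes. If $z_1$ were constant on $\gamma$, then $\log z_2$ and $\log(1-z_2)$ would be constant too, and $\gamma$ would be a point. Otherwise the relation $\alpha\log z_1+\beta\log(1-z_1)=\mathrm{const}$, with $(\alpha,\beta)\neq(0,0)$, holds at infinitely many points of a residue disc, hence on the whole disc. Differentiating gives $\alpha(1-z_1)=\beta z_1$ identically, which forces $\alpha=\beta=0$, a contradiction. The paper phrases this without tori. On any curve in the locus, both $b\,\rd z_1/z_1-a\,\rd z_2/z_2$ and $b\,\rd z_1/(1-z_1)-a\,\rd z_2/(1-z_2)$ pull back to zero. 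Their wedge product is $ab(z_1-z_2)\frac{\rd z_1}{z_1(1-z_1)}\wedge\frac{\rd z_2}{z_2(1-z_2)}$, so the curve lies on the diagonal. There $a\neq b$ leaves only $\log z=\log(1-z)=0$, i.e.\ finitely many points. This needs only $ab\neq 0$ and $a\neq b$, not $\lambda\notin\bQ$. The same computation shows that the Jacobian of your two equations is nonzero off the diagonal. So the argument applies directly to any positive-dimensional analytic component of the zero set on a polydisc, and with it you could drop Ax--Schanuel entirely.
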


\Cref{S=1-finiteness} immediately follows from the following:

\begin{lemma}
    \label{depth-1-finiteness}
    Let $a, b \in \bQ_p \smallsetminus \{0\}$ with $a \neq b$. Then there are only finitely many pairs $(z_1,z_2) \in X(\bZ_p) \times X(\bZ_p)$ satisfying the equations
    \begin{align*}
        a \log(z_1) &= b \log(z_2), \\
        a \log(1-z_1) &= b \log(1-z_2).
    \end{align*}
\end{lemma}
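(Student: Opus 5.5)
The plan is to reduce to a local statement on residue discs and then rule out a one-dimensional solution set: via Ax's theorem (the Ax–Schanuel theorem for formal power series) when $a/b\notin\bQ$, and via an algebraic-curve argument when $a/b\in\bQ$.

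\textbf{Reduction to residue discs.} Since $X(\bZ_p)\times X(\bZ_p)$ is covered by finitely many products of residue discs, it suffices to show finiteness on each such product $D_1\times D_2$. On $D_1\times D_2$ the functions $\log(z_i)$ and $\log(1-z_i)$ are given by convergent power series in local parameters, so the solution set $V$ is a closed rigid-analytic subset of a closed bidisc. If $V$ were infinite, compactness together with Weierstrass preparation would give an irreducible analytic curve germ inside $V$. Normalising it, we would obtain nonconstant convergent power series $z_1(t),z_2(t)$ (at least one nonconstant, hence both by the equations, since $a,b\neq 0$) satisfying both equations identically. So we argue by contradiction against the existence of such a germ.

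\textbf{The case $a/b\notin\bQ$: Ax's theorem.} Put $y_1=\log z_1$, $y_2=\log z_2$, $y_3=\log(1-z_1)$, $y_4=\log(1-z_2)$, viewed in $\bC_p[[t]]$ after subtracting constants. The corresponding exponentials $x_i$ are, up to constants, $z_1,z_2,1-z_1,1-z_2$. The field $\bC_p(x,y)$ is generated by $z_1,z_2,y_1,y_3$, because the equations give $y_2=(a/b)y_1$ and $y_4=(a/b)y_3$. Hence its transcendence degree is at most $4$.

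Ax's theorem is purely algebraic (differential fields of characteristic $0$), so it applies over $\bC_p$. It says that if $y_1,\dots,y_4$ are $\bQ$-linearly independent modulo constants, then $\operatorname{trdeg}\bC_p(x,y)\ge 5$, a contradiction. It therefore remains to exclude $\bQ$-linear relations $\sum m_iy_i\in\bC_p$, i.e.\ multiplicative relations
\[
z_1^{m_1}z_2^{m_2}(1-z_1)^{m_3}(1-z_2)^{m_4}=\mathrm{const}.
\]
Differentiating, and using $dy_2=(a/b)\,dy_1$ and $dy_4=(a/b)\,dy_3$, such a relation becomes $(m_1+m_2a/b)\,dy_1+(m_3+m_4a/b)\,dy_3=0$. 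Since $a/b\notin\bQ$, either both coefficients vanish (forcing all $m_i=0$), or $dy_1$ and $dy_3$ are proportional with a constant factor $\lambda$. In the latter case $z_1^{\lambda}\sim(1-z_1)$, i.e.\ $z_1'/z_1=\lambda z_1'/(1-z_1)$. Comparing the poles of the rational functions $1/z$ and $\lambda/(1-z)$ shows this forces $z_1$ to be constant, a contradiction.

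\textbf{The case $a/b=m/n\in\bQ$ with $m\ne n$ (from $a\neq b$).} Then $z_1^{m}/z_2^{n}$ and $(1-z_1)^m/(1-z_2)^n$ have vanishing logarithm, so they are roots of unity. On $X(\bZ_p)$ these are roots of unity in $\bZ_p$, hence finitely many possible values $\zeta,\eta$. For each choice, the solutions lie on the intersection of the two plane curves
\[
z_1^m=\zeta z_2^n,\qquad (1-z_1)^m=\eta(1-z_2)^n .
\]
By B\'ezout this intersection is finite unless the curves share a component. I expect the main obstacle to be checking that no common component exists when $m\neq n$ (with signs allowed). A common component would be parametrised by a curve on which both $z_2^n/z_1^m$ and $(1-z_2)^n/(1-z_1)^m$ are constant. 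I would rule this out by comparing valuations at the places over $z_1\in\{0,1,\infty\}$ of that curve, which leads to a degree contradiction of the form $m\cdot\deg=n\cdot\deg$; alternatively, one can apply Ax's theorem again to this algebraic curve.

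Combining the two cases gives finiteness on every product of residue discs, and hence the lemma.
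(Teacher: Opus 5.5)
Your reduction to a nonconstant germ and your Ax argument for $a/b\notin\bQ$ are correct. They also take a different and more direct route than the paper: you apply Ax's differential-algebraic theorem straight to the $p$-adic germ, whereas the paper uses Tsimerman's complex Ax--Schanuel and the formal comparison lemma.

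The gap is in the case $a/b=m/n\in\bQ$, at exactly the step you flagged. You never show that $z_1^m=\zeta z_2^n$ and $(1-z_1)^m=\eta(1-z_2)^n$ share no component, and neither of the routes you suggest does it.

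\begin{itemize}
\item A degree count gives nothing. For $m,n>0$, on a putative common component both equations yield the same identity $m\deg z_1=n\deg z_2$, since $\deg(1-z_i)=\deg z_i$. The local conditions $m\,v(z_1)=n\,v(z_2)$ and $m\,v(1-z_1)=n\,v(1-z_2)$ are consistent at every place. To get a contradiction from valuations you would need further global input, e.g.\ the explicit parametrisation $z_1=\alpha s^{n'}$, $z_2=\beta s^{m'}$ of the components of $z_1^m=\zeta z_2^n$.
\item Applying Ax again also fails. On such a component the four logarithms satisfy two $\bQ$-linear relations modulo constants. Ax applied to the independent pair $\log z_1,\log(1-z_1)$ only gives transcendence degree $\geq 3$, which is consistent with $z_2$ being algebraic over $z_1$.
\end{itemize}

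The missing idea is the paper's $2$-form computation: differentiate along the component. You have $m\,\frac{dz_1}{z_1}=n\,\frac{dz_2}{z_2}$ and $m\,\frac{dz_1}{1-z_1}=n\,\frac{dz_2}{1-z_2}$, with $dz_1,dz_2\neq 0$. Dividing gives $\frac{z_1}{1-z_1}=\frac{z_2}{1-z_2}$, hence $z_1=z_2$ on the component. Then $z_1^{m-n}=\zeta$ is constant, so $z_1$ is constant because $m\neq n$, a contradiction.

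The same computation, with $a,b$ in place of $m,n$, works verbatim on the nonconstant germ $(z_1(t),z_2(t))$ from your first step, for every $a\neq b$. It forces $z_1=z_2$, and then $(a-b)z_1'/z_1=0$. So once you have the reduction to germs, the case distinction and Ax's theorem are unnecessary. The paper needs Ax--Schanuel because it argues with the Zariski closure, and must first place the solutions on finitely many algebraic curves before running this step.
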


We prove \Cref{depth-1-finiteness}  using the Ax--Schanuel theorem, following a suggestion by Netan Dogra. Since the Ax--Schanuel theorem is usually stated over the complex numbers, we need a lemma to pass from $p$-adic to complex logarithms. The following is the torus variant of \cite[Lemma~2.1]{dogra_2023_unlikely}, proved in the same way by comparing the underlying formal groups:

\begin{lemma}[Formal comparison of logarithm and exponential]
    \label{lem:formal-log-exp}
    Let $T=\mathbb{G}_m^n$ over $\bQ_p$, and let $\Delta_{\log}\subset T^{\an}\times\Lie(T)^{\an}$ be the graph of a branch of the $p$-adic logarithm. After choosing an embedding $\bQ_p\hookrightarrow\bC$, the formal completion of $\Delta_{\log}$ at any point of $T(\bQ_p)$ is, after translation, the base change to $\bC$ of the formal graph of the complex exponential. \qed
\end{lemma}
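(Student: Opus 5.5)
The statement to prove is \Cref{lem:formal-log-exp}, and I would follow the proof of \cite[Lemma~2.1]{dogra_2023_unlikely}. The idea is that both the $p$-adic logarithm and the complex exponential are, at the level of formal power series, the same object with rational coefficients: the logarithm of the formal multiplicative group $\widehat{\bG}_m^n$. Because of this, the formal completion of either graph is determined by a formal isomorphism defined over $\bQ$.

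First I reduce to the identity. Let $t_0\in T(\bQ_p)$. Locally around $t_0$, any branch of the $p$-adic logarithm satisfies $\log(t_0 t)=\log(t_0)+\log(t)$. So translating by $(t_0^{-1},-\log(t_0))$ carries the germ of $\Delta_{\log}$ at $(t_0,\log t_0)$ onto its germ at $(1,0)$. This translation is the only place where the choice of branch enters, and it only shifts the additive constant $\log(t_0)$.

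Next I describe the completion at $(1,0)$. Use coordinates $t_i=1+u_i$ on $T$ and $v_i$ on $\Lie(T)$. On the open polydisc $|u_i|<1$, the logarithm is given coordinatewise by the series $\log(1+u_i)=\sum_{k\ge1}(-1)^{k+1}u_i^k/k\in\bQ[[u_i]]$. Hence the formal completion of $\Delta_{\log}$ at $(1,0)$ is the closed formal subscheme of $\Spf\bQ_p[[u,v]]$ cut out by $v_i=\log(1+u_i)$. It is the graph of a formal isomorphism $\widehat T\to\widehat{\Lie(T)}$, and that isomorphism is defined over $\bQ$.

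On the complex side, the graph of $\exp$, viewed as a subset of $T\times\Lie(T)$ via $(e^{v},v)$, is near $(1,0)$ the graph of the inverse series $u_i=e^{v_i}-1\in\bQ[[v_i]]$. Since $\exp$ and $\log$ are mutually inverse as formal series over $\bQ$, the two equation systems generate the same ideal in $\bQ[[u,v]]$. Base-changing along the chosen embedding $\bQ_p\hookrightarrow\bC$ therefore identifies the two formal completions.

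The main point needing care is the interpretation of the statement, specifically how the base change is meant and how the coordinates are matched. The completions live in $\bQ_p[[u,v]]$ and $\bC[[u,v]]$, and neither ring is literally a base change of the other. The correct statement is that both are base changes of the same formal subscheme of $\Spf\bQ[[u,v]]$, along $\bQ\to\bQ_p$ and $\bQ\to\bC$ respectively. I would phrase the conclusion this way, which is also the form needed to apply Ax--Schanuel afterwards.
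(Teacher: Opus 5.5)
Your proposal is correct and follows the paper's approach: the paper gives no separate argument beyond citing \cite[Lemma~2.1]{dogra_2023_unlikely} and ``comparing the underlying formal groups''. Your argument carries this out. You translate the point to the identity. You then observe that both formal completions are cut out by the $\bQ$-rational formal logarithm (equivalently, exponential) of $\widehat{\bG}_m^n$.
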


We need the following geometric formulation of the Ax--Schanuel theorem for tori:

\begin{thm}[{\cite[Theorem~1.3]{Tsimerman_2015}}]
\label{ax-schanuel}
    Let $V \subseteq \bC^n \times (\bC^\times)^n$ be an irreducible subvariety. Let $\Delta_{\exp} \subset \bC^n \times (\bC^\times)^n$ be the graph of the exponential and let $U$ be an irreducible component of the complex analytic space $V \cap \Delta_{\exp}$. If the projection of $U$ to $(\bC^\times)^n$ is not contained in a coset of a proper subtorus then
    \[ \dim_{\bC} V \geq \dim_{\bC} U + n. \]
\end{thm}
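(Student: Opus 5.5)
The plan is to deduce the statement from Ax's differential-field form of Schanuel's conjecture, rather than reproving Tsimerman's o-minimal argument. Set $k\coloneqq\dim_{\bC}U$. Choose a smooth point $u_0$ of $U$ at which $U$ is a $k$-dimensional complex manifold, and let $F$ be the field of germs of meromorphic functions on $U$ at $u_0$. Write $x_1,\ldots,x_n$ for the coordinate functions on $\bC^n$ and $y_1,\ldots,y_n$ for those on $(\bC^\times)^n$, restricted to $U$. Since $U\subseteq\Delta_{\exp}$, we have $y_i=\exp(x_i)$ on $U$. Local coordinates $t_1,\ldots,t_k$ at $u_0$ give commuting derivations $D_j=\partial/\partial t_j$ of $F$ whose common field of constants is $\bC$, and they satisfy $D_j y_i = y_i D_j x_i$.

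The first step is to verify the two hypotheses of Ax's theorem~\cite{Ax}. That theorem says: if $x_1,\ldots,x_n$ are $\bQ$-linearly independent modulo $\bC$, then
\[ \operatorname{trdeg}_{\bC}\bC(x_1,\ldots,x_n,y_1,\ldots,y_n)\geq n+\operatorname{rank}(D_jx_i)_{i,j}. \]
\begin{itemize}
\item \emph{Linear independence.} Suppose $\sum q_ix_i$ is constant on $U$ for some nonzero $q\in\bZ^n$. Exponentiating gives $\prod y_i^{q_i}=\mathrm{const}$, so the projection of $U$ lies in a coset of the proper subtorus $\{\prod y_i^{q_i}=1\}$, contrary to hypothesis.
\item \emph{Rank.} The map $(x,\exp x)\mapsto x$ is a local biholomorphism from $\Delta_{\exp}$ to $\bC^n$. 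Hence the $x_i$ restrict to an immersion of $U$, and the Jacobian $(D_jx_i)$ has rank $k$.
\end{itemize}

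The second step is the upper bound. The point $(x,y)$ lies on $V$, so every polynomial relation defining $V$ holds among the $x_i,y_i$ in $F$. It follows that $\bC(x,y)$ is a quotient-field image of $\bC(V)$, and therefore $\operatorname{trdeg}_{\bC}\bC(x,y)\leq\dim V$. Combining this with Ax's inequality gives $\dim V\geq n+k$, which is the claim.

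The main obstacle is Ax's inequality itself, if one wants a self-contained argument. I would prove it with Kähler differentials. Suppose the transcendence degree were smaller than $n+r$, where $r$ is the rank. Then the $2n$ differentials $dx_i,dy_i$ in $\Omega_{F/\bC}$ span a space of dimension less than $n+r$. Consequently the closed $1$-forms $\omega_i=dy_i/y_i-dx_i$, which vanish on $U$, satisfy a nontrivial $F$-linear relation modulo the span of the $dx_i$. Choosing such a relation of minimal length and applying Cartan's formula / Lie derivatives along the $D_j$ shows that its coefficients can be taken constant, in fact rational. That yields a $\bQ$-linear relation among the $x_i$ modulo constants, which the linear-independence step rules out.
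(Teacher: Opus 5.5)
The paper does not prove this statement; it quotes it from \cite{Tsimerman_2015}. There it is proved by an o-minimal argument: definability of $\exp$ on a fundamental domain and Pila--Wilkie counting of lattice translates, combined with an induction on dimension. Your route instead deduces the geometric statement from Ax's 1971 differential-algebraic theorem, and the reduction is correct. The torus-coset hypothesis gives $\bQ$-linear independence of the $x_i$ modulo $\bC$. The fact that $\Delta_{\exp}\to\bC^n$ is biholomorphic gives $\operatorname{rank}(D_jx_i)=\dim U$. Finally, $\operatorname{trdeg}_{\bC}\bC(x,y)\le\dim V$ closes the argument.

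The trade-off is as follows. Tsimerman's method is tied to $\bC$ and o-minimality, but it is the one that extends to uniformisations not coming from algebraic groups ($j$-function, Shimura varieties). Yours needs nothing beyond Ax in the torus case, and Ax's theorem holds over any characteristic-$0$ differential field. In particular, in the proof of \Cref{depth-1-finiteness} you could apply Ax directly to the fraction field of the formal completion of the $p$-adic component $W$ at a smooth point. There the constants are $\bQ_p$ and $\partial z_i=z_i\,\partial u_i$. This would make the embedding $\bQ_p\hookrightarrow\bC$ and \Cref{lem:formal-log-exp} unnecessary.

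A few details need tightening.
\begin{itemize}
\item Linear dependence modulo $\bC$ in $F$ only says that $\sum q_ix_i$ is constant near $u_0$. You need the identity theorem on the irreducible space $U$, whose regular locus is connected and dense, to get constancy on all of $U$.
\item You should take $q$ primitive, or pass to the identity component, so that $\{\prod y_i^{q_i}=1\}$ is really a subtorus.
\item In the upper bound, evaluation is defined on $\bC[V]$, not on $\bC(V)$: a rational function on $V$ can have poles along $U$ when $U$ is not Zariski dense in $V$. The correct statement is $\bC[x,y^{\pm1}]\cong\bC[V]/\mathfrak P$ with $\mathfrak P$ prime, so $\operatorname{trdeg}=\dim\bC[V]/\mathfrak P\le\dim V$.
\end{itemize}

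Your closing sketch of Ax's inequality would not stand as a proof.
\begin{itemize}
\item The $\omega_i$ are not zero in $\Omega_{F/\bC}$ (e.g.\ $e^t$ is transcendental over $\bC(t)$). They only lie in the kernel of $\Phi\colon\omega\mapsto(\iota_{D_j}\omega)_j$.
\item A relation merely modulo the span of the $dx_i$ is not stable under $\mathcal{L}_{D_j}$, since $\mathcal{L}_{D_j}(dx_l)=d(D_jx_l)$.
\item What works: for the span $W$ of the $dx_i,dy_i$ you have $\dim(W\cap\ker\Phi)\le\operatorname{trdeg}-r<n$. This gives an exact relation $\sum f_i\omega_i=0$. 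Then $\mathcal{L}_{D_j}\omega_i=0$ (closed and killed by $\iota_{D_j}$) together with minimality forces $f_i\in\bC$.
\item Passing from $\bC$- to $\bQ$-coefficients is the genuinely hard step of Ax's proof, a residue lemma on logarithmic differentials. Your phrase ``in fact rational'' hides it.
\end{itemize}
Since your main argument simply quotes Ax's theorem, none of this affects its correctness.
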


\begin{proof}[Proof of \Cref{depth-1-finiteness}]
    Let $V\subseteq X_{\bQ_p}\times X_{\bQ_p}$ be the Zariski closure of the set of pairs $(z_1,z_2)\in X(\bZ_p)\times X(\bZ_p)$ satisfying $a\log(z_1)=b\log(z_2)$ and $a\log(1-z_1)=b\log(1-z_2)$. We first show that $V$ is a union of finitely many points and finitely many curves.
    Let $Z$ be the algebraic subvariety of $\mathbb{A}_{\mathbb{Q}_p}^4\times\mathbb{G}_{m,\mathbb{Q}_p}^4$ defined by
    \[
    z_i+z_i'=1\quad (i=1,2),
    \qquad
    a u_1=b u_2,
    \qquad
    a u_1'=b u_2'.
    \]
    Here $(u_1,u_2,u_1',u_2')$ are the additive coordinates and
    $(z_1,z_2,z_1',z_2')$ the multiplicative coordinates. The intersection $Z \cap \Delta_{\log}$ with the graph of the $p$-adic logarithm is given by
    \[
    u_1=\log(z_1),\quad
    u_2=\log(z_2),\quad
    u_1'=\log(1-z_1),\quad
    u_2'=\log(1-z_2).
    \]
    The intersection $Z\cap \Delta_{\log}$ is a union of finitely many irreducible analytic components. Let $W$ be such a component. Take a point $P$ in $W$ and let $\hat{W}$ be the formal completion of $W$ at $P$. By \Cref{lem:formal-log-exp}, after choosing an embedding $\mathbb{Q}_p\hookrightarrow \mathbb{C}$, $\hat{W}_{\mathbb{C}}$ is an irreducible analytic component of the formal completion of $Z_{\mathbb{C}}\cap \Delta_{\exp}$ at $P$, where $\Delta_{\exp}\subseteq \mathbb{C}^4\times (\mathbb{C}^\times)^4$ denotes the graph of the complex exponential. If $W$ is not finite, then $\hat{W}_{\mathbb{C}}$ is the formal completion at $P$ of some irreducible analytic component $\tilde{W}$ of $Z_{\mathbb{C}}\cap \Delta_{\exp}$ satisfying $\dim_{\mathbb{C}} Z_{\mathbb{C}}=4<\dim_{\mathbb{C}} \tilde{W}+4$. 
    By \Cref{ax-schanuel}, $\mathrm{pr}_{(\mathbb{C}^\times)^4}(\tilde{W})$ is contained in a translate of proper subtorus of $\mathbb{G}_{m,\mathbb{C}}^4$ (where $\mathrm{pr}_{(\mathbb{C}^\times)^4}$ denotes the projection to the $(\mathbb{C}^\times)^4$ components), hence the same holds for $\mathrm{pr}_{(\mathbb{C}^\times)^4}(\hat{W}_{\mathbb{C}})$, and hence for $\mathrm{pr}_{(\mathbb{Q}_p^\times)^4}(\hat{W})$, and hence for $\mathrm{pr}_{(\mathbb{Q}_p^\times)^4}(W)$. Then there exist $(a_1,a_2,a_1',a_2')\in\mathbb{Z}^4\backslash \left\{0\right\}$ and $c\in\mathbb{Q}_p^\times$ such that $z_1^{a_1}z_2^{a_2}z_1'^{a_1'}z_2'^{a_2'}=c$ for any $(z_1,z_2,z_1',z_2')\in \mathrm{pr}_{(\mathbb{Q}_p^\times)^4}(W)$, or equivalently $z_1^{a_1}z_2^{a_2}(1-z_1)^{a_1'}(1-z_2)^{a_2'}=c$ for any $(z_1,z_2)\in \mathrm{pr}_{(z_1,z_2)}(W)$, which is a curve in $(\mathbb{Q}_p^\times)^2$. Therefore, we proved that for each irreducible analytic component $W$ of $Z\cap \Delta_{\log}$, either $W$ is finite or $\mathrm{pr}_{(z_1,z_2)}(W)$ is contained in a curve. Since there are only finitely many such components, we conclude that $V\subseteq \mathrm{pr}_{(z_1,z_2)}(Z\cap \Delta_{\log})^{\mathrm{Zar}}$ is included in a union of finitely many points and finitely many curves.

    Next we show that $V$ cannot contain any curve, and hence $V$ is finite. Suppose for contradiction that $C$ is such a curve. By the first equation, the Coleman function $z \mapsto a \log(z_1) - b \log(z_2)$ vanishes on infinitely many points of~$C(\bQ_p)$, hence it is constant zero. Its derivative must therefore be zero, which implies that the differential $a \frac{\rd z_1}{z_1} - b \frac{\rd z_2}{z_2}$ on $X_{\bQ_p} \times X_{\bQ_p}$ pulls back to zero on~$C$. Similarly, by the second equation, the differential $a \frac{\rd z_1}{1-z_1} - b \frac{\rd z_2}{1-z_2}$ also pulls back to zero on~$C$. Thus, $C$ is contained in the vanishing locus of the 2-form
    \[ \left(a \frac{\rd z_1}{z_1} - b \frac{\rd z_2}{z_2}\right) \wedge \left(a \frac{\rd z_1}{1-z_1} - b \frac{\rd z_2}{1-z_2}\right) = ab(z_1 - z_2) \frac{\rd z_1}{z_1(1-z_1)} \wedge \frac{\rd z_2}{z_2(1-z_2)}, \]
    namely the diagonal $\{z_1 = z_2\} \subseteq X_{\bQ_p} \times X_{\bQ_p}$. But $a \neq b$ by assumption, so the first equation holds for only finitely many points on the diagonal: the pairs $(\zeta,\zeta)$ with $\zeta \in X(\bZ_p)$ a root of unity. This is a contradiction, so $V$ does not contain a curve.
\end{proof}

\subsection{Refined Chabauty--Kim loci}
\label{sec:refined}

We now study the refined Chabauty--Kim locus $X(\cO_K \otimes \bZ_p)_{S,\PL,\infty}^{\min}$ in the case that $K$ is imaginary quadratic and $S=\left\{\mathfrak{l}\right\}$ contains a single prime of~$K$. In this case, the non-degeneracy condition \ref{cond:F2} requires that either $2$ is inert in~$K$, or $2$ is ramified and $\fl$ is the unique prime of~$K$ dividing~$2$. If this is not satisfied then $X(\cO_{K,\{\fl\}})$ as well as all refined Chabauty--Kim loci are automatically empty.

By~\eqref{eq:decompositionSelmin}, the refined Chabauty--Kim locus is a union of three subsets, corresponding to the three refinement conditions $\Sigma=(0),(1),(\infty)$:
\[ X(\cO_K \otimes \bZ_p)_{\{\fl\},\PL,n}^{\min} = X(\cO_K \otimes \bZ_p)_{\{\fl\},\PL,n}^{(0)} \cup X(\cO_K \otimes \bZ_p)_{\{\fl\},\PL,n}^{(1)} \cup X(\cO_K \otimes \bZ_p)_{\{\fl\},\PL,n}^{(\infty)}. \]
for $1 \leq n \leq \infty$. We first treat the case $\Sigma=(1)$, where the equations become particularly simple and can be determined in infinite depth.

\begin{thm}
\label{thm:refinedkim-functionsforS=1}
	Let $K$ be an imaginary quadratic field and $S=\left\{\mathfrak{l}\right\}$. Assume \Cref{cond:F2}. Let $p$ be a rational prime which splits completely in~$K$ and is not divisible by~$\fl$. Then the refined polylogarithmic Chabauty--Kim locus $X(\cO_K \otimes \bZ_p)_{S,\PL,\infty}^{(1)}$ is contained in the set of pairs $(z_1,z_2) \in X(\bZ_p) \times X(\bZ_p)$ satisfying the equations
		\begin{align}
			\log(z_1) =\log(z_2) &= 0, \label{eq:relog-equationsforS=1}\\
			\log(\sigma\pi_{\fl})\Li_1(z_1) - \log(\pi_{\fl})\Li_1(z_2)&=0 , \label{eq:reLi1-equationsforS=1}\\
			\Li_n(z_1) + (-1)^n \Li_n(z_2) &= 0 \qquad \text{for $n \geq 2$},\label{eq:reLin-equationsforS=1}
		\end{align}
    where $\log(\pi_{\fl}), \log(\sigma \pi_{\fl}) \in \bQ_p$ are defined via the $\fp_1$-adic embedding $K^\times \hookrightarrow K_{\fp_1}^{\times} \cong \bQ_p^{\times}$, and $\sigma\in\Gal(K/\mathbb{Q})$ is complex conjugation.
    The containment is an equality if the $p$-adic period conjecture holds for the primes dividing~$p$.
\end{thm}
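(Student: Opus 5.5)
We compute the image of $\ev_{\eps_1}\times\ev_{\eps_2}$ on the $\Sigma=(1)$ component $\Sel^{\mot,(1)}_{S,\PL,\infty}(X)$. This component is cut out by $x_{\fl}=0$ (\Cref{def:refined-selmer-scheme}), and then we pull back along $\log\circ j_{\fp_i}^{\dR}$ using \Cref{Ln-as-pullback}. Here $d_1=1$, so the only degree $-1$ generator is $\tau_{\fl}$, dual to $\pi_{\fl}$. By \Cref{lem:sigmasigman} we cannot choose $\sigma_n$ Galois-equivariantly when $S$ is unstable. But as in the Galois-unstable discussion, we only need the action on single-letter periods: $c'_{\sigma_n}=(-1)^{n+1}c_{\sigma_n}$, because $c^{\fu}_{\sigma_n}\in E_n$ and $\sigma_*$ acts on $E_n$ by $(-1)^{n+1}$. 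Likewise $c'_{\tau_{\fl}}=\log(\sigma\pi_{\fl})$ and $c_{\tau_{\fl}}=\log(\pi_{\fl})$ by \Cref{tau-coeffs} and \Cref{eta-sigma-p-from-eta-p}. If $S$ is Galois-stable, we instead use the choice of \Cref{lem:sigmasigman}, and then $\log(\sigma\pi_{\fl})=\log(\pi_{\fl})$ up to a root of unity, which is harmless.

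The key step is that on $x_{\fl}=0$, every commutator is sent to zero. This is the same argument as in the proof of \Cref{thm:roots-of-unity}: $\xi(\tau_{\fl})=y_{\fl}e_1$ and $\xi(\sigma_n)=z_n\ad(e_0)^{n-1}e_1$ both have $e_1$-degree at least $1$, so any bracket has $e_1$-degree at least $2$, which vanishes in $\Lie(\Pi_{\PL})$. Equivalently, in \Cref{cocycle-evaluation-map} every term of $k_n$ other than $c_{\sigma_n}z_n$ carries a factor $x_{\fl}$. Hence
\[
\ev_{\eps_1}(\xi)=(0,\;c_{\tau_{\fl}}y_{\fl},\;c_{\sigma_2}z_2,\;c_{\sigma_3}z_3,\ldots),
\qquad
\ev_{\eps_2}(\xi)=(0,\;c'_{\tau_{\fl}}y_{\fl},\;-c_{\sigma_2}z_2,\;c_{\sigma_3}z_3,\ldots),
\]
where the degree-$n$ entry of $\ev_{\eps_2}(\xi)$ is $(-1)^{n+1}c_{\sigma_n}z_n$. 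This is the step to check most carefully. Only the single-letter Goncharov coefficients of $\eps_2$ enter, so no knowledge of the Galois action on the $\sigma_n$ beyond the abelianisation is needed.

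From these formulas we eliminate the coordinates. We get $L_0^{(1)}=L_0^{(2)}=0$ and $c'_{\tau_{\fl}}L_1^{(1)}=c_{\tau_{\fl}}L_1^{(2)}$. For $n\geq2$ we get $L_n^{(1)}+(-1)^nL_n^{(2)}=0$, since $(-1)^{n+1}\cdot(-1)^n=-1$. We then pull these back to $X(\bZ_p)\times X(\bZ_p)$. Because $\log(z_i)=0$ on the locus, $L_n(z_i)=\Li_n(z_i)$ by \Cref{def:Ln}, exactly as at the end of the proof of \Cref{integral-points-equations}. This gives \eqref{eq:relog-equationsforS=1}, \eqref{eq:reLi1-equationsforS=1} and \eqref{eq:reLin-equationsforS=1}, and proves the containment.

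For the equality under the $p$-adic period conjecture, \Cref{period-conjecture-implies-coeff-nonzero} gives $c_{\sigma_n}\neq0$. Also $c_{\tau_{\fl}}=\log(\pi_{\fl})\neq0$, since $\pi_{\fl}$ is not a root of unity and $\log^{\fp}$ is injective on $\cO_{K,S}^\times\otimes\bQ$. Then any tuple satisfying the equations is hit by taking $y_{\fl}=L_1^{(1)}/c_{\tau_{\fl}}$ and $z_n=L_n^{(1)}/c_{\sigma_n}$. So the scheme-theoretic image is exactly cut out by these equations, and the locus is its preimage.
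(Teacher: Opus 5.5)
Your proposal is correct and follows essentially the same route as the paper. On the component $x_{\fl}=0$ all commutator terms vanish, the single-letter coefficients of $\eps_2$ come from $\per_{\fp_2}=\per_{\fp_1}\circ\sigma_*$ (giving $c'_{\tau_{\fl}}=\log(\sigma\pi_{\fl})$ and $c'_{\sigma_n}=(-1)^{n+1}c_{\sigma_n}$), and comparing the two evaluation maps yields the equations; your explicit choice $y_{\fl}=L_1^{(1)}/c_{\tau_{\fl}}$, $z_n=L_n^{(1)}/c_{\sigma_n}$ just spells out the surjectivity the paper leaves implicit, and your separate treatment of the Galois-stable case is harmless but unnecessary since the period-based argument applies uniformly.
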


\begin{proof}
    Let $\tau_{\mathfrak{l}}\in (\mathcal{O}_{K,S}^\times\otimes\mathbb{Q}_p)^\vee$ be the dual of $\alpha$. 
Write the element $\eps_{\univ} \coloneqq \log(\eta_{\univ}) \in \Lie(U_S)(A(\cO_{K,S}))$ as
\[ \eps_{\univ} = c_{\tau_{\fl}}^{\fu} \tau_{\fl} + c_{\sigma_2}^{\fu} \sigma_2 + c_{\sigma_3}^{\fu} \sigma_3 + c_{\sigma_4}^{\fu} \sigma_4 + \ldots \bmod \text{commutators}. \]

Specialising at $\fp_1$ with $c_w = \per_{\fp_1}(c_w^{\fu})$ gives the expansion of $\eps_1\coloneq \eps_{\fp_1}$
\[ \varepsilon_1 = c_{\tau_{\fl}}\tau_{\mathfrak{l}} + c_{\sigma_2} \sigma_2 + c_{\sigma_3} \sigma_3 + c_{\sigma_4} \sigma_4 + \ldots \quad \bmod \text{commutators} \]
where $c_{\tau_{\fl}}=\log(\pi_{\fl})$ by \Cref{tau-coeffs}.

The refined condition $(1)$, which is $v_{\mathfrak{l}}(z)=0$, requires $x_{\mathfrak{l}}(\xi)=0$ for $\xi\in \mathrm{Sel}_{S,\PL}^{(1)}$. As in the proof of Theorem \ref{thm:roots-of-unity}, $c\in \mathrm{Sel}_{S,\PL}^{(1)}$ maps commutators to zero since $x_{\mathfrak{l}}(\xi)=0$. So the localisation map at $\mathfrak{p}_1$ is given by
\begin{equation}
\label{eq:ev1}
    \ev_{\varepsilon_1}(\xi) = \left(0, \log(\pi_{\fl}) y_{\mathfrak{l}}(\xi), c_{\sigma_2} z_2(\xi), c_{\sigma_3}z_3(\xi), c_{\sigma_4} z_4(\xi),\ldots\right).
\end{equation}

Similarly, we write the expansion of $\eps_2\coloneq \eps_{\fp_2}=\eps_{\sigma^*\fp_1}$ as the following
\begin{equation}
    \label{eq:eps2-refined}
    \eps_2=\eps_{\sigma^*\fp_1} = c_{\tau_{\fl}}' \tau_{\fl} + c_{\sigma_2}'\sigma_2 + c_{\sigma_3}' \sigma_3 +  c_{\sigma_4}' \sigma_4  +\ldots \bmod \text{commutators}.
\end{equation}
with $c_w' = \per_{\sigma^*\fp_1}(c_w^{\fu}) = \per_{\fp_1}(\sigma_* c_w^{\fu})$ by \Cref{eta-sigma-p-from-eta-p}. In particular,
\[ c_{\tau_{\fl}}' = \log(\sigma \pi_{\fl}) ,\quad c_{\sigma_n}' =  (-1)^{n+1} c_{\sigma_n}\]
as explained in \S\ref{sec:Galois-unstable}. Therefore, the evaluation map at $\fp_2$ is given by

\begin{equation}
    \label{eq:ev2nonstable}
\ev_{\varepsilon_2}(\xi) = \left(0, \log(\sigma \pi_{\fl})y_{\mathfrak{l}}(\xi),-c_{\sigma_2} z_2(\xi), c_{\sigma_3}z_3(\xi), -c_{\sigma_4} z_4(\xi),\ldots\right).
\end{equation}

Comparing \eqref{eq:ev1} with \eqref{eq:ev2nonstable}, the desired equations are exactly \eqref{eq:relog-equationsforS=1}, \eqref{eq:reLi1-equationsforS=1}, \eqref{eq:reLin-equationsforS=1}.
\end{proof}

\begin{rem}
\label{rem:refined-galois-stable}
    If $S = \{\fl\}$ is Galois-stable in \Cref{thm:refinedkim-functionsforS=1} then $\log(\sigma \pi_{\fl}) = \log(\pi_{\fl})$ since $\sigma \pi_{\fl} = \pi_{\fl}$ holds in $\cO_{K,S}^{\times} \otimes \bQ$, so the coefficients in the equation~\eqref{eq:reLi1-equationsforS=1} can be omitted, which recovers the equation $\log(1-z_1) = \log(1-z_2)$ for the unrefined locus $X(\cO_K \otimes \bZ_p)_{S,\PL,1}$ found in \Cref{S=1-stable-equations}. Thus, in the Galois-stable case, the equations for $X(\cO_K \otimes \bZ_p)_{S,\PL,\infty}^{(1)}$ do not depend on~$K$ and~$S$.
\end{rem}

We can completely solve the equations in \Cref{thm:refinedkim-functionsforS=1} for arbitrary~$p$:

\begin{thm}	
\label{S=1-refined1-equations}
	Let $K$ be an imaginary quadratic field and suppose $S=\left\{\mathfrak{l}\right\}$ contains one prime of~$K$. Let $p$ be a rational prime not divisible by~$\fl$ which splits in~$K$. Assume \Cref{cond:F2}. Then the refined polylogarithmic Chabauty--Kim locus $X(\cO_K \otimes \bZ_p)_{S,\PL,\infty}^{(1)}$ in infinite depth is contained in
\begin{itemize}
    \item the set of all pairs $(\zeta,\zeta^{-1})$ where $\zeta\ne 1$ is a root of unity in $\mathbb{Z}_p$, if $S$ is stable under the $\mathrm{Gal}(K/\mathbb{Q})$-action;
    \item $\left\{(\zeta_6,\zeta_6^{-1}),(\zeta_6^{-1},\zeta_6)\right\}$, if $S$ is not stable under the $\mathrm{Gal}(K/\mathbb{Q})$-action and there exists a primitive $6$-th root of unity $\zeta_6$ in $\mathbb{Z}_p$;
    \item empty otherwise.
\end{itemize}
The containment is an equality if the $p$-adic period conjecture holds for the primes dividing~$p$.
\end{thm}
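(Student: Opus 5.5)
We solve the equations of \Cref{thm:refinedkim-functionsforS=1} directly. Every pair $(z_1,z_2)$ in the locus satisfies $\log(z_1)=\log(z_2)=0$, so $z_1,z_2\in X(\bZ_p)$ are nontrivial roots of unity. For roots of unity we have $L_n(z_i)=\Li_n(z_i)$, and the Coleman--Sinnott formula (\Cref{coleman-sinnott}) gives $\Li_n(z^{-1})=(-1)^{n+1}\Li_n(z)$ for every $n\geq 1$.

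\emph{Galois-stable case.} By \Cref{rem:refined-galois-stable}, equation \eqref{eq:reLi1-equationsforS=1} becomes $\Li_1(z_1)=\Li_1(z_2)$, i.e.\ $\log(1-z_1)=\log(1-z_2)$. Together with $\log z_1=\log z_2$, this puts us in the setting of \Cref{lem:logLi1eq}.

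Diagonal pairs $(z,z)$ must satisfy $\Li_n(z)=0$ for all even $n\geq 2$, in particular $\Li_{p-3}(z)=0$. By \cite[Proposition~5.15]{BKL:chabauty-kim-sc} this forces $z=-1$ (for small $p$, where $p-3<2$, we handle the finitely many roots of unity directly). We then exclude $z=-1$ using $\Li_n(-1)=(2^{1-n}-1)\zeta(n)$ at some odd $n$. Here we need care: for odd $n$ the equation reads $\Li_n(z_1)=\Li_n(z_2)$, which is automatic on the diagonal. So odd $n$ gives nothing, and we must instead use that $\Li_n(-1)$ vanishes for even $n$ as well. This is a subtle point: if $(-1,-1)$ genuinely survives, it already has the form $(\zeta,\zeta^{-1})$ with $\zeta=-1$, so the diagonal point $-1$ is harmlessly contained in the claimed set. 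Hence diagonal solutions cause no trouble.

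Off-diagonal pairs have $z_1=\zeta z_2\neq z_2$ by \Cref{lem:logLi1eq}, with both $z_i$ roots of unity. We show that such a pair lies in the set $\{(\zeta,\zeta^{-1})\}$, i.e.\ $z_2=z_1^{-1}$. The step I expect to be the main obstacle is deducing $z_2=z_1^{-1}$ from the equations $\Li_n(z_1)+(-1)^n\Li_n(z_2)=0$ for all $n$, equivalently $\Li_n(z_1)=\Li_n(z_2^{-1})$ for all $n\geq 2$. I would try to get this from the $p$-adic injectivity result behind \cite[Proposition~5.15]{BKL:chabauty-kim-sc}: among roots of unity in $\bZ_p$, the values $\Li_{n}(\zeta)$ at a suitable $n$ (near $p-3$, or a family of $n$ modulo $p-1$) are congruent mod $p$ to polynomial expressions in $\zeta$ that separate points. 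As a first attempt, I would reduce $\Li_{n}(z)\equiv\sum_{k=1}^{p-1}z^k/k^{n}$ modulo $p$ and use the values for $n$ ranging over all residues mod $p-1$ to recover the power sums $\sum_k z^k k^{j}$, hence the polynomial $\sum_k z^k T^k$ type data that determines $z$.

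\emph{Galois-unstable case.} Here $\log(\pi_\fl)\neq\log(\sigma\pi_\fl)$; these are $p$-adic logarithms of multiplicatively independent elements (up to roots of unity), and the period conjecture or Baker-type injectivity makes the ratio non-rational. Since $\Li_1(z_i)=-\log(1-z_i)$ with $1-z_i$ algebraic, equation \eqref{eq:reLi1-equationsforS=1} forces $\log(1-z_1)=\log(1-z_2)=0$. Actually this step is cleaner: $1-z_i$ is a unit of $\bQ(\mu_{p-1})$ while $\pi_\fl,\sigma\pi_\fl$ are not, and comparing valuations gives the vanishing. Then \Cref{lem:logLi1zeta6} gives $z_i\in\{\zeta_6^{\pm1}\}$. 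The higher equations with $n=p-3$ exclude the diagonal pairs exactly as in \Cref{integral-points-locus}, leaving $(\zeta_6,\zeta_6^{-1})$ and $(\zeta_6^{-1},\zeta_6)$, or nothing if $\zeta_6\notin\bZ_p$.

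\emph{Equality under the period conjecture.} The converse inclusion follows by the construction in \Cref{thm:roots-of-unity}: set $x_\fl=0$, $y_\fl=\Li_1(\zeta)/\log\pi_\fl$ and $z_n=L_n(\zeta)/c_{\sigma_n}$. In the unstable case consistency needs $\Li_1(\zeta)=0$, which holds precisely for $\zeta_6$.
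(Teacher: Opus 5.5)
There is a genuine gap, and it is the step you yourself flag as the main obstacle. You never prove that two nontrivial roots of unity $z_1,z_2\in\bZ_p$ satisfying \eqref{eq:reLin-equationsforS=1} must satisfy $z_2=z_1^{-1}$, and the route you sketch rests on an incorrect congruence.

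It is not true that $\Li_n(z)\equiv\sum_{k=1}^{p-1}z^k/k^n \bmod p$. The correct statement, from Besser's finite polylogarithms, concerns $\Li_n^{(p)}(z)\coloneqq\Li_n(z)-p^{-n}\Li_n(z^p)$:
\[
\Li_n^{(p)}(z)\equiv \frac{1}{1-z^p}\li_n(\overline{z}) \bmod p, \qquad \li_n(x)=\sum_{k=1}^{p-1}x^k/k^n .
\]
The factor $1/(1-z^p)$ is essential. With your formula, $n=p-2$ would give $\overline{z_1}/(1-\overline{z_1})=\overline{z_2}/(1-\overline{z_2})$, hence $z_1=z_2$. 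That would exclude the genuine solutions $(\zeta,\zeta^{-1})$.

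The missing argument is short and needs no recovery of power sums. Since $z_i^p=z_i$, you have $\Li_n^{(p)}(z_i)=(1-p^{-n})\Li_n(z_i)$. So \eqref{eq:reLin-equationsforS=1} transfers to
\[
\frac{\li_n(\overline{z_1})}{1-\overline{z_1}}+(-1)^n\frac{\li_n(\overline{z_2})}{1-\overline{z_2}}=0 \quad\text{for } n\geq 2 .
\]
\begin{itemize}
\item For $p\geq5$ and $x\in\mathbb{F}_p\smallsetminus\{0,1\}$ one has $\li_{p-2}(x)=x/(1-x)$. Taking $n=p-2$ gives $(\overline{z_1}-\overline{z_2})(1-\overline{z_1}\,\overline{z_2})=0$, hence $z_2=z_1^{\pm1}$, since roots of unity in $\bZ_p$ are determined by their reductions.
\item If $z_1=z_2$, then $n=p-3$ together with $\li_{p-3}(x)=x(1+x)/(1-x)^2$ forces $z_1=z_2=-1$, which is again of the form $(\zeta,\zeta^{-1})$.
\item For $p\in\{2,3\}$, $-1$ is the only nontrivial root of unity in $\bZ_p$.
\end{itemize}
This treats diagonal and off-diagonal pairs uniformly. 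Your detour through \Cref{lem:logLi1eq} and \cite[Proposition~5.15]{BKL:chabauty-kim-sc} is therefore unnecessary.

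Your Galois-unstable argument has a second gap. You claim that \eqref{eq:reLi1-equationsforS=1} alone forces $\log(1-z_1)=\log(1-z_2)=0$; this is unjustified, for several reasons.
\begin{itemize}
\item The relation $\log(\sigma\pi_{\fl})\log(1-z_1)=\log(\pi_{\fl})\log(1-z_2)$ is quadratic in $p$-adic logarithms. Baker-type results on linear forms do not apply; this is a four-exponentials-type question.
\item Comparing valuations of the algebraic numbers $1-z_i$ and $\pi_{\fl}$ says nothing about the values of their $p$-adic logarithms.
\item Irrationality of $\log(\pi_{\fl})/\log(\sigma\pi_{\fl})$ would not suffice anyway.
\item Appealing to a period conjecture at this point would make the containment conditional, whereas the theorem asserts it unconditionally.
\end{itemize}
The correct order is to first establish $z_2=z_1^{-1}$ as above. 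Then $\Li_1(z_1^{-1})=\Li_1(z_1)$ by \Cref{coleman-sinnott}, because $\log(z_1)=0$. So \eqref{eq:reLi1-equationsforS=1} becomes $\log(\sigma\pi_{\fl}/\pi_{\fl})\,\Li_1(z_1)=0$. The first factor is nonzero: $\sigma\pi_{\fl}/\pi_{\fl}$ is a $p$-adic unit with nonzero $\fl$-adic valuation, hence not a root of unity. Therefore $\Li_1(z_1)=0$, and \Cref{lem:logLi1zeta6} gives $z_1\in\{\zeta_6^{\pm1}\}$. In the stable case the same equation holds automatically.

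Your converse direction under the period conjecture is fine; it is the construction of \Cref{thm:roots-of-unity}, equivalently the equality statement of \Cref{thm:refinedkim-functionsforS=1}.
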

\begin{proof}
    We determine the solutions of the equations \eqref{eq:relog-equationsforS=1}, \eqref{eq:reLi1-equationsforS=1}, \eqref{eq:reLin-equationsforS=1} for $X(\cO_K \otimes \bZ_p)_{S,\PL,\infty}^{(1)}$ from Theorem \ref{thm:refinedkim-functionsforS=1}.
    First note that $(z_1,z_2)$ satisfies \eqref{eq:relog-equationsforS=1} if and only if $z_1,z_2\ne 1$ are roots of unity in $\mathbb{Z}_p$.
    We show that $(z_1,z_2)$ in addition satisfies \eqref{eq:reLin-equationsforS=1} if and only if $z_2 = z_1^{-1}$, i.e.\ the pair is of the form $(\zeta,\zeta^{-1})$ with $\zeta\ne 1$ a root of unity in $\mathbb{Z}_p$. All such pairs satisfy \eqref{eq:reLin-equationsforS=1} by Coleman--Sinnott (Lemma \ref{coleman-sinnott}). Conversely, assume that $z_1, z_2$ are nontrivial roots of unity in~$\bZ_p$ satisfying \eqref{eq:reLin-equationsforS=1}. If $p = 2$ or $p=3$ then $\zeta = -1$ is the only nontrivial root of unity in $\mathbb{Z}_p$, so $(z_1,z_2)=(-1,-1)$ necessarily, which is of the claimed form.
    Assume $p\geq 5$. We use an argument involving finite polylogarithms \cite[Proposition~2.1]{besser:finite-polylogs} similar to the proof of \cite[Proposition~5.15]{BKL:chabauty-kim-sc}. We have a $\mathbb{Z}_p$-valued modified polylogarithm function
    \begin{equation}
    \label{eq:modifiedLin}
        \mathrm{Li}_n^{(p)}(z) \coloneqq \Li_n(z) - \frac1{p^n}\Li_n(z^p)
    \end{equation}
    whose mod-$p$ reduction is given by
    $$\mathrm{Li}_n^{(p)}(z)\equiv \frac{1}{1-z^p}\mathrm{li}_n(\overline{z})\bmod{p}, \quad \mathrm{li}_n(x)\coloneq \sum_{k=1}^{p-1}\frac{x^k}{k^n} \text{ for }x\in\mathbb{F}_p.$$
    Since $z_1$, $z_2$ are $(p-1)$st roots of unity satisfying~\eqref{eq:reLin-equationsforS=1} by assumption, we have 
    $$ \mathrm{Li}_n^{(p)}(z_1)+(-1)^n\mathrm{Li}_n^{(p)}(z_2)=0 \quad \text{for $n \geq 2$}, $$
    which implies that the mod-$p$ reductions $\overline{z_i} \in \mathbb{F}_p$ satisfy
    \begin{equation}
    \label{eq:lin}
        \frac{1}{1-\overline{z_1}}\mathrm{li}_n(\overline{z_1})+(-1)^n\frac{1}{1-\overline{z_2}}\mathrm{li}_n(\overline{z_2})=0 \quad \text{for $n \geq 2$}.
    \end{equation}
    We will use this with $n=p-2$ and $n=p-3$, where $p\geq 5$ guarantees $n\geq 2$. For any $x\in \mathbb{F}_p\smallsetminus\left\{0,1\right\}$, we have
    \begin{align}
    \label{eq:lip-2}
        \mathrm{li}_{p-2}(x)&=\sum_{k=1}^{p-1}\frac{x^k}{k^{p-2}}=\sum_{k=1}^{p-1}kx^k=x\frac{1-x^p}{(1-x)^2}= \frac{x(1-x)}{(1-x)^2}=\frac{x}{1-x},\\
    \label{eq:lip-3}
        \mathrm{li}_{p-3}(x)&=\sum_{k=1}^{p-1}\frac{x^k}{k^{p-3}}=\sum_{k=1}^{p-1}k^2x^k=x\frac{(1+x)(1-x^p)}{(1-x)^3}= x\frac{(1+x)(1-x)}{(1-x)^3}=\frac{x(1+x)}{(1-x)^2}.
    \end{align}
    Taking $n=p-2$ in \eqref{eq:lin} and using \eqref{eq:lip-2} we have
    $$\frac{\overline{z_1}}{(1-\overline{z_1})^2}-\frac{\overline{z_2}}{(1-\overline{z_2})^2}=0.$$
    This simplifies to $(\overline{z_1}-\overline{z_2})(1 - \overline{z_1}\overline{z_2}) = 0$, hence $\overline{z_1}=\overline{z_2}$ or $\overline{z_1}=\overline{z_2}^{-1}$. Since $z_1,\ z_2$ are roots of unity, $z_1=z_2$ or $z_1=z_2^{-1}$ holds in~$\bZ_p$. In fact we always have $z_1=z_2^{-1}$. Indeed, if $z_1=z_2$, then taking $n=p-3$ in~\eqref{eq:lin} and yields $\li_{p-3}(\overline{z_1}) = 0$, which implies $\overline{z_1} = -1$ by~\eqref{eq:lip-3}, so we have $z_1 = z_2 = -1$, which still satisfies $z_1=z_2^{-1}$. 
    
    In summary, we proved that the set of $(z_1,z_2) \in X(\bZ_p) \times X(\bZ_p)$ satisfying \eqref{eq:relog-equationsforS=1} and \eqref{eq:reLin-equationsforS=1} is precisely the set of all pairs $(\zeta,\zeta^{-1})$ where $\zeta\ne 1$ is a root of unity in~$\bZ_p$. It remains to determine which of these pairs also satisfy~\eqref{eq:reLi1-equationsforS=1}. Note that $\mathrm{Li}_1(\zeta)=\mathrm{Li}_1(\zeta^{-1})$. If $S$ is stable under the $\mathrm{Gal}(K/\mathbb{Q})$-action, then $\log(\pi_{\fl})= \log(\sigma\pi_{\fl})$ and \eqref{eq:reLi1-equationsforS=1} is automatically satisfied for $(\zeta,\zeta^{-1})$. Otherwise $\log(\pi_{\fl})\ne \log(\sigma\pi_{\fl})$, so \eqref{eq:reLi1-equationsforS=1} gives $\mathrm{Li}_1(\zeta)=0$. Then $\zeta$ must be a primitive sixth root of unity by \Cref{lem:logLi1zeta6}.
\end{proof}

\begin{rem}
\label{roots-of-unity-in-refined-locus}
    When $K$ is imaginary quadratic and $S$ is Galois-stable of size~$1$, we already saw in \Cref{thm:roots-of-unity} that the pairs $(\zeta,\zeta^{-1})$ for $\zeta \neq 1$ a root of unity in~$\bZ_p$ are contained in $X(\cO_K \otimes \bZ_p)_{S,\PL,\infty}$, assuming the period conjecture.
    The Galois-stable part of \Cref{S=1-refined1-equations} says that these pairs are even present in the \emph{refined} Chabauty--Kim locus $X(\cO_K \otimes \bZ_p)_{S,\PL,\infty}^{\min}$. In fact this can already be seen by inspecting the proof of \Cref{thm:roots-of-unity}, where an element $\xi \in \Sel \coloneqq \Sel_{S,\PL,\infty}^{\mot}(X)$ with $\loc_p(\xi) = j_p((\zeta,\zeta^{-1}))$ is constructed with the additional property that $x_1(\xi) = 0$, which in the case of $S$ being Galois-stable of size~1 expresses that $\xi$ is contained in the refined Selmer scheme $\Sel^{(1)} \subseteq \Sel$.
\end{rem}

\begin{thm}
\label{selmer-section-conjecture}
    The $S$-Selmer Section Conjecture \cite[Conj.~1.6]{BKL:chabauty-kim-sc} holds for $K$ an imaginary quadratic field, $S$ which is empty or contains precisely one prime which is not stable under the $\Gal(K/\bQ)$-action, and the curve $X=\mathbb{P}^1\smallsetminus\left\{0,1,\infty\right\}$.
\end{thm}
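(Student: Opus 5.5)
The plan is to apply \Cref{kim-implies-selmer-section-conjecture}. Concretely, we aim to show that for a Dirichlet-dense set of primes $\fp$ of~$K$ we have $X(\cO_{\fp})^{\min}_{S,\infty} = X(\cO_{K,S})$.

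The natural set of primes is the degree-one primes $\fp$ lying over rational primes $p$ that split in~$K$, not divisible by a prime of~$S$, and with $p \not\equiv 1 \bmod 3$. These have Dirichlet density $1/4$ among all primes of~$K$, since the split primes with $p\not\equiv 1 \bmod 3$ form a positive-density set of rational primes. If $K=\bQ(\zeta_3)$, every split prime is $\equiv 1 \bmod 3$, so we instead take all split $p$ and treat that field separately. In every case the target set $X(\cO_{K,S})$ is read off from \Cref{S=1-stable-solutions} (for $S$ empty or $S$ unstable it is $\emptyset$, except $\{\zeta_6^{\pm1}\}$ when $K=\bQ(\zeta_3)$). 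The chain of inclusions $X(\cO_{K,S}) \subseteq X(\cO_{\fp})^{\min}_{S,\infty} \subseteq X(\cO_{\fp})^{\min}_{S,\PL,n}$ means it suffices to bound polylogarithmic refined loci.

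For $S=\emptyset$, under \Cref{cond:F2} the refined and unrefined loci agree. \Cref{kims-conjecture-p-not-1-mod-3} then gives the empty locus in depth~1 for $p\not\equiv1\bmod3$ and $K\neq\bQ(\zeta_3)$. For $K=\bQ(\zeta_3)$, \Cref{integral-points-locus-depth1} gives the single-prime locus $\{\zeta_6,\zeta_6^{-1}\} = X(\bZ[\zeta_3])$ unconditionally. If \Cref{cond:F2} fails, the refined Selmer scheme is empty and the claim is vacuous.

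For $S=\{\fl\}$ unstable, we decompose the refined locus into the three pieces $\Sigma=(0),(1),(\infty)$. Using \Cref{inversion-action-on-refined-loci}, the pieces $(0)$ and $(\infty)$ are swapped by $z\mapsto 1/z$. The piece $(1)$ is bounded unconditionally (the containment direction needs no period conjecture) by \Cref{S=1-refined1-equations}. It is empty when $p\not\equiv 1\bmod 3$, and it is $\{(\zeta_6,\zeta_6^{-1}),(\zeta_6^{-1},\zeta_6)\}$ otherwise, which matches the integral points only when $K=\bQ(\zeta_3)$.

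For the piece $(0)$ we need equations in the same style. Here $y_{\fl}=0$, so in depth~1 the $\tau$-part gives $\Li_1(z_1)=\Li_1(z_2)=0$. The $L_0$-coordinates give $\log(\sigma\pi_{\fl})\log z_1=\log(\pi_{\fl})\log z_2$. Higher-depth terms with $x_{\fl}\neq 0$ involve unknown periods, so I would try to stay in low depth. From $\Li_1(z_i)=0$, each $1-z_i$ is a root of unity, so $z_i = 1-\eta_i$. We must then exclude all such pairs except $S$-integral ones, which requires $\log(1-\eta_1)/\log(1-\eta_2)=\log(\pi_{\fl})/\log(\sigma\pi_{\fl})$.

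I expect this exclusion to be the main obstacle. As a first attempt, I would try to exploit that $\pi_{\fl}$ is not a unit times a Galois-invariant element, so the ratio of logarithms is not equal to the ratio for cyclotomic units. If that fails, I would add the depth-2 relation $L_2(z_1)+L_2(z_2)=0$, which is unconditional since it only uses $c_{\sigma_2}'=-c_{\sigma_2}$, and use the finite-polylogarithm reduction trick from the proof of \Cref{S=1-refined1-equations} to force $\eta_1,\eta_2$ into a small set. Finally, I would check that whatever survives is empty (or $\zeta_6^{\pm1}$ for $\bQ(\zeta_3)$), possibly after shrinking to a still-dense set of primes~$p$ defined by congruence conditions.
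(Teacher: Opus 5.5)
Your proposal has a genuine gap in the Galois-unstable case. Your bound on the $\Sigma=(1)$ piece via \Cref{S=1-refined1-equations} is fine, but the pieces $\Sigma=(0)$ and $(\infty)$ are left open, and the routes you sketch would not close them.

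\begin{itemize}
\item Excluding $\log(1-\eta_1)\log(\sigma\pi_{\fl})=\log(1-\eta_2)\log(\pi_{\fl})$ is a vanishing-determinant statement about $p$-adic logarithms, of four-exponentials type. It is out of reach of Baker--Brumer, and you would need it uniformly in $p$, $K$ and $\fl$.
\item The finite-polylogarithm trick from the proof of \Cref{S=1-refined1-equations} needs $z_1,z_2$ to be roots of unity. It also needs the period-free relations $\Li_n(z_1)+(-1)^n\Li_n(z_2)=0$ for all $n\geq 2$ (it uses $n=p-2,p-3$). On the $(0)$-piece, however, $z_i=1-\eta_i$ and $x_{\fl}\neq 0$, so beyond depth~$2$ the equations involve unknown periods.
\item \Cref{inversion-action-on-refined-loci} only exchanges $(0)$ and $(\infty)$. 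The polylogarithmic locus is not $S_3$-stable, so nothing there links these pieces to $(1)$.
\end{itemize}

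The missing idea is to pass to the full fundamental group \emph{before} decomposing by refinement conditions. There the refined locus $X(\cO_{\fp})^{\min}_{S,\infty}$ is $S_3$-stable, and $S_3$ permutes the three conditions transitively. So this locus is the union of the $S_3$-translates of
\[
X(\cO_{\fp})^{(1)}_{S,\infty}\subseteq X(\cO_{\fp})^{(1)}_{S,\PL,\infty}\subseteq Z(K_{\fp}),
\qquad Z=\{\zeta_6,\zeta_6^{-1}\}.
\]
The set $Z$ is $S_3$-stable because $1-\zeta_6=\zeta_6^{-1}=1/\zeta_6$. So no equations for $(0)$ or $(\infty)$ are needed at all.

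Second, the set of primes you feed into \Cref{kim-implies-selmer-section-conjecture} is too thin when $K\neq\bQ(\zeta_3)$, for $S$ empty as well as unstable. Primes over split $p\not\equiv 1\bmod 3$ have Dirichlet density $1/2$ among primes of $K$ (not $1/4$, since each such $p$ gives two primes). The criterion needs a set of density one, which is why the paper points out that its set $\fP$ of all primes over split $p$ has density $1$. At split $p\equiv 1\bmod 3$ you cannot get Kim's Conjecture from polylogarithmic computations: the bound there is $Z(K_{\fp})=\{\zeta_6,\zeta_6^{-1}\}$, while $X(\cO_{K,S})=\emptyset$.

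The paper avoids needing Kim's Conjecture at those primes. It proves only $X(\cO_{\fp})^{\min}_{S,\infty}\subseteq Z(K_{\fp})$ for all $\fp\in\fP$. Via \cite[§2.3]{BKL:chabauty-kim-sc} it deduces that every element $x$ of the finite descent locus has $x_{\fp}\in Z(K_{\fp})$ for $\fp\in\fP$. It then gets $x\in Z(K)$ from Theorem~2.9 of loc.\ cit.\ and concludes with Theorem~F. (For $K\neq\bQ(\zeta_3)$, a single prime at which the refined locus is empty would also empty the finite descent locus by the same argument. That is a different deduction from the one you propose, though.)
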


\begin{proof}
     Note that the set $\fP$ of primes $\mathfrak{p} \notin S$ in $K$ whose underlying rational prime~$p$ splits completely in $K$ has Dirichlet density $1$. By \Cref{integral-points-locus} (for $S = \emptyset$) and \Cref{S=1-refined1-equations} (for $S = \{\fl\}$ not Galois-stable), the refined Chabauty--Kim locus $X(\mathcal{O}_{\mathfrak{p}})_{S,\PL,\infty}^{(1)}$ is a subset of $Z(K_{\fp})$ for all $\fp \in \fP$, where $Z\subseteq X$ is the reduced finite subscheme consisting of $\zeta_6,\zeta_6^{-1}$. Passing from the polylogarithmic quotient to the full fundamental group, we in particular get an inclusion $X(\cO_{\fp})_{S,\infty}^{(1)} \subseteq Z(K_{\fp})$. Taking $S_3$-orbits we get the full refined locus $X(\cO_{\fp})_{S,\infty}^{\min}$. But $Z(K_{\fp})$ is $S_3$-stable since $1 - \zeta_6 = \zeta_6^{-1} = 1/\zeta_6$, hence we find
     \[ X(\mathcal{O}_{\mathfrak{p}})_{S,\infty}^{\min}\subseteq Z(K_{\mathfrak{p}}) \]
     for all $\fp \in \fP$. As explained in \cite[§2.3]{BKL:chabauty-kim-sc}, this implies that every element $x = (x_{\fp})_{\fp}$ of the finite descent locus $X(\bA_{K,S})^{\fcov}_{\bullet}$ satisfies $x_{\fp} \in Z(K_{\fp})$ for $\fp \in \fP$, and this implies $x \in Z(K)$ by Theorem~2.9 of loc.\ cit. The sufficiency of the finite descent obstruction implies the $S$-Selmer Section Conjecture by Theorem~F of loc.\ cit.
\end{proof}

\begin{rem}
    \label{rem:selmer-section-conjecture}
    The $S$-Selmer Section Conjecture is known to hold for $X = \bP^1 \smallsetminus \{0,1,\infty\}$ over imaginary quadratic fields for any set~$S$ \cite[Corollary~6]{stix:birationalSC}, so the instances provided by \Cref{selmer-section-conjecture} are not new. However, \Cref{selmer-section-conjecture} shows that the strategy of using Chabauty--Kim computations to obtain results on the Section Conjecture is viable beyond the case $K = \bQ$.
\end{rem}

We now consider the refinement conditions $\Sigma = (0)$ and $\Sigma = (\infty)$ and focus on the case that $S = \{\fl\}$ is Galois-stable. In fact it is enough to consider only $\Sigma = (0)$, since $X(\cO_K \otimes \bZ_p)_{S,\PL,n}^{(0)}$ and $X(\cO_K \otimes \bZ_p)_{S,\PL,n}^{(\infty)}$ are related to one another by the inversion map $z \mapsto 1/z$ thanks to \Cref{inversion-action-on-refined-loci}. 

\begin{thm}
\label{S=1-refined0-equations}
    Let $K$ be an imaginary quadratic field and suppose $S = \{\fl\}$ is Galois-stable. Let $p$ be a rational prime not divisible by~$\fl$ which splits in~$K$. Assume \Cref{cond:F2}. Then, for all $1 \leq n \leq \infty$, the depth-$n$ refined polylogarithmic Chabauty--Kim locus $X(\cO_K \otimes \bZ_p)_{S,\PL,n}^{(0)}$ is the subset of the unrefined locus $X(\cO_K \otimes \bZ_p)_{S,\PL,n}$ consisting of pairs $(z_1,z_2)$ satisfying the additional equations
    \begin{equation}
    \label{eq:S=1-refined0-depth1}
       \log(1-z_1) = 0 \quad \text{ and } \quad \log(1-z_2) = 0. 
    \end{equation}
\end{thm}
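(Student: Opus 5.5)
By \Cref{def:refined-selmer-scheme}, the refinement condition $\Sigma = (0)$ cuts $\Sel^{\mot,(0)}_{S,\PL,n}(X)$ out of the unrefined Selmer scheme by the single linear equation $y_{\fl} = 0$. The plan is to show that the image of this closed subscheme under $\ev_{\eps_1} \times \ev_{\eps_2}$ is exactly the intersection of the unrefined image with the two hyperplanes $L_1^{(1)} = 0$ and $L_1^{(2)} = 0$. Pulling back along $j_p$ then gives the claim, since $L_1(z) = \Li_1(z) = -\log(1-z)$.

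For the inclusion into these hyperplanes, I would use the explicit formulas \eqref{eq:S=1-stable-ev-eps1} and \eqref{eq:S=1-stable-ev-eps2}, or in arbitrary depth \Cref{cocycle-evaluation-map} with $d_1 = 1$. Since $S$ is Galois-stable, $\sigma^*\tau_{\fl} = \tau_{\fl}$. So the $e_1$-coefficient of $\ev_{\eps_i}(\xi)$ is $c_{\tau_{\fl}}y_{\fl}$ for both $i = 1,2$. Moreover, $y_{\fl}$ occurs nowhere else in the formulas: the $\tau$-only terms $c'_{\tau\cdots\tau}$ in \Cref{cocycle-evaluation-map} all vanish when $d_1 = 1$, because there is no index $j < i_n$ and no sequence with $i_{n-1} > i_n$. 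Hence $y_{\fl} = 0$ forces $L_1^{(1)} = L_1^{(2)} = 0$.

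For the reverse inclusion, take a point in the unrefined image that also satisfies $L_1^{(1)} = L_1^{(2)} = 0$, and let $\xi$ be a preimage. Then $c_{\tau_{\fl}}y_{\fl}(\xi) = 0$. By \Cref{tau-coeffs}, $c_{\tau_{\fl}} = \log(\pi_{\fl}) \neq 0$, because $\pi_{\fl}$ is not a root of unity and the $p$-adic logarithm is injective on $\cO_{K,S}^\times \otimes \bQ$. Therefore $y_{\fl}(\xi) = 0$, so $\xi$ lies in $\Sel^{(0)}$. The other coordinates of $\ev_{\eps_i}(\xi)$ do not involve $y_{\fl}$, so they are unchanged and the image point is the same. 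This step is unconditional, which is consistent with the statement not invoking the period conjecture.

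The main obstacle is the bookkeeping in infinite depth: one must confirm that in the Galois-stable case with $d_1 = 1$, the coordinate $y_{\fl}$ enters $\ev_{\eps_1}$ and $\ev_{\eps_2}$ only through $L_1$. I would check this from the formula for $k_n$ in \Cref{cocycle-evaluation-map}, together with the fact that $\sigma^*$ preserves the span of the $\tau_{\fl}$-nested commutators. The compatibility of the identification \eqref{eq:decompositionSelmin} with the étale refined scheme is already given by \Cref{refined-selmer-scheme-comparison}.
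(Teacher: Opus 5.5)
Your proposal is correct and follows the same route as the paper. The refinement condition $y_{\fl}=0$ is converted into $L_1^{(1)}=L_1^{(2)}=0$, i.e.\ $\log(1-z_1)=\log(1-z_2)=0$, using $L_1^{(i)}=c_{\tau_{\fl}}y_{\fl}$ for both $i$ (because $\sigma^*\tau_{\fl}=\tau_{\fl}$) and $c_{\tau_{\fl}}\neq 0$. Your extra check that $y_{\fl}$ enters $\ev_{\eps_1}\times\ev_{\eps_2}$ only through $L_1$, which the paper leaves implicit, is what makes the reverse inclusion rigorous; it also covers scheme-theoretic images, since the $y_{\fl}$-coordinate splits off as a product factor.
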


\begin{proof}
    Recall in the proof of \Cref{S=1-stable-equations}, the evaluation maps up to depth $1$ are given by
    $$\ev_{\eps_i}:(x_{\fl},y_{\fl})\mapsto (c_{\tau_\fl}x_{\fl},c_{\tau_\fl}y_{\fl})\quad \text{ for }i=1,2.$$

    So $y_{\fl}=L_1^{(1)}/c_{\tau_\fl}=L_1^{(2)}/c_{\tau_\fl}$. The refined condition at $\Sigma=(0)$ is $y_{\fl}=0$, which is $L_1^{(1)}=L_1^{(2)}=0$. Pulling back the equations to $X(\mathbb{Z}_p)\times X(\mathbb{Z}_p)$ gives $\log(1-z_1)=0$ and $\log(1-z_2)=0$.
\end{proof}

Note that the loci $X(\cO_K \otimes \bZ_p)_{S,\PL,n}^{(0)}$ in \Cref{S=1-refined0-equations} do not depend on~$K$ and~$S$ for $n = 1,2$. They also agree with the loci $X(\cO_K \otimes \bZ_p)_{S,n}^{(0)}$ for the full depth-$n$ quotient of the fundamental group since $\Pi_{\PL,n} = \Pi_n$ for $n=1,2$. 

\begin{thm}
\label{S=1-refined0-depth1and2}
    In the setting of \Cref{S=1-refined0-equations}, the loci $X(\cO_K \otimes \bZ_p)_{S,n}^{(0)}$ for $n=1,2$ are given as follows:
    \begin{itemize}
        \item in depth~$n=1$:
        \[ X(\cO_K \otimes \bZ_p)_{S,1}^{(0)} = \bigcup_{1 \neq \zeta \in \mu(\bZ_p)} \{ (1-\zeta,1-\zeta), (1-\zeta,1-\zeta^{-1}) \}, \]
        where the union is over all nontrivial roots of unity in~$\bZ_p$;
        \item in depth~$n=2$:
        \begin{align*}
            X(\cO_K \otimes \bZ_p)_{S,2}^{(0)} &\subseteq \{ (1-\zeta,1-\zeta^{-1}) : 1 \neq \zeta \in \mu(\bZ_p) \}\\
            & \qquad \cup \{(1-\zeta,1-\zeta) : \pm 1 \neq \zeta \in \mu(\bZ_p) \textnormal{ s.\,t.\ $\Li_2(\zeta) = 0$}\}, 
        \end{align*}
        with equality assuming the $p$-adic period conjecture. The second set in the union is empty for all primes $p  < 2{,}000$.
    \end{itemize}
\end{thm}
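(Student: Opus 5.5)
By \Cref{S=1-refined0-equations}, the depth-$n$ locus $X(\cO_K \otimes \bZ_p)_{S,n}^{(0)}$ for $n = 1,2$ is cut out inside $X(\bZ_p)\times X(\bZ_p)$ by the unrefined equations of \Cref{S=1-stable-equations} together with \eqref{eq:S=1-refined0-depth1}. In depth~2 this is an equality under the period conjecture; in depth~1 it is an equality unconditionally, since no higher periods enter.

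\emph{Depth 1.} The equations are $\log(z_1)=\log(z_2)$, $\log(1-z_1)=\log(1-z_2)=0$. The vanishing of $\log(1-z_i)$ says that $1-z_i$ is a root of unity in $\bZ_p$, so $z_i = 1-\zeta_i$ with $\zeta_i \neq 1$ (because $z_i\neq 0$). I then need to determine when $\log(1-\zeta_1)=\log(1-\zeta_2)$. Choose a $p$-th power Frobenius-invariant description, or more concretely reduce modulo~$p$. Since the $\zeta_i$ are Teichmüller lifts, $\log(1-\zeta)=\log\bigl((1-\zeta)/\omega(1-\bar\zeta)\bigr)$. Equality of logarithms means $(1-\zeta_1)/(1-\zeta_2)$ is a root of unity~$\eta$.

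This is the same situation as \Cref{lem:logLi1eq} applied to $1-z_i$. That lemma says $(1-z_1, 1-z_2)$ is either diagonal or of the form $\bigl(\frac{1-\eta'}{\zeta'-\eta'}, \zeta'\frac{1-\eta'}{\zeta'-\eta'}\bigr)$; equivalently, applying it after the substitution $z\mapsto 1-z$, the pair satisfies both $\log$ equations. I would rather argue directly. The points $\zeta_1$, $\zeta_2$, $1-\zeta_1$, $1-\zeta_2$ are algebraic, and $(1-\zeta_1)/(1-\zeta_2) = \eta$ is a unit-circle equation in~$\bC$ after choosing an embedding of the cyclotomic field. This gives $|1-\zeta_1| = |1-\zeta_2|$, i.e.\ $\zeta_2 = \zeta_1^{\pm1}$ under the embedding, which is compatible across Galois conjugates. (If a Galois-conjugate argument is needed, note that the embedding is arbitrary, but the relation $\zeta_2=\zeta_1^{\pm 1}$ is an algebraic statement.) Conversely, $(1-\zeta)/(1-\zeta^{-1}) = -\zeta$ is a root of unity, so both types of pairs are solutions. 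This gives exactly the stated union.

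\emph{Depth 2.} I add $L_2(z_1)+L_2(z_2)=0$. For off-diagonal pairs $(1-\zeta,1-\zeta^{-1})$ this holds by \Cref{S=1-stable-depth2}. For diagonal pairs $(1-\zeta,1-\zeta)$ the condition is $L_2(1-\zeta)=0$, which by \Cref{L2-reflection-formula} is $L_2(\zeta)=0$. Since $\log\zeta=0$, this is equivalent to $\Li_2(\zeta)=0$. The value $\zeta=-1$ gives the pair $(2,2)$, already of the first type. The containment, and the equality under the period conjecture, are inherited from \Cref{S=1-refined0-equations}.

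Finally, the claim that no $\zeta\neq\pm1$ has $\Li_2(\zeta)=0$ for $p<2{,}000$ is a finite computation of $\Li_2$ at the $(p-1)$-st roots of unity. It could be done using \eqref{eq:modifiedLin}-type reductions, but in practice it is checked in SageMath. The step I expect to be the main obstacle is making the root-of-unity classification in depth~1 rigorous across all embeddings; the cleanest route is to mimic the proof of \Cref{lem:logLi1zeta6}.
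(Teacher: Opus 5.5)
Your proposal is correct and is essentially the paper's argument. In depth~1 you write $z_i=1-\zeta_i$, use that $z_1/z_2$ is a root of unity, and compare complex absolute values after embedding $\bQ(\mu_{p-1})\subset\bQ_p$ into $\bC$. In depth~2 you use $L_2(1-z)+L_2(1-z^{-1})=0$, $L_2(1-z)=-L_2(z)$ and $\log\zeta=0$, exactly as the paper does.

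The ``obstacle'' you flag about embeddings is not a real one. A single embedding suffices because it is injective: $\zeta_2=\zeta_1^{\pm1}$ in $\bC$ already gives $\zeta_2=\zeta_1^{\pm1}$ in $\bZ_p$. The digressions about Teichmüller lifts and \Cref{lem:logLi1eq} can simply be dropped.
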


\begin{proof}
   In depth $n=1$, both $1-z_1$ and $1-z_2$ are roots of unity by \eqref{eq:S=1-refined0-depth1}. So we write $z_1=1-\zeta_1$ and $z_2=1-\zeta_2$ for roots of unity $\zeta_1,\zeta_2\ne 1$ in $\mathbb{Z}_p$. By \eqref{eq:S=1-stable-log}, $z_1/z_2$ is a root of unity, denoted by $\eta$. Then $1-\zeta_1=\eta(1-\zeta_2)$, and hence $|1-\zeta_1|=|1-\zeta_2|$ with respect to the complex norm after embedding into $\mathbb{C}$. Geometrically, $\zeta_1$ and $\zeta_2$ which lie on the unit circle have the same distance from $1$, which implies that either $\zeta_2=\zeta_1$ or $\zeta_2=\zeta_1^{-1}$. So we get the desired result.

   In depth $n=2$, for a root of unity $\zeta\ne 1$, $(z_1,z_2)=(1-\zeta,1-\zeta^{-1})$ always satisfies \eqref{eq:S=1-depth-2} by the functional equation $L_2(1-z)+L_2(1-z^{-1})=0$ as we discussed in the proof of \Cref{S=1-stable-depth2}, and thus belongs to $X(\cO_K \otimes \bZ_p)_{S,2}^{(0)}$ assuming the $p$-adic period conjecture. For a root of unity $\zeta\ne \pm 1$, $(z_1,z_2)=(1-\zeta,1-\zeta)$ satisfies \eqref{eq:S=1-depth-2} if and only if $\Li_2(\zeta)=0$ by the functional equation $L_2(z)+L_2(1-z)=0$, which is checked to be impossible for $p<2{,}000$.
\end{proof}

To determine the loci $X(\cO_K \otimes \bZ_p)_{S,\PL,n}^{(0)}$ for $n=3,4$ we need to know the value of the constants $c_3$ and $c_4$ from~\eqref{eq:c3-c4}, which we achieve in the cases $K = \bQ(i)$, $S = \{(1-i)\}$, and $K = \bQ(\zeta_3)$, $S = \{(1-\zeta_3)\}$, using \Cref{S=1-stable-coeffs-from-known-solution} with $z = 1-i$ and $z = 1- \zeta_3$, respectively.

\begin{thm}
\label{Qzeta3-refined0-depth3and4}
    Let $K = \bQ(\zeta_3)$, $S = \{(1-\zeta_3)\}$, and let $p$ be a rational prime with $p \equiv 1 \bmod 3$. Then we have
    \begin{align*}
        X(\cO_K \otimes \bZ_p)_{S,\PL,3}^{(0)} &= \bigl\{ (\zeta_6,\zeta_6^{-1}), \; (\zeta_6^{-1},\zeta_6),\; (1-\zeta_3, 1-\zeta_3^{-1}),\; (1-\zeta_3^{-1},1-\zeta_3),\; (2,2) \bigr\},\\
        X(\cO_K \otimes \bZ_p)_{S,\PL,4}^{(0)} &= \bigl\{ (\zeta_6,\zeta_6^{-1}), \; (\zeta_6^{-1},\zeta_6),\; (1-\zeta_3, 1-\zeta_3^{-1}),\; (1-\zeta_3^{-1},1-\zeta_3) \bigr\} \\
        &= X(\cO_{K,S})_{(0)},
    \end{align*}
    for all $p < 2{,}000$.
\end{thm}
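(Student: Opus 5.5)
The plan is to start from the depth-2 description in \Cref{S=1-refined0-depth1and2} and intersect it with the depth-3 and depth-4 equations of \Cref{S=1-stable-equations}. By \Cref{S=1-refined0-equations}, the refined locus $X(\cO_K \otimes \bZ_p)_{S,\PL,n}^{(0)}$ is the unrefined locus cut by $\log(1-z_1)=\log(1-z_2)=0$. So we only need to test the finitely many candidates from depth~2:
\begin{itemize}
\item the off-diagonal pairs $(1-\zeta,1-\zeta^{-1})$ with $1\neq\zeta\in\mu(\bZ_p)$;
\item the diagonal pair $(2,2)$, which is the case $\zeta=-1$ of the first family;
\item possibly pairs $(1-\zeta,1-\zeta)$ with $\Li_2(\zeta)=0$, which \Cref{S=1-refined0-depth1and2} shows do not occur for $p<2{,}000$.
\end{itemize}
Since $p\equiv 1\bmod 3$, the depth-2 candidate set contains $p-2$ pairs. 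For $\zeta=\zeta_6^{\pm1}$ we have $1-\zeta_6=\zeta_6^{-1}$, so these candidates include $(\zeta_6,\zeta_6^{-1})$ and $(\zeta_6^{-1},\zeta_6)$. The case $\zeta=\zeta_3^{\pm1}$ gives $(1-\zeta_3,1-\zeta_3^{-1})$ and its swap.

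Next we make the depth-3 and depth-4 equations explicit. Following the discussion after \Cref{S=1-stable-coeffs-from-known-solution}, we apply that lemma with $z=1-\zeta_3$, so that $\sigma z=1-\zeta_3^{-1}$. The hypotheses $\log(z)=\tfrac12\log 3\neq0$, $L_2(z)\neq0$ and $L_3(z)+L_3(\sigma z)\neq0$ are checked numerically in $\bQ_p$ for each $p$. The lemma then gives $c_{\sigma_2},c_{\sigma_3}\neq0$ and explicit values of $c_3,c_4$ via \eqref{eq:S=1-stable-coeffs-from-known-solutions}. Consequently the depth-3 and depth-4 loci are exactly cut out by \eqref{eq:S=1-depth-3-simple} and \eqref{eq:S=1-depth-4-simple}. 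This avoids the period conjecture here, since only $c_{\sigma_2},c_{\sigma_3}\neq0$ is used. Equality in depth~2 also only needs $c_{\sigma_2}\neq0$, and this gives equality for the stated sets rather than mere containment.

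Then we sort the candidates, starting with those forced to lie in the locus.
\begin{itemize}
\item The four $S$-integral points lie in $X(\cO_{K,S})_{(0)}$, since $1-z$ is a unit. They therefore lie in every refined locus by the Chabauty--Kim diagram.
\item $(2,2)$ satisfies depth~3 automatically. Being diagonal with $L_2(2)=0$, both sides of \eqref{eq:S=1-depth-3-simple} vanish. So it belongs to the depth-3 locus.
\item For depth~4, $(2,2)$ survives if and only if $L_4(2)=c_4\log(2)L_3(2)$. We expect this to fail by direct computation for each $p$.
\end{itemize}
For the remaining off-diagonal candidates $(1-\zeta,1-\zeta^{-1})$ with $\zeta\notin\{-1,\zeta_3^{\pm1},\zeta_6^{\pm1}\}$, we evaluate the left-minus-right side of \eqref{eq:S=1-depth-3-simple} to sufficient $p$-adic precision. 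We expect it to be nonzero, eliminating all of them already in depth~3, consistent with \Cref{S=1-stable-off-diag-depth3}.

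The main obstacle is that these eliminations are not proved conceptually but by computation for each $p<2{,}000$, using the SageMath code. The delicate point is certifying nonvanishing with sufficient $p$-adic precision, and certifying that $L_4(2)-c_4\log(2)L_3(2)\neq0$ for every such $p$. I would compute all quantities via Coleman integration with controlled precision, and confirm the nonvanishing by finding a valuation bound below the working precision.
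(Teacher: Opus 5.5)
Your proposal follows the paper's proof: apply \Cref{S=1-stable-coeffs-from-known-solution} with $z=1-\zeta_3$ to obtain $c_3,c_4$ (together with $c_{\sigma_2},c_{\sigma_3}\neq0$), then filter the depth-2 refined candidates of \Cref{S=1-refined0-depth1and2} through \eqref{eq:S=1-depth-3-simple} and \eqref{eq:S=1-depth-4-simple} by computer. One small correction: exactness of the depth-4 equations would also need $c_{\sigma_4}\neq0$, which the lemma does not give. Your argument does not depend on this, because in depth~4 the computed solution set is $X(\cO_{K,S})_{(0)}$, which lies in the locus unconditionally; exactness is used only to place $(2,2)$ in the depth-3 locus, and that needs only $c_{\sigma_3}\neq0$.
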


\begin{proof}
    Let $z=1-\zeta_3$. We have $\log(z)\ne 0$, and we checked $L_2(z)\ne 0$ and $L_3(z)+L_3(\sigma z)\ne 0$ for $p<2{,}000$. Then \Cref{S=1-stable-coeffs-from-known-solution} determines $c_3$, $c_4$, and we use the computer to filter depth $2$ solutions described in \Cref{S=1-refined0-depth1and2} through the depth $3$, $4$ equations \eqref{eq:S=1-depth-3-simple}, \eqref{eq:S=1-depth-4-simple}.
\end{proof}

Since $X(\cO_K \otimes \bZ_p)_{S,\PL,n}^{(0)} \supseteq X(\cO_{K,S})_{(0)}$ holds for all $n \geq 1$ by construction, the equality for $n = 4$ in \Cref{Qzeta3-refined0-depth3and4} implies the equality for all $n \geq 4$. It is interesting to observe that the loci $X(\cO_K \otimes \bZ_p)_{S,\PL,n}^{(0)}$ are finite already in depth~1 and yet it takes three more steps in the depth filtration to reach $X(\cO_{K,S})_{(0)}$ in depth~$4$.

\begin{thm}
\label{Qi-refined0-depth3and4}
    Let $K = \bQ(i)$, $S = \{(1-i)\}$, and let $p$ be a rational prime with $p \equiv 1 \bmod 4$. Then we have
    \begin{align*}
        X(\cO_K \otimes \bZ_p)_{S,\PL,3}^{(0)} &= X(\cO_K \otimes \bZ_p)_{S,\PL,4}^{(0)} \\
        &= \bigl\{ (1-i,1+i),\; (1+i,1-i),\; (2,2) \bigr\} \sqcup \bigl\{ (\zeta_6,\zeta_6^{-1}) : \zeta_6 \in \mu_6^{\prim}(\bZ_p) \bigr\} \\
        &= X(\cO_{K,S})_{(0)} \sqcup \bigl\{ (\zeta_6,\zeta_6^{-1}) : \zeta_6 \in \mu_6^{\prim}(\bZ_p) \bigr\},
    \end{align*}
    for all $p < 2{,}000$.
\end{thm}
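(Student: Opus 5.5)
The proof will follow the template of \Cref{Qzeta3-refined0-depth3and4}: determine the constants $c_3,c_4$, then filter the depth-$2$ refined locus through the depth-$3$ and depth-$4$ equations.

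\emph{Step 1: constants.} Take the $S$-integral point $z = 1-i$. In $\cO_{K,S}^\times\otimes\bQ$ we have $1-i \sim \pi_{\fl}$, so $\log(z)=\log(1-i)\neq 0$ in $\bQ_p$. For each split $p<2{,}000$ (i.e.\ $p\equiv 1 \bmod 4$, with $i\in\bZ_p$ fixed via $\fp_1$), I would verify numerically that $L_2(1-i)\neq 0$ and $L_3(1-i)+L_3(1+i)\neq 0$. Then \Cref{S=1-stable-coeffs-from-known-solution} gives $c_{\sigma_2},c_{\sigma_3}\neq 0$ and explicit formulas for $c_3$ and $c_4$. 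Because these are nonzero, the locus is cut out exactly by the equations of \Cref{S=1-stable-equations} together with \eqref{eq:S=1-refined0-depth1}, and no period conjecture assumption is needed.

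\emph{Step 2: candidates.} By \Cref{S=1-refined0-depth1and2} and the check that $\Li_2(\zeta)\neq 0$ for $\zeta\neq\pm1$, the depth-$2$ refined locus is contained in $\{(1-\zeta,1-\zeta^{-1}) : 1\neq\zeta\in\mu(\bZ_p)\}$. I would then test each candidate against \eqref{eq:S=1-depth-3-simple} and \eqref{eq:S=1-depth-4-simple}.

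Two families survive for structural reasons, which explains the answer:
\begin{itemize}
\item $\zeta=-1$ gives $(2,2)$, and $\zeta=\pm i$ gives $(1\mp i,1\pm i)$. These are the images of the $S$-integral points $2$ and $1\mp i$, which have reduction $0$ at $\fl$, so they lie in every locus.
\item $\zeta$ a primitive sixth root of unity gives $1-\zeta=\zeta^{-1}$, so the candidate is the pair $(\zeta^{-1},\zeta)$. By \Cref{thm:roots-of-unity}, these pairs lie in the unrefined locus. Their refined membership follows because $\log(1-z_i)=0$ holds for them.
\end{itemize}
For these sixth-root pairs one can also confirm the equations directly: $\log(z_1)=0$ kills the right-hand sides, and $L_n(z_1)+(-1)^nL_n(z_1^{-1})=0$ by \Cref{coleman-sinnott-for-Ln}.

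\emph{Main step: elimination.} For the remaining $\zeta$, the pair has $\log(z_1)\neq0$ generically, and the depth-$3$ equation $L_3(1-\zeta)-L_3(1-\zeta^{-1}) = c_3\log(1-\zeta)L_2(1-\zeta)$ should fail. This is the main obstacle: there is no conceptual reason for it to fail, so it must be verified by $p$-adic computation for every $p<2{,}000$, using the SageMath code already employed for \Cref{S=1-stable-depth4}. I would first try to confirm that the depth-$3$ equation alone removes every other candidate. This would show that the depth-$3$ and depth-$4$ loci coincide, since the survivors satisfy the depth-$4$ equation by the argument above.

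Finally, the identification $X(\cO_{K,S})_{(0)}=\{2,1\pm i\}$ follows by reading off from \Cref{S=1-stable-solutions}\ref{item:Qi} the points reducing to $0$ mod $(1-i)$, i.e.\ those with positive valuation at $\fl$: $2$, $1+i$, $1-i$.
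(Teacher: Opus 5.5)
Your approach is the paper's: determine $c_3,c_4$ from the $S$-integral point $z=1-i$ via \Cref{S=1-stable-coeffs-from-known-solution}, then filter the $p-2$ depth-$2$ candidates $(1-\zeta,1-\zeta^{-1})$ numerically through the depth-$3$ and depth-$4$ equations. Your handling of the $S$-integral survivors $(2,2),(1\mp i,1\pm i)$ and your identification of $X(\cO_{K,S})_{(0)}$ are correct.

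There is one genuine gap. You claim that $c_{\sigma_2},c_{\sigma_3}\neq 0$ suffice for the equations of \Cref{S=1-stable-equations} to cut out the locus exactly, so that no period-conjecture input is needed. You then place the sixth-root pairs in the locus by citing \Cref{thm:roots-of-unity}, which is itself conditional.

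In depth~$4$, exactness also needs $c_{\sigma_4}\neq 0$. At $(\zeta_6,\zeta_6^{-1})$ we have $\log(\zeta_6)=\Li_1(\zeta_6)=0$. Since $c_{\tau_{\fl}}\neq 0$, any Selmer class mapping to this pair must have $x_{\fl}=y_{\fl}=0$. By \eqref{eq:S=1-stable-ev-eps1}, its $L_4$-coordinate under $\ev_{\eps_1}$ is then just $c_{\sigma_4}z_4$, and this must equal $\Li_4(\zeta_6)$. So if $c_{\sigma_4}$ vanished, these pairs would lie in the depth-$4$ locus only if $\Li_4(\zeta_6)=0$ in $\bQ_p$.

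Your direct check that the pairs satisfy the depth-$3$ and depth-$4$ equations only shows they lie in the zero set. That zero set equals the locus only once $c_{\sigma_4}\neq 0$ is known. Without it, neither your description of $X(\cO_K\otimes\bZ_p)^{(0)}_{S,\PL,4}$ nor its equality with the depth-$3$ locus is established.

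The fix is the extra check recorded in \Cref{checking-period-conjecture}. The space $E_4(\bQ(i))$ is one-dimensional and contains $\Li_4^{\fu}(i)$ by \Cref{zeta-and-cyclotomic-polylogs-in-En}. Take $\sigma_4$ dual to it, so that $c_{\sigma_4}=\Li_4(i)$. Then verify $\Li_4(i)\neq 0$ in $\bQ_p$ for each split $p<2{,}000$; the paper reports this check for $p<10{,}000$. With that added, the argument is complete and unconditional.
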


The depth-3 locus in \Cref{Qi-refined0-depth3and4} is already as  small as possible: indeed, it follows from \Cref{thm:roots-of-unity} and \Cref{S=1-refined0-equations} that the pairs $(\zeta_6,\zeta_6^{-1})$ and $(\zeta_6^{-1},\zeta_6)$ are contained in the infinite depth locus $X(\cO_K \otimes \bZ_p)_{S,\PL,\infty}^{(0)}$ when $\zeta_6 \in \bQ_p$ and the period conjecture holds for~$p$.

The following table summarises what we know about the size of $X(\cO_K \otimes \bZ_p)_{S,\PL,n}^{(0)}$ for $p < 2{,}000$ from Theorems~\ref{S=1-refined0-depth1and2}, \ref{Qzeta3-refined0-depth3and4}, \ref{Qi-refined0-depth3and4}:

{\renewcommand{\arraystretch}{1.4}%
\begin{center}
\begin{tabular}{|c|c|c|}
    \hline
    & $K = \bQ(\zeta_3)$, $S = \{(1-\zeta_3)\}$ & $K = \bQ(i)$, $S = \{(1-i)\}$ \\
    \hline
    $\#X(\cO_K \otimes \bZ_p)_{S,\PL,1}^{(0)}$ & $2p-5$ & $2p-5$ \\
    $\#X(\cO_K \otimes \bZ_p)_{S,\PL,2}^{(0)}$ & $p-2$ & $p-2$ \\
    $\#X(\cO_K \otimes \bZ_p)_{S,\PL,3}^{(0)}$ & $5$ & 3 or 5 \\
    $\#X(\cO_K \otimes \bZ_p)_{S,\PL,4}^{(0)}$ & $4$ & 3 or 5\\ \hline
\end{tabular}
\end{center}
}
The entries ``3 or 5'' depend on whether $\bZ_p$ contains a primitive sixth root of unity: they are $5$ when $p \equiv 1 \bmod 3$ and otherwise~$3$. Of course we expect the results that we verified computationally for many primes to hold also for larger primes. 

\section{Real quadratic fields}
\label{sec:real-quadratic}

Let $K$ be a real quadratic field. We study the refined and unrefined Chabauty--Kim loci in small depth when $S$ is a set of primes of~$K$ of size $\leq 1$. We denote by $\sigma$ the nontrivial element of the Galois group $\Gal(K/\bQ)$.

\subsection{Integral points}
\label{sec:real-integral-points}

We start with the case $S = \emptyset$. It is known that $K=\mathbb{Q}(\sqrt{5})$ is the only real quadratic field such that $X(\mathcal{O}_K)\neq \emptyset$. In this case, $X(\mathcal{O}_K)$ is the $S_3$-orbit of $(-1+\sqrt{5})/2$ \cite[Th\'eor\`eme~1]{nagell_1970_quelques}:
\begin{equation}
\label{eq:sqrt5-orbit}
    X(\bZ[(1+\sqrt{5})/2]) = \Bigl\{ \frac{-1 \pm \sqrt{5}}2, \frac{3 \pm \sqrt{5}}2, \frac{1 \pm \sqrt{5}}2 \Bigr\}.
\end{equation}

We first discuss the choice of generators of $U_{\emptyset}^{\MT}$, the unipotent part of the mixed Tate motivic Galois group of $\cO_K$, specialising the discussion of §\ref{sec:coordinates} to the present situation. Since $K$ is real quadratic, $\mathcal{O}_K^\times\otimes\mathbb{Q}$ is 1-dimensional. Choose a generator~$\beta$ and define $\tau_{\beta} \in \Lie(U_{\emptyset}^{\MT})_{-1}$ as its dual. There are no Lie algebra generators in degree~$-2$ by~\eqref{eq:borel-dimensions}. In degree~$-3$ we have $r_1(K) + r_2(K) = 2$ generators $\sigma_3$ and $\chi_3$. Since the subspace of $\Ext^1_{\MT(K,\bQ)}(\bQ(0),\bQ(3))$ fixed by $\Gal(K/\bQ)$ has dimension $r_1(\bQ) + r_2(\bQ) = 1$, we can choose $\sigma_3$ and $\chi_3$ such that $\sigma^* \sigma_3 = \sigma_3$ and $\sigma^* \chi_3 = -\chi_3$, using the same argument as in \Cref{lem:sigmasigman}. This determines $\sigma_3$ and $\chi_3$ up to scaling. There are no generators in degree~$-4$. In summary:
\[ \Sigma_1 = \{\tau_{\beta}\}, \quad \Sigma_2 = \emptyset,\quad \Sigma_3 = \{\sigma_3,\chi_3\}, \quad \Sigma_4 = \emptyset. \]

Now let $p$ be a rational prime which splits in~$K$, let $\fp_1$, $\fp_2$ be the two primes of~$K$ above~$p$, and let $\eps_1$, $\eps_2$ be the associated period elements in $\Lie(U_{\emptyset}^{\MT})(K_{\fp_{i}})$ for $i=1,2$. In the chosen generators, $\eps_1$ can be written as
\begin{equation}
    \label{eq:realquad-integral-eps1}
    \eps_1 = c_{\tau_{\beta}} \tau_{\beta} + c_{\sigma_3} \sigma_3 + c_{\chi_3} \chi_3 + c_{\tau_{\beta}\sigma_3} [\tau_{\beta},\sigma_3] + c_{\tau_{\beta}\chi_3}[\tau_{\beta},\chi_3] + \ldots
\end{equation}
with coefficients $c_w \in K_{\fp_1} = \bQ_p$ and omitted elements having degree $< -4$. By \Cref{tau-coeffs} we have $c_{\tau_{\beta}} = \log(\beta) \neq 0$, where the $p$-adic logarithm of~$\beta$ is defined via $K \hookrightarrow K_{\fp_1} = \bQ_p$. Assuming the $p$-adic period conjecture, we also have $c_{\sigma_3}, c_{\chi_3} \neq 0$.

We now derive the equations for the Chabauty--Kim locus $X(\cO_K \otimes \bZ_p)_{\emptyset,\PL,n}$ for $n \leq 4$. It turns out that the equations in depth~$\leq 2$ do not depend on~$K$ and~$S$. At the end of this subsection, we derive the equations in arbitrary depth, see \Cref{realquad:higher-depth}.

\begin{thm}
\label{real-integral-equations}
    Let $K$ be a real quadratic field and let $p$ be a prime which splits in~$K$. Then for $n=1,2,3,4$, the depth-$n$ polylogarithmic Chabauty--Kim locus $X(\cO_K \otimes \bZ_p)_{\emptyset,\PL,n}$ is contained in the set of pairs $(z_1,z_2) \in X(\bZ_p) \times X(\bZ_p)$ satisfying the following incremental list of equations:
    \begin{itemize}
        \item in depth~1:
        \begin{align}
            \log(z_1) +\log(z_2) &= 0, \label{eq:log-equations-real}\\
            \Li_1(z_1) + \Li_1(z_2) &= 0; \label{eq:Li1-equations-real}
        \end{align}
        \item in depth~2:
        \begin{equation}
        \label{eq:L2-equations-real}
            L_2(z_1)=0, \qquad L_2(z_2)=0;
        \end{equation}
        \item in depth~3: no additional equations;
        \item in depth~4:
        \begin{align}
        \label{eq:realquadSemptydepth4-1}
             c_{\tau_{\beta}} c_{\chi_3} (L_4(z_1) + L_4(z_2)) &=  c_{\tau_{\beta} \chi_3}\log(z_1)(L_3(z_1) - L_3(z_2)),\\
        \label{eq:realquadSemptydepth4-2}
            c_{\tau_{\beta}} c_{\sigma_3}(L_4(z_1) - L_4(z_2)) &= c_{\tau_{\beta} \sigma_3} \log(z_1)(L_3(z_1) + L_3(z_2)).
        \end{align}
    \end{itemize}
    The containment is an equality for $n \leq 2$. If $c_{\sigma_3}, c_{\chi_3} \neq 0$ then it is also an equality for $n=3,4$.
\end{thm}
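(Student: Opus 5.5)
We follow the template of the proofs of \Cref{S=1-stable-equations} and \Cref{integral-points-equations}: determine the image of $\ev_{\eps_1}\times\ev_{\eps_2}$ on the depth-$4$ polylogarithmic Selmer scheme, then pull the resulting equations back to $X(\bZ_p)\times X(\bZ_p)$ via \Cref{Ln-as-pullback}.

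\textbf{Coordinates and the map at $\fp_1$.} Since $\Sigma_1=\{\tau_\beta\}$, $\Sigma_2=\Sigma_4=\emptyset$ and $\Sigma_3=\{\sigma_3,\chi_3\}$, the depth-$4$ Selmer scheme has coordinates $x=x_\beta$, $y=y_\beta$, $z_3=z_{\sigma_3}$, $w_3=z_{\chi_3}$. Two simplifications make \Cref{cocycle-evaluation-map} short here. First, with only one $\tau$ there are no basis elements of the form $\ad(\tau)^{n-1}\tau$ with $i_{n-1}>i_n$. Second, the pure-$\tau$ coefficients $c'$ are empty sums. Reading off the Goncharov expansion \eqref{eq:realquad-integral-eps1}, we get
\[ \ev_{\eps_1}(x,y,z_3,w_3)=\bigl(c_{\tau_\beta}x,\;c_{\tau_\beta}y,\;0,\;c_{\sigma_3}z_3+c_{\chi_3}w_3,\;x(c_{\tau_\beta\sigma_3}z_3+c_{\tau_\beta\chi_3}w_3)\bigr) \]
in the coordinates $L_0,\dots,L_4$.

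\textbf{Galois action and the map at $\fp_2$.} By \Cref{sigma-on-period-point} we have $\eps_2=\sigma^*\eps_1$. Since $\cO_K^\times\otimes\bQ$ is spanned by $\beta$ and $\sigma\beta=\pm\beta^{-1}$, we get $\sigma^*\tau_\beta=-\tau_\beta$; we check this by noting that $\sigma\beta\cdot\beta$ is the norm, which is torsion. Combined with $\sigma^*\sigma_3=\sigma_3$ and $\sigma^*\chi_3=-\chi_3$, this gives
\[ \ev_{\eps_2}=\bigl(-c_{\tau_\beta}x,\;-c_{\tau_\beta}y,\;0,\;c_{\sigma_3}z_3-c_{\chi_3}w_3,\;x(-c_{\tau_\beta\sigma_3}z_3+c_{\tau_\beta\chi_3}w_3)\bigr). \]
Here the signs in the last entry come from $[\tau_\beta,\sigma_3]\mapsto-[\tau_\beta,\sigma_3]$ and $[\tau_\beta,\chi_3]\mapsto[\tau_\beta,\chi_3]$. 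This is the one place where care is needed; we will double-check it against the invariance of $\loc_p$ of a known point, for instance $K=\bQ(\sqrt5)$ with $z=\tfrac{-1+\sqrt5}{2}$.

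\textbf{Eliminating the Selmer coordinates.} Depth $1$ gives $L_0^{(1)}+L_0^{(2)}=0$ and $L_1^{(1)}+L_1^{(2)}=0$. Depth $2$ gives $L_2^{(1)}=L_2^{(2)}=0$, which are equalities, since everything is forced. In depth $3$, the pair $(L_3^{(1)},L_3^{(2)})$ equals $(c_{\sigma_3}z_3+c_{\chi_3}w_3,\;c_{\sigma_3}z_3-c_{\chi_3}w_3)$. This map is surjective onto $\bQ_p^2$ when $c_{\sigma_3},c_{\chi_3}\neq0$, so there are no new equations. In that case we solve
\[ z_3=\frac{L_3^{(1)}+L_3^{(2)}}{2c_{\sigma_3}},\qquad w_3=\frac{L_3^{(1)}-L_3^{(2)}}{2c_{\chi_3}},\qquad x=\frac{L_0^{(1)}}{c_{\tau_\beta}}. \]
Substituting these into $L_4^{(1)}\pm L_4^{(2)}$ yields
\begin{align*}
L_4^{(1)}+L_4^{(2)}&=2x\,c_{\tau_\beta\chi_3}w_3,\\
L_4^{(1)}-L_4^{(2)}&=2x\,c_{\tau_\beta\sigma_3}z_3.
\end{align*}
After clearing denominators these are exactly \eqref{eq:realquadSemptydepth4-1} and \eqref{eq:realquadSemptydepth4-2}. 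In this form they hold unconditionally: we verify them directly from the parametrisation without dividing, using only $c_{\tau_\beta}\ne0$. Conversely, when $c_{\sigma_3}$ and $c_{\chi_3}$ are nonzero, every point satisfying the equations lifts, giving equality in depths $3$ and $4$.

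\textbf{Pulling back.} By \Cref{Ln-as-pullback}, $L_0$ pulls back to $\log$ and $L_1$ to $\Li_1$, while the $L_n$ pull back to the modified polylogarithms. This gives the stated equations in $z_1,z_2$. The main obstacle is the sign bookkeeping in $\eps_2$, in particular the claim $\sigma^*\tau_\beta=-\tau_\beta$ and its effect on the brackets.
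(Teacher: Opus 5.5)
Your proposal is correct and follows the paper's proof essentially step by step. You use $\eps_2=\sigma^*\eps_1$ with $\sigma^*\tau_\beta=-\tau_\beta$, $\sigma^*\sigma_3=\sigma_3$ and $\sigma^*\chi_3=-\chi_3$, read off $\ev_{\eps_1},\ev_{\eps_2}$ from \Cref{cocycle-evaluation-map}, and eliminate the Selmer coordinates by taking sums and differences. Your pairing of $L_4^{(1)}+L_4^{(2)}$ with $c_{\tau_\beta\chi_3}$, and of $L_4^{(1)}-L_4^{(2)}$ with $c_{\tau_\beta\sigma_3}$, is the correct one and matches the statement; the intermediate display in the paper's proof has the labels $\sigma_3$ and $\chi_3$ interchanged.
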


\begin{proof}
    The $\fp_2$-adic period element $\eps_2 = \sigma^* \eps_1$ is obtained by applying $\sigma^*$ to~\eqref{eq:realquad-integral-eps1}.
    We have $\sigma \beta = \beta^{-1}$ in $\cO_K^{\times} \otimes \bQ$ since $\beta$ has norm~$\pm 1$, hence $\sigma^* \tau_{\beta} = -\tau_{\beta}$. Recall that $\sigma_3$ and $\chi_3$ are chosen such that $\sigma^*\sigma_3 = \sigma_3$ and $\sigma^* \chi_3 = -\chi_3$. Hence we get
    \begin{equation*}
    \label{eq:realquad-integral-eps2}
        \eps_2 = -c_{\tau_{\beta}} \tau_{\beta} + c_{\sigma_3} \sigma_3 - c_{\chi_3} \chi_3 - c_{\tau_{\beta}\sigma_3} [\tau_{\beta},\sigma_3] + c_{\tau_{\beta}\chi_3}[\tau_{\beta},\chi_3] + \ldots
    \end{equation*}
    By §\ref{sec:coordinates}, the polylogarithmic Selmer scheme in depth~4 is a 4-dimensional affine space with coordinates $x_{\beta},y_{\beta},z,w$ with $z \coloneqq z_{3,1}$ corresponding to~$\sigma_3$ and $w \coloneqq z_{3,2}$ corresponding to~$\chi_3$. By \Cref{cocycle-evaluation-map}, the cocycle evaluation maps at $\eps_i$ ($i=1,2$) are given by
    \begin{align*}
        \ev_{\eps_i}^{\sharp} L_0 &= (-1)^{i+1} c_{\tau_{\beta}} x_{\beta},\\
        \ev_{\eps_i}^{\sharp} L_1 &= (-1)^{i+1} c_{\tau_{\beta}} y_{\beta},\\
        \ev_{\eps_i}^{\sharp} L_2 &= 0,\\
        \ev_{\eps_i}^{\sharp} L_3 &= c_{\sigma_3} z + (-1)^{i+1} c_{\chi_3} w,\\
        \ev_{\eps_i}^{\sharp} L_4 &= (-1)^{i+1} c_{\tau_{\beta}\sigma_3} x_{\beta}z + c_{\tau_{\beta}\chi_3} x_{\beta} w.
    \end{align*}
    Denoting by $L_n^{(i)}$ the pullback of $L_n$ along the $i$-th projection of $\Lie(\Pi_{\PL,4}^{\dR}) \times \Lie(\Pi_{\PL,4}^{\dR})$, the scheme-theoretic image of $\ev_{\eps_1} \times \ev_{\eps_2}$ is contained in the subscheme cut out by the equations
    \begin{gather*}
        L_0^{(1)} + L_0^{(2)} = 0 ,\qquad L_1^{(1)} + L_1^{(2)} = 0,\\
        L_2^{(1)} = 0, \qquad L_2^{(2)} = 0,\\
        c_{\tau_{\beta}} c_{\sigma_3}(L_4^{(1)} + L_4^{(2)}) = c_{\tau_{\beta}\sigma_3} L_0^{(1)}(L_3^{(1)} - L_3^{(2)}),\\
        c_{\tau_{\beta}} c_{\chi_3}(L_4^{(1)} - L_4^{(2)}) = c_{\tau_{\beta}\chi_3} L_0^{(1)}(L_3^{(1)} + L_3^{(2)}),
    \end{gather*}
    and pulling this back to $X(\bZ_p) \times X(\bZ_p)$ yields precisely the claimed equations. If $c_{\sigma_3}, c_{\chi_3} \neq 0$ then the scheme-theoretic image is exactly cut out by the equations above. 
\end{proof}

\begin{lemma}
\label{solving-realquad-depth1}
    Let $p$ be a prime. The common vanishing set in $X(\bZ_p) \times X(\bZ_p)$ of the equations $\log(z_1) + \log(z_2) = 0$ and $\log(1-z_1) + \log(1-z_2) = 0$ is the set of pairs of the form
    \[ \left(\frac{1+\zeta-\eta \pm \sqrt{(1+\zeta-\eta)^2 - 4\zeta}}{2}, \frac{1+\zeta-\eta \mp \sqrt{(1+\zeta-\eta)^2 - 4\zeta}}{2} \right) \]
    where $\zeta, \eta$ are roots of unity in~$\bZ_p$ for which the square root exists in~$\bZ_p$.
\end{lemma}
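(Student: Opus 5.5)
The argument mirrors \Cref{lem:logLi1eq}, with the ratios there replaced by products. The key fact is that on $\bZ_p^\times$ the kernel of the $p$-adic logarithm consists exactly of the roots of unity in $\bZ_p$. Also, points of $X(\bZ_p)$ satisfy $z_i\not\equiv 0,1 \bmod p$, so both $z_i$ and $1-z_i$ are units.

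\emph{Necessity.} Let $(z_1,z_2)$ satisfy both equations.
\begin{itemize}
\item From $\log(z_1)+\log(z_2)=\log(z_1z_2)=0$, the product $z_1z_2=\zeta$ is a root of unity in $\bZ_p$.
\item From $\log(1-z_1)+\log(1-z_2)=\log((1-z_1)(1-z_2))=0$, the product $(1-z_1)(1-z_2)=\eta$ is a root of unity in $\bZ_p$.
\item Expanding, $(1-z_1)(1-z_2)=1-(z_1+z_2)+z_1z_2$, so $z_1+z_2=1+\zeta-\eta$.
\end{itemize}
Hence $z_1,z_2$ are the two roots of the quadratic
\[
T^2-(1+\zeta-\eta)T+\zeta.
\]
The quadratic formula gives exactly the displayed pair, with either ordering. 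Its discriminant $(1+\zeta-\eta)^2-4\zeta=(z_1-z_2)^2$ is a square in $\bZ_p$, with square root $\pm(z_1-z_2)$.

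The division by $2$ needs one remark when $p=2$. The sum of the two roots, $1+\zeta-\eta$, and the square root $z_1-z_2$ have the same parity. Therefore the quotient by $2$ is still in $\bZ_2$, and the formula holds as written.

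\emph{Sufficiency.} Conversely, let $\zeta,\eta$ be roots of unity in $\bZ_p$ such that the square root exists in $\bZ_p$, and suppose the resulting pair lies in $X(\bZ_p)\times X(\bZ_p)$. By Vieta, the two entries satisfy $z_1z_2=\zeta$ and $z_1+z_2=1+\zeta-\eta$. It follows that $(1-z_1)(1-z_2)=\eta$. All four quantities $z_1,z_2,1-z_1,1-z_2$ are units, so additivity of $\log$ on $\bZ_p^\times$ gives
\[
\log(z_1)+\log(z_2)=\log(\zeta)=0,\qquad \log(1-z_1)+\log(1-z_2)=\log(\eta)=0.
\]

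The only delicate point is the bookkeeping. The correspondence is between solutions and triples $(\zeta,\eta,\text{choice of sign})$. The statement should be read as ranging over those $\zeta,\eta$ for which the formula yields a point of $X(\bZ_p)\times X(\bZ_p)$, which also covers the $p=2$ integrality issue above. Beyond this, I expect no real obstacle.
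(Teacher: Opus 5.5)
Your proof is correct and is essentially the paper's argument: rewrite the equations as $\log(z_1z_2)=0$ and $\log((1-z_1)(1-z_2))=0$, set $\zeta=z_1z_2$ and $\eta=(1-z_1)(1-z_2)$, and read off $z_1,z_2$ as the roots of $T^2-(1+\zeta-\eta)T+\zeta$. The caveat in your converse is unnecessary: once the roots lie in $\bZ_p$, the identities $z_1z_2=\zeta$ and $(1-z_1)(1-z_2)=\eta$ already force $z_i$ and $1-z_i$ to be units, so the pair lies in $X(\bZ_p)\times X(\bZ_p)$; the $p=2$ case is vacuous because $X(\bZ_2)$ is empty.
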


\begin{proof}
    The equations are equivalent to the vanishing of $\log(z_1 z_2)$ and $\log((1-z_1)(1-z_2))$, hence to $z_1z_2$ and $(1-z_1)(1-z_2)$ being roots of unity. Assume that
    \[ \zeta \coloneqq z_1 z_2 \quad \text{and} \quad \eta \coloneqq (1-z_1)(1-z_2) \]
    are roots of unity. Then we have
    \[ (X - z_1)(X - z_2) = X^2 - (1+\zeta-\eta)X + \zeta, \]
    hence $z_1$ and $z_2$ are the two roots to the quadratic equation on the right hand side.
\end{proof}

Using \Cref{solving-realquad-depth1} we can compute the locus $X(\cO_K \otimes \bZ_p)_{\emptyset,n} = X(\cO_K \otimes \bZ_p)_{\emptyset,\PL,n}$ from \Cref{real-integral-equations} in depth~$n \leq 2$. 

\begin{thm}
\label{real-integral-depth2-locus}
    Let $K$ be a real quadratic field and let $p$ be a prime that splits completely in $K$. If $p \equiv \pm 1 \bmod 5$ then $X(\mathcal{O}_K\otimes\mathbb{Z}_p)_{\emptyset,2}$ contains the $S_3$-orbit of $\left(\frac{-1+\sqrt{5}}{2},\frac{-1-\sqrt{5}}{2}\right)$. We have
    \[ 
        X(\mathcal{O}_K\otimes\mathbb{Z}_p)_{\emptyset,2} = 
        \begin{cases}
            S_3.\!\left(\frac{-1+\sqrt{5}}{2},\frac{-1-\sqrt{5}}{2}\right), & \text{if $p \equiv \pm 1 \bmod 5$},\\[1mm]
            \emptyset, & \text{otherwise,}
        \end{cases}
    \]
    for all $p < 2{,}000$.
\end{thm}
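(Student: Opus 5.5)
We first show the containment: the $S_3$-orbit of $(\phi,\bar\phi)$, with $\phi=\frac{-1+\sqrt5}{2}$ and $\bar\phi=\frac{-1-\sqrt5}{2}$, lies in the depth-$2$ locus. Assume $p\equiv\pm1\bmod5$, so that $\sqrt5\in\bZ_p$. By \Cref{real-integral-equations}, the locus $X(\cO_K\otimes\bZ_p)_{\emptyset,2}$ is exactly cut out by \eqref{eq:log-equations-real}, \eqref{eq:Li1-equations-real} and \eqref{eq:L2-equations-real}, with no period conjecture needed, so it suffices to check these equations on the orbit. For the pair itself, $\phi\bar\phi=-1$ and $(1-\phi)(1-\bar\phi)=1-(\phi+\bar\phi)+\phi\bar\phi=1+1-1=1$. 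Both are roots of unity, so the depth-$1$ equations hold.

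For depth $2$ we need $L_2(\phi)=0$. Here we use the five-term relation for the Coleman dilogarithm, which should specialise to the golden-ratio identity $\Li_2$-type vanishing of the single-valued combination $L_2$. Alternatively, and more cleanly, we argue via $\bQ(\sqrt5)$ itself. For $K'=\bQ(\sqrt5)$, the point $\phi$ is an integral point, so $(\phi,\bar\phi)\in X(\cO_{K'}\otimes\bZ_p)_{\emptyset,2}$. The depth-$2$ equations in \Cref{real-integral-equations} are independent of $K$, and the containment is an equality in depth $\le 2$, so the pair satisfies them for every real quadratic $K$. Orbit-stability follows in the same way, using \eqref{eq:sqrt5-orbit}, or from the $S_3$-stability of the depth-$2$ locus for the full quotient. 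Indeed $\Pi_{\PL,2}=\Pi_2$, so \cite[Lemma~4.12]{BKL:chabauty-kim-sc} applies.

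For the equality with $p<2{,}000$, we solve the equations computationally. By \Cref{solving-realquad-depth1}, every depth-$1$ solution has the form $z_{1,2}$ = roots of $X^2-(1+\zeta-\eta)X+\zeta$ for $\zeta,\eta\in\mu_{p-1}(\bZ_p)$. This gives at most $2(p-1)^2$ candidate pairs, after discarding those with a root $\equiv0,1\bmod p$ or outside $\bZ_p$. For each candidate we evaluate $L_2(z_1)$ and $L_2(z_2)$ to sufficient $p$-adic precision using Sage, and we keep the pairs where both vanish. To certify vanishing versus non-vanishing rigorously, we would bound the precision loss. For non-vanishing it suffices to exhibit a nonzero residue at finite precision. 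For the surviving pairs we do not rely on numerics at all: they are identified with the orbit above, where vanishing is proven.

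The main obstacle is the rigorous exclusion of the finitely many candidates with $L_2(z_i)\equiv0$ to high order but not identically zero; in practice we expect no such near-misses. A secondary subtlety is the degenerate case $z_1=z_2$, which arises when the discriminant is $0$; it is handled within the same enumeration. When $p\not\equiv\pm1\bmod5$, the enumeration should return nothing, because the only solutions of the system are the orbit points, which require $\sqrt5\in\bQ_p$.
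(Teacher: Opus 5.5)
Your proposal is correct and matches the paper's proof. Containment follows because the orbit is $X(\cO_{\bQ(\sqrt5)})$ and the depth-$\le 2$ equations of \Cref{real-integral-equations} are independent of $K$ and cut out the locus exactly. The equality for $p<2{,}000$ is a finite computer check, which you make concrete via the root-of-unity parametrisation of \Cref{solving-realquad-depth1}; the five-term-relation aside is unnecessary (cf.\ \Cref{rem:Qsqrt5-depth2}) and your argument does not rely on it.
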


\begin{proof}
    If $p \equiv \pm 1 \bmod 5$ then $p$ splits in $\bQ(\sqrt{5})$. Since the $S_3$-orbit of $\frac{-1+\sqrt{5}}{2}$ is contained in $X(\mathcal{O}_{\mathbb{Q}(\sqrt{5})})$, the $S_3$-orbit of $\left(\frac{-1+\sqrt{5}}{2},\frac{-1-\sqrt{5}}{2}\right)$ must satisfy the equations \eqref{eq:log-equations-real}--\eqref{eq:L2-equations-real}, and hence is contained in $X(\mathcal{O}_K\otimes\mathbb{Z}_p)_{\emptyset,2}$.
    We computed the locus for all $p < 2{,}000$ and verified that it contains no additional points except this one $S_3$-orbit when $p \equiv \pm 1 \bmod 5$.
\end{proof}

\begin{rem}
    \label{rem:Qsqrt5-depth2}
    The fact that $L_2$ vanishes on the $S_3$-orbit of $(-1 \pm \sqrt{5})/2$ was already observed by Coleman \cite[Remark after Prop.~6.4]{coleman:dilogarithms}. \Cref{real-integral-equations} provides a new proof of this fact. The equation $L_2(z) = 0$ was also found by Dan-Cohen and Wewers for the single-prime Chabauty--Kim locus $X(\bZ_p)_{\emptyset,2}'$ \cite[§12.1]{DCW:explicitCK}.
\end{rem}

\begin{rem}
\label{real-selmer-section-conjecture}
    If one could prove that for any prime~$p$, the zero locus of the equations \eqref{eq:log-equations-real}--\eqref{eq:L2-equations-real} in $X(\bZ_p) \times X(\bZ_p)$ contains at most the $S_3$-orbit of $\left(\frac{-1+\sqrt{5}}{2},\frac{-1-\sqrt{5}}{2}\right)$ (i.e.\ \Cref{real-integral-depth2-locus} holds for all~$p$) then, arguing as in \Cref{selmer-section-conjecture}, we would have a proof of the $S$-Selmer Section Conjecture \cite[Conj.~1.6]{BKL:chabauty-kim-sc} for any real quadratic field $K$, $S = \emptyset$, $X = \bP^1 \smallsetminus \{0,1,\infty\}$. This would be a genuinely new result since the known cases from \cite[Corollary~6]{stix:birationalSC} assume that $K$ is $\bQ$ or imaginary quadratic.
\end{rem}

From \Cref{real-integral-depth2-locus} we immediately get the following corollary.
\begin{cor}
    When $K = \bQ(\sqrt{5})$ and $S = \emptyset$, Kim's Conjecture~\ref{conj:kim-full} holds for all $p < 2{,}000$ which split in~$K$. When $K$ is real quadratic and $\neq \bQ(\sqrt{5})$, and $S = \emptyset$,
    \begin{itemize}
        \item Kim's Conjecture~\ref{conj:kim-full} holds for all $p< 2{,}000$ which split in~$K$ and satisfy $p \not\equiv \pm 1 \bmod 5$;
        \item Kim's Conjecture~\ref{conj:kim-single} does not hold in depth~2 for all $p$ which split in $K$ and satisfy $p \equiv \pm 1 \bmod 5$.
    \end{itemize}
\end{cor}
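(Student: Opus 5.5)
The corollary follows directly from \Cref{real-integral-depth2-locus}, combined with the chain of inclusions $X(\cO_K) \subseteq X(\cO_K\otimes\bZ_p)_{\emptyset,n} \subseteq X(\cO_K\otimes\bZ_p)_{\emptyset,2}$ for $n\ge 2$. The plan is to treat the three claims in turn. In each case we identify $X(\cO_K)$ with its diagonal image under $(\iota_1,\iota_2)$ and compare it with the depth-2 locus computed in \Cref{real-integral-depth2-locus}. That comparison is valid for all $p<2{,}000$ that split in $K$.

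\textbf{The case $K=\bQ(\sqrt5)$.} Every split prime $p$ satisfies $p\equiv\pm1\bmod 5$, since splitting in $\bQ(\sqrt5)$ means $5$ is a square mod $p$ (the case $p=5$ is ramified). By \Cref{real-integral-depth2-locus}, the depth-2 locus is therefore the $S_3$-orbit of $\bigl(\tfrac{-1+\sqrt5}2,\tfrac{-1-\sqrt5}2\bigr)$, which has six elements. By \eqref{eq:sqrt5-orbit}, $X(\cO_K)$ consists of the six elements of the $S_3$-orbit of $\tfrac{-1+\sqrt5}2$. Each such point $z$ embeds as the pair $(z,\sigma z)$, because the two $p$-adic embeddings differ by conjugation. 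The $S_3$-action commutes with $\sigma$, so the image of $X(\cO_K)$ is exactly this orbit of pairs, and the two sets coincide. The only check needed is that the labelling of $\sqrt5\in\bZ_p$ is compatible with the choice of $\iota_1$. If the labelling is swapped, we instead get the pair with the roles of $\pm\sqrt5$ exchanged, which lies in the same orbit because $z\mapsto 1-1/z$ realises the conjugation on this orbit; alternatively, the orbit can simply be taken to be defined via $\iota_1$.

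\textbf{The case $K\neq\bQ(\sqrt5)$, $p\not\equiv\pm1\bmod5$.} Here the depth-2 locus is empty, so $X(\cO_K)=\emptyset$ and Kim's Conjecture holds in depth~2.

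\textbf{The case $K\neq\bQ(\sqrt5)$, $p\equiv\pm1\bmod5$.} We use the first, unconditional part of \Cref{real-integral-depth2-locus}: the depth-2 locus contains the $S_3$-orbit, and this holds for every such $p$, not only $p<2{,}000$. Projecting to $\fp$ gives $X(\cO_\fp)_{\emptyset,2}\ni \tfrac{-1+\sqrt5}2$. On the other hand $X(\cO_K)=\emptyset$ by Nagell's theorem, so the single-prime conjecture fails in depth~2.

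The only mildly delicate point is the identification of the diagonal image in the $\bQ(\sqrt5)$ case. Everything else is immediate.
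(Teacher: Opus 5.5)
Your proposal is correct and follows the paper's argument: the corollary is derived directly from \Cref{real-integral-depth2-locus}. As in the paper, the containment of the $S_3$-orbit is unconditional for every split $p\equiv\pm1\bmod 5$, and equality is the computation for $p<2{,}000$. One small slip: $z\mapsto 1-1/z$ sends $\frac{-1+\sqrt5}{2}$ to $\frac{1-\sqrt5}{2}$, not to its conjugate $\frac{-1-\sqrt5}{2}$ (the involution $z\mapsto z/(z-1)$ does that), but you can avoid the labelling issue altogether: the six-element diagonal image of $X(\cO_K)$ lies inside the six-element depth-2 locus, so the two sets coincide.
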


The following lemma can be used to verify the non-vanishing of the periods $c_{\sigma_3}$ and $c_{\chi_3}$ in~$\bQ_p$ for $K = \bQ(\sqrt{5})$, and to determine the coefficients 
\[ \tilde c_{\tau_{\beta}\chi_3} \coloneqq  \frac{c_{\tau_{\beta}\chi_3}}{c_{\tau_{\beta}} c_{\chi_3}}, \quad  \tilde c_{\tau_{\beta}\sigma_3} \coloneqq \frac{c_{\tau_{\beta}\sigma_3}}{c_{\tau_{\beta}} c_{\sigma_3}}\]
appearing in the simplified depth-4 equations~\eqref{eq:realquadSemptydepth4-1}, \eqref{eq:realquadSemptydepth4-2}:
\begin{align}
    \label{eq:realquadSemptydepth4-1-simple}
     L_4(z_1) + L_4(z_2) &=  \tilde c_{\tau_{\beta} \chi_3}\log(z_1)(L_3(z_1) - L_3(z_2)),\\
     \label{eq:realquadSemptydepth4-2-simple}
     L_4(z_1) - L_4(z_2) &=  \tilde c_{\tau_{\beta} \sigma_3}\log(z_1)(L_3(z_1) + L_3(z_2)).
\end{align}

\begin{lemma}
    \label{realquad-integral-coeffs}
    Suppose $K = \bQ(\sqrt{5})$. Let $p$ be a rational prime satisfying $p \equiv \pm 1 \bmod 5$ and let $\alpha_1 \coloneqq (-1+\sqrt{5})/2$ and $\alpha_2 \coloneqq (-1-\sqrt{5})/2$. Assume that 
    \[ L_3(\alpha_1) \pm L_3(\alpha_2) \neq 0 \]
    for some choice of $\sqrt 5 \in \bQ_p$.
    Then $c_{\sigma_3}, c_{\chi_3} \neq 0$ in $\bQ_p$ and 
    \begin{align*}
        \tilde c_{\tau_{\beta} \chi_3} &= \frac{L_4(\alpha_1) + L_4(\alpha_2)}{\log(\alpha_1)(L_3(\alpha_1) - L_3(\alpha_2))},\\
        \tilde c_{\tau_{\beta} \sigma_3} &= \frac{L_4(\alpha_1) - L_4(\alpha_2)}{\log(\alpha_1)(L_3(\alpha_1) + L_3(\alpha_2))}.
    \end{align*}
\end{lemma}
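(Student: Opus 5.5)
The proof mirrors \Cref{S=1-stable-coeffs-from-known-solution}: feed a known integral point into the explicit description of $\ev_{\eps_1}\times\ev_{\eps_2}$ from the proof of \Cref{real-integral-equations}. The pair $(z_1,z_2)=(\alpha_1,\alpha_2)$ is the image of the integral point $\alpha_1\in X(\cO_K)$ under $(\iota_1,\iota_2)$, because $\sigma\alpha_1=\alpha_2$. By commutativity of the Chabauty--Kim diagram (\Cref{motivic-etale-comparison}, \Cref{lie-algebra-ck-diagram}), the pair lies in the image of $\ev_{\eps_1}\times\ev_{\eps_2}$, i.e. $\loc_p(\xi)=j_p(\alpha_1,\alpha_2)$ for $\xi=j_S(\alpha_1)$. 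The choice of $\sqrt5$ in $\bQ_p$ only swaps $\alpha_1$ and $\alpha_2$. The symmetric hypothesis $L_3(\alpha_1)\pm L_3(\alpha_2)\neq0$ is therefore harmless, and I will fix the embedding for which it holds.

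\textbf{Non-vanishing of $c_{\sigma_3}$ and $c_{\chi_3}$.} From the formulas in the proof of \Cref{real-integral-equations},
\[ \ev_{\eps_1}^\sharp L_3+\ev_{\eps_2}^\sharp L_3=2c_{\sigma_3}z, \qquad \ev_{\eps_1}^\sharp L_3-\ev_{\eps_2}^\sharp L_3=2c_{\chi_3}w. \]
Suppose $c_{\sigma_3}=0$. Then $L_3^{(1)}+L_3^{(2)}$ vanishes on the image, so $L_3(\alpha_1)+L_3(\alpha_2)=0$, contradicting the hypothesis. The same argument with the difference gives $c_{\chi_3}\neq0$. I read the hypothesis as requiring both the sum and the difference to be nonzero, and I will state that reading in the proof. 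Once both periods are nonzero, the simplified equations \eqref{eq:realquadSemptydepth4-1-simple} and \eqref{eq:realquadSemptydepth4-2-simple} follow by dividing by $c_{\tau_\beta}c_{\chi_3}$ and $c_{\tau_\beta}c_{\sigma_3}$, using $c_{\tau_\beta}=\log(\beta)\neq0$ from \Cref{tau-coeffs}.

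\textbf{Solving for the coefficients.} Plug $(z_1,z_2)=(\alpha_1,\alpha_2)$ into the two simplified equations and solve for $\tilde c_{\tau_\beta\chi_3}$ and $\tilde c_{\tau_\beta\sigma_3}$. This requires $\log(\alpha_1)\neq0$. That holds because $\alpha_1$ is a fundamental unit up to sign, hence not a root of unity, and the $p$-adic logarithm is injective on $\cO_K^\times\otimes\bQ$ (\Cref{sec:logu-iso}). It also requires the $L_3$ sum and difference to be nonzero, which is the hypothesis.

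\textbf{Main obstacle.} The delicate step is the sign bookkeeping in the depth-4 relations. The displayed equations in \Cref{real-integral-equations} and in its proof appear to swap which of $c_{\sigma_3}$ or $c_{\chi_3}$ pairs with $L_4^{(1)}\pm L_4^{(2)}$. I will recompute this directly from
\[ \ev_{\eps_i}^\sharp L_4=(-1)^{i+1}c_{\tau_\beta\sigma_3}x_\beta z+c_{\tau_\beta\chi_3}x_\beta w. \]
Eliminating with $x_\beta=L_0^{(1)}/c_{\tau_\beta}$ and the $L_3$ relations above gives
\[ L_4^{(1)}+L_4^{(2)}=\frac{c_{\tau_\beta\chi_3}}{c_{\tau_\beta}c_{\chi_3}}\,L_0^{(1)}\bigl(L_3^{(1)}-L_3^{(2)}\bigr), \qquad L_4^{(1)}-L_4^{(2)}=\frac{c_{\tau_\beta\sigma_3}}{c_{\tau_\beta}c_{\sigma_3}}\,L_0^{(1)}\bigl(L_3^{(1)}+L_3^{(2)}\bigr). \]
These are \eqref{eq:realquadSemptydepth4-1-simple} and \eqref{eq:realquadSemptydepth4-2-simple}, and they yield exactly the stated formulas for $\tilde c_{\tau_\beta\chi_3}$ and $\tilde c_{\tau_\beta\sigma_3}$.
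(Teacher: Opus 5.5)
Your proposal is correct and follows the paper's proof. Both arguments get $c_{\sigma_3},c_{\chi_3}\neq 0$ from the fact that $(\alpha_1,\alpha_2)=(\alpha_1,\sigma\alpha_1)$ lies in the depth-3 locus while $L_3(\alpha_1)\pm L_3(\alpha_2)\neq 0$, and then solve the simplified depth-4 equations using $\log(\alpha_1)\neq 0$.

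Your recomputation of the depth-4 relations is also right. The pairing in the statement of \Cref{real-integral-equations}, and hence in this lemma, is the correct one, and the swapped coefficients in the display inside that theorem's proof are a typo.
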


\begin{proof}
    Recall from the proof of \Cref{real-integral-equations} that $\ev_{\eps_i}^{\sharp} L_3 = c_{\sigma_3} z + (-1)^{i+1} c_{\chi_3} w$ for $i=1,2$. As a consequence, $c_{\sigma_3} = 0$ would imply that $L_3(z_1) + L_3(z_2) = 0$ holds on $X(\cO_K \otimes \bZ_p)_{\emptyset,\PL,3}$. Similarly, $c_{\chi_3} = 0$ would imply that $L_3(z_1) - L_3(z_2) = 0$ holds on $X(\cO_K \otimes \bZ_p)_{\emptyset,\PL,3}$. The pair $(\alpha_1,\alpha_2)$ is contained in this locus since it comes from the integral point $(-1+\sqrt{5})/2 \in X(\cO_K)$, but we have $L_3(\alpha_1) \pm L_3(\alpha_2) \neq 0$ by assumption, thus we must have $c_{\sigma_3},c_{\chi_3} \neq 0$. Note that $\log(\alpha_1) \neq 0$ since $\alpha_1$ is not a root of unity. Hence the equations \eqref{eq:realquadSemptydepth4-1-simple}--\eqref{eq:realquadSemptydepth4-2-simple} can be solved for $\tilde c_{\tau_{\beta} \chi_3}$ and $\tilde c_{\tau_{\beta} \sigma_3}$ as claimed.
\end{proof}

For real quadratic fields other than $\bQ(\sqrt{5})$ we do not have any integral points available, which makes it more difficult to determine the coefficients $\tilde c_{\tau_{\beta} \chi_3}$ and $\tilde c_{\tau_{\beta} \sigma_3}$. However, we will show in \Cref{thm:Qsqrt2SemptyKimholds} below that for $K = \bQ(\sqrt{2})$ we can use $S'$-integral points for the larger set $S' = \{(\sqrt{2})\}$ to determine the coefficients and compute the depth 4 loci of \Cref{real-integral-equations}.

At the end of this subsection, we generalise \Cref{real-integral-equations} to depth $n$ equations for arbitrary $n\geq 1$.
\begin{thm}
\label{realquad:higher-depth}
    Let $K$ be a real quadratic field and let $p$ be a prime which splits in~$K$. Assume the $p$-adic period conjecture. Then for $n\geq 1$, the depth-$n$ polylogarithmic Chabauty--Kim locus $X(\cO_K \otimes \bZ_p)_{\emptyset,\PL,n}$ is the set of pairs $(z_1,z_2) \in X(\bZ_p) \times X(\bZ_p)$ satisfying the following incremental list of equations:
\begin{itemize}
\item in depth~1:
    \begin{equation*}
        \log(z_1) +\log(z_2) = 0, \quad
        \Li_1(z_1) + \Li_1(z_2) = 0;
    \end{equation*}
\item in depth~$m$ for $2\leq m\leq n$ and $m$ even:
\begin{align}
L_m(z_1) - L_m(z_2)
= \sum_{\substack{3\le k\le m-1\\ k\textnormal{ odd}}}
\lambda_{m,k}^{(1)}\log(z_1)^{m-k}
\left(
L_k(z_1) + L_k(z_2)
\right)
\end{align}
\begin{align}
L_m(z_1) + L_m(z_2)
= \sum_{\substack{3\le k\le m-1\\ k\textnormal{ odd}}}
\lambda_{m,k}^{(2)}\log(z_1)^{m-k}
\left(
L_k(z_1) - L_k(z_2)
\right)
\end{align}
where $\lambda_{m,k}^{(j)}$ is determined recursively by
\[
\lambda_{m,k}^{(j)} \coloneqq \frac{1}{c_{\sigma_{k,j}}} \left( \frac{c_{\tau_\beta^{m-k}\sigma_{k,j}}}{c_{\tau_\beta}^{m-k}} - \sum_{\substack{k<r<m\\ r\textnormal{ odd}}} \lambda_{m,r}^{(j)} \frac{c_{\tau_\beta^{r-k}\sigma_{k,j}}}{c_{\tau_\beta}^{r-k}} \right),
\quad
j=1,2,
\quad
k=m-1,m-3,\ldots,3.
\]
\end{itemize}
\end{thm}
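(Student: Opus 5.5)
We follow the proof of \Cref{real-integral-equations}, now in arbitrary depth. Write $\sigma_{k,1}\coloneqq\sigma_k$ and $\sigma_{k,2}\coloneqq\chi_k$ for the two generators in each odd degree $-k$, $k\geq3$. By the argument of \Cref{lem:sigmasigman} we choose them with $\sigma^*\sigma_{k,1}=\sigma_{k,1}$ and $\sigma^*\sigma_{k,2}=-\sigma_{k,2}$. There are no generators in even degree $\leq -2$ by \eqref{eq:borel-dimensions}, and $\Sigma_1=\{\tau_\beta\}$ with $\sigma^*\tau_\beta=-\tau_\beta$.

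Since $d_1=1$, \Cref{lie-algebra-element-expansion} shows that the Goncharov representation of $\eps_1$ involves only $\tau_\beta$ and the elements $\ad(\tau_\beta)^{m-k}\sigma_{k,j}$. Indeed, commutators of $\tau_\beta$ alone vanish, because the index condition $i_{n-1}>i_n$ cannot hold. By \Cref{cocycle-evaluation-map}, with coordinates $x,y,z_{k,j}$,
\[ \ev_{\eps_1}^\sharp L_0=c_{\tau_\beta}x,\quad \ev_{\eps_1}^\sharp L_1=c_{\tau_\beta}y,\quad \ev_{\eps_1}^\sharp L_m=\sum_{j}\sum_{\substack{k\le m\\ k\text{ odd}}} c_{\tau_\beta^{m-k}\sigma_{k,j}}x^{m-k}z_{k,j}\quad(m\geq2). \]
Applying $\sigma^*$ gives $\eps_2$. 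The term $\ad(\tau_\beta)^{m-k}\sigma_{k,j}$ acquires the sign $(-1)^{m-k}\epsilon_j$, where $\epsilon_1=1$ and $\epsilon_2=-1$. So $\ev_{\eps_2}^\sharp L_m$ is obtained from the formula above by inserting these signs, and $L_0,L_1$ change sign. This already gives the depth-1 equations. For $m$ even and $k$ odd the sign is $-\epsilon_j$. Hence
\[ L_m^{(1)}-L_m^{(2)}=2\sum_k c_{\tau_\beta^{m-k}\sigma_{k,1}}x^{m-k}z_{k,1}, \qquad L_m^{(1)}+L_m^{(2)}=2\sum_k c_{\tau_\beta^{m-k}\sigma_{k,2}}x^{m-k}z_{k,2}, \]
while for $k$ odd, $L_k^{(1)}\pm L_k^{(2)}$ isolates the $j=1$ resp.\ $j=2$ parts.

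Next we eliminate the coordinates. For odd $r$ we have
\[ L_r^{(1)}+L_r^{(2)}=2\sum_{k\le r}c_{\tau_\beta^{r-k}\sigma_{k,1}}x^{r-k}z_{k,1}, \]
which is triangular in the $z_{k,1}$ with diagonal $c_{\sigma_{k,1}}\neq0$ by \Cref{period-conjecture-implies-coeff-nonzero}. We substitute $x=L_0^{(1)}/c_{\tau_\beta}$ and the ansatz
\[ L_m^{(1)}-L_m^{(2)}=\sum_{r}\lambda^{(1)}_{m,r}(L_0^{(1)})^{m-r}\bigl(L_r^{(1)}+L_r^{(2)}\bigr). \]
Comparing coefficients of $x^{m-k}z_{k,1}$ for $k=m-1,m-3,\dots$ gives exactly the stated recursion for $\lambda^{(1)}_{m,k}$. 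The $j=2$ case is identical with roles swapped. Pulling back along \Cref{Ln-as-pullback} gives the equations in $z_1,z_2$. The case $m=2$ is an empty sum, which recovers \eqref{eq:L2-equations-real}.

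Finally we show the image is exactly cut out. The image is the set of $(L^{(1)},L^{(2)})$ with $L_0^{(2)}=-L_0^{(1)}$ and $L_1^{(2)}=-L_1^{(1)}$, such that for each odd $k$ the sums $L_k^{(1)}\pm L_k^{(2)}$ are arbitrary. These determine $z_{k,j}$ uniquely given $x$, by triangularity, or when $x=0$ directly. The even-degree combinations are then forced by the equations above.

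The main obstacle is this converse direction. The image must be closed and equal to the locus of the equations, including on $x=0$. This reduces to the unipotent triangular change of coordinates $(z_{k,j})\leftrightarrow(L_k^{(1)}\pm L_k^{(2)})$ being an isomorphism over the whole affine line in $x$. That holds because its diagonal entries $c_{\sigma_{k,j}}$ are nonzero constants.
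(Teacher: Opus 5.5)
Your proposal is correct and follows essentially the same route as the paper: the same Galois-adapted generators, the same sign bookkeeping for $\eps_2=\sigma^*\eps_1$, and the same triangular elimination. The paper phrases that elimination via a lower-triangular matrix $A$ acting on the monomials $x^{m-k}z_{k,j}$, while you compare coefficients in an ansatz; both give the identical back-substitution recursion for $\lambda^{(j)}_{m,k}$, and your explicit check that $(x,y,z_{k,j})\mapsto(L_0^{(1)},L_1^{(1)},L_k^{(1)}\pm L_k^{(2)})$ is a unipotent triangular isomorphism simply makes precise the paper's remark that odd depths only solve for the new variables $z_{m,j}$.
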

\begin{proof}
Let $\fp_1$ and $\fp_2$ be the two primes of~$K$ above~$p$, and let $\eps_i \coloneqq \eps_{\fp_i}$ be the associated $\fp_i$-adic period elements ($i=1,2$). In degree $-1$, we choose $\tau_\beta$ to be the dual of $\beta$, and we have $\sigma \beta = \beta^{-1}$ in $\cO_K^{\times} \otimes \bQ$ since $\beta$ has norm~$\pm 1$. Dually, we find $\sigma^*\tau_{\beta}=-\tau_{\beta}$. In degree $\leq -2$,
$$\dim_{\bQ}\Ext^1_{\MT(K,\bQ)}(\bQ(0),\bQ(k))=\begin{cases}
    r_2(K)=0 & k\geq 2 \text{ even},\\
    r_1(K)+r_2(K)=2 & k\geq 3 \text{ odd},
\end{cases}
$$
and for odd $k\geq 3$ we have
$$\dim_{\bQ}\Ext^1_{\MT(K,\bQ)}(\bQ(0),\bQ(k))^{\Gal(K/\bQ)}=r_1(\bQ)+r_2(\bQ)=1.
$$

As a result, we can choose the Lie algebra generators $\sigma_{k,1},\sigma_{k,2}$ in degree $-k$ for odd $k\geq 3$ such that $\sigma^*\sigma_{k,1}=\sigma_{k,1}$ and $\sigma^*\sigma_{k,2}=-\sigma_{k,2}$, and there are no Lie algebra generators in degree $-k$ for even $k\geq 2$. By \Cref{lie-algebra-element-expansion}, we can write
$$\eps_1=c_{\tau_\beta}\tau_\beta+\sum_{m\geq 2}\, \sum_{\substack{3 \le k \le m \\ k \text{ odd}}}  \bigl( c_{\tau_\beta^{\,m-k} \sigma_{k,1}} \ad(\tau_\beta)^{m-k}\sigma_{k,1} + c_{\tau_\beta^{\,m-k} \sigma_{k,2}} \ad(\tau_\beta)^{m-k}\sigma_{k,2} \bigr)$$

Thus, applying $\sigma^*$, the $\fp_2$-adic period element $\eps_2 = \sigma^* \eps_1$, is given by
$$\eps_2=-c_{\tau_\beta}\tau_\beta+\sum_{m\geq 2} \,\sum_{\substack{3 \le k \le m \\ k \text{ odd}}}  (-1)^{m-k}\bigl( c_{\tau_\beta^{\,m-k} \sigma_{k,1}} \ad(\tau_\beta)^{m-k}\sigma_{k,1} - c_{\tau_\beta^{\,m-k} \sigma_{k,2}} \ad(\tau_\beta)^{m-k}\sigma_{k,2} \bigr)$$

Specialising the discussion of §\ref{sec:coordinates} to the present situation, the Selmer scheme $\Sel^{\mot}_{S,\PL,n}(X)$ is an affine space with coordinates $x_{\beta}, y_{\beta}, z_{k,j}$ for $j=1,2$ and odd $3\leq k\leq n$ where $z_{k,j}$ corresponds to $\sigma_{k,j}$. By \Cref{cocycle-evaluation-map}, the evaluation maps at $\eps_1$ and $\eps_2$ in depth~$m\geq 2$ are given by
\begin{align}
\label{eq:real_empty_eveps1}
\ev_{\eps_1}^{\sharp} L_m &= \sum_{\substack{3 \le k \le m \\ k \text{ odd}}} x_\beta^{\,m-k} \bigl( c_{\tau_\beta^{\,m-k} \sigma_{k,1}} z_{k,1} + c_{\tau_\beta^{\,m-k} \sigma_{k,2}} z_{k,2} \bigr) \\
\label{eq:real_empty_eveps2}
\ev_{\eps_2}^{\sharp} L_m &= \sum_{\substack{3 \le k \le m \\ k \text{ odd}}} (-1)^{m-k} x_\beta^{\,m-k} \bigl(  c_{\tau_\beta^{\,m-k} \sigma_{k,1}} z_{k,1} - c_{\tau_\beta^{\,m-k} \sigma_{k,2}} z_{k,2} \bigr)
\end{align} 

Let $L_m^{(j)}$ denote $L_m$ composed with the $j$-th projection of $\Lie(\Pi_{\PL,n}^{\dR}) \times \Lie(\Pi_{\PL,n}^{\dR})$ for $j=1,2$. The graph of $\ev_{\eps_1} \times \ev_{\eps_2}$ is the affine variety with coordinate ring
\begin{equation}
    \label{eq:real-empty-graph-ring}
        R \coloneqq \bQ_p[x_{\beta},y_{\beta},(z_{m,j})_{m,j}, (L_m^{(j)})_{m,j}] \bigm/ (L_m^{(j)} - \ev_{\eps_j}^{\sharp}L_m).
\end{equation}

We have derived the depth $1$ and $2$ equations in \Cref{real-integral-equations}, so we discuss only the cases $m\geq 3$ now. First consider the case where $m\geq 4$ is even. In $R$, by adding and subtracting \eqref{eq:real_empty_eveps1}, \eqref{eq:real_empty_eveps2}, 

\begin{equation}
\label{eq:Lm1Lm2}
  L_m^{(1)} - L_m^{(2)} = 2\sum_{\substack{3\le r\le m-1\\ r\ \text{odd}}} c_{\tau_\beta^{m-r}\sigma_{r,1}}\,x_\beta^{m-r}z_{r,1},\quad 
L_m^{(1)} + L_m^{(2)} = 2\sum_{\substack{3\le r\le m-1\\ r\ \text{odd}}} c_{\tau_\beta^{m-r}\sigma_{r,2}}\,x_\beta^{m-r}z_{r,2}.  
\end{equation}
For odd $k\geq 3$ which is smaller than $m$, similarly we have
$$
L_k^{(1)} + L_k^{(2)} = 2\sum_{\substack{3\le r\le k\\ r\ \text{odd}}} c_{\tau_\beta^{k-r}\sigma_{r,1}}\,x_\beta^{k-r}z_{r,1},\quad
L_k^{(1)} - L_k^{(2)} = 2\sum_{\substack{3\le r\le k\\ r\ \text{odd}}} c_{\tau_\beta^{k-r}\sigma_{r,2}}\,x_\beta^{k-r}z_{r,2}.
$$
Denote $P_k^{(1)} \coloneq L_k^{(1)} + L_k^{(2)}$ and $P_k^{(2)} \coloneq L_k^{(1)} - L_k^{(2)}$. Then we have the following unified expression
$$P_k^{(j)} = 2\sum_{\substack{3\le r\le k\\ r\ \text{odd}}} c_{\tau_\beta^{k-r}\sigma_{r,j}}\,x_\beta^{k-r}z_{r,j}.$$
Since $L_0^{(1)} = c_{\tau_\beta} x_\beta$, multiplying by $(L_0^{(1)})^{m-k}$ yields
$$(L_0^{(1)})^{m-k}P_k^{(j)} = 2\sum_{\substack{3\le r\le k\\ r\ \text{odd}}} c_{\tau_\beta}^{m-k}\,c_{\tau_\beta^{k-r}\sigma_{r,j}}\,x_\beta^{m-r}z_{r,j}.$$
Let $k_1 = m-1,\; k_2 = m-3,\; \dots,\; k_N = 3$ where $N = (m-2)/2$. The relation above becomes the matrix equation
$$\mathbf{y} = A \mathbf{x}$$
where
\begin{itemize}
    \item $\mathbf{y} = \bigl( (L_0^{(1)})^{m-k_i} P_{k_i}^{(j)} \bigr)_{i=1}^N$ and $\mathbf{x} = \bigl( x_\beta^{m-k_i} z_{k_i,j} \bigr)_{i=1}^N$;
\item $A = (a_{i\ell})$ is an $N\times N$ matrix defined by 
$$a_{i\ell} =
\begin{cases}
2\,c_{\tau_\beta}^{m-k_i}\,c_{\tau_\beta^{k_i-k_\ell}\sigma_{k_\ell,j}}, & \text{if } \ell \ge i \ (\text{i.e. } k_\ell \le k_i),\\
0, & \text{if } \ell < i \ (\text{i.e. } k_\ell > k_i).
\end{cases}$$
\end{itemize}
We find that $A$ is a lower triangular matrix with the diagonal entries $a_{ii} = 2\,c_{\tau_\beta}^{m-k_i}\,c_{\sigma_{k_i,j}}$ which are all nonzero by the $p$-adic period conjecture, hence $A$ is invertible. In other words, the set
$$\bigl\{(L_0^{(1)})^{m-k}P_k^{(j)} \mid 3\le k\le m-1,\; k\ \text{odd}\bigr\}$$
forms a basis of the space spanned by $\{x_\beta^{m-r}z_{r,j}\}_{3\le r\le m-1,\; r\ \text{odd}}$. The elements $L_m^{(1)} - L_m^{(2)}$, $L_m^{(1)} + L_m^{(2)}$ belong to the space by \eqref{eq:Lm1Lm2}. Consequently, there exist unique coefficients $\lambda_{m,k}^{(j)}$ such that
\begin{align*}
    L_m^{(1)} - L_m^{(2)} &= 2\sum_{\substack{3\le r\le m-1\\ r\ \text{odd}}} c_{\tau_\beta^{m-r}\sigma_{r,1}}\,x_\beta^{m-r}z_{r,1} = \sum_{\substack{3\le k\le m-1\\ k\ \text{odd}}} \lambda_{m,k}^{(1)} (L_0^{(1)})^{m-k}\bigl(L_k^{(1)} + L_k^{(2)}\bigr),\\
    L_m^{(1)} + L_m^{(2)} &= 2\sum_{\substack{3\le r\le m-1\\ r\ \text{odd}}} c_{\tau_\beta^{m-r}\sigma_{r,2}}\,x_\beta^{m-r}z_{r,2} = \sum_{\substack{3\le k\le m-1\\ k\ \text{odd}}} \lambda_{m,k}^{(2)} (L_0^{(1)})^{m-k}\bigl(L_k^{(1)} - L_k^{(2)}\bigr).
\end{align*}
The coefficients $\lambda_{m,k}^{(j)}$ are obtained by applying $A^{-1}$ to the vector representing the left-hand sides. Because $A$ is lower triangular, the system can be solved by back-substitution, yielding the explicit recurrence
\[
\lambda_{m,k}^{(j)} \coloneqq \frac{1}{c_{\sigma_{k,j}}} \left( \frac{c_{\tau_\beta^{m-k}\sigma_{k,j}}}{c_{\tau_\beta}^{m-k}} - \sum_{\substack{k<r<m\\ r\text{ odd}}} \lambda_{m,r}^{(j)} \frac{c_{\tau_\beta^{r-k}\sigma_{k,j}}}{c_{\tau_\beta}^{r-k}} \right),
\quad
j=1,2,
\quad
k=m-1,m-3,\ldots,3.
\]

Next consider the case where $m\geq 3$ is odd. In $R$, by adding and subtracting \eqref{eq:real_empty_eveps1}, \eqref{eq:real_empty_eveps2},
\begin{align*}
L_m^{(1)} + L_m^{(2)} &= 2\sum_{\substack{3\le r\le m\\ r\ \text{odd}}} c_{\tau_\beta^{m-r}\sigma_{r,1}}\,x_\beta^{m-r}z_{r,1}=2c_{\sigma_{m,1}}z_{m,1} + (\text{lower depth terms}),\\
L_m^{(1)} - L_m^{(2)} &= 2\sum_{\substack{3\le r\le m\\ r\ \text{odd}}} c_{\tau_\beta^{m-r}\sigma_{r,2}}\,x_\beta^{m-r}z_{r,2}= 2c_{\sigma_{m,2}}z_{m,2} + (\text{lower depth terms}).
\end{align*}
Since $c_{\sigma_{m,1}}$ and $c_{\sigma_{m,2}}$ are nonzero by the $p$-adic period conjecture, these two equations can be used to solve for the new depth variables $z_{m,1}$ and $z_{m,2}$ instead of giving relations that only involve lower depths. Hence, when $m$ is odd, no extra equations arise. This completes the proof.
\end{proof}

\subsection{Galois-stable $S$ of size~1}
\label{sec:real-S=1}

Now consider a real quadratic field~$K$ and $S = \{\fl\}$ of size~$1$ with $\fl$ a Galois-stable prime of~$K$. Let $\sigma$ be the nontrivial element of $\Gal(K/\bQ)$. Let $\beta$ be a generator of $\cO_K^{\times} \otimes \bQ$ and let $\pi_{\fl}$ be a generator of $\cO_{K,\{\fl\}}^{\times} \otimes \bQ$ modulo $\beta$ such that $\sigma \pi_{\fl} = \pi_{\fl}$ holds in $\cO_{K,\{\fl\}}^{\times} \otimes \bQ$. For example, $\pi_{\fl}$ could be the rational prime under~$\fl$. We choose the Lie algebra generators $\tau_{\beta}$ and $\tau_{\fl}$ of $\Lie(U_S^{\MT})$ in degree~$-1$ as the dual basis of $\beta$ and $\pi_{\fl}$. As before, we choose the Lie algebra generators $\sigma_3, \chi_3$ in degree $-3$ such that $\sigma^* \sigma_3 = \sigma_3$ and $\sigma^* \chi_3 = -\chi_3$. There are no generators in even degrees. In summary:
\[ \Sigma_1 = \{\tau_{\beta} ,\tau_{\fl} \}, \quad \Sigma_2 = \emptyset,\quad \Sigma_3 = \{\sigma_3,\chi_3\}, \quad \Sigma_4 = \emptyset. \]

Now let $p$ be a rational prime not divisible by~$\fl$ which splits in~$K$, let $\fp_1$ and $\fp_2$ be the two primes of~$K$ above~$p$, and let $\eps_i \coloneqq \eps_{\fp_i}$ be the associated $\fp_i$-adic period elements ($i=1,2$). By \Cref{lie-algebra-element-expansion}, we can write
\begin{align}
\label{eq:real-S=1-eps1}
    \eps_1 &= c_{\tau_{\beta}} \tau_{\beta} + c_{\tau_{\fl}} \tau_{\fl} + c_{\tau_{\fl}\tau_{\beta}} [\tau_{\fl},\tau_{\beta}] + c_{\sigma_3} \sigma_3 + c_{\chi_3} \chi_3
    + c_{\tau_{\beta}\tau_{\fl}\tau_{\beta}} [\tau_{\beta},[\tau_{\fl},\tau_{\beta}]] + c_{\tau_{\fl}\tau_{\fl}\tau_{\beta}} [\tau_{\fl},[\tau_{\fl},\tau_{\beta}]]\\ \notag
    & + c_{\tau_{\beta}\sigma_3} [\tau_{\beta},\sigma_3] + c_{\tau_{\fl}\sigma_3} [\tau_{\fl},\sigma_3]  + c_{\tau_{\beta}\chi_3} [\tau_{\beta},\chi_3] +c_{\tau_{\fl}\chi_3} [\tau_{\fl},\chi_3] \\ \notag
    &+ c_{\tau_{\beta}\tau_{\beta}\tau_{\fl}\tau_{\beta}} [\tau_{\beta},[\tau_{\beta}, [\tau_{\fl}, \tau_{\beta}]]] 
    + c_{\tau_{\beta}\tau_{\fl}\tau_{\fl}\tau_{\beta}} [\tau_{\beta},[\tau_{\fl}, [\tau_{\fl}, \tau_{\beta}]]] 
    + c_{\tau_{\fl}\tau_{\fl}\tau_{\fl}\tau_{\beta}} [\tau_{\fl},[\tau_{\fl}, [\tau_{\fl}, \tau_{\beta}]]] + \ldots
\end{align}
with coefficients $c_w \in \bQ_p$, where the omitted elements belong to the Goncharov ideal or have degree $< -4$.
We have $c_{\tau_{\fl}} = \log(\pi_{\fl})$, $c_{\tau_{\beta}} = \log(\beta)$ by \Cref{tau-coeffs}. Here, the logarithms of $\beta$ and $\pi_{\fl}$ are defined via $K \hookrightarrow K_{\fp_1} = \bQ_p$. Note that $c_{\tau_{\fl}}$, $c_{\tau_{\beta}} \neq 0$. We also define the normalised coefficients
\begin{gather*}
    \tilde c_{\tau_{\fl} \tau_{\beta}} \coloneqq \frac{c_{\tau_{\fl} \tau_{\beta}}}{c_{\tau_{\fl}} c_{\tau_{\beta}}}, \\
    \quad \tilde c_{\tau_{\beta}\sigma_3} \coloneqq \frac{c_{\tau_{\beta}\sigma_3}}{c_{\tau_{\beta}}c_{\sigma_3}}, 
    \quad \tilde c_{\tau_{\fl}\sigma_3} \coloneqq \frac{c_{\tau_{\fl}\sigma_3}}{c_{\tau_{\fl}}c_{\sigma_3}}, 
    \quad \tilde c_{\tau_{\fl}\chi_3} \coloneqq \frac{c_{\tau_{\fl}\chi_3}}{c_{\tau_{\fl}}c_{\chi_3}}, 
    \quad \tilde c_{\tau_{\beta}\chi_3} \coloneqq \frac{c_{\tau_{\beta}\chi_3}}{c_{\tau_{\beta}}c_{\chi_3}}, \\
    \tilde c_{\tau_{\beta}\tau_{\beta}\tau_{\fl}\tau_{\beta}} \coloneqq \frac{c_{\tau_{\beta}\tau_{\beta}\tau_{\fl}\tau_{\beta}}}{c_{\tau_{\beta}}^3c_{\tau_{\fl}}}, 
    \quad \tilde c_{\tau_{\beta}\tau_{\fl}\tau_{\fl}\tau_{\beta}} \coloneqq \frac{c_{\tau_{\beta}\tau_{\fl}\tau_{\fl}\tau_{\beta}}}{c_{\tau_{\beta}}^2c_{\tau_{\fl}}^2},
    \quad \tilde c_{\tau_{\fl}\tau_{\fl}\tau_{\fl}\tau_{\beta}} \coloneqq \frac{c_{\tau_{\fl}\tau_{\fl}\tau_{\fl}\tau_{\beta}}}{c_{\tau_{\beta}}c_{\tau_{\fl}}^3},
\end{gather*}
whenever $c_{\sigma_3}$, $c_{\chi_3} \neq 0$ as well, as is implied by the $p$-adic period conjecture (see \Cref{period-conjecture-implies-coeff-nonzero}). These constants are invariant under rescaling the Lie algebra generators $\tau_{\beta}, \tau_{\fl}, \sigma_3, \chi_3$.

\begin{thm}
\label{thm:realquadS=1eqs}
    Let $K$ be a real quadratic field and assume that $S=\{\fl\}$ is Galois-stable. Let $p$ be a rational prime which splits in~$K$. Assume that $c_{\sigma_3}, c_{\chi_3}, c_{\tau_{\fl}\tau_{\beta}} \neq 0$ in $\bQ_p$. Then, for $n=1,2,3,4$, the depth-$n$ Chabauty--Kim locus $X(\cO_K \otimes \bZ_p)_{\{\fl\},\PL,n}$ is exactly the set of pairs $(z_1,z_2) \in X(\bZ_p) \times X(\bZ_p)$ satisfying the following incremental list of equations:
    \begin{itemize}
        \item in depth~1: no equations;
        \item in depth~2:
        \begin{gather}
            L_2(z_1)+L_2(z_2)=0, \label{eq:realquadS=1eq1}\\
            2L_2(z_1) + c_2(\log(z_1)\Li_1(z_2) - \log(z_2)\Li_1(z_1)) = 0; \label{eq:realquadS=1eq2}
        \end{gather}
        \item in depth~3: no additional equations;
        \item in depth~4:
        \begin{gather}
        \begin{aligned}
        \label{eq:realquadS=1eq3}
            2L_4(z_1) + 2L_4(z_2) &= c_{4,1}(\log(z_1) + \log(z_2))(L_3(z_1) + L_3(z_2))\\
            &+ c_{4,2}(\log(z_1) - \log(z_2))(L_3(z_1) - L_3(z_2)) \\
            &+ c_{4,3} (\log(z_1)^2 - \log(z_2)^2) L_2(z_1),
        \end{aligned}
        \\[1mm]
        \begin{aligned}
        \label{eq:realquadS=1eq4}
            2L_4(z_1) - 2L_4(z_2) &= c_{4,4}(\log(z_1) - \log(z_2))(L_3(z_1) + L_3(z_2))\\
            &+ c_{4,5}(\log(z_1) + \log(z_2))(L_3(z_1) - L_3(z_2))\\
            &+ c_{4,6} (\log(z_1) - \log(z_2))^2 L_2(z_1)\\
            &+ c_{4,7} (\log(z_1) + \log(z_2))^2 L_2(z_1),
        \end{aligned}
        \end{gather}
    \end{itemize}where
    \begin{gather*}
        c_2 \coloneqq \tilde c_{\tau_{\fl}\tau_{\beta}},\\
        c_{4,1} \coloneqq \tilde c_{\tau_{\fl} \sigma_3}, \quad c_{4,2} \coloneqq \tilde c_{\tau_{\beta} \chi_3}, \\
        c_{4,3} \coloneqq (\tilde c_{\tau_{\beta}\tau_{\fl}\tau_{\fl}\tau_{\beta}} - \tilde c_{\tau_{\fl}\sigma_3} \tilde c_{\tau_{\beta}\tau_{\fl}\tau_{\beta}} - \tilde c_{\tau_{\beta}\chi_3} \tilde c_{\tau_{\fl}\tau_{\fl}\tau_{\beta}})/\tilde c_{\tau_{\fl}\tau_{\beta}},\\
        c_{4,4} \coloneqq \tilde c_{\tau_{\beta}\sigma_3}, \quad c_{4,5} \coloneqq \tilde c_{\tau_{\fl}\chi_3},\\
        c_{4,6} \coloneqq (\tilde c_{\tau_{\beta}\tau_{\beta}\tau_{\fl}\tau_{\beta}} -\tilde c_{\tau_{\beta} \sigma_3} \tilde c_{\tau_{\beta} \tau_{\fl} \tau_{\beta}})/\tilde c_{\tau_{\fl}\tau_{\beta}},\\
        c_{4,7} \coloneqq (\tilde c_{\tau_{\fl}\tau_{\fl}\tau_{\fl}\tau_{\beta}} - \tilde c_{\tau_{\fl}\chi_3} \tilde c_{\tau_{\fl}\tau_{\fl}\tau_{\beta}})/\tilde c_{\tau_{\fl}\tau_{\beta}}.
    \end{gather*}
\end{thm}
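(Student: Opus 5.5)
We follow the same pattern as \Cref{S=1-stable-equations} and \Cref{real-integral-equations}. Since $\fl$ is Galois-stable, $\sigma^*\tau_{\fl}=\tau_{\fl}$; since $\beta$ has norm $\pm1$, $\sigma^*\tau_\beta=-\tau_\beta$. By construction, $\sigma^*\sigma_3=\sigma_3$ and $\sigma^*\chi_3=-\chi_3$. By \Cref{sigma-on-period-point}, $\eps_2=\sigma^*\eps_1$. Applying $\sigma^*$ termwise to \eqref{eq:real-S=1-eps1} multiplies each Goncharov coefficient $c_w$ by $(-1)^{\#\tau_\beta(w)+\#\chi_3(w)}$. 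Because $\sigma^*$ is a Lie algebra automorphism preserving the Goncharov ideal, the result is again a Goncharov representation (up to the sign from reordering, which does not arise here).

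The Selmer scheme in depth~4 has coordinates $x_\beta,y_\beta,x_{\fl},y_{\fl},z,w$. We apply \Cref{cocycle-evaluation-map} with $d_1=2$, ordering $\tau_{\fl}<\tau_\beta$ so that the basis elements are $[\tau_\beta,\tau_{\fl}]$-type. We then write $\ev_{\eps_i}^\sharp L_k$ for $k\le 4$. In degree $2$ the only contribution is $c_{\tau_{\fl}\tau_\beta}(x_{\fl}y_\beta-x_\beta y_{\fl})$, with the sign pattern of \Cref{cocycle-evaluation-map}. In degree $3$ we get $c_{\sigma_3}z\pm c_{\chi_3}w$ plus the two triple-$\tau$ terms times $x\cdot(xy-xy)$. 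In degree $4$ we get the $[\tau,\sigma_3]$, $[\tau,\chi_3]$ and quadruple-$\tau$ terms.

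For the eliminations, define $D\coloneqq x_{\fl}y_\beta-x_\beta y_{\fl}$. In depth~1 the four coordinates $L_0^{(i)},L_1^{(i)}$ are independent linear images of $(x_\beta,x_{\fl},y_\beta,y_{\fl})$: the matrix is $\bigl(\begin{smallmatrix} c_{\tau_\beta}&c_{\tau_{\fl}}\\-c_{\tau_\beta}&c_{\tau_{\fl}}\end{smallmatrix}\bigr)$, invertible. Hence there are no depth-1 equations, and we can express
\[ x_{\fl}=\tfrac{L_0^{(1)}+L_0^{(2)}}{2c_{\tau_{\fl}}},\qquad x_\beta=\tfrac{L_0^{(1)}-L_0^{(2)}}{2c_{\tau_\beta}}, \]
and similarly for the $y$'s. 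In depth~2 we have $L_2^{(2)}=-L_2^{(1)}$ (equation \eqref{eq:realquadS=1eq1}), since $[\tau_{\fl},\tau_\beta]$ flips sign. Also $L_2^{(1)}=c_{\tau_{\fl}\tau_\beta}D$. Substituting the $x,y$ in terms of $L_0,L_1$ gives $D$ as a multiple of $L_0^{(1)}L_1^{(2)}-L_0^{(2)}L_1^{(1)}$. Pulling back, with $L_0=\log$, $L_1=\Li_1$, and noting that the constant works out to $c_2=\tilde c_{\tau_{\fl}\tau_\beta}$, yields \eqref{eq:realquadS=1eq2}. In depth~3, the sum and difference of $L_3^{(i)}$ involve $z$ and $w$ respectively with nonzero coefficients $c_{\sigma_3},c_{\chi_3}$, so they can be solved for $z,w$ and no new equations arise. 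In depth~4 there is no new variable. We compute $L_4^{(1)}\pm L_4^{(2)}$ as polynomials in $x_\beta,x_{\fl},z,w,D$ and eliminate as follows. We replace $z$ using $L_3^{(1)}+L_3^{(2)}=2c_{\sigma_3}z+(\text{$D$-terms})$ and $w$ using the difference. We replace $D$ by $L_2^{(1)}/c_{\tau_{\fl}\tau_\beta}$, which uses $c_{\tau_{\fl}\tau_\beta}\neq0$. We replace $x_\beta,x_{\fl}$ by $(\log z_1\mp\log z_2)/2c$. The parity of each term under $\sigma$ determines which sum $L_3^{(1)}\pm L_3^{(2)}$ and which linear or quadratic form in $\log z_1\pm\log z_2$ it multiplies. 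Collecting terms gives \eqref{eq:realquadS=1eq3}--\eqref{eq:realquadS=1eq4} with the stated constants. The constants $c_{4,3},c_{4,6},c_{4,7}$ arise as the quadruple-$\tau$ coefficients minus the triple-$\tau$ contributions re-inserted when $z,w$ are eliminated.

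Exactness is argued in the same way. For the converse direction, we show that every point of the target satisfying the equations lies in the image: solve successively for $x,y$, then check that $D$ matches (this is the depth-2 equation), then solve for $z,w$, and the depth-4 equations are then forced. The image is therefore precisely cut out, as in \Cref{real-integral-equations}.

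The main obstacle will be the depth-4 bookkeeping. This requires the correct $c'$-coefficients from \Cref{cocycle-evaluation-map} for words like $\tau_{\fl}\tau_\beta\tau_\beta$ (the case $i_{n-1}\le i_n$), together with tracking signs and factors of $2$ through the normalisations. I would first verify the result symbolically (e.g.\ in Sage) against the explicit polynomial maps before writing out the elimination.
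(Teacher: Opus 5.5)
Your proposal is correct and follows the paper's proof essentially step by step. You obtain $\eps_2=\sigma^*\eps_1$ by sign flips according to the number of $\tau_\beta$'s and $\chi_3$'s, and you solve $L_0^{(1)}\pm L_0^{(2)}$, $L_1^{(1)}\pm L_1^{(2)}$, $L_3^{(1)}\pm L_3^{(2)}$ for $x_{\fl},x_\beta,y_{\fl},y_\beta,z,w$ (this is the paper's splitting $\phi$, which also gives exactness). You then substitute into $L_2^{(i)}$ and $L_4^{(1)}\pm L_4^{(2)}$, replacing $D$ by $L_2^{(1)}/c_{\tau_{\fl}\tau_\beta}$ in depth~4.

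One small slip: the expansion \eqref{eq:real-S=1-eps1}, relative to which the constants are defined, uses $[\tau_{\fl},\tau_\beta]$. That corresponds to the ordering $\tau_\beta<\tau_{\fl}$ in \Cref{goncharov-basis}, not the ordering $\tau_{\fl}<\tau_\beta$ you state; your formula $c_{\tau_{\fl}\tau_\beta}D$ already uses the right convention. Also, since you keep $D$ as a factor, you never need the $c'$-coefficients of \Cref{cocycle-evaluation-map}.
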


\begin{proof}
    We have $\sigma \beta = \beta^{-1}$ in $\cO_K^{\times} \otimes \bQ$ since $\beta$ has norm~$\pm 1$, and we have $\sigma \pi_{\fl} = \pi_{\fl}$ by our choice of~$\pi_{\fl}$. Dually, we find $\sigma^*\tau_{\beta}=-\tau_{\beta}$ and $\sigma^*\tau_{\fl}=\tau_{\fl}$. Recall that $\sigma_3$ and $\chi_3$ are chosen such that $\sigma^* \sigma_3 = \sigma_3$ and $\sigma^* \chi_3 = -\chi_3$. Thus, applying $\sigma^*$ to~\eqref{eq:real-S=1-eps1}, the $\fp_2$-adic period element $\eps_2 = \sigma^* \eps_1$, is given by
    \begin{align}
    \label{eq:real-S=1-eps2}
        \eps_2 &= - c_{\tau_{\beta}} \tau_{\beta} + c_{\tau_{\fl}} \tau_{\fl} - c_{\tau_{\fl}\tau_{\beta}} [\tau_{\fl},\tau_{\beta}] + c_{\sigma_3} \sigma_3 - c_{\chi_3} \chi_3
        + c_{\tau_{\beta}\tau_{\fl}\tau_{\beta}} [\tau_{\beta},[\tau_{\fl},\tau_{\beta}]] - c_{\tau_{\fl}\tau_{\fl}\tau_{\beta}} [\tau_{\fl},[\tau_{\fl},\tau_{\beta}]]\\ \notag
        &- c_{\tau_{\beta}\sigma_3} [\tau_{\beta},\sigma_3] + c_{\tau_{\fl}\sigma_3} [\tau_{\fl},\sigma_3] + c_{\tau_{\beta}\chi_3} [\tau_{\beta},\chi_3] - c_{\tau_{\fl}\chi_3} [\tau_{\fl},\chi_3] \\ \notag
        &- c_{\tau_{\beta}\tau_{\beta}\tau_{\fl}\tau_{\beta}} [\tau_{\beta},[\tau_{\beta}, [\tau_{\fl}, \tau_{\beta}]]] 
        + c_{\tau_{\beta}\tau_{\fl}\tau_{\fl}\tau_{\beta}} [\tau_{\beta},[\tau_{\fl}, [\tau_{\fl}, \tau_{\beta}]]] 
        - c_{\tau_{\fl}\tau_{\fl}\tau_{\fl}\tau_{\beta}} [\tau_{\fl},[\tau_{\fl}, [\tau_{\fl}, \tau_{\beta}]]] + \ldots
    \end{align}
    Specialising the discussion of §\ref{sec:coordinates} to the present situation, the Selmer scheme $\Sel^{\mot}_{S,\PL,4}(X)$ is a 6-dimensional affine space with coordinates $x_{\fl}, x_{\beta}, y_{\fl}, y_{\beta}, z, w$, with $z \coloneqq z_{3,1}$ corresponding to~$\sigma_3$ and $w \coloneqq z_{3,2}$ corresponding to~$\chi_3$. By \Cref{cocycle-evaluation-map}, the evaluation maps at $\eps_1$ and $\eps_2$ in depth~$4$ are given by
    \begin{align}
        \ev_{\eps_i}^{\sharp} L_0 &= (-1)^{i+1} c_{\tau_\beta}x_{\beta} + c_{\tau_{\fl}}x_{\fl},
        \label{eq:real-stable-ev-L0} \\
        \ev_{\eps_i}^{\sharp} L_1 &= (-1)^{i+1}c_{\tau_\beta}y_{\beta} + c_{\tau_{\fl}}y_{\fl}, \label{eq:real-stable-ev-L1}  \\
        \label{eq:real-stable-ev-L2}
        \ev_{\eps_i}^{\sharp} L_2 &= (-1)^{i+1}c_{\tau_{\fl}\tau_{\beta}}(x_{\fl} y_{\beta} - x_{\beta} y_{\fl}),\\
        \label{eq:real-stable-ev-L3}
        \ev_{\eps_i}^{\sharp} L_3 &= c_{\sigma_3} z + (-1)^{i+1} c_{\chi_3} w + \bigl(c_{\tau_{\beta}\tau_{\fl}\tau_{\beta}} x_{\beta} + (-1)^{i+1} c_{\tau_{\fl}\tau_{\fl}\tau_{\beta}} x_{\fl}\bigr)(x_{\fl} y_{\beta} - x_{\beta} y_{\fl}), \\
        \ev_{\eps_i}^{\sharp} L_4 &= (-1)^{i+1} c_{\tau_{\beta}\sigma_3} x_{\beta} z + c_{\tau_{\fl}\sigma_3} x_{\fl} z + c_{\tau_{\beta}\chi_3} x_{\beta} w + (-1)^{i+1} c_{\tau_{\fl}\chi_3} x_{\fl} w \\
        &\quad + \bigl((-1)^{i+1}  c_{\tau_{\beta}\tau_{\beta}\tau_{\fl}\tau_{\beta}} x_{\beta}^2 + c_{\tau_{\beta}\tau_{\fl}\tau_{\fl}\tau_{\beta}} x_{\beta} x_{\fl} + (-1)^{i+1}  c_{\tau_{\fl}\tau_{\fl}\tau_{\fl}\tau_{\beta}} x_{\fl}^2\bigr) (x_{\fl} y_{\beta} - x_{\beta} y_{\fl}) \notag
    \end{align}
    for $i=1,2$. We need to determine the scheme-theoretic image of
    \[ \ev_{\eps_1} \times \ev_{\eps_2}\colon \Sel^{\mot}_{S,\PL,4}(X) \to \Lie(\Pi_{\PL,4}^{\dR}) \times \Lie(\Pi_{\PL,4}^{\dR}). \]
    Let $L_n^{(i)}$ denote $L_n$ composed with the $i$-th projection of $\Lie(\Pi_{\PL,4}^{\dR}) \times \Lie(\Pi_{\PL,4}^{\dR})$ for $i=1,2$. The graph of $\ev_{\eps_1} \times \ev_{\eps_2}$ is the affine variety with coordinate ring
    \begin{equation}
        \label{eq:real-stable-graph-ring}
        R \coloneqq \bQ_p[x_{\fl},x_{\beta},y_{\fl},y_{\beta},z,w, (L_n^{(i)})_{n,i}] \bigm/ (L_n^{(i)} - \ev_{\eps_i}^{\sharp}L_n).
    \end{equation}
    In~$R$ we compute
    \begin{gather*}
        L_0^{(1)} + L_0^{(2)} = 2 c_{\tau_{\fl}} x_{\fl}, \qquad L_0^{(1)} - L_0^{(2)} = 2 c_{\tau_{\beta}} x_{\beta},\\
        L_1^{(1)} + L_1^{(2)} = 2 c_{\tau_{\fl}} y_{\fl}, \qquad L_1^{(1)} - L_1^{(2)} = 2 c_{\tau_{\beta}} y_{\beta},\\
        L_3^{(1)} + L_3^{(2)} = 2c_{\sigma_3} z + 2c_{\tau_{\beta}\tau_{\fl}\tau_{\beta}} x_{\beta}(x_{\fl} y_{\beta} - x_{\beta} y_{\fl}) \\
        L_3^{(1)} - L_3^{(2)} = 2c_{\chi_3} w + 2c_{\tau_{\fl}\tau_{\fl}\tau_{\beta}} x_{\fl}(x_{\fl} y_{\beta} - x_{\beta} y_{\fl})
    \end{gather*}
    Note that the first two lines can be solved for $x_{\fl}, x_{\beta}, y_{\fl}, y_{\beta}$ and then the last two lines can be solved for~$z$ and~$w$. This defines a splitting $\phi$ of $\ev_{\eps_1} \times \ev_{\eps_2}$ and a projection $\psi \coloneqq (\ev_{\eps_1} \times \ev_{\eps_2}) \circ \phi$ onto the scheme-theoretic image. This image is cut by the equations $f = \psi^\sharp(f)$ with $f$ running through a set of algebra generators of $\cO(\Lie(\Pi_{\PL,4}^{\dR}) \times \Lie(\Pi_{\PL,4}^{\dR}))$. Taking $f = L_2^{(1)} + L_2^{(2)}$ we compute
    \[ \psi^{\sharp}(L_2^{(1)} + L_2^{(2)}) = c_{\tau_{\fl}\tau_{\beta}}(x_{\fl} y_{\beta} - x_{\beta} y_{\fl}) - c_{\tau_{\fl}\tau_{\beta}}(x_{\fl} y_{\beta} - x_{\beta} y_{\fl}) = 0, \]
    resulting in the equation $L_2^{(1)} + L_2^{(2)} = 0$ for the scheme-theoretic image, which pulls back to equation~\eqref{eq:realquadS=1eq1} for the depth-2 Chabauty--Kim locus. For $f = 2L_2^{(1)}$ we compute
    \begin{align*}
        \psi^{\sharp}(2L_2^{(1)}) &= 2c_{\tau_{\fl}\tau_{\beta}} (x_{\fl} y_{\beta} - x_{\beta} y_{\fl}) = 2c_{\tau_{\fl}\tau_{\beta}}\left(\frac{L_0^{(1)} + L_0^{(2)}}{2 c_{\tau_{\fl}}} \frac{L_1^{(1)} - L_1^{(2)}}{2 c_{\tau_{\beta}}} - \frac{L_0^{(1)} - L_0^{(2)}}{2 c_{\tau_{\beta}}} \frac{L_1^{(1)} + L_1^{(2)}}{2 c_{\tau_{\fl}}} \right) \\
        &= -\tilde c_{\tau_{\fl}\tau_{\beta}}(L_0^{(1)} L_1^{(2)} - L_0^{(2)} L_1^{(1)}),
    \end{align*}
    resulting in~\eqref{eq:realquadS=1eq2}. With $f = 2L_4^{(1)} + 2L_4^{(2)}$ we get
    \begin{align*}
        &\psi^{\sharp}(2L_4^{(1)} + 2L_4^{(2)}) \\
        &= 4 c_{\tau_{\fl}\sigma_3} x_{\fl} z + 4 c_{\tau_{\beta}\chi_3} x_{\beta} w + 4c_{\tau_{\beta}\tau_{\fl}\tau_{\fl}\tau_{\beta}} x_{\beta} x_{\fl} (x_{\fl} y_{\beta} - x_{\beta} y_{\fl}) \\
        &= 4 c_{\tau_{\fl}\sigma_3} x_{\fl} \frac{L_3^{(1)} + L_3^{(2)} - 2c_{\tau_{\beta}\tau_{\fl}\tau_{\beta}} x_{\beta} (x_{\fl} y_{\beta} - x_{\beta} y_{\fl})}{2 c_{\sigma_3}} \\
        &\qquad + 4 c_{\tau_{\beta}\chi_3} x_{\beta} \frac{L_3^{(1)} - L_3^{(2)} - 2c_{\tau_{\fl}\tau_{\fl}\tau_{\beta}} x_{\fl}(x_{\fl} y_{\beta} - x_{\beta} y_{\fl})}{2c_{\chi_3}}\\
        &\qquad + 4c_{\tau_{\beta}\tau_{\fl}\tau_{\fl}\tau_{\beta}} x_{\beta} x_{\fl} (x_{\fl} y_{\beta} - x_{\beta} y_{\fl}) \\[3mm]
        &= \frac{2 c_{\tau_{\fl}\sigma_3}}{c_{\sigma_3}} x_{\fl} (L_3^{(1)} + L_3^{(2)}) + \frac{2c_{\tau_{\beta}\chi_3}}{c_{\chi_3}} x_{\beta}(L_3^{(1)} - L_3^{(2)}) \\
        &\qquad + 4\left(c_{\tau_{\beta}\tau_{\fl}\tau_{\fl}\tau_{\beta}} - \frac{c_{\tau_{\fl}\sigma_3}c_{\tau_{\beta}\tau_{\fl}\tau_{\beta}}}{c_{\sigma_3}} - \frac{c_{\tau_{\beta}\chi_3} c_{\tau_{\fl}\tau_{\fl}\tau_{\beta}}} {c_{\chi_3}} \right) x_{\beta}x_{\fl}(x_{\fl} y_{\beta} - x_{\beta} y_{\fl}) \\[3mm]
        &= \tilde c_{\tau_{\fl}\sigma_3} (L_0^{(1)} + L_0^{(2)})(L_3^{(1)} + L_3^{(2)}) + \tilde c_{\tau_{\beta}\chi_3}(L_0^{(1)} - L_0^{(2)})(L_3^{(1)} - L_3^{(2)}) \\
        &\qquad + \left(\frac{c_{\tau_{\beta}\tau_{\fl}\tau_{\fl}\tau_{\beta}}}{c_{\tau_{\beta}} c_{\tau_{\fl}} c_{\tau_{\fl}\tau_{\beta}}} - \frac{c_{\tau_{\fl}\sigma_3}c_{\tau_{\beta}\tau_{\fl}\tau_{\beta}}}{c_{\sigma_3} c_{\tau_{\beta}} c_{\tau_{\fl}} c_{\tau_{\fl}\tau_{\beta}}} - \frac{c_{\tau_{\beta}\chi_3} c_{\tau_{\fl}\tau_{\fl}\tau_{\beta}}} {c_{\chi_3} c_{\tau_{\beta}} c_{\tau_{\fl}} c_{\tau_{\fl}\tau_{\beta}}} \right) (L_0^{(1)} - L_0^{(2)}) (L_0^{(1)} + L_0^{(2)}) L_2^{(1)}\\[3mm]
        &= c_{4,1} (L_0^{(1)} + L_0^{(2)})(L_3^{(1)} + L_3^{(2)}) + c_{4,2} (L_0^{(1)} - L_0^{(2)})(L_3^{(1)} - L_3^{(2)}) \\
        &\qquad +c_{4,3} ((L_0^{(1)})^2 - (L_0^{(2)})^2) L_2^{(1)},
    \end{align*}
    resulting in equation~\eqref{eq:realquadS=1eq3} for the depth-4 Chabauty--Kim locus. A similar calculation with $f = 2L_4^{(1)} - 2L_4^{(2)}$ yields~\eqref{eq:realquadS=1eq4}. The remaining algebra generators $L_n^{(1)} \pm L_n^{(2)}$ of $\cO(\Lie(\Pi_{\PL,4}^{\dR}) \times \Lie(\Pi_{\PL,4}^{\dR}))$ for $n=1,3$ don't produce additional equations since they were used to define the splitting~$\phi$.
\end{proof}

If one knows some $S$-integral points, the following lemma can be used to compute the coefficients $c_2$ and $c_{4,1},\ldots,c_{4,7}$ appearing in the equations of \Cref{thm:realquadS=1eqs} and verify the non-vanishing of the $p$-adic periods $c_{\tau_{\fl}\tau_{\beta}}$ at the same time.

\begin{lemma}
\label{real-stable-coefficients}
    Let $K$ be a real quadratic field and assume that $S = \{\fl\}$ is Galois-stable. Let~$p$ be a rational prime which splits in~$K$ and fix an embedding $K \hookrightarrow \bQ_p$.
    \begin{enumerate}[label=(\alph*)]
        \item \label{item:real-stable-depth2-coeff}
        Assume that $\alpha \in X(\cO_{K,S})$ is an $S$-integral point such that $\alpha \equiv 0 \bmod \fl$ and $\alpha \neq 2$. Then the coefficient $c_2$ in~\eqref{eq:realquadS=1eq2} is given by
        \[ c_2 = -\frac{2 L_2(\alpha)}{\log(\alpha)\Li_1(\sigma \alpha) - \log(\sigma \alpha)\Li_1(\alpha)}. \]
        \item \label{item:real-stable-depth4-coeffs-1}
        Assume that $\alpha_1,\alpha_2,\alpha_3 \in X(\cO_{K,S})$ are $S$-integral points such that the matrix
        \[  M_1(\alpha_1,\alpha_2,\alpha_3) \coloneqq \begin{pmatrix}
            (\log(\alpha_i) + \log(\sigma \alpha_i))(L_3(\alpha_i) + L_3(\sigma \alpha_i))\\
            (\log(\alpha_i) - \log(\sigma \alpha_i))(L_3(\alpha_i) - L_3(\sigma \alpha_i))\\
            (\log(\alpha_i)^2 - \log(\sigma \alpha_i)^2)L_2(\alpha_i) 
        \end{pmatrix}^T_{i=1,2,3} \]
        has nonzero determinant in~$\bQ_p$. Then $c_{\tau_{\fl}\tau_{\beta}}, c_{\sigma_3}, c_{\chi_3} \neq 0$ and the coefficients $c_{4,1}, c_{4,2}, c_{4,3}$ in~\eqref{eq:realquadS=1eq3} are given by
        \[  (c_{4,1},\; c_{4,2},\; c_{4,3})^T = M_1(\alpha_1,\alpha_2,\alpha_3)^{-1} \bigl(2L_4(\alpha_i) + 2L_4(\sigma\alpha_i)\bigr)_{i=1,2,3}. \]
        \item \label{item:real-stable-depth4-coeffs-2}
        Assume that $\alpha_1,\alpha_2,\alpha_3,\alpha_4 \in X(\cO_{K,S})$ are $S$-integral points such that the matrix
        \[  M_2(\alpha_1,\alpha_2,\alpha_3,\alpha_4) \coloneqq \begin{pmatrix}
            (\log(\alpha_i) - \log(\sigma \alpha_i))(L_3(\alpha_i) + L_3(\sigma \alpha_i))\\
            (\log(\alpha_i) + \log(\sigma \alpha_i))(L_3(\alpha_i) - L_3(\sigma \alpha_i))\\
            (\log(\alpha_i) - \log(\sigma \alpha_i))^2 L_2(\alpha_i) \\
            (\log(\alpha_i) + \log(\sigma \alpha_i))^2 L_2(\alpha_i) 
        \end{pmatrix}^T_{i=1,2,3,4} \]
        has nonzero determinant in~$\bQ_p$. Then $c_{\tau_{\fl}\tau_{\beta}}, c_{\sigma_3}, c_{\chi_3} \neq 0$ and the coefficients $c_{4,4},\ldots, c_{4,7}$ in~\eqref{eq:realquadS=1eq4} are given by
        \[  (c_{4,4}, c_{4,5}, c_{4,6}, c_{4,7})^T = M_2(\alpha_1,\alpha_2,\alpha_3,\alpha_4)^{-1} \bigl(2L_4(\alpha_i) - 2L_4(\sigma\alpha_i)\bigr)_{i=1,2,3,4}. \]
    \end{enumerate}
\end{lemma}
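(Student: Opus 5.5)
The plan is to substitute known $S$-integral points into the equations of \Cref{thm:realquadS=1eqs}. Each $\alpha \in X(\cO_{K,S})$ gives the pair $(z_1,z_2) = (\alpha,\sigma\alpha)$ in the locus, because the Chabauty--Kim diagram commutes. So every equation that holds on the scheme-theoretic image of $\ev_{\eps_1}\times\ev_{\eps_2}$ also holds at this pair. I would not quote the equations in the final form of \Cref{thm:realquadS=1eqs}, since that theorem assumes the nonvanishing we want to prove. Instead I would go back to the graph ring \eqref{eq:real-stable-graph-ring} and to the formulas \eqref{eq:real-stable-ev-L0}--\eqref{eq:real-stable-ev-L3}.

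For part \ref{item:real-stable-depth2-coeff} the identity
\[ 2c_{\tau_{\fl}\tau_{\beta}}\bigl(L_0^{(1)}L_1^{(2)}-L_0^{(2)}L_1^{(1)}\bigr) = -4c_{\tau_{\fl}}c_{\tau_{\beta}}\,L_2^{(1)} \]
holds on the image unconditionally. It uses only $c_{\tau_{\fl}}, c_{\tau_{\beta}} \neq 0$, which we know from \Cref{tau-coeffs}. Evaluating at $(\alpha,\sigma\alpha)$ and dividing gives the formula for $c_2$, provided the bracket $\log(\alpha)\Li_1(\sigma\alpha)-\log(\sigma\alpha)\Li_1(\alpha)$ is nonzero. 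This nonvanishing is the main point to justify, and it is where the hypotheses enter. Since $v_{\fl}(\alpha)>0$ and $1-\alpha$ is a unit, we have $\log(1-\alpha)=c_{\tau_{\beta}}\,v_\beta$ for some rational $v_\beta$. Also $\log(\alpha)$ has a nonzero $\pi_{\fl}$-component. Using $\sigma\beta=\beta^{-1}$ and $\sigma\pi_{\fl}=\pi_{\fl}$, the bracket becomes a rational multiple of $c_{\tau_{\fl}}c_{\tau_{\beta}}$ times $v_{\fl}(\alpha)\,v_\beta$. I would check that $v_\beta=0$ forces $1-\alpha$ to be a root of unity, which in a real field means $\pm1$. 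Then $\alpha=2$, which is excluded, or $\alpha=0$, which is impossible. As a by-product, $L_2(\alpha)\ne0$ follows from the same identity, so the formula also shows $c_{\tau_{\fl}\tau_{\beta}}\neq0$.

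For parts \ref{item:real-stable-depth4-coeffs-1} and \ref{item:real-stable-depth4-coeffs-2} I would prove nonvanishing of the periods first, by contradiction. Suppose one of $c_{\tau_{\fl}\tau_{\beta}}, c_{\sigma_3}, c_{\chi_3}$ is zero. Then $L_2^{(1)}$, or $L_3^{(1)}+L_3^{(2)}$ modulo terms of the form $x_\beta(x_{\fl}y_\beta-x_\beta y_{\fl})$, or the analogous combination for $\chi_3$, satisfies an extra linear relation. The corresponding column of $M_1$ (resp.\ $M_2$) is then a combination of the others, or vanishes at all the $\alpha_i$, so $\det M_1 = 0$ (resp.\ $\det M_2=0$), a contradiction. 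Making this precise is the expected obstacle. I would try first to show that if such a period vanishes, the image lies in the zero set of a nontrivial linear combination of the three (resp.\ four) column functions. For $c_{\tau_{\fl}\tau_{\beta}}=0$ this is direct: $L_2^{(1)}=0$ on the image, so the $L_2$ columns vanish.

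Once the periods are known to be nonzero, \Cref{thm:realquadS=1eqs} applies. Equations \eqref{eq:realquadS=1eq3} and \eqref{eq:realquadS=1eq4}, evaluated at the pairs $(\alpha_i,\sigma\alpha_i)$, form a linear system $M_1 c = b$ (resp.\ $M_2 c=b$) in the unknown coefficients. Inverting the matrix gives the stated formulas.
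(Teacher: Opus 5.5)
Your part (a) matches the paper's argument. The bracket you compute, $2a\,v_\beta\,c_{\tau_\fl}c_{\tau_\beta}$ with $a$ the $\pi_\fl$-coordinate of $\alpha$, is exactly the paper's $\log(1-\alpha)\log(\Nm_{K/\bQ}(\alpha))$ written in coordinates. Your strategy for (b) and (c) is also the paper's: argue by contradiction via column dependence in $M_1$, $M_2$, then invert the linear system coming from \eqref{eq:realquadS=1eq3} and \eqref{eq:realquadS=1eq4}.

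However, as written you have not proved the claimed nonvanishing of $c_{\sigma_3}$ and $c_{\chi_3}$; you stop at calling it the expected obstacle. The missing step is short, but it has to come after $c_{\tau_\fl\tau_\beta}\neq0$ is established, which your vanishing-$L_2$-column argument does. Once you have that, in \eqref{eq:real-stable-graph-ring} you may substitute $x_\beta=(L_0^{(1)}-L_0^{(2)})/(2c_{\tau_\beta})$ and $x_\fl y_\beta-x_\beta y_\fl=L_2^{(1)}/c_{\tau_\fl\tau_\beta}$. Then $c_{\sigma_3}=0$ turns \eqref{eq:real-stable-ev-L3} into
\[ L_3^{(1)}+L_3^{(2)}=\frac{c_{\tau_\beta\tau_\fl\tau_\beta}}{c_{\tau_\beta}c_{\tau_\fl\tau_\beta}}\,(L_0^{(1)}-L_0^{(2)})\,L_2^{(1)}. \]
Evaluated at the integral points, this says $L_3(\alpha_i)+L_3(\sigma\alpha_i)=\kappa\,(\log\alpha_i-\log\sigma\alpha_i)\,L_2(\alpha_i)$ with $\kappa$ independent of $i$. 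Since $\log(\alpha_i)^2-\log(\sigma\alpha_i)^2=(\log\alpha_i+\log\sigma\alpha_i)(\log\alpha_i-\log\sigma\alpha_i)$, the first column of $M_1$ is $\kappa$ times its third column. Likewise the first column of $M_2$ is $\kappa$ times its third. Symmetrically, $c_{\chi_3}=0$ gives $L_3^{(1)}-L_3^{(2)}=\frac{c_{\tau_\fl\tau_\fl\tau_\beta}}{c_{\tau_\fl}c_{\tau_\fl\tau_\beta}}(L_0^{(1)}+L_0^{(2)})L_2^{(1)}$. This makes the second column of $M_1$ proportional to its third, and the second column of $M_2$ proportional to its fourth. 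With this filled in, your argument coincides with the paper's.

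Separately, drop the ``by-product'' claim at the end of (a); it is circular. At $(\alpha,\sigma\alpha)$ your identity reads
\[ L_2(\alpha)=-\frac{c_{\tau_\fl\tau_\beta}}{2c_{\tau_\fl}c_{\tau_\beta}}\bigl(\log(\alpha)\Li_1(\sigma\alpha)-\log(\sigma\alpha)\Li_1(\alpha)\bigr), \]
so it only shows $L_2(\alpha)=0\iff c_{\tau_\fl\tau_\beta}=0$, and neither side is known unconditionally. Part (a) does not need this. The nonvanishing of $c_{\tau_\fl\tau_\beta}$ is precisely what (b) and (c) extract from $\det M_j\neq0$.
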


\begin{proof}
    For part~\ref{item:real-stable-depth2-coeff}, note that every $S$-integral point $\alpha \in X(\cO_{K,S})$ yields a solution $(\alpha,\sigma\alpha)$ to equation~\eqref{eq:realquadS=1eq2}, whether the $p$-adic periods vanish or not. We can solve the equation for $c_2$ as soon as $\log(\alpha)\Li_1(\sigma \alpha) - \log(\sigma\alpha)\Li_1(\alpha) \neq 0$.    
    The assumption $\alpha \equiv 0 \bmod \fl$ implies that $1- \alpha \in \cO_K^{\times}$. So $1-\alpha$ has norm~1, which implies $\Li_1(\sigma\alpha) = -\Li_1(\alpha)$, hence
    \begin{align*}
        \log(\alpha)\Li_1(\sigma \alpha) - \log(\sigma\alpha)\Li_1(\alpha) &= -\Li_1(\alpha)((\log(\alpha) + \log(\sigma\alpha)) \\
        &= \log(1-\alpha)\log(\Nm_{K/\bQ}(\alpha)).
    \end{align*} 
    Since $\alpha \neq 2$ and $\fl \mid \alpha$, neither $1-\alpha$ nor $\Nm_{K/\bQ}(\alpha)$ are roots of unity, so this expression is nonzero and~\eqref{eq:realquadS=1eq2} can be solved for~$c_2$.

    For part~\ref{item:real-stable-depth4-coeffs-1}, observe that if $c_{\tau_{\fl}\tau_{\beta}}$ were~0 then by~\eqref{eq:real-stable-ev-L2} one would have $L_2(\alpha) = 0$ for every $S$-integral point $\alpha \in X(\cO_{K,S})$. But then the third column of the matrix $M_1 \coloneqq M_1(\alpha_1,\alpha_2,\alpha_3)$ would be zero and the determinant would vanish. Similarly, if $c_{\sigma_3}$ were~0 then by \eqref{eq:real-stable-ev-L3}, computing in the coordinate ring \eqref{eq:real-stable-graph-ring} of the graph of $\ev_{\eps_1} \times \ev_{\eps_2}$, one would have 
    \[
        L_3^{(1)} + L_3^{(2)} = 2 c_{\tau_{\beta}\tau_{\fl}\tau_{\beta}} x_{\beta} (x_{\fl}y_{\beta} - x_{\beta}y_{\fl}) 
        = 2 c_{\tau_{\beta}\tau_{\fl}\tau_{\beta}} \frac{L_0^{(1)} - L_0^{(2)}}{2c_{\tau_{\beta}}} \frac{L_2^{(1)}}{c_{\tau_{\fl}\tau_{\beta}}} 
    \]
    hence every $S$-integral point~$\alpha$ would satisfy 
    \[ L_3(\alpha) + L_3(\sigma\alpha) = \frac{c_{\tau_{\beta}\tau_{\fl}\tau_{\beta}}}{c_{\tau_{\beta}} c_{\tau_{\fl}\tau_{\beta}}} (\log(\alpha) - \log(\sigma \alpha)) L_2(\alpha). \]
    But then the first and third column of the matrix $M_1$ would be linearly dependent, again contradicting the non-vanishing of the determinant. A similar argument with $L_3(\alpha) - L_3(\sigma\alpha)$ shows $c_{\chi_3} \neq 0$. The assumptions of \Cref{thm:realquadS=1eqs} are thus satisfied and plugging each $(\alpha_i,\sigma \alpha_i)$ into Eq.~\eqref{eq:realquadS=1eq3} produces a linear equation for $c_{4,1},c_{4,2},c_{4,3}$. The non-vanishing of the determinant ensures that the linear equations can be solved for the $c_{4,i}$. The proof of part~\ref{item:real-stable-depth4-coeffs-2} is completely analogous.
\end{proof}

Using \Cref{real-stable-coefficients}, we can now verify Kim's Conjecture in the case $K = \bQ(\sqrt{2})$, $S = \emptyset$.
\begin{thm}
\label{thm:Qsqrt2SemptyKimholds}
    When $K=\mathbb{Q}(\sqrt{2})$ and $S=\emptyset$, Kim's Conjecture (\Cref{conj:kim-full}) for the polylogarithmic quotient holds in depth $4$ for $p<2{,}000$ which split completely in $K$.
\end{thm}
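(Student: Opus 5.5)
Since $X(\bZ[\sqrt2]) = \emptyset$ (the only real quadratic field with integral points is $\bQ(\sqrt5)$), we must show that the depth-4 locus $X(\cO_K \otimes \bZ_p)_{\emptyset,\PL,4}$ is empty for every split $p<2{,}000$, i.e.\ $p \equiv \pm1 \bmod 8$. The equations are those of \Cref{real-integral-equations}. The depth-$\le 2$ equations are explicit, and by \Cref{real-integral-depth2-locus} the depth-2 locus is already empty unless $p \equiv \pm 1 \bmod 5$. So only the primes with $p\equiv\pm1 \bmod 8$ and $p \equiv \pm1 \bmod 5$ remain, and for these the depth-2 locus is the $S_3$-orbit of $\bigl(\tfrac{-1+\sqrt5}{2},\tfrac{-1-\sqrt5}{2}\bigr)$. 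The task is therefore to show that each of these six pairs violates one of the depth-4 equations \eqref{eq:realquadSemptydepth4-1-simple}, \eqref{eq:realquadSemptydepth4-2-simple}. For this we need the normalised periods $\tilde c_{\tau_\beta\chi_3}$ and $\tilde c_{\tau_\beta\sigma_3}$ for $K=\bQ(\sqrt2)$, together with the non-vanishing of $c_{\sigma_3}, c_{\chi_3}$.

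We obtain these constants by enlarging $S$ to $S'=\{(\sqrt2)\}$, which is Galois-stable, and using \Cref{goncharov-coeffs-from-larger-field} with $K'=K$. Choose the generators compatibly: $\tau_\beta,\sigma_3,\chi_3$ for $S'$ map to those for $\emptyset$, and $\tau_{\fl}\mapsto 0$. Then the Goncharov coefficients $c_{\tau_\beta}, c_{\sigma_3}, c_{\chi_3}, c_{\tau_\beta\sigma_3}, c_{\tau_\beta\chi_3}$ are the same for $S$ and $S'$. In particular $c_{4,4}=\tilde c_{\tau_\beta\sigma_3}$ and $c_{4,2}=\tilde c_{\tau_\beta\chi_3}$ in \Cref{thm:realquadS=1eqs} coincide with the constants needed for $S=\emptyset$.

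The plan is then as follows. First list enough $\{(\sqrt2)\}$-integral points of $\bQ(\sqrt2)$, e.g.\ $2,-1,\tfrac12,\sqrt2,\ 1+\sqrt2,\ -1-\sqrt2$ (up to $S_3$), $2+\sqrt2$, $\sqrt2-1$ multiples, and others found by a small search of the $S$-unit equation. Second, for each relevant $p$, embed these points in $\bQ_p$, form the matrices $M_1$ and $M_2$ of \Cref{real-stable-coefficients}, and check by computer that their determinants are nonzero. That lemma then gives $c_{\sigma_3},c_{\chi_3}\neq0$ and the values of $c_{4,1},\dots,c_{4,7}$, in particular $c_{4,2}$ and $c_{4,4}$. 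Third, evaluate \eqref{eq:realquadSemptydepth4-1-simple} and \eqref{eq:realquadSemptydepth4-2-simple} at the six depth-2 candidate pairs, using $p$-adic $\log, L_3, L_4$ (SageMath code as in the repository), and confirm that each pair fails at least one equation. The containment in \Cref{real-integral-equations} then gives an empty locus, with no period-conjecture assumption needed because only the containment direction is used.

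The main obstacle is the second step: finding enough $S'$-integral points to make $M_1$ and $M_2$ invertible. Points with $\sigma\alpha=\alpha$, such as $2,-1,\tfrac12$, kill the $(\log\alpha-\log\sigma\alpha)$ columns, and points in the same $S_3$-orbit may give dependent rows. If the direct choice degenerates, I would first try points from different orbits, including those with nonzero $\log$ of the norm. Failing that, I would solve only the reduced system needed for $c_{4,2}$ and $c_{4,4}$ after eliminating the other unknowns, or combine points from several orbits. A secondary point is numerical precision: the $p$-adic evaluations must be carried out to enough digits that the non-vanishing conclusions are certified.
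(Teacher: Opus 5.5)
Your proposal is correct and follows the paper's own argument. You enlarge $S$ to $S'=\{(\sqrt2)\}$, use $S'$-integral points and \Cref{real-stable-coefficients} to certify $c_{\sigma_3},c_{\chi_3}\neq 0$ and to compute $c_{4,2}=\tilde c_{\tau_\beta\chi_3}$ and $c_{4,4}=\tilde c_{\tau_\beta\sigma_3}$, and then check numerically that the depth-2 candidates from \Cref{real-integral-depth2-locus} violate \eqref{eq:realquadSemptydepth4-1-simple}--\eqref{eq:realquadSemptydepth4-2-simple}.

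One small correction: \Cref{goncharov-coeffs-from-larger-field} with $K'=K$ forces $S'=S$, so it does not literally cover enlarging $S$. What you actually need is the analogous compatibility of $\eps_{\fp}$ under $\Lie(U^{\MT}_{S'})\twoheadrightarrow\Lie(U^{\MT}_{\emptyset})$, which is equally formal and which the paper also uses implicitly.
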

\begin{proof}
    By enlarging $S$ to $S'=\left\{(\sqrt{2})\right\}$ and plugging in points of $X(\mathcal{O}_{K,S'})$ into equations \eqref{eq:realquadS=1eq3}, \eqref{eq:realquadS=1eq4}, we can determine $c_{4,2}=c_{\tau_\beta\chi_3}/c_{\tau_\beta}c_{\chi_3}$, $c_{4,4}=c_{\tau_\beta\sigma_3}/c_{\tau_\beta}c_{\sigma_3}$ by \Cref{real-stable-coefficients}, which also determine the coefficients appearing in \eqref{eq:realquadSemptydepth4-1}, \eqref{eq:realquadSemptydepth4-2}. Numerical computation shows that the depth $2$ solutions (which are at most the $S_3$-orbits of $((-1+\sqrt{5})/2,(-1-\sqrt{5})/2)$ for $p<2{,}000$ by \Cref{real-integral-depth2-locus}) do not satisfy \eqref{eq:realquadSemptydepth4-1}, \eqref{eq:realquadSemptydepth4-2} for $p<2{,}000$. 
\end{proof}

We also derive the equations for the refined Chabauty--Kim locus $X(\cO_K \otimes \bZ_p)_{\{\fl\},n}^{(1)}$ for $n=1,2$, for the refinement condition $\Sigma = (1)$. The full refined locus $X(\cO_K \otimes \bZ_p)_{\{\fl\},n}^{\min}$ can be obtained from this by taking $S_3$-orbits.

\begin{thm}
\label{real-refined1-equations}
    In the situation of \Cref{thm:realquadS=1eqs}, 
    for $n=1,2,3,4$, the refined Chabauty--Kim locus $X(\cO_K \otimes \bZ_p)_{\{\fl\},n}^{(1)}$ is exactly the set of pairs $(z_1,z_2) \in X(\bZ_p) \times X(\bZ_p)$ satisfying the following incremental list of equations:
    \begin{itemize}
        \item in depth~1:
        \begin{equation}
        \label{eq:real-refined1-log}
            \log(z_1)+\log(z_2)=0;
        \end{equation}
        \item in depth~2:
        \begin{gather}
            \label{eq:real-refined1-L2}
            L_2(z_1)+L_2(z_2) =0, \\
            \label{eq:real-refined1-L2-with-c2}
            2L_2(z_1) + c_2\log(z_1)(\Li_1(z_1)+\Li_1(z_2)) = 0;
        \end{gather}
        \item in depth~3: no additional equations;
        \item in depth~4:
        \begin{align}
            L_4(z_1) + L_4(z_2) &= c_{4,2} \log(z_1)(L_3(z_1) - L_3(z_2)) + c_{4,8} \log(z_1)^2 L_2(z_1),\\
            L_4(z_1) - L_4(z_2) &= c_{4,4} \log(z_1)(L_3(z_1) + L_3(z_2)) + c_{4,9} \log(z_1)^2 L_2(z_1),
        \end{align}
    \end{itemize}
    where $c_2$, $c_{4,2}$, $c_{4,4}$ are as in \Cref{thm:realquadS=1eqs} and
    \begin{equation*}
        c_{4,8} \coloneqq -2\tilde c_{\tau_{\beta}\chi_3} \tilde c_{\tau_{\beta}\tau_{\fl}\tau_{\beta}}/\tilde c_{\tau_{\beta}\tau_{\fl}}, \qquad 
        c_{4,9} \coloneqq 2 \tilde c_{\tau_{\beta}\tau_{\beta}\tau_{\fl}\tau_{\beta}}/\tilde c_{\tau_{\fl}\tau_{\beta}}.
    \end{equation*}
\end{thm}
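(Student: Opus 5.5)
We reuse the setup and formulas from the proof of \Cref{thm:realquadS=1eqs}, now restricted to the refined Selmer subscheme $\Sel^{\mot,(1)}_{S,\PL,4}(X)$. By \Cref{def:refined-selmer-scheme}, this subscheme is cut out by $x_{\fl}=0$. It is therefore a $5$-dimensional affine space with coordinates $x_\beta,y_\beta,y_{\fl},z,w$. Setting $x_{\fl}=0$ in \eqref{eq:real-stable-ev-L0}--\eqref{eq:real-stable-ev-L3} and in the formula for $\ev^\sharp_{\eps_i}L_4$ gives:
\begin{gather*}
\ev^\sharp_{\eps_i}L_0=(-1)^{i+1}c_{\tau_\beta}x_\beta,\qquad
\ev^\sharp_{\eps_i}L_1=(-1)^{i+1}c_{\tau_\beta}y_\beta+c_{\tau_{\fl}}y_{\fl},\\
\ev^\sharp_{\eps_i}L_2=(-1)^{i}c_{\tau_{\fl}\tau_\beta}x_\beta y_{\fl},\qquad
\ev^\sharp_{\eps_i}L_3=c_{\sigma_3}z+(-1)^{i+1}c_{\chi_3}w-c_{\tau_\beta\tau_{\fl}\tau_\beta}x_\beta^2y_{\fl},
\end{gather*}
and $\ev^\sharp_{\eps_i}L_4=(-1)^{i+1}c_{\tau_\beta\sigma_3}x_\beta z+c_{\tau_\beta\chi_3}x_\beta w+(-1)^{i}c_{\tau_\beta\tau_\beta\tau_{\fl}\tau_\beta}x_\beta^3y_{\fl}$. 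We then compute the scheme-theoretic image of $\ev_{\eps_1}\times\ev_{\eps_2}$ exactly as before: construct a splitting $\phi$ and pull back generators along $\psi=(\ev_{\eps_1}\times\ev_{\eps_2})\circ\phi$.

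The depth-$1$ part reads as follows. From $L_0^{(1)}+L_0^{(2)}=0$ we get \eqref{eq:real-refined1-log}. We solve $x_\beta=L_0^{(1)}/c_{\tau_\beta}$. From $L_1^{(1)}\pm L_1^{(2)}$ we solve $y_{\fl}=(L_1^{(1)}+L_1^{(2)})/2c_{\tau_{\fl}}$ and $y_\beta=(L_1^{(1)}-L_1^{(2)})/2c_{\tau_\beta}$. These impose no further condition, since there are two free unknowns for two functions.

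In depth $2$, $L_2^{(1)}+L_2^{(2)}=0$ gives \eqref{eq:real-refined1-L2}. Next, $\psi^\sharp(2L_2^{(1)})=-2c_{\tau_{\fl}\tau_\beta}x_\beta y_{\fl}=-\tilde c_{\tau_{\fl}\tau_\beta}L_0^{(1)}(L_1^{(1)}+L_1^{(2)})$. We pull this back, using \Cref{Ln-as-pullback} with $L_1=\Li_1$ and $L_0=\log$, and check the sign convention matches \eqref{eq:realquadS=1eq2} specialised to $\log z_2=-\log z_1$. The result is \eqref{eq:real-refined1-L2-with-c2}. For this step, and for the later division, we need $c_{\tau_{\fl}\tau_\beta}\neq0$, which is assumed.

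In depth $3$, the two functions $L_3^{(1)}\pm L_3^{(2)}$ determine $z$ and $w$ because $c_{\sigma_3},c_{\chi_3}\neq0$, so no new equations arise. In depth $4$ we substitute $x_\beta z$ and $x_\beta w$ from the $L_3$ relations and replace $x_\beta^3y_{\fl}$ by $-(L_0^{(1)})^2L_2^{(1)}/(c_{\tau_\beta}^2c_{\tau_{\fl}\tau_\beta})$.
\begin{itemize}
\item Adding the $L_4$ formulas gives $2c_{\tau_\beta\chi_3}x_\beta w$. Its expansion yields $c_{4,2}\log(z_1)(L_3(z_1)-L_3(z_2))$, plus a $\log(z_1)^2L_2(z_1)$ term coming from the $c_{\tau_\beta\tau_{\fl}\tau_\beta}$ correction. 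That term has coefficient $c_{4,8}$.
\item Subtracting the $L_4$ formulas gives $2c_{\tau_\beta\sigma_3}x_\beta z+2c_{\tau_\beta\tau_\beta\tau_{\fl}\tau_\beta}x_\beta^3y_{\fl}$. The $z$-substitution produces $c_{4,4}$ together with another $c_{\tau_\beta\tau_{\fl}\tau_\beta}$ correction, and the last term produces $c_{4,9}$.
\end{itemize}

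The main obstacle is the depth-$4$ bookkeeping, namely tracking the corrections so that the combined coefficients come out as the stated $c_{4,8}$ and $c_{4,9}$. In the difference equation, the $c_{\tau_\beta\sigma_3}c_{\tau_\beta\tau_{\fl}\tau_\beta}$ correction must either cancel or be absorbed. We will check this carefully against the formula for $c_{4,9}$, which as stated contains no such product term. Since $\phi$ is a genuine section, these generators cut out the image exactly, which gives equality of loci.
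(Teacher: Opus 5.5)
Your route is the same as the paper's: restrict the evaluation maps from the proof of \Cref{thm:realquadS=1eqs} to $x_{\fl}=0$ and eliminate coordinates via a section. Your restricted formulas for $\ev_{\eps_i}^{\sharp}L_0,\dots,\ev_{\eps_i}^{\sharp}L_4$ are correct, and so is the depth $\le 3$ part. The gap is at depth~$4$, and it is not mere bookkeeping.

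First, your claim that the sum equation picks up a $\log(z_1)^2L_2(z_1)$ term contradicts your own formulas. The correction $-c_{\tau_\beta\tau_{\fl}\tau_\beta}x_\beta^2y_{\fl}$ occurs with the same sign in $\ev^\sharp_{\eps_1}L_3$ and $\ev^\sharp_{\eps_2}L_3$, so $L_3^{(1)}-L_3^{(2)}=2c_{\chi_3}w$ exactly. Hence
\[ L_4^{(1)}+L_4^{(2)}=2c_{\tau_\beta\chi_3}x_\beta w=\tilde c_{\tau_\beta\chi_3}\,L_0^{(1)}\bigl(L_3^{(1)}-L_3^{(2)}\bigr) \]
with no $L_2$-term.

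Second, the difference has a sign slip: it is $L_4^{(1)}-L_4^{(2)}=2c_{\tau_\beta\sigma_3}x_\beta z-2c_{\tau_\beta\tau_\beta\tau_{\fl}\tau_\beta}x_\beta^3y_{\fl}$. Substitute $2c_{\sigma_3}z=L_3^{(1)}+L_3^{(2)}+2c_{\tau_\beta\tau_{\fl}\tau_\beta}x_\beta^2y_{\fl}$ and $x_\beta^3y_{\fl}=-(L_0^{(1)})^2L_2^{(1)}/(c_{\tau_\beta}^2c_{\tau_{\fl}\tau_\beta})$. The product correction then does not cancel, and the coefficient of $\log(z_1)^2L_2(z_1)$ is
\[ \frac{2\bigl(\tilde c_{\tau_\beta\tau_\beta\tau_{\fl}\tau_\beta}-\tilde c_{\tau_\beta\sigma_3}\tilde c_{\tau_\beta\tau_{\fl}\tau_\beta}\bigr)}{\tilde c_{\tau_{\fl}\tau_\beta}}=2c_{4,6}, \qquad \tilde c_{\tau_\beta\tau_{\fl}\tau_\beta}\coloneqq\frac{c_{\tau_\beta\tau_{\fl}\tau_\beta}}{c_{\tau_\beta}^2c_{\tau_{\fl}}}. \]

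So the step you postpone (``must either cancel or be absorbed'') is exactly where the argument breaks. The computation proves the depth-$4$ equations with $c_{4,8}=0$ and $c_{4,9}=2c_{4,6}$, not the printed constants. The printed $c_{4,8}$ also uses the undefined $\tilde c_{\tau_\beta\tau_{\fl}}$. It also pairs $\chi_3$ with the $\tau_\beta\tau_{\fl}\tau_\beta$-coefficient, which in every formula of \Cref{thm:realquadS=1eqs} couples only to $\sigma_3$. It looks like the missing correction to $c_{4,9}$, misplaced.

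There is a clean independent check, which is also a shorter proof of the whole theorem. On the unrefined image the section satisfies $\phi^\sharp x_{\fl}=(L_0^{(1)}+L_0^{(2)})/2c_{\tau_{\fl}}$. So the refined image is the unrefined image intersected with $\{L_0^{(1)}+L_0^{(2)}=0\}$. Substituting $\log(z_2)=-\log(z_1)$ into \eqref{eq:realquadS=1eq1}--\eqref{eq:realquadS=1eq4} kills the $c_{4,1},c_{4,3},c_{4,5},c_{4,7}$ terms. It yields the stated depth-$2$ equations and exactly the corrected depth-$4$ equations above.

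The paper's own proof writes out only depth $\le 2$ and says ``comparing these two'', so it does not settle this point. You should prove the corrected statement rather than try to reach the printed $c_{4,8}$ and $c_{4,9}$.
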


\begin{proof}
    By \Cref{def:refined-selmer-scheme}, the refined Selmer scheme $\Sel_{S,2}^{\mot,(1)}(X)$ is defined by the equation $x_{\fl}=0$. Setting $x_{\fl}=0$ in \eqref{eq:real-stable-ev-L0}, \eqref{eq:real-stable-ev-L1}, \eqref{eq:real-stable-ev-L2}, the restrictions of the evaluation maps $\ev_{\eps_i}$ to the refined Selmer scheme are given by
    \begin{align*}
        \ev_{\eps_1}\colon (0,x_{\beta},y_{\fl},y_{\beta}) &\mapsto (c_{\tau_\beta}x_{\beta},\; c_{\tau_{\fl}}y_{\fl}+c_{\tau_\beta}y_{\beta},\; -c_{\tau_{\fl}\tau_{\beta}}x_{\beta}y_{\fl}),\\
        \ev_{\eps_2}\colon(0,x_{\beta},y_{\fl},y_{\beta})&\mapsto (-c_{\tau_\beta}x_{\beta},\; c_{\tau_{\fl}}y_{\fl}-c_{\tau_\beta}y_{\beta},\; c_{\tau_{\fl}\tau_{\beta}}x_{\beta}y_{\fl}).
    \end{align*}
    Comparing these two, one obtains the equations defining the image of $\ev_{\eps_1} \times \ev_{\eps_2}$.
\end{proof}

The two equations \eqref{eq:real-refined1-log} and \eqref{eq:real-refined1-L2} both hold on the curve $\{z_1 z_2 = 1\} \subseteq X(\bZ_p) \times X(\bZ_p)$ by the functional equation $L_2(z) + L_2(z^{-1}) = 0$ (\Cref{coleman-sinnott-for-Ln}). Due to this unlikely intersection we do not expect $X(\cO_K \otimes \bZ_p)_{\{\fl\},2}^{(1)}$ to contain only $S$-integral points even though we have three equations in two variables: the locus contains all pairs of the form $(z,z^{-1})$ with $z \in X(\bZ_p)$ a root of the function $L_2(z) + c_2 \log(z)(2\Li_1(z) + \log(z))$. This includes $(-1,-1)$ (which is not $\{\fl\}$-integral when $\fl \nmid 2$) but typically also transcendental $p$-adic points.

\begin{example}
\label{Qsqrt2-example}
    Let $K=\mathbb{Q}(\sqrt{2})$ and $S=\{(\sqrt{2})\}$. We apply \Cref{real-stable-coefficients} with $z=\sqrt{2}$ to obtain
    \[ c_2 = \frac{L_2(\sqrt{2})}{\log(2)\log(1+\sqrt{2})}. \]
    For various $p$ which split in $K$, the numerical results for the Chabauty--Kim locus are shown in the following table, using the Sage code \url{https://github.com/steffenmueller/LinQC} implemented by Francesca Bianchi to solve the system of $2$ equations in $2$ variables.
    \begin{center}
    \begin{tabular}{ |c|c|c|c|c|c|c| } 
     \hline
      & $p=7$ & $p=17$ & $p=23$ & $p=31$ & $p=41$ & $p=47$
      \\  \hline
     $\# X(\mathcal{O}_K\otimes \mathbb{Z}_p)_{S,\PL,2}$ & $39$ & $183$ & $345$ & $867$  & $1647$ & $1929$ \\
     $\# X(\mathcal{O}_K\otimes \mathbb{Z}_p)_{S,\PL,2}^{\min}$ & $39$ & $63$ & $75$ & $171$  &  $183$ & $195$ \\  
     $\# X(\mathcal{O}_K\otimes \mathbb{Z}_p)_{S,\PL,4}$ & $33$ & $33$ & $33$ & $33$ & $33$ & $33$ \\ \hline
    \end{tabular}
    \end{center}
We found that for the primes $p$ in the table, $\# X(\mathcal{O}_K\otimes \mathbb{Z}_p)_{S,\PL,4}=33$ which is the same as the cardinality of the set of actual points $X(\mathcal{O}_{K,S})$, indicating that the Kim's conjecture holds. We did this computation up to $p<100$ summarised in the following theorem.
\end{example}

\begin{thm}
\label{thm:Qsqrt2S=1Kimholds}
    When $K=\mathbb{Q}(\sqrt{2})$ and $S=\left\{(\sqrt{2})\right\}$, Kim's Conjecture (\Cref{conj:kim-full}) for the polylogarithmic quotient holds in depth $4$ for $p<100$ which split completely in $K$.
\end{thm}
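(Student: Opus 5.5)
The proof is a computation in the spirit of \Cref{thm:Qsqrt2SemptyKimholds} and \Cref{S=1-stable-depth4}: determine the constants in the equations of \Cref{thm:realquadS=1eqs} from known $S$-integral points, then solve the resulting system for each split prime $p<100$. By \Cref{S=1-stable-solutions}-type enumeration (or a direct $S$-unit computation) one first lists $X(\cO_{K,S})$ for $K=\bQ(\sqrt2)$, $S=\{(\sqrt2)\}$; this set has $33$ elements, consisting of several $S_3$-orbits including those of $\sqrt2$, $-1$, $2$ and $1+\sqrt2$-type points. The aim is to show that for each split $p<100$ (i.e.\ $p\equiv\pm1\bmod 8$) the locus $X(\cO_K\otimes\bZ_p)_{S,\PL,4}$ has exactly these $33$ elements. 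Since it contains the diagonal image of $X(\cO_{K,S})$, this proves the claim.

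\textbf{Step 1: determine the coefficients.} By \Cref{real-stable-coefficients}\ref{item:real-stable-depth2-coeff} with $\alpha=\sqrt2$ one gets $c_2$, which is valid since $\sqrt2\equiv0\bmod\fl$ and $\sqrt2\ne2$. Next, choose three resp.\ four points $\alpha_i\in X(\cO_{K,S})$ whose Galois conjugate pairs are not of the degenerate forms, for instance $\sqrt2$, $1+\sqrt2$, $2+\sqrt2$ and their $S_3$-translates. Then check numerically for each $p$ that $\det M_1\ne0$ and $\det M_2\ne0$. Parts \ref{item:real-stable-depth4-coeffs-1} and \ref{item:real-stable-depth4-coeffs-2} of that lemma then give $c_{4,1},\dots,c_{4,7}$. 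They also yield $c_{\tau_\fl\tau_\beta},c_{\sigma_3},c_{\chi_3}\ne0$, so the hypotheses of \Cref{thm:realquadS=1eqs} hold unconditionally. That theorem therefore describes the loci exactly.

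\textbf{Step 2: solve in depth $2$.} Solve \eqref{eq:realquadS=1eq1} and \eqref{eq:realquadS=1eq2}, two equations in $(z_1,z_2)\in X(\bZ_p)^2$, on each pair of residue discs. Expand $\log$, $\Li_1$, $L_2$ as power series and use a multivariable $p$-adic Newton/Hensel argument (the LinQC code of Bianchi). This gives a finite set of points, for example $39,183,\dots$ points as in \Cref{Qsqrt2-example}, each to some $p$-adic precision. A nondegeneracy check of the Jacobian then certifies that the depth-$2$ locus is finite and is exactly the computed set.

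\textbf{Step 3: filter in depth $4$.} Evaluate \eqref{eq:realquadS=1eq3} and \eqref{eq:realquadS=1eq4} at each depth-$2$ point and discard the points where either equation fails at the working precision. The survivors should be exactly the $33$ images of $S$-integral points.

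The main obstacle is Step~2: certifying that the depth-$2$ locus is exactly the computed finite set. This includes handling multiple or non-simple roots, in particular near roots of unity and points like $(-1,-1)$. It also requires enough precision that a nonzero value in Step~3 is provably nonzero. I would first try Hensel with Jacobian checks on each disc, and fall back on Newton-polygon bounds in residue discs where the Jacobian vanishes mod $p$.
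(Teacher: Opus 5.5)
Your proposal is correct and follows the paper's route. The constants are fixed from known $S$-integral points via \Cref{real-stable-coefficients}, with $c_2$ coming from $\alpha=\sqrt2$; this also certifies the nonvanishing hypotheses of \Cref{thm:realquadS=1eqs}. The depth-2 locus is then computed with Bianchi's two-variable solver, and one checks that exactly the $33$ points of $X(\cO_{K,S})$ survive the depth-4 equations for each split $p<100$, as in \Cref{Qsqrt2-example}. (One minor slip: $-1$ and $2$ lie in the same $S_3$-orbit, and $1+\sqrt2$ lies in the orbit of $-\sqrt2$; the $33$ points are the orbits of $-1$, $\pm\sqrt2$, $3+2\sqrt2$, $-3-2\sqrt2$ and $17+12\sqrt2$.)
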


\subsection{Galois-unstable $S$ of size~1}

Now consider a real quadratic field~$K$ and $S = \{\fl\}$ of size~$1$ with $\fl$ a Galois-unstable prime of~$K$. Write $l\mathcal{O}_K=\fl\fl'$ where $l$ is the rational prime lying below $\fl$. 
As in \eqref{eq:real-S=1-eps1} above, we write the $\fp_1$-adic period element $\varepsilon_1$ as
\begin{equation}
   \eps_1 = c_{\tau_{\beta}} \tau_{\beta} + c_{\tau_{\fl}} \tau_{\fl} + c_{\tau_{\fl}\tau_{\beta}} [\tau_{\fl},\tau_{\beta}] + \ldots
\end{equation}

For a word $w$, we denote $c_w' \coloneq \per_{\fp_2}(c_w^{\fu})=\per_{\sigma^*\fp_1}(c_w^{\fu})\in K_{\fp_2}$, which equals $\per_{\fp_1}(\sigma_* c_w^{\fu})$ by \Cref{eta-sigma-p-from-eta-p}. 
Thus, applying $\sigma^*$ to $\eps_1$, the $\fp_2$-adic period element $\eps_2 = \sigma^* \eps_1$, is given by
\begin{equation}
   \eps_2 = - c_{\tau_{\beta}} \tau_{\beta} + c_{\tau_{\fl}}' \tau_{\fl} - c_{\tau_{\fl}\tau_{\beta}}' [\tau_{\fl},\tau_{\beta}] + \ldots
\end{equation}

\begin{thm}
\label{thm:realquad_unstable}
    In the setting of \Cref{thm:realquadS=1eqs} except that $S=\{\fl\}$ is not Galois-stable, for $n=1,2$, the depth-$n$ Chabauty--Kim locus $X(\cO_K \otimes \bZ_p)_{\{\fl\},n}$ is contained in the set of pairs $(z_1,z_2) \in X(\bZ_p) \times X(\bZ_p)$ satisfying the following incremental list of equations:
    \begin{itemize}
        \item in depth~1: no equations.
        \item in depth~2:
        \begin{align}
        \label{eq:realquad_unstable_depth2_1} c_{\tau_{\fl}\tau_{\beta}}' L_2(z_1)+c_{\tau_{\fl}\tau_{\beta}}L_2(z_2)&=0,\\
            \label{eq:realquad_unstable_depth2_2}        c_{\tau_{\fl}\tau_{\beta}}(\log(z_2)\Li_1(z_1)-\log(z_1)\Li_1(z_2)) &= (\log(\pi_{\fl})+\log(\sigma\pi_{\fl}))\log(\beta)L_2(z_1).
        \end{align}
    \end{itemize}
    The refined locus $X(\mathcal{O}_K\otimes\mathbb{Z}_p)_{\{\fl\},n}^{(1)}$ for $n=1,2$ is contained in the set of pairs $(z_1,z_2)$ satisfying the above list of equations plus the following equation
\begin{equation}
\label{eq:realquad_unstable_refined1}
       \log(z_1)+\log(z_2)=0,
\end{equation}
and $X(\mathcal{O}_K\otimes\mathbb{Z}_p)_{\emptyset,n}^{\min}$ is obtained by taking the $S_3$-orbits from $X(\mathcal{O}_K\otimes\mathbb{Z}_p)_{\emptyset,n}^{(1)}$ for $n=1,2$. 
    
    The containment is an equality if the $p$-adic period conjecture holds for the primes dividing~$p$.
\end{thm}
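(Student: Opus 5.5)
The plan follows the template of \Cref{thm:realquadS=1eqs}, now without the Galois symmetry $\sigma^*\tau_{\fl}=\tau_{\fl}$. In depth $\leq 2$ the Selmer scheme $\Sel^{\mot}_{S,\PL,2}(X)$ is the affine space with coordinates $x_{\fl},x_\beta,y_{\fl},y_\beta$; there are no generators in degree $-2$. By \Cref{cocycle-evaluation-map} the evaluation maps are
\begin{align*}
\ev_{\eps_1}^\sharp L_0 &= c_{\tau_\beta}x_\beta + c_{\tau_{\fl}}x_{\fl}, \quad \ev_{\eps_1}^\sharp L_1 = c_{\tau_\beta}y_\beta + c_{\tau_{\fl}}y_{\fl}, \quad \ev_{\eps_1}^\sharp L_2 = c_{\tau_{\fl}\tau_\beta}(x_{\fl}y_\beta - x_\beta y_{\fl}),\\
\ev_{\eps_2}^\sharp L_0 &= -c_{\tau_\beta}x_\beta + c'_{\tau_{\fl}}x_{\fl}, \quad \ev_{\eps_2}^\sharp L_1 = -c_{\tau_\beta}y_\beta + c'_{\tau_{\fl}}y_{\fl}, \quad \ev_{\eps_2}^\sharp L_2 = -c'_{\tau_{\fl}\tau_\beta}(x_{\fl}y_\beta - x_\beta y_{\fl}).
\end{align*}
Here $c_{\tau_\beta}=\log\beta$, $c_{\tau_{\fl}}=\log\pi_{\fl}$ by \Cref{tau-coeffs}, and $c'_{\tau_{\fl}}=\log(\sigma\pi_{\fl})$ by \Cref{eta-sigma-p-from-eta-p} and \Cref{sigma-on-motivic-polylogarithms}, as in \S\ref{sec:Galois-unstable}. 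The sign of the $\tau_\beta$ coefficient comes from $\sigma\beta=\beta^{-1}$ in $\cO_K^\times\otimes\bQ$. One checkpoint is needed: the coefficient of the $\tau_{\fl}$-component of $\sigma^*\tau_\beta$ vanishes. This holds because $\sigma^*$ is dual to $\sigma$ on $\cO_{K,S}^\times\otimes\bQ$ and $\sigma\beta=\beta^{-1}$ involves no $\pi_{\fl}$.

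In depth 1 the map $(x,y)\mapsto(L_0^{(1)},L_0^{(2)},L_1^{(1)},L_1^{(2)})$ is given by the $2\times2$ block
\[
\begin{pmatrix}c_{\tau_\beta}&c_{\tau_{\fl}}\\-c_{\tau_\beta}&c'_{\tau_{\fl}}\end{pmatrix},
\]
applied separately to $x$ and to $y$. Its determinant is $\log\beta\,(\log\pi_{\fl}+\log\sigma\pi_{\fl})=\log\beta\log\Nm(\pi_{\fl})$. This is nonzero because $\Nm\pi_{\fl}$ is a nontrivial power of $l$ up to sign. The block is therefore invertible, so the map is dominant and there are no depth-1 equations.

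In depth 2, write $D=x_{\fl}y_\beta-x_\beta y_{\fl}$. Eliminating $D$ between $L_2^{(1)}$ and $L_2^{(2)}$ gives \eqref{eq:realquad_unstable_depth2_1}. To get \eqref{eq:realquad_unstable_depth2_2}, invert the block and express $D$ through the depth-1 coordinates. By Cramer's rule,
\[
D = \frac{L_0^{(1)}L_1^{(2)} - L_0^{(2)}L_1^{(1)}}{\det} \quad\text{up to sign}.
\]
Multiplying by $c_{\tau_{\fl}\tau_\beta}$ yields \eqref{eq:realquad_unstable_depth2_2}, after pulling back via \Cref{Ln-as-pullback} with $L_0=\log$ and $L_1=\Li_1$. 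The sign and ordering conventions in this step are where I expect errors, and I will check them against the Galois-stable case, where $c'=c$ recovers \eqref{eq:realquadS=1eq2}.

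For equality, I will construct a splitting $\phi$ of $\ev_{\eps_1}\times\ev_{\eps_2}$ from the depth-1 inversion. I then check that the image is exactly the locus cut out by the two depth-2 equations when $c_{\tau_{\fl}\tau_\beta}\neq0$, which follows from the period conjecture via \Cref{period-conjecture-implies-coeff-nonzero}.

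For the refined statement, use \Cref{def:refined-selmer-scheme} and set $x_{\fl}=0$. Then $L_0^{(1)}=c_{\tau_\beta}x_\beta=-L_0^{(2)}$, which gives \eqref{eq:realquad_unstable_refined1}. The image of the remaining coordinates $x_\beta,y_{\fl},y_\beta$ imposes no further independent conditions beyond the unrefined ones restricted to this subvariety. The statement about the $S_3$-orbits is taken over from the stable case, by the symmetry of the full depth-$\leq2$ quotient (\cite[Lemma~4.12]{BKL:chabauty-kim-sc}) together with \Cref{inversion-action-on-refined-loci}. The main obstacle is the bookkeeping of signs and of the primed coefficients in the depth-2 elimination.
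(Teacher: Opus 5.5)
Your proposal is correct and is essentially the paper's proof: express $\ev_{\eps_1},\ev_{\eps_2}$ in the coordinates $x_{\fl},x_\beta,y_{\fl},y_\beta$ via \Cref{cocycle-evaluation-map}, eliminate these coordinates, and set $x_{\fl}=0$ for the refinement condition $(1)$. You make explicit two things the paper leaves implicit, namely that the degree~$-1$ block has determinant $\log(\beta)\log(\Nm_{K/\bQ}(\pi_{\fl}))\neq 0$ and that equality only needs $c_{\tau_{\fl}\tau_\beta}\neq 0$; your sign resolves as $x_{\fl}y_\beta-x_\beta y_{\fl}=\bigl(L_0^{(2)}L_1^{(1)}-L_0^{(1)}L_1^{(2)}\bigr)/\bigl(\log(\beta)(\log(\pi_{\fl})+\log(\sigma\pi_{\fl}))\bigr)$, which gives \eqref{eq:realquad_unstable_depth2_2} exactly, and your ``checkpoint'' is unnecessary (and its justification is off, since $\sigma$ does not preserve $\cO_{K,S}^{\times}\otimes\bQ$ when $S$ is unstable) because the $\tau$-coefficients of $\eps_2$ are directly $\log^{\fp_2}(\beta)=-\log(\beta)$ and $\log^{\fp_2}(\pi_{\fl})=\log(\sigma\pi_{\fl})$ by \Cref{tau-coeffs}.
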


\begin{proof}
  The Selmer scheme $\Sel^{\mot}_{S,\PL,2}(X)$ is a 4-dimensional affine space with coordinates $x_{\fl}, x_{\beta}, y_{\fl}, y_{\beta}$. By \Cref{cocycle-evaluation-map}, the evaluation maps at $\eps_1$ and $\eps_2$ in depth~$2$ are given by
    \begin{align}
    \label{eq:realquad_unstable_eveps1L0}
        \ev_{\eps_1}^{\sharp} L_0 =  c_{\tau_\beta}x_{\beta} + c_{\tau_{\fl}}x_{\fl},\quad & \ev_{\eps_2}^{\sharp} L_0 = - c_{\tau_\beta}x_{\beta} + c_{\tau_{\fl}}'x_{\fl}; \\
    \label{eq:realquad_unstable_eveps1L1}
        \ev_{\eps_1}^{\sharp} L_1 = c_{\tau_\beta}y_{\beta} + c_{\tau_{\fl}}y_{\fl},\quad &  \ev_{\eps_2}^{\sharp} L_1 = -c_{\tau_\beta}y_{\beta} + c_{\tau_{\fl}}'y_{\fl}; \\
\label{eq:realquad_unstable_eveps1L2}
    \ev_{\eps_1}^{\sharp} L_2 = c_{\tau_{\fl}\tau_{\beta}}(x_{\fl} y_{\beta} - x_{\beta} y_{\fl}),\quad &  \ev_{\eps_2}^{\sharp} L_2 = -c_{\tau_{\fl}\tau_{\beta}}'(x_{\fl} y_{\beta} - x_{\beta} y_{\fl}).
\end{align}
Eliminating $x_{\fl}, x_{\beta}, y_{\fl}, y_{\beta}$ and pulling back equations to $X(\mathbb{Z}_p)\times X(\mathbb{Z}_p)$ gives \eqref{eq:realquad_unstable_depth2_1}, \eqref{eq:realquad_unstable_depth2_2}.

By \Cref{def:refined-selmer-scheme}, the refined Selmer scheme $\Sel_{S,2}^{\mot,(1)}(X)$ is defined by the equation $x_{\fl}=0$. Setting $x_{\fl}=0$ in \eqref{eq:realquad_unstable_eveps1L0}, \eqref{eq:realquad_unstable_eveps1L1}, \eqref{eq:realquad_unstable_eveps1L2} gives \eqref{eq:realquad_unstable_refined1}.
\end{proof}

	\printbibliography
	
\end{document}